\tikzset{ 
    table/.style={
        matrix of math nodes,
        row sep=-\pgflinewidth,
        column sep=-\pgflinewidth,
        nodes={rectangle,text width=3em,align=center},
        text depth=1.25ex,
        text height=2.5ex,
        nodes in empty cells,
        left delimiter=[,
        right delimiter={]},
        ampersand replacement=\&
    }
}
\def\mmath#1{\text{\scalebox{1.1}{$#1$}}}
\def\smath#1{\text{\scalebox{0.9}{$#1$}}}
\def\mfrac#1#2{\mmath{\frac{#1}{#2}}}
\def\sfrac#1#2{\smath{\frac{#1}{#2}}}
\newcommand*{\encircled}[1]{\relax\ifmmode\mathpalette\@encircled@math{#1}\else\@encircled{#1}\fi}
\newcommand*{\@encircled@math}[2]{\@encircled{$\m@th#1#2$}}
\newcommand*{\@encircled}[1]{%
  \tikz[baseline,anchor=base]{\node[draw,circle,outer sep=0pt,inner sep=.2ex] {#1};}}
\newcommand{\pnrelbar}{%
  \linethickness{\dimen2}%
  \sbox\z@{$\m@th\prec$}%
  \dimen@=1.1\ht\z@
  \begin{picture}(\dimen@,.4ex)
  \roundcap
  \put(0,.2ex){\line(1,0){\dimen@}}
  \put(\dimexpr 0.5\dimen@-.2ex\relax,0){\line(1,1){.4ex}}
  \end{picture}%
}
\newcommand{\precneq}{\mathrel{\vcenter{\hbox{\text{\prec@neq}}}}}
\newcommand{\prec@neq}{%
  \dimen2=\f@size\dimexpr.04pt\relax
  \oalign{%
    \noalign{\kern\dimexpr.2ex-.5\dimen2\relax}
    $\m@th\prec$\cr
    \noalign{\kern-.5\dimen2}
    \hidewidth\pnrelbar\hidewidth\cr
  }%
}
\newcommand*\kay{%
  \text{%
  \fontencoding{LS1}%
  \fontfamily{stixscr}%
  \fontseries{\textmathversion}%
  \fontshape{n}%
  \selectfont\symbol{"6B}}}
  \newcommand*\textmathversion{\csname textmv@\math@version\endcsname}
  \newcommand*\textmv@normal{m}
  \newcommand*\textmv@bold{b}
\newcommand*\ess{%
  \text{%
  \fontencoding{LS1}%
  \fontfamily{stixscr}%
  \fontseries{\textmathversion}%
  \fontshape{n}%
  \selectfont\symbol{"73}}}
\newcommand{\QQ}{\mathbb{Q}}
\newcommand{\ZZ}{\mathbb{Z}}
\newcommand{\tH}{H'}
\newcommand{\tfh}{\mathfrak{h}}
\newcommand{\tU}{U'}
\newcounter{dummypart}
\newcommand{\ch}{\mathrm{ch}}
\newcommand{\Ab}{\mathbb{A}}   
\newcommand{\pr}{\mathrm{pr}}
\DeclareMathOperator{\supp}{Supp}
\DeclareMathOperator{\Oscr}{\mathscr{O}}
\numberwithin{equation}{section}
\newtheorem{theorem}[equation]{Theorem}
\newtheorem{proposition}[equation]{Proposition}
\newtheorem{lemma}[equation]{Lemma}
\newtheorem{corollary}[equation]{Corollary}
\newtheorem*{corollary*}{Corollary}
\newtheorem{theoremx}{Theorem}
\theoremstyle{definition}
\newtheorem*{notation*}{Notation}
\theoremstyle{definition}
\newtheorem{definition}[equation]{Definition}
\theoremstyle{remark}
\newtheorem{remark}[equation]{Remark}
\newtheorem{example}{Example}[section]   
\newtheorem{notation}{Notation}[section] 
\newtheorem{note*}[equation]{Note}
\DeclareFontFamily{U}{wncy}{}
\DeclareFontShape{U}{wncy}{m}{n}{<->wncyr10}{}
\DeclareSymbolFont{mcy}{U}{wncy}{m}{n}
\DeclareMathSymbol{\sha}{\mathord}{mcy}{"58}
\renewcommand*\env@matrix[1][\arraystretch]{%
  \edef\arraystretch{#1}%
  \hskip -\arraycolsep
  \let\@ifnextchar\new@ifnextchar
  \array{*\c@MaxMatrixCols c}}
\newcommand{\Yvar}[1]{\mathcal{Y}_{\varepsilon_{#1}}}
\newcommand{\SL}{\mathrm{SL}}
\newcommand{\Gb}{\mathbf{G}}
\newcommand{\Hb}{\mathbf{H}}  
\newcommand{\Ht}{H'}
\newcommand{\Addresses}{{
  \bigskip
  \footnotesize \medskip

  (Shah) \textsc{Department of Mathematics, University of California, Santa Barbara, CA 93106-3080}\par\nopagebreak
  \textit{E-mail address}: \texttt{swshah@ucsb.edu}

}}
\providecommand\given{}
\newcommand\givensymbol[1]{%
  \nonscript\;\delimsize#1\allowbreak\nonscript\;\mathopen{}%
}
\DeclarePairedDelimiterX\Set[1]\{\}{%
\renewcommand\given{\givensymbol{\vert}}%
  #1%
}
\newcommand{\GG}{\mathbb{G}}
\newcommand{\OO}{\mathcal{O}}
\newcommand{\GL}{\mathrm{GL}}
\newcommand{\RR}{\mathbb{R}}
\renewcommand{\dateseparator}{-}
\renewcommand{\today}{\the\year \dateseparator \twodigit\month
\dateseparator \twodigit\day}
\title{Horizontal norm compatibility of cohomology classes for  $\mathrm{GSp}_{6}$} 
\author{Syed Waqar Ali Shah}    
\date{}
\begin{document}
\begin{abstract} We establish  abstract  horizontal 
norm relations involving the unramified Hecke-Frobenius  polynomials  that    correspond under the Satake isomorhpism to the degree  eight  spinor $L$-factors  of  $ \mathrm{GSp}_{6} $. These relations    apply  to    classes     in   the  degree seven motivic  cohomology of the Siegel modular sixfold obtained 
via   Gysin   pushforwards of Beilinson's  Eisenstein symbol 
pulled back on one  copy in a triple   product of modular curves. The proof is based on   a novel    approach  that circumvents the failure of the so-called   multiplicity one hypothesis in our setting, which precludes the applicability  of an  existing technique.  In a sequel, we combine our result with  the     previously established vertical  norm  relations for these classes to  obtain   new    Euler systems    for  the    eight dimensional  Galois representations associated with    certain     non-endoscopic  cohomological  cuspidal automorphic representations of $ \mathrm{GSp}_{6} $.   
\end{abstract}    
\maketitle
\tableofcontents
\section{Introduction}

Ever since the pioneering work of Kolyvagin,  the machinery of Euler systems has become  a standard tool for probing the structure of Selmer groups of global Galois representations  and for    establishing  specific  instances of Bloch-Kato and Iwasawa main conjectures.  Recently, there has been an interest in constructing Euler systems  for Galois representations found in the cohomology of Siegel modular varieties. 
In  \cite{LSZ}, the authors constructed an Euler system for certain four dimensional  Galois representations found in the middle degree cohomology of the $  \mathrm{GSp}_{4} $ Siegel modular variety.      They also introduced a  new  technique of  using local zeta integrals   that  has been applied with great success in many other settings (\cite{GS}, \cite{hsu2020euler}, \cite{gu21}, \cite{Dis}).   

The natural successor  of  $ \mathrm{GSp}_{4} $ in  Euler system based investigations is  the Siegel modular variety attached to  $  \mathrm{GSp}_{6}$. This is a sixfold 
whose middle degree  cohomology realizes the composition of the spin representation with the $\mathrm{GSpin}_{7} $-valued Galois representation associated under  Langlands correspondence with certain 
cohomological  
cuspidal automorphic representations of $ \mathrm{GSp}_{6}$  \cite{Kret},   \cite{Buzz}.     
A standard paradigm for constructing  Euler systems for such geometric  Galois representations is via pushforwards of  a   special family  of  motivic cohomology classes known as  \emph{Eisenstein symbols}. 
A natural candidate  class 
in the  $ \mathrm{GSp}_{6} $  setting    is the  pushforward of the Eisenstein symbol pulled 
back  on one copy in a triple product of modular curves. Besides having the correct numerology, this particular choice of pushforward is  motivated   by a period integral of  Pollack and Shah \cite{PS},  who showed that integrating certain cusp forms of $ \mathrm{GSp}_{6} $ against an Eisenstein series on one copy in a  triple product of $\GL_{2} $  retrieves the degree eight (partial)   spinor  $L$-function  for  that     cusp  form. In \cite{gclj}, the authors  use  this   period integral to relate the regulator of our candidate class in Deligne-Beilinson cohomology to non-critical special values of the spinor $L$-function, thereby providing  
 evidence that it  sits   at the bottom of a non-trivial  
 Euler system  whose behaviour can be explicitly    tied to special  $L$-values.     

To construct an Euler system above this  class,  one needs to produce  classes going up the abelian tower over  $  \QQ $  that   satisfy among themselves  two kinds of norm 
relations. One of these is the \emph{vertical} relations that see variation along the $\ZZ_{p}$-extension and are Iwasawa theoretic in nature.  These have already been verified  in   \cite{AJ} using a general method later  axiomatized in \cite{loe}.  
 The other and typically more challenging kind is  the \emph{horizontal} relations that see variation along ray class extensions and involve local $L$-factors   of the Galois representation.  These present an even  greater  challenge in the $ \mathrm{GSp}_{6} $ case,  since one is dealing with a non-spherical pair of groups and  the  so-called multiplicity one hypothesis on a local space of linear functionals fails to hold. In particular,  the  technique of   local zeta  integrals of \cite{LSZ} and  its variants cannot be applied in this situation to establish horizontal norm compatibility. 
 
The purpose  of  this article is to establish the ideal version of this compatibility   using  a    fairly     general  method developed by us  in   a  companion article   \cite{CZE}, thereby completing the Euler system construction envisioned in \cite{AJ}.    
For convenience and to free up notations that play no role outside the proof of our norm relations, we have chosen to cast our result in the framework of abstract cohomological Mackey (CoMack) functors\footnote{the more relaxed notion of ``Mackey functor" is referred to as a ``cohomology functor" in \cite{loe}}. 
The application to $p$-adic \'{e}tale   cohomology and the actual Euler system construction  is recorded in a sequel \cite{EulerGSp6}. In future, we also expect to establish an explicit reciprocity law relating this Euler system to special values of the spinor $L$-function by means of a $p$-adic $L$-function, thereby making progress on the Bloch-Kato and Iwasawa main conjectures in this setting.      

\subsection{Main result} 
Let $ \Gb = \mathrm{GSp}_{6} $, $ \tilde{\Gb} = \Gb \times \GG_{m} $  and  $ \Hb = \GL_{2} \times_{\GG_{m}} \GL_{2}   \times_{\GG_{m}}  \GL_{2} $ where the products in $ \mathbf{H} $ are  fibered  over the   determinant map.  There is a natural embedding $ \iota : \Hb  \hookrightarrow \Gb $ and if $ \mathrm{sim} : \Gb \to \GG_{m} $ denotes the  similitude map, then post composing $ \iota $ with  $ 1_{\Gb}  \times \mathrm{sim} : \Gb \to \tilde{\Gb} $ gives us an embedding $$  \tilde{\iota} :  \Hb   \hookrightarrow   \tilde{\Gb}  $$  via  which we view $ \Hb $ as a subgroup of $ \tilde{\Gb}  $.  For $ \ell $ a rational prime,  let $ G_{\ell} $    denote the groups of $ \QQ_{\ell} $-points of $ \Gb $ and  
let  $ \mathcal{H}_{R}  $  denote the spherical Hecke algebra of $ G_{\ell}$ with coefficients in a ring $ R $.  For $ c $ an integer, let $ \mathfrak{H}_{\ell,c}(X)  \in \mathcal{H}_{\ZZ[\ell^{-1}]}[X]  $ 
denote the unique polynomial in $ X $ such that for any (irreducible)  unramified representation $ \pi_{\ell} $ of $ G_{\ell} $ and any spherical vector $ \varphi_{\ell} \in \pi_{\ell}   $, $$ \mathfrak{H}_{\ell,c}(\ell^{-s})  \cdot \varphi_{\ell}  = L(s+c, \pi_{\ell}  ,  \mathrm{Spin}) ^{-1} \cdot \varphi_{\ell}  $$  
for all $ s \in \mathbb{C} $. Here  $ L(s,  \pi_{\ell}, \mathrm{Spin}) $ denotes the spinor $ L$-factor  of $ \pi_{\ell} $ normalized as in  \cite{Asgari}. 
Fix  any  finite set $ S $ of rational  primes and let  $ G$,  $ \tilde{G} $, $ H $ denote the group of $ \ZZ_{S} \cdot \Ab_{f}^{S} $-points of $ \Gb $,  $ \tilde{\Gb} $, $ \Hb $ 
respectively. 
Fix  also   a neat compact open subgroup $ K \subset G $ such that $ K $ is unramified at primes away from $ S $.    Let $ \mathcal{N} $ denote the set of all square free  products of primes outside  $ S $  (where the empty product  means  $ 1  $) and for $ n \in  \mathcal{N} $, denote  $$ K[n]  = K \times  \prod_{\ell \nmid n} \ZZ_{\ell}^{\times}  
\prod_{\ell \mid n } ( 1+ \ell \ZZ_{\ell})      \subset  \tilde{G}  .   $$ 
Let $ \mathcal{O} $ be a characteristic zero integral domain such that $ \ell \in \mathcal{O}^{\times} $ for all $ \ell \notin S $. Denote by $ \mathcal{S} =  \mathcal{S}_{\mathcal{O}} $ the  $ \mathcal{O} $-module  of  all locally constant compactly supported  functions  $ \chi  :\mathrm{Mat}_{2 \times 1} ({\Ab_{f}}) \setminus  \left \{ 0 \right \} 
\to \mathcal{O}  $ such that $ \chi  = 
f_{S} \otimes\chi  ^{S }$ where $ f_{S } $ is a fixed  function on $  \mathrm{Mat}_{2 \times 1} (\ZZ_{S})  $ that is invariant under $  \Hb(\ZZ_{S} )   $ under the natural left action of $ H $ on such functions.   We  view the association  $ V \mapsto \mathcal{S}(V) $ that sends a compact open subgroup $ V $ of $ H $ to the $ V$-invariants of $ \mathcal{S}  $ as a CoMack functor for $ H $.   
Let $ U = H \cap K[1] $ and let 
$$ \phi = 
f_{S}  
\otimes \ch(\widehat{\ZZ}^{S}) \in  \mathcal{S}_{
}(U)  $$ where $\widehat{\ZZ}^{S} = \prod_{\ell \notin S } \ZZ_{\ell} $ denotes integral adeles away from $ S $. Finally, let  $ \mathrm{Frob}_{\ell} $ denote $ \ch(\ell \ZZ_{\ell}^{\times}) $.

\begin{theoremx}[Theorem \ref{mainteoglobal}] \label{introB}  For any $\mathcal{O}$-\text{Mod} valued cohomological Mackey functor  $ M_{\tilde{G}} $ for  $  \tilde{G} $, 
any Mackey pushforward   $ \tilde{\iota}_{*} :   \mathcal{S} \to   M_{\tilde{G}} $ and any integer $ c $,   there exists a collection of classes $ y_{n} \in M_{\tilde{G}}(K[n])   $ indexed by $ n \in \mathcal{N} $ such that $ y_{1} = \tilde { \iota } _{U, K[1],*}(\phi) $ and $$ [\mathfrak{H}_{\ell,c}(\mathrm{Frob}_{\ell})]_{*}( y_{n}  )   = \mathrm{pr}_{K[n\ell],  K [n],*}(y_{n\ell})   $$
for all $ n , \ell \in \mathcal{N} $ such that $ \ell $ is a prime and $ \ell   \nmid  n  $.   
\end{theoremx}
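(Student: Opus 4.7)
The plan is to write each $y_n$ as a pushforward $\tilde{\iota}_{V_n, K[n], *}(\phi_n)$ for an appropriate test function $\phi_n \in \mathcal{S}(V_n)$, where $V_n = H \cap K[n]$, and to construct these $\phi_n$ inductively on the number of prime factors of $n$. The starting case is forced: $\phi_1 = \phi$. Since $\tilde{\iota}_*$ is a morphism of CoMack functors it commutes with pushforwards along subgroup inclusions, so $\mathrm{pr}_{K[n\ell], K[n],*}(y_{n\ell}) = \tilde{\iota}_{V_n, K[n], *}(\mathrm{pr}_{V_{n\ell}, V_n, *}\, \phi_{n\ell})$. The desired relation therefore reduces to a universal identity equating $\tilde{\iota}_*(\mathrm{pr}_*\, \phi_{n\ell})$ with $[\mathfrak{H}_{\ell,c}(\mathrm{Frob}_\ell)]_*(\tilde{\iota}_* \phi_n)$, to hold in every target CoMack functor $M_{\tilde{G}}$.

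The first step is to use the product structure of the levels $K[n]$ to decouple the problem prime by prime. Because $\mathfrak{H}_{\ell,c}(\mathrm{Frob}_\ell)$ is supported at $\ell$ and $\mathrm{pr}_{K[n\ell], K[n],*}$ modifies only the $\ell$-component of the level, I factor $\phi_n = \phi_n^{(\ell)} \otimes \phi_{n,\ell}$ and change only $\phi_{n,\ell}$ when passing from $n$ to $n\ell$. Moreover, since $K[n]_\ell$ depends only on whether $\ell \mid n$, the identity at $\ell$ is in fact independent of $n$: I must exhibit a single local function $\sigma_\ell \in \mathcal{S}_\ell(H_\ell \cap K[\ell]_\ell)$ whose pushforward to level $K[1]_\ell$ via $\tilde{\iota}_*$ realizes $[\mathfrak{H}_{\ell,c}(\mathrm{Frob}_\ell)]_*$ applied to $\tilde{\iota}_*$ of the local component of $\phi$, with the equality holding universally in any local CoMack functor for $\tilde{G}_\ell$.

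The main obstacle is this purely local identity, and it is exactly where the failure of multiplicity one becomes decisive. In analogous constructions for $\mathrm{GSp}_4$, as in \cite{LSZ}, the corresponding identity is established by a zeta integral argument that crucially uses the one-dimensionality of the space of $H_\ell$-invariant functionals on irreducible unramified representations of $\tilde{G}_\ell$. For $\tilde{\Gb} = \mathrm{GSp}_6 \times \GG_m$ and $\Hb = \GL_2 \times_{\GG_m} \GL_2 \times_{\GG_m} \GL_2$ this multiplicity one hypothesis fails, and the zeta integral route is unavailable. The companion article \cite{CZE} develops an alternative framework operating directly at the level of test functions and coset representatives in $H_\ell \backslash \tilde{G}_\ell / K[\ell]_\ell$, matched against the Satake expansion of $\mathfrak{H}_{\ell,c}$; the local identity required here is an instance of that general framework applied to our specific embedding and the spinor polynomial. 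Granting the local identity, the classes $y_n$ are assembled over $n \in \mathcal{N}$ by a routine inductive argument using the multiplicativity of the level structure and the mutual independence of the local constructions at distinct primes.
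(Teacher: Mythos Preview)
Your reduction to a purely local problem at each prime $\ell$ and the invocation of the machinery of \cite{CZE} are both correct, and this is exactly how the paper proceeds: the global statement follows from a local one via \cite[Theorem~3.4.2]{CZE}. However, you stop precisely where the real work begins. The sentence ``the local identity required here is an instance of that general framework applied to our specific embedding and the spinor polynomial'' misrepresents what \cite{CZE} actually provides. That framework does not produce the local identity; it reduces the universal statement you want (your $\sigma_\ell$ exists for \emph{every} target functor) to a concrete criterion that must then be verified by hand: for every $g \in \tilde{G}_\ell$, the convolution $\mathfrak{h}_{g,*}(\phi_\ell)$ of the $(H_\ell,g)$-restriction of $\mathfrak{H}_{\ell,c}(\mathrm{Frob}_\ell)$ with $\phi_\ell$ must lie in the image of the trace map $\mathrm{pr}_* : \mathcal{S}_\ell(V_{\ell,g}) \to \mathcal{S}_\ell(H_{\ell,g})$. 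This is Theorem~\ref{mainzeta}, and it is the entire substance of the paper.

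Verifying this trace condition is far from a formality. One must first determine a complete set of representatives for $H_\ell \backslash H_\ell \cdot \supp(\mathfrak{H}) / K_\ell$ --- there are $\ell+10$ of them, reflecting the non-sphericity of the pair $(\Hb,\Gb)$ --- then for each representative compute the twisted restriction explicitly as a sum of double coset characteristic functions on $H_\ell$, and finally compute every convolution with $\phi_\ell$ modulo $\ell-1$. The paper accomplishes this via a two-step descent through the intermediate group $\GL_2 \times_{\GG_m} \mathrm{GSp}_4$ (which \emph{does} form a spherical pair with $\mathrm{GSp}_6$), combined with a parahoric decomposition recipe from \cite[\S 5]{CZE}. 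All but one of the $\ell+10$ convolutions vanish modulo $\ell-1$; the surviving one is handled by a direct stabilizer computation. None of this is present in your proposal, so what you have written is essentially the one-line deduction of Theorem~\ref{mainteoglobal} from Theorem~\ref{mainzeta}, with Theorem~\ref{mainzeta} itself left unproved.
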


Here for a  locally constant compactly supported function $f : \tilde{G} \to \mathcal{O} $, $  [ f  ]_{*} $ denotes the covariant action of $ f $  and $ \pr_{*} $ denotes the trace 
map of the functor $ M_{\tilde{G}} $.   For sufficiently negative $ c $, the Hecke polynomial  $ \mathfrak{H}_{\ell, c}(X) $ has coefficients in $  \mathcal{H}_{\ZZ}  $. For such  $ c $, the condition on invertibility of primes outside $ S $  in  $ \mathcal{O} $   can be dropped.

In the intended application, the   functor $ \mathcal{S} $ over $ \QQ $ parametrizes weight-$ k $ Eisenstein classes in the first motivic cohomology of the modular curve. Its  composition with   the \'{e}tale regulator admits a $ \ZZ_{p}$-valued  version by   \cite{distpolylog},  which ensures integrality of    classes  in Galois cohomology corresponding to all choices of integral Schwartz functions.    
The set $ S $ corresponds to the set of ``bad primes" where the behaviour of   Eisenstein classes is pathological and the function $ f_{S} $ is therefore not perturbed for Euler system purposes. The  functor for $ \tilde{G} $ is the degree seven  absolute \'{e}tale   cohomology on which $ \ch(\ell \ZZ_{\ell}^{\times} ) $ acts covariantly  as  
 arithmetic Frobenius.   Moreover the 
 pushforward $ \tilde{\iota} _{*} $ is    obtained via  the Gysin  triangle in Ekedahl's ``derived" category of lisse \'{e}tale $ p $-adic sheaves along with certain branching laws of coefficient sheaves on the underlying Shimura varieties.   The abstract formalism of functors used above applies to this cohomology theory  by various   results established in  
 \cite[Appendix A]{GS}. 
 \begin{remark} The   bottom class $ y_{1} $ in our Euler system is meant to be a geometric incarnation of the Rankin-Selberg period integral of Pollack-Shah \cite{PS}\footnote{This integral is denoted by  $I(\phi, s)$ in \emph{loc.\ cit.}} and is  expected to  be related to certain special values of the degree  eight   spinor $L $-function via this period.  See also  
 \cite[\S 5]{gclj}.    
 \end{remark}

\subsection{Our  approach}  While Theorem \ref{introB} is the key relation required for an Euler system,  its proof relies on  a  far more fundamental and purely local relation  that lies   at the heart of our approach. In a nutshell, our approach posits that if the convolutions of all ‘twisted’ restrictions to \( H_{\ell} = \Hb ( \QQ_{\ell} ) \) of the Hecke-Frobenius polynomial with the unramified Schwartz function \( \phi_{\ell} = \ch \left ( \begin{smallmatrix} \ZZ_{\ell} \\ \ZZ_{\ell} \end{smallmatrix} \right ) \) fall in the image of certain trace maps, then Theorem \ref{introB} follows. This local relation is also exactly what is needed in \cite{EulerGSp6}, as it allows us to synthesize the results of \cite{AJ} with our own.

We state this relation precisely.    In analogy with the global situation, let $ \mathcal{S}_{\ell }$ denote the set of all $ \mathcal{O}$-valued locally constant compactly supported functions on  $  \mathrm{Mat}_{2 \times 1 }  ( \QQ_{\ell}  )   $. Again, this is a smooth $H_{\ell}$-representation which we view as a  CoMack functor for $ H_{\ell} $.  Denote $ \tilde{G}_{\ell} = \Gb(\QQ_{\ell}) $ and   $ \tilde{K}_{\ell} = \tilde{\Gb} (\ZZ_{\ell}) $. For a  compactly supported function $ \tilde{\mathfrak{H}} : \tilde{G}_{\ell}  \to \OO $ and $ g \in \tilde{G}_{\ell}  
$, 
the 
\emph{$ (H_{\ell},g) $-restriction   of    $ \tilde{\mathfrak{H}} $} is the function   $$ \mathfrak{h}_{g} : H_{\ell}   \to \mathcal{O}  \quad \quad  h  \mapsto \tilde{\mathfrak{H}}(hg) . $$   If $    \tilde{\mathfrak{H} }  $ is $ \tilde{K}_{\ell}$-biinvariant,  then $ \mathfrak{h}_{g} $ is left invariant under $ U_{\ell} = H_{\ell} \cap \tilde{K}_{\ell}  $ and right invariant under $ H_{\ell, g} = H_{\ell} \cap g \tilde{K}_{\ell} 
 g^{-1} $. It therefore induces   
an $ \mathcal{O} $-linear map $ \mathfrak{h}_{g,*}  :   \mathcal{S}_{\ell}  (U_{\ell})   \to  \mathcal{S}_{\ell  } 
 (H_{\ell , g }) $. 
 Let $ V_{\ell , g} $ denote the subgroup of all elements in $ H_{\ell, g} $ whose similitude lies in $ 1 + \ell \ZZ_{\ell} $. 

\begin{theoremx}[Theorem \ref{mainzeta}]   \label{introC}  Suppose in the notation above,   $ \tilde{\mathfrak{H}} = \mathfrak{H}_{\ell,c}(\mathrm{Frob}_{\ell}) $ where $ c $ is any integer.  Then $  \mathfrak{h}_{g,*}(\phi_{\ell})  $ 
lies in the image of the trace map $ \pr_{*} :   \mathcal{S}_{\ell}  (V_{\ell,g}) \to  \mathcal{S}_{\ell} (H_{\ell, g})  $ for every $ g \in \tilde{G}_{\ell} 
  $. 
\end{theoremx}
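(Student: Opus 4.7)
My plan is to first translate the trace-image condition into a concrete divisibility statement on values, and then verify it by exploiting the decomposition of $\mathfrak{H}_{\ell,c}(\mathrm{Frob}_\ell)$ along the similitude. Since $V_{\ell,g}$ is the kernel in $H_{\ell,g}$ of the mod-$\ell$ reduction of the similitude (= common determinant) character, the quotient $H_{\ell,g}/V_{\ell,g}$ embeds into $\mathbb{F}_\ell^\times$. A standard Mackey calculation then shows that an $H_{\ell,g}$-invariant compactly supported function on $\mathrm{Mat}_{2 \times 1}(\mathbb{Q}_\ell) \setminus \{0\}$ lies in the image of $\mathrm{pr}_*$ precisely when, for each $H_{\ell,g}$-orbit $O$, its value on $O$ is divisible in $\mathcal{O}$ by the index $[H_{\ell,g} : V_{\ell,g} \cdot \mathrm{Stab}_{H_{\ell,g}}(x)]$ for $x \in O$. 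Since this index divides $\ell - 1$, the theorem is an $(\ell - 1)$-integrality statement.

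Next, I would make the function $\mathfrak{h}_g$ concrete. Writing $\mathfrak{H}_{\ell,c}(X) = \sum_{j} T_j X^j$ in the spherical Hecke algebra of $G_\ell$ and noting that $\mathrm{Frob}_\ell = \mathrm{ch}(\ell \mathbb{Z}_\ell^\times)$ acts on the $\mathbb{G}_m$-factor of $\tilde{G}_\ell$, we obtain $\tilde{\mathfrak{H}}(x,\lambda) = T_{v_\ell(\lambda)}(x)$. Using the Cartan decomposition for $\tilde{G}_\ell$ under $\tilde{K}_\ell$, I would normalize $g = (g_0,\lambda_0)$ so that $H_{\ell,g}$ and $V_{\ell,g}$ become explicit. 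The restriction $\mathfrak{h}_g$ then splits into ``similitude strata'' indexed by $v_\ell(\det(h))$, and $\mathfrak{h}_{g,*}(\phi_\ell)$ decomposes into finitely many contributions indexed by the double cosets $U_\ell \backslash \mathrm{supp}(\mathfrak{h}_g) / H_{\ell,g}$.

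The central step is then verifying the divisibility orbit-by-orbit. Here I would use that the degree-$8$ spinor $L$-factor factorizes, in terms of the Satake parameters $(\alpha_0, \alpha_1, \alpha_2, \alpha_3)$ of an unramified $\pi_\ell$, as
\[
L(s, \pi_\ell, \mathrm{Spin})^{-1} = \prod_{(e_1,e_2,e_3) \in \{0,1\}^3} (1 - \alpha_0 \alpha_1^{e_1} \alpha_2^{e_2} \alpha_3^{e_3} \ell^{-s}),
\]
whose product structure over the three binary indices mirrors the fiber-product decomposition $\mathbf{H} = \mathrm{GL}_2 \times_{\mathbb{G}_m} \mathrm{GL}_2 \times_{\mathbb{G}_m} \mathrm{GL}_2$. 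After passing to an extension of the Hecke algebra in which the $\alpha_i$ are available as scalars, one can try to distribute $\mathfrak{H}_{\ell,c}$ into pieces each of which sees only one of the three $\mathrm{GL}_2$-factors of $\mathbf{H}$, and then check the required divisibility contribution-by-contribution using elementary intertwining calculations for $\mathrm{GL}_2$.

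The main obstacle is precisely the failure of the multiplicity one hypothesis for the pair $(\tilde{\mathbf{G}}, \mathbf{H})$, which rules out the zeta-integral strategy of \cite{LSZ}; each term in the decomposition of $\mathfrak{H}_{\ell,c}$ must be controlled individually rather than collectively through a single global identity. This is presumably exactly where the companion article \cite{CZE} intervenes: it is expected to provide a general criterion --- formulated directly in the language of Mackey functors and twisted restrictions used above --- that reduces the statement to a finite, purely combinatorial divisibility check on the double cosets supporting $\mathfrak{h}_g$, thereby bypassing multiplicity one altogether.
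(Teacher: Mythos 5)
Your reformulation of the trace-image condition is close in spirit but not correct in detail, and the detail matters here. A function $\chi \in \mathcal{S}_\ell(H_{\ell,g})$ lies in the image of $\mathrm{pr}_*$ precisely when, for each $H_{\ell,g}$-orbit $H_{\ell,g}\cdot x$ in its support, $\chi(x)$ is divisible by $[\mathrm{Stab}_{H_{\ell,g}}(x) : \mathrm{Stab}_{H_{\ell,g}}(x)\cap V_{\ell,g}]$; you have written the complementary index $[H_{\ell,g}:V_{\ell,g}\cdot\mathrm{Stab}(x)]$ instead, which is not the relevant quantity. More importantly, the slogan that ``the theorem is an $(\ell-1)$-integrality statement'' is false and conceals the actual difficulty: in the paper's computation, all but one of the $\ell+10$ twisted restrictions of $\mathfrak{H}$ do have convolutions $\equiv 0 \pmod{\ell-1}$ and hence trivially lie in the image, but $\mathfrak{h}_{\vartheta_3,*}(\phi_\ell)$ equals $-\ch\!\left(\begin{smallmatrix} \varpi^{-1}\Oscr_F^\times \\ \varpi^{-2}\Oscr_F^\times\end{smallmatrix}\right)$, which is certainly not a multiple of $\ell-1$. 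The paper handles this last term not by divisibility but by the sharp criterion of \cite[Theorem 3.5.3]{CZE}: one checks directly that the stabilizer in $H_{\vartheta_3}$ of every point of the support is already contained in $V_{\vartheta_3}$ (so the relevant index is $1$), which follows from the structure lemma showing $H_{\vartheta_3}$ is contained in a ``deep Iwahori'' subgroup. Your framework, as stated, would not discover this term at all, since you are looking only for $(\ell-1)$-divisibility.

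The second gap is structural. You propose distributing $\mathfrak{H}_{\ell,c}$ across the three $\mathrm{GL}_2$-factors of $\mathbf{H}$, mirroring the factorization of the spinor $L$-factor over $\{0,1\}^3$. There is no evidence this can be done integrally at the level of Hecke operators, and the paper does something quite different: it interposes the \emph{spherical} pair $\mathbf{H}'=\mathrm{GL}_2\times_{\mathbb{G}_m}\mathrm{GSp}_4 \hookrightarrow \mathrm{GSp}_6$ and computes the twisted restrictions in two stages, first $\mathfrak{H}\rightsquigarrow\{\mathfrak{h}_0,\mathfrak{h}_1,\mathfrak{h}_2\}$ indexed by Weissauer's Schr\"oder-type representatives $\tau_0,\tau_1,\tau_2$ for $H'_\ell\backslash G_\ell/K_\ell$, and then each $\mathfrak{h}_i\rightsquigarrow$ its $H_\ell$-restrictions. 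This two-step device is explicitly flagged as essential (see Remark \ref{formidableremark}): a one-step computation would require analyzing far more Schubert cells and is not feasible by hand. Without this intermediate group your plan has no mechanism for organizing the $U_\ell$-orbit analysis. Finally, the heavy lifting in the actual proof is the explicit decomposition of parahoric double cosets via Lansky-type Schubert cells (\S\ref{U'orbitssec}--\S\ref{Uorbitssec}) together with the orbit-by-orbit convolution computations (\S\ref{convolutionsection}); your proposal gestures at ``elementary intertwining calculations for $\mathrm{GL}_2$'' but does not supply a replacement for any of this.
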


Results analogous  to Theorem \ref{introC} were obtained in \cite{CZE}, which   strengthen the norm relations of \cite{GS} and \cite{LSZ} to their ideal (motivic)  versions.     The machinery of \cite{CZE}  takes Theorem \ref{introC} as input and gives Theorem \ref{introB} as output,  and can also easily incorporate vertical norm compatibility once a local result has been established, say,   in the style of \cite{loe}.   
Our approach 
has also been  successfully  applied  in   forthcoming works   to obtain new  Euler systems for certain exterior square motives  in the cohomology of $ \mathrm{GU}_{2,2}$ Shimura varieties  \cite{EulerGU22} and for certain rank seven motives of type $G_{2}$ \cite{EulerG2}.   All these results taken together point towards an intrinsic ``trace-imbuing" property of Hecke polynomials attached to Langlands $L  $-factors that seems to be  preserved under   twisted restrictions on suitable reductive  
 subgroups.  We 
hope to explain this property   more conceptually at a future  point.

\subsection{Outline} We prove Theorem \ref{introC} by explicitly computing the convolutions of twisted  restrictions of $\tilde{\mathfrak{H}}  = \mathfrak{H}_{\ell,c}(\mathrm{Frob}_{\ell}) $ with $ \phi_{\ell}     $. As this is rather involved,  we have divided the article into two parts, the first containing mainly statements and the second  their proofs.  Below we provide an  outline  of  the key steps.   

Note first of all that if $ \mathfrak{h}_{g,*}(\phi_{\ell} ) $ lies in the image of the trace map, so does $ \mathfrak{h}_{\eta g \gamma,*}(\phi_{\ell}  ) $ for any $ \eta \in H_{\ell}  $ and $ \gamma \in \tilde{K}_{\ell}  $. Thus it suffices to compute $ \mathfrak{h}_{g,*}(\phi_{\ell} ) $ for $ g $ running over  a choice of representatives for 
 $ H_{\ell} \backslash H_{\ell}  \cdot \supp(\mathfrak{\tilde{H}}  ) / \tilde{K}_{\ell}   $.   
Since multiplies of $ \ell - 1 $ obviously lie in the images   of trace maps that concern us,   it also suffices to compute these functions modulo $ \ell - 1 $. This allows us to completely bypass the computation of $ \mathfrak{H}_{\ell,c}(X) $ by a property of Kazhdan-Lusztig polynomials. It is also straightforward to restrict attention to  $ \mathfrak{H} : = \mathfrak{H}_{\ell,c}(1) \pmod{\ell - 1} $ by first restricting $ \tilde{\mathfrak{H}}  $ to $ G_{\ell} $. The problem is then reduced to computing $ U_{\ell} $-orbits on certain double coset spaces  $ K_{\ell} g K_{\ell}/ K_{\ell} $  where $K_{\ell} = \Gb(\ZZ_{\ell} ) $ and   $ \ch(K_{\ell} g K_{\ell})  $ is a Hecke operator  in  $ \mathfrak{H} $.    
The key technique that allows us to compute  these orbits  is a recipe of decomposing parahoric double cosets  proved in \cite[\S 5]{CZE}. It is originally due to Lansky  \cite{Lansky} in the setting of  Chevalley groups.

However even with the full force of this recipe, directly computing  the $ U_{\ell}  $-orbits on  all the relevant double coset spaces  is a  rather  formidable  task, particularly because the  pair $ (\Hb , \Gb) $ is not spherical. See also Remark \ref{formidableremark}. 
What makes  this computation much more tractable is  the introduction of an intermediate group that allows us to  compute the twisted  restrictions in two steps. In the first step, we compute the restrictions   of $ \mathfrak{H} $ with respect to the group $ H' _{\ell }   = \Hb'(\QQ_{\ell}) $ where $ \Hb' = \GL_{2} \times _{\GG_{m}} \mathrm{GSp}_{4} . $   
The pair $ (\Hb', \Gb) $ is spherical, and a relatively straightforward 
computation shows that there are three $H'_{\ell}  $-restrictions corresponding to the representative elements 
$$  
\tau_{0} =  \left(\begin{smallmatrix}  
1 & & & &    \\ 
& 1 & &   \\
& &  1  & & \\
& & & 1 \\
& & & & 1 \\ 
& &  & & & 1 
\end{smallmatrix} 
\right ), 
\quad\quad
\tau_{1} = \left(\begin{smallmatrix}  
\ell & & & &  1 \\ 
&\ell  & & 1 \\
& & \ell & & \\
& & & 1 \\
& & & & 1 \\ 
& &  & & & 1 
\end{smallmatrix} 
\right ), 
\quad \quad 
\tau_{2} = \left(\begin{smallmatrix}
\ell \, \,  & & & & \ell^{-1}   \\   
&   \ell \,  \, & & \ell^{-1}\\[0.1em] 
& &  1 \, \,  &  \\   
& & & \ell^{-1} \\
& & &  & \ell^{-1}      \\ 
& & & & & 1  
\end{smallmatrix}\right) $$
in $ G_{\ell} $. This is expected since a general ``Schr\"{o}der type" decomposition holds for the quotient    $  H'_{\ell} \backslash  G_{\ell} / K_{\ell} $ by a result of  Weissauer \cite[\S 12]{Endoscopy}. We  denote the $ (H'_{\ell}, \tau_{i}) $-restrictions of $ \mathfrak{H} $ by $ \mathfrak{h}_{i} $. This step is recorded in \S \ref{firstrestrictions} and justifications  are   provided in \S \ref{U'orbitssec}.  

The second step is to compute the $ H_{\ell} $-restrictions of $\mathfrak{h}_{i} $ for $ i = 0,1,2 $.  This essentially turns out to be a study of $ \GL_{2} \times_{\GG_{m}}  \GL_{2} $-orbits on $ \mathrm{GSp}_{4} $-double cosets. Since $ (\GL_{2} \times _{\GG_{m}} , \GL_{2} , \mathrm{GSp}_{4} )$ is also a spherical pair, this is again  straightforward for $ i = 0 $ and even for $ i =1 $ as the projection of $ H'_{\ell} \cap \tau_{1} K_{\ell} \tau_{1}^{-1} $ to  the   $ \mathrm{GSp}_{4}(\QQ_{\ell}) $-component  turns out to be a non-special maximal compact open subgroup of  $ \mathrm{GSp}_{4}(\QQ_{\ell}) $. The more challenging case of $ i = 2 $ is handled by  comparing the  double cosets with a subgroup of $ \mathrm{GSp}_{4}(\QQ_{\ell}) $ deeper than the Iwahori  subgroup that sits in the projection of the twisted intersection.  For $ \mathfrak{h}_{0} $ (resp., $ \mathfrak{h}_{1}$), there turn out to be three (resp., four) restrictions  indexed again  by certain  ``Schr\"oder type" representatives. 
For $ \mathfrak{h}_{2} $ however, there turn out to be  $ \ell + 3 $ restrictions. 
We use the symbols $ \varrho $, $ \varsigma $, $\vartheta $ for the set of distinct representatives of $H_{\ell} \backslash H_{\ell} \cdot \supp(\mathfrak{H})  /K_{\ell}  $ which correspond to the $H_{\ell}  $-restrictions of $ \mathfrak{h}_{0}, \mathfrak{h}_{1}, \mathfrak{h}_{2} $ respectively. The diagram below organizes these restrictions in a  tree.  

\begin{center}  
\begin{tikzcd}
                           &                                                  &                              & \mathfrak{H} \arrow[dd] \arrow[rrrd] \arrow[lld]             &                              &                              &                                                                          &                              &                                      \\
                           & \mathfrak{h}_{0} \arrow[rd] \arrow[d] \arrow[ld] &                              &                                                              &                              &                              & \mathfrak{h}_{2} \arrow[lld] \arrow[ld] \arrow[d] \arrow[rd] \arrow[rrd] &                              &                                      \\
\mathfrak{h}_{\varrho_{0}} & \mathfrak{h}_{\varrho_{1}}                       & \mathfrak{h}_{\varrho_{2}}   & \mathfrak{h}_{1} \arrow[lld] \arrow[ld] \arrow[d] \arrow[rd] & \mathfrak{h}_{\vartheta_{0}} & \mathfrak{h}_{\vartheta_{1}} & \mathfrak{h}_{\vartheta_{2}}                                             & \mathfrak{h}_{\vartheta_{3}} & \mathfrak{h}_{\tilde{\vartheta}_{k}} \\
                           & \mathfrak{h}_{\varsigma_{0}}                     & \mathfrak{h}_{\varsigma_{1}} & \mathfrak{h}_{\varsigma_{2}}                                 & \mathfrak{h}_{\varsigma_{3}} &                              &                                                                          &                              &                                     
\end{tikzcd}
\end{center} 
Here the branch indexed by $ \tilde{\vartheta}_{k} $ actually designates  $ \ell - 1 $ branches, one for each value of $ k \in \left \{0,1, 2,3,\ldots, \ell-2\right \} $. Thus $ H_{\ell} \backslash  H_{\ell} \cdot \supp(\mathfrak{H}) / K_{\ell} $ consists of  $ 3 + 4 + (4 + \ell -1 ) =  \ell + 10  $ elements.  The corresponding  $ \ell + 10 $ restrictions are recorded  in  \S \ref{secondrestrictions} and proofs of various claims are provided  in \S\ref{Uorbitssec}.   Once these restrictions are obtained, the final step is to compute their covariant  convolution   with  $ \phi_{\ell} $. We show in \S \ref{convolutionsection} that all resulting convolutions vanish modulo $ \ell  - 1 $ except for $ \mathfrak{h}_{\vartheta_{3},*}(\phi_{\ell}) $. 
A necessary and sufficient criteria established in \cite[\S 3.5]{CZE} allows us to easily determine that $ \mathfrak{h}_{\vartheta_{3},*}(\phi_{\ell}) $ lies  in the image of the appropriate trace map and thus deduce the truth of Theorem \ref{introC}. 

\begin{remark}   For comparison, the $ \mathrm{GSp}_{4} $ setting studied in \cite[\S 9]{CZE} involved  only $ 2 $ restrictions, which explains why the test vector of \cite[Corollary 3.10.5]{LSZ} only required two terms to produce the  $ L$-factor.  
\end{remark} 

\begin{remark}  The mysterious vanishing of all but one of the convolutions modulo $ \ell - 1 $ and 
 the simplicity of $ \mathfrak{h}_{\vartheta_{3},*}(\phi_{\ell}) $  strongly suggest that a more conceptual proof of our result is possible. 
\end{remark}

\subsection{Acknowledgements}   I would like to express my gratitude to Antonio Cauchi and Joaquín Rodrigues  Jacinto, whose work on Beilinson conjectures and vertical norm relations in the $\mathrm{GSp}_{6}$ setting served as the inspiration for this article.  I am especially indebted to Antonio Cauchi for his  careful  explanation of the unfeasibility of a related construction    and for his unwavering support 
throughout the course of this project. In addition, I  thank  Aaron Pollack, Andrew Graham,  Christophe Cornut, Barry Mazur, Daniel Disegni,  David Loeffler and  Wei Zhang  for   several    valuable conversations in relation to the broader  aspects of this work. I am also grateful to  Francesc Castella,  Naomi Sweeting and  Raúl Alonso Rodríguez for some  useful  comments and  suggestions.   At various stages, the software  MATLAB\textsuperscript{®} was used for performing  and organizing  symbolic matrix manipulations, which proved 
 invaluable in composing many of the  proofs.

\part{Statements of results}
\section{General notation}    \label{generalnot}     The notations introduced here are used throughout this article except  for  \S \ref{HNRglobal}.  For aesthetic reasons, we work  with an arbitrary local field of characteristic zero, though we only need the results over $ \QQ_{\ell} $.  

Let  $F $ denote  a local field of characteristic zero, $ \Oscr_{F} $ its ring of integers, $ \varpi $ a uniformizer, $ \kay = \Oscr_{F} / \varpi \Oscr_{F} $ its residue field and $ q = | \kay | $.   For $ a \geq 0 $ an integer, we let $ [\kay_{a}] \subset \Oscr_{F} $ denote a fixed set of representatives for $ \kay_{a} = \Oscr_{F} / \varpi^{a} \Oscr_{F} $ and we omit the subscript $ a $ when $ a = 1 $. We let $ 0,1,-1 \in [\kay] $ denote the elements that represent $ 0,1,-1 \in \kay $  respectively.   For $ n $ an integer,  let $ 1_{n} $ denote the $ n \times n $ identity matrix and $ J_{2n} = \left ( \begin{smallmatrix} &  1_{n} \\ - 1_{n} \end{smallmatrix}  \right )  $ denote the standard $ 2n \times 2n $ 
symplectic matrix. We define $ \mathrm{GSp}_{2n} $ to be the group scheme over $ \ZZ $  whose $R$-points for a ring $ R $  are given by $$ \mathrm{GSp}_{2n}(R) =  \left \{ (g ,c )\in \GL_{2n}(R) \times R^{\times}  \, | \, g^{t} J_{2n}  g = c   J_{2n}  \right \} . $$
Note that $ \mathrm{GSp}_{2} $ is the general linear group $ \GL_{2} $. We let $ \mathrm{sim} :  \mathrm{GSp}_{2n} \to \GG_{m} $, $ (g,c) \mapsto c $  denote the similitude map and refer to an element $ (g, c) \in \mathrm{GSp}_{2n}(R) $ simply by $ g $.  The following group schemes will be used throughout:  

\begin{multicols}{2} 
\begin{itemize} 
\item $ \Hb   = \GL_{2} \times_{\GG_{m} }  \GL_{2}  \times_{\GG_{m}}  \GL_{2}  $,
\item $ \Hb_{1} = \GL_{2} $,  
\item $ \Hb_{2} = \GL_{2} \times_{\GG_{m}} \GL_{2} $,
\item $ \Hb'  = \GL_{2} \times_{\GG_{m}} \mathrm{GSp}_{4} $,
\item $ \Hb'_{2} =  \mathrm{GSp}_{4} $,
\item $ \mathbf{G} \, \,  =  \mathrm{GSp}_{6} $
\end{itemize} 
\end{multicols} 
\noindent  where all the products are  fibered over similitude maps. We define $ H $, $H_{1} $, $ H_{2} $,  $ H ' $,  $ H_{2}' $, $ G $  to be respectively the group of $ F $-points of the algebraic groups above and $ U $, $U_{1} $, $U_{2} $,  $ U' $,  $ U_{2} '  $, $ K $ to be the group of $ \Oscr_{F} $-points. 
We define projections   
\begin{alignat*}{9}   
\pr_{1}:\Hb 
& \longrightarrow \Hb_{1}  & \quad \quad \quad
&& \pr_{2} :  \Hb 
& \longrightarrow  \Hb_{2} & \quad\quad\quad   
&& \pr_{1}': \Hb  '  &\longrightarrow  \Hb_{1}    &\quad\quad\quad   
&& \pr_{2}': \Hb' & \longrightarrow \Hb'_{2} 
\\ 
(h_{1},h_{2},h_{3}) 
& \longmapsto h_{1} &\quad \quad  \quad    
&& (h_{1}, h_{2}, h_{3} ) & \longmapsto (h_{2},h_{3}) &\quad\quad\quad 
&& (h_{1}, h_{2} )  
& \longmapsto h_{1} & 
&&  (h_{1}, h_{2}) & \longmapsto  h_{2} 
\end{alignat*}

and  embeddings 
\begin{alignat*}{9}  \jmath _{2} :  \Hb_{2}  &   \longrightarrow    \Hb'_{2}  &  \quad \quad   &&    \jmath :  \Hb   &   \longrightarrow    \Hb'    \quad \quad    &       \iota' :  \Hb'   &   \longrightarrow    
\Gb\\
\left ( \left ( 
\begin{smallmatrix} a & b \\ c & d \end{smallmatrix} \right ) , 
\left ( 
\begin{smallmatrix} a' & b' \\ c' & d' \end{smallmatrix} \right    )   \right )  &  \longmapsto  \left (   \begin{smallmatrix} a & & b \\  & a ' & & b' \\ 
c &  & d \\ &  c' & & d '  \end{smallmatrix} \right )   &  \quad \quad  &&    (h_{1}, h_{2}, h_{3})  & \longmapsto (h_{1},  \jmath_{2}(h_{2}, h_{3})      )   & \quad \quad  \left ( \begin{psmallmatrix} a & b \\ c & d \end{psmallmatrix} ,  \begin{psmallmatrix} A &  B \\ C & D  \end{psmallmatrix}   \right )  & \longmapsto  \begin{psmallmatrix} a &  &  b  \\  & A & &  B    \\ c  & & d \\  & C & & D \end{psmallmatrix}  & 
\end{alignat*}
via which we consider $ U_{2}$, $ H_{2} $, $ U $, $ H  $, $U'$, $H' $ to be subgroups of $ U_{2} '$, $ H_{2}' $, $ U' $, $ H' $, $ K $, $ G $   respectively. We let $$ \iota : \Hb \to \Gb $$ 
denote the composition $ \iota ' 
\circ  \jmath $ via which we view $ U $, $ H $ as subgroups of $  K $, $  G $ respectively. If $ R $ is a commutative ring with identity  and $ L_{1}, L_{2} $ are compact open subgroups of $ G $, we write $ \mathcal{C}_{R}(L_{1} \backslash G / L_{2}) $ for the set of $ R $-valued    compactly  supported functions $ f : G \to R $ that are left $ L_{1} $-invariant and right $L_{2} $-invariant.   Similar notations will be used for functions on $ H $ and $ H ' $.    
\begin{definition} Given a function $ \mathfrak{F} : G \to R $ and an element $ g \in G $, we define the \emph{$ (H', g)$-restriction} of $ \mathfrak{F} $ to be the function $ \mathfrak{f}_{g} : H'  \to  R $ given by $ \mathfrak{f}_{g}(h) = \mathfrak{F}(hg) $ for all  $ h \in H ' $. We similarly define $ (H,g) $-restriction  of $ \mathfrak{F} $ and $ (H,\eta) $-restrictions  of functions on $ H'  $   and   $ \eta \in H' $.  
\end{definition}  
It is easy to see that if $ \mathfrak{F} \in \mathcal{C}_{R}(K\backslash G /K) $, then $ \mathfrak{f}_{g}  \in \mathcal{C}_{R}(U' \backslash  H' / H_{g}' ) $ where  $ H'_{g} = H '\cap g K g^{-1 }$. If $ 
 \eta \in H' $, then the $ (H,\eta)$-restriction of $ \mathfrak{f}_{g}$  coincides with  the  $ (H, \eta g) $-restriction of $ \mathfrak{F} $ and  lies in  $ \mathcal{C}_{R}(U\backslash G / H_{\eta g}) $ where $ H_{\eta g } =   H \cap \eta H_{g} ' \eta^{-1}  =  H \cap  \eta g K g^{-1} \eta^{-1}   $.

\section{Spinor  Hecke polynomial}    

\subsection{Root datum of $\Gb$} Let  $  \mathbf{A} =  \GG_{m}^{4} $  and  $ \mathrm{dis} :   \mathbf{A} \to \mathbf{G} $ to be the embedding given by $$   (u_{0}, u_{1},u_{2},  u_{3} ) \mapsto  \mathrm{diag}( u_{1} , u_{2} , u_{3}  ,  u_{0} u_{1} ^{-1}, u_{0}  u _{2}  ^{-1} ,  u_{0} u _{3} ^{-1}  ) .  $$
Then $ \mathrm{dis} $
identifies $ \mathbf{A} $ with a maximal (split) torus in $ \mathbf{G} $. We let $ A $, $ A^{\circ} = A \cap K  $ denote respectively  the group of $ F $, $ \Oscr_{F} $-points of $ \mathbf{A}  $.   Let $ e_{i} : \mathbf{A} \to \GG_{m} $ be the projection onto the $ i $-th component, $ f_{i} : \GG_{m} \to \mathbf{A} $ be the cocharacter inserting $ u $ into the $ i $-th component with $ 1 $ in the remaining components. We will let $$ \Lambda =  \ZZ f_{0} \oplus \cdots \oplus \ZZ f_{3}  $$ 
denote the cocharacter lattice.  An   element $  a_{0} f_{0}  + \ldots + a_{3} f_{3}  \in \Lambda  $ will  also  be  denoted by $ (a_{0}, \ldots, a_{3} ) $. The set $ \Phi  \subset X^{*}(\mathbf{A} ) $ of roots of $ \mathbf{G} $ are
\begin{itemize}  \setlength\itemsep{0.3em}      
\item   $  \pm ( e_{i} - e_{j}  ) $ for $ 1 \leq i < j  \leq  3 $, 
\item  $  \pm (   e_{i} + e_{j}  - e_{0} )  $ for $ 1 \leq i  < j \leq 3   $
\item    $   \pm (  2 e_{i} - e_{0}  ) $ for $ i = 1 , 2 , 3 $
\end{itemize}
which makes an irreducible root system of type $ C_{3} $. We choose  $$  \alpha_{1} = e_{1} - e_{2}, \quad \quad  \alpha_{2} = e_{2} - e_{3}, \quad \quad  \alpha_{3} = 2 e_{3} - e_{0}  $$ as our simple roots and let  $ \Delta = \left \{ \alpha_{1} , \alpha_{2} ,  \alpha_{3}  \right \}  $. This determines a subset $   \Phi ^ { + }  \subset \Phi $ of positive roots. The  
 resulting  half sum of positive roots is 
 \begin{equation}  \label{halfsum} \delta =   -3 e_{0} + 3e_{1} + 2 e_{2} +   e_{3}  \in X^{*}( \mathbf{A}) 
 \end{equation}  
and  the  highest root is $ \alpha_{0}  =  2 e_{1} - e_{0} $.  
The simple  coroots corresponding to $ \alpha_{i} $ for $ i = 0, 1,  2, 3 $ are   $$ \alpha_{0}  ^ { \vee } =  f_{1}, \quad\quad 
\alpha_{1}^{\vee} = f_{1} -  f_{2},  \quad\quad     \alpha_{2}^{\vee} = f_{2}  - f_{3},  \quad  \quad   \alpha_{3}^{\vee} = f_{3}  $$  and their $ \ZZ $ span in $ \Lambda $ is denoted by $ Q ^{\vee} $. The set $ \Delta $ determines a dominance order on $ \Lambda $. Explicitly, an  element    $  \lambda = (a_{0} , \ldots, a_{3} )  \in \Lambda $ is  dominant iff 
$$ a_{1} \geq a_{2} \geq a_{3}  \, \text{ and }  \,   2 a_{3} - a_{0} \geq 0 . $$ It is anti-dominant if all these inequalities hold in reverse. We denote the set  of dominant cocharacters by $ \Lambda^{+ } $.  
Let $ W $ denote the Weyl group of $ (\Gb, \mathbf{A}) $ and $ s_{i} $ be the reflection associated with  $ \alpha_{i} $, $ i = 0, \ldots,  3  $.  The action of $  s_{i} $ on $ \Lambda $ is given  as  follows:  
\begin{itemize}[after = \vspace{\smallskipamount+1.5pt}]  
\setlength\itemsep{0.3em}   
\item   $   s_{i} $ acts by switching $ f_{i} \leftrightarrow f_{i+1} $ for $ i = 1,2   $,
\item $  s_{3}   $ acts by sending $ f_{0} \mapsto f_{0} + f_{3} $, $ f_{3} \mapsto   - f_{3}   $, 
\item $ s_{0} = s_{1} s_{2} s_{3} s_{2} s_{1} $ acts by sending $ f_{0} \mapsto f_{0} +                f_{1} $, $ f_{1}  \mapsto  -  f_{1}  $.  
\end{itemize}
We have $ W = \langle s_{1}, s_{2}, s_{3}  \rangle  \simeq  (\ZZ /2 \ZZ )^{3} \rtimes S_{3}  $ where $ S_{3} $ denotes the group of permutations of three elements that acts on $ ( \ZZ / 2 \ZZ ) ^{3} $ in the obvious manner.    
\subsection{Iwahori Weyl group}   

Let $ I $ denote  the  Iwahori subgroup  of $ G $  corresponding to (the alcove determined by) the simple affine roots $ \Delta_{\mathrm{aff}}  =  \left \{     \alpha_{1}, \alpha_{2}, - \alpha_{0} + 1   \right \}$.    Explicitly, $ I $ is the compact open subgroup of $ K $ whose reduction modulo $ \varpi $ is the Borel subgroup of $ \mathbf{G}(\kay) $ determined by $ \Delta $.   Let $ W_{\mathrm{aff}} $ and $ W_{I} $ denote respectively the  affine Weyl and  Iwahori Weyl groups of the pair $ (\mathbf{G}, \mathbf{A}) $.   We  view   $ W_{\mathrm{aff}} $ as a subgroup of the group of affine transformations of $ \Lambda \otimes \RR $. Given $ \lambda \in \Lambda $, we let $ t(\lambda) $ denote translation by $ \lambda $ map on $ \Lambda \otimes \RR $  and  write    $  \varpi^{\lambda} $ for the element  $ \lambda(\varpi) \in A $.   Let    $v : A / A ^{\circ}  \to \Lambda $ be the inverse of the isomorphism  $ \Lambda \to A / A ^{\circ} $ given by  $ \lambda  \mapsto \varpi^{-\lambda} A^{\circ} $. Then 
\begin{itemize} [before = \vspace{\smallskipamount}, after =  \vspace{\smallskipamount}]    \setlength\itemsep{0.1em}  

\item $ W_{\mathrm{aff}} = t(Q^{\vee} ) \rtimes W $
\item $ W_{I} =   N_{G}(A) / A^{\circ} =  A/A^{\circ} \rtimes W  \stackrel{v}{\simeq}     \Lambda \rtimes W $,
\end{itemize} 
where $ N_{G}(A)$ denotes the normalizer  of $ A $ in $ G $. The set  $S_{\mathrm{aff} } = \left \{ s_{1} , s_{2} ,s_{3} ,  t ( \alpha _ {  0 }  ^ { \vee }    )    s_{0}  \right \} $  is a generating  set  for $ W_{\mathrm{aff} } $ and  the pair $ (W_{\mathrm{aff}} , S_{\mathrm{aff}}) $ forms   a Coxeter system of type $ \tilde{C}_{3}   $.  Identifying $ W_{I} $ with $ \Lambda \rtimes W $ as above,  we can consider  $ W_{\mathrm{aff}}   $    a   subgroup of  $  W_{I } $ via  $ W_{\mathrm{aff}} =  t(Q^{\vee})   \rtimes W  \hookrightarrow t(\Lambda) \rtimes W  $.     The quotient   $$\Omega : = W_{I} /  W_{\mathrm{aff}} $$ is  then  an infinite cyclic group and  we  have a canonical isomorphism $ W_{I} \cong  W_{\mathrm{aff}}  \rtimes \Omega $.   
We  let   $$ \ell :  W_{I}  \to \ZZ $$  denote the induced length function with respect $ S_{\mathrm{aff}} $. Given $ \lambda \in  \Lambda $, the minimal length of elements in $ t  ( \lambda ) W $ is  achieved by a unique element. This length is given by \begin{equation}   \label{lmin}       \ell_{\mathrm{min}} ( t(\lambda)   )    :    =      \sum_{  \alpha \in \Phi_{\lambda}}  | \langle \lambda , \alpha \rangle |    +   \sum _ { \alpha  \in  
\Phi^{\lambda }  }  (  \langle \lambda , \alpha \rangle   -  1   )   
\end{equation}    where $  \Phi_ { \lambda}    = \left \{  \alpha \in  \Phi   ^    {    + } \, | \,   \langle  \lambda ,  \alpha   \rangle  \leq 0  \right \} $ and  $ \Phi ^    {\lambda }    =  \left \{   \alpha  \in  \Phi ^ { + }  \,, |  \langle  \lambda ,  \alpha  \rangle   > 0  \right \}   $.     When $ \lambda $ is dominant, this is also the minimal length of elements in $ W t(\lambda) W $.  Consider the following elements in $ N_{G} (A) $:   

$$   w_{1} : =  \scalebox{1.1}{$      \left ( \begin{smallmatrix}
 0&  1&  &&  & \\[0.1em] 
 1&  0&  &  &  & \\ 
 &  &  1&  &  & \\
&  &  & 0 &1  & \\[0.1em] 
 &  &  & 1 & 0& \\ 
 &  &  &  &  & 1
\end{smallmatrix} \right )$}, \quad     
w_{2}   :    =  \left( \scalebox{1.1}{$ \begin{smallmatrix}
 1&  &  &&  & \\ 
 &   0 & 1 &  &  & \\[0.1em] 
 &  1 &  0 &  &  & \\ 
&  &  & 1  &   & \\ 
 &  &  &   & 0&1  \\[0.1em]  
 &  &  &  &  1  &  0 
\end{smallmatrix}$}\right) , \quad      w_{3}  : =   \scalebox{1.1}{$ \left( \begin{smallmatrix}
 1&  &  &  &  & \\ 
 &   1  &  &  &  & \\ 
 &    &   0  &  &  &1\\ 
&  &  &   -1  &  & \\ 
 &  &  &   &  -1 &  \\ 
 &  &  1  &  &  &  0 
\end{smallmatrix} \right ) $} $$ 

$$  w_  {  0   }   : =  \scalebox{1.1}{$\left (  \begin{smallmatrix}
 0&  &  &  \scalebox{1}{$\sfrac{1}{\varpi}$} &  & \\ 
 &  \,  1&  &  \\ 
 &  &  \,  1& \\ 
 \varpi&  &  & 0 \\ 
 &  &  &  &   -1  \\ 
 &  &  &  &  &  -1
\end{smallmatrix}  \right )$},  \quad     \rho   =   \scalebox{0.95} {$      \left ( \begin{smallmatrix}
 &  &  & &  &  1\\ 
 &  & &  &  1& \\ 
 &   &   & 1 &  &\\ 
&     &  \varpi  & &   & \\ 
 &  \varpi&  &   & &  \\ 
     \varpi &  & &   &    &  
\end{smallmatrix} \right ) $}   .  $$ 
The   classes of $ w_{0} , w_{1} ,w_{2}, w_{3} $ in $ W_{I} $   represent $ t(\alpha_{0} ^{\vee} ) s_{0},   s_{1} , s_{2}, s_{3} $ respectively and the  reflection $ s_{0} $   is  represented  by  $ w_{\alpha_{0}} :  =  \varpi^{f_{1}}  w_{0}     =      w_{1} w_{2} w_{3} w_{2} w_{1}  $. The class of  $ \rho $ represents $ \omega : = t(-f_{0} ) s_{3} s_{2} s_{3} s_{1} s_{2} s_{3}  $ which is a generator of $ \Omega $ and the conjugation by $ \omega $ acts by switching $ s_{0} \leftrightarrow  s_{3} $, $ s_{1}  \leftrightarrow s_{2} $. That is,   it induces    an  automorphism of the extended   Coxeter-Dynkin  diagram  
 $$    \scalebox{0.9}{ \dynkin[extended,Coxeter,
edge length=1cm,
labels={0,1,2,3}]
C{3}}$$
where the labels below the vertices correspond to $ w_{i } $. Note also that $ \rho^{2} = \varpi^{(2,1,1,1)}  \in A  $ is central.   We will   henceforth    use the   letters    $ w_{i} $, $ \rho $ to denote both the matrices and the their classes in $ W_{I} $ if no confusion can arise. When referring to action of simple reflections in $ W $ on $ \Lambda $ however, we will  stick to the letters $ s_{i} $.     
\subsection{The  Hecke polynomial} Let  $ \ZZ [ \Lambda ] $ denote the group algebra of $ \Lambda $.  For $ \lambda \in \Lambda $, we let $ e ^ { \lambda } \in \ZZ [ \Lambda ] $ denote\footnote{this is done to distinguish the addition in $ \Lambda $ from addition in the group algebra}  the element corresponding to $\lambda $ 
and $ e ^ { W \lambda } \in \ZZ [ \Lambda ] $ denote the the (formal)  sum    of elements in the orbit $ W \lambda $.
 We will denote $   y_{i} : =  e^{f_{i} }   \in     \ZZ [ \Lambda ]$ for $ i = 0 , \ldots 3 $, so that $$ \ZZ [ \Lambda ] = \ZZ [ y_{0} ^{ \pm } , \ldots, y_{3} ^ { \pm } ] . $$
Let $ \mathcal{R}  =  \mathcal{R}_{q} 
 $ denote the ring $ \ZZ [ q ^ { \pm \frac{1}{2} } ]   $. The dual group of $ \mathbf{G} $ has an $ 8 $-dimensional representation called the  \emph{spin} representation.  Its    highest (co)weight is $ f_{0} + f_{1} + f_{2} + f_{3} $ which is minuscule. Thus its (co)weights are $ \frac{1}{2} ( 2 f_{0} + f_{1} + f_{2} + f_{3} )   +  \frac{1}{2} ( \pm f_{1}  \pm  f_{2}  \pm  f_{3} )   $ and its characteristic (Satake) polynomial is
\begin{align*}  
\mathfrak{S} _{ \mathrm{spin} } ( X )  =   &  ( 1-  y_{0} X ) ( 1- y_{0} y_{1} X) ( 1-  y_{0} y_{2} X ) ( 1 - y_{0} y_{3} X) \\[1pt]
& 
( 1-  y_{0} y_{1} y_{2} X ) ( 1-  y_{0} y_{1} y_{3} X )( 1- y_{0} y_{2}  y_{3} X ) ( 1 - y_{0} y_{1} y_{2} y_{3} X ) \in  \ZZ [ \Lambda ] ^ { W } (X)    .  
\end{align*}
\noindent Let $ \mathcal{H}_{\mathcal{R}}(K \backslash G / K ) $ denote the spherical Hecke algebra with coefficients in $ \mathcal{R} $ that is defined with respect to a measure on $ G $ giving  $ K $ measure one.   Let $$  \mathscr{S} :  \mathcal{H}_{\mathcal{R}}(K \backslash G / K ) \to \mathcal{R} [  \Lambda ]  ^ {  W } $$ denote the Satake  isomorphism. If $ P = P(X) \in \mathcal{H}_{\mathcal{R}}(K\backslash G / K )[X] $ is a polynomial, then $ \mathscr{S}(P) $ means the polynomial in $ \mathcal{R}[\Lambda]^{W}[X] $ obtained by applying $ \mathscr{S} $ to the coefficients of the powers of $ X $ in $ P $.      
\begin{definition}  For $ c \in \ZZ $, we define the degree 8 \emph{spinor Hecke polynomial}  $ \mathfrak{H}_{\mathrm{spin},c}(X) \in  \mathcal{H}_{\mathcal{R}}(G) [X] $ to be unique polynomial    such that $ \mathscr{S} ( \mathfrak{H}_{\mathrm{spin},  c }    )    =  \mathfrak{S}_{\mathrm{spin}}( q^{-c} X) $.   
\end{definition}
To work with this  Hecke polynomial and to describe the decompositions of the double coset operators appearing in it later on, it would be convenient to record the following.

\begin{lemma}   \label{lminwords}     For each  $ \lambda \in  \Lambda ^ { + }  $  below,      the element  $ w =  w _{\lambda}   \in  W_{I}  $ specified is the unique element in $ W_{I} $ of minimal possible length  such that  $ K \varpi ^ { \lambda } K = K   w   K $.

\begin{itemize}    [after = \vspace{\smallskipamount-1pt}] 
\setlength\itemsep{0.4em}      
\item $ \lambda = (1,1,1,1) $, $  w = \rho $, 
\item $ \lambda = (2,2,1,1) $, $ w =  w_{0} \rho^{2} $,  
\item $ \lambda = (2,2,2,1) $, $ w = w_{0}  w_{1} w_{0} \rho^{2} $, 

\item  $ \lambda = (3,3,2,2) $, $ w =   w_{0} w_{1} w_{2} w_{3}  \rho^{3}  $, 
\item $  \lambda = ( 4,3,3,3)  $,  $  w =    w_{0} w_{1} w_{0} w_{2} w_{1} w_{0}   \rho^{4}  $,    
\item $  \lambda = ( 4,4,2,2 )    $, $ w =   w_{0} w_{1} w_{2} w_{3} w_{2} w_{1} w_{0}   \rho^{4}  $.   
\end{itemize}
\end{lemma}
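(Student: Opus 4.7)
The plan is to verify, for each of the six dominant cocharacters $\lambda$ listed, two things: that the proposed $w_\lambda$ lies in the double coset $W t(\lambda) W$, and that its length equals $\ell_{\min}(t(\lambda))$. Uniqueness of the minimal-length element in $W t(\lambda) W$ will then follow from general theory of extended affine Weyl groups.

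For the length comparison I would invoke \eqref{lmin} directly. Since $\lambda$ is dominant, every pairing $\langle \lambda, \alpha \rangle$ for $\alpha \in \Phi^+$ is non-negative, so the first sum in \eqref{lmin} vanishes and the formula collapses to $\sum_{\alpha \in \Phi^\lambda}(\langle \lambda, \alpha \rangle - 1)$. Enumerating the nine positive roots of $C_{3}$ (three of form $e_i - e_j$, three of form $e_i + e_j - e_0$, three of form $2 e_i - e_0$) and pairing each with $\lambda$, a case-by-case tally yields the values $0$, $1$, $3$, $4$, $6$, $7$, which match the number of letters $w_i$ with $i \in \{0,1,2,3\}$ appearing in each proposed $w_\lambda$ (since $\rho \in \Omega$ has length zero). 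Reducedness of each expression can then be verified via the braid relations of $(W_{\mathrm{aff}}, S_{\mathrm{aff}})$ of type $\tilde{C}_{3}$, or equivalently by observing that each is already written in an economical normal form.

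To see that $w_\lambda \in W t(\lambda) W$, I would compute the image of $w_\lambda$ in $W_I = \Lambda \rtimes W$ using the data provided: the class of $\rho$ is $\omega = t(-f_0)\, s_3 s_2 s_3 s_1 s_2 s_3$, the element $\rho^2 = \varpi^{(2,1,1,1)}$ is central, $w_0$ represents $t(\alpha_0^\vee) s_0 = t(f_1) s_0$, and $w_i$ represents $s_i$ for $i = 1,2,3$. Propagating these through the semidirect-product multiplication extracts a translation part that agrees with $\lambda$ modulo the $W$-action in each case. Alternatively, and perhaps more efficiently, one may simply multiply out the corresponding $6 \times 6$ matrices over $F$ and recognise the product as an element of $K \varpi^\lambda K$ by inspection or by computing its Smith normal form.

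The uniqueness claim follows from the standard fact that in an extended affine Weyl group the double coset $W v W$ for $v = t(\lambda)$ with $\lambda$ dominant contains a unique element of minimal length; this is also built into the parahoric double-coset recipe recalled in \cite[\S 5]{CZE}. The main obstacle is the third step: the interplay between translations contributed by $w_0$ and the Dynkin-diagram twisting effected by $\rho^{a_0}$ (which permutes $w_0 \leftrightarrow w_3$ and $w_1 \leftrightarrow w_2$ under conjugation) makes the $\Lambda \rtimes W$ bookkeeping intricate, so a systematic symbolic computation is the most reliable approach.
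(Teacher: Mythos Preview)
Your proposal is correct and follows essentially the same route as the paper, which in fact gives no formal proof but only the Remark immediately following the lemma: compute $\ell_{\min}(t(\lambda))$ via \eqref{lmin} (your values $0,1,3,4,6,7$ are right), count the $S_{\mathrm{aff}}$-letters in each $w_\lambda$ (recall $\ell(\rho)=0$), and verify the coset equality by tracking the translation part in $\Lambda\rtimes W$. The paper's remark packages the last step slightly more cleanly by noting that the translation component of each $w_\lambda$ is exactly $t(-\lambda^{\mathrm{opp}})$, which immediately places $w_\lambda$ in $t(\lambda)W \subset Wt(\lambda)W$ and spares you the $\rho$-twisting bookkeeping you flag as the main obstacle.
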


\begin{remark} We point out  that the translation component   of each $ w_{\lambda} $ above (i.e., the $ \Lambda $-component in  $  W_{I} = \Lambda \rtimes W $) is $ t(- \lambda^{\mathrm{opp}})  $ 
where $ \lambda^{\mathrm{opp}} $ is the  anti-dominant element in the Weyl orbit $ W \lambda $.     The minimal  
possible length in each case is computed using (\ref{lmin}) and   that $ \ell ( w _{\lambda} ) =  \ell_{\min} ( t ( -  \lambda ^ { \mathrm{opp} }   )   )    =  \ell _ { \min }  ( t ( \lambda ) )    $.  

\end{remark}   

\begin{notation} For convenience,  we will notate     
\[ \upsilon_{0} = w_{0} , \quad \upsilon_{1} = w_{0} w_{1} w_{0} , \quad  \upsilon _{2} : =   w_{0} w_{1} w_{2} w_{3}  , \quad 
 \upsilon_{3}  : = w_{0} w_{1} w_{0} w_{2} w_{1} w_{0} , \quad  \upsilon_{4} =  w_{0} w_{1} w_{2} w_{3} w_{2} w_{1} w_{0}  \]    Given $ g \in G $, we let $ (K gK ) $ denote the characteristic function $ \ch(K g K ) : G \to \ZZ $ of the double coset $ K g K$. 
 For an even integer  $ k $,  we let $ \rho^{k} (K g K ) $ denotes the function  $ \ch(K g \rho^{k} K ) $. We will use  similar notation for sums of such functions and for functions on $ H'$ and $ H $.    
\end{notation} 
\begin{proposition}\label{Gsp6Heckepolynomial}     The coefficients of $ \mathfrak{H}_{\mathrm{spin},c}(X) $ lie in $ \mathcal{H}_{\ZZ[q^{-1}]}(K \backslash G  / K ) $ for all $ c \in \ZZ $. If we define 
{
\setlength{\abovedisplayskip}{0.7em}
\setlength{\belowdisplayskip}{0em}
\begin{align*}  
\mathfrak{H} (X)   &  =  ( K  )  -   (K \rho K ) X +   \mathfrak{A}   X^{2}     -   \mathfrak{B} X^{3}   + (   \mathfrak{C}    +    2 \rho^{2}   \mathfrak{A}  )X ^{4} 
 - \rho^{2} \mathfrak{B} X^{5}  +   \rho^{4} \mathfrak{A} X^{6}  \\[1pt]  
 & - ( K \rho ^ { 7 }  K )   X ^ { 7 }   +   ( K \rho ^ { 8 }  K )       X ^{ 8 } \, \in  \, \mathcal{H}_{\ZZ } ( K  \backslash G / K  ) [ X ]
\end{align*}}
where 
\begin{itemize}[before = \vspace{-1pt}, after = \vspace{\smallskipamount+1pt}]  
\setlength\itemsep{0.5em}   
\item $  \mathfrak{A} =  ( K   \upsilon_{1} \rho^{2}    K ) +    2 (   K \upsilon _{0} \rho^{2}  K  )   +   4  (  K \rho^{2} K    )  $,
\item $   \mathfrak{B}  = ( K \upsilon_{2}  \rho  ^{3}  K  ) +   4 (   K     \rho ^ { 3 }  K   )    $,
\item $ \mathfrak{C}  =    ( K   \upsilon_{3} \rho^{4}     K   )   +   (  K  \upsilon_{4} \rho^{4}   K  )  $,    
\end{itemize}  
then  $ \mathfrak{H}_{\mathrm{spin},c} (X) $ is congruent to  
$ \mathfrak{H} (X) $ modulo $ q - 1 $  for all $ c \in \ZZ $. 
\end{proposition}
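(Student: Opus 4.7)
The plan is to pass through the Satake isomorphism, expand $\mathfrak{S}_{\mathrm{spin}}(q^{-c}X)$ explicitly in the basis of Weyl orbit sums, invert back into the spherical Hecke algebra, and finally reduce modulo $q-1$.

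Since the spin representation is minuscule with weight set $\{f_0 + \sum_{i\in S} f_i : S \subseteq \{1,2,3\}\}$, the coefficient of $X^k$ in $\mathfrak{S}_{\mathrm{spin}}(X)$ is $(-1)^k$ times the $k$-th elementary symmetric polynomial in the eight monomials $y_0 y_S$. First I would expand this and collect like terms into $W$-orbit sums $e^{W\lambda}$ for $\lambda \in \Lambda^+$. A direct inspection should show that the dominant cocharacters arising are precisely the six listed in Lemma \ref{lminwords}, together with central translates by the cocharacter $(2,1,1,1)$ represented by $\rho^2$. The palindromic structure of $\mathfrak{H}(X)$, in which the coefficients at $X^k$ and $X^{8-k}$ differ only by a $\rho^2$-twist, then reflects the isomorphism $\wedge^{8-k}\mathrm{Spin} \cong \wedge^k \mathrm{Spin} \otimes \det(\mathrm{Spin})$ on the dual side.

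For the integrality assertion over $\ZZ[q^{-1}]$, I would invoke Macdonald's triangular formula
\[
\mathscr{S}\bigl(\ch(K\varpi^\lambda K)\bigr) = q^{\langle \delta,\lambda\rangle}\Bigl( e^{W\lambda} + \sum_{\mu \in \Lambda^+,\, \mu \precneq \lambda} P_{\mu,\lambda}(q^{-1})\, e^{W\mu}\Bigr),
\]
with $P_{\mu,\lambda} \in \ZZ[t]$. Solving this triangular system recursively expresses $\mathscr{S}^{-1}(e^{W\lambda})$ as a $\ZZ[q^{-1}]$-linear combination of the $\ch(K\varpi^\mu K)$'s for $\mu \leq \lambda$, the denominators coming only from the diagonal factors $q^{\langle \delta,\lambda\rangle}$. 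Combined with the $\ZZ$-coefficients of the first step (up to powers of $q^{-c}$), this yields the first claim.

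For the congruence modulo $q-1$, note that $q^{\langle \delta,\lambda\rangle} \equiv 1$ and $P_{\mu,\lambda}(q^{-1}) \equiv \dim V_\lambda(\mu)$ at $q = 1$ (by Kato's identification of Kostka--Foulkes polynomials with weight multiplicities), so the Satake transform mod $q-1$ sends $\ch(K\varpi^\lambda K)$ to the character $\chi_{V_\lambda}$ of the Weyl module of $\hat G = \mathrm{GSpin}_7$ with highest weight $\lambda$. Consequently $\mathfrak{H}_{\mathrm{spin},c}(X)$ modulo $q-1$ corresponds, on the dual side, to $\sum_k (-1)^k \chi_{\wedge^k \mathrm{Spin}}\, X^k$, which is visibly independent of $c$. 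The problem reduces to decomposing each $\wedge^k\mathrm{Spin}$ into irreducibles $\bigoplus_\lambda m_\lambda V_\lambda$ for $k = 0,\ldots,8$ and translating each $V_\lambda$ back to $\ch(K\varpi^\lambda K) = (K w_\lambda K)$ via Lemma \ref{lminwords}; the multiplicities $m_\lambda$ are precisely the integer coefficients appearing in $\mathfrak{A}$, $\mathfrak{B}$, $\mathfrak{C}$ --- most notably the $2$ and $4$ in $\mathfrak{A}$ and the $4$ in $\mathfrak{B}$. The chief obstacle is carrying out this decomposition explicitly for $k = 2, 3, 4$; a purely combinatorial alternative is to expand $\mathfrak{S}_{\mathrm{spin}}(q^{-c}X)$ directly in the orbit sum basis, reduce mod $q-1$, and invert the resulting system by induction on the dominance order, starting from the top weight $(4,4,2,2)$ and working downward until the coefficients of $\mathfrak{H}(X)$ emerge term by term.
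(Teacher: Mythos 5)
Your integrality argument is essentially the paper's: the half-sum of positive roots $\delta = -3e_0 + 3e_1 + 2e_2 + e_3$ is an integral character of $\mathbf{A}$, so the factors $q^{\langle\delta,\lambda\rangle}$ appearing in Macdonald's formula lie in $\ZZ[q^{\pm 1}]$ rather than $\ZZ[q^{\pm 1/2}]$, and the triangular inversion keeps everything in $\ZZ[q^{-1}]$.

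Your argument for the congruence modulo $q-1$, however, rests on a false premise. You invoke Kato's identification of Kostka--Foulkes polynomials with weight multiplicities to conclude that $\mathscr{S}(\ch(K\varpi^\lambda K)) \equiv \chi_{V_\lambda} \pmod{q-1}$. That is not what happens. Kato's and Lusztig's theorem concerns the \emph{Kazhdan--Lusztig basis element} (equivalently, the stalk of the IC sheaf on the affine Schubert variety), whose Satake transform at $q=1$ is the Weyl character. The naive characteristic function $\ch(K\varpi^\lambda K)$ is a different element of the Hecke algebra, and Macdonald's formula
\[
\mathscr{S}\bigl(\ch(K\varpi^\lambda K)\bigr) = \frac{q^{\langle\delta,\lambda\rangle}}{W_\lambda(q^{-1})}\sum_{w\in W} e^{w\lambda}\prod_{\alpha>0}\frac{1-q^{-1}e^{-w\alpha}}{1-e^{-w\alpha}}
\]
specializes at $q=1$ to the Weyl \emph{orbit sum} $e^{W\lambda}$, not the character $\chi_{V_\lambda}$; all the lower-order coefficients vanish at $q=1$ (equivalently, the Hall--Littlewood polynomials $P_\lambda(x;t)$ degenerate to monomial symmetric functions as $t\to 1$, not to Schur functions --- that limit is $t\to 0$). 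You are conflating the expansion of Hall--Littlewood polynomials in monomials with the expansion of Schur functions in Hall--Littlewood polynomials; the Kostka--Foulkes polynomials appear in the latter, not the former.

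The error propagates into the final step, where you propose to decompose each $\wedge^k\mathrm{Spin}$ into irreducibles $\bigoplus_\lambda m_\lambda V_\lambda$ and to read off the coefficients of $\mathfrak{A}, \mathfrak{B}, \mathfrak{C}$ as the $m_\lambda$. These coefficients are in fact weight multiplicities (the Weyl-orbit-sum expansion of $\chi_{\wedge^k\mathrm{Spin}}$), not irreducible multiplicities, and the two disagree. Concretely, $\wedge^2\mathrm{Spin} \cong V_{(2,2,2,1)} \oplus V_{(2,2,1,1)}$ (the adjoint plus the $7$-dimensional standard of $\mathrm{Spin}_7$, $21 + 7 = 28$), so the irreducible multiplicities are $(1,1,0)$, whereas $\mathfrak{A}$ requires $(1,2,4)$; interpreting $2$ and $4$ as irreducible multiplicities would already give $\dim \geq 21 + 2\cdot 7 + 4\cdot 1 = 39 > 28$. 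The correct argument is the paper's and is simpler than what you propose: since $\mathscr{S}(\ch(K\varpi^\lambda K)) \equiv e^{W\lambda}$ modulo $q-1$ with no lower-order corrections, the inverse Satake transform carries the Weyl-orbit-sum expansion of $\mathfrak{S}_{\mathrm{spin}}(X)$ coefficient-by-coefficient to the double coset operators, and no triangular inversion, Kostka--Foulkes computation, or decomposition into irreducibles is required at all.
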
 
\begin{proof}  Since the half sum of positive roots  (\ref{halfsum})   lies in $ X^{*}(\mathbf{A})    $, the first claim is obvious from the discussion in \cite[\S 4.4]{CZE}. Solving the plethysm problem for exterior powers of the  spin  representation by combining $ i $ choices of coweights $  \big ( 1 , \mfrac{1}{2} , \mfrac{1}{2}  , \mfrac{1}{2} 
\big  )  +  \big ( 0,  \pm \mfrac{1}{2}, \pm  \mfrac{1}{2}, \pm \mfrac{1}{2} \big    ) $ for $ i = 0, \ldots, 8 $  or simply  by expanding $ \mathfrak{S}_{\mathrm{spin}}(X) $, we see that             
{\setlength{\abovedisplayskip}{1em}
\setlength{\belowdisplayskip}{1em}
\begin{align*}      \mathfrak{S} _ { \mathrm{spin} } ( X)  &   = 1 - e ^{ W(1,1,1,1)} X  \\
& +  \big ( e ^ { W ( 2,2,2,1) } + 2 e ^{W(2,2,1,1)}   +  4  e ^{ (2,1,1,1)} \big ) X^{2}   -  \big (  e^{W(3,3, 2,2)}  + 4  e ^{W(3,2,2,2)}   \big  )     X ^{3}  \\
& +   \big (    e ^ {  W ( 4,4,2,2)  }  + e ^ {   W     ( 4,3,3,3) }  + 2  e ^ { W (  4,3,3,2) }   +   4   e ^ { W ( 4,3,2,2)}  +   8  e ^ {  ( 4,2,2,2) }   \big  ) X^{4}   \\
&    - \big (  e ^ { W ( 5,4,3,3) }  +   4   e  ^{    W  (  5, 3 , 3 ,3 ) }   \big  )  X ^ { 5  }    +   \big ( e ^ { W ( 6, 4,4,3) } +   2      e ^ {W (6,4,3,3)}  + 4 e ^ {  ( 6,3,3,3) }   \big  )  X ^ { 6  }    \\   &    -   e ^ { W ( 7,4,4,4)} X^{7} +  e ^ { ( 8  ,  4 , 4,   4 ) }  X ^ { 8 } 
\end{align*}} 
The claim now follows by Lemma \ref{lminwords} and \cite[Corollary  4.9.4]{CZE}.  
\end{proof}

\begin{remark} The exact coefficients in the Hecke polynomial are polynomial expressions in $ q $ translated by (possibly negative) powers of $ q $. They  can be found explicitly using Sage by computing appropriate Kazhdan-Lusztig polynomials $ P_{\sigma,\tau}(q) $ for $ \sigma, \tau \in W_{I}    $. See \cite[Remark 10.1.3]{AES} for an example.  
\end{remark}

\section{Restriction to \texorpdfstring{$\GL_{2} \times \mathrm{GSp}_{4}$}{GL2 × GSp4}} 
\label{firstrestrictions} In what follows, we will denote
\begin{align}  \label{heckepoly} \mathfrak{H}  =  \mathfrak{H}(1) 
 &  =  ( 1 + \rho^{8} ) (K) - ( 1 + \rho^{6} )  (K\rho K) + (1  +  2 \rho^{2}  +  \rho^{4} ) \mathfrak{A} - ( 1  + \rho^{2} ) \mathfrak{B}  + \mathfrak{C}  
\end{align} 
considered as an element of $ \mathcal{\mathcal{C}}_{\ZZ} ( K \backslash G /  K ) $. Note that $ \mathfrak{H} \equiv \mathfrak{H}_{\mathrm{spin}, c}(1)  $ modulo $ q - 1 $  for all $ c \in \ZZ $ by Proposition \ref{Gsp6Heckepolynomial}. Note also that $ \rho^{k }$ for even $ k $ is an element of $ H $ (and $ H'$).    We wish to write the $ \tH $-restrictions of $ \mathfrak{H} $. To this end, let us introduce the following elements in $ G $: 
$$ 
\tau_{0} = 1_{G}, 
\quad\quad
\tau_{1} = \left(\begin{smallmatrix}  
\varpi & & & &  1 \\ 
&\varpi & & 1 \\
& & \varpi  & & \\
& & & 1 \\
& & & & 1 \\ 
& &  & & & 1 
\end{smallmatrix} 
\right ), 
\quad \quad 
\tau_{2} = \left(\begin{smallmatrix}
\varpi \, \,  & & & & \mfrac{1}{\varpi}   \\   
&     \varpi \,  \, & & \mfrac{1}{\varpi}  \\[0.1em] 
& &  1 \, \,  &  \\   
& & &   \mfrac{1}{\varpi} \\
& & & &  \mfrac{1}{\varpi}      \\ 
& & & & & 1  
\end{smallmatrix}\right). $$
For $ w \in W_{I} $, we denote  $ \mathscr{R}(w) =    U' \backslash K w K / K $. When listing elements of $ \mathscr{R}(w) $, we will only write the representative element and it will be understood that no two elements represent the same double coset. Similar convention will be used for other double coset spaces.    
\begin{proposition}    
\setlength\itemsep{0.4em} With notations and conventions as above,    
\begin{itemize}    
\item $ \mathscr{R}(\rho)= 
\left\{\varpi^{(1,1,1,1)},\, 
\tau_{1} \right \}$,     
\item $ \mathscr{R}(\upsilon_{0}\rho^{2})=
\left \{\varpi^{(2,2,1,1)},\,
\varpi^{(2,1,2,1)},\,
\varpi^{(1,1,0,0)}\tau _{1}\right \}   $,   
\item $ \mathscr{R} (\upsilon_{1}\rho^{2})=
\left\{\varpi^{(2,2,2,1)},\, \varpi^{(2,1,2,2)},\,   \varpi^{(1,1,1,0)}\tau _{1},\, \varpi^{(1,1,0,1)}\tau_{1}, \, 
\varpi^{(2,1,1,1)}\tau_{2}\right\}$,
\item $ \mathscr{R} (\upsilon_{2}\rho^{3})
=\left\{\varpi^{(3,3,2,2)},\, \varpi^{(3,2,3,2)},\, \varpi^{(2,2,1,1)}\tau_{1},\,  \varpi^{(2,1,2,1)}\tau_{1},\, 
\varpi^{(2,2,0,1)}\tau_{1},\, 
\varpi^{(2,1,1,2) }\tau_{1},\,     
\varpi^{(3,2,1,2)}\tau_{2}  
\right \},$   
\item $ \mathscr{R} (\upsilon_{3}\rho^{4})
=\left\{\varpi^{(4,3,3,3)},\, 
\varpi^{(3,2,2,2)}\tau_{1},\, 
\varpi^{(4,2,2,3)}\tau_{2} \right \},$
\item $ \mathscr{R}
 (\upsilon_{4}\rho^{4})=
\left\{\varpi^{(4,4,2,2)},\, \varpi^{(4,2,4,2)}, \, \varpi^{(3,3,1,1) }\tau_{1},\, 
\varpi ^ {(3,2,0,1)}\tau_{1},\,       \varpi^{(4,3,1,2)}\tau_{2}\right\} $. 
\end{itemize}      
\label{GSp6decompose}
Moreover,  $ H'\tau_{i} K \in H '   \backslash G / K $ are pairwise distinct for $ i = 0 , 1 , 2  $.   
\end{proposition}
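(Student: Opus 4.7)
The plan is to apply the parahoric double coset decomposition recipe of \cite[\S 5]{CZE} (originating in work of Lansky) to each of the six double cosets $K w_\lambda K$ listed in Lemma \ref{lminwords} and then determine the left $U'$-orbits on the resulting enumeration of single cosets. Concretely, for a fixed $\lambda$ with minimal word $w_\lambda = w_{i_1} \cdots w_{i_r} \rho^k$, the recipe expresses $K w_\lambda K / K$ as an explicit finite disjoint union of cosets of the form $u_\sigma \varpi^{\lambda_\sigma} K$, where $\sigma$ runs through combinatorial data (galleries) attached to the reduced expression, $\lambda_\sigma$ lies in the Weyl orbit $W \lambda$, and each $u_\sigma$ is a product of affine root unipotents whose coefficients range over $[\kay]$ or $[\kay_2]$. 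Because the words for our six $w_\lambda$ are short, this first step produces a manageable and very concrete list.

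Next, I would compress the enumeration by the left action of $U' = H' \cap K$. Using that $U'$ contains the block Levi $\GL_2(\Oscr_F) \times_{\Oscr_F^\times} \mathrm{GSp}_4(\Oscr_F)$ as well as the unipotent radicals of the two Siegel parabolics of $H'$, one performs Gaussian-style row operations on each representative $u_\sigma \varpi^{\lambda_\sigma}$, clearing entries off the $\{1,4\} \sqcup \{2,3,5,6\}$ block pattern whenever possible. The surviving normalized forms fall into three families, labelled by $\tau_0 = 1$, $\tau_1$, and $\tau_2$, and the orbit representatives listed in the proposition are then read off by tracking which powers of $\varpi$ remain on each block. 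Distinctness of the representatives inside each $\mathscr{R}(w)$ is immediate from the distinct cocharacters $\mu$ that appear, and from the fact that $\tau_1$, $\tau_2$ contribute off-block entries that cannot be eliminated by $U'$.

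For the final assertion, the similitude character $\mathrm{sim} : G \to F^\times$ separates $H' \tau_1 K$, with $\mathrm{sim}(\tau_1) \in \varpi \cdot \Oscr_F^\times$, from $H' \tau_0 K$ and $H' \tau_2 K$, both of which have similitudes in $\Oscr_F^\times$. To split $\tau_0$ from $\tau_2$, note that $\tau_0 K = K \subset H' K$, whereas the lattice $\tau_2 \Oscr_F^6$ fails to decompose with respect to the splitting $F^6 = F \langle e_1, e_4 \rangle \oplus F \langle e_2, e_3, e_5, e_6 \rangle$ underlying the embedding $\iota'$ (the off-block entries $\varpi^{-1}$ at positions $(1,5)$ and $(2,4)$ cannot be cleared by multiplying on the right by any element of $K$ that preserves the splitting), so $\tau_2 \notin H' K$. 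The main obstacle throughout is computational rather than conceptual: verifying that no spurious $U'$-orbit has been missed and no two normalized representatives secretly coincide demands systematic matrix bookkeeping across six Weyl orbits, which is why the detailed verifications are deferred to \S \ref{U'orbitssec}.
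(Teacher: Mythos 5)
The combinatorial core of your plan -- apply the parahoric recipe of \cite[\S 5]{CZE} (going back to Lansky), enumerate Schubert cells attached to reduced words for each $w_\lambda$, and then normalize by $U'$-row operations -- is precisely the paper's approach, and the organization via Weyl orbit diagrams to cut down the cells that need inspection agrees with what is done in \S\ref{U'orbitssec}. However, two steps of your write-up contain genuine gaps.

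First, your claim that distinctness of the listed representatives within a given $\mathscr{R}(w)$ is ``immediate from the distinct cocharacters $\mu$ that appear'' is not correct as stated. The map $\Lambda \to U'\backslash U'\varpi^\Lambda \tau_i K$, $\lambda \mapsto U'\varpi^\lambda\tau_i K$, is not injective: the groups $H'_{\tau_i}$ contain lifts of nontrivial Weyl elements (e.g.\ $\ess_\tau$ lifting $s_0 r_0$, and certain $r_2$-type elements), so distinct cocharacters in the same $W'_{\tau_i}$-orbit give the same double coset. One actually needs the Cartan-type result (Proposition \ref{cartanhtaui}): $p_i$ induces a bijection $W'_{\tau_i}\backslash\Lambda \xrightarrow{\sim} U'\varpi^\Lambda\tau_i K$. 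With $W'_{\tau_1} = \langle s_0 r_0, t(-f_3)r_2\rangle$ and $W'_{\tau_2} = \langle s_0 r_0, r_2\rangle$, verifying that your listed $\lambda$'s land in distinct orbits is a real (if routine) check, and the paper establishes the underlying bijection through the analysis of the intersections $H'_{\tau_i}$ with standard parahorics in \S\ref{H'strucsec}--\ref{Cartansec}.

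Second, the similitude argument in your final paragraph fails. Since $H' = \GL_2(F)\times_{F^\times}\mathrm{GSp}_4(F)$, the restriction of $\mathrm{sim}$ to $H'$ is surjective onto $F^\times$, so $\mathrm{sim}(H'\tau_i K) = F^\times$ for every $i$; the value $\mathrm{sim}(\tau_1) = \varpi$ is not an invariant of the double coset $H'\tau_1 K$ in any quotient of $F^\times$. Your lattice-decomposability observation does correctly show $\tau_2 \notin H'K$, and an identical calculation shows $\tau_1 \notin H'K$ (the lattice $\tau_1\Oscr_F^6$ contains $e_1 + e_5$ but $(\tau_1\Oscr_F^6 \cap V_1)\oplus(\tau_1\Oscr_F^6\cap V_2)$ does not), but this invariant cannot distinguish $H'\tau_1 K$ from $H'\tau_2 K$. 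The paper instead shows directly (Lemma \ref{distincttaui}) that for distinct $i,j$ and any $h\in H'$, the matrix $\tau_i^{-1}h\tau_j$, if required to have integral entries, has determinant in $\varpi\Oscr_F$, contradicting membership in $K$; alternatively one can invoke Weissauer's Schr\"oder decomposition for $H'\backslash G/K$. You should replace the similitude argument with one of these.
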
      
\begin{proof}  
A proof of this  is provided   in  \S \ref{U'orbitssec}.  
\end{proof} 
\begin{remark} A quick check on our lists of  representatives for each $ \mathscr{R}(w) $ above is through computing their classes in $ K \backslash G / K $. These   should  return $ \varpi^{\lambda} $ on the diagonal where   $ \lambda   $ corresponds to $ w $ in  Lemma    \ref{lminwords}. The distinctness of our representatives  is also  easily checked using a  Cartan style decomposition proved in \S \ref{Cartansec}.  What is difficult however  is establishing that these  represent   \emph{all} the orbits of $ U' $ on $ K w K / K $ and this is where bulk of the work lies.  
\end{remark} 
\begin{corollary}  
$ H ' \backslash H ' \cdot  \supp (\mathfrak{H} ) / K =   \left \{ \tau_{0}, \tau_{1}, \tau_{2} \right \} $.  In particular if $ g \in G $ is such that  $ H'gK \neq H'\tau_{i}K $ for $ i = 0,1,2 $, then $ (H', g) $-restriction of $ \mathfrak{H} $ is zero.     
\end{corollary}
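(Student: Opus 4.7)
The plan is to deduce the corollary directly from Proposition \ref{GSp6decompose}, essentially as a bookkeeping exercise.

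First I would observe that, by construction, $\mathfrak{H}$ is a $\ZZ$-linear combination of characteristic functions of double cosets of the form $K w \rho^{2k} K$, with $w \in \{1,\rho,\upsilon_0\rho^2,\upsilon_1\rho^2,\upsilon_2\rho^3,\upsilon_3\rho^4,\upsilon_4\rho^4\}$ and $k$ a small non-negative integer (the powers of $\rho^2$ arise from expanding the prefactors $1+\rho^8$, $1+\rho^6$, $1+2\rho^2+\rho^4$, $1+\rho^2$ in \eqref{heckepoly}). Since $\rho^2 = \varpi^{(2,1,1,1)} = \varpi \cdot 1_6$ is a scalar matrix, every $\rho^{2k}$ is central in $G$ and lies in $A \subset H'$. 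Moreover, the Cartan invariants of the various $K w \rho^{2k} K$ are pairwise distinct, so no cancellations occur in $\mathfrak{H}$, and $\supp(\mathfrak{H})$ is precisely the union of the double cosets whose coefficient is nonzero.

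Next, the centrality of $\rho^{2k}$ yields a $U'$-equivariant bijection $K w K / K \xrightarrow{\sim} K w \rho^{2k} K / K$ by right multiplication. This carries each representative $\varpi^{\lambda}\tau_i$ produced by Proposition \ref{GSp6decompose} to $\varpi^{\lambda}\rho^{2k}\tau_i \in H'\tau_i K$, because $\varpi^{\lambda}, \rho^{2k} \in H'$. Collating the six lists of Proposition \ref{GSp6decompose}, every element of $\supp(\mathfrak{H})$ therefore lies in $H'\tau_0 K \cup H'\tau_1 K \cup H'\tau_2 K$.

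To finish, I would exhibit a point of $\supp(\mathfrak{H})$ in each of the three double cosets: $\tau_0 = 1 \in K \subset \supp(\mathfrak{H})$ from the leading term $(K)$; $\tau_1 \in K\rho K \subset \supp(\mathfrak{H})$ from the $-(1+\rho^6)(K\rho K)$ term; and $\rho^2\tau_2 \in K\upsilon_1\rho^2 K \subset \supp(\mathfrak{H})$ via the summand $(K\upsilon_1\rho^2 K)$ of $\mathfrak{A}$. Pairwise distinctness of $H'\tau_0 K, H'\tau_1 K, H'\tau_2 K$ is the last assertion of Proposition \ref{GSp6decompose}. The ``in particular'' clause is then immediate: if $H'gK$ differs from all three cosets, then $hg \notin \supp(\mathfrak{H})$ for every $h \in H'$, so the $(H',g)$-restriction $\mathfrak{f}_g$ vanishes identically. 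There is no essential obstacle; the real computation is packaged in Proposition \ref{GSp6decompose}, and the corollary simply collates its output.
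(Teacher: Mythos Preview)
Your proposal is correct and takes essentially the same approach as the paper's own proof, which simply states that the claim is clear from expression \eqref{heckepoly} and Proposition \ref{GSp6decompose}. You have merely spelled out the bookkeeping (centrality of $\rho^{2}$, absence of cancellations, and exhibition of a witness in each $H'\tau_i K$) that the paper leaves implicit.
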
 
\begin{proof} 
The is clear  from the expression (\ref{heckepoly})  and Proposition  \ref{GSp6decompose}.    
\end{proof}
For $ i = 0 , 1 , 2 $, we let $  \mathfrak{a}_{i} $, $   \mathfrak{b}_{i} $, $ \mathfrak{c}_{i} $, $  \tfh_{i}  \in  \mathcal{C}_{\ZZ}(U' \backslash H' /  H_{\tau_{i}}') $ denote the $ (\tH, \tau_{i}) $-restriction of $ \mathfrak{A}$, $ \mathfrak{B} $, $ \mathfrak{C}$, $ \mathfrak{H} $ respectively.    
Here for $ g \in G $, $ \tH_{g} $ denotes   the compact open subgroup  $ H ' \cap g K g^{-1} $ of $ H ' $.   As before, we omit writing $ \ch $ for characteristic functions.  By Proposition \ref{GSp6decompose}, we have $$ (K \rho K ) = (U' \varpi^{(1,1,1,1)}K) + ( U' \tau_{1}K ) . $$  Since $ U' \varpi^{\lambda} K  \subset H' K $ for any $ \lambda \in \Lambda $ and $ U' \tau_{1} K \subset H' \tau_{1} K $,  the $ (H', \tau_{i}) $-restrictions of $ (K \rho K) $ for $ i = 0, 1, 2 $  are given by    $$  (U' \varpi^{(1,1,1,1)} U') , \quad   \quad  (U' H_{\tau_{1}}') , \quad  \quad   0   $$ respectively. Proceeding in a similar fashion, we find that  \\[-1em] 
\begin{align*} 
\mathfrak{a}_{0} & = (U' \varpi^{(2,2,2,1)} U' ) + (\tU \varpi^{(2,1,2,2)}  \tU) +  2 (\tU \varpi^{(2,2,1,1)}\tU) + 2  (\tU \varpi^{(2,1,2,1)}  \tU) + 4  (\tU \varpi^{(2,1,1,1)} \tU  ),\\[0.1em]  
\mathfrak{a}_{1} & = (\tU  \varpi^{(1,1,1,0)}   \tH_{\tau_{1}} ) +  (U ' \varpi^{(1,1,0,1)}    \tH_{\tau_{1}} )  +  2 (\tU  \varpi^{(1,1,0,0)} \tH_{\tau_{1}} ), \\[0.1em]  
\mathfrak{a}_{2} & = (\tU \varpi^{(2,1,1,1)}  \tH_{\tau_{2}})  ,   \\[0.1em]
\mathfrak{b}_{0} & =  (\tU \varpi^{(3,3,2,2)} \tU ) + (\tU \varpi^{(3,2,3,2)}  \tU) +  4  (\tU \varpi^{(3,2,2,2)} \tU ),  \\[0.1em] 
\mathfrak{b}_{1} & =  (\tU  \varpi^{(2,2,1,1)}   \tH_{\tau_{1}} ) +  (\tU  \varpi^{(2,1,2,1)}    \tH_{\tau_{1}} )  +  (\tU  \varpi^{(2,2,0,1)} \tH_{\tau_{1}} )  +   (\tU  \varpi^{(2,1,1,2)} \tH_{\tau_{1}} )  + 4   (\tU  \varpi^{(2,1,1,1)} \tH_{\tau_{1}} ) ,   \\[0.1em] 
\mathfrak{b}_{2} & =   (\tU  \varpi^{(3,2,1,2)}  H_{\tau_{2}} ' )  , \\[0.1em]
\mathfrak{c}_{0} & =   (\tU   \varpi^{(4,3,3,3)}  \tU  ) + (\tU \varpi^{(4,4,2,2)}   \tU  ) +   (\tU \varpi^{(4,2,4,2)} \tU  )   ,  \\[0.1em]
\mathfrak{c}_{1} & =    (\tU   \varpi^{(3,2,2,2)} \tH_{\tau_{1}} ) + (\tU \varpi^{(3,3,1,1)}   \tH_{\tau_{1}}) +   (\tU \varpi^{(3,2,0,1)} \tH_{\tau_{1}} )  ,  \\[0.1em]
\mathfrak{c}_{2} & =     (\tU    \varpi^{(4,2,2,3)} \tH_{\tau_{2}} ) + (\tU \varpi^{(4,3,1,2)}   \tH_{\tau_{2}}). 
\end{align*} 
Using expression (\ref{heckepoly}), we find that 
\begin{align} 
\label{tfh0}  \tfh_{0} &=  
(1 + \rho^{8}) (\tU) - (1+\rho^{6}) (\tU \varpi^{(1,1,1,1)} \tU ) +(1+ 2 \rho^{2} + \rho^{4}) \mathfrak{a}_{0} -(1+\rho^{2}) \mathfrak{b}_{0} +  \mathfrak{c}_{0} , 
\\
 \tfh _{1}   & =    - ( 1 + \rho^{6} ) (U   ' H_{\tau_{1}}') +  ( 1 +  2 \rho^{2} + \rho^{4} ) \mathfrak{a}_{1} - ( 1 + \rho^{2} ) \mathfrak{b}_{1}   +  \mathfrak{c}_{1}  ,         \label{tfh1}  \\
\label{tfh2} \tfh_{2}     &  =   ( 1 + 2 \rho^{2}  +  \rho^{4} )  \mathfrak{a}_{2}  - ( 1 + \rho^{2} ) \mathfrak{b}_{2}  + \mathfrak{c}_{2}  
\end{align}
where the the central elements $ \rho^{2k} $ distribute over Hecke operators as before. 
\begin{remark} The particular choice of $ \tau_{1}, \tau_{2} $ is motivated by the structure of the group $ H' \cap \tau_{i} K \tau_{i}^{-1} $ which  is  convenient for decomposing double cosets involving these groups (see \S  \ref{H'strucsec}). Note that $\tau_{i}$  very closely related to the ``Schr\"{o}der's representatives" for the double coset $ H'\backslash G/ K  $  given  in  \cite[Chapter 12]{Endoscopy}.  
\end{remark} 
\section{Restriction to \texorpdfstring{ $\GL_{2}\times\GL_{2}\times\GL_{2}$}{GL2 × GL2  × GL2 }}     \label{secondrestrictions}     In this section, we  record  the twisted restrictions of $ \tfh_{0} $, $ \tfh_{1} $, $ 
 \tfh_{2} $ with respect to $ H $.  For $ i = 0, 1, 2 $ and $ h \in H' $, we let   $ \mathscr{R}_{i}(h) $ denote the  double coset space $   U  \backslash U' h H_{\tau_{i}}'/ H_{\tau_{i}}' $. The convention  used  in \S \ref{firstrestrictions} for listing elements of double  coset spaces will also be be applied to $ \mathscr{R}_{i}(h) $.    

\subsection{$H$-restrictions of $\tfh_{0}$}  To write the restrictions of $ \mathfrak{h}_{0} $, we  introduce the following elements of $H '= \GL_{2}(F) \times_{F^{\times}  }  \mathrm{GSp}_{4}(F) $:
\begin{equation}    \label{varrhomatrices} \quad \quad   \varrho_{0} = 1_{H'}, \quad    \varrho_{1}  = \left (  \scalebox{0.9}{$ \left ( \begin{matrix} \varpi \\ &  1   \end{matrix} \right )$} ,  \left ( \begin{smallmatrix}   \varpi & &  & 1  \\ & \varpi &  1 \\[0.3em] &  & 1 \\ &  & &1 \end{smallmatrix} \right ) \right ) , \quad  \varrho _{2} =   
\left (  \scalebox{0.9}{$\left ( \begin{matrix} \varpi \\ &  \varpi   \end{matrix} \right )$} ,    
\left ( \begin{smallmatrix} 
\varpi^{2}  & & &   1   \\ 
& \varpi^{2}  & 1 \\[0.3em]   
& & 1 \\ & & &  1  \end{smallmatrix}  \right )  \right ) 
\end{equation} 
which we also view as elements of $ G  $ via $ \iota' $.   
\begin{proposition}   \label{R0scrhard}    With notations  and conventions  as above, we have  
\begin{itemize}   
\setlength\itemsep{0.1em}   
\item $ \mathscr{R}_{0}(\varpi^{(1,1,1,1)}) =  \left \{
\varpi^{(1,1,1,1)},\, 
\varrho_{1} \right \} $,   
\item $ \mathscr{R}_{0}(\varpi^{(2,2,2,1)})=  \left \{ \varpi^{(2,2,2,1)},\,  \varpi^{(2,2,1,2)},\,   \varpi^{(1,1,1,0)}\varrho_{1} \right \}, $ 
\item $ \mathscr{R}_{0}(\varpi^{(2,1,2,2)}) = \left \{ \varpi^{(2,1,2,2)},\,   \varpi^{(1,0,1,1)}\varrho _{1},\,   \varrho_{2} \right \}, $ 
\item  $ \mathscr{R}_{0}(\varpi^{(3,2,3,2)} ) = \left \{  \varpi^{(3,2,3,2)},\,   \varpi^{(3,2,2,3)},\,    \varpi^{(2,1,2,1)}\varrho_{1}, \,   
\varpi^{(2,1,1,2)}\varrho_{1}, \, \varpi^{(2,1,2,0)}\varrho_{1}, \,   \varpi^{(1,1,0,1)}\varrho_{2} \right \}, $  
\item  $ \mathscr{R}_{0}(\varpi^{(4,2,4,2)} ) = 
\left \{\varpi^{(4,2,4,2)},\,  \varpi^{(4,2,2,4)},\, 
\varpi^{(3,1,3,1)}\varrho_{1},\,  \varpi^{(3,1,1,3)}\varrho_{1},\, \varpi^{(2,1,2,0)}\varrho_{2}        \right \}.   $
\end{itemize}
Moreover $ H \varrho_{i} U' \in H \backslash H' / U'  $ are pairwise distinct for $ i = 0 , 1 , 2 $.  
\end{proposition}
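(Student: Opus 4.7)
Since $\tau_{0} = 1_{G}$, the twisted stabilizer $H'_{\tau_{0}}$ equals $H' \cap K = U'$, so $\mathscr{R}_{0}(h) = U \backslash U'hU'/U'$ is the set of $U$-orbits on the right coset space $U'hU'/U'$. Each listed $h$ has a $\pr_{1}$-component which is central in $\GL_{2}(F)$ up to $U_{1}$ on both sides, so the computation reduces essentially to describing $U_{2}$-orbits on a double coset space $U'_{2} g U'_{2}/U'_{2}$ in $H'_{2} = \mathrm{GSp}_{4}(F)$, where $U_{2} = (\GL_{2} \times_{\GG_{m}} \GL_{2})(\Oscr_{F})$ acts on $U'_{2} = \mathrm{GSp}_{4}(\Oscr_{F})$-cosets. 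Crucially, $(\Hb_{2}, \Hb'_{2})$ is a spherical pair, which controls the coarseness of the resulting orbit space.

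\textbf{Main steps.} First, for each dominant cocharacter $\lambda$ in the statement, I would apply the parahoric double coset recipe of \cite[\S 5]{CZE} (originally due to Lansky) to produce an explicit complete set of left $U'$-coset representatives inside $U'\varpi^{\lambda} U'$. These representatives arise in a normal form parameterized by coordinates in $[\kay_{a}]$ indexed by positive affine roots crossed along a minimal-length gallery realizing $w_{\lambda}$. A consistency check against the $q$-power count $|U'\varpi^{\lambda}U'/U'|$ would pin down the total number of representatives before the $U$-action is considered. Second, I would compute the right $U$-action on each representative, first at the Iwahori level (by tracking how multiplication by elements of $\Hb(\Oscr_{F})$ modifies the coordinates in $[\kay_{a}]$) and then by $K$-level Bruhat moves, grouping representatives into orbits. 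The anticipated normal forms are $\varpi^{\mu}$ (the big-cell pieces), $\varpi^{\mu} \varrho_{1}$, or $\varpi^{\mu} \varrho_{2}$, where the matrices $\varrho_{1}, \varrho_{2}$ in \ref{varrhomatrices} encode the two non-identity Schröder-type representatives for the spherical pair $(\Hb_{2}, \Hb'_{2})$.

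\textbf{Main obstacle and finishing steps.} The routine cases are $h = \varpi^{(1,1,1,1)}, \varpi^{(2,2,2,1)}, \varpi^{(2,1,2,2)}$, where the coset space $U'hU'/U'$ is small and orbit identification is immediate after writing the representatives down. The combinatorial bottleneck will be the two larger cases $h = \varpi^{(3,2,3,2)}$ and $h = \varpi^{(4,2,4,2)}$, where the coset space is substantially bigger and orbits of all three Schröder types appear; propagating the $U$-action across the multiple Iwahori reductions needed to reach a canonical form is where the accounting becomes delicate. For the distinctness of $H\varrho_{i} U'$ in $H \backslash H' / U'$, I would first compute the $\mathrm{GSp}_{4}$-elementary divisors of each $\varrho_{i}$ via the classical Cartan decomposition, noting that $1_{H'_{2}}, \pr'_{2}(\varrho_{1}), \pr'_{2}(\varrho_{2})$ have visibly distinct invariants, and then apply the Cartan-style decomposition for the spherical pair $(\Hb_{2}, \Hb'_{2})$ recorded in \S\ref{Cartansec} to deduce distinctness in $H_{2} \backslash H'_{2} / U'_{2}$ (the $\GL_{2}$-component plays no role since the $\varrho_{i}$ differ only in the $\mathrm{GSp}_{4}$-component up to a central twist). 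Finally, I would confirm completeness by matching the total number of proposed $U$-orbits against the coset count obtained in step one.
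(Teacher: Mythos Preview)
Your proposal is essentially correct and follows the same route as the paper: reduce via Lemma~\ref{pVbijection} to the $\mathrm{GSp}_{4}$ problem of computing $U_{2}$-orbits on $U'_{2}\varpi^{\mu}U'_{2}/U'_{2}$, then decompose each such double coset into Schubert cells using the recipe of \cite[\S 5]{CZE} and reduce each cell by row/column operations to one of the Schr\"oder-type normal forms $\varpi^{\nu}$, $\varpi^{\nu}\eta_{1}$, or $\varpi^{\nu}\eta_{2}$. The only point where you diverge is the distinctness of the $H\varrho_{i}U'$: the paper does this by a direct matrix check (Lemma~\ref{distinctetai}, showing $\eta_{i}^{-1}h\eta_{j}\notin U'_{2}$ for any $h\in H_{2}$), whereas you appeal to elementary divisors and a spherical-pair Cartan decomposition; note that the ordinary $\mathrm{GSp}_{4}$ Cartan invariants classify $U'_{2}\backslash H'_{2}/U'_{2}$ rather than $H_{2}\backslash H'_{2}/U'_{2}$, and \S\ref{Cartansec} treats the pair $(H',G)$, not $(H_{2},H'_{2})$, so your argument as written would need an extra step to close.
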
 
\begin{proof}   A  proof of this is given in \S\ref{UU'orbsubsec}. 
\end{proof}
By Lemma \ref{pVbijection}, the representatives of $ \mathscr{R}_{0}(\varpi^{\lambda}) $ depend only on those for $ U_{2} \backslash U_{2}' \varpi^{\pr_{2}(\lambda)} U_{2}'/U_{2}' $. Then one easily obtains the following from Proposition   \ref{R0scrhard}.  
\begin{corollary}  \label{R0screasy} We  have 
\begin{itemize} 
\item $ \mathscr{R}_{0}(\varpi^{(2,2,1,1)} ) = \left \{ \varpi^{(2,2,1,1)} \right \}   ,   $
\item $ \mathscr{R}_{0}(\varpi^{(2,1,2,1)}) =  \left \{ \varpi^{(2,1,2,1)},\, \varpi^{(2,1,1,2)},\,  \varpi^{(1,0,1,0)}\varrho_{1} \right \} , $
\item $ \mathscr{R}_{0}(\varpi^{(3,3,2,2)}) = \left \{  \varpi^{(3,3,2,2)}, \,  
\varpi^{(2,2,1,1)} \varrho_{1} \right \}  ,    $
\item  $ \mathscr{R}_{0}(\varpi^{(4,3,3,3)}) =   \left \{    \varpi^{(4,3,3,3)}, \,  \varpi^{(3,2,2,2)}\varrho_{1}, \,  \varpi^{(2,2,1,1)}\varrho_{2}    \right \}  , $  
\item  $ \mathscr{R}_{0}(\varpi^{(4,4,2,2)}) =   \left \{  \varpi^{(4,4,2,2)}   \right \}   . $   
\end{itemize}
\end{corollary}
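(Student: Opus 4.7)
The plan is to invoke Lemma~\ref{pVbijection}, which reduces the computation of $\mathscr{R}_{0}(\varpi^{\lambda})$ to the computation of $U_{2}\backslash U_{2}'\varpi^{\pr_{2}(\lambda)}U_{2}'/U_{2}'$ for $\mathrm{GSp}_{4}$, where $\pr_{2}(\lambda) = (a_{0},a_{2},a_{3})$ for $\lambda = (a_{0},a_{1},a_{2},a_{3})$. The bijection transports $\mathrm{GSp}_{4}$-representatives back to $H'$-representatives whose $\GL_{2}$-component is recovered from $a_{1}$ and the similitude $a_{0}$; hence two cocharacters with the same $\mathrm{GSp}_{4}$-projection give orbit spaces of the same shape, with representatives differing only in their $\GL_{2}$-blocks.

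For $\lambda = (2,2,1,1)$ and $\lambda = (4,4,2,2)$, the projections $\pr_{2}(\lambda)=(2,1,1)$ and $(4,2,2)$ correspond to the scalar matrices $\varpi\cdot 1_{4}$ and $\varpi^{2}\cdot 1_{4}$ in $\mathrm{GSp}_{4}$, so the double cosets $U_{2}'\varpi^{\pr_{2}(\lambda)}U_{2}'$ reduce to single cosets $\varpi^{\pr_{2}(\lambda)}U_{2}'$ and their $U_{2}$-orbit spaces are singletons. This yields $\mathscr{R}_{0}(\varpi^{\lambda}) = \{\varpi^{\lambda}\}$ in both cases.

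For each of the remaining three cases, I would match $\lambda$ with a cocharacter $\lambda^{\mathrm{hard}}$ handled by Proposition~\ref{R0scrhard}, possibly after translating by a cocharacter $z$ with $\pr_{2}(z)$ scalar in $\mathrm{GSp}_{4}$. The case $\lambda = (2,1,2,1)$ has $\pr_{2}(\lambda) = (2,2,1)$, which directly matches $\lambda^{\mathrm{hard}} = (2,2,2,1)$; the three representatives are then obtained by changing $a_{1}$ from $2$ to $1$ in each representative from Proposition~\ref{R0scrhard}. For $\lambda = (3,3,2,2)$ and $\lambda = (4,3,3,3)$, I would write $\lambda = \lambda^{\mathrm{hard}} + (2,2,1,1)$ with $\lambda^{\mathrm{hard}} = (1,1,1,1)$ and $(2,1,2,2)$ respectively. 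Since $\pr_{2}(2,2,1,1) = (2,1,1)$ is scalar in $\mathrm{GSp}_{4}$, left multiplication by $\varpi^{(2,2,1,1)}$ commutes with $U_{2}'$ on the $\mathrm{GSp}_{4}$-side and so preserves the $U_{2}$-orbit structure of the double coset; translating the representatives from Proposition~\ref{R0scrhard} by $\varpi^{(2,2,1,1)}$ then produces the claimed lists.

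The main obstacle, such as it is, has already been cleared by Proposition~\ref{R0scrhard}; the corollary is purely bookkeeping via the $\pr_{2}$-bijection and central translations in $\mathrm{GSp}_{4}$, with no further orbit analysis required.
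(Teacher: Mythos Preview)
Your proposal is correct and is exactly the approach the paper takes: reduce via Lemma~\ref{pVbijection} to the $\mathrm{GSp}_{4}$ double cosets $U_{2}\backslash U_{2}'\varpi^{\pr_{2}(\lambda)}U_{2}'/U_{2}'$, then read off the answers from the cases already handled in Proposition~\ref{R0scrhard} (using that $\pr_{2}(2,2,1,1)$ and $\pr_{2}(4,4,2,2)$ are central in $\mathrm{GSp}_{4}$). One minor slip of phrasing: for $\lambda=(2,1,2,1)$ the lift of $\varpi^{(1,1,0)}\eta_{1}$ is $\varpi^{(1,0,1,0)}\varrho_{1}$, so the $a_{1}$-coordinate drops by $1$ in each representative rather than literally ``from $2$ to $1$''; the mechanism you describe (the $\GL_{2}$-block is forced to be $\varpi^{(a_{0},a_{1})}$ by Lemma~\ref{pVbijection}) is what does the work and gives the right answer.
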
 
The last two results describe the the $U$-orbits of all the double coset spaces arising from  (\ref{tfh0}) up to translation by the central element $ \rho^{2} $. 
This implies the next  claim.  
\begin{corollary} $ H \backslash H \cdot \supp(\mathfrak{h}_{0}) / U'  = \left \{ \varrho _{0}, \varrho _{1}, \varrho _{2} \right \} $.    
\end{corollary}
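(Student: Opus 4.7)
The plan is to deduce the statement as an immediate consequence of Proposition \ref{R0scrhard}, Corollary \ref{R0screasy}, and the explicit formula (\ref{tfh0}) for $\mathfrak{h}_{0}$. The key observation is that every coset representative produced by those two results has the form $\varpi^{\mu} \varrho_{i}$ with $\mu \in \Lambda$ and $i \in \{0,1,2\}$, and both $\varpi^{\mu}$ and the central element $\rho^{2} = \varpi^{(2,1,1,1)}$ lie inside $A \subset H$; consequently, central translates do not move elements across distinct $H$-$U'$ double cosets.

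First, I would enumerate the $U'$-double cosets whose characteristic functions appear (possibly twisted by central powers $\rho^{2k}$) in the expansion of $\mathfrak{h}_{0}$: namely $(\tU)$, $(\tU \varpi^{(1,1,1,1)} \tU)$, and the various terms that make up $\mathfrak{a}_{0}$, $\mathfrak{b}_{0}$, and $\mathfrak{c}_{0}$. Then $\supp(\mathfrak{h}_{0})$ is contained in the union of these cosets (and their $\rho^{2k}$-translates).

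Second, for each such $U'\varpi^{\lambda}U'$ I would apply Proposition \ref{R0scrhard} (or Corollary \ref{R0screasy} where applicable) to decompose $U' \varpi^{\lambda} U'/U'$ into $U$-orbits. By inspection of the lists, every orbit representative has the form $\varpi^{\mu}$, $\varpi^{\mu}\varrho_{1}$, or $\varpi^{\mu}\varrho_{2}$. Since $\varpi^{\mu} \in A \subset H$ and $\rho^{2k}$ is central and also lies in $A$, each such representative together with its central translates sits in $H\varrho_{i} U'$ for the corresponding $i \in \{0,1,2\}$. This gives the inclusion $\supp(\mathfrak{h}_{0}) \subset H\varrho_{0} U' \cup H\varrho_{1} U' \cup H\varrho_{2} U'$.

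For the reverse inclusion, $\varrho_{0} = 1$ lies in $\supp(\mathfrak{h}_{0})$ via the nonzero constant contribution $(1+\rho^{8})(\tU)$; $\varrho_{1}$ occurs in $\mathscr{R}_{0}(\varpi^{(1,1,1,1)})$, whose double coset appears in the term $-(1+\rho^{6})(\tU \varpi^{(1,1,1,1)} \tU)$ with nonzero coefficient; and $\varrho_{2}$ occurs in $\mathscr{R}_{0}(\varpi^{(2,1,2,2)})$, which is one of the summands of $\mathfrak{a}_{0}$. The pairwise distinctness of $H\varrho_{0} U', H\varrho_{1} U', H\varrho_{2} U'$ is exactly the final assertion of Proposition \ref{R0scrhard}. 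There is no real obstacle here; the only item worth double-checking is the compatibility of central twisting with $H$-$U'$ double cosets, which is transparent from centrality of $\rho^{2}$.
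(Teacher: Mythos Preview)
Your proposal is correct and follows the same approach as the paper: the corollary is an immediate consequence of the orbit lists in Proposition \ref{R0scrhard} and Corollary \ref{R0screasy} applied to the terms of (\ref{tfh0}), together with the observation that $\varpi^{\mu}$ and $\rho^{2k}$ lie in $H$. The paper states this in one line (``The last two results describe the $U$-orbits of all the double coset spaces arising from (\ref{tfh0}) up to translation by the central element $\rho^{2}$. This implies the next claim.''), while you have spelled out the containment and the reverse inclusion more explicitly; your check that each $\varrho_{i}$ lies in a $U'$-double coset appearing with nonzero coefficient in $\mathfrak{h}_{0}$ is a valid way to see the reverse inclusion.
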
    
For $ i = 0 ,1 , 2 $, we let $ \mathfrak{a}_{\varrho_{i}} ,  \mathfrak{b}_{\varrho_{i}} , \mathfrak{c}_{\varrho_{i}} ,  \mathfrak{h}_{\varrho_{i}} \in \mathcal{C}_{\ZZ}(U \backslash H /  H_{\varrho_{i}}) $ denote the $ (H, \varrho_{i})$-restriction of $ \mathfrak{a}_{0}, \mathfrak{b}_{0}, \mathfrak{c}_{0}$, $ \tfh_{0} $ respectively where as before,  we let $ H_{\varrho_{i}} $ denote $ H \cap \varrho_{i}   K   \varrho_{i} ^{-1}   $ as before. From    Proposition \ref{R0scrhard} and Corollary \ref{R0screasy}, we find that \\[-0.8em]    
\begin{align*} 
\mathfrak{a}_{\varrho_{0}} & =
(U\varpi^{(2,2,2,1)}U)+(U\varpi^{(2,2,1,2)}U)
+(U\varpi^{(2,1,2,2)}U) +  2 (U \varpi^{(2,2,1,1)}U) + 2  (U \varpi^{(2,1,2,1)}  U ) \,  \\
&  \quad  +  2   (U \varpi^{(2,1,1,2)}   U )   +   
4  (U \varpi^{(2,1,1,1)} U ) , \\ 
\mathfrak{a}_{\varrho_{1}} & = (U   \varpi^{(1,1,1,0)} H_{\varrho_{1}}  ) +  (U  \varpi^{(1,0,1,1)}    H_{\varrho_{1}})  +  2  ( U \varpi^{(1,0,1,0)}  H_{\varrho_{1}}  ) ,  
 \\ 
\mathfrak{a}_{\varrho_{2}} & = (U H_{\varrho_{2}})  ,  \\
\mathfrak{b}_{\varrho_{0}} & =  (U  \varpi^{(3,3,2,2)} U ) +  (U \varpi^{(3,2,3,2)}   U )  +  (U \varpi^{(3,2,2,3)} U )  + 4   ( U  \varpi^{(3,2,2,2)} U )  ,     \\
\mathfrak{b}_{\varrho_{1}} & =  (U  \varpi^{(2,2,1,1)}   H _{\varrho_{1}} ) + ( U  \varpi^{(2,1,2,1)  }  H_{\varrho_{1}} )  +  ( U \varpi^{(2,1,1,2)} H_{\varrho_{1}} )   +   (U \varpi^{(2,1,2,0)} H_{\varrho_{1}} )  + 4  (U \varpi^{(2,1,1,1)} H_{\varrho_{1}})   ,    \\
\mathfrak{b}_{\varrho_{2}} & =   (U  \varpi^{(1,1,0,1)}  H_{\varrho_{2}})    ,   \\
\mathfrak{c}_{\varrho_{0}} & =   ( U  \varpi^{(4,3,3,3)}   U ) + (U  \varpi^{(4,4,2,2)}   U  ) +   (U \varpi^{(4,2,4,2)} U  )      + ( U \varpi^{(4,2,2,4)} U )    ,   \\
\mathfrak{c}_{\varrho_{1}} & =    (U \varpi^{(3,2,2,2)} H_{\varrho_{1}} ) + (U \varpi^{(3,1,3,1)}   H _{\varrho_{1}}) +   (U \varpi^{(3,1,1,3)} H _{\varrho_{1}} )   ,   \\
\mathfrak{c}_{\varrho_{2}} & =     (U  \varpi^{(2,2,1,1)} H_{\varrho_{2}} ) + (U \varpi^{(2,1,2,0)} H_{\varrho_{2}})  . 
\end{align*}
From the expression (\ref{tfh0}), we  get
\begin{align} \label{tfhvarrho0}            
\tfh_{\varrho_{0}} &=  
(1 + \rho^{8}) (U) - (1+\rho^{6}) ( U 
   \varpi^{(1,1,1,1)}  U ) +(1+ 2 \rho^{2} + \rho^{4}) \mathfrak{a}_{\varrho_{0}} -(1+\rho^{2}) \mathfrak{b}_{\varrho_{0}} +  \mathfrak{c}_{\varrho_{0}}  , 
\\ \label{tfhvarrho1}     \tfh _{\varrho_{1}}   & =    - ( 1 + \rho^{6} ) (U H_{\varrho_{1}}) +  ( 1 +  2 \rho^{2} + \rho^{4} ) \mathfrak{a}_{\varrho_{1}} - ( 1 + \rho^{2} ) \mathfrak{b}_{\varrho_{1}}    +  \mathfrak{c}_{\varrho_{1}}    ,       \\
\label{tfhvarrho2} 
 \tfh_{\varrho_{2}}    &  =   ( 1 + 2 \rho^{2}  +  \rho^{4} )  \mathfrak{a}_{\varrho_{2}}  - ( 1 + \rho^{2} ) \mathfrak{b}_{\varrho_{2}}  + \mathfrak{c}_{\varrho_{2}}   .   
\end{align}
\subsection{$H$-restrictions of $\tfh_{1}$} 
We consider  the following elements in $ H ' $: 
\begin{equation}  \label{varsigmamatrices}     \hspace{-1em} 
\sigma_{0} = 1_{H'} , \quad \quad    
\sigma_{1} =  w_{2} ,  \quad \quad  
\sigma_{2 } =  \varrho_{1} \varpi^{-(1,1,1,1)}, \quad \quad   
\sigma_{3} =  \varrho_{1} . 
\end{equation} 
where $ \varrho_{i} $ are as in (\ref{varrhomatrices}).  For $ i = 0 , 1, 2 , 3 $, let $ \varsigma_{i} \in G $ denote $ \sigma_{i} \tau_{1} $. Also let   $  \psi  =   \left (  \begin{psmallmatrix} 1 \\[0.1em]  1 & 1 \end{psmallmatrix} , 1  _ { H_{2}}   \right )   \in H  $.  
\begin{proposition}  \label{R1scrhard} With notations and conventions as above, we have  
\begin{itemize}
\item $\mathscr{R}_{1}(\varpi^{(1,1,1,0)})  =   \left \{ \varpi^{(1,1,1,0)},\, \varpi^{(1,1,0,1)}\sigma_{1},\, \varpi^{(1,1,1,0)}\sigma_{2} 
\right \}  $, 
\item $\mathscr{R}_{1}(\varpi^{(1,1,0,1)})=  \left \{
\varpi^{(1,1,0,1)},\,   \varpi^{(1,1,1,0)}\sigma_{1},\,
\varpi^{(1,0,0,0)}\sigma_{2},\,
\varpi^{(1,1,0,1)}\sigma_{2},\,
\varpi^{(1,0,1,1)}\sigma_{2},\,
\varpi^{-(0,1,0,0)} \sigma_{3} 
\right \}  $, 
\item $\mathscr{R}_{1}(\varpi^{(1,1,0,0)})= \left\{
\varpi^{(1,1,0,0)},\,  \varpi^{(1,1,0,0)}\sigma_{1},\,  \varpi^{(1,0,1,0)}\sigma_{2} \right \} $,
\item $\mathscr{R}_{1}(\varpi^{(2,2,1,1)})= \left\{
\varpi^{(2,2,1,1)},\, 
\varpi^{(2,2,1,1)}\sigma_{1},\,   \varpi^{(2,2,1,1)}\sigma_{2},\, \varpi^{(2,0,1,1)}\sigma_{2}
\right\} $,
\item $\mathscr{R}_{1}(\varpi^{(2,1,2,1)})= \left \{  \varpi^{(2,1,2,1)},\,  
\varpi^{(2,1,1,2)}\sigma_{1},\,
\varpi^{(2,1,2,1)}\sigma_{2},\,
\varpi^{(2,1,2,0)}\sigma_{2},\,
\varpi^{(2,1,1,0)}\sigma_{2},\,  \varpi^{(1,0,1,0)}\sigma_{3}  \right \}, $
\item $\mathscr{R}_{1}(\varpi^{(2,2,0,1)})=  \left \{
\varpi^{(2,2,0,1)},\,
\varpi^{(2,2,1,0)}\sigma_{1},\,
\varpi^{(2,0,2,1)}\sigma_{2},\,
\varpi^{(2,0,2,0)}\sigma_{2},\,
\varpi^{(2,0,1,0)}\sigma_{2},\, \varpi^{(1,-1,1,0)}\sigma_{3} \right \}$,
\item $\mathscr{R}_{1}(\varpi^{(2,1,1,2)})=   \left \{
\varpi^{(2,1,1,2)},\,
\varpi^{(2,1,2,1)}\sigma _{1},\,    
\varpi^{(2,1,0,1)}\sigma_{2},\,  \varpi^{(2,1,1,2)}\sigma_{2},\,  \varpi^{(1,0,0,1)}\sigma_{3}
\right \},$
\item $\mathscr{R}_{1}(\varpi^{(2,1,1,1)})=   \left \{
\varpi^{(2,1,1,1)},\,
\varpi^{(2,1,1,1)}\sigma_{1},\,   \varpi^{(2,1,1,1)}\sigma_{2}
\right \},$
\item $ \mathscr{R}_{1}(\varpi^{(3,2,2,2)}) = $ \begin{minipage}[t]{1.0\textwidth}$ 
\left \{\varpi^{(3,2,2,2)},\,  \varpi^{(3,2,2,2)}\sigma_{1},\, \varpi^{(3,2,1,1)}\sigma_{2},\, \varpi^{(3,2,2,2)}\sigma_{2},\,  \varpi^{(3,1,1,2)}\sigma_{2},\, 
\varpi^{(3,2,1,2)}\psi\sigma_{2}, \, \varpi^{(2,1,1,1)}\sigma_{3}\right \},$ 

\end{minipage} 
\item $\mathscr{R}_{1}(\varpi^{(3,3,1,1)})= \left \{  \varpi^{(3,3,1,1)},\,  \varpi^{(3,3,1,1)}\sigma_{1},\, \varpi^{(3,0,2,1)}\sigma_{2}  \right \}  ,  $
\item $\mathscr{R}_{1}(\varpi^{(3,2,0,1)}) = \left \{  \varpi^{(3,2,0,1)}, \,  \varpi^{(3,2,1,0)}\sigma_{1},\,  \varpi^{(3,1,3,1)}\sigma_{2},\,  \varpi^{(3,1,2,0)}\sigma_{2},\, \varpi^{(2,0,2,0)}\sigma_{3}\right \}   $. 
  
\end{itemize}
Moreover  $ H\sigma_{i}H_{\tau_{1}}'  \in H \backslash  H'  / H_{\tau_{i}}'  $ are pairwise distinct  for $ i = 0,1,2,3 $.     
\end{proposition}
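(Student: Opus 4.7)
The plan is to mirror the strategy used for Proposition \ref{R0scrhard} in \S\ref{U'orbitssec}, applying the parahoric double coset decomposition recipe of \cite[\S 5]{CZE} (originally due to Lansky) to each of the double cosets $U'\varpi^{\lambda} H_{\tau_1}'$ appearing in the formula (\ref{tfh1}) for $\mathfrak{h}_1$. The recipe produces an explicit set of single-coset representatives $g H_{\tau_1}'$, after which the determination of $U$-orbits becomes a matter of pulling the representatives into a fixed Bruhat cell and bookkeeping with unit matrix entries.

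The crucial preliminary input is the explicit structure of the twisted intersection $H_{\tau_1}' = H' \cap \tau_1 K \tau_1^{-1}$. A direct matrix computation shows that, although $\pr_1'(H_{\tau_1}')$ remains $\GL_2(\Oscr_F)$ (appropriately fibered by the similitude), the projection $\pr_2'(H_{\tau_1}') \subset \mathrm{GSp}_4(F)$ is a non-special maximal parahoric — specifically the Klingen-type parahoric stabilizing the line spanned by the last basis vector. I would record this structural description first in \S\ref{H'strucsec} and then feed it into the Lansky recipe; this also explains why the resulting list of representatives is substantially richer than in the spherical case.

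Once the single-coset decomposition of each $U'\varpi^{\lambda} H_{\tau_1}'$ is in hand, the action of $U$ on the left partitions these cosets into orbits. Each orbit representative is brought into one of the four Schr\"oder-type families indexed by $\sigma_0 = 1$, $\sigma_1 = w_2$, $\sigma_2 = \varrho_1 \varpi^{-(1,1,1,1)}$, and $\sigma_3 = \varrho_1$: the family $\sigma_0$ collects orbits that already lie in $U \varpi^{\mu} H_{\tau_1}'$, the family $\sigma_1$ encodes the non-trivial Weyl swap available inside $\pr_2'(H_{\tau_1}')$, and $\sigma_2, \sigma_3$ correspond to orbits requiring a twist by $\varrho_1$ (the second Schr\"oder representative of $H \backslash H' / U'$ from Proposition \ref{R0scrhard}). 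For each $\lambda$ in the list I would run this classification, simultaneously proving that the proposed representatives are inequivalent under $U$ on the left and $H_{\tau_1}'$ on the right by comparing their Cartan invariants in $U_2 \backslash H_2' / U_2'$ via the decomposition of \S\ref{Cartansec}. Pairwise distinctness of $H \sigma_i H_{\tau_1}'$ in $H \backslash H' / H_{\tau_1}'$ follows by reducing modulo the similitude and comparing valuations of diagonal entries.

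The main obstacle I anticipate is the case $\lambda = (3,2,2,2)$, which is the only instance where a genuinely new representative of the form $\varpi^{(3,2,1,2)} \psi \sigma_2$, carrying the nontrivial unipotent twist $\psi = \left(\begin{smallmatrix} 1 \\ 1 & 1 \end{smallmatrix}\right)$ in the first $\GL_2$-factor, is forced to appear. Verifying that this element is not $U$-equivalent on the left (or $H_{\tau_1}'$-equivalent on the right) to any $\varpi^{\mu} \sigma_i$ requires a residue-level computation: one must reduce the relevant products modulo $\varpi$ and track the image of $\psi$ in the reductive quotient of the Klingen parahoric, where it becomes a non-trivial unipotent element that cannot be absorbed. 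A secondary nuisance is that because $H_{\tau_1}'$ is deeper than $U'$ at the $\mathrm{GSp}_4$-factor, one cannot directly quote the orbit computations of \S\ref{U'orbitssec}; the Lansky recipe must be re-run with the correct Iwahori refinement sitting inside $H_{\tau_1}'$.
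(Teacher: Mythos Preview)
Your proposal has a genuine gap at the point where you apply the parahoric double coset recipe. The group $H_{\tau_1}'$ is \emph{not} a parahoric subgroup of $H'$: by Lemma~\ref{Httau1} it is a proper subgroup of the parahoric $U^\circ$ (after a central conjugation), with additional congruences linking the $\GL_2$ and $\mathrm{GSp}_4$ factors (e.g.\ $a \equiv d_1 \pmod{\varpi}$). Only the projection $\pr_2'(H_{\tau_1}')$ to $\mathrm{GSp}_4$ is parahoric. So the recipe of \cite[\S 5]{CZE} does not apply directly to $U'\varpi^\lambda H_{\tau_1}'/H_{\tau_1}'$, and your phrase ``re-run with the correct Iwahori refinement sitting inside $H_{\tau_1}'$'' does not fix this: refining further only makes the right-hand group smaller, not parahoric. (Incidentally, the $\mathrm{GSp}_4$-projection is the paramodular group, not the Klingen parahoric.)

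The paper's route is a projection-and-lift argument that you do not describe. One first computes the $U_2$-orbits on $U_2'\varpi^{(a,c,d)}U_2^\dagger/U_2^\dagger$ in $\mathrm{GSp}_4$, where $U_2^\dagger=\pr_2'(H_{\tau_1}')$ \emph{is} parahoric and the Schubert-cell recipe applies (Proposition~\ref{U2circhard}, Corollaries~\ref{U2circeasy} and~\ref{U2dagger}). One then lifts each $\mathrm{GSp}_4$-orbit to $\mathscr{R}_1(\varpi^\mu)$ via the fiber description of Corollary~\ref{fibersofpV} specialized as Corollary~\ref{fibersofHtau1}: the fiber is parameterized by the finitely many $\chi\in S_1^\pm$ in the first $\GL_2$-slot compatible with the $U'$-coset condition. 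Determining which $\chi$ actually occur, and whether distinct $\chi$ give distinct $U$-orbits, is the technical core (Lemmas~\ref{liftsforsigma0}--\ref{liftsforsigma3}); the extra representative $\varpi^{(3,2,1,2)}\psi\sigma_2$ in $\mathscr{R}_1(\varpi^{(3,2,2,2)})$ appears precisely because two inequivalent choices of $\chi$ survive in that fiber, not from a residue-field unipotent obstruction in a reductive quotient as you suggest.
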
    
\begin{proof} A proof  of this  is provided in 
\S \ref{UHtau1orbitsec}. 
\end{proof} 

\begin{remark} We also need $ \mathscr{R}_{1}(1) = \left \{  \sigma_{0},\, \sigma_{1},\, \sigma_{2} \right \} $ but this is obtained from  $ \mathscr{R}_{1}(\varpi^{(2,1,1,1)})$.  
\end{remark} 

\begin{remark} The appearance of $ \psi $ in one of the representatives listed  in $ \mathscr{R}_{1}(\varpi^{(3,2,2,2)}) $ seems unavoidable. Curiously,  $U \varpi^{(3,2,1,2)}\psi H_{\varsigma_{2}} $ is  the only double coset arising form $ \mathfrak{H} $ 
whose degree vanishes modulo $ q - 1 $. See Lemma \ref{exceptionsvarsigma2}. 
\end{remark} 
\begin{corollary} 
$ H \backslash H \cdot \supp(\mathfrak{h}_{1} ) /  H_{\tau_{1}}'  =  \left \{ 
 \sigma_{0}, \, \sigma _{1}, \sigma_{2}, \, \sigma _{3} \right \} $.    
\end{corollary}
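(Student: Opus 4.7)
The plan is to read off the support of $\tfh_1$ from the explicit formula \eqref{tfh1} and the lists of double cosets making up $\mathfrak{a}_1$, $\mathfrak{b}_1$, $\mathfrak{c}_1$, then partition each of those double cosets into $U$-orbits using Proposition \ref{R1scrhard}, and finally verify that every orbit representative lies in one of the four classes $H\sigma_i H_{\tau_1}'$.

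First I would list the cocharacters $\lambda$ appearing in $\mathfrak{a}_1$, $\mathfrak{b}_1$, $\mathfrak{c}_1$ (namely $(1,1,1,0)$, $(1,1,0,1)$, $(1,1,0,0)$, $(2,2,1,1)$, $(2,1,2,1)$, $(2,2,0,1)$, $(2,1,1,2)$, $(2,1,1,1)$, $(3,2,2,2)$, $(3,3,1,1)$, $(3,2,0,1)$) together with the trivial double coset arising from the $(U'H_{\tau_1}')$ term. Because $\rho^2 = \varpi^{(2,1,1,1)}$ is a scalar matrix, hence central in $G$ and lying in $H$, its even powers can be absorbed on either side without changing the $H$-double-coset class, so the $\rho^{2k}$-translates can be ignored for the present purpose.

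Next I would feed each such $U'\varpi^{\lambda} H_{\tau_1}'$ into Proposition \ref{R1scrhard}. Inspection shows that every representative produced has the shape $\varpi^{\mu}\sigma_i$, or $\varpi^{\mu}\psi\sigma_i$ in the one exceptional entry of $\mathscr{R}_1(\varpi^{(3,2,2,2)})$, for some $\mu \in \Lambda$ and some $i \in \{0,1,2,3\}$. Since the image of the diagonal torus $\mathbf{A}$ lies inside $\iota(\mathbf{H})$ and $\psi$ belongs to $H$ by construction, each such representative lies in $H\sigma_i H_{\tau_1}'$, giving the inclusion $H \backslash H \cdot \supp(\tfh_1) / H_{\tau_1}' \subset \{\sigma_0, \sigma_1, \sigma_2, \sigma_3\}$.

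To see that each $\sigma_i$ actually occurs, I would point to an explicit summand realising it: $\sigma_0$, $\sigma_1$ and $\sigma_2$ all appear in the decomposition of $U'\varpi^{(1,1,1,0)} H_{\tau_1}'$, a summand of $\mathfrak{a}_1$ with coefficient one, while $\sigma_3$ appears in $U'\varpi^{(1,1,0,1)} H_{\tau_1}'$, another summand of $\mathfrak{a}_1$. Since the various $\varpi^{\lambda}$ and their $\rho^{2k}$-translates appearing in \eqref{tfh1} belong to pairwise distinct $K$-double cosets in $G$ (by Proposition \ref{GSp6decompose} together with Lemma \ref{lminwords}), their $(H',\tau_1)$-restrictions have pairwise disjoint supports in $H'$, which rules out any cancellation in the signed sum \eqref{tfh1}. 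Combined with the pairwise distinctness of the classes $H\sigma_i H_{\tau_1}'$ asserted at the end of Proposition \ref{R1scrhard}, this delivers the equality. The only mild subtlety is this non-cancellation verification; everything else is direct bookkeeping once Proposition \ref{R1scrhard} is in hand.
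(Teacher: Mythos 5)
Your proposal is correct and matches the argument the paper intends (the paper states this corollary without a written proof, treating it as an immediate consequence of Proposition~\ref{R1scrhard} and the expression~\eqref{tfh1}). The one place where you are slightly heavier-handed than needed is the ``non-cancellation'' point: since the various $(U'\varpi^{\lambda}H_{\tau_1}')$ in~\eqref{tfh1} arise by restriction from pairwise distinct $K$-double cosets of $\mathfrak{H}$ and carry nonzero integer coefficients, disjointness is automatic and $\supp(\mathfrak{h}_1)$ is literally the union of those double cosets, so the verification you flag as a subtlety is already built in; otherwise your bookkeeping (read off the $\lambda$'s, apply Proposition~\ref{R1scrhard} together with the remark giving $\mathscr{R}_1(1)$, note $\varpi^{\mu},\psi,\rho^{2k}\in H$, and invoke the pairwise distinctness of the $H\sigma_i H_{\tau_1}'$) is exactly what the corollary rests on.
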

For $ i = 0 , 1, 2, 3 $,  let $ \mathfrak{a}_{\varsigma_{i}} $, $ \mathfrak{b}_{\varsigma_{i}} $, $ \mathfrak{c}_{\varsigma_{i}} $, $ \mathfrak{h}_{\varsigma_{i}}  \in  \mathcal{C}_{\mathcal{\ZZ}}(U \backslash  H / H_{\varsigma_{i}})  $ denote the $ (H,\sigma_{i})$-restrictions of 
$\mathfrak{a}_{1},  \mathfrak{b}_{1}, \mathfrak{c}_{1}, 
 \mathfrak{h}_{1}$ respectively.
Proposition    \ref{R1scrhard} implies that  
\begin{align*} 
\mathfrak{a}_{\varsigma_{0}} & = (U \varpi^{(1,1,1,0)} H_{\varsigma_{0}}) + 
(U\varpi^{(1,1,0,1)}H_{\varsigma_{0}})+ 2 (U\varpi^{(1,1,0,0)}H_{\varsigma_{0}}),  \\
\mathfrak{a}_{\varsigma_{2}} &  =   ( U \varpi^{(1,1,1,0)} H_{\varsigma_{2}})   +  ( U \varpi^{(1,0,0,0)} H_{\varsigma_{2}}) +  (U \varpi^{(1,1,0,1)} H_{\varsigma_{2}}) + ( U \varpi^{(1,0,1,1)} H_{\varsigma_{2}}) + 2( U \varpi^{(1,0,1,0)} H_{\varsigma_{2}})  ,  \\
\mathfrak{a}_{\varsigma_{3}} &  =   ( U \varpi^{-(0,1,0,0)} H_{\varsigma_{3}} )   , \\
\mathfrak{b}_{\varsigma_{0}} & =  (U   \varpi^{(2,2,1,1)}   H _{\varsigma_{0}} ) +  (U \varpi^{(2,1,2,1)}   H_{\varsigma_{0}}) + (U\varpi^{(2,2,0,1)} H _{\varsigma_{0}}) + (U \varpi^{(2,1,1,2)} H_{\varsigma_{0}}) + 4 (U\varpi^{(2,1,1,1)}H_{\varsigma_{0}}) , \\
\mathfrak{b}_{\varsigma_{2}} & = 
(U\varpi^{(2,2,1,1)}H_{\varsigma_{2}})+    (U\varpi^{(2,0,1,1)}H_{\varsigma_{2}})+ (U\varpi^{(2,1,2,1)}H_{\varsigma_{2}})+ 
(U\varpi^{(2,1,2,0)}H_{\varsigma_{2}})+ 
(U\varpi^{(2,1,1,0)}H_{\varsigma_{2}})\, +\\
&\quad \,\,  (U\varpi^{(2,0,2,1)}H_{\varsigma_{2}})+(U\varpi^{(2,0,2,0)}H_{\varsigma_{2}})+
(U\varpi^{(2,0,1,0)}H_{\varsigma_{2}})+
(U\varpi^{(2,1,0,1)}H_{\varsigma_{2}})+
(U\varpi^{(2,1,1,2)}H_{\varsigma_{2}})\, +\\
& \quad \,\,  \,   4(U\varpi^{(2,1,1,1)}H_{\varsigma_{2}}),\\
\mathfrak{b}_{\varsigma_{3}} & =  (U  \varpi^{(1,0,1,0)} H_{\varsigma_{3}}) +  (U \varpi^{(1,-1,1,0)}H_{\varsigma_{3}} )  +  (U \varpi^{(1,0,0,1)}H_{\varsigma_{3}})  ,     \\
\mathfrak{c}_{\varsigma_{0}} & = 
(U\varpi^{(3,2,2,2)}H_{\varsigma_{0}})+
(U\varpi^{(3,3,1,1)}H_{\varsigma_{0}})+(U\varpi^{(3,2,0,1)}H_{\varsigma_{0}}),\\  
\mathfrak{c}_{\varsigma_{2}} & =    
(U\varpi^{(3,2,1,1)}H_{\varsigma_{2}})+
(U\varpi^{(3,2,2,2)}H_{\varsigma_{2}})+   (U\varpi^{(3,1,1,2)}H_{\varsigma_{2}})  
+ ( U \varpi^{(3,2,1,2)} \psi  H_{\varsigma_{2}}) 
+ ( U \varpi^{(3,0,2,1)}
H_{\varsigma_{2}}) \, + \\ & \quad \,\,     ( U \varpi^{(3,1,3,1)}    H_{\varsigma_{2}})  + ( U \varpi^{(3,1,2,0)}  H_{\varsigma_{2}}) , \\
\mathfrak{c}_{\varsigma_{3}} & =     (U  \varpi^{(2,1,1,1)} H_{\varsigma_{3}} ) + (U 
 \varpi^{(2,0,2,0)} H_{\varsigma_{3}}). 
\end{align*}
Using expression (\ref{tfh1}), we  get
\begin{align}  \label{tfhvarsig0}    
\tfh_{\varsigma_{0}} &=  -  (1+\rho^{6}) ( U   H_{\varsigma_{0}}  ) +(1+ 2 \rho^{2} + \rho^{4}) \mathfrak{a}_{\varsigma_{0}}  -  (1+\rho^{2}) \mathfrak{b}_{\varsigma_{0}} + \mathfrak{c}_{\varsigma_{0}}   , 
\\  
\label{tfhvarsig2} \tfh _ {\varsigma_{2}}   & =    - ( 1 + \rho^{6} ) (U H_{\varsigma_{2}})  +  ( 1 +  2 \rho^{2} + \rho^{4} ) \mathfrak{a}_{\varsigma_{2}} - ( 1 + \rho^{2} ) \mathfrak{b}_{\varsigma_{2}}     +  \mathfrak{c}_{\varsigma_{2}}     ,    \\
\tfh_{\varsigma_{3} }    &  =   ( 1 + 2 \rho^{2}  +  \rho^{4} )  \mathfrak{a}_{\varsigma_{3}}  - ( 1 + \rho^{2} ) \mathfrak{b}_{\varsigma_{3}}  + \mathfrak{c}_{\varsigma_{3}} 
\intertext{Now observe that each $ \mathscr{R}(\varpi^{\lambda}) $ in Proposition \ref{R1scrhard} contains a unique representative of the form $ \varpi^{s_{2}(\lambda)}  \sigma_{1} $. Moreover $ \varsigma_{1} = w_{2} \varsigma_{0} $ and $ w_{2} $ normalizes  $ U $ (and $ H $).  So  $ w_{2} U \varpi^{\lambda}  H_{\varsigma_{0}}w_{2} = U \varpi^{ s_{2}(\lambda) } H_{\varsigma_{1}} $ for all $ \lambda \in \Lambda $. Therefore }  \label{tfhvarsig1} 
\mathfrak{h}_{\varsigma_{1}} & = w_{2} \mathfrak{h}_{\varsigma_{0}} w_{2} 
\end{align}
where $ w_{2} $ distributes over each double coset characteristic function.

\subsection{$H$-restrictions of $\tfh_{2}$}

For $ i = 0, 1, 2  $, denote  $ \theta_{i} : = \sigma_{i} $ and $ \theta_{3} :  =  \varpi^{-(1,1,1,1)}  \sigma_{3} $ where $ \sigma_{0}, \sigma_{1}, \sigma_{2}, \sigma_{3} $ are as in 
 (\ref{varsigmamatrices}). For $ i = 0 ,1, 2, 3 $, set  
$ \vartheta_{i} = \sigma_{i}  \tau_{2} \in G $.  Additionally for $ k \in  [\kay]^{\circ} := [\kay] \setminus \left \{ -1 \right \} $, we define $  \tilde{\theta}_{k} = (1, \tilde{\eta}_{k})  \in H'   $ where   \begin{equation} \label{etakmatrix}  \tilde{\eta}_{k} =  \scalebox{0.9}{$\begin{pmatrix}     k & 1  \\
k+1 & 1 \\ & & -1 & k + 1   \\[0.1em]   &  & 1 & - k   \end{pmatrix}$} \in H_{2}'  
\end{equation}   
and set $ \tilde{ \vartheta}_{k}   = \tilde{\theta}_{k}  \tau_{2} \in G   $. Note that $ \tilde{\theta}_{0} = w_{2} w_{3} \theta_{2} w_{3}  $ and $ w_{3}  \tau_{2} =  \tau_{2}w_{3}t_{1} $ where $ t_{1} = \mathrm{diag}(1,1,-1,1,1,-1) $. So  $$ \tilde{\vartheta}_{0} = \tilde{\theta}_{0} \tau_{2} =  w_{2} w_{3} \theta_{2} w_{3} \tau_{2} = 
w_{2} w_{3}  \vartheta_{2}  
 w_{3} t_{1}.  $$

\begin{proposition}   \label{R2scrhard}     We have 
\begin{enumerate}[label = $\bullet$]  
 \setlength\itemsep{0.5em}
\item  $  \mathscr{R}_{2} ( \varpi^{(0,0,0,0)} )  = \{ 1,  \, \theta_{1} , \,  
\theta_{2} , \,  
\tilde{\theta}_{k}    \, |\, k \in [\kay]^{\circ}  
  \}   $,  
\item  $ \mathscr{R}_{2}(\varpi^{(3,2,1,2)}) =$ 
\begin{minipage}[t]{1.0\textwidth}
$\{
\varpi^{(3,2,1,2)},\,
\varpi^{(3,2,2,1)}\theta_{1},\,  
\varpi^{(3,2,1,2)}\theta_{2},\, 
\varpi^{(3,1,2,1)}\theta_{2},\,  
\varpi^{(3,1,2,2)}\theta_{2},\,
\varpi^{(3,1,2,2)}\theta_{3}\}\cup \\[0.3em]  
\{
\varpi^{(3,1, 2, 2)} \tilde{\theta}_{0},\,  
\varpi^{(3,1,1,2)}\tilde{\theta}_{0},\,  
\varpi^{(3,2,1,1)}\tilde{\theta}_{k} 
\, | \,  k \in   [\kay]^{\circ} \}, $
\end{minipage} 
\item  $ \mathscr{R}_{2}(\varpi^{(4,3,1,2)})   =   \{   \varpi^{(4,3,1,2)}, \, 
\varpi^{(4,3,2,1)}\theta_{1}  ,    \varpi^{(4,1,3,2)}\theta_{2} ,     \,  \varpi^{(4,1,2,3)}\tilde{\theta}_{0},   \,  \varpi^{(4,1,3,2)}\theta_{3}          \},      $       
\item $\mathscr{R}_{2}(\varpi^{(4,2,2,3)}) =  \{
\varpi^{(4,2,2,3)},\,   \varpi^{(4,2,3,2)}\theta_{1},\, \varpi^{(4,2,2,3)}\theta_{2},\, \varpi^{(4,2,1,2)}\tilde{\theta}_{0},\,    \varpi^{(4,2,2,3)}\theta_{3} \} $.
\end{enumerate}
\end{proposition}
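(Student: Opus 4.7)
The plan is to adapt the strategy used in the proofs of Propositions \ref{R0scrhard} and \ref{R1scrhard}, but with considerably more care since the pair $(H,G)$ is not spherical at the twist $\tau_2$. First I would make the group $H'_{\tau_2} = H' \cap \tau_2 K \tau_2^{-1}$ explicit by direct matrix conjugation, writing down the congruence conditions on its elements block by block. The key qualitative feature is that the projection $\pr_2'(H'_{\tau_2})$ to $\mathrm{GSp}_4(F)$ sits strictly deeper than any conjugate of a standard Iwahori (this is the ``deeper than Iwahori'' subgroup alluded to in \S\ref{firstrestrictions}), and the extra congruences at the central $2\times 2$ block are what will eventually produce the residue-field parameter $k$.

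Second, for each $\lambda$ in the statement, I would apply the Lansky-style parahoric decomposition of \cite[\S 5]{CZE} to split $U'\varpi^\lambda H'_{\tau_2}$ into single right $H'_{\tau_2}$-cosets, then group these into $U$-orbits on the left. For $\lambda = (0,0,0,0)$ the problem reduces, via $\pr_2'$ and Lemma \ref{pVbijection} in spirit, to enumerating $U_2\backslash H_2'/\pr_2'(H'_{\tau_2})$; the representatives $1, \theta_1, \theta_2$ correspond to the Bruhat-type orbits predicted by the spherical pair $(\mathrm{GL}_2\times_{\mathbb{G}_m}\mathrm{GL}_2, \mathrm{GSp}_4)$, while the family $\tilde\theta_k$ comes from the orbit stratum that becomes visible only because $\pr_2'(H'_{\tau_2})$ is smaller than the Iwahori. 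The explicit matrix shape of $\tilde\eta_k$ in (\ref{etakmatrix}) suggests it arises from lifting a unipotent parameter $k \in \kay$ subject to the constraint $k+1 \neq 0$, which matches $[\kay]^\circ$ exactly.

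Third, the decisive step is to prove that the listed representatives are pairwise $U$-$H'_{\tau_2}$ inequivalent and exhaust the orbit space. For pairwise inequivalence of the $\tilde\theta_k$, I would extract an invariant valued in $\kay$ from the off-diagonal entries of $\tilde\eta_k$ modulo the permitted left $U$-translations and right $\pr_2'(H'_{\tau_2})$-translations; the explicit entries $k, k+1, -k$ make this a finite linear-algebra check over the residue field, and the exclusion of $k = -1$ will correspond precisely to the coincidence of $\tilde\theta_{-1}$ with one of $\theta_0, \theta_1, \theta_2, \theta_3$. For exhaustiveness I would compute the total right $H'_{\tau_2}$-index of $U'\varpi^\lambda H'_{\tau_2}$ independently (from the Cartan decomposition developed in \S\ref{Cartansec}) and verify it matches the sum of the indices coming from the listed representatives.

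The main obstacle will be this final bookkeeping in the cases $\lambda = (0,0,0,0)$ and $\lambda = (3,2,1,2)$, where the family $\tilde\theta_k$ contributes $q-1$ additional orbits. For $\lambda = (4,3,1,2)$ and $\lambda = (4,2,2,3)$ only the single representative $\tilde\theta_0$ (up to a translation) survives; I would account for this by a valuation argument showing that conjugation of $\varpi^\lambda$ by $\tilde\theta_k$ for $k \neq 0$ lands inside the $U'$-orbit of a Schr\"oder-type translate of one of $\theta_0,\ldots,\theta_3$, so that the non-spherical stratum is absorbed when $\lambda$ is sufficiently regular. Throughout the computation, conjugation identities such as $\tilde\theta_0 = w_2 w_3 \theta_2 w_3$ and $w_3\tau_2 = \tau_2 w_3 t_1$ recorded just before the statement would be used repeatedly to cross-check the identifications.
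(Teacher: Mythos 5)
Your high-level plan — make $H'_{\tau_2}$ explicit, project to $\mathrm{GSp}_4$, recognize the ``deeper-than-Iwahori'' subgroup as the source of the $\tilde\theta_k$ family, and lift back — is the right skeleton, and your identification of the conjugation relations $\tilde\theta_0 = w_2w_3\theta_2w_3$ and the exclusion of $k = -1$ is correct (the paper notes $Uv_2\tilde\eta_{-1}v_2 E = U\eta_0 E$). But there is a genuine gap at the heart of step two: you propose to ``apply the Lansky-style parahoric decomposition of \cite[\S 5]{CZE}'' to decompose $U'\varpi^\lambda H'_{\tau_2}$. That recipe applies to double cosets of \emph{parahoric} subgroups, and the subgroup $E := \pr_2'(H'_{\tau_2})$ is precisely \emph{not} parahoric — it is the endohoric subgroup of $U_2'$ consisting of elements whose reduction modulo $\varpi$ lands in $\mathbf{H}_2(\kay) = \GL_2(\kay)\times_{\kay^\times}\GL_2(\kay)$, which is not a parabolic of $\mathrm{GSp}_4(\kay)$. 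The paper's workaround is the central new idea you are missing: write $U_2'\varpi^\lambda E = U_2'\varpi^\lambda I_2' E$ (valid only when $\beta_1(\lambda),\beta_2(\lambda)\in\{0,1\}$, Lemma \ref{Iwahoripassesthrough}), decompose $U_2'\varpi^\lambda I_2'$ by the genuine parahoric recipe, and then separately decompose $I_2'E/E$ into $q^2$ cosets indexed by the unipotent parameters $\gamma(u,v)$ (Lemma \ref{IwYlemma}). Without this two-step factorization, there is no algorithmic way to enumerate the $U_2$-orbits, and the residue-field parameter $k$ in $\tilde\eta_k$ would not emerge cleanly. Your plan of ``computing the total right $H'_{\tau_2}$-index and verifying it matches'' would require knowing that index independently, which in practice means redoing the same decomposition.

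A second, subsidiary gap is in the lifting step back from $\mathrm{GSp}_4$ to $H'$. You implicitly treat the projection $\mathscr{R}_2(\varpi^\mu) \to \mathscr{R}_E(\varpi^{\pr_2(\mu)})$ as bijective. Lemma \ref{pVbijection} guarantees this only when $\pr_1'(\varpi^\mu)$ is central — which holds for $\mu = (0,0,0,0)$ and $\mu=(4,2,2,3)$ but fails for $\mu = (3,2,1,2)$ and $(4,3,1,2)$ since $\alpha_0(\mu)\neq 0$. In the non-central cases the fiber of the projection can a priori have up to $|S^\pm_{\alpha_0(\mu)}|$ elements (Corollary \ref{fibersofpV}), and one must determine exactly which $\chi\in S^\pm_1$ yield the correct $U'$-double coset. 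The paper does this via the explicit case analysis of Lemma \ref{liftsforU'Htau2'}, exploiting that $\varkappa^-(-x)\in H'_{\tau_2}$ to eliminate the lower-unipotent twists. Your proposal, as written, gives no mechanism for this fiber computation, so the lists for the two non-central $\mu$ could not be certified.
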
   
\begin{proof} The proof of this result is provided in \S \ref{UHtau2'orbitssec}   
\end{proof}  
\begin{corollary} $ H \backslash  H \cdot \, \supp(\tfh_{2}) / H_{\tau_{2}}'   =   \{ \theta_{0}, \theta_{1}, \theta_{2} , \theta_{3} , \tilde{\theta}_{k} \, | \, k \in [\kay]^{\circ}    \} $.  
\end{corollary}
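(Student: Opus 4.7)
The plan is to derive this corollary directly from Proposition \ref{R2scrhard} combined with the explicit expression \eqref{tfh2} for $\tfh_{2}$. First I will expand \eqref{tfh2} using the formulas for $\mathfrak{a}_{2}, \mathfrak{b}_{2}, \mathfrak{c}_{2}$ to see that
\[
\supp(\tfh_{2}) \;\subseteq\; \bigcup_{k=0}^{2}\bigcup_{\mu} U'\rho^{2k}\varpi^{\mu} H_{\tau_{2}}',
\]
with $\mu$ ranging over $\{(2,1,1,1),(3,2,1,2),(4,3,1,2),(4,2,2,3)\}$. Since $\rho^{2}=\varpi\cdot 1_{6}$ is central in $G$ and hence lies in $H_{\tau_{2}}'$, translation by $\rho^{2k}$ preserves every $(H,H_{\tau_{2}}')$-double coset, so it suffices to analyze the four basic values of $\mu$.

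Next, for each such $\mu$ I will read off a complete set of $U'$-orbit representatives in $U'\varpi^{\mu}H_{\tau_{2}}'/H_{\tau_{2}}'$ from Proposition \ref{R2scrhard} (the case $\mu=(2,1,1,1)$ being obtained from $\mathscr{R}_{2}(\varpi^{(0,0,0,0)})$ by the central shift $\rho^{2}$). Each representative has a canonical $\sigma$-component living in the finite set $\{1,\theta_{1},\theta_{2},\theta_{3},\tilde{\theta}_{k}\mid k\in[\kay]^{\circ}\}$. Scanning the four lists in Proposition \ref{R2scrhard}, one checks that every element of $\{\theta_{0},\theta_{1},\theta_{2},\theta_{3}\}\cup\{\tilde{\theta}_{k}\mid k\in[\kay]^{\circ}\}$ actually occurs as a $\sigma$-component in at least one list; this yields the inclusion $\subseteq$ of the claimed equality, together with the surjection onto the set on the right.

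What remains is pairwise distinctness of the $H$-orbits of $\theta_{0},\theta_{1},\theta_{2},\theta_{3}$ and of the $\tilde{\theta}_{k}$'s in $H\backslash H'/H_{\tau_{2}}'$, and this is where I expect the only real (if still modest) content of the proof to lie. For the $\theta_{i}$'s one may transport the distinctness statement in the ``moreover'' part of Proposition \ref{R1scrhard} from $H_{\tau_{1}}'$ to $H_{\tau_{2}}'$, or equivalently project to the $\mathrm{GSp}_{4}$-factor and analyze $H_{2}\backslash H_{2}'/\pr_{2}'(H_{\tau_{2}}')$. For the $\tilde{\theta}_{k}$'s, I would examine the matrix $\tilde{\eta}_{k}$ of \eqref{etakmatrix}: its $H_{2}$-orbit modulo $\pr_{2}'(H_{\tau_{2}}')$ should depend faithfully on the reduction of $k$ in $\kay$, with $k\equiv -1\pmod{\varpi}$ being precisely the value at which $\tilde{\eta}_{k}$ degenerates into one of the $\theta_{i}$-orbits. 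This degeneration accounts for the restriction to $[\kay]^{\circ}$ and is the only step requiring a genuine matrix computation; everything else is bookkeeping on top of Proposition \ref{R2scrhard}.
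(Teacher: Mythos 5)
Your overall plan is right and matches the paper's argument, which simply cites Lemma \ref{distinctH2E} and Proposition \ref{R2scrhard}. Reading off the support of $\tfh_{2}$ from the formulas for $\mathfrak{a}_{2},\mathfrak{b}_{2},\mathfrak{c}_{2}$ and then collating the $U$-orbit representatives listed in Proposition \ref{R2scrhard} is exactly the bookkeeping step, and the real content is the pairwise distinctness of the classes $H\theta_{i}H_{\tau_{2}}'$ and $H\tilde{\theta}_{k}H_{\tau_{2}}'$.

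Two points need correction. First, the claim that $\rho^{2}=\varpi\cdot 1_{6}$ ``lies in $H_{\tau_{2}}'$'' is false: $\rho^{2}$ has similitude $\varpi^{2}$ and cannot lie in any compact subgroup of $H'$. What you actually need --- and what is true --- is that $\rho^{2}$ is central in $G$ and lies in $H$, so that $H\cdot\rho^{2k}U'\varpi^{\mu}H_{\tau_{2}}' = H\cdot U'\varpi^{\mu}H_{\tau_{2}}'$ for any even power; this is why the central shifts do not enlarge the set of $(H,H_{\tau_{2}}')$-double cosets.

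Second, and more seriously, your first suggested route to distinctness --- ``transport the distinctness statement in the moreover part of Proposition \ref{R1scrhard} from $H_{\tau_{1}}'$ to $H_{\tau_{2}}'$'' --- does not go through. Distinctness of $H$-classes modulo a compact open subgroup $V$ only transfers to another subgroup $V'$ when $V'\subseteq V$ (a finer quotient), and $H_{\tau_{1}}'$ and $H_{\tau_{2}}'$ are neither nested nor conjugate in a way that would permit this; in fact Lemma \ref{Httau1} places $H_{\tau_{\circ}}'$ inside a parahoric $U^{\circ}$ with non-integral entries, while Lemma \ref{structureofHtau2} places $H_{\tau_{2}}'$ inside $U'$, so these are genuinely different compact opens. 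Your second suggestion --- project to the $\mathrm{GSp}_{4}$-factor and analyze $H_{2}\backslash H_{2}'/\pr_{2}'(H_{\tau_{2}}')$ --- is the correct one, and it is precisely the paper's Lemma \ref{distinctH2E} once one invokes Lemma \ref{structureofHtau2} to identify $\pr_{2}'(H_{\tau_{2}}')$ with the endohoric group $E$. Your heuristic about $\tilde{\eta}_{-1}$ degenerating into one of the $\theta$-orbits is also borne out: the remark after Lemma \ref{distinctH2E} records $U_{2}v_{2}\tilde{\eta}_{-1}v_{2}E = U_{2}\eta_{0}E$. So keep the projection argument, drop the transport argument, and fix the central-element justification.
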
  
\begin{proof} This follows by Lemma  \ref{distinctH2E} and  Proposition  \ref{R2scrhard}.  
\end{proof} 
For $ \vartheta \in 
\{\vartheta_{0}, \vartheta_{1}, \vartheta_{2}, \vartheta_{3}, \tilde{\vartheta}_{k} \, | \, k \in  [\kay]^{\circ} 
\} $, we  let $ \mathfrak{h}_{\vartheta}  \in   \mathcal{C}_{\mathcal{\ZZ}}(U \backslash  H / H_{\vartheta}   )   $ denote the $ (H,\vartheta\tau_{2}^{-1}) $-restriction of $   \mathfrak{h}_{2} $. By the results above,
\begin{align}
\mathfrak{h}_{\vartheta_{0}}  &  =   
(\rho^{2} + 2\rho^{4} +  \rho^{6})  (U H_{\vartheta_{0}} ) -  ( 1 + \rho^{2})     (U  \varpi^{(3,2,1,2)}  H_{\vartheta_{0}} )  +   ( U \varpi^{(4,2,2,3)} H_{\vartheta_{0}})  + ( U \varpi^{(4,3,1,2)}H_{\vartheta_{0}})  ,   \\[0.1em]  
\,  \mathfrak{h}_{\vartheta_{2}}  & =   
( \rho^{2} + 2 \rho^{4} +  \rho^{6} )  (  U H_{\vartheta_{2}  }  )   - ( 1 + \rho^{2} )  
\big (     (  U    \varpi^{(3,2,1,2)}   H _{\vartheta_{2}  }  )  +   ( U   \varpi^{(3,1,2,1)}   H _{\vartheta_{2}  }  )  +    ( U    \varpi^{(3,1,2,2)}   H _{\vartheta_{2}  }  ) 
\big  )   \,  +    \\[0.1em]  
 & \quad  \,  \,      ( U \varpi^{(4,2,2,3)}  H_{\vartheta_{2} } )   + ( U \varpi^{(4,1,3,2)}  H _ { \vartheta_{2} }  )  ,     \\
 \mathfrak{h}_{\vartheta_{3}} &  =  
 ( U \varpi^{(4,2,2,3)}   H _{\vartheta_{3}} )   +  (U \varpi^{(4,1,3,2)}   H _{\vartheta_{3}} )    - ( 1 + \rho^{2} ) (U \varpi^{(3,1,2,2)}  H_{\vartheta_{3}} ),  \\[0.1em]   
\mathfrak{h}_{\tilde{\vartheta}_{k}}    &    =   (\rho^{2} + 2\rho^{4} + \rho^{6} ) ( U H_{\tilde{\vartheta}_{k}}  ) - ( 1 + \rho^{2} )   (   U \varpi^{(3,2,1,1)} H_{\tilde{\vartheta}_{k}}) \\ 
\intertext{ where $ k \in [\kay] \setminus \left \{ 0 , - 1 \right \} $. Observe that  $ H_{\vartheta_{1}} = w_{2} H_{\vartheta_{0}} w_{2} $ and that  in  each set appearing in  Proposition  \ref{R2scrhard}, $ \varpi ^{\lambda} $ for some $ \lambda \in \Lambda $ is listed in that set if and only if $ \varpi^{s_{2}(\lambda)} \theta_{1} $ is.    
So as in the case of $ \mathfrak{h}_{\varsigma_{1}} $, we have    
} 
\mathfrak{h}_{\vartheta_{1}}  &   = w _{2} \mathfrak{h}_{\vartheta_{0}} w _{2}.
\intertext{Similarly we have $ H_{\vartheta_{2}} = w_{2} w_{3} H_{\tilde{\vartheta}_{0}} w_{3} w_{2}  $ and $ \varpi^{\lambda} \vartheta_{2} $ appears in Proposition  \ref{R2scrhard} if and only if $ \varpi^{s_{2}s_{3}(\lambda)} \tilde{\vartheta}_{0}$ does. Therefore  
}    
\mathfrak{h}_{\tilde{\vartheta}_{0}}   &  =  w_{2} w_{3} \mathfrak{h}_{\vartheta_{2}} w_{3} w_{2} .
\end{align} 

\section{Horizontal norm relations} 
\label{HNR}

Let $ X  = \mathrm{Mat}_{2\times 1}(F) $ be  the $ F$-vector space of  size $ 2 $ column vectors over $ F $. We view $ X $ as  a locally compact totally disconnected topological  vector 
 space. Define a right action  $ X \times H   \to H $, $  
(\vec{v}, h)     \mapsto \pr_{1}(h)^{-1} \cdot \vec{v}  $ where  dot denote matrix multiplication. Let $ \mathcal{\OO} $ be an integral domain in which $ \ell $ is invertible and let $ \mathcal{S}_{X} =  \mathcal{S}_{X,\OO} $ denote the $ \OO$-module  of all locally constant compactly supported functions $ X \to \mathcal{O} $. Then $ \mathcal{S}_{X} $  inherits a smooth left  $H  $-action. We define $$ \phi = \ch \left (  \begin{smallmatrix} \Oscr_{F} \\ \Oscr_{F}  \end{smallmatrix}  \right )  \in \mathcal{S}_{X} .   $$  For any compact open subgroup $ V $ of $ H$, we let $ \mathcal{S}_{X}(V) $ denote the submodule $ V $-invariant functions. Let $ \Upsilon_{H} $ denote the collection of all compact open subgroups of $ H $ and  $ \mathcal{P}(H, \Upsilon_{H}) $ denote the category of compact opens (see \cite[\S 2]{CZE}). Then $$  \mathcal{S}_{X} : \mathcal{P}(H, \Upsilon_{H}) \to \mathcal{O}\text{-Mod},  \quad    V \mapsto  \mathcal{S}_{X}(V)   $$
is a cohomological Mackey functor. Note that $ \phi \in \mathcal{S}_{X}(U) $.  For $ g \in G $, let $ H_{g} = H \cap gK g^{-1} $ as before and $ V_{g} \subset H_{g} $ denote the subgroup of all elements $ h \in H_{g} $ such that $ \mathrm{sim}(g) \in 1 +  \varpi  \Oscr_{F} $.   For  $ g \in G $, we denote  by $ \mathfrak{h}_{g} \in \mathcal{C}_{\ZZ}(U \backslash H / H_{g} ) $ the  $ (H,g) $-restriction $ \mathfrak{H} $.  
\begin{theorem} \label{mainzeta} For any $ g \in G $, $ \mathfrak{h}_{g,*}(\phi) $  lies in the image of the trace map $ \pr_{*} :  \mathcal{S}_{X}(V_{g} ) \to  \mathcal{S}_{X}(H_{g} ) $.  
\end{theorem}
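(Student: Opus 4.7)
The plan is to combine the two-step decomposition of $\mathfrak{H}$ from Sections \ref{firstrestrictions} and \ref{secondrestrictions} with a reduction modulo $q-1$. First, I would observe that the trace map $\pr_*: \mathcal{S}_X(V_g) \to \mathcal{S}_X(H_g)$ has image containing $(q-1) \cdot \mathcal{S}_X(H_g)$: the index $[H_g : V_g]$ divides $q - 1$, since the similitude induces an injection $H_g/V_g \hookrightarrow \Oscr_F^\times / (1+\varpi\Oscr_F) = \kay^\times$, and on any $H_g$-invariant element the trace acts as multiplication by this index. Consequently it suffices to prove $\mathfrak{h}_{g,*}(\phi) \equiv 0 \pmod{q-1}$ for every $g$ outside a small explicit set on which the more refined necessary-and-sufficient criterion of \cite[\S 3.5]{CZE} will be applied.

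Second, by $H$-equivariance it is enough to check the assertion at representatives of the finite set $H \backslash H \cdot \supp(\mathfrak{H})/K$. This set has been explicitly enumerated in the excerpt via a tree: restricting first to $H'$ yields three branches $\tau_0, \tau_1, \tau_2$ by Proposition \ref{GSp6decompose}, and restricting each $\mathfrak{h}_i$ further to $H$ yields $\varrho_0,\varrho_1,\varrho_2$, then $\varsigma_0,\ldots,\varsigma_3$, and then $\vartheta_0,\ldots,\vartheta_3$ together with $\tilde{\vartheta}_k$ for $k \in [\kay]^\circ$, for a grand total of $\ell + 10$ elements. Each $\mathfrak{h}_g$ has been written as a short $\ZZ$-linear combination of elementary double coset characteristic functions $\ch(U\varpi^\lambda H_g)$ (possibly decorated by the central $\rho^{2k}$ or by $\psi$), so the remaining task is purely computational.

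Third, I would compute each convolution $\mathfrak{h}_{g,*}(\phi)$ by exploiting the fact that the $H$-action on $X = \mathrm{Mat}_{2\times 1}(F)$ factors through the projection $\pr_1: H \twoheadrightarrow \GL_2$. Each term $\ch(U\varpi^\lambda H_g)_*(\phi)$ then reduces, after enumerating left $U$-cosets in the double coset, to a weighted sum of characteristic functions of lattices in $X$ indexed by $\pr_1$-orbits, and divisibility by $q-1$ can be read off directly from the orbit counts. Careful bookkeeping organized along the tree should realize the mysterious cancellation noted in the introduction: all but one of the $\ell + 10$ convolutions vanish modulo $q - 1$, the sole survivor being $\mathfrak{h}_{\vartheta_3,*}(\phi)$. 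For this final term, which is already a three-term combination, the image-of-trace-map condition would be verified by directly applying the criterion of \cite[\S 3.5]{CZE}.

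The principal obstacle lies in step three, namely the verification of $\ell + 9$ vanishings modulo $q-1$ — in particular throughout the $(\ell-1)$-parameter family $\tilde{\vartheta}_k$ arising from the non-sphericity of $(\mathbf{H}, \mathbf{G})$ that distinguishes this setting from the $\mathrm{GSp}_4$ case of \cite{CZE, LSZ}. The cancellations are combinatorially delicate and depend sensitively on the precise integer coefficients $1, 2, 4$ appearing in $\mathfrak{A}, \mathfrak{B}, \mathfrak{C}$, as well as on the symmetries $\mathfrak{h}_{\varsigma_1} = w_2 \mathfrak{h}_{\varsigma_0} w_2$, $\mathfrak{h}_{\vartheta_1} = w_2 \mathfrak{h}_{\vartheta_0} w_2$, and $\mathfrak{h}_{\tilde{\vartheta}_0} = w_2 w_3 \mathfrak{h}_{\vartheta_2} w_3 w_2$ already identified in Section \ref{secondrestrictions}, which roughly halve the independent calculations but still leave a substantial amount of case-by-case orbit counting.
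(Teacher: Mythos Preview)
Your proposal is correct and follows essentially the same approach as the paper. The paper's proof reduces to the same explicit set of representatives, invokes the computations of \S\ref{convolutionsection} to show $\mathfrak{h}_{g,*}(\phi)\equiv 0\pmod{q-1}$ for every representative except $\vartheta_{3}$, computes $\mathfrak{h}_{\vartheta_{3},*}(\phi)$ exactly as $-\ch\!\begin{psmallmatrix}\varpi^{-1}\Oscr_F^{\times}\\ \varpi^{-2}\Oscr_F^{\times}\end{psmallmatrix}$, and then verifies the stabilizer criterion of \cite[Theorem~3.5.3]{CZE} directly using the structural description of $H_{\vartheta_{3}}$ from Lemma~\ref{vart3struclemma}; your observation that $(q-1)\mathcal{S}_X(H_g)$ lies in the trace image via the index bound $[H_g:V_g]\mid q-1$ is exactly the mechanism that makes the mod-$(q-1)$ vanishings sufficient.
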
  

\begin{proof} Since $ \mathfrak{h}_{\eta g \gamma , * } = \mathfrak{h}_{g, *} \circ [\eta]_{H_{g}, H_{\eta g }, * }  $, 
it suffices to  prove the claim for $ g  \in H  \backslash  H  \cdot  \supp(\mathfrak{H}) /  K   $. By the results of the previous section, a complete system of representatives for this double quotient is the set $  \{ \varrho_{0}, \varrho_{1}, \varrho_{2},  \varsigma_{0},  \varsigma_{1},  \varsigma_{2}, 
 \varsigma_{3}, \vartheta_{0}, \vartheta_{1}, \vartheta_{2}, \vartheta_{3},  \tilde{\vartheta}_{k} \, | \, k \in [\kay]^{\circ} \} $.    By the results established in \S  \ref{convolutionsection}, $$ \mathfrak{h}_{g,*}(\phi) \equiv  0 \pmod{q  - 1}  $$ for  all $ g \neq  \vartheta_{3}  $ in this  set and $ \mathfrak{h}_{\vartheta_{3}, *} ( \phi )  =    -  \ch \left (    \begin{smallmatrix} \varpi ^{-1}  \Oscr_{F}^{\times} \\  \varpi^{-2} \Oscr_{F}^{\times}  \end{smallmatrix} \right )   $.  So it suffices to show that $   \chi : =   \ch    \left (  \begin{smallmatrix} 
 \varpi \Oscr_{F}^{\times} \\  \Oscr_{F}^{\times}      \end{smallmatrix}     \right )  \in \mathcal{S}_{X}(H_{\vartheta_{3}}) $  is the trace of a function in $ \mathcal{S}_{X}(V_{\vartheta_{3}}) $. 
 By  \cite[Theorem 3.5.3]{CZE}, it suffices to verify that for all $ \vec{v} \in  \supp(\chi) $, the stabilizer $  \mathrm{Stab}_{H_{\vartheta_{3}}} (\vec{v}) $ of $ \vec{v} $ in $ H_{\vartheta_{3}} $ is contained in $  V_{\vartheta_{3}} $. So let $ \vec{v} = \begin{psmallmatrix}  x \\ y  \end{psmallmatrix} \in  \supp(\chi) $ and $ h = (h_{1}, h_{2}, h_{3} )  \in \mathrm{Stab}_{H_{\vartheta_{3}}}(\vec{v}) $. If we write  $  h_{1} = \begin{psmallmatrix}  a & b \\ c &  d \end{psmallmatrix}  $, then $   \vec{v} \cdot h  = \vec{v} $ is equivalent to $  \vec{v} \cdot  h^{-1} = \vec{v} $ and so     
 \begin{align*}  ( a  - 1 ) x +  by   = 0 , \\
  cx +   ( d - 1 ) y  = 0   .   
  \end{align*}
By Lemma \ref{vart3struclemma}, $ h_{1} \in \GL_{2}(\Oscr_{F}) $  and   $ b  \in \varpi^{2} \Oscr_{F} $. Since $ x \in \varpi \Oscr_{F}^{\times} $, it follows that $ a \in 1 + \varpi \Oscr_{F} $. Similarly 
$ y \in \Oscr_{F}^{\times} $,  $ x \in \varpi \Oscr_{F} ^{\times }$  implies  $ d \in 1 + \varpi \Oscr_{F} $. Thus $ \mathrm{sim}(h) =  ad - bc  \in 1 + \varpi \Oscr_{F } $ and  so  $ h \in  V_{\vartheta_{3}} $.   
\end{proof} 

Now  let $ \tilde{\Gb} : = \Gb \times \GG_{m} $, $ \tilde{G} $ its group of $ F $-points and $ \tilde{K} $ its group of $ \Oscr_{F} $-points. Embed $ \Gb $ into $ \tilde{\Gb }$ via $ 1 \times \mathrm{sim} $ and let $ \tilde{\iota} : \Hb \to \tilde{\Gb} $ denote the embedding $  (1 \times \mathrm{sim}) \circ \iota $. Fix a $ c \in \ZZ $ and define  $$ \tilde{\mathfrak{H}} = \mathfrak{H}_{\mathrm{spin}, c}(\mathrm{Frob}) \in \mathcal{C}_{\ZZ[q^{-1}]}(\tilde{K} \backslash \tilde{G} / \tilde{ K}  ) $$ where $ \mathrm{Frob} = \ch( \varpi \Oscr_{F}^{\times}  ) $.   Let $ \tilde{L}  =  K \times ( 1 + \varpi \Oscr_{F} ) \subset \tilde{K} $. Let $ \Upsilon_{\tilde{G}} $ denote the collection of all compact open subgroups of $ \tilde{G} $ and $ \mathcal{P}(\tilde{G}, \Upsilon_{\tilde{G}}) $ the associated   category.  
\begin{corollary}  \label{mainteoloc}    For any cohomological  Mackey 
functor $ M_{\tilde{G}} : \mathcal{P}(\tilde{G}, \Upsilon_{\tilde{G}}) \to \mathcal{O}\text{-Mod}  $ 
and 
any Mackey  pushforward $ \tilde{\iota}_{*} : \mathcal{S}_{X}  \to M_{\tilde{G}} $, there exists a class $   y    \in M_{\tilde{G}}( \tilde{L}  )  $  such that $$  \tilde{\mathfrak{H}}_{*} \circ  \tilde{\iota}_{U , \tilde{K} ,   *}(\phi  )=  \pr_{ \tilde{L}  , \tilde{K} ,*}(y )  $$  

\end{corollary}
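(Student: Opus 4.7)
The corollary follows from Theorem \ref{mainzeta} via the Mackey-functor formalism of \cite{CZE}.

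First, I would apply the Mackey decomposition to write $\tilde{\mathfrak{H}}_* \circ \tilde{\iota}_{U, \tilde{K}, *}(\phi)$ as a finite sum of terms indexed by double-coset representatives $[\tilde{g}] = [(g, c)] \in H \backslash \supp(\tilde{\mathfrak{H}}) / \tilde{K}$, each involving the $(H, \tilde{g})$-restriction $\tilde{\mathfrak{h}}_{\tilde{g}}$ of $\tilde{\mathfrak{H}}$. Since the coefficient $A_i$ of $X^i$ in $\mathfrak{H}_{\mathrm{spin}, c}(X)$ is supported on elements of $G$ whose similitude has valuation $i$, the support of $\tilde{\mathfrak{H}}$ consists of pairs $(g_0, c_0)$ with $v(c_0) = v(\mathrm{sim}(g_0))$, and I may therefore choose representatives with $v(c) = v(\mathrm{sim}(g))$. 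Using $\tilde{\iota}(h) = (\iota(h), \mathrm{sim}(h))$, one then checks $H \cap \tilde{g}\tilde{K}\tilde{g}^{-1} = H_g$ and $\tilde{\mathfrak{h}}_{\tilde{g}}(h) = A_{v(\mathrm{sim}(\iota(h)g))}(\iota(h)g) = \mathfrak{H}_{\mathrm{spin},c}(1)(\iota(h)g)$, which by Proposition \ref{Gsp6Heckepolynomial} coincides modulo $q - 1$ with $\mathfrak{h}_g(h)$. The difference is divisible by $q - 1 = [\tilde{K} : \tilde{L}]$, hence lies in the image of $\pr_{\tilde{L}, \tilde{K}, *}$, because cohomological Mackey functors satisfy pullback-then-pushforward equals multiplication by the index.

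Next, Theorem \ref{mainzeta} supplies, for each relevant $g$, a lift $z_g \in \mathcal{S}_X(V_g)$ with $\mathfrak{h}_{g, *}(\phi) = \pr_{V_g, H_g, *}(z_g)$. The essential compatibility is that $V_g$ is cut out of $H_g$ by $\mathrm{sim}(h) \in 1 + \varpi \Oscr_F$, matching exactly the $F^\times$-factor of $\tilde{L} = K \times (1 + \varpi \Oscr_F)$. Conjugating by $\tilde{g}^{-1}$, one has $\tilde{g}^{-1}\tilde{\iota}(V_g)\tilde{g} \subset \tilde{L}$ (the $G$-component lies in $K$ because $h \in H_g$, and the $F^\times$-component is invariant since $F^\times$ is abelian). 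Hence by Mackey functoriality each $z_g$ produces a class at $\tilde{L}$-level, and summing the finitely many terms yields the desired $y \in M_{\tilde{G}}(\tilde{L})$. The main obstacle is the bookkeeping in the first step: identifying the similitude stratification of $\tilde{\mathfrak{H}}$ that makes it interchangeable with $\mathfrak{H}$ modulo $q - 1$, so that each piece falls cleanly within the scope of Theorem \ref{mainzeta}. This is exactly the content of the abstract framework of \cite{CZE}.
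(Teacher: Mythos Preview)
Your proof is correct and takes essentially the same approach as the paper's: reduce $\tilde{\mathfrak{H}}$ to $\mathfrak{H}_{\mathrm{spin},c}(1)$ on the relevant double cosets, invoke Proposition~\ref{Gsp6Heckepolynomial} to pass to $\mathfrak{H}$ modulo $q-1$, and then feed Theorem~\ref{mainzeta} into the Mackey-functor machinery of \cite{CZE}. The only cosmetic difference is that the paper factors the restriction through $G$ first (observing that the unique nonzero $(G,\tilde g)$-restriction of $\tilde{\mathfrak{H}}$ occurs at $\tilde g\in G\tilde K$ and equals $\mathfrak{H}_{\mathrm{spin},c}(1)$), whereas you go directly from $\tilde G$ to $H$; your unpacking of the similitude stratification and the inclusion $\tilde g^{-1}\tilde\iota(V_g)\tilde g\subset \tilde L$ is precisely the content the paper defers to \cite[Corollary 3.2.13 and 3.2.14]{CZE}.
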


\begin{proof} By the expression in Proposition \ref{Gsp6Heckepolynomial}, it is clear that $ (G, g) $-restriction of $ \tilde{\mathfrak{H}} $ is non-zero only if $ g \in G \tilde{K} $ and the $ (G, 1_{\tilde{G}}  ) $-restriction is $ \mathfrak{H}_{\mathrm{spin},c}(1) $.   The  claim is then a  consequence of  Theorem \ref{mainzeta}, Proposition  \ref{Gsp6Heckepolynomial}  and 
 \cite[Corollary 3.2.13 and 3.2.14]{CZE}. 
\end{proof} 

\subsection{Global relations} \label{HNRglobal}   We now  repurpose our notation for the global setup.  Let $ \Gb $, $ \tilde{ \Gb } = \Gb \times \GG_{m} $, $ \Hb $ be as before.  Fix a set $ S $ of rational  primes. By $ \ZZ_{S} $, be mean the product $ \prod_{\ell \in S } \ZZ_{\ell} $ and by $ \Ab_{f}^{S} $, we mean the group of finite rational adeles away from primes in $ S $.  Let  $ G$,  $ \tilde{G} $, $ H $ denote the group of $ \ZZ_{S} \cdot \Ab_{f}^{S} $ points of $ \Gb $,  $ \tilde{\Gb}   $, $ \Hb $ 
respectively.  Let $ \Upsilon_{\tilde{G}} $ denote the collection of all neat compact open subgroups of $ \tilde{G} $ and $ \Upsilon_{H} $ denote the collection of compact open subgroups of the form $ H \cap \tilde{L} $ where $ \tilde{L} \in  \Upsilon_{\tilde{G}} 
 $. Let $ \mathcal{P}(H, \Upsilon_{H}) $, $ \mathcal{P}(\tilde{G}, \Upsilon_{\tilde{G}}) $ denote the corresponding categories  of compact opens. These satisfy axioms (T1)-(T3) of \cite[\S 2]{CZE}. 
 
Next fix a neat compact open subgroup $ K \subset G $ such that if $  \ell \notin S $ is a rational prime, $ K = K^{\ell} K_{\ell} $ where $ K_{\ell} =  G(\ZZ_{\ell}) $ as before and $ K^{\ell} = K / K_{\ell} \subset \Gb(\Ab_{f}^{\ell} )   $ is the group at primes away from $ \ell $.    Let $ \mathcal{N} $ denote the set of all square free  products of primes away from $ S $  where the empty product  means  $ 1  $.   For each $ n \in  \mathcal{N} $, let $$ K[n]  = K \times  \prod_{\ell \nmid n} \ZZ_{\ell}^{\times}  
\prod_{\ell \mid n } ( 1+ \ell \ZZ_{\ell})  \in \Upsilon_{\tilde{G}} . $$ 
We also denote $ K[1] $ as $ \tilde{K} $.   
Let $ X = \mathrm{Mat}_{2 \times 1} ({\Ab_{f}}) \setminus 
\{ \vec{0} 
\}  
$ and let $ H $ act on $ X $ in a manner analogous to  the local situation. Let $ \mathcal{O} $ be a characteristic zero integral domain such that $ \ell \in \mathcal{O}^{\times} $ for all $ \ell \notin S $. Let  $ \mathcal{S}_{X} =  \mathcal{S}_{X, \mathcal{O}} $ denote the set 
of all 
functions   $ \chi : X  \to \mathcal{O} $ such that $  \chi   = 
f_{S} \otimes \chi^{S }$ where $ f_{S } $ is a fixed locally constant  compactly supported function on  $  \mathrm{Mat}_{2 \times 1} (\ZZ_{S})  $ that is invariant under $  \Hb(\ZZ_{S} ) $ and $  \chi  ^{S} $ is any locally constant compactly supported  function on $ \mathrm{Mat}_{2\times 1}(\Ab_{f}^{S}) $.   Then $$  \mathcal{S}_{X} : \mathcal{P}(H, \Upsilon_{H}) \to  \mathcal{O}\text{-Mod} , \quad   V \mapsto \mathcal{S}_{X}(V) $$ is a CoMack functor with Galois descent.  Let $ U = H \cap \tilde{K} $ and $ \phi \in  \mathcal{S}_{X}(U) $ be the  function  
$
f_{S}  
\otimes   \ch(\widehat{\ZZ}^{S}) $ where $\widehat{\ZZ}^{S} = \prod_{\ell \notin S } \ZZ_{\ell} $ denotes integral adeles away from $ S $. Note that $ \phi^{S} $ is the restricted tensor product of $ \otimes_{\ell \notin S } \phi_{\ell} $ where $ \phi_{\ell}  =    \ch \left ( \begin{smallmatrix}   \ZZ_{\ell} \\  \ZZ_{\ell} \end{smallmatrix} \right )   $. Fix an integer $ c$ and for each $ \ell \in S $, let $$ \tilde{\mathfrak{H}}_{\ell}   =  \mathfrak{H}_{\mathrm{spin},c,\ell}(\mathrm{Frob}_{\ell})  \otimes  \ch( \tilde{K}^{\ell}) \in \mathcal{C}_{\ZZ[\ell^{-1}]}  ( \tilde{K} \backslash \tilde{G} / \tilde{K}) $$
where $ \mathrm{Frob}_{\ell}  =  \ch ( \ell  \ZZ_{\ell} ^  { \times } )    $ is as before.

\begin{theorem}
\label{mainteoglobal}      For any cohomological Mackey functor  $ M_{\tilde{G}} : \mathcal{P}(\tilde{G}, \Upsilon_{\tilde{G}}) \to \mathcal{O}\text{-Mod} $ 
and any Mackey pushforward   $ \tilde{\iota}_{*} :   \mathcal{S}_{X}  \to   M_{\tilde{G}} $,  there exists a collection of classes $ y_{n} \in M_{\tilde{G}}(K[n])   $ indexed by integers $ n \in \mathcal{N} $ such that $ y_{1} = \tilde { \iota } _{U,\tilde{K},*}(\phi) $ and $$ \tilde{\mathfrak{H}}_{*}     
( y_{n}  )   = \mathrm{pr}_{K[n\ell],  K [n],*}(y_{n\ell})   $$
for all $ n , \ell \in \mathcal{N} $ such that $ \ell $ is a prime and $ \ell   \nmid  n  $.   
\end{theorem}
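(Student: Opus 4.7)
My plan is to deduce Theorem \ref{mainteoglobal} from the local result Corollary \ref{mainteoloc} by invoking the abstract Mackey-functor machinery developed in the companion paper \cite{CZE}, together with the factorization properties of the global Hecke operator $\tilde{\mathfrak{H}}$ across primes. The overall strategy is an inductive construction: starting from $y_1 = \tilde{\iota}_{U,\tilde{K},*}(\phi)$, one builds $y_n$ for $n$ with an additional prime factor $\ell$ by solving the equation $\tilde{\mathfrak{H}}_{\ell,*}(y_n) = \mathrm{pr}_{K[n\ell],K[n],*}(y_{n\ell})$ at one prime at a time.

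The first step is to observe that the global Hecke operator decomposes as $\tilde{\mathfrak{H}} = \tilde{\mathfrak{H}}_{\ell} \otimes \ch(\tilde{K}^{\ell})$, i.e.\ it acts only at the place $\ell$ and trivially elsewhere, and the level subgroups $K[n]$ and $K[n\ell]$ differ only at $\ell$ (where one shrinks from $\ZZ_\ell^\times$ to $1+\ell\ZZ_\ell$). Thus the problem of producing $y_{n\ell}$ given $y_n$ reduces, via the CoMack factorization of $M_{\tilde{G}}$ with respect to levels away from $\ell$, to a purely local problem at $\ell$ concerning the Mackey functor obtained by restriction to compact opens of $\tilde{G}_\ell$. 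The machinery of \cite{CZE} provides this factorization in a form that lets one decouple the local problem from the rest of the prime-to-$\ell$ level structure; concretely, the restricted tensor structure of $\phi = f_S \otimes \otimes'_{\ell \notin S}\phi_\ell$ and the analogous decomposition of the pushforward $\tilde{\iota}_*$ lets one apply the local statement separately at each $\ell \nmid n$.

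The second step is then to invoke Corollary \ref{mainteoloc} at each such $\ell$: it produces a class $y \in M_{\tilde{G}_\ell}(\tilde{L}_\ell)$ (with $\tilde{L}_\ell = K_\ell \times (1+\ell\ZZ_\ell)$) such that $\mathfrak{H}_{\mathrm{spin},c,\ell}(\mathrm{Frob}_\ell)_* \circ \tilde{\iota}_{U_\ell, \tilde{K}_\ell,*}(\phi_\ell) = \mathrm{pr}_{\tilde{L}_\ell,\tilde{K}_\ell,*}(y)$. Combining this local lift with the identity operators at all other primes produces the required $y_{n\ell}$ at global level $K[n\ell]$, and the desired relation $\tilde{\mathfrak{H}}_*(y_n) = \mathrm{pr}_{K[n\ell],K[n],*}(y_{n\ell})$ becomes a tautology in the Mackey formalism once one verifies that the local lift commutes with the Mackey pushforward $\tilde{\iota}_*$. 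This compatibility is precisely the content of \cite[Corollary 3.2.13 and 3.2.14]{CZE} used already in the proof of Corollary \ref{mainteoloc}.

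The main obstacle — and the only substantive content that does not reduce to formal category-theoretic verifications — is ensuring that the inductive construction is internally consistent: when $n = \ell_1 \cdots \ell_k$ admits several orderings of the primes, the resulting $y_n$ must be well-defined up to compatibility with the norm maps, and the chosen lifts at distinct primes must commute. This is handled in \cite{CZE} by observing that the Hecke operators at distinct primes $\ell, \ell'$ commute (since they are supported at disjoint places) and that the local lifts of Corollary \ref{mainteoloc} can be chosen coherently by a projective-limit or zigzag argument across the finite set of primes dividing $n$. Once this consistency is established, assembling the family $\{y_n\}_{n \in \mathcal{N}}$ and verifying both $y_1 = \tilde{\iota}_{U,\tilde{K},*}(\phi)$ and the norm relation at each step $(n, n\ell)$ completes the proof.
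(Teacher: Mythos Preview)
Your proposal is correct and takes essentially the same approach as the paper: both derive the global statement from the local input via the abstract machinery of \cite{CZE}. The paper's one-sentence proof simply cites Theorem~\ref{mainzeta} together with \cite[Theorem 3.4.2]{CZE} (plus the auxiliary \cite[Corollary 3.2.13 and 3.2.14]{CZE} already used for Corollary~\ref{mainteoloc}); your outline is an informal sketch of what that black-box theorem accomplishes.

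One point worth noting: the paper feeds the \emph{source-level} trace statement Theorem~\ref{mainzeta} (for the functor $\mathcal{S}_X$) directly into \cite[Theorem 3.4.2]{CZE}, rather than iterating the target-level Corollary~\ref{mainteoloc} as you do. This is slightly cleaner, because in your inductive step the class $y_n$ is not a priori of the form $\tilde{\iota}_*(\text{explicit Schwartz function})$, so applying Corollary~\ref{mainteoloc} literally would require first exhibiting a suitable local Mackey pushforward $\mathcal{S}_{X_\ell} \to M_{\tilde{G}}(K[n]^{\ell} \times -)$ with $y_n$ in its image. Working entirely at the source level $\mathcal{S}_X$ (building compatible Schwartz functions there via Theorem~\ref{mainzeta}, then pushing forward all at once) sidesteps this bookkeeping, and that is what \cite[Theorem 3.4.2]{CZE} packages.
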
    
\begin{proof} Combine  Theorem \ref{mainzeta}, \cite[Theorem 3.4.2]{CZE} and the  results  referred to in Corollary \ref{mainteoloc}.  
\end{proof}  
\part{Proofs} 
\section{Double cosets of   \texorpdfstring{$ \mathrm{GSp}_{6}$}{GSp6}}   \label{U'orbitssec}

Throughout, we maintain the notations introduced in Part 1. 

\subsection{Desiderata} The embedding $ \iota' : \Hb \to \Gb $ identifies the set $ \Phi_{H'} $ of roots of $ \mathbf{H}'$   with   $$    \left \{ \pm \alpha_{0} , \pm \alpha_{2} , \pm \alpha_{3} , \pm ( \alpha_{2} + \alpha_{3}) , \pm ( 2 \alpha_{2} + \alpha_{3} )  \right  \}  \subset \Phi.  
 $$  
The Weyl group $ W'  $ of $ H ' $ is then the subgroup of $ W $ generated by $    s_{0} , s_ {   2    }  ,  s_{3} $ and  $ W'  \cong   S_{2} \times  \left ( (\ZZ / 2 \ZZ ) ^{2}  \rtimes S_{2}   \right  )  $. We let $ \Phi_{H'}^{+} = \Phi^{+} \cap \Phi_{H} $ be the set of positive roots. The base is then $ \Delta_{H'} = \left \{  \alpha_{0}, \alpha_{2}, \alpha_{3} \right \}  $  
and the corresponding Iwahori subgroup $ I' $ of $ H ' $ equals the intersection $ I \cap G $. 
Since the normalizer $ N_{H'} (A) $ of $ A $ in $ H' $ equals the intersection $ N_{G}(A) \cap H' $, the Iwahori Weyl  group 
$ W_{I'} = N_{H'}(A)/A^{\circ} $ is also identified  with a subgroup of $ W_{I}  $. We let $ W_{\mathrm{aff}}' $ denote the affine Weyl group of $ H ' $. 

For notational convenience in referring to the roots  corresponding to the projection $ \Hb_{2}' = \mathrm{GSp}_{4}  $ of $ \Hb'  $, we will denote  
$$  \beta_{0}  =  2 e_{2} - e_{0}  ,  \quad   \quad    \beta_{1} : = e_{2} - e_{3} , \quad  \quad   \beta_{2} = 2 e_{3}  - e_{0}, $$ and  let    $ r_{0}, r_{1}, r_{2} $ denote the reflections associated with $ \beta_{0}, \beta_{1}  ,  \beta_{2} $ respectively. In this notation, the generators of $ W_{\mathrm{aff}}' $ of  $  \Ht $  are given by  $  S_{\mathrm{aff}}'  =   \left  \{   s_{0} , t (  f_{1} ) s_{0},  r_{1}, r_{2} , t( f_{2} ) r_{0}   \right \}  .   $ 
The group $ W_{I'} $ is equals the semidirect product of $ W_{\mathrm{aff}}'  $ with the  cyclic subgroup $ \Omega_{H'} \subset W_{I} $ generated  by $  \omega_{H'}   :=  t(-f_{0}) s_{0} r_{2} r _    {1} r_{2} \in W_{I}  $. The action of $ \omega_{H'}  $ on $ S_{\mathrm{aff}}' $ is given by  $ s_{0} \leftrightarrow t(f_{1}) s_{0} $, $  r_{2} \leftrightarrow t(f_{2}) r_{0}   $  and fixing $ r_{1}  $. It can be  visualized as the  order $ 2 $ automorphism of the extended Coxeter-Dynkin diagram
\begin{equation}    
\dynkin[extended, Coxeter, edge length = 1cm, labels = { t(f_{1}) s_{0}  , s_{0}     }]A{1}   \quad   \quad      \dynkin[extended,Coxeter,
edge length=1cm,
labels={t(f_{2})  r_{0}    , r_{1}  ,  r_{2}    }]
C{2}   \label{dynkingl2gsp4} 
\end{equation} 
A representative element in $N_{H'}(A) $ for $ \omega_{H'} $ is given by $  (\rho_{1}, \rho_{2}) \in \GL_{2}(F) \times_{F^{\times}} \mathrm{GSp}_{4}(F)  $ where  $$ \rho_{1} =  
\begin{pmatrix}   &  1 \\   \varpi  \end{pmatrix}  , \quad  \quad    \rho_{2} = 
\begin{psmallmatrix}  & &  &  1  \\ 
&  &  1  &  \\  
& \varpi & \\ 
\varpi & & &   
\end{psmallmatrix}.
$$
Note that $ \rho $ normalizes $I'$.

\subsection{Intersections with $ H'$}   \label{H'strucsec}       In this subsection, we record some results on the structure of the twisted intersections $   H' \cap \tau _{i} K \tau_{i}^{-1} $.  
\begin{notation}  \label{notationh'}    
If $ h \in H ' $, we will often write  $ h  =   \left (  \begin{smallmatrix} a  & & & b &  \\  & a_{1} & a_{2} & &  b_{1} & b_{2}  \\ & a_{3} & a_{4} & & b_{3} & b_{4} \\ 
c & & &   d  \\   & c_{1} & c_{2} & &  d_{1} & d_{2}  \\  & c_{3} & c_{4}  & &  d_{3} & d_{4}     \end{smallmatrix}   \right  )  $ or $    h = \left (  \left ( \begin{matrix} a  & b \\ c & d  \end{matrix}  \right )  ,   \left  (   \begin{smallmatrix} a_{1} & a_{2} &  b_{1} & b_{2}  \\ a_{3} & a_{4} & b_{3} & b_{4} \\  c_{1} & c_{2} &  d_{1} & d_{2}  \\ c_{3} & c_{4}  &   d_{3} & d_{4}   \end{smallmatrix} 
     \right )  \right )  .   $    
\end{notation} 
 
\begin{lemma}   \label{distincttaui}                        $ H   ' K $, $ H '  \tau_{1} K $ and $  H '  \tau_{2} K $ 
are pairwise disjoint.   
\end{lemma}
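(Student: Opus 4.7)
The plan is to distinguish the three double cosets by a numerical invariant of $H' \backslash G / K$. Using the standard representation $V = F^6$ of $G$ with basis $e_1, \ldots, e_6$, decompose $V = V_1 \oplus V_2$ with $V_1 = F e_1 \oplus F e_4$ and $V_2 = F e_2 \oplus F e_3 \oplus F e_5 \oplus F e_6$. Inspecting the embedding $\iota'$ shows that each $V_i$ is $H'$-stable and that $h \in H'$ acts on $V_1$ through the block $\pr_1'(h)$. For $g \in G$ set $L_g = g \Oscr_F^6$ and define two $\Oscr_F$-submodules of $V_1$,
\[
M_g := L_g \cap V_1, \qquad N_g := \pi(L_g),
\]
where $\pi \colon V \twoheadrightarrow V/V_2 \xrightarrow{\sim} V_1$ is the projection along $V_2$. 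The first step is to verify, for any $h \in H'$ and $k \in K$, the two equivariances $M_{hgk} = \pr_1'(h) M_g$ and $N_{hgk} = \pr_1'(h) N_g$. This is a formal consequence of the facts $h V_i = V_i$, $\pi \circ h = \pr_1'(h) \circ \pi$, and $k L_0 = L_0$. It implies that the index $[N_g : M_g]$ depends only on the class of $g$ in $H' \backslash G / K$.

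It then suffices to compute this index at each $\tau_i$ by reading off the action of $\tau_i$ on the standard basis. For $\tau_0 = 1$ one has $M = N$, so the index is $1$. For $\tau_1$ the columns of the given matrix exhibit $L_{\tau_1}$ as the $\Oscr_F$-span of $\varpi e_1, \varpi e_2, \varpi e_3, e_2 + e_4, e_1 + e_5, e_6$; intersecting with $V_1$ forces the coefficients of $e_2, e_3, e_5, e_6$ to vanish and produces $M_{\tau_1} = \varpi \Oscr_F e_1 \oplus \varpi \Oscr_F e_4$, while projecting yields $N_{\tau_1} = \Oscr_F e_1 \oplus \Oscr_F e_4$, giving $[N_{\tau_1} : M_{\tau_1}] = q^2$. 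The same recipe applied to $\tau_2$ gives $M_{\tau_2} = \varpi \Oscr_F e_1 \oplus \varpi \Oscr_F e_4$ and $N_{\tau_2} = \varpi^{-1} \Oscr_F e_1 \oplus \varpi^{-1} \Oscr_F e_4$, the denominators appearing because $L_{\tau_2}$ now contains $\varpi^{-1}(e_2+e_4)$ and $\varpi^{-1}(e_1+e_5)$, whose projections to $V_1$ are $\varpi^{-1} e_4$ and $\varpi^{-1} e_1$. Hence $[N_{\tau_2} : M_{\tau_2}] = q^4$, and since $q \geq 2$ the three indices $1, q^2, q^4$ are pairwise distinct, which proves the lemma.

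The argument has no essential obstacle: the equivariance of $(M_g, N_g)$ is a direct unfolding of definitions using only that $H'$ respects the decomposition $V = V_1 \oplus V_2$, and the per-$\tau_i$ calculations are read off immediately from the matrices. A Cartan-based attack would be clumsier because the $K$-Cartan class of $g \in G$ is not preserved under left multiplication by $H'$, whereas the lattice pair $(M_g, N_g)$ is tailor-made to transform covariantly by $\pr_1'$.
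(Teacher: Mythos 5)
Your proof is correct. Let me note how it differs from the paper's and what each buys.

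The paper argues by contradiction: if $H'\tau_i K = H'\tau_j K$ then some $k := \tau_i^{-1} h \tau_j$ lies in $K$, and a direct inspection of the matrix entries of $k$ forces $\det(k) \in \varpi\Oscr_F$, contradicting $k \in K$. This is short but requires writing out the product $\tau_i^{-1} h \tau_j$ for each pair $(i,j)$ and reading off the right entries — a computation that is done once but does not separate the three cosets by any intrinsic quantity.

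Your approach instead produces a genuine invariant of $H'\backslash G/K$: the index $[N_g : M_g]$ of the two lattices $M_g = g\Oscr_F^6 \cap V_1$ and $N_g = \pi(g\Oscr_F^6)$ in the $H'$-stable summand $V_1 = Fe_1 \oplus Fe_4$. The equivariance $M_{hgk} = \pr_1'(h)M_g$, $N_{hgk} = \pr_1'(h)N_g$ is exactly right (it uses only that $H'$ preserves the decomposition $V = V_1 \oplus V_2$ and acts on $V_1$ through $\pr_1'$, while right multiplication by $K$ fixes $\Oscr_F^6$), and since $\pr_1'(h)$ is an $F$-linear isomorphism it preserves the index. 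Your per-$\tau_i$ computations $[N_{\tau_i}:M_{\tau_i}] = 1, q^2, q^4$ check out against the given matrices. This is more conceptual and more robust — it is in effect the invariant underlying Weissauer's Schröder-type decomposition of $H'\backslash G/K$, which the paper itself acknowledges as an alternative (Remark~\ref{Schroder}). The trade-off is that you must set up the lattice formalism, while the paper's argument just opens one matrix and points at an entry; but for a reader who wants to see \emph{why} these cosets are distinct (and why there are exactly three of them) the invariant is the better tool.
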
  
\begin{proof} 
If $  \Ht \tau _{i} K =  \Ht   \tau _{j} K $ for distinct $ i $ and $j $,   then $   \tau_{i}^{-1} h  \tau_{j} \in K $ for some $ h \in H $.
Requiring the entries of $ k  : = \tau_{i} ^{-1} h  \tau _{j} $ to be in $ \Oscr_{F} $, one easily deduces that $ \det ( k  ) \in \varpi \Oscr_{F}^{\times} $, a contradiction.    For instance,    $$  \tau_{1}^{-1} h \tau_{2}   =  \begin{pmatrix}  a &  * & * &  *  &   
\mfrac{a - d_{1} } { \varpi^{2}  }
& * 
  \\[0.1em] 
-  c    & * &  *  &  * 
  &    *       &  *    \\
  & *  & *  & *  &  *  &  *      \\[0.3em]
  c \varpi  &  &  &    *  &  
  \mfrac{c}{\varpi }
  &  \\[0.5em]  
  &  *  & *  &  * &  \mfrac{d_{1}}{\varpi}   & *   \\[0.5em] 
   & *   &  *    &  *   &   *   &  *     
\end{pmatrix} $$
where a $ * $ denotes an expression in the matrix entries of  $ h $ and the empty spaces are zeros.  From the entries displayed above, we see that $ a , c  \in \varpi \Oscr_{F} $ and so the  first column is an integral  multiple of $ \varpi $.      
\end{proof}  

\begin{remark}   \label{Schroder}  
  This also follows by an analogue of Schr\"oder's decomposition  proved in \cite[Theorem 12.1] {Endoscopy}. 
\end{remark}

\begin{notation}   \label{notationT} We let $ W^{\circ}  \subset W'  $ be the Coxeter subgroup generated by $ T : = 
 S_{\mathrm{aff}}'  \backslash   \left \{ s_{0} , r_{1}  \right \} $ and $  U^{\circ}  =  I'
W^{\circ}     I'$  the  corresponding maximal parahoric subgroup  of $H'$.    We let $ \lambda_{\circ}  = (1, 1, 1, 1) $ and  $ \tau_{\circ}  = \varpi^{-\lambda_{\circ}  } \tau_{1} $.  
\end{notation}
As usual,  we denote $ \Ht_{\tau_{\circ} } : = \Ht\cap \tau_{\circ}  K \tau_{\circ} ^{-1} $. Then $ H_{\tau_{1} } ' $ is the conjugate of $ H_{\tau_{\circ}} '  $ by $ 
\varpi^{\lambda_{\circ}} $.  Note that 
$ U^{\circ} $ is exactly the  subgroup of $ \Ht $ whose elements lie in $$    \begin{pmatrix}  \Oscr_{F} &  \varpi^{-1} \Oscr_{F}  \\   \varpi  \Oscr_{F}  &  \Oscr_{F}      \end{pmatrix}   \times  \begin{pmatrix} \Oscr_{F}  & \Oscr_{F} &  \varpi^{-1} \Oscr_{F}  &    \Oscr_{F}   \\ 
\varpi \Oscr_{F}   &  \Oscr_{F} &  \Oscr_{F}  &  \Oscr_{F}   \\    \varpi  \Oscr_{F}  &   \varpi     \Oscr_{F}  &   \Oscr_{F}   &   \varpi     \Oscr_{F}  \\
\varpi \Oscr_{F}  &  \Oscr_{F}  & \Oscr_{F} & \Oscr_{F}  
 \end{pmatrix}   $$ 
 and whose similitude is in $ \Oscr_{F}^{\times} $.    
\begin{lemma}   \label{Httau1}   $ \Ht_{\tau_{\circ} } $ is a subgroup of  $ U^{\circ}    $ and $ \pr'_{2}( \Ht_{\tau_{\circ}} )  = \pr'_{2}( U^{\circ} ) $. 
\end{lemma}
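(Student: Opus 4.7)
The plan is to carry out a direct matrix calculation, exploiting the fact that $\tau_\circ = \varpi^{-\lambda_\circ}\tau_1$ is \emph{unipotent}: writing $\tau_\circ = I_6 + \varpi^{-1} N$ with $N = E_{1,5} + E_{2,4}$, one has $N^2 = 0$, and therefore for any $h \in H'$
$$\tau_\circ^{-1} h \tau_\circ \;=\; h + \varpi^{-1}(hN - Nh) - \varpi^{-2}\, N h N.$$
Since $N$ has only two non-zero entries, at most rows $1,2$ and columns $4,5$ of the conjugate $k := \tau_\circ^{-1} h \tau_\circ$ differ from the corresponding entries of $h$. Writing $h$ in the block form of Notation \ref{notationh'}, the affected entries of $k$ can be read off explicitly; for instance $k_{1,5} = \varpi^{-1}(a - d_1)$, $k_{2,4} = \varpi^{-1}(a_1 - d)$, $k_{1,4} = b - \varpi^{-2} c_1$, $k_{2,5} = b_1 - \varpi^{-2} c$, $k_{3,4} = \varpi^{-1} a_3$, $k_{6,4} = \varpi^{-1} c_3$, and all remaining entries of $k$ equal those of $h$.

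First I would impose $k \in K$ and compare entries one at a time. This produces two kinds of constraints: the valuation conditions that define $U^\circ$ on each individual entry of $h$ (for example $c \in \varpi \Oscr_F$ from $k_{2,1}$, $a_3 \in \varpi\Oscr_F$ from $k_{3,4}$, and so on), together with the congruences
\begin{equation} \label{eq:planCong}
a \equiv d_1,\ d \equiv a_1 \pmod{\varpi}, \qquad \varpi^2 b \equiv c_1,\ \varpi^2 b_1 \equiv c \pmod{\varpi^2}
\end{equation}
coupling the $\GL_2$- and $\mathrm{GSp}_4$-factors (read off from $k_{1,4},k_{1,5},k_{2,4},k_{2,5}$). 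Because $\mathrm{sim}(\tau_\circ) = 1$, the unit-similitude condition on $h$ is automatic from $k \in K$. Together these checks give $\Ht_{\tau_\circ} \subseteq U^\circ$.

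For the surjectivity of $\pr'_2$, I would fix $h_2 \in \pr'_2(U^\circ)$ with similitude $s \in \Oscr_F^\times$ and take the tautological candidate $\GL_2$-factor $h_1^{(0)}$ whose entries $(a,b,c,d) = (d_1,\, \varpi^{-2} c_1,\, \varpi^2 b_1,\, a_1)$ saturate \eqref{eq:planCong}. A brief computation using the $(1,1)$-entry of the symplectic identity $A^t D - C^t B = s\, I_2$ for $h_2$ then yields $\det h_1^{(0)} = s + \xi$ with $\xi := c_3 b_3 - a_3 d_3 \in \varpi \Oscr_F$. It remains to perturb $h_1^{(0)}$ within the kernel of \eqref{eq:planCong}---by adding $(\alpha,\delta,\beta,\gamma) \in \varpi\Oscr_F \times \varpi\Oscr_F \times \Oscr_F \times \varpi^2\Oscr_F$ to $(a,d,b,c)$---so that the resulting determinant becomes exactly $s$.

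The main obstacle will be this final perturbation, which requires a short case analysis. If at least one of $a_1, d_1$ is a unit, one absorbs $\xi$ by adjusting $d$ or $a$ alone, solving a one-variable linear equation with right-hand side in $\varpi\Oscr_F$. If instead $a_1, d_1 \in \varpi\Oscr_F$, the $(1,1)$-identity $a_1 d_1 + a_3 d_3 - c_1 b_1 - c_3 b_3 = s$ forces $c_1 b_1 \in \Oscr_F^\times$, hence $b_1 \in \varpi^{-1}\Oscr_F^\times$; then $c = \varpi^2 b_1$ is a uniformizer times a unit, and the single scalar perturbation $\beta = -\xi/c \in \Oscr_F$ absorbs $\xi$. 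In every case a valid $h_1 \in \GL_2(F)$ with $(h_1,h_2) \in \Ht_{\tau_\circ}$ is produced, establishing $\pr'_2(\Ht_{\tau_\circ}) = \pr'_2(U^\circ)$ and completing the argument.
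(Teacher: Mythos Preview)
Your proof is correct and follows essentially the same approach as the paper: both compute the conjugate $\tau_\circ^{-1} h \tau_\circ$ explicitly and read off the entrywise valuation conditions to obtain $H'_{\tau_\circ} \subseteq U^\circ$, and both then lift a given $h_2 \in \pr'_2(U^\circ)$ by producing an $h_1$ satisfying the congruences you label \eqref{eq:planCong} together with $\det h_1 = \mathrm{sim}(h_2)$. The paper simply asserts that such $a',b',c',d'$ ``may be found'' (after observing $\mathrm{sim}(h_2) \equiv a_1 d_1 - b_1 c_1 \pmod{\varpi}$), whereas you actually supply the explicit perturbation argument with the two-case analysis on whether $a_1$ or $d_1$ is a unit; so your write-up is in fact more complete on this point than the paper's.
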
    
\begin{proof} Let $ h \in  H _ { \tau_{\circ} }  '  $ and write $ h $ as in Notation \ref{notationh'}. Then   
 \begin{equation*}  
 \label{tau1hexpr}  \tau_{\circ}^{-1} h \tau_{\circ}     =   \begin{pmatrix} 
a &  -   \mfrac{c_{1}}{\varpi}     & -\mfrac{c_{2}}{\varpi} &  b -  \frac{c_{1}   }{\varpi^{2}} &  \mfrac{a-d_{1}}{\varpi }  &   - \mfrac{d_{2}}{\varpi}      \\[0.4em]  
-  \mfrac{c}{\varpi}    &  a_{1}   &   a_{2}     &  \mfrac{a_{1} - d }{  \varpi  }    &   b_{1}  -  \mfrac{c} { \varpi^{2}} &    b_{2}        \\[0.4em]
& a_{3}     &  a_{4}   &    \frac{a_{3}} {  \varpi  }   &    b_{3}    &   b_{4}       \\[0.4em]     
c      &   &   &   d  &    
  \mfrac{c}{\varpi}      &  \\[0.4em] 
&  c_{1}  & c_{2}    &    \mfrac{  c_{1}   } {  \varpi   }        & d_{1} & d_{2}   \\[0.4em]         
& c_{3}        & c_{4}    &     \mfrac{  c_{3 }  }  { \varpi }   &      d_{3}        &  d_{4}      
\end{pmatrix}  \in  K    
\end{equation*}   
From  the  matrix  above, one sees that $ h $ satisfies all the conditions that are satisfied by elements of    $ U^{\circ}   $, e.g., $ c \in \varpi \Oscr_{F} $ and  $ b \in \varpi^{-1} \Oscr_{F} $ and $ \det(h) = \det(\tau_{\circ} h \tau_{\circ}^{-1}  ) \in \det(K) \subset \Oscr_{F}^{\times}$. Therefore   $ \Ht_{\tau_{\circ}}  \subset  U^{\circ}  $. In  particular,  $  \pr_{2}'( \Ht_{\tau_{\circ}} ) \subseteq  \pr_{2}   '  ( U^{\circ} ) $. To see the reverse inclusion, say $  h  = ( h_{1} , h_{2} )  \in  U^{\circ}  $   
and again  write $ h $ as in Notation \ref{notationh'}.  Clearly, 
$ a_{1} d_{1} - b_{1} c_{1} \in \Oscr_{F} $.
Since \begin{align*}   \mathrm{sim}(h_{2}) &  =  a_{1} d_{1} - b_{1} c_{1} + a_{3}d_{3} - b_{3} c_{3} \\
      & \in  a_{1} d_{1} - b_{1} c_{1} + \varpi \Oscr_{F} ,
      \end{align*} 
we may find $ a', d' \in \Oscr_{F} $, $ b ' \in \varpi^{-1} \Oscr_{F} $ and $ c   '     \in \varpi \Oscr_{F}$ such that $ \frac{a' - d_{1}}{\varpi}, \frac{a_{1} - d' }{\varpi} $, $  b' - \frac{c_{1}}{\varpi^{2}} $, $   b_{1} -    \frac{ c'}{ \varpi^{2}} $ are all integral and $ a' d' - b ' c ' =  \mathrm{sim}(h_{2}) $.    Then $ h '   =  \left (   \left  (  \begin{smallmatrix}   a' & b' \\ c' &  d' \end{smallmatrix}   \right ) , h_{2} \right ) \in   \Ht_{\tau_{\circ}}  $ and $ \pr_{2}'(h') =  h_{2} $.       
\end{proof}

\begin{notation}  We let  $  {U}^{\ddagger}  \subset U' $   denote the compact open subgroup of all elements     whose  reduction modulo $ \varpi $  equals  $ \jmath(\mathbf{H}(\kay)) $.        
\end{notation} 

\begin{lemma}   \label{structureofHtau2}     $ \Ht_{\tau_{2}} $ is a subgroup of $  U'  $  and $ \pr_{2}(\tH_{\tau_{2}}) = \pr_{2} ( U^{\ddagger} ) $. 
\end{lemma}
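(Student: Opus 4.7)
The plan is to follow the template of Lemma \ref{Httau1}: for $h \in H'$ written as in Notation \ref{notationh'}, I would compute the matrix $k = \tau_2^{-1} h \tau_2$ entry by entry and read off the constraints imposed by the requirement $k \in K$. The matrix $\tau_2$ acts by scaling rows $1, 2$ (resp.\ columns $4, 5$) by $\varpi^{-1}$ on the left (resp.\ right) and rows $4,5$ (resp.\ columns $1,2$) by $\varpi$, while additionally introducing cross-terms coming from the off-diagonal entries at positions $(1,5)$ and $(2,4)$. A direct computation will therefore yield an explicit $6 \times 6$ matrix whose entries are $\Oscr_F$-linear combinations of the entries of $h$ divided by at most $\varpi^2$.

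Carrying this out, I expect to find that integrality of $k$ forces every entry of $h$ to lie in $\Oscr_F$ (so $h \in \GL_6(\Oscr_F)$), which, together with $\det(\tau_2) \in \Oscr_F^\times$ and $\det(k) \in \Oscr_F^\times$, yields $\mathrm{sim}(h) \in \Oscr_F^\times$; hence $h \in U'$. Moreover, the cross-term entries of $k$ arising from the positions $(1,5)$ and $(2,4)$ of $\tau_2$ will impose, in addition, that the eight ``cross'' entries $a_2, a_3, b_2, b_3, c_2, c_3, d_2, d_3$ of the $\mathrm{GSp}_4$ component of $h$ all lie in $\varpi \Oscr_F$, together with a collection of $\mathrm{mod\,}\varpi^2$ congruences pairing the entries $a, b, c, d$ of the $\GL_2$ component with $d_1, c_1, b_1, a_1$ respectively. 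The first of these conditions says exactly that the reduction of $h$ modulo $\varpi$ has block structure $\jmath(\mathbf{H}(\kay))$, so $h \in U^\ddagger$, establishing $\tH_{\tau_2} \subseteq U^\ddagger$ and hence $\pr_2(\tH_{\tau_2}) \subseteq \pr_2(U^\ddagger)$.

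For the reverse inclusion, given $h_2 \in \pr_2(U^\ddagger)$ I would construct a compatible $h_1 \in \GL_2(\Oscr_F)$ by starting from $h_1 \equiv \left(\begin{smallmatrix} d_1 & c_1 \\ b_1 & a_1 \end{smallmatrix}\right)$, which automatically satisfies the $\mathrm{mod\,}\varpi^2$ congruences. The compatibility requirement $\det(h_1) = \mathrm{sim}(h_2)$ can then be arranged by an $\varpi^2$-perturbation of one entry, since the $\mathrm{GSp}_4$ relation $\mathrm{sim}(h_2) = a_1 d_1 + a_3 d_3 - b_1 c_1 - b_3 c_3$ combined with $a_3, b_3, c_3, d_3 \in \varpi \Oscr_F$ (from $h_2 \in U^\ddagger$) gives $a_1 d_1 - b_1 c_1 \equiv \mathrm{sim}(h_2) \pmod{\varpi^2}$. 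This yields $(h_1, h_2) \in \tH_{\tau_2}$ with $\pr_2'(h_1, h_2) = h_2$.

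The main obstacle I anticipate is purely organizational: $\tau_2$ mixes six coordinates and produces both scaling factors and off-diagonal cross-terms, so the explicit form of $k$ is considerably bulkier than in Lemma \ref{Httau1}, and extracting the correct and complete list of integrality conditions—and in particular identifying precisely which entries land in $\varpi \Oscr_F$ versus $\Oscr_F$ versus are only constrained by a $\mathrm{mod\,}\varpi^2$ congruence—requires careful bookkeeping. No new conceptual ingredient beyond the techniques already used in Lemma \ref{Httau1} should be needed.
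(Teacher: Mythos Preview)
Your proposal is correct and follows essentially the same approach as the paper: an explicit computation of $\tau_2^{-1} h \tau_2$ yields the integrality of $h$ and the $\varpi$-divisibility of the eight cross entries $a_2,a_3,b_2,b_3,c_2,c_3,d_2,d_3$ (giving $\pr_2'(\tH_{\tau_2})\subseteq\pr_2'(U^\ddagger)$), together with the $\bmod\,\varpi^2$ congruences $a\equiv d_1$, $b\equiv c_1$, $c\equiv b_1$, $d\equiv a_1$ that drive the reverse construction. The only cosmetic difference is that the paper deduces $\mathrm{sim}(h)\in\Oscr_F^\times$ from compactness of $\tH_{\tau_2}$ rather than from determinants.
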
  
\begin{proof}  If we write $ h \in H_{\tau_{2}}' $ as in  \ref{notationh'}, then 
\begin{equation*}  \label{sig2hexpr}
\tau_{2}^{-1} h \tau_{2}   =   \begin{pmatrix} 
a &  -c_{1}  & -\mfrac{c_{2}}{\varpi} & \mfrac{b  -c_{1}   }{\varpi^{2}} &  \frac{a-d_{1}}{\varpi^{2}}  &   - \mfrac{d_{2}}{\varpi}      \\[0.5em]  
- c &  a_{1}   &   \mfrac{a_{2}}{\varpi}    &  \mfrac{a_{1} - d }{  \varpi ^{2}  }    &    \mfrac{b_{1} - c } { \varpi^{2}} &   \mfrac{  b_{2}   } { \varpi   }           \\[0.5em] 
& *  &  a_{4}   &    \mfrac{a_{3}} {  \varpi  }   &    \mfrac{  b_{3} } { \varpi }  &   b_{4}       \\[0.5em]    
 *   &   &   &   d  &   c    &  \\[0.3em]  
&  *   & *  &  *   & d_{1} & *        \\[0.5em]         
& *       & c_{4}    &     \mfrac{  c_{3 }  }  { \varpi }   &    \mfrac{  d_{3}  } { \varpi  }       &  d_{4}      
\end{pmatrix}  \in  K 
\end{equation*}   
From  the  matrix  above, one sees that all the entries of $ h $ are integral. Since $ H_{\tau_{2}} $ is compact, $ \mathrm{sim}(h) \in \Oscr_{F}^{\times} $ and so   $ h \in U'  $.  Similarly, it is easy to see from the matrix above that $ \pr_{2}' ( H_{\tau_{2}} ) \subset \pr_{2}'( U^{\ddagger} )   $.  For the reverse inclusion, say $ y \in \pr_{2}( U^{\ddagger})  $ is given. Choose any $ h \in H '  $ such that $  \pr'_{2}(h) = y $ and write $ h $ as in Notation \ref{notationh'}.   Then \begin{align*}  \mathrm{sim} (y)  &   =  a_{1} d_{1} -  b_{1}c_{1} + a_{3} d_{3} - b_{3} c_{3}   \\ &  \in  a_{1} d_{1} - b_{1} c_{1}  +   \varpi^{2}   \Oscr_{F}    
\end{align*}   
We may therefore find $ a', b', c' , d' \in \Oscr_{F} $ which are congruent to $ d_{1} $, $ c_{1} $, $ b_{1} $, $ a_{1} $ modulo $ \varpi^{2} $ such that $ a'd' - b'c' =   \mathrm{sim} (     y) $.  Then $ h ' =  \left (  \left ( \begin{smallmatrix}  a' & b' \\ c ' & d '  \end{smallmatrix}  \right ) ,   y  \right ) \in \Ht_{\tau_{2}} $ and $  \pr_{2} '  ( h')  = y   $. 
\end{proof} 
\begin{notation}  \label{Xscrnot} Let $  \jmath _{\tau} :  \GL_{2} \to \Hb $ be the embedding  given by   the  embedding  \begin{align*}   
\begin{pmatrix} a & b \\ c & d  \end{pmatrix}    
   \mapsto 
   \left ( \left ( \begin{matrix} a & b \\ c & d  \end{matrix} \right )  , \scalebox{1.1}{$ \left (   \begin{smallmatrix} \\  d &     &  c  \\  &   1 \\   b  &  &   a \\ & & &     ad    -  bc      \end{smallmatrix} \right )$} \right ) .    
\end{align*} 
We let  $ \mathscr{X}_{\tau} :=  \jmath_{\tau}(\GL_{2}(\Oscr_{F})  )  $ and $ \ess _{\tau}  \in \mathscr{X}_{\tau}  $ denote    $  \jmath_{\tau}  \left (  \begin{smallmatrix}  & 1 \\  1 &  \end{smallmatrix}  \right  )  $. 
\end{notation}    
\begin{lemma}   \label{Xscrembeds}  For $ i = 0, 1, 2 $, $ \mathscr{X}_{\tau}  $ is a subgroup of $   \Ht_{\tau_{i}}$.   In  particular,  $   \pr'_{1}( \Ht_{\tau_{i}})  =  \GL_{2}(\Oscr_{F} ) $.    
\end{lemma}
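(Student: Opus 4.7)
The plan is to verify directly, for each $i \in \{0,1,2\}$, the inclusion $\mathscr{X}_{\tau} \subseteq \Ht_{\tau_i}$; the equality of projections in the second sentence will then follow at once from this together with inclusions already recorded in earlier lemmas. The case $i = 0$ is trivial, since $\tau_0 = 1_G$ and the entries of $\jmath_\tau(g)$ are polynomials in the entries of $g$, with similitude $\det g \in \Oscr_F^{\times}$.

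For $i \in \{1, 2\}$, the strategy is a $3\times 3$ block computation. Writing $\tau_i = \bigl(\begin{smallmatrix} A_i & B_i \\ 0 & D_i \end{smallmatrix}\bigr)$ with $A_i, D_i$ diagonal and $B_i$ supported at a few off-diagonal entries, and $\jmath_\tau(g) = \bigl(\begin{smallmatrix} P & Q \\ R & S \end{smallmatrix}\bigr)$ with $P, Q, R, S$ the four diagonal $3\times 3$ blocks read off from Notation~\ref{Xscrnot}, three of the four blocks of $\tau_i^{-1} \jmath_\tau(g) \tau_i$ are visibly integral. The only delicate one is the upper-right block, which a priori carries negative powers of $\varpi$. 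For $\tau_1 = \bigl(\begin{smallmatrix} \varpi \cdot 1_3 & M \\ 0 & 1_3 \end{smallmatrix}\bigr)$ with $M_{12} = M_{21} = 1$ and all other entries of $M$ zero, this upper-right block works out to $\varpi^{-1}(PM + Q - MRM - MS)$, which we expect to vanish identically because the explicit diagonal forms of $P, Q, R, S$ force the two algebraic identities $PM = MS$ and $Q = MRM$. The case $\tau_2$ is analogous: its off-diagonal block $B_2$ is supported at two off-diagonal positions, and the same cancellation reduces to the parallel identities $P B_2 = B_2 D_2^{-1} S D_2$ and $Q D_2 = B_2 D_2^{-1} R B_2$. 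Integrality of the similitude $\det g$ then completes the verification that $\tau_i^{-1} \jmath_\tau(g) \tau_i \in K$.

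Granting $\mathscr{X}_\tau \subseteq \Ht_{\tau_i}$, applying $\pr_1'$ yields $\GL_2(\Oscr_F) = \pr_1'(\mathscr{X}_\tau) \subseteq \pr_1'(\Ht_{\tau_i})$. The reverse inclusion is immediate for $i = 0$; for $i = 2$ it follows from Lemma~\ref{structureofHtau2}, which places $\Ht_{\tau_2}$ inside $U'$; and for $i = 1$ it follows from Lemma~\ref{Httau1} together with the conjugacy $\Ht_{\tau_1} = \varpi^{\lambda_\circ} \Ht_{\tau_\circ} \varpi^{-\lambda_\circ}$, since the $\GL_2$-component of $\varpi^{\lambda_\circ}$ is $\mathrm{diag}(\varpi, 1)$ and conjugation by this element turns the Iwahori-type entries of $\pr_1'(U^\circ)$ into integral ones. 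The argument is essentially a bookkeeping exercise; the only nontrivial moment is the pair of algebraic identities needed to cancel the potential $\varpi^{-1}$ in the upper-right block, which is precisely where the special form of $\jmath_\tau$ is designed to produce an exact match.
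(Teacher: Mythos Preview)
Your argument is correct. For the first claim you take the same route as the paper --- a direct check that $\tau_i^{-1}\mathscr{X}_\tau\tau_i\subseteq K$ --- only spelling out the block calculation in more detail; the identities $PM=MS$ and $Q=MRM$ do hold for the diagonal blocks of $\jmath_\tau(g)$ and kill the upper-right block as you claim.

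For the second claim your approach diverges from the paper's. You obtain the reverse inclusion $\pr_1'(\Ht_{\tau_i})\subseteq\GL_2(\Oscr_F)$ by appealing to the structural Lemmas~\ref{Httau1} and~\ref{structureofHtau2} (together with the conjugation by $\varpi^{\lambda_\circ}$ for $i=1$). The paper instead observes that $\pr_1'(\Ht_{\tau_i})$ is a compact open subgroup of $\GL_2(F)$ containing $\GL_2(\Oscr_F)$, and invokes maximality of $\GL_2(\Oscr_F)$ among compact subgroups. The paper's argument is a single line and independent of the earlier lemmas; yours is more hands-on but has the minor advantage of not requiring any structure theory beyond what was already computed.
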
    
\begin{proof} The first claim is easily verified by checking that $ \tau_{i}^{-1} \mathscr{X}\tau_{i}  \subseteq K   $ for each $i $.  For the second, note that $ \pr_{1}'(H_{\tau_{i}}') $ are  compact open subgroups of $ H_{1} = \GL_{2}(F) $ that contains $  \GL_{2}(\Oscr_{F} )$ and  $ U_{1} = \GL_{2}(\Oscr_{F}) $ is a maximal compact open subgroup of $H_{1} $.        
\end{proof}  
\begin{corollary}   \label{sigepsientrycoro}      If $ h \in H_{\tau_{i}} $, $ a_{1} - d , a - d_{1} , b_{1} - c, b - c_{1} \in \varpi^{i}  \Oscr_{F} $. 
\end{corollary}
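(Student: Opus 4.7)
The plan is to dispose of each of the three cases $i=0,1,2$ separately by reading the required $\varpi^{i}$-divisibility directly off the explicit matrix conjugations already computed earlier in the section. Since $H \subset H'$, it suffices to prove the stronger statement for $h \in H'_{\tau_{i}}$ and all four cases reduce to bookkeeping on matrix entries.

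The case $i=0$ is immediate: $\tau_{0}=1_{G}$ gives $H'_{\tau_{0}} = H' \cap K = U'$, so every entry of $h$ lies in $\Oscr_{F}$ and the four differences trivially lie in $\Oscr_{F}=\varpi^{0}\Oscr_{F}$. For $i=2$, I would inspect the matrix $\tau_{2}^{-1} h\tau_{2}$ already displayed in the proof of Lemma~\ref{structureofHtau2}. Its $(1,4)$, $(1,5)$, $(2,4)$, $(2,5)$-entries are precisely
\[
\tfrac{b-c_{1}}{\varpi^{2}},\quad \tfrac{a-d_{1}}{\varpi^{2}},\quad \tfrac{a_{1}-d}{\varpi^{2}},\quad \tfrac{b_{1}-c}{\varpi^{2}},
\]
and since $\tau_{2}^{-1} h \tau_{2} \in K$, each of these lies in $\Oscr_{F}$, delivering the four $\varpi^{2}$-divisibilities at once.

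The case $i=1$ requires one extra bookkeeping step, because Lemma~\ref{Httau1} computes $\tau_{\circ}^{-1} h \tau_{\circ}$ rather than $\tau_{1}^{-1}h'\tau_{1}$. My plan is to use the identity $H'_{\tau_{1}} = \varpi^{\lambda_{\circ}} H'_{\tau_{\circ}} \varpi^{-\lambda_{\circ}}$ with $\varpi^{\lambda_{\circ}} = \mathrm{diag}(\varpi,\varpi,\varpi,1,1,1)$, which scales off-diagonal blocks by $\varpi^{\pm 1}$ and fixes the diagonal blocks. Given $h' \in H'_{\tau_{1}}$ with entries as in Notation~\ref{notationh'}, setting $h := \varpi^{-\lambda_{\circ}} h'\varpi^{\lambda_{\circ}} \in H'_{\tau_{\circ}}$ yields entries satisfying $a=a'$, $d=d'$, $a_{i}=a'_{i}$, $d_{i}=d'_{i}$, while $b'=\varpi b$, $b'_{i}=\varpi b_{i}$, $c'=\varpi^{-1} c$, $c'_{i}=\varpi^{-1} c_{i}$. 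Reading the $(1,5)$, $(2,4)$, $(1,4)$, $(2,5)$-entries of $\tau_{\circ}^{-1} h \tau_{\circ}$ from the proof of Lemma~\ref{Httau1} and imposing integrality gives
\[
\tfrac{a-d_{1}}{\varpi},\quad \tfrac{a_{1}-d}{\varpi},\quad b-\tfrac{c_{1}}{\varpi^{2}},\quad b_{1}-\tfrac{c}{\varpi^{2}} \in \Oscr_{F}.
\]
Multiplying the latter two by $\varpi$ and translating through the entry relations gives $a'-d'_{1}$, $a'_{1}-d'$, $b'-c'_{1} = \varpi(b-c_{1}/\varpi^{2})$, and $b'_{1}-c' = \varpi(b_{1}-c/\varpi^{2})$ all in $\varpi\Oscr_{F}$, completing the case $i=1$.

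I do not anticipate any real obstacle: the whole statement is a formal consequence of matrix displays already in the excerpt. The only subtlety is remembering to pass through $\varpi^{\lambda_{\circ}}$-conjugation in the $i=1$ case so as to land in the situation already analyzed by Lemma~\ref{Httau1}.
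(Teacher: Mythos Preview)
Your proof is correct and takes the same approach as the paper, which simply says ``Follows by matrix computations above.'' You have spelled out precisely which entries of the displayed conjugates $\tau_{\circ}^{-1}h\tau_{\circ}$ and $\tau_{2}^{-1}h\tau_{2}$ to inspect, and correctly handled the $\varpi^{\lambda_{\circ}}$-conjugation bookkeeping needed to pass from $H'_{\tau_{\circ}}$ to $H'_{\tau_{1}}$.
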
 
\begin{proof} Follows by   matrix  computations  above.  \end{proof} 

\subsection{Cartan decompositions}    \label{Cartansec}  Throughout this article, we  let    $ \varpi^{\Lambda } $ denote the subset $ \left \{ \varpi^{\lambda} \, | \, \lambda \in \Lambda \right \} $ of $ A $.  
For $ i = 0, 1, 2 $, define  \begin{align}  p_{i}  : \Lambda &  \to U ' \varpi^{\Lambda} \tau_{i}  K,  \quad \quad 
\lambda     \mapsto   U '   \varpi^{\lambda}  \tau_{i} K  . 
\end{align} 
By \cite[Lemma 5.9.2]{CZE}, we have an identification $ U ' \varpi^{\Lambda} H_{\tau_{i}} ' \xrightarrow{\sim}  U' \varpi^{\Lambda} \tau_{i} K $ given by $ U '  \varpi^{\lambda} \Ht_{\tau_{i} } \mapsto  U  ' \varpi^{\lambda} \tau_{i}  K $.  
So we may equivalently view $ p_{i} $ as a map to $ U ' \varpi^{\Lambda}   \Ht_{\tau_{i}} $. For $ i = 0 $, Cartan decomposition for $ H' $ implies the following. 
\begin{lemma}  \label{p0isabijection} $ p_{0} $ induces a bijection $  W' \backslash  \Lambda  \xrightarrow{\sim}  U' \varpi^{\lambda} K $.    
\end{lemma}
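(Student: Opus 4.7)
The plan is to deduce this from the classical Cartan decomposition applied to $H'$, using that the twist is trivial when $i = 0$.

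First, since $\tau_{0} = 1_{G}$, one has $H'_{\tau_{0}} = H' \cap K = U'$, so under the identification of \cite[Lemma 5.9.2]{CZE} recalled just above the statement, the target $U' \varpi^{\Lambda} \tau_{0} K$ corresponds to $U' \varpi^{\Lambda} U' \subseteq H'$, and $p_{0}$ is literally the map $\lambda \mapsto U' \varpi^{\lambda} U'$.

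Next, the group scheme $\mathbf{H}' = \GL_{2} \times_{\GG_{m}} \mathrm{GSp}_{4}$ is smooth, connected and reductive over $\Oscr_{F}$, so $U' = \mathbf{H}'(\Oscr_{F})$ is a hyperspecial maximal compact subgroup of $H'$. Moreover $\mathbf{A}$ is a maximal $F$-split torus of $\mathbf{H}'$ whose cocharacter lattice is (all of) $\Lambda$, since the embedding $\iota'$ factors $\mathbf{A} \hookrightarrow \mathbf{H}' \hookrightarrow \mathbf{G}$. The Cartan decomposition for the reductive $p$-adic group $H'$ relative to the hyperspecial $U'$ therefore reads
$$ H' \;=\; \bigsqcup_{\mu \in \Lambda^{+}_{H'}} U' \varpi^{\mu} U', $$
where $\Lambda^{+}_{H'}$ is any chosen set of representatives for $W' \backslash \Lambda$ (e.g.\ the cone of cocharacters dominant with respect to $\Delta_{H'} = \{\alpha_{0}, \alpha_{2}, \alpha_{3}\}$).

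Two consequences follow immediately. First, $U' \varpi^{\Lambda} U' = H'$, so $p_{0}$ is surjective. Second, $U' \varpi^{\lambda_{1}} U' = U' \varpi^{\lambda_{2}} U'$ if and only if $\lambda_{1}$ and $\lambda_{2}$ lie in the same $W'$-orbit. Together these show that $p_{0}$ descends through the $W'$-quotient to a bijection $W' \backslash \Lambda \xrightarrow{\sim} U' \varpi^{\Lambda} K$, as desired.

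There is no real obstacle here: the lemma is essentially just a repackaging of Cartan decomposition for $H'$, and the only verification required is that $U'$ is hyperspecial, which is built into the smooth integral model of $\mathbf{H}'$. (Contrast this with the cases $i = 1, 2$, treated later in \S\ref{U'orbitssec}, where $H'_{\tau_{i}}$ is a genuinely smaller subgroup of $U'$ and a naive Cartan decomposition no longer suffices.)
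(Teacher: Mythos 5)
Your proof is correct and takes the same route as the paper, which disposes of the case $i=0$ by invoking the Cartan decomposition for $H'$ with respect to the hyperspecial maximal compact $U'$. You have simply spelled out the reduction (via $\tau_{0}=1_{G}$ and \cite[Lemma 5.9.2]{CZE}) that the paper leaves implicit; the one inessential remark is that surjectivity is already built into the definition of the target $U'\varpi^{\Lambda}K$, so the Cartan decomposition is only genuinely needed for well-definedness modulo $W'$ and for injectivity.
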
 

Observe that $ \ess_{\tau}  \in N_{H'}(A^{\circ} ) $ is a lift of the element $ s_{0} r_{0}\in W ' $. 
Moreover $$    \begin{psmallmatrix}
 1&  &  &  
 \\ 
 &  \,  1&  &  \\ 
 &  &  0 &  & &  \varpi \\ 
 &  &  & 1  & & \\ 
 &  &  &  &   1  \\ 
 &  &  -\mfrac{1}{\varpi}&  &  & 0 
\end{psmallmatrix}  \in H_{\tau_{1}}' ,  \quad \quad    \quad \begin{psmallmatrix}
 1&  &  & 
 \\ 
 & 1&  &  \\ 
 &  & 0 &  & & \, 1  \\ 
 &  &  & \, 1 & &  \\ 
 &  &  &  & \,  1  \\ 
 &  & -1  &  &  & \,  0
\end{psmallmatrix} \in H_{\tau_{2}}' .   $$ 
Thus   $  p_{1} $ factors through $ \langle s_{0}r_{0}, t(-f_{3}) r_{2} \rangle \backslash \Lambda  $ and $  p_{2} $ factor through   $  \langle s_{0}r_{0}, 
  r_{2} \rangle  \backslash \Lambda $.  

\begin{lemma} For $ i = 1, 2 $, $ p_{i} ( \lambda) $ is distinct from $  p_{i} ( s_{0} \lambda ) $ if $ \lambda \notin \left \{ s_{0} \lambda ,   r_{0}  \lambda   \right \} $.            
\end{lemma}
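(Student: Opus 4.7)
The plan is to prove the contrapositive: assume $p_i(\lambda)=p_i(s_0\lambda)$ and show $\lambda\in\{s_0\lambda,r_0\lambda\}$. By \cite[Lemma 5.9.2]{CZE}, the hypothesis translates to the existence of $u\in U'$ and $h\in H_{\tau_i}'$ satisfying $\varpi^{s_0\lambda}=u\,\varpi^{\lambda}\,h$. I would then project this relation separately along the two factors of $H'=\GL_{2}\times_{\GG_m}\mathrm{GSp}_{4}$, using Lemma \ref{Xscrembeds} for the $\GL_{2}$ side (where $\pr_1'(H_{\tau_i}')=\GL_2(\Oscr_F)$) and Lemmas \ref{Httau1} and \ref{structureofHtau2} for the $\mathrm{GSp}_4$ side (which supply explicit descriptions of $\pr_2'(H_{\tau_i}')$).

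The $\GL_2$-projection, $\mathrm{diag}(\varpi^{a_0-a_1},\varpi^{a_1})=\pr_1'(u)\cdot\mathrm{diag}(\varpi^{a_1},\varpi^{a_0-a_1})\cdot\pr_1'(h)$, is always solvable by Cartan but pins down the shape of $\pr_1'(u)$: writing $\pr_1'(h)^{-1}=\left(\begin{smallmatrix}\alpha&\beta\\\gamma&\delta\end{smallmatrix}\right)$, one computes $\pr_1'(u)=\left(\begin{smallmatrix}\varpi^{-m}\alpha&\beta\\\gamma&\varpi^{m}\delta\end{smallmatrix}\right)$ with $m:=2a_1-a_0$, and integrality combined with $\det\in\Oscr_F^{\times}$ forces (for $m\neq 0$) one diagonal entry of $\pr_1'(h)^{-1}$ to have valuation $|m|$ and the anti-diagonal entries to be units. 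The $\mathrm{GSp}_4$-projection has a different flavor: since $s_0$ fixes $(a_0,a_2,a_3)$, one has $\pr_2'(\varpi^{s_0\lambda})=\pr_2'(\varpi^{\lambda})$, so the equation reduces to the conjugation constraint $\pr_2'(u)=\pr_2'(\varpi^{\lambda})\,\pr_2'(h)^{-1}\,\pr_2'(\varpi^{-\lambda})$. Demanding $\pr_2'(u)\in\mathrm{GSp}_4(\Oscr_F)$ imposes entry-wise valuation constraints on $\pr_2'(h)$ governed by the integer $n:=a_0-2a_2$ (and similarly $a_0-2a_3$ for $i=2$).

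The two projections are coupled via the fiber-product compatibility $\det\pr_1'(u)=\mathrm{sim}\,\pr_2'(u)\in\Oscr_F^{\times}$. If $\lambda\neq r_0\lambda$ (so $n\neq 0$), the $\mathrm{GSp}_4$ side enforces a valuation pattern on $\pr_2'(u)$ that the $\GL_2$ determinant can balance only when $m=0$; symmetrically, if $\lambda\neq s_0\lambda$ (so $m\neq 0$), one is forced to $n=0$. Either way $\lambda\in\{s_0\lambda,r_0\lambda\}$.

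The main obstacle is carrying out the $\mathrm{GSp}_4$-projection step: one has to track how the diagonal conjugation by $\pr_2'(\varpi^{\lambda})=\mathrm{diag}(\varpi^{a_2},\varpi^{a_3},\varpi^{a_0-a_2},\varpi^{a_0-a_3})$ interacts with the entry structure of $\pr_2'(H_{\tau_i}')$. For $i=1$ the parahoric $\pr_2'(U^{\circ})$ only involves congruences modulo $\varpi$, but for $i=2$ the deeper congruence subgroup $\pr_2'(U^{\ddagger})$ requires tracking modulo $\varpi^2$, which makes the bookkeeping more delicate. A case split on the signs of $m$, $n$, and (for $i=2$) $a_0-2a_3$ is unavoidable but remains routine once the symmetry among the $\mathrm{GSp}_4$-root directions is exploited.
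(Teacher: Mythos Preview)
There is a genuine gap: the similitude coupling you propose is too weak to yield a contradiction. Once you project onto the two factors, the $\mathrm{GSp}_4$-equation $\pr_2'(\varpi^{s_0\lambda})=\pr_2'(u)\,\pr_2'(\varpi^{\lambda})\,\pr_2'(h)$ reduces (as you note) to $\pr_2'(u)=\pr_2'(\varpi^{\lambda})\,\pr_2'(h)^{-1}\,\pr_2'(\varpi^{-\lambda})$, and this is \emph{always} solvable by taking $\pr_2'(h)=1$, $\pr_2'(u)=1$, regardless of $n=a_0-2a_2$. Likewise the $\GL_2$-equation is always solvable by Cartan. The fiber-product constraint $\det\pr_1'(u)=\mathrm{sim}\,\pr_2'(u)$ is then vacuous: both sides lie in $\Oscr_F^\times$ automatically (since $u\in U'$), and choosing $\pr_2'(h)=1$ forces nothing on $\pr_1'(h)$ beyond $\det\pr_1'(h)=1$, which is compatible with any $m\neq 0$. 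So your argument as stated produces no obstruction when $m,n\neq 0$.

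The missing ingredient is that $H_{\tau_i}'$ is \emph{not} the fiber product of its two projections: Corollary~\ref{sigepsientrycoro} gives the finer entry-wise constraints $a-d_1$, $a_1-d$, $b-c_1$, $b_1-c\in\varpi^i\Oscr_F$ linking the $\GL_2$-entries of $h$ directly to specific $\mathrm{GSp}_4$-entries. The paper's proof exploits exactly this. It first uses the $s_0r_0$-invariance of $p_i$ to reduce WLOG to $2a_1>a_0$ and $2a_2>a_0$, then writes $\gamma=\varpi^{-\lambda}h\varpi^{s_0\lambda}\in H_{\tau_i}'$ for $h\in U'$. The $(1,1)$-entry of $\gamma_1$ is $a\varpi^{a_0-2a_1}$, which by Lemma~\ref{Xscrembeds} forces $a\in\varpi\Oscr_F$; the $(3,1)$-entry of $\gamma_2$ is $c_1\varpi^{2a_2-a_0}$, and Corollary~\ref{sigepsientrycoro} then gives $b-c_1\varpi^{2a_2-a_0}\in\varpi^i\Oscr_F$, whence $b\in\varpi\Oscr_F$. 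Now $\det h_1=ad-bc\in\varpi\Oscr_F$ contradicts $h\in U'$. Your framework can be salvaged by replacing the similitude coupling with this entry coupling.
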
   
\begin{proof}  Write $ \lambda  = ( a_{0}, a_{1}, a_{2}, a_{3} ) $.  Since $ p_{i} $ factors through $ \langle s_{0} r_{0}  \rangle \backslash \Lambda  $, we may assume by replacing $ \lambda $ with $ s_{0}(\lambda) $ etc.,  that $  2 a_{1} \geq a_{0}$ and $ 2 a_{2} \geq a_{0}  $.   Then we need to show that $ U' 
 \varpi^{\lambda} H_{\tau_{i}} \neq U' \varpi^{s_{0}(\lambda)}  \Ht_{\tau_{i}  } $  whenever  $  2 p_{1}  > p_{0} $ and $ 2p_{2} >  p_{0}    $.  Assume on the contrary that  there  exists an $  h  \in  U'   $ such that $   \gamma  : =  \varpi^{-\lambda}  h  \varpi^{s_{0}(\lambda)}  \in  
 H_{\tau_{i}} $. Write $ h = (h_{1}, h_{2} ) $ as in Notation \ref{notationh'}. Then $  \gamma  = (\gamma_{1} ,  \gamma_{2} ) $ satisfies  
 $$   
 \gamma_{1} =  \begin{pmatrix}  a  \varpi^{p_{0} - 2p_{1}} &   b \\[0.2em] c  &  d  \varpi^{2p_{1} -  p_{0}}  \end{pmatrix}  , \quad \quad   \gamma _{2}  =  \begin{pmatrix}  *  &  *  & * &  * \\ 
 * & * & * & *   \\[0.2em]   c_{1} \varpi^{2p_{2} - p_{0}}  
 & *  & * & *  \\ * & *   & *  &  *      \end{pmatrix}  .  $$
Lemma \ref{Xscrembeds} implies that   $ a \varpi ^ { p_{0}  - 2p_{1}  }  \in  \Oscr_{F} $  and Corollary \ref{sigepsientrycoro} implies that  $ b - c_{1}  \varpi^{2p_{2} -  p_{0} } \in \varpi^{i} \Oscr_{F}  $. Thus $ a ,   b   \in \varpi \Oscr_{F} $. Since $ c, d \in \Oscr_{F} $ as $ h \in U' $, we see that $ \mathrm{sim}(h) =  \det(h_{1}) = ad - bc \in \varpi \Oscr_{F} $,  a  contradiction.   
\end{proof}    
Recall that $ \lambda_{\circ} \in \Lambda $ denotes the cocharacter $ (1,1,1,1) $.  
\begin{lemma}   \label{distinctWTorbits}      If  the $ W  ^ { \circ  }   $-orbits of $ \lambda + \lambda_{\circ} $ and $  \mu  + \lambda_{\circ}  $ are distinct,   $ p_{1}(\lambda ) $ is distinct from $ p_{1}( \mu  )  $.  
\end{lemma}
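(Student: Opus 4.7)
The strategy is to reduce the distinctness of $p_1(\lambda)$ and $p_1(\mu)$ to a comparison of parahoric double cosets in $H'$, and then invoke the Iwahori--Bruhat correspondence for $(U', U^{\circ})$.

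First I would use $\tau_1 = \varpi^{\lambda_{\circ}}\tau_{\circ}$ to rewrite $p_1(\lambda) = U'\varpi^{\lambda+\lambda_{\circ}}\tau_{\circ}K$. By the bijection $U'\varpi^{\Lambda}H_{\tau_{\circ}}' \xrightarrow{\sim} U'\varpi^{\Lambda}\tau_{\circ}K$ from \cite[Lemma 5.9.2]{CZE} (already invoked in the discussion preceding Lemma \ref{p0isabijection}), the equality $p_1(\lambda) = p_1(\mu)$ is equivalent to $U'\varpi^{\lambda+\lambda_{\circ}}H_{\tau_{\circ}}' = U'\varpi^{\mu+\lambda_{\circ}}H_{\tau_{\circ}}'$ inside $H'$.

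Next I would invoke Lemma \ref{Httau1}, which shows $H_{\tau_{\circ}}' \subseteq U^{\circ}$. The $H_{\tau_{\circ}}'$-double coset equality above thus forces the (potentially coarser) $U^{\circ}$-double coset equality $U'\varpi^{\lambda+\lambda_{\circ}}U^{\circ} = U'\varpi^{\mu+\lambda_{\circ}}U^{\circ}$. Since $U' = K \cap H'$ and $U^{\circ}$ are parahoric subgroups of $H'$ with associated Weyl groups $W'$ and $W^{\circ}$ respectively, standard Iwahori--Bruhat theory yields a canonical bijection $U'\backslash H'/U^{\circ} \xrightarrow{\sim} W'\backslash W_{I'}/W^{\circ}$, under which the class of $U'\varpi^{\nu}U^{\circ}$ corresponds to that of $t(\nu)$. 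Consequently $t(\lambda+\lambda_{\circ})$ and $t(\mu+\lambda_{\circ})$ must lie in a common $W'\cdot(-)\cdot W^{\circ}$ double coset of $W_{I'}$.

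Finally, using the semidirect product decomposition $W_{I'} = \Lambda \rtimes W'$, I would unwind this relation elementarily: $t(\mu+\lambda_{\circ}) \in W' \cdot t(\lambda+\lambda_{\circ}) \cdot W^{\circ}$ precisely when there exist $\bar{w}_1 \in W'$ and $t(\nu)\bar{w} \in W^{\circ}$ satisfying $\bar{w}_1 \bar{w} = 1$ and $\mu + \lambda_{\circ} = \bar{w}_1(\lambda + \lambda_{\circ} + \nu)$, which is exactly the condition that $\lambda+\lambda_{\circ}$ and $\mu+\lambda_{\circ}$ lie in the same $W^{\circ}$-orbit on $\Lambda$. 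Contrapositively, distinct orbits yield distinct $p_1$-images, completing the proof. The only nontrivial ingredient is the Iwahori--Bruhat identification for the non-hyperspecial parahoric $U^{\circ}$, which is standard Bruhat--Tits theory; the translation back to the stated orbit condition is purely formal once one unpacks the semidirect product structure of $W_{I'}$.
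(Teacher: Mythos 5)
Your proposal is correct and follows essentially the same route as the paper's proof. Both arguments reduce $p_{1}$ to the $\varpi^{\lambda_\circ}$-conjugate $H_{\tau_\circ}'$, invoke Lemma \ref{Httau1} to pass to the coarser $U^{\circ}$-double cosets, and then appeal to the parahoric double-coset bijection $W^{\circ}\backslash W_{I'}/W' \cong U^{\circ}\varpi^{\Lambda}U'$ (equivalently, your $U'\backslash H'/U^{\circ}\cong W'\backslash W_{I'}/W^{\circ}$) to translate distinctness of $W^{\circ}$-orbits in $\Lambda$ into distinctness of the double cosets. The only cosmetic discrepancy is a sign: under the paper's convention $\varpi^{\nu}\mapsto(t(-\nu),1)$, the class $U'\varpi^{\nu}U^{\circ}$ matches $t(-\nu)$ rather than $t(\nu)$ as you wrote, but this has no effect on the substance of the argument since the identification of $W^{\circ}$-orbits in $\Lambda$ is built precisely to absorb that sign.
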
    
\begin{proof} Since $ W^{\circ} $ is a  Coxeter subgroup of the Iwahori Weyl group, there is a bijection \begin{align*}    W^{\circ}  \backslash  W_{I'} / W'    & \xrightarrow {\sim}  U  ^  { \circ }  \varpi^{\Lambda} U'      
 \quad   \quad     W  ^ {\circ } w  W ' \mapsto U ^{\circ} w U ' .
 \intertext{Recall that we have an isomorphism $ W_{I'}   \simeq   \Lambda  \rtimes W  '  $ which sends $ \varpi^{\lambda} \in W_{I}' $ to $  (t(-\lambda), 1)  $. Via this isomorphism, we obtain  bijection $ W^{\circ} \backslash \Lambda \to U ^{\circ} \varpi^{\lambda} U' $ given by $ W ^{\circ} \lambda \mapsto  U ^{\circ} \varpi^{-\lambda} U ' $ and hence a bijection} 
 W^{\circ} \backslash  \Lambda  &   \xrightarrow{\sim}  U ' \varpi^{\lambda} U ^{\circ} , \quad 
 \quad   W^{\circ} \lambda \mapsto  U '\varpi^{\lambda} U ^{\circ}.
 \end{align*} 
Now $  H_{\tau_{\circ}}  \subset  U^{\circ}  $ by Lemma  \ref{Httau1}. So (the inverse of) the bijection above induces  a well-defined   surjection   $ U '  \varpi^{\Lambda}   H_{\tau_{\circ}} '  \to   U'   \varpi^{\Lambda}  U^{\circ}  \xrightarrow{\sim}   W^{\circ} 
     \backslash  \Lambda $. Thus if $ \lambda_{1}, \mu_{1} \in  \Lambda $ are in different $ W^{\circ} $-orbits, $ U ' \varpi^{\lambda_{1}} H_{\tau_{\circ}}'$ is distinct from $ U ' \varpi^{\mu_{1}} H_{\tau_{\circ}} ' $.  Now  apply this to $ \lambda_{1} : = \lambda  +  \lambda_{0} $ and $ \mu_{1} : = \mu + \lambda_{\circ}  $ and use that $ H _{ \tau_{1}} '  = \varpi^{\lambda_{\circ}} H_{\tau_{\circ}}  '\varpi^{-\lambda_{\circ}} $.        
\end{proof}

\begin{lemma} If the $ W' $-orbits of $ \lambda $, $ \mu $ are distinct, $ p_{2}(\lambda) $ is distinct from $ p_{2}(\mu) $. 
\end{lemma}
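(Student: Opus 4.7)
The plan is to combine the Cartan decomposition for $H'$ with the containment $H_{\tau_{2}}' \subset U'$ established in Lemma \ref{structureofHtau2}. The argument is parallel to that of Lemma \ref{distinctWTorbits}, but in fact considerably simpler, since here $H_{\tau_{2}}'$ sits inside the hyperspecial maximal compact $U'$ itself rather than in a more complicated (non-maximal) parahoric.

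Concretely, the first step is to invoke the identification $U' \varpi^{\Lambda} H_{\tau_{i}}' \xrightarrow{\sim} U' \varpi^{\Lambda} \tau_{i} K$ from \cite[Lemma 5.9.2]{CZE} (recalled before Lemma \ref{p0isabijection}), applied at $i = 2$, to reduce the claim to the statement that $U' \varpi^{\lambda} H_{\tau_{2}}' \neq U' \varpi^{\mu} H_{\tau_{2}}'$ whenever $W' \lambda \neq W' \mu$. Assume for contradiction that the two cosets coincide. Using $H_{\tau_{2}}' \subset U'$ from Lemma \ref{structureofHtau2}, this equality upgrades to $U' \varpi^{\lambda} U' = U' \varpi^{\mu} U'$, since a representative of the left-hand coset is visibly a product of three elements of $U'$ (namely some element of $U'$, a torus element, and an element of $H_{\tau_{2}}' \subset U'$).

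The final step is to apply the classical Cartan decomposition to the reductive group $H' = \mathrm{GL}_{2} \times_{\mathbb{G}_{m}} \mathrm{GSp}_{4}$ over $F$ with respect to its hyperspecial maximal compact $U' = H' \cap K$: since $\mathbf{A} \subset \mathbf{H}'$ is a maximal $F$-split torus with Weyl group $W'$ (as recorded in \S\ref{U'orbitssec}), the decomposition yields a bijection $W' \backslash \Lambda \xrightarrow{\sim} U' \backslash H' / U'$ sending $W' \lambda$ to $U' \varpi^{\lambda} U'$. Hence $U' \varpi^{\lambda} U' = U' \varpi^{\mu} U'$ forces $W' \lambda = W' \mu$, contradicting the hypothesis and completing the proof. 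There is no real obstacle here: once Lemma \ref{structureofHtau2} is in hand, the argument is immediate, and essentially the same one-line proof would suffice for Lemma \ref{p0isabijection} as well.
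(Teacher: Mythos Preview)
Your proof is correct and follows essentially the same approach as the paper: the paper's one-line argument ``This follows similarly since $H_{\tau_{2}}' \subset U'$'' is precisely what you have spelled out, namely that the containment $H_{\tau_{2}}' \subset U'$ from Lemma~\ref{structureofHtau2} gives a well-defined surjection $U' \varpi^{\Lambda} H_{\tau_{2}}' \to U' \varpi^{\Lambda} U' \simeq W' \backslash \Lambda$, the last bijection being Cartan decomposition for $H'$.
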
 

\begin{proof} This follows similarly since $  H_{\tau_{2}} '    \subset   U' $. 
\end{proof}

\begin{notation} We denote  $ W_{\tau_{1}} ' = \langle s_{0} r_{0} , t(-f_{3}) r_{2}  \rangle \subset W_{I'} $  and  $  W_{\tau_{2}}' = \langle  s_{0} r_{0} , r_{2} \rangle $. We also denote $ W' $ by $ W_{\tau_{1}}' $ for consistency.  
\end{notation}
\begin{proposition}    \label{cartanhtaui}              For $ i = 0, 1, 2 $,  the maps $ p_{i}$ induce bijections $   W _ { \tau_{i} } '  \backslash  \Lambda    \xrightarrow { \sim } U   ' \varpi^{\lambda } \tau_{i}  K $. 
\end{proposition}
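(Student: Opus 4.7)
The case $i = 0$ is precisely Lemma \ref{p0isabijection}. For $i = 1, 2$, surjectivity of $p_i$ onto $U'\varpi^\Lambda\tau_i K$ is immediate from the bijection $U'\varpi^\Lambda H'_{\tau_i} \simeq U'\varpi^\Lambda\tau_iK$ of \cite[Lemma 5.9.2]{CZE}. Factorization through $W'_{\tau_i}\backslash\Lambda$ is the content of the assertions made immediately before Lemma \ref{distinctWTorbits}: the element $\ess_\tau \in \mathscr{X}_\tau \subset H'_{\tau_i}$ (by Lemma \ref{Xscrembeds}) lifts $s_0 r_0$, while the two matrices displayed there lift $t(-f_3) r_2 \in W'_{\tau_1}$ and $r_2 \in W'_{\tau_2}$ inside $H'_{\tau_1}$ and $H'_{\tau_2}$ respectively. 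Using these lifts one verifies by direct conjugation that $U'\varpi^\lambda H'_{\tau_i} = U'\varpi^{w\cdot\lambda}H'_{\tau_i}$ for $w \in W'_{\tau_i}$ acting affinely on $\Lambda$ (so that e.g.\ $t(-f_3)r_2$ sends $\lambda$ to $r_2\lambda - f_3$).

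For injectivity when $i = 1$, assume $p_1(\lambda) = p_1(\mu)$. By Lemma \ref{distinctWTorbits}, $\lambda + \lambda_\circ$ and $\mu + \lambda_\circ$ lie in the same $W^\circ$-orbit. A direct calculation in $W_{I'} = \Lambda \rtimes W'$ using $s_0(\lambda_\circ) = \lambda_\circ - f_1$, $r_0(\lambda_\circ) = \lambda_\circ - f_2$, and $r_2(\lambda_\circ) = \lambda_\circ - f_3$ shows that $t(-\lambda_\circ) W^\circ t(\lambda_\circ) = \langle s_0, r_0, t(-f_3) r_2\rangle$, which contains $W'_{\tau_1}$ as an index-two subgroup whose non-trivial coset is represented by $s_0$. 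Hence either $\mu \in W'_{\tau_1}\lambda$ (and we are done), or $\mu \in W'_{\tau_1}(s_0\lambda)$, in which case $p_1(\lambda) = p_1(s_0\lambda)$. The lemma immediately preceding \ref{distinctWTorbits} then forces $s_0\lambda = \lambda$ or $r_0\lambda = \lambda$; in either case $s_0\lambda \in W'_{\tau_1}\lambda$ (the second via $s_0\lambda = s_0r_0\lambda$), so $\mu \in W'_{\tau_1}\lambda$ as required.

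For $i = 2$, the $W'$-separation lemma stated just before Proposition \ref{cartanhtaui} places $\mu$ in the $W'$-orbit of $\lambda$. Since $[W' : W'_{\tau_2}] = 4$ with coset representatives $\{1, s_0, r_1, s_0r_1\}$ (using $r_0 \equiv s_0 \pmod{W'_{\tau_2}}$), the analogous reduction requires showing that $p_2(w\lambda) = p_2(\lambda)$ forces $w\lambda \in W'_{\tau_2}\lambda$ for each $w \in \{s_0, r_1, s_0r_1\}$. The case $w = s_0$ is handled exactly as for $i = 1$. For $w \in \{r_1, s_0r_1\}$ I would establish analogous pointwise separation statements by imitating the proof of the lemma preceding \ref{distinctWTorbits}: expanding $\varpi^{-\lambda} h \varpi^{w\lambda}$ for $h \in U'$ and requiring the product to lie in $H'_{\tau_2} \subset U'$ extracts valuation obstructions from the matrix entries, which either force $\mathrm{sim}(h) \in \varpi\Oscr_F$ (contradicting $h \in U'$) or yield equalities among the coordinates of $\lambda$ that place $w\lambda$ back into $W'_{\tau_2}\lambda$. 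The main obstacle is precisely this case-by-case matrix computation for $r_1$ and $s_0r_1$; it is routine in spirit but more involved than the $s_0$-argument owing to the larger coset structure.
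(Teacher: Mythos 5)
Your handling of $i = 0$ and $i = 1$ is correct and reconstructs the paper's intended argument: the conjugation identity $t(-\lambda_\circ)W^\circ t(\lambda_\circ) = \langle s_0, r_0, t(-f_3)r_2\rangle$ is exactly right, as is the observation that $W'_{\tau_1}$ sits in it with index two and non-trivial coset represented by $s_0$, so that Lemma \ref{distinctWTorbits} together with the pointwise $s_0$-lemma closes the case. For $i = 2$ your coset analysis of $[W' : W'_{\tau_2}] = 4$ with representatives $\{1, s_0, r_1, s_0r_1\}$ is also correct, and you have identified a genuine terseness in the paper: the displayed lemmas prior to the proposition only separate the $s_0$-coset (via the $s_0$-lemma), and the coarser $W'$-separation lemma cannot by itself rule out $p_2(\lambda) = p_2(r_1\lambda)$ when $r_1\lambda \notin W'_{\tau_2}\lambda$.

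Where your sketch for $r_1$ and $s_0 r_1$ should be sharpened: you write that "imitating the proof of the lemma preceding \ref{distinctWTorbits}" will "force $\mathrm{sim}(h) \in \varpi\Oscr_F$," but the $s_0$-lemma's mechanism is entirely concentrated in the $\GL_2$-component (Lemma \ref{Xscrembeds} and Corollary \ref{sigepsientrycoro}), and this part gives nothing when $w = r_1$: since $r_1$ fixes $\pr_1(\lambda)$, the projection $\pr'_1(\gamma)$ is merely a diagonal conjugate of $\pr'_1(h)$, and in the critical case $2a_1 = a_0$ it is literally equal to $\pr'_1(h) \in \GL_2(\Oscr_F)$, so no entry-valuation constraint results. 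The correct obstruction comes from the $\mathrm{GSp}_4$-component and uses Lemma \ref{structureofHtau2}: requiring $\pr'_2(\gamma) \in E = \pr'_2(U^\ddagger)$ imposes both integrality and mod-$\varpi$ vanishing at the anti-block positions, and for $r_1\lambda \notin W'_{\tau_2}\lambda$ these conditions force an entire row of $\pr'_2(h)$ into $\varpi\Oscr_F$, whence $\det\pr'_2(h) = \mathrm{sim}(h)^2 \in \varpi\Oscr_F$, giving the desired contradiction (you can test this explicitly with $\lambda = (0,0,1,0)$). So your conclusion "$\mathrm{sim}(h) \in \varpi\Oscr_F$" is reachable, but via $\mathrm{sim}^2 = \det$ on the symplectic factor rather than $\mathrm{sim} = \det$ on the linear factor; making this explicit, together with the bookkeeping of when the constraints degenerate into the equalities $a_2 = a_3$ or $\{2a_1 = a_0,\ a_2 + a_3 = a_0\}$ (precisely the locus where $r_1\lambda \in W'_{\tau_2}\lambda$), would complete your proof. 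Note also that the paper only ever invokes the $i = 2$ case for $W'_{\tau_2}$-identifications (factorization, never separation of distinct $\tau_2$-representatives within a single $\mathscr{R}(w)$), which explains why the omission was harmless in context.
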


\begin{proof}  Follows  from  the results above. 
\end{proof}

\subsection{Schubert cells}  The decompositions of various double cosets is  accomplished by a recipe proved in \cite[\S 5]{CZE}. Below, we    provide its formulation in the special case of  $ G = \mathrm{GSp}_{6}(F) $.   

Recall that $ I $ denotes the Iwahori subgroup of $ G $ contained in $ U $ whose reduction modulo $ \varpi $ lies in the Borel of $ \Gb(\kay) $ determined by $ \Delta $.  For $ i = 0 ,1 ,2 ,3 $, let  
$ x_{i} : \GG_{a} \to \mathbf{G} $ denote the root group maps 
\begin{align*}  x_{0}   :      u \mapsto    \left (   \begin{smallmatrix}
1 &  &  & &  & \\  &  1 &  &  &  & \\ 
 &  &  1&  &  & \\ 
 \varpi u &  & &  1   &  & \\ 
 &  &  &  &1  & \\ 
 &  &  &  &  & 1
 \end{smallmatrix} \right ), \quad  \!    
    x_{1} : u  \mapsto            
 \begin{psmallmatrix}
 1&   u  &  &&  & \\[0.1em]  
 &   1 &  &  &  & \\ 
 &   &  1   &  &  & \\ 
&  &  & 1  &   & \\[0.1em]  
 &  &  &   - u &  1 &  \\ 
 &  &  &  &   &  1    \end{psmallmatrix}, \quad \!  x_{2 }  : u   \mapsto     \begin{psmallmatrix}
  1&  &  & &  & \\ 
 &   1&  u &  &  & \\[0.1em]  
 &  &   1&  &  & \\ 
 &  &  &  1 &  & \\ 
 &  &  &   &    1 & \\[0.1em]  
 &  &  &  &    -u &  1 
\end{psmallmatrix},  \quad  \!   
x_{3} :  u  \mapsto    
\begin{psmallmatrix}
1&  &  &  &  & \\ 
&   1&  &  &  & \\ 
&  &   1&  &  &   u \\ 
&  &  &  1&  & \\ 
&  &  &  &  1& \\ 
&  &  &  &  &  1
\end{psmallmatrix}   
\end{align*}
and let $ g_{i} : [ \kay ] \to G $ be the maps $ \kappa \mapsto x_{i}(\kappa ) w_{i} $. Then $ Iw_{i} I/ I =  \bigsqcup_{ \kappa \in [\kay] } g_{i}(\kappa) I $ for $ i = 0,  1, 2, 3 $.  
For $ w \in  W_{I}   $, choose a reduced word decomposition $ w=  s_{w,1} s_{w,2}\cdots s_{w, \ell(w) }  \rho_{w} $  where $   s_{w,i}  \in  S_{\mathrm{aff}} $, $ \rho_{w} \in  \Omega $ and  define  \begin{align*} \mathcal{X}_{w} :  [  \kay ] ^ { \ell(w) } &  \to  G \\ 
 (\kappa_{1} ,   \ldots,   \kappa_{\ell(w)}  )& \mapsto    g_{s_{w,1} } ( \kappa_{1} )  \cdots  g_{s_{w}, \ell(w) } ( \kappa_{\ell (w) }   )   \rho_{w}  
\end{align*}    
Here, we have suppressed the dependence on the choice of the reduced word decomposition in light of the following result, 
which is a consequence of the braid relations in Iwahori Hecke algebras.     
\begin{proposition}  \label{SchubertGSp6}   $  I w I = \displaystyle \bigsqcup  \nolimits  _ { \vec{\kappa} \in [\kay]^{\ell(w)}}  \mathcal{X}_{w}(\vec{\kay}) I  $. If $ w $ has minimal possible length in $ w W $, then $ I w K = \bigsqcup_{\vec{\kappa}   \in [\kay]^{\ell(w)} } \mathcal{X}_{w}(\vec{\kappa})K  $.     
\end{proposition}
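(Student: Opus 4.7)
The plan is to prove the two assertions in sequence: first the Iwahori double coset decomposition by induction on $\ell(w)$ using the standard Iwahori--Hecke multiplication rules, then the minimal-length statement by reducing to the first via the Bruhat decomposition of $K$.

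For the Iwahori part I would proceed by induction on $\ell(w)$. The base case $\ell(w)=0$ reduces to $w=\rho_w \in \Omega$, which normalizes $I$, so $IwI = wI = \mathcal{X}_w(\emptyset)I$. For the inductive step, factor a chosen reduced word as $w = s \cdot w'$ with $s = s_{w,1}$ and $w' = s_{w,2}\cdots s_{w,\ell(w)}\rho_w$; since $\ell(sw') = 1 + \ell(w')$, the length-additive multiplication rule for Iwahori double cosets gives $IwI = IsI \cdot Iw'I$, with the product a disjoint union modulo $I$. Combining the given decomposition $IsI = \bigsqcup_{\kappa \in [\kay]} g_s(\kappa) I$ with the inductive hypothesis applied to $w'$ yields the claimed disjoint parametrization of $IwI/I$ by $[\kay]^{\ell(w)}$. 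Independence from the choice of reduced word decomposition is essentially automatic: both sides are subsets of $IwI/I$, which has cardinality $q^{\ell(w)}$, and by the induction each candidate family of Schubert cell representatives contributes exactly $q^{\ell(w)}$ distinct $I$-cosets, hence must exhaust $IwI/I$.

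For the second claim I would invoke the Bruhat decomposition $K = \bigsqcup_{\sigma \in W} I\sigma I$, valid because $K$ is the stabilizer of a hyperspecial vertex in the Bruhat--Tits building of $\Gb$. Then $IwK = \bigcup_{\sigma \in W} IwI\sigma I$, and the minimality hypothesis $\ell(w\sigma) = \ell(w) + \ell(\sigma)$ for every $\sigma \in W$ lets one collapse $IwI\sigma I = Iw\sigma I$ by the length-additive multiplication rule. Since Bruhat cells attached to distinct Iwahori Weyl group elements are disjoint, this gives $IwK = \bigsqcup_{\sigma \in W} Iw\sigma I$, and therefore $|IwK/K| = |IwK/I|/|K/I| = q^{\ell(w)}$, matching $|[\kay]^{\ell(w)}|$. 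Injectivity of the map $\vec{\kappa} \mapsto \mathcal{X}_w(\vec{\kappa}) K$ then follows from the first part: if $\mathcal{X}_w(\vec{\kappa}) K = \mathcal{X}_w(\vec{\kappa}')K$, expanding $K$ along Bruhat cells gives $\mathcal{X}_w(\vec{\kappa}) I = \mathcal{X}_w(\vec{\kappa}')\sigma I \subset Iw\sigma I$ for some $\sigma \in W$; but the left side lies in $IwI/I$, forcing $\sigma = 1$ and then $\vec{\kappa} = \vec{\kappa}'$ by the first assertion. A cardinality count closes the argument.

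The main obstacle, though largely notational, is justifying that the parametrization $\mathcal{X}_w$ is well-defined independently of the chosen reduced word for $w$. While the $\tilde{C}_3$ braid relations and the commutation of $\Omega$ with the simple reflections can in principle produce different-looking Schubert representatives under different reduced words, both decompositions are disjoint unions of $q^{\ell(w)}$ $I$-cosets contained in the single double coset $IwI/I$ of the same cardinality, so they must coincide as sets. This cardinality-based shortcut avoids explicitly checking each braid move on the representatives and is what makes the proof clean.
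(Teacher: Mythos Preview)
Your proposal is correct and is the standard argument. The paper does not actually prove this proposition in the text: it is stated without proof, with the surrounding commentary noting that the decomposition recipe is ``proved in \cite[\S 5]{CZE}'' and that the well-definedness of the Schubert cell images modulo $I$ ``is a consequence of the braid relations in Iwahori Hecke algebras.'' Your induction on $\ell(w)$ via the length-additive multiplication $IwI = IsI \cdot Iw'I$, together with the cardinality count $|IwI/I| = q^{\ell(w)}$, is exactly what that remark unpacks to, and your passage from $I$-cosets to $K$-cosets via the Bruhat decomposition $K = \bigsqcup_{\sigma \in W} I\sigma I$ and the minimal-length condition $\ell(w\sigma) = \ell(w) + \ell(\sigma)$ is the standard route. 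One small tightening: in your injectivity step, the line ``$\mathcal{X}_w(\vec{\kappa}) I = \mathcal{X}_w(\vec{\kappa}')\sigma I$'' is not literally what you get from $\mathcal{X}_w(\vec{\kappa}) K = \mathcal{X}_w(\vec{\kappa}') K$; cleaner is to observe that the natural surjection $IwI/I \to IwK/K$ is between sets of the same size $q^{\ell(w)}$, hence a bijection, and then injectivity at the $K$-level follows immediately from the first assertion.
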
 
\noindent Thus the image of $ \mathcal{X}_{w} $ modulo $ I $ is independent of the choice of decomposition and we have     $ \left  |        \mathrm{im} ( \mathcal{X}_{w})I/I  \right    |     =  q ^ { \ell(w) } $. Moreover,  the  same facts  holds with right $ K $-cosets if $ w $ has the aforementioned   minimal length property. For such $ w $,  $  \ell(w) =    \ell_{\min}( t(-\lambda_{w} )  )    $ where $  \lambda_{w}    \in \Lambda $ is the   unique     cocharacter   such that $ w K  =  \varpi^{\lambda_{w}} K $. We refer to the image of $ \mathcal{X}_{w}$ as a \emph{Schubert cell} since these images are reminiscent of the Schubert cells that appear in the stratification of the classical  Grassmannians. 

Now given a $ \lambda \in \Lambda^{+}  $, a set of representatives for $ U' \backslash K \varpi^{\lambda} K / K $ can be obtained by studying $ U  '    $-orbits on a decomposition for $ K \varpi^{\lambda} K / K $. Let $ W^{\lambda} $ denote the stabilizer of $ \lambda $ in $ W $.  The next result shows that the study of such   orbits  amounts to studying $ U' $-orbits on certain Schubert cells. 

\begin{proposition} There exists a unique $ w =  w_{\lambda} \in W_{I} $ of minimal possible length such that $ K \varpi^{\lambda} K = K w  K $.  If $ [W / W^{\lambda}] $ denotes the set of minimal length representatives in $ W $ for $ W / W^{\lambda} $, then   
$$  K \varpi^{\lambda} K  = \bigsqcup_{\tau} \bigsqcup_{\vec{\kappa} \in [\kay]^{\ell(\tau w  )}  } \mathcal{X}_{\tau w  }( \vec{\kappa}  )    K  . $$
Moreover, $ \ell ( \tau w ) = \ell (\tau) + \ell(w) $ for all $ \tau \in   [ W / W^{\lambda} ]  $.  
\end{proposition}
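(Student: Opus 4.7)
The plan is to combine the Iwahori-Bruhat decomposition $K = \bigsqcup_{\tau \in W} I \tau I$ of the maximal parahoric with the Schubert cell decomposition of Proposition~\ref{SchubertGSp6}, once $w_\lambda$ has been produced as a minimal length double coset representative. The argument splits naturally into three steps.

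\emph{Existence and uniqueness of $w_\lambda$.} Since $K = IWI$, the equality $KwK = Kw'K$ in $G$ translates into equality of $(W,W)$-double cosets $WwW = Ww'W$ inside $W_I$. The subgroup $W \subset W_I = W_{\mathrm{aff}} \rtimes \Omega$ is a standard finite parabolic for the quasi-Coxeter structure on $W_I$ (with $W_{\mathrm{aff}}$ Coxeter and $\Omega$ acting by length-preserving diagram automorphisms), so standard Coxeter double coset theory produces a unique minimal length element $w_\lambda$ in the double coset $W t(-\lambda^{\mathrm{opp}}) W$, which is the one containing $\varpi^\lambda$. Its length is $\ell_{\min}(t(\lambda))$ by the remark following Lemma~\ref{lminwords}.

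\emph{Length additivity.} The Coxeter characterization of minimal $(W,W)$-double coset representatives says
\[
\{\tau \in W : \ell(\tau w_\lambda) = \ell(\tau) + \ell(w_\lambda)\} = [W / W^{w_\lambda}],
\]
where $W^{w_\lambda} := W \cap w_\lambda W w_\lambda^{-1}$. The heart of the step is to identify $W^{w_\lambda} = W^\lambda$. Writing $w_\lambda = t(-\lambda^{\mathrm{opp}}) v$ with $v \in W$ and using the identity $t(\mu)\sigma = \sigma \cdot t(\sigma^{-1}\mu)$ in $\Lambda \rtimes W$ gives
\[
w_\lambda^{-1}\sigma w_\lambda = (v^{-1}\sigma v) \cdot t\bigl(v^{-1}(\sigma^{-1}\lambda^{\mathrm{opp}} - \lambda^{\mathrm{opp}})\bigr),
\]
which lies in $W$ iff $\sigma \in W^{\lambda^{\mathrm{opp}}}$. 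Since the opposition involution is trivial for the finite root system $C_3$ (the longest element $w_0 \in W$ acts as the identity on the Coxeter-Dynkin diagram), conjugation by $w_0$ preserves every standard parabolic of $W$, and hence $W^{\lambda^{\mathrm{opp}}} = w_0 W^\lambda w_0^{-1} = W^\lambda$.

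\emph{Assembling the decomposition.} From $K = \bigsqcup_{\tau \in W} I\tau I$ one writes $K w_\lambda K = \bigcup_{\tau \in W} I \tau w_\lambda K$. The overlap condition $I\tau w_\lambda K = I\tau' w_\lambda K$ becomes $w_\lambda^{-1}(\tau')^{-1}\tau w_\lambda \in K \cap W_I = W$, which by the preceding step is equivalent to $(\tau')^{-1}\tau \in W^\lambda$. Hence the union is indexed without redundancy by $[W/W^\lambda]$. Length additivity further implies that $\tau w_\lambda$ has minimal length in $\tau w_\lambda W$ for each $\tau \in [W/W^\lambda]$, so the second clause of Proposition~\ref{SchubertGSp6} partitions each $I\tau w_\lambda K$ as $\bigsqcup_{\vec\kappa \in [\kay]^{\ell(\tau w_\lambda)}} \mathcal{X}_{\tau w_\lambda}(\vec\kappa) K$, yielding the desired global decomposition. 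The main obstacle is the identification $W^{w_\lambda} = W^\lambda$, which relies both on the explicit translation component of $w_\lambda$ recorded after Lemma~\ref{lminwords} and on the type-specific triviality of the opposition involution for $C_3$; once this is in place, the redundancy analysis and the final assembly proceed routinely from Proposition~\ref{SchubertGSp6}.
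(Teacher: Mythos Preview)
The paper does not give its own proof of this proposition; it is quoted as the $\mathrm{GSp}_6$ specialization of the general parahoric decomposition recipe established in \cite[\S 5]{CZE} (see the opening paragraph of \S\ref{U'orbitssec}). Your argument supplies exactly the standard BN-pair/Coxeter proof that underlies that recipe, and the three steps are substantively correct.

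Two imprecisions are worth tightening. First, the overlap clause ``$w_\lambda^{-1}(\tau')^{-1}\tau w_\lambda \in K \cap W_I = W$'' conflates the quotient $W_I = N_G(A)/A^\circ$ with a subset of $G$; what you are really invoking is the Iwahori--Bruhat bijection $I\backslash G/K \simeq W_I/W$, which gives $I\tau w_\lambda K = I\tau' w_\lambda K$ iff $\tau w_\lambda W = \tau' w_\lambda W$ in $W_I$. The passage from $K = \bigsqcup_{\tau} I\tau I$ to $Kw_\lambda K = \bigcup_\tau I\tau w_\lambda K$ likewise needs this bijection rather than a direct coset manipulation: one first decomposes $Kw_\lambda K$ into $I$--$K$ double cosets $I\sigma K$, reads off at the Weyl level that the minimal representatives $\sigma \in W_I/W$ occurring are exactly those in $Ww_\lambda W/W$, and then invokes the parabolic double coset normal form. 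Second, the claim that ``length additivity further implies $\tau w_\lambda$ has minimal length in $\tau w_\lambda W$'' does not follow from $\ell(\tau w_\lambda)=\ell(\tau)+\ell(w_\lambda)$ alone; it is the Howlett--Kilmoyer normal form (every element of $Ww_\lambda W$ factors uniquely as $u\, w_\lambda\, v$ with $u \in [W/W^{w_\lambda}]$, $v \in W$, and lengths adding), which should be cited directly. Neither issue affects the validity of the proof.
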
 
In what follows, we will write these Schubert cells for various words in $ W_{I} $. Note  $ W / W^{\lambda} $ is identified with the orbit  $   W \lambda $ of $ \lambda $.  The  set of possible reduced  words decompositions for $ \tau \in [W / W_{\lambda}] $ can be   visualized by a \emph{Weyl orbit diagram}. This is the Hasse diagram on the subset $ [ W / W^{\lambda} ] \subset W $ under the weak left Bruhat order. Via the bijection $ [W/ W^{\lambda} ] \simeq W\lambda $,  the nodes of this diagram can be viewed as elements of $ W \lambda $ and its edges are labelled by  one of the simple reflections in $ \Delta =  \left \{  s_{1} , s_{2} , s_{3}  \right \}  $.  The unique minimal element of this diagram is $ \lambda^{\mathrm{opp}} $ (the unique anti-dominant element in $ W\lambda $)  and  the unique maximal element in this diagram is $ \lambda $. 
\begin{example}   \label{Schubertexample}     Let $ \lambda = (2,2,1,1) $. Then $ \lambda ^ {  \mathrm{opp}} = (2,0,1,1) $ and the Weyl orbit diagram  is  \begin{center} \vspace{0.2em} 

\begin{tikzcd}
{(2,0,1,1)} \arrow[r, "s_1"] & {(2,1,0,1)} \arrow[r, "s_2"] & {(2,1,1,0)} \arrow[r, "s_3"] & {(2,1,1,2)}  \arrow[r, "s_2"] & {(2,1,2,1)}  \arrow[r, "s_1"] & {(2,2,1,1)}
\end{tikzcd}
\end{center} 
By Lemma \ref{lminwords}, we have $ w_{\lambda} = w_{0} \rho^{2} $. So the decomposition of $ K \varpi^{\lambda} K / K $ can be given by six Schubert cells, corresponding to the reduced words $$ w_{0} \rho^{2}, \quad  w_{1} w_{0} \rho^{2} , \quad  w_{2} w_{1} w_{0} \rho^{2},  \quad  w_{3} w_{2} w_{1} w_{0} \rho^{2},  \quad     w_{2}  w_{3} w_{2} w_{1} w_{0} \rho^{2} , \quad    w_{1}   w_{2}  w_{3} w_{2} w_{1} w_{0} \rho^{2} $$
which are obtained by  ``going down" the Weyl orbit diagram. Each cell down this diagram can be obtained from one preceding it by applying two elementary row operations, one for the reflection and one for the root group  map.    We also apply an optional column operation to ``match" the diagonal with  the value of the cocharacter at $ \varpi $ at each node (for aesthetic reasons).   For instance, let $ \varepsilon_{0} = w_{0} \rho^{2} $ and $ \varepsilon_{1} = w_{1} \varepsilon_{1} $. We have  \begin{align*} \mathrm{im}( \mathcal{X}_{\varepsilon_{0}}) K / K   &    =   \scalebox{0.8}{$ \Set*{   \left(\begin{array}{cccccc}
1   &     & &   & &    \\
 & \varpi  &  &  & & \\
 & &   \varpi  & & \\
 x \varpi & & &      \varpi^{2} &  &  \\
 &  &  &   &  \varpi    & \\
 & & &  &  & \varpi 
\end{array}\right)   K  \given  
x   \in  [\kay]    
}$} \\   
\mathrm{im}( \mathcal{X}_{\varepsilon_{1} })K/ K  & =   
  \scalebox{0.8}{$ \Set*{   \left(\begin{array}{cccccc}
 \varpi  &    a     & &   & &    \\
 & 1  &  &  & & \\
 & &   \varpi  & & \\
 & & &      \varpi &  &  \\
 & x \varpi     &  &  - a \varpi  &  \varpi^{2}     & \\
 & & &  &  & \varpi 
\end{array}\right)   K  \given  
a  ,    x  \in  [\kay]    
}.$}
\end{align*} 
Note that for $ \varepsilon =  w_{2} w_{3} w_{2} w_{1} w_{0} \rho^{2} $, our recipe gives 
$$ \mathrm{im}(\mathcal{X}_{\varepsilon}) K / K      
 =   
  \scalebox{0.8}{$ \Set*{   \left(\begin{array}{cccccc}
 \varpi  &   & &   & a  &    \\[0.2em] 
 &  \varpi^{2}  &   c_{1} \varpi   &  a   \varpi   &  z  + cc_{1} + \varpi x  & c \varpi   \\[0.2em] 
 & &   \varpi  & & \\
 & & &    
 \varpi &  &  \\[0.1em] 
 &   &  &  & 1     & \\
 & & &  & - c_{1}    & \varpi 
\end{array}\right)   K  \given  
a  , c, c_{1} ,     x  ,  z  \in  [\kay]    
}$}$$ 
However, we can replace $ z + cc_{1} + \varpi x $ with a variable $  y  $ running over $ [\kay_{2}] $, since for a fixed value of $ c $, $ c_{1} $ and $ a $,   the expression $ z + c c_{1} + \varpi x $ runs over such a set of representatives of $ \Oscr_{F} /\varpi^{2} \Oscr_{F }$  and a column operation between fifth and second columns allows us to choose any such set of representatives. In what follows, such replacements will be made without further  comment. 
\end{example} 

\noindent  \textit{Convention.} To save space, we will often write the descriptors of parameters below the Schubert cells rather than within the set.  We will also write $ \mathcal{X}_{\varepsilon}  $ for the Schubert cell  where we really mean $ \mathrm{im}(\mathcal{X}_{\varepsilon})K/K  $ and omit writing $ K $ next to the matrices.  When drawing Weyl orbit diagrams, we remove all the labels of the nodes as they can be read off by following the labels on the edges.  
\begin{proof}[Proof of Proposition \ref{GSp6decompose}]   That the listed representatives are distinct follows by Lemma   \ref{distincttaui} and Lemma \ref{cartanE}.     The goal therefore  is  to show that  the Schubert  cells reduce to the claimed  representatives in each case.   For each of the words $ w $, we will draw the Weyl orbit diagram beginning in the anti-dominant cocharacter $ \lambda_{w} $ associated with $ w $.  In   these   diagrams, we pick the first vertex and the vertices that only have one incoming arrow labelled $ s_{1} $ (all of which we mark on the diagrams)  and study the $ U'$-orbits on Schubert cells corresponding to these vertices. This suffices  since the orbits of $ U  '    $ on the remaining cells are contained in these by the recursive nature of the cell maps. 
We list all of the  relevant  cells and record all of our conclusions. However since the reduction steps involved are just  elementary row and column operations\footnote{row operations coming from $ \GL_{2}(\Oscr_{F}) \times_{\Oscr_{F}^{\times}} \mathrm{GSp}_{4}(\Oscr_{F})$ and column operations coming from $\mathrm{GSp}_{6}(\Oscr_{F})$},  we only provide detailed justifications for one cell in each case, and leave the remaining for the reader to verify (all of which are  completely  straightforward).    \\

\noindent $ \bullet  $ $ w  =  \rho  $. Here  $ \lambda_{w} = (1,0,0,0) $ and the Weyl orbit diagram is as follows.  
\begin{center}

\begin{tikzcd}
                      &                       &                                             & {} \arrow[rd, "s_{1}"  ]  &                       &                       &    \\
 \arrow[r, "s_{3}"] & {} \arrow[r, "s_{2}"] & {} \arrow[ru, "s_{3}"] \arrow[rd, "s_{1}"' , "\circ" marking   ] &                         & {} \arrow[r, "s_{2}"] & {} \arrow[r, "s_{3}"] & {}  &  &    \\
                      &                       &                                             &  \arrow[ru, "s_{3}"'] &                       &                       &   
\end{tikzcd}
\end{center} Thus there are two cells of interests, corresponding to the words $ \varepsilon_{0} = \rho $ and $ \varepsilon_{1} =  w_{1} w_{2}    w_{3}     \rho $. The cell $ \mathcal{X}_{\rho}  $   obviously reduces to $ \varpi^{(1,1,1,1)}$. As for $ \varepsilon_{1} $, we have      $$          \mathcal{X}_{ \varepsilon_{1}}   =   \scalebox{0.8}{$  \Set*{  
\left(\begin{array}{cccccc}
\varpi  & a & c & z &   & \\
& 1 &   & &  & \\
& & 1 & & & \\
& & & 1 & & \\
& & & -a & \varpi &\\
& & & - c & & \varpi 
\end{array}\right)   \given     a, c, z \in   [  \kay   ] }$}$$
We can eliminate $ z  $ via a row operation. Then we conjugate by reflections $ w_{3 } $ and $ v_{2} =  w_{2} w_{3} w_{2} $ to make the diagonal $ \varpi^{(1,1,1,1)} $ which puts the entries $ a $, $ c $ in the top right $ 3 \times 3 $ block. Conjugation by $ w_{1} $ switches $ a $, $ c $ and one execute Euclidean division (using row/column operations) to make one of $ a $ or $ c $ equal to zero. Conjugating by an  element of $ A^{\circ} $ if necessary, we get $ \varpi ^{(1,1,1,1)} $ or $ \tau_{1}  $ as  possible  representatives from this cell.      \\

\noindent  $  \bullet $ $ w = w_{0}  \rho^{2} $. The Weyl orbit diagram of $  \lambda_{w} =  (2,0,1,1) $ is 
\begin{center}    

\begin{tikzcd}
 \arrow[r, "s_{1}",  "\circ" marking] & \arrow[r, "s_{2}"] & {} \arrow[r, "s_{3}"] & {} \arrow[r, "s_{2}"] & {} \arrow[r, "s_{1}",   "\circ" marking  ] &  { }   
\end{tikzcd}
\end{center} There are three cells of interests corresponding to $ \varepsilon_{0} =  w_{0} \rho^{2} $, $ \varepsilon_{1} = w_{1}  \varepsilon_{0} $ and $ \varepsilon_{2} =   w_{1} w_{2} w_{3} w_{2}  \varepsilon_{1}  $.  The cells $ \mathcal{X}_{\varepsilon_{0}}  $, $ \mathcal{X}_{\varepsilon_{1}}  $ were recorded in Example \ref{Schubertexample} and 
$$   \mathcal{X}_{\varepsilon_{2}} = \scalebox{0.8}{$ \Set*{   \left(\begin{array}{cccccc}
\varpi ^{2}   &  a_{1} \varpi  & c_{1} \varpi &  z + \varpi x   & a \, \varpi  &  c  \, \varpi  \\
 & \varpi  &  & a & & \\
 & & \varpi  &c  & & \\
 & & & 1 &  &  \\
 &  &  &  -a_{1}  &   \varpi    & \\
 & & &  -c_1 &  & \varpi 
\end{array}\right)   \given  \begin{aligned} &  a ,  a_{1} ,  c , c_{1},  \\ &  \,   x,  z   \in  [\kay]   \end{aligned} 
}$}   .  $$
We claim that the $ U  '   $-orbits  on 
\begin{itemize}   [before = \vspace{\smallskipamount}, after =  \vspace{\smallskipamount}]    \setlength\itemsep{0.1em}  
\item  $ \mathcal{X}_{\varepsilon_{0}} $ are represented by  $ \varpi^{(2,2,1,1)} $, 
\item $ \mathcal{X}_{\varepsilon_{1}} $  
are represented by $ \varpi ^{(2,1,2,1) } $, $ \varpi^{(1,1,0,0)} \tau_{1}  $,    
\item   $ \mathcal{X}_{\varepsilon_{2}}$ are represented by $ \varpi^{(2,2,1,1) } ,  \varpi^{(1,1,0,0) }  \tau_{1} $.  
\end{itemize}
We record our steps for reducing $ \mathcal{X}_{\varepsilon_{2}} $.    Eliminate the entry $ z + \varpi x $ using a row operation. Conjugation by $ w_{3} \in U $ (resp., $ w_{2}w_{3}w_{2} \in U   '  $) switches $ a_{1} , a $ (resp., $ c_{1} , c $) and keeps the diagonal $ \varpi^{(2,2,1,1)} $. Using row/column operations, we may make one $ a, a_{1} $  (resp., $ c ,  c_{1} $) zero while still keeping the diagonal $ \varpi^{(2,2,1,1)} $. Without loss of generality,  assume $ a_{1} , c_{1} $ are zero.  Conjugation by $ w_{2} \in U   '   $  switches  $ a $, $ c $ and we may again apply row-column operations to make one of $ a $, $c $ zero, say $ c $. Normalizing by an appropriate  diagonal matrix in $ A^{\circ} $, we get the representatives $ \varpi^{(2,2,1,1)} $ or $ \varpi^{(1,1,0,0)} \tau_{1} $ depending on whether $ a = 0 $ or not. \\

\noindent $\bullet$ $ w = \upsilon_{1} \rho^{2}  $.  We have $ \lambda_{w} =    (2,0,0,1) $ and the Weyl orbit diagram is 
\begin{center}

\begin{tikzcd}
                      &                                            &                                            & {} \arrow[r, "s_{1}",   "\circ" marking  ]  & \arrow[rd, "s_{2}"]                     &                         &                       &    \\
  \arrow[r, "s_{2}"] & {} \arrow[r, "s_{3}"] \arrow[rd, "s_{1}"',  "\circ" marking  ] & {} \arrow[rd, "s_{1}"    ] \arrow[ru, "s_{2}"] &                        &                                            & {} \arrow[r, "s_{3}"]   & {} \arrow[r, "s_{2}"] & {} \\
                      &                                            &    \arrow[r, "s_{3}"']                     & {} \arrow[r, "s_{2}"'] & {} \arrow[ru, "s_{1}"] \arrow[r, "s_{3}"'] & {} \arrow[ru, "s_{1}"'] &                       &   
\end{tikzcd} 
\end{center} 
So we need to study the $ U$-orbits on the analyze Schubert cells corresponding to the words 
$
\varepsilon_{0} =  w $, $  
\varepsilon_{1} = w_{1} w_{2} w $ and $ 
\varepsilon_{2} = w_{1} w_{2} w_{3} w_{2} w . 
$. The cells corresponding to these words are  $$  \mathcal{X}_{\varepsilon_{0}}  =   \scalebox{0.8}{$\Set*{   \left(\begin{array}{cccccc}
1 & & & &  & \\
 & 1 &  &  &  &  \\
 &  & \varpi  & &  & \\
x_{1} \varpi  &  a \varpi  &  &  \varpi^{2} & & \\
a \varpi  & - x \varpi  &  & & \varpi^2  & \\
 &  &  &  &  & \varpi 
\end{array}\right)}$}, \quad \mathcal{X}_{\varepsilon_{1}}  =   \scalebox{0.8}{$\Set*{   \left(\begin{array}{cccccc}
\varpi & a_{1}  &  c  & &  & \\
 & 1 &  &  &  &  \\
 &  & 1  &  &  & \\
 &  &  & \varpi & & \\
 & x_{1} \varpi  &  a \varpi & -a_{1} \varpi &  \varpi  ^ { 2  }   & \\
 &  a \varpi  &  - x \varpi  & - c \varpi     &  &  \varpi ^{2} 
\end{array}\right)}$} $$ 
$$  \mathcal{X}_{\varepsilon_{2}}  =   \scalebox{0.8}{$\Set*{   \left(\begin{array}{cccccc}
\varpi^2  & a_1 +a\,\varpi  & c_1 \,\varpi  & z + \varpi \,x &  & c\,\varpi \\
 & 1 &  &  &  &  \\
 &  & \varpi  & c &  & \\
 &  &  & 1 & & \\
 &  x_1  \varpi   &  & -(a_1 +  a\,\varpi)  & \varpi^2  & \\
 &  &  & -c_1  &  & \varpi 
\end{array}\right)}$} $$  
where $ a, a_{1}, c , c_{1} , x , x_{1},   z \in [\kay] $. 
We claim that the $ U  '$-orbits  on 
\begin{itemize}   
[before = \vspace{\smallskipamount -1.25pt}, after =  \vspace{\smallskipamount-0.25pt}]    
\setlength\itemsep{0.1em}     
\item   $ \mathcal{X}_{\varepsilon_{0}} $ are given by $  \varpi^{(2,2,2,1)} $, $    \varpi^{(1,1,1,0)}  \tau_{1}    $, 
\item  $ \mathcal{X}_{\varepsilon_{1}} $ are given by $ \varpi^{(2,1,2,2)} $, $ \varpi^{(1,1,0,1)} \tau_{1}$, 
\item $ \mathcal{X}_{\varepsilon_{2}} $ are given by $   \varpi^{(2,2,2,1) }  ,  \varpi^{(1,1,1,0)} \tau_{1}   ,  \varpi^{(2,1,1,1)} \tau_{2} $.   \vspace{-0.1em}    
\end{itemize}   We record our analysis for $ \mathcal{X}_{\varepsilon_{2}} $. 
Begin by eliminating  the entries  $  z + \varpi x $ and $ x_{1} \varpi $ using row operations.   Conjugation by $  w_{3} \in  U '  $ switches $ c_{1} $, $ c $ while keeping the diagonal $ \varpi^{(2,2,0,1)} $ and we can apply row-column operations to make either $ c $ or $ c_{1 }$ zero, say $ c_{1} $. Conjugating by $  r_{0} =   w_{2} w_{3} w_{2}  \in  U    ' $, we arrive at   
$$ \scalebox{0.8}{$       \left(\begin{array}{cccccc}
\varpi^2  &  &   &  & a_1 +a\,\varpi  & c\,\varpi \\
 & \varpi^2  &  & a_1 +a\,\varpi  &  &  \\
 &  & \varpi  & c &   &  \\
 &  &  & 1 &  &  \\
 &  &  &  & 1 &  \\
 &  &  &   &  & \varpi 
\end{array}\right)   $}    $$
for some $ a, a_{1}, c \in [\kay]  $. We now divide in two case.   Suppose first that $ c $ is zero. Then $ (a_{1} + a \varpi ) $ is in $ \Oscr_{F}^{\times }$, $ \varpi \Oscr_{F}^{\times} $ or is equal to zero, and we  can normalize by conjugating with an element of $ A^{\circ} $ to get the representatives $ \varpi^{(2,2,2,1)} $, $ \varpi ^{(1,1,1,0)} \tau_{1} $, $ \varpi^{(2,1,1,1)}  \tau_{2} $.  Now suppose  that  $ c \neq 0 $.  Then we  may assume $ a = 0 $ by applying  row-column operations. If now  $ a_{1} \neq 0 $, we may make $ c = 0 $ and normalizing by $ A^{\circ}$  
leads us to the representative  $ \varpi^{(1,1,1,0)} \tau_{1} $. 
If $ a_{1} = 0 $ however, then conjugating by $ w_{2} $ and normalizing by $ A^{\circ}$ gives us the representative   $ \varpi^{(1,1,0,1)} \tau_{1} $. \\

\noindent   $   \bullet $  $ w =  \upsilon_{2}   \rho^{3}  $.  Here $ \lambda_{w} = (3,0,1,1) $ and the Weyl orbit diagram is    
\begin{center}

\begin{tikzcd}
                                            &                                              &                                            & {} \arrow[r, "s_{1}"]                        & {} \arrow[r, "s_{2}"]                       & {} \arrow[rd, "s_{3}"]  &                                             &                         &    \\
                                            &                                              & {} \arrow[ru, "s_{3}"] \arrow[r, "s_{1}", "\circ" marking   ] & {} \arrow[rd, "s_{2}"'] \arrow[ru, "s_{3}"'] &                                             &                         & {} \arrow[rd, "s_{2}"]                      &                         &    \\
                                            & {} \arrow[rd, "s_{1}"] \arrow[ru, "s_{2}"]   &                                            &                                              & {} \arrow[rd, "s_{3}"]                      &                         &                                             & {} \arrow[rd, "s_{1}"]  &    \\
 {}  \arrow[rd, "s_{1}"',  "\circ" marking]  \arrow[ru, "s_{3}"] &                                              & {} \arrow[r, "s_{2}"]                      & {} \arrow[rd, "s_{3}"'] \arrow[ru, "s_{1}"]  &                                             & {} \arrow[r, "s_{2}"]   & {} \arrow[rd, "s_{1}"'] \arrow[ru, "s_{3}"] &                         & {} \\
                                            &   {} \arrow[ru, "s_{3}"'] \arrow[rd, "s_{2}"'] &                                            &                                              & {} \arrow[ru, "s_{1}"'] \arrow[rd, "s_{2}"] &                         &                                             & {} \arrow[ru, "s_{3}"'] &    \\
                                            &                                              & {} \arrow[rd, "s_{3}"']                    &                                              &                                             & {} \arrow[r, "s_{1}"]   & {} \arrow[ru, "s_{2}"']                     &                         &    \\
                                            &                                              &                                            & {} \arrow[r, "s_{2}"']                       & {} \arrow[r, "s_{1}",swap,   "\circ" marking   ] \arrow[ru, "s_{3}"]  &    {} \arrow[ru, "s_{3}"'] &                                             &                         &   
\end{tikzcd}
\end{center}  
There are four cells of interest corresponding to words  $ \varepsilon_{0} =  w_{0} w_{1} w_{2} w_{3} \rho^{3} $, $   \varepsilon_{1} =  w_{1} \varepsilon_{0}  $, $  \varepsilon_{2} =  w_{1} w_{2}  w_{3}  \varepsilon_{0} $ and $   \varepsilon_{3} = w_{1}  w_{2} w_{3} w_{2}  w_{1} \varepsilon_{0} $.    Their   Schubert cells are  
\begin{align*}    
\mathcal{X}_{\varepsilon_{0}}  & = \scalebox{0.8}{$\Set*{\left(\begin{array}{cccccc}
1 &  &  &  &  &  \\
 & \varpi  &  &  &  &  \\
 &  & \varpi  &  &  &  \\
y\varpi  & a\,\varpi^2  & c\,\varpi^2  & \varpi^3  &  & \\
a\,\varpi  &  &  &  & \varpi^2  &  \\
c\,\varpi  &  &  &  &  & \varpi^2 
\end{array}\right) }$,} &  \! \!     &  \mathcal{X}_{\varepsilon_{2}} : = \scalebox{0.8}{$\Set*{  \left(\begin{array}{cccccc}     
\varpi^2  & a_1 +c\,\varpi  & c_1 \,\varpi  & \varpi \,z  &  &  \\
 & 1 &  &  &  &  \\
 &  & \varpi  &  &  &  \\
 &  & & \varpi  &  &  \\
 & - y\varpi & -a\,\varpi^2  & -\varpi \,{\left(a_1 +c\,\varpi \right)} & \varpi^3  &\\
 & -a\,\varpi  &  & -c_1 \,\varpi  &  & \varpi^2  
\end{array}\right)} $}   \\
\mathcal{X}_{\varepsilon_{1}} &  =    \scalebox{0.8}{$\Set*{\left(\begin{array}{cccccc}
\varpi  & a_1  &  &  &  &  \\
 & 1 &  &  &  &  \\
 &  & \varpi  &  &  &  \\
 & a\,\varpi  &  & \varpi^2  &  & \\
a\,\varpi^2  & y  \varpi  & c\,\varpi^2  & -a_1 \,\varpi^2  & \varpi^3  &\\
 & c\,\varpi  &  &  &  & \varpi^2
\end{array}\right) }$,} &  \! \!   &   \mathcal{X}_{\varepsilon_{3}} = \scalebox{0.8}{$\Set*{ \left(\begin{array}{cccccc}
\varpi^3  & a\,\varpi^2 +a_2 \,\varpi  & c\,\varpi^2 +c_2 \,\varpi  &   z   
& a_1 \,\varpi^2  & c_1 \,\varpi^2 \\ 
 & \varpi  &  & a_1  &  &  \\
 &  & \varpi  & c_1  &  &  \\
 &  &  & 1 &  &  \\
 &  &  & - (a_2 + a\,\varpi   )  & \varpi^2  &  \\
 &  &  & -( c_2  +  c\,\varpi  )  &  & \varpi^2 
\end{array}\right)}$}  
\end{align*}
where $ a, a_{1},  a_{2}  ,  c, c_{1}, c_{2}  \in [\kay] $, $   y \in [ \kay_{2}] $ and $ z \in [\kay_{3}] $.  Then we claim that the $ U' $-orbits on   
\begin{itemize} [before = \vspace{\smallskipamount-2pt}, after =  \vspace{\smallskipamount-2pt}]    \setlength\itemsep{0.1em}     
\item $ \mathcal{X}_{\varepsilon_{0}} $ are represented by $ \varpi^{(3,3,2,2)} $, $ \varpi^{(2,2,1,1)} \tau_{1} $, 
\item $ \mathcal{X}_{\varepsilon_{1}}  $ are represented by $  \varpi^{(3,2,3,2)} ,  \varpi^{(2,1,2,1) } \tau _{1} $,   
\item  $ \mathcal{X}_{\varepsilon_{2}} $ are represented by $ \varpi^{(3,2,3,2)}  $,  $  \varpi^{(2,1,2,1)}   \tau _{1} $, 
$  \varpi^{(2,1,1,2)} \tau _{1}  $,       $  \varpi^{(3,2,1,2)} \tau_{2} 
$, 
\item  $ \mathcal{X}_{\varepsilon_{3}} $ are 
represented by  $ \varpi^{ (3,3,2,2) }, \varpi^{(2,2,1,1) } \tau _{1} $,  $  \varpi^{(2,2,0,1)}  \tau  _{1} $,  $ \varpi^{ (3,2,1,2)  }  \tau  _{2} $.  
\vspace{-0.1em}    
\end{itemize} We record our reduction steps for $ \mathcal{X}_{\varepsilon_{3}} $. 
Begin by eliminating the entry $ y   $ by a  row operation.  Observe  that if $ a_{1} $ (resp., $ c_{1}) $ is not zero, then we can assume $ a $ (resp.,  $c$) is zero by row column operations. Moreover, conjugation by $ w_{2} $ switches the places of $ a,  a_{1},  a_{2} $ by $ c, c_{1}, c_{2} $ respectively  and   keeps  the  diagonal  $ \varpi ^ { ( 3,3,1,1)} $.    We have three cases to discuss.   \vspace{0.7em}    

\noindent  \textit{Case 1.} Suppose $ a_{1} = c_{1} = 0 $. Apply row column operations to replace $ a \varpi^{2} + a_{2} \varpi $, $ c \varpi^{2} + c_{2} \varpi $ by their greatest common divisor (with the other entry being zero).  Since we can  swap entries by $ w_{2} $, let's assume that  $ a \varpi^{2} + a_{2} \varpi = 0 $. We may normalize the gcd by an element of $ A^{\circ} $ so that the greatest common divisor is  $ 0 $ or $  \varpi $ or $ \varpi^{2} $.  Now conjugate by  $ s_{2} r_{0} r_{2} = s_{2} (s_{1} s_{0}s_{1}) s_{3}  \in  U  $ to   makes the diagonal $ \varpi^{(3,3,2,2)} $ and put the non-diagonal entries in right place.  Thus this case leads us to    representatives  $  \varpi^{(3,3,2,2)} \tau_{1} $, $ \varpi^{(3,3,2,2)} \tau_{2} $.   \vspace{0.7em}

\noindent    \textit{Case 2.}  Suppose exactly one of $ a_{1} $, $ c_{1} $ is non-zero. Since we can swap these, we may assume  wlog  $ a_{1} \neq 0 $, $ c_{1} = 0 $.  Then we are free to make  $ a = 0 $. Now if  $  a_{2} \neq 0 $, it  can be used to replace the entries  $ a_{1} , c , c_{2} $ by zero.   Conjugating by $ r_{0} r_{2} =  w_{2} w_{3} w_{2} w_{3} $ and normalizing by $ A^{\circ} $ gives us $ \varpi^{(3,3,2,2)} \tau_{2} $. If however $  a_{2} = 0 $, then we can conjugate by $ w_{3} $ to make the diagonal $ \varpi^{(3,3,1,2)} $ while moving the $ c \varpi^{2} + c_{2} \varpi $ entry corresponding to the root group of $ e_{1} + e_{3} - e_{0} $. As $ a_{1} \neq 0 $, we are free to eliminate $ c_{1} $.  There are now two further sub-cases. If $ c_{2} = 0 $, we obtain the representative $ \varpi^{(3,3,1,2)} \tau_{1} $ after normalizing by an element of $ A^{\circ} $. If however $ c_{2} \neq 0 $, we can replace $ a_{1} = 0 $ and  conjugating by $ w_{2} w_{3} \in U $ and normalizing by $ A^{\circ} $ gives us $ \varpi^{(3,3,2,2)} \tau_{2} $.  \vspace{0.7em}

\noindent   \textit{Case 3.}  Suppose both $ a_{1} $, $ c_{1} $ are non-zero.   Then we may assume $ a, c $ are zero. If $ a_{2} $ (resp., $c_{2}$) is not zero, we can  eliminate entries containing  $ a_{1} $ (resp.,  $c_{1} $).  Then an argument similar to Case 2 yields $ \varpi^{(3,3,2,2)} \tau_{2}  $, $ \varpi^{(3,3,1,2)}\tau_{1} $ as representatives.     \\ 

\noindent   $ \bullet     $      $ w =  \upsilon_{3}   \rho^{4} $.  The Weyl orbit diagram  for $  \lambda_{w}   =    (4,1,1,1) $ is the same as for $ (1,0,0,0) $ and so we have to analyze cells of length $  \varepsilon_{0} =  w_{0} w_{1} w_{0} w_{2} w_{1} w_{0} \rho^{4} $ and $ \varepsilon_{1} =  w_{1} w_{2} w_{3}  \varepsilon_{0} $. The two cells are as follows: 
\begin{align*} \mathcal{X}_{\varepsilon_{0}} &  =  \scalebox{0.8}{$  \Set*{\left(\begin{array}{cccccc}
1  &  &  &   &  &  \\
 &  1  &  &  &  &  \\
 &  &  1  &  &  &  \\
x_{2} \,\varpi  & a_1 \,\varpi  & c\,\varpi  & \varpi^2 &  &  \\
a_1 \,\varpi   & -  x_{1} \, \varpi  & - a\, \varpi  &  & \varpi^2  &  \\
c\,\varpi & -  a\, \varpi  & x 
 \,  \varpi  &  &  & \varpi^2 
\end{array}\right) \rho^{2} 
\given    \begin{aligned} &  a ,  a_{1} ,  c, x    \\
&   x_{1}, x_{2}  \in   [\kay ]  
\end{aligned}}  $} \\   \mathcal{X}_{\varepsilon_{1} }   & =    \scalebox{0.8}{$  \Set*{\left(\begin{array}{cccccc}
\varpi^2  &  a_2 + c \,\varpi &  c_{1}  +a \, \varpi  & z +  x\,\varpi  &  &  \\
& 1  & & & & \\
& & 1  & &  &  \\
& & & 1  &  &  \\
& - x_2  \,  \varpi  & - a_{1}  \varpi    & - ( a_{2} + c\,\varpi ) & \varpi^2  &  \\
& -  a_{1}  \varpi   & x_1  \,  \varpi  &  -  ( c_{1} +  a\,\varpi  )  &  & \varpi^2 
\end{array}\right) \rho^{2} \given  \begin{aligned}  &   a ,  a_{1}  , a_{2} ,  c, c_{1} , \\ &   x ,  x_{1}, x_{2},  z       \in   [\kay ]  
\end{aligned}   
}    $}  
\end{align*} We claim that   the $ U'$-orbits on 
\begin{itemize}  [before = \vspace{\smallskipamount-2pt}, after =  \vspace{\smallskipamount-2pt}]    \setlength\itemsep{0.1em}         
\item 
$\mathcal{X}_{\varepsilon_{0}} $ are given by  $  \varpi^{(4,3,3,3)} $, $ \varpi^{(3,2,2,2)}  \sigma _{1} $,   
\item   $ \mathcal{X}_{\varepsilon_{1}} $ are given $  \varpi^{(4,3,3,3)} $, $ \varpi^{(3,2,2, 2)}  \tau_{1} $, $ \varpi^{(4,2,2,3)}  \tau _{2} $,  \vspace{-0.1em}     
\end{itemize} 
We record our analysis for orbits on $ \mathcal{X}_{\varepsilon_{1}} $. 
We can eliminate the entries involving $ a_{1},  x , x_{1} , x_{2} , z $ using row  operations. Conjugating by $ w_{3 }$ and $ w_{2} w_{3} w_{2} $ gives us  $$ \scalebox{0.8}{$  \left(\begin{array}{cccccc}
\varpi^2  & & &  & a_2 +c\,\varpi  &  c_{1}  +  a\,\varpi \\
 & \varpi^2  &  & a_2 +c\,\varpi  &   &  \\
 &  & \varpi^2  & c_{1} +  a\,\varpi &  &  \\
 &  &  & 1 &  &  \\
 &  &  &  & 1 &  \\
 &  &  &  &  & 1
\end{array}\right)   \rho^{2}  $} $$ 
and one can apply Euclidean algorithm to the entries $ c_{1} + a \varpi $, $ a_{2} + c \varpi $ to replace one of them with $ 0 $ and the other by the greatest common divisor which is either $ 0, 1 $ or $ \varpi $.   Conjugating by $ w_{2} $ and normalizing by $ A^{\circ} $ if necessary, we obtain  the  three representatives.  \\ 

\noindent $  \bullet $ $  w =  
\upsilon_{4} \rho^{4} $. We have $ \lambda_{w} = (4,0,2,2) $ and the Weyl orbit diagram is the same as for $ (2,0, 1,1) $.  We need to analyze  the 
Schubert cells  corresponding to $ \varepsilon_{0} = w_{0} w_{1} w_{2} w_{3} w_{2} w_{1} w_{0}  \rho^{4} $, $ \varepsilon_{1} =  w_{1}  w $ and $ \varepsilon_{2} =   w_{1} w_{2} w_{3} w_{2} w_{1}  w $. These cells are 
$$  \mathcal{X}_{\varepsilon_{0}} =   \scalebox{0.8}{$\displaystyle \Set*{  \left(\begin{array}{cccccc}
1 &  &  &  &  & \\
a\,\varpi  & \varpi^2  &  &  &  & \\
c\,\varpi  &  & \varpi^2  &  &  & \\
y\varpi  & a_1 \,\varpi^3  & c_1 \,\varpi^3  & \varpi^4  & -a\,\varpi^3  & -c\,\varpi^3 \\
a_1 \,\varpi  &  &  &  & \varpi^2  & \\
c_1 \,\varpi  &  &  & & & \varpi^2 
\end{array}\right)}$}, \quad  \mathcal{X}_{\varepsilon_{1}} = \scalebox{0.8}{$ \Set*{ \left(\begin{array}{cccccc}
\varpi^2  & a_2 +a\,\varpi  &  &  &  &  \\
 & 1 &  &  &  & \\
 & c\,\varpi  & \varpi^2  &  &  &  \\
 & a_1 \,\varpi  &  & \varpi^2  &  & \\
a_1 \,\varpi^3  & y \, \varpi  & c_1 \,\varpi^3  & - (a_{2} + a \, \varpi) \varpi^{2}   & \varpi^4  & -c\,\varpi^3 \\
 & c_1 \,\varpi  &  &  &  & \varpi^2 
\end{array}\right)}$}     
$$
$$     \mathcal{X}_{\varepsilon_{2}} = \scalebox{0.8}{$ \Set*{ \left(\begin{array}{cccccc}  
\varpi^4  & a_1 \,\varpi^3 +a_3 \,\varpi^2  & c_1 \,\varpi^3 +c_3 \,\varpi^2  &   & a\,\varpi^3 +a_2 \,\varpi^2  & c\,\varpi^3 +c_2 \,\varpi^2 \\
 & \varpi^2  &   & a_2 +a\,\varpi  &   &  \\
  &   & \varpi^2  & c_2 +c\,\varpi  &   &  \\
  &   &  & 1 &  &  \\
 &   &   & -a_3 -a_1 \,\varpi  & \varpi^2  &  \\
 &  &  & -c_3 -c_1 \,\varpi  &   & \varpi^2       
\end{array}\right)}$}      $$ 
where $ a , a_{1}, a_{2}, a_{3},  c , c_{1} , c_{2} , c_{3} \in  [\kay] $ and $ y \in [\kay_{3}] $.  We claim that  the $ U ' $-orbits on  
\begin{itemize}   [before = \vspace{\smallskipamount}, after =  \vspace{\smallskipamount}]    \setlength\itemsep{0.1em}   
\item  $ \mathcal{X}_{\varepsilon_{0}} $   are given by  $  \varpi^{(4,4,2,2)} $, $ \varpi^{(3,3,1,1)} \tau _{1} $,  
\item $ \mathcal{X}_{\varepsilon_{1}} $ are given by 
$  \varpi^{(4, 2, 4, 2 )} $, $ \varpi^{(3,2,0,1)} \tau_{1}  $,  $  \varpi^{(4,3,1,2)}  \tau _{2}  $     
\item  $ \mathcal{X}_{\varepsilon_{2}} $ are given by   
$      \varpi^{(4,4,2,2)} $, $ \varpi^{(3,3,1,1)} \tau  _{1} $,  \vspace{-0.1em}   
\end{itemize}   
Let us record our steps for the reduction of $ \mathcal{X}_{\varepsilon_{1}} $. 
We begin by eliminating the entries involving $ y $, $ c$, $ c_{1} $  using row operations. If $ a_{1} = 0 $, then conjugating $  r_{0} = w_{2} w_{3} w_{2} $ and normalizing by an appropriate element of $ A^{\circ} $, we obtain     $ \varpi^{(4,2,4,2)} $, $ \varpi^{(3,1,3,1)} \tau_{1} $, $ \varpi^{(4,1,3,2)} \tau_{2} $ depending on the valuation of $ a_{2 } + a \varpi $. Now $$ U  ' \varpi^{(3,1,3,1)} \tau_{1} K =  U '  \varpi^{(3,2,0,1)} \tau_{2} K, \quad \quad U ' \varpi^{(4,1,3,2) }   K   =  U ' \varpi^{(4,3,1,2)}  \tau_{2} K $$ 
by Proposition \ref{cartanE}. If however $ a_{1} \neq 0  $, then $ a $ can be made zero via row-column operations. We then  have two  further 
subcases. If $ a_{2} = 0 $, then we can conjugate by $  s_{0} = 
s_{\alpha_{0}} $ and normalize by $ A^{\circ} $ to obtain $ \varpi^{(3,1,3,1)} \tau_{1} $ which is the same as $ \varpi^{(3,2,0,1)}$. On the other hand, if $ a_{2} \neq 0 $,  then $ a_{1} $ can be made zero and   normalizing by $ A^{\circ} $ gives $ \varpi^{(4,1,3,2)} \tau_{2} $ which is the same as $ \varpi^{(4,3,1,2)} \tau_{2} $.   
\end{proof}

\begin{remark} \label{formidableremark}    If one instead tries  to directly study the $ U $-orbits on the double cosets in the proof above, one needs to study far more Schubert cells and distinguish an enormous number of representatives from each other. For instance for $ w = \upsilon_{2} \rho^{3} $, one would need to study $12$  cells instead of $ 4 $.  
\end{remark}  
\section{Double cosets of  \texorpdfstring{$\GL_{2} \times \mathrm{GSp}_{4}$}{GL2 x GSp4}} 
\label{Uorbitssec}
In this section, we record the proofs of various  claims involving the action of $ U $ on double cosets spaces of $ H ' $. Since both $ U$ and $U'$ have a common $ \GL_{2}(\Oscr_{F} ) $ component, the computation of orbits is facilitated   by  studying the orbits of $ U_{2} $ on double cosets of $H _{2}  '   $. This in turn is  achieved by techniques analogous to the one used in \S \ref{U'orbitssec} for decomposing double cosets of parahoric subgroups of an unramified group.

\begin{notation}  \label{notationh2}    
If $ h \in H_{2} \subset H_{2}  ' $, we will often write  $$  h  =   \left (  \begin{smallmatrix} a  & & b  \\  & a_{1} & &  b_{1} \\
  c & & d  \\ &  c_{1} & &  d_{1} \end{smallmatrix}   \right  )   \quad  \text { or }  \quad     h = \left (  \left ( \begin{matrix} a  & b \\ c & d  \end{matrix}  \right )  ,   \left  (   \begin{matrix} a_{1} & b_{1} \\ c_{1} & d_{1}    \end{matrix}   
     \right )  \right )    . $$
We let $ \Lambda_{2} $ denote $ \ZZ f_{0} \oplus \ZZ f_{2} \oplus \ZZ f_{3} $. Given $  \lambda = a_{0} f_{0} + a_{2} f_{2} + a_{3} f_{3}  \in \Lambda_{2} $ as $ (a_{0}, a_{2}, a_{3}) $ and let $ \varpi^{\lambda} $ denote the element  $ \mathrm{diag}(\varpi^{a_{2}}, \varpi^{a_{3}}, \varpi^{a_{0} -a_{2}},  \varpi^{a_{0} - a_{3} })  \in H_{2}' $.    
\end{notation}

\subsection{Projections}  \label{projectionprelim}    
Let $ s : \Hb'_{2} \to \Hb'  $ denote the section of $ \pr_{2}' $ given  by $ \gamma \mapsto   \left (  \left (   \begin{smallmatrix}  \mathrm{sim}(\gamma)  \\ &  1  \end{smallmatrix}   \right )  ,    \gamma   \right  )    $. Fix a compact open subgroup $ V \subset H ' $ such that $ \pr_{1}'(V) = \GL_{2}(\Oscr_{F} )$ and an arbitrary  element $ h = (h_{1}, h_{2} ) \in H ' $. Denote $ V_{2} = \pr_{2}'(V) $.  We  refer to   \begin{align*}   \pr_{h, V}   :  U \backslash U' h  V / V 
&  \to U_{2} \backslash U_{2}' h_{2}  V_{2} / V_{2} , \\  
U  \gamma  V   &    \mapsto U_{2} \pr_{2}(\gamma )   V_{2}
\end{align*}  
as the projection map. We are interested in the fibers of $ \mathrm{pr}_{h,V} $.

\begin{lemma}  \label{pVbijection}  Suppose $ h_{1} \in \GL_{2}(F) $ is diagonal and either $ s(V_{2}) \subset V $ or $ h_{1} $ is central.  If $ \eta \in H_{2}' $ has the same similitude as $ h $ and $ U_{2}  \eta  V_{2}  \in U_{2}  \backslash U_{2} ' h_{2} V_{2}/ V_{2} $, then $   \{  U (h_{1}, \eta) V    \} = \pr_{h,V}^{-1}(U_{2} \eta V_{2} )   $. In particular, $ \mathrm{pr}_{h,V} $ is a bijection.  

\end{lemma}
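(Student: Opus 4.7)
The lemma splits into two separate claims: (i) $(h_1,\eta) \in U'hV$, so that $U(h_1,\eta)V$ is a well-defined class; and (ii) every $\gamma \in U'hV$ with $\pr_2'(\gamma) \in U_2\eta V_2$ already lies in $U(h_1,\eta)V$. Bijectivity of $\pr_{h,V}$ then follows: surjectivity onto $U_2\backslash U_2'h_2V_2/V_2$ comes from (i), while (ii) says each fibre is the single class $\{U(h_1,\eta)V\}$, giving injectivity. So the whole content is in constructing, for a given $\gamma = (x,y) \in U'hV$ with decomposition $y = a\eta b$ ($a \in U_2$, $b \in V_2$), a factorization $(x,y) = (u_1,u_2)(h_1,\eta)(v_1,v_2)$ with $(u_1,u_2) \in U$, $(v_1,v_2) \in V$. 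I will handle the two alternative hypotheses on $h_1$ separately; throughout one exploits the ``similitude accounting'' $\mathrm{sim}(x) = \mathrm{sim}(y) = \mathrm{sim}(\eta)\mathrm{sim}(a)\mathrm{sim}(b) = \mathrm{sim}(h_1)\mathrm{sim}(a)\mathrm{sim}(b)$.

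In the central case, simply take $u_2 := a$, $v_2 := b$. Since $b \in V_2 = \pr_2'(V)$, we can lift $b$ to some $(v_1,b) \in V$; then $\mathrm{sim}(v_1) = \mathrm{sim}(b)$ automatically. Set $u_1 := xh_1^{-1}v_1^{-1}$. Because $h_1$ is a scalar, $U_1 h_1 U_1 = h_1 U_1$, so $xh_1^{-1} \in U_1$, hence $u_1 \in U_1$. The similitude calculation above gives $\mathrm{sim}(u_1) = \mathrm{sim}(a)$, so $(u_1,a) \in U$; and $u_1h_1v_1 = x$ holds by construction. Part (i) is the special case $(x,y) = (h_1,\eta)$ of this argument.

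In the section case, I first claim that $s(V_2) \subset V$ together with $\pr_1'(V) = \GL_2(\Oscr_F)$ forces $V$ to contain $\SL_2(\Oscr_F) \times \{1_{H_2'}\}$: given $v_1 \in \SL_2(\Oscr_F)$, surjectivity of $\pr_1'|_V$ yields some $(v_1,v_2) \in V$ with $\mathrm{sim}(v_2) = 1$, and then $(v_1,1) = (v_1,v_2)\cdot s(v_2)^{-1} \in V$. The analogous fact for $U$ is automatic. Now, since $s(U_2) \subset U$ always and $s(V_2) \subset V$ by hypothesis, left-and-right multiplication by $s(a)^{-1}$ and $s(b)^{-1}$ carries $\gamma$ to $(x',\eta) \in U'hV$, where $x' = \mathrm{diag}(\mathrm{sim}(a)^{-1},1)\,x\,\mathrm{diag}(\mathrm{sim}(b)^{-1},1)$. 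The similitude bookkeeping gives $\mathrm{sim}(x') = \mathrm{sim}(h_1)$, so $x' \in U_1h_1U_1$ with $\mathrm{sim}(x') = \mathrm{sim}(h_1)$. Using that $h_1$ is diagonal (hence commutes with the diagonal units appearing in the polar decomposition of any element of $U_1$), one sees that this set equals $\SL_2(\Oscr_F)\,h_1\,\SL_2(\Oscr_F)$. Writing $x' = u_1h_1v_1$ with $u_1,v_1 \in \SL_2(\Oscr_F)$, the first claim of this paragraph guarantees $(u_1,1) \in U$ and $(v_1,1) \in V$, and $(u_1,1)(h_1,\eta)(v_1,1) = (x',\eta)$. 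Multiplying by $s(a)$ and $s(b)$ recovers $\gamma \in U(h_1,\eta)V$.

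The main technical step is the diagonalization argument in the third paragraph, namely the identification $\{x \in U_1h_1U_1 : \mathrm{sim}(x) = \mathrm{sim}(h_1)\} = \SL_2(\Oscr_F)h_1\SL_2(\Oscr_F)$; this is what allows the section hypothesis together with $\pr_1'(V) = \GL_2(\Oscr_F)$ to replace the centrality assumption on $h_1$.
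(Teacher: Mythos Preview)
Your proof is correct and rests on the same mechanism as the paper's—using diagonal elements (supplied by $s$ or by centrality) to commute past $h_1$. One remark: the sentence ``Part (i) is the special case $(x,y)=(h_1,\eta)$'' is circular as written, since your argument for (ii) takes $(x,y)\in U'hV$ as hypothesis. What actually holds is that (i) \emph{follows from} (ii): once any $\gamma\in U'hV$ with $\pr_2'(\gamma)\in U_2\eta V_2$ has been written as $u(h_1,\eta)v$, you get $(h_1,\eta)=u^{-1}\gamma v^{-1}\in U'hV$; and such $\gamma$ exist because $\pr_2'$ maps $U'hV$ onto $U_2'h_2V_2\ni\eta$.

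The paper's organization is a bit more economical. It first normalizes every class in $U\backslash U'hV/V$ to the form $U(1,\gamma)hV$ with $\gamma\in\mathrm{Sp}_4(\Oscr_F)$ (using $U'=U\cdot(1,\mathrm{Sp}_4(\Oscr_F))$), which yields surjectivity immediately. Then, writing $\eta=u_2\gamma h_2v_2$, it takes $u_1=\mathrm{diag}(1,\mathrm{sim}(u_2))$ and $v=s(v_2)$ (or an arbitrary lift when $h_1$ is central), and observes $u_1h_1v_1=u_1v_1h_1$ with $u_1v_1\in\SL_2(\Oscr_F)$. This treats both hypotheses in one stroke and bypasses your auxiliary claims $\SL_2(\Oscr_F)\times\{1\}\subset V$ and $\{x\in U_1h_1U_1:\det x=\det h_1\}=\SL_2(\Oscr_F)\,h_1\,\SL_2(\Oscr_F)$—though these are simply repackagings of the same diagonal-commutation observation, so the two routes are equivalent in substance.
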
 
\begin{proof} 
Note that  any element of $ U  \backslash U ' h V / V $  can be written as  $ U (1, \gamma) h V $ for some $ \gamma \in  \mathrm{Sp}_{4}(\Oscr_{F}) $ and similarly for elements of $ U_{2} \backslash  U_{2}' h_{2} V_{2}/ V_{2} $.  This immediately implies that $ \pr_{h,V} $ is surjective.  

Suppose now that $ \gamma \in \mathrm{Sp}_{4}(\Oscr_{F} ) $  is such that   $  U (1, \gamma ) h V $ maps to  $ U_{2} \eta V_{2} $ under $ \mathrm{pr}_{h,V}$.  
Then  there  exist $ u_{2} \in U_{2} $, $ v_{2} \in V_{2} $  such that $ \eta  =  u_{2} \gamma  h_{2} v_{2}  $. Taking similitudes, we see that $ \mathrm{sim}(u_{2})   = \mathrm{sim}(v_{2}) ^{-1} $.   Let $ u_{1} = \mathrm{diag}(1, \mathrm{sim}(u_{2}) )  \in \GL_{2}(\Oscr_{F}) $ and set $ u = (u_{1} , u_{2}) \in  U $.
Take $ v = s(v_{2})  \in V $ if $ s(V_{2}) \subset V $ or an arbitrary element in $ (\pr_{2}')^{-1}(V) $ if $ h_{1} $ is central.  Write $ v = (v_{1}, v_{2}) $. Then  
\begin{align*} U (1,\gamma) h V & = U u (1,\gamma) h v V \\
& = U (u_{1}h_{1} v_{1}, u_{2} \gamma_{2} h_{2} v_{2} ) V\\ 
& =  U ( u_{1}v_{1} h_{1}, \eta ) V  \\
&
= U (h_{1}, \eta) h V 
\end{align*}
where we used that $ h_{1} $ commutes with $ v_{1} $ in both cases and  that  $ (u_{1}v_{1}, 1) \in U_{2} $.  
\end{proof}
In case $h_{1}$ is non-central or $ s(V_{2}) \not \subset V $,  one needs to perform an additional check to determine the fibers of $ \pr_{h, V}  $.  Define 
\begin{alignat*}{3}
S^{-} &  = \left \{  \left (   \begin{smallmatrix} 1 & \\  x & 1  \end{smallmatrix}  \right )  \,
|  \,  x \in \Oscr_{F}  \right \} ,  \quad \quad \quad   &     S^{+} & = 
\left \{
\left(\begin{smallmatrix}
& 1 \\ -1 \end{smallmatrix}\right )  
\left ( \begin{smallmatrix} 1 &  \varpi x \\ & 1 \end{smallmatrix} \right ) \, | \, x \in \Oscr_{F}      \right \}.
\end{alignat*} 
For a positive integer $ a  $, define $ S_{a}^{-} $ to be the subset $ S^{-} $ where we require the variable $ x $ to lie  in $  [\kay_{a} ]$ (see \S \ref{generalnot} for notation)  and $ S_{a}^{+} $ the subset of $ S^{+} $ where we require  $ x $ to lie in $  [\kay_{a-1}   ]  $. 
We also  denote $ S^{\pm} = S^{-} \cup S^{+} $ and $ S^{\pm}_{a} = S^{-}_{a} \cup S^{+}_{a} $. 

\begin{corollary} \label{fibersofpV}  Suppose   $ h_{1 }  =   \mathrm{diag}  (\varpi^{u}, \varpi^{v}) $ with  $ u > v $ and  $ \eta \in H_{2}' $ is such that  $ U_{2} \eta V_{2} \in   U_{2} \backslash  U_{2}' h V_{2} / V_{2} $ with   $ \mathrm{sim}(\eta) =    \varpi^{ u+v }  $. Then $$  \pr^{-1}_{h ,  V }( U_{2} \eta V_{2} ) =   \left \{  U (h_{1} \chi ,   \eta )   V  \, | \,  \chi  \in S^{\pm} _{ u -  v}  \text{ and }    \,   U'  (h_{1} \chi , \eta ) V = U' h V    \right \}  
$$  
\end{corollary}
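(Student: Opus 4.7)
My plan is to extend Lemma \ref{pVbijection} to the case where neither of its sufficient hypotheses holds: here $h_1 = \mathrm{diag}(\varpi^u, \varpi^v)$ is non-central (since $u > v$), and $s(V_2)$ need not lie in $V$. The extra freedom in the fiber is captured by $S^\pm_{u-v}$, which I view as a set of representatives for the coset space $\GL_2(\Oscr_F) h_1 \GL_2(\Oscr_F)/\GL_2(\Oscr_F)$ of size $q^{u-v} + q^{u-v-1}$, i.e., an appropriate mod-$\varpi^{u-v}$ Bruhat-Iwasawa decomposition.

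The containment $\supseteq$ is immediate: for $\chi \in S^\pm_{u-v}$ satisfying the compatibility $U'(h_1\chi, \eta) V = U' h V$, the double coset $U(h_1\chi, \eta) V$ lies in $U \backslash U' h V / V$, and its image under $\pr_{h, V}$ is $U_2 \pr_2(h_1\chi, \eta) V_2 = U_2 \eta V_2$.

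For $\subseteq$, I will proceed as in the proof of Lemma \ref{pVbijection}: given $U\gamma V$ in the fiber, I write $\gamma_2 = u_2 \eta v_2$ for some $u_2 \in U_2$, $v_2 \in V_2$, and choose $u = (u_1, u_2^{-1}) \in U$ and $v = (v_1, v_2^{-1}) \in V$ by selecting $u_1, v_1 \in \GL_2(\Oscr_F)$ with appropriate determinants. The existence of $u$ is automatic since $U$ is a fiber product, while the existence of $v$ uses the hypothesis $\pr_1(V) = \GL_2(\Oscr_F)$ to locate some $v_1$ with $(v_1, v_2^{-1}) \in V$. The product $u\gamma v = (u_1\gamma_1 v_1, \eta)$ then has normalized second component, with first component lying in $\GL_2(\Oscr_F) \cdot h_1 \cdot \GL_2(\Oscr_F)$. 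Invoking the decomposition of this double coset indexed by $S^\pm_{u-v}$, I write $u_1\gamma_1 v_1 = h_1 \chi \cdot k$ for some $\chi \in S^\pm_{u-v}$ and $k \in \GL_2(\Oscr_F)$, and then absorb $k$ on the right into $V$ by picking $\tilde v \in V$ with $\pr_1(\tilde v) = k^{-1}$, yielding $U\gamma V = U(h_1\chi, \eta) V$. The compatibility $U'(h_1\chi, \eta) V = U' h V$ is automatic because $U\gamma V \subseteq U' h V$.

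The main obstacle is the final absorption: since $V$ is not a fiber product, absorbing $k$ perturbs the second component by $\pr_2(\tilde v)$, which must then be compensated by a further left $U$-action that in turn modifies the first component. The hard part will be showing that these compensating moves modify the first component only by elements of a prescribed Iwahori subgroup of $\GL_2(\Oscr_F)$ -- namely $h_1^{-1} \GL_2(\Oscr_F) h_1 \cap \GL_2(\Oscr_F)$, an Iwahori of level $u-v$ -- so that the resulting representative $\chi \in S^\pm_{u-v}$ is well-defined modulo this Iwahori equivalence. This will reduce the claim to the bijection $S^\pm_{u-v} \leftrightarrow \GL_2(\Oscr_F) h_1 \GL_2(\Oscr_F) / \GL_2(\Oscr_F)$ underlying the classical Bruhat-Iwasawa parameterization.
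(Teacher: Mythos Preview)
Your overall strategy is sound, but the ``hard part'' you flag is a detour the paper's proof avoids entirely. Follow the proof of Lemma~\ref{pVbijection} verbatim: starting from the normalized form $U(1,\gamma)hV$ with $\gamma \in \mathrm{Sp}_4(\Oscr_F)$, and choosing $u_1 = \mathrm{diag}(1,\mathrm{sim}(u_2))$ \emph{diagonal} (hence commuting with $h_1$), one obtains $U(h_1 w,\eta)V$ with $w = u_1 v_1 \in \SL_2(\Oscr_F)$. The remaining normalization is then done on the \emph{left} via $U$, not on the right via $V$: for any $k$ in the congruence subgroup $\SL_2(\Oscr_F)\cap h_1^{-1}\SL_2(\Oscr_F)h_1$, the element $(h_1 k h_1^{-1},1)$ lies in $U$ (integral entries, similitude $1$), and left-multiplying by it replaces $w$ by $k^{-1}w$ while leaving $\eta$ untouched. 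So $w$ may be freely adjusted to its representative $\chi\in S^\pm_{u-v}$ for this quotient, and the proof ends there --- no compensation cycle, because the second component is never perturbed.

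Your plan to absorb $k$ on the right via some $\tilde v\in V$ forces you to then undo a perturbation of $\eta$, and there is no a priori reason the compensating left $U$-move should land in the Iwahori on the first component; this is why the step feels hard, and it is not clear it can be made to work as stated. A related slip: the map $\chi\mapsto h_1\chi\,\GL_2(\Oscr_F)$ is constant (all $\chi$ lie in $\GL_2(\Oscr_F)$), so $S^\pm_{u-v}$ does not index $\GL_2(\Oscr_F)h_1\GL_2(\Oscr_F)/\GL_2(\Oscr_F)$ via right cosets as you write. Its correct role --- and the one you yourself arrive at by the end --- is as a set of representatives for $\big(\SL_2(\Oscr_F)\cap h_1^{-1}\SL_2(\Oscr_F)h_1\big)\backslash \SL_2(\Oscr_F)$, which is exactly what the left-absorption trick uses.
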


\begin{proof} In the proof of Lemma \ref{pVbijection}, one obtains the equality $ U ( 1, \gamma ) h V = U (h u_{1} v_{1} , \eta V ) $ with $ u_{1} v_{1} \in \SL_{2}(\Oscr_{F} ) $. Now $ u_{1} v_{1} $ can be replaced with a   representative in the quotient $$ \big ( \SL_{2}(\Oscr_{F} )  \cap h_{1}  ^{-1} \SL_{2}(\Oscr_{F}  )  h_{1}  \big  )    \backslash \SL_{2}(\Oscr_{F}) $$ 
and $ 
S^{\pm}_{u-v} $ forms such a set of representatives.        
\end{proof}
\begin{remark} We will need to use the last result for $ V \in \left \{ H_{\tau_{1}}' , H_{\tau_{2}}' \right \} $ when lifting coset representatives $ \eta $  for $ U_{2} \backslash U_{2}' h_{2} V_{2}/ V_{2}  $ to $ U \backslash U' h V/ V $.  In almost all cases,  it will turn out that there is essentially one choice of $ \gamma \in S^{\pm} $ that satisfies $ U ' (h_{1} \gamma , \eta)V = U' h V  $. If  there are more than one element in the fiber, we will invoke  a suitable Bruhat-Tits  decomposition for parahoric double cosets to distinguish them.     \end{remark}

\subsection{The $\mathrm{GSp}_{4} $-players}  \label{players}      
Recall that the roots of $ H_{2}' = \mathrm{GSp}_{4}  $  are  identified with $$ \left \{ \pm \beta_{0}, \pm \beta_{1}, \pm \beta_{2} , \pm (\beta_{1} + \beta_{2})    \right \} .  $$     To compute these decompositions,  we let 
$$  
v_{0} =   \scalebox{0.9}{$ \left (   
\begin{matrix} & & \mfrac{1}{\varpi}\\ 
& 1 & &  \\ 
\varpi & & &  
\\ & & & -1 
\end{matrix} \right )$} , \quad \quad 
v_{1} =   \scalebox{0.9}{$ 
\left ( 
\begin{matrix} &  1\\ 
1&   & & \\ 
& & & 1 \\ 
& & 1 &  
\end{matrix} \right )$} , \quad \quad  
v_{2} =  
\scalebox{0.9}{$\left ( \begin{matrix} 1 & \\ 
&  &  & 1  \\  
& & - 1 \\ 
& 1 & &   
\end{matrix}  \right )$}  $$ 
which respectively represent the reflections $ t(f_{2}) r_{0}, r_{1}, r_{2} $ which generate the affine Weyl group  $ W_{2, \mathrm{aff}}' $ of $ H_{2}' $.  We also denote $
v_{\beta_{0}} = \mathrm{diag}(\varpi, 1, \varpi^{-1}, 1) v_{0} $ which represents the reflection $ r_{0} $   in the root $ \beta_{0}$.   For $ i = 0, 1, 2 $, let $ y_{i} : \GG_{a} \to \Hb_{2}'  $ be the maps $$ y_{0} : u \mapsto    \scalebox{0.9}{$   
\left ( \begin{matrix}  1 &  \\  &  1  &  &  \\   u  \varpi   &  & 1  &  \\ & & & 1  \end{matrix} \right ) $},  \quad 
y_{1} : u \mapsto  \scalebox{0.9}{$  \left (   \begin{matrix} 1  & u \\[0.1em]  &  1 \\ & & 1 \\ & & - u & 1  \end{matrix}    \right )$} , \quad 
y_{2}  : u \mapsto    \scalebox{0.9}{$\left (  
\begin{matrix} 1  &  \\ &  1  &  & u \\ & & 1 \\ & & & 1  \end{matrix}  \right   ) $}   . $$
If $ I_{2}' \subset H'_{2} $ denote the Iwahori subgroup given by $ \pr_{2}(I') $, then $ y_{i}([\kay]) $ forms a set distinct $ q $  representatives for the quotients $ I_{2}'/ I_{2}' \cap v_{i} I_{2}' v_{i} $.  For each $ i = 0, 1, 2 $, let  $$ h_{r_{i}} : [\kay] \to H_{2} ' , \quad \kappa \mapsto y_{i}(\kappa) v_{i} . $$
Let $ W_{I_{2}'} $ denote the Iwahori Weyl group of $H_{2}' $ and  $ l : W_{I_{2}'} \to \ZZ $ denote the length function induced by $ \pr_{2}(S_{\mathrm{aff}}') = \left \{ r_{1}, r_{2} , t(f_{2}) r_{0} \right \} $.   For $ v \in W_{I_{2}'} $ and $  v = r_{v,1} r_{v,2} \cdots  r_{v, l} \omega_{v}   $ (where $ r_{v,i} \in \pr_{2}(S_{\mathrm{aff}}') $, $ \omega_{v} \in \pr_{2}( \Omega_{H'}) $ is a power of $ \pr_{2}(\omega_{H'})$)  is a reduced word decomposition, we set 
\begin{align*} \mathcal{Y}_{v} : [\kay]^{l(v)} & \longmapsto  H_{2} ' \\
(\kappa_{1}, \ldots,  \kappa_{l(v)})   &  \mapsto h_{r_{v,1}}(\kappa_{1})    \cdots h_{r_{v,l(v)}}(\kappa_{l(v)}) \rho_{2,v} 
\end{align*}  
where $ \rho_{2,v} \in H $ is the element representing $ \omega_{v}  $.   For a compact open subgroup $ V \subset H_{2}' $, we let $ \mathcal{Y}_{v}/V $ to denote the coset space  $ \mathrm{im}(\mathcal{Y}_{v})V/V $, which we will also refer to as a Schubert cell.  

\subsection{Orbits on $ U' h U'/U'$}
\label{UU'orbsubsec}
Let $ W_{2} $ denote the Weyl group of $ H_{2} =  \GL_{2} \times_{\GG_{m}} \GL_{2} $. We can identify $ W_{2} $ as the subgroup of $ W_{2}' $ generated by $ r_{0} $ and $ r_{2} $. For $ \eta \in H_{2}' $, denote $ H_{2} \cap \eta U_{2}' \eta^{-1} $ by $ H_{2,\eta} $.  Then the map 
\begin{equation} \label{mydclemma} U_{2} \varpi^{\Lambda_{2}} \eta U_{2}' \to U_{2} \varpi^{\Lambda_{2}} H_{2,\eta} \quad U_{2} \varpi^{\lambda} \eta U_{2} ' \mapsto U_{2} \varpi^{\lambda} H_{2,\eta} \end{equation} 
is a bijection.  Let $ \eta_{1} $, $ \eta_{2} $ denote the projection of $ \varrho_{1} $, $\varrho_{2} $ given in  (\ref{varrhomatrices}) to $ H_{2} '$.  Explicitly,  
\begin{equation}  \label{eta12matrices} \eta_{1}
=   \scalebox{0.9}{$ \left (  \begin{matrix} \varpi  &  & & 1 \\ & \varpi & 1 \\[0.1em] & &  1  \\ &  & & 1 \end{matrix} \right )$} , \quad \eta_{2} =  \scalebox{0.9}{$\left ( \begin{matrix}  \varpi^{2} &  & & 1  \\ & \varpi^{2} & 1  \\[0.1em]   &  & 1 \\ &  & & 1 \end{matrix}  \right )$}   
\end{equation} 
 
\begin{lemma}  \label{distinctetai}  The cosets $ H_{2} U_{2}' $, $ H_{2} \eta_{1} U_{2}' $ and $ H_{2}\eta_{2}U_{2}' $ are pairwise disjoint. 
\end{lemma}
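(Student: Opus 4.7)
\emph{Plan.} I will prove this by direct matrix computation, ruling out each of the three potential coincidences $H_2 \eta_i U_2' = H_2 \eta_j U_2'$ for $0 \leq i < j \leq 2$ (where $\eta_0 := 1_{H_2'}$). Such an equality holds if and only if there exists $h = \jmath_2(h_1, h_2) \in H_2$, with $h_1 = \left(\begin{smallmatrix} a & b \\ c & d \end{smallmatrix}\right)$ and $h_2 = \left(\begin{smallmatrix} a' & b' \\ c' & d' \end{smallmatrix}\right)$ satisfying $\det h_1 = \det h_2$, such that $k := \eta_j^{-1} h \eta_i$ lies in $U_2' = \mathrm{GSp}_4(\Oscr_F)$. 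Noting $\mathrm{sim}(\eta_r) = \varpi^r$ for $r = 0,1,2$, the similitude constraint $\mathrm{sim}(k) \in \Oscr_F^\times$ pins down $v(\det h_1) = j - i$. The strategy is to show that integrality of the entries of $k$ forces $\det h_1$ to have valuation strictly greater than $j-i$.

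When $i = 0$, write $\eta_j$ in block form $\bigl(\begin{smallmatrix} \varpi^j I_2 & N \\ 0 & I_2 \end{smallmatrix}\bigr)$ with $N = \bigl(\begin{smallmatrix} 0 & 1 \\ 1 & 0 \end{smallmatrix}\bigr)$, so that $\eta_j^{-1} = \bigl(\begin{smallmatrix} \varpi^{-j} I_2 & -\varpi^{-j} N \\ 0 & I_2 \end{smallmatrix}\bigr)$. A direct multiplication shows that the upper two rows of $\eta_j^{-1} h$ are $\varpi^{-j}$ times explicit integral combinations of the entries of $h_1, h_2$. Requiring these to lie in $\Oscr_F$ forces all eight entries $a, b, c, d, a', b', c', d' \in \varpi^j \Oscr_F$, whence $\det h_1 \in \varpi^{2j} \Oscr_F$. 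This contradicts $v(\det h_1) = j$.

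The mixed case $(i,j) = (1,2)$ is handled similarly, with slightly more bookkeeping. Writing $A = \mathrm{diag}(a,a')$, $B = \mathrm{diag}(b,b')$, $C = \mathrm{diag}(c,c')$, $D = \mathrm{diag}(d,d')$, the block form gives that the upper-left $2\times 2$ block of $k$ is $\varpi^{-1}(A - NC)$ and the upper-right block is $\varpi^{-2}\bigl(\begin{smallmatrix} b - c' & a - d' \\ a' - d & b' - c \end{smallmatrix}\bigr)$. Integrality of the former yields $a, a', c, c' \in \varpi \Oscr_F$; integrality of the latter, combined with the former, then forces $b, b', d, d' \in \varpi \Oscr_F$ as well, so $\det h_1 \in \varpi^2 \Oscr_F$. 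As $v(\det h_1) = 1$ is required, we reach a contradiction. I anticipate no real obstacle beyond these short manipulations; indeed the lemma may be regarded as a $\mathrm{GSp}_4$-shadow of the Schr\"oder-type disjointness already recorded for $(\tau_0, \tau_1, \tau_2)$ in Lemma \ref{distincttaui}, and an alternative (more conceptual) proof could be extracted from the Cartan decomposition of $H_2'$ applied to the explicit forms of $\eta_1, \eta_2$ in \eqref{eta12matrices}.
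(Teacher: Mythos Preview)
Your proposal is correct and follows essentially the same approach as the paper, which simply writes ``This is similar to Lemma~\ref{distincttaui}'' and defers to the direct matrix computation there (showing that integrality of the entries of $\eta_j^{-1} h \eta_i$ forces the determinant/similitude to have excess valuation). Your writeup is in fact more explicit than the paper's one-line deferral, and your closing remark about Schr\"oder/Cartan is precisely what the paper's reference to Remark~\ref{Schroder} records.
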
 
\begin{proof} This is similar to Lemma   \ref{distincttaui}. See also Remark \ref{Schroder}.  
\end{proof} 
\begin{lemma}  \label{distinctetaireflections}  The map $ W_{2} \backslash \Lambda_{2} \to U_{2} \varpi^{\Lambda_{2}} U _{2}' $ given by $ W_{2} \lambda  \mapsto  U_{2} \varpi^{\lambda} U_{2}' $ is a bijection.   If $ \lambda, \mu \in \Lambda_{2} $ are not in the same $ W_{2} $-orbit, then  $ U_{2} \varpi^{\lambda} \eta_{1}  U_{2}'$ is distinct from  $ U_{2} \varpi^{\mu} \eta_{1}  U_{2}' $. 
\end{lemma}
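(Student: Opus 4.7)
The plan is to invoke the bijection \eqref{mydclemma} applied successively to $\eta = 1$ and $\eta = \eta_{1}$, and in each case reduce the claim to the classical Cartan decomposition of $H_{2} = \GL_{2}(F) \times_{F^{\times}} \GL_{2}(F)$, which identifies $U_{2} \backslash H_{2} / U_{2}$ with $W_{2} \backslash \Lambda_{2}$.

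For the first claim I would first verify well-definedness: the generators $r_{0}, r_{2}$ of $W_{2}$ admit lifts inside $U_{2}$, namely $\jmath_{2}\bigl(\bigl(\begin{smallmatrix} & 1 \\ 1 & \end{smallmatrix}\bigr), \bigl(\begin{smallmatrix} 1 & \\ & -1 \end{smallmatrix}\bigr)\bigr)$ and $\jmath_{2}\bigl(\bigl(\begin{smallmatrix} 1 & \\ & -1 \end{smallmatrix}\bigr), \bigl(\begin{smallmatrix} & 1 \\ 1 & \end{smallmatrix}\bigr)\bigr)$ (both of whose two components share the common determinant $-1 \in \Oscr_{F}^{\times}$), so $U_{2} \varpi^{\lambda} U_{2}' = U_{2} \varpi^{w\lambda} U_{2}'$ for every $w \in W_{2}$. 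For injectivity, note that $H_{2, 1} = H_{2} \cap U_{2}' = U_{2}$, since the only obstruction for $\jmath_{2}(h_{1}, h_{2})$ to lie in $\mathrm{GSp}_{4}(\Oscr_{F})$ is that each $h_{i}$ be integral with similitudes matching in $\Oscr_{F}^{\times}$. Hence \eqref{mydclemma} with $\eta = 1$ identifies $U_{2} \backslash U_{2} \varpi^{\Lambda_{2}} U_{2}'$ with $U_{2} \backslash U_{2} \varpi^{\Lambda_{2}} U_{2} \subseteq U_{2} \backslash H_{2} / U_{2}$, and the Cartan decomposition of $H_{2}$ concludes.

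For the second claim the key input is the inclusion $H_{2, \eta_{1}} \subset U_{2}$. To establish this I would compute the twisted conjugate $\eta_{1}^{-1} \jmath_{2}(h_{1}, h_{2}) \eta_{1}$ explicitly for a generic $h = (h_{1}, h_{2}) \in H_{2}$ in the style of the calculations in \S\ref{H'strucsec}: the resulting $4 \times 4$ matrix has the diagonal blocks essentially equal to $\jmath_{2}(h_{1}, h_{2})$ itself and off-diagonal entries of the form $(x - y)/\varpi$ for various entries $x, y$ of $h_{1}, h_{2}$. Requiring all entries to lie in $\Oscr_{F}$ immediately forces $h_{1}, h_{2} \in \mathrm{Mat}_{2}(\Oscr_{F})$, and combined with $\det h_{1} = \det h_{2} \in \Oscr_{F}^{\times}$ (which is automatic for elements of the compact group $H_{2, \eta_{1}}$) this yields $h \in U_{2}$. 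The additional congruences modulo $\varpi$ that the off-diagonal integrality imposes are irrelevant to the present claim. Granted this inclusion, if $U_{2} \varpi^{\lambda} \eta_{1} U_{2}' = U_{2} \varpi^{\mu} \eta_{1} U_{2}'$, then \eqref{mydclemma} yields $U_{2} \varpi^{\lambda} H_{2, \eta_{1}} = U_{2} \varpi^{\mu} H_{2, \eta_{1}}$, which propagates to $U_{2} \varpi^{\lambda} U_{2} = U_{2} \varpi^{\mu} U_{2}$ in $H_{2}$, and therefore $W_{2} \lambda = W_{2} \mu$ by the Cartan decomposition.

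The only real work is the explicit matrix computation giving $H_{2, \eta_{1}} \subset U_{2}$; this is routine linear algebra over $\Oscr_{F}$ with no conceptual obstacle. Everything else consists of invoking \eqref{mydclemma} and the Cartan decomposition for the product of two copies of $\GL_{2}$.
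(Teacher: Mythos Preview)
Your proposal is correct and follows essentially the same approach as the paper: invoke the bijection \eqref{mydclemma} for $\eta = 1$ together with the Cartan decomposition of $H_{2}$ for the first claim, and verify the inclusion $H_{2,\eta_{1}} \subseteq U_{2}$ (which the paper simply asserts as ``easily verified'') to reduce the second claim to the same Cartan decomposition. Your version merely supplies the details behind these two verifications.
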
 
\begin{proof}   The first claim follows by the bijection (\ref{mydclemma})  and Cartan decomposition for $ H_{2} $. It is easily verified that $ H_{2,\eta_{1}}  \subseteq U_{2} $, so  the second claim also follows by  Cartan decomposition for $ H_{2}  $.    
\end{proof}

\begin{lemma}  \label{etaireflections}  For $ i = 1 ,2 $ and any $ \lambda \in \Lambda_{2} $, $ U_{2} \varpi^{\lambda} \eta_{i} U_{2}' = U_{2}  \varpi^{r_{0}r_{2}(\lambda)   } \eta_{i} U_{2}'  $.  
\end{lemma}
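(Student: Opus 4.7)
The plan is to execute the standard Weyl-twist trick: realise the involution $r_0 r_2 \in W_2'$ by a representative already lying in $U_2$, and then reduce the claim to the assertion that this representative is preserved by conjugation by $\eta_i$ modulo $U_2'$.

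First, since $r_0$ and $r_2$ are precisely the two non-trivial reflections in the Weyl group $W_2$ of the subgroup $H_2 = \GL_2 \times_{\GG_m} \GL_2 \subset H_2'$, one may choose lifts $\tilde{r}_0 = v_{\beta_0}$ and $\tilde{r}_2 = v_2$ from \S\ref{players}. A quick inspection of the matrices defined there shows that both have entries in $\Oscr_F$ and lie in the image of $\jmath_2$, hence in $U_2 \subset U_2'$. Set $w = \tilde{r}_0 \tilde{r}_2 \in U_2$. Since $w$ normalises $A$ and represents the involution $r_0 r_2$, one has $w \varpi^{r_0 r_2(\lambda)} w^{-1} = \varpi^\lambda$, and absorbing $w \in U_2$ into the left factor gives
\[
U_2 \varpi^\lambda \eta_i U_2' \;=\; U_2 \, w \varpi^{r_0 r_2(\lambda)} w^{-1} \eta_i U_2' \;=\; U_2 \varpi^{r_0 r_2(\lambda)} w^{-1} \eta_i U_2'.
\]

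Thus the lemma reduces to showing $\eta_i^{-1} w \eta_i \in U_2'$ for $i = 1, 2$. This is carried out by a direct matrix computation: writing $\eta_i = \bigl(\begin{smallmatrix} \varpi^i I_2 & B \\ 0 & I_2 \end{smallmatrix}\bigr)$ with $B = \bigl(\begin{smallmatrix} 0 & 1 \\ 1 & 0 \end{smallmatrix}\bigr)$, and using the explicit form of $w$, one multiplies out
\[
\eta_i^{-1} w \eta_i \;=\; \begin{pmatrix} 0 & 1 & 0 & 0 \\ -1 & 0 & 0 & 0 \\ \varpi^i & 0 & 0 & 1 \\ 0 & -\varpi^i & -1 & 0 \end{pmatrix},
\]
all of whose entries lie in $\Oscr_F$. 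Since conjugation preserves the similitude and $\mathrm{sim}(w) \in \Oscr_F^\times$, the conjugate lies in $\mathrm{GSp}_4(\Oscr_F) = U_2'$, completing the proof.

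I do not anticipate a serious obstacle. The calculation is routine; its success is structural: the upper-triangular block shape of $\eta_i$ ensures that the factor $\varpi^{-i}$ appearing in the upper block of $\eta_i^{-1}$ is absorbed into the lower-left block of the conjugate, producing only non-negative powers of $\varpi$. Any alternative lift of $r_0 r_2$ in $U_2$ differs from $w$ by an element of $A^\circ \subset U_2 \cap U_2'$, so the specific choice of representative is immaterial.
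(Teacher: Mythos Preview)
Your proposal is correct and follows exactly the approach of the paper's proof, which is the one-liner ``This follows by noting that $\eta_i^{-1} v_{\beta_0} v_2 \eta_i \in U_2'$ for $i = 1, 2$ and $v_{\beta_0} v_2 \in U_2$.'' You have simply fleshed out the details: the reduction via $w\varpi^{r_0r_2(\lambda)}w^{-1}=\varpi^{\lambda}$ and the explicit conjugate matrix (which indeed has integral entries because the upper-right block $\varpi^{-i}(C-BDB)$ vanishes).
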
 

\begin{proof} This follows by noting that $ \eta_{i}^{-1} v_{\beta_{0}} v_{2} \eta_{i} 
 \in U_{2}' $ for $ i = 1 , 2 $ and $ v_{\beta_{0}} v_{2} \in U_{2} $.   
\end{proof}   
 
\begin{proof}[Proof of Proposition \ref{R0scrhard}] For $ h \in H' $,   Let $ \mathscr{R}(h) $ denote the double coset space $ U_{2} \backslash U_{2}' h U_{2}'/ U_{2}' $. 
By Lemma \ref{pVbijection}, it suffices to establish that 
\begin{enumerate}[label = (\alph*)]   
\item $ \mathscr{R}(\varpi^{(1,1,1)}) = 
\left \{\varpi^{(1,1,1)}, \, \eta_{1} \right \} $, 
\item $ \mathscr{R}(\varpi^{(2,2,1)}) = \left \{\varpi^{(2,2,1)}, \, \varpi^{(2,1,2)},\,  \varpi^{(1,1,0)} \eta_{1}  \right \} $,
\item $ \mathscr{R}(\varpi^{(2,2,2)}) = 
\left \{\varpi^{(2,2,2)},\, \varpi^{(1,1,1)}\eta_{1},\,   
\eta_{2} \right \} $ 
\item $ \mathscr{R}(\varpi^{(3,3,2)}) = 
\left \{\varpi^{(3,3,2)},\,  \varpi^{(3,2,3)},\, \varpi^{(2,2,1)}\eta_{1},\,
\varpi^{(2,2,0)}\eta_{1},\,
\varpi^{(1,1,0)}\eta_{2} \right \}$, 
\item $ \mathscr{R}(\varpi^{(4,4,2)})  = \left \{ \varpi^{(4,4,2)}, \, \varpi^{(4,2,4)},\, \varpi^{(3,3,1)} \eta_{1},\,  
\varpi^{(2,2,0)}\eta_{2}\right \}  
$. 
\end{enumerate} 
It is easy to check using Lemma \ref{distinctetai} and  Lemma \ref{distinctetaireflections}  that the listed elements in each case represent distinct double cosets. It remains to show that they form a complete set of representatives. Here we again use the recipe given by \cite[\S 5]{CZE}. As before, we will write the parameters of the below them and omit writing $ U_{2} ' $ next to the matrices.  
\\ 

\noindent (a) \& (b) These were calculated in \cite[Proposition 9.3.3]{CZE}.   \\

\noindent (c) We have $ U_{2}' \varpi^{(2,2,2)} U_{2}' = U_{2}' v_{0}v_{1}v_{0} \rho^{2} _{2}  U_{2}' $ and $ v_{0} v_{1} v_{0} \rho^{2}_{2}  $ is of minimal possible length. The Weyl orbit diagram of $(2,2,2) $ is 
\begin{center} 

\begin{tikzcd}
 \arrow[r, "r_{2}"] & \arrow[r, "r_{1}"] & {} \arrow[r, "r_{2}"] &   {} 
\end{tikzcd}
\end{center}
So we need to analyze the cells corresponding to the first and the third node, which are of length $ 3 $ and $ 5 $ respectively. Let  $ \varepsilon_{0} = v_{0}v_{1}v_{0} \rho^{2}_{2} $ and $   \varepsilon  _{1} = v_{1} v_{2}v_{0}v_{1}v_{0} \rho^{2}_{2} $ be the words corresponding to these nodes. We have \begin{align*}   
 \mathcal{Y}_{\varepsilon_{0}}  / U_{2}'  & = \scalebox{0.9}{$     
\Set*{\left(\begin{array}{cccc}
1 & & & \\
& 1 & & \\[0.1em] 
x_{1} \varpi & a \varpi  & \varpi^{2} &  \\[0.1em] 
a \varpi   & x \varpi  &   & \varpi^{2}  
\end{array}\right)}$}, \quad \quad  
 \mathcal{Y}_{\varepsilon_{1}}   / U_{2}'    = \scalebox{0.9}{$  
\Set*{\left(\begin{array}{cccc}
\varpi^{2} & a_{1}+a\varpi & y+\varpi x  \\[0.1em]
& 1  & & \\
& & 1 &\\[0.1em]
& x_{1} \varpi  & -(a_{1} + a \varpi) & \varpi^{2}
\end{array}\right)}$} 
\end{align*} 
where $ a, a_{1} , x , y $ run over $ [\kay] $. 
For the first cell, eliminate $ \varpi x_{1} $, $ \varpi x $ via row operations and conjugate by $ v_{\alpha_{0}}v_{0} $.  For the second, eliminate $ y+ \varpi x $, $ \varpi x_{1} $ similarly and conjugate by $v_{2} $. The resulting matrices are {\setlength{\abovedisplayskip}{7pt}\setlength{\belowdisplayskip}{7pt}\[ \scalebox{0.9}{$ \left( 
\begin{matrix}  
\varpi^{2} & & & a\varpi \\
& \varpi^{2} & a\varpi\\[0.1em]
& & 1 \\
& & & 1
\end{matrix}  \right )$}, \quad \quad    \scalebox{0.9}{$ \left( 
\begin{matrix}  
\varpi^{2} & & & a_{1} + a\varpi \\
& \varpi^{2} & a_{1} + a\varpi\\[0.1em]  
& & 1 \\
& & & 1
\end{matrix}  \right )$} \]}
respectively. By conjugating with appropraite diagonal matrices,  the left matrix can be simplified to $ \varpi^{(2,2,2)} $ or $ \varpi^{(1,1,1)}\eta_{1} $ depending on whether $ a $ is zero or not. Similarly the second one simplifies to one of $ \varpi^{(2,2,2)}$, $\varpi^{(1,1,1)}\eta_{1} $, $ \eta_{2} $.  \\

\noindent (d)  We have  $ U_{2} ' \varpi^{(3,3,2)} U_{2}'  = U_{2}' v_{0}v_{1}v_{2}\rho^{3}_{2}   U_{2'} $ with $ v_{0} v_{1} v_{2} \rho^{3}_{2} $ of minimal  possible length. The Weyl orbit diagram of $ (3,3,2)$ is

\begin{center} 
\begin{tikzcd}
& {} \arrow[r, "r_2"]  & {}  \arrow[r,"r_{1}"] & {}   \arrow[rd,"r_{2}"] &  {}    \\
\arrow[ru,"r_{1}"]\arrow[rd,"r_{2}"]& &  & & {}   \\
& {} \arrow[r, "r_1"] & {} \arrow[r, "r_2"] {} & {} \arrow[ru, "r_1"] & {}    
\end{tikzcd}
\end{center} 
There are four cells to analyze which have lengths $3 $, $4$, $5$ and $ 6 $.  These correspond to $ \varepsilon _{1} = v_{0} v_{1} v_{2} \rho^{3}_{2} $, $ \varepsilon  _ { 2} = v_{1} \varepsilon  _{1} $, $  \varepsilon _{3}  =  v_{1} v_{2}  \varepsilon  _{1} $ and  $ \varepsilon _{4} = v_{1} v_{2} v_{1}  \varepsilon _ {1}  . $      
The matrices in the corresponding cells are as follows: 
\begin{alignat*}{4}  \mathcal{Y}_{\varepsilon _{0}}/ U_{2}'  &  =  \scalebox{0.9}{$\Set*{\left(\begin{array}{cccc}
1 & & & \\
& \varpi & & \\[0.2em] 
z \varpi  & a \varpi^{2} & \varpi^{3} &  \\[0.2em] 
a \varpi  & &   & \varpi^{2}  
\end{array}\right)}$},& 
 \mathcal{Y}_{\varepsilon _{2}}  / U_{2}'   &= \scalebox{0.9}{$\Set*{\left(\begin{array}{cccc}
\varpi^{2} & a_{1} + a\varpi & y_{1} \varpi & \\[0.2em] 
& 1 & & \\
&  & \varpi & \\[0.2em] 
 &  z \varpi   & (a_{1} + a \varpi)\varpi  & \varpi^{3}  
\end{array}\right)}$},  \\
\mathcal{Y}_{\varepsilon _{1}} / U_{2}'  & = \scalebox{0.9}
{$\Set*{\left(\begin{array}{cccc}
\varpi & a_{1} & & \\
&1 & & \\[0.2em] 
& a \varpi & \varpi^{2}  &  \\[0.2em] 
a \varpi^{2 }  & z \varpi  & -a_{1} \varpi^{2}   & \varpi^{3}  
\end{array}\right)},$}  
&
\quad  \quad \mathcal{Y}_{\varepsilon _{3}} / U_{2} '     &=  \scalebox{0.9}
{$\Set*{\left(\begin{array}{cccc}
\varpi^{3} & (a + a_{2} \varpi) \varpi  & y_{1} +  z \varpi & a_{1} \varpi^{2}  \\
& \varpi &  a_{1}  & \\[0.2em] 
 &  & 1 &  \\[0.2em] 
 & &  -(a_{2}+  a \varpi )  &   \varpi^{2}   
\end{array}\right)}$}   
\end{alignat*} 
where $  a, a_{1} , a_{2} , y_{1} 
 \in [\kay] $ and $ z \in  [\kay_{2} ]  $.    From these matrices and using elementary row/column operations\footnote{A slightly non-obvious operation is $ \varpi^{(2,0,2)} \eta_{2} \to  \varpi^{(2,2,0)} \eta_{2} $ obtained from  Lemma \ref{etaireflections}.}  arising from $ U_{2} $, $U_{2}'$,  one  can deduce that the orbits of $ U $ on  
\begin{itemize} 
\item $  \mathcal{Y}_{\varepsilon_{0}}/ U_{2} ' $ are given by  $  \varpi^{(3,3,2)} $, $ \varpi^{(2,2,1)}\eta_{1} $, 
\item $ \mathcal{Y}_{\varepsilon_{1}}/ U_{2}' $ are given by $ \varpi^{(3,2,3)}  $, $ \varpi^{(2,2,0)} \eta_{1} $,  $ \varpi^{(2,1,2)}\eta_{1} $,   
\item $ \mathcal{Y}_{\varepsilon_{2}}/ U_{2}'  $ are given by $ \varpi^{(3,2,3)} $, $\varpi^{(2,1,2)}\eta_{1} $, $ \varpi^{(1,1,0)} \eta_{2} $,  
\item $ \mathcal{Y}_{\varepsilon_{3}}/ U_{2} '     $ are given by   $    \varpi^{(3,3,2) } $, $ \varpi^{(2,2,0)} \eta_{1}  $, $ \varpi^{(2,2,1)} \eta_{1}   $, $ \varpi^{(1,1,0)} \eta_{2} $.  \\ 
\end{itemize}

\noindent (e)  We have   $ U_{2}'  \varpi^{(4,2,2)}  U_{2}'  =  U_{2}' v_{0}v_{1}v_{2}v_{1}v_{0} \rho^{4}_{2} U_{2}' $ and $  v_{0} v_{1}v_{2} v_{1}v_{0}\rho^{4}_{2} $ is of minimal possible length. The Weyl orbit diagram for $ (4,2,2) $ is   
\begin{center} 
\begin{tikzcd}
 \arrow[r, "r_{1}"] & \arrow[r, "r_{2}"] & {} \arrow[r, "r_{1}"] &   {} 
\end{tikzcd}
\end{center}
So we have three cells to check, corresponding to $ \pi_{1} = v_{0} v_{1} v_{2} v_{1} v_{0} \rho^{4}_{2} $, $ \sigma_{2} = v_{1}  \sigma_{1} $ and $ \sigma_{3} = v_{1} v_{2} v_{1} \sigma_{1} $. The matrices in the corresponding cells are as follows:  

\begin{align*}  \mathcal{Y}_{\varepsilon_{0}} /U_{2} ' =  \scalebox{0.9}{$\Set*{\left(\begin{array}{cccc}
1 & & & \\
a \varpi & \varpi ^{2}  & & \\[0.2em] 
z \varpi &  a_{1} \varpi^{3} & \varpi^{4} &  - a\varpi^{3}   \\[0.2em] 
a_{1} \varpi  & &   & \varpi^{2}  
\end{array}\right)}$},  
\quad  \mathcal{Y}_{\varepsilon_{1}}/ U_{2}'   = \scalebox{0.9}
{$\Set*{\left(\begin{array}{cccc}
\varpi^{2}  & a_{2} + a\varpi  & & \\
&1 & & \\[0.2em] 
& a_{1}  \varpi & \varpi^{2}  &  \\[0.2em] 
a_{1} \varpi^{3}  &  z \varpi  & - ( a_{2} + a \varpi)\varpi^{2}   & \varpi^{4}  
\end{array}\right)}$}   
\end{align*}
\begin{align*}   \mathcal{Y}_{\varepsilon_{2}}/U_{2}'  =     
&\scalebox{0.9}{$\Set*{\left(\begin{array}{cccc}
\varpi^{4} & (a_{1} + a_{3} \varpi)\varpi^{2}   & y_{1} + z \varpi &  (a_{2} + a\varpi) \varpi^{2}\\[0.2em] 
& \varpi^{2} & a_{2} + a \varpi   & \\
&  & 1  & \\[0.2em] 
&  & (a_{3} + a_{1} \varpi)  & \varpi^{2} 
\end{array}\right)}$}\quad \quad \quad
\end{align*} 
where $ a, a_{1}, a_{2}, a_{3}, y_{1} \in [\kay] $ and $ z \in  [\kay_{3} ]  $. From these, one deduces that  the orbits of $ U $ on  
\begin{itemize} 
\item $ \mathcal{Y}_{\varepsilon_{0}}/U_{2}'  $ are given by $  \varpi^{(4,4,2)} $, $ \varpi^{(3,3,1)} \eta _{1} $, 
\item $ \mathcal{Y}_{\varepsilon_{1}}/U_{2}' $ are given by   $ \varpi^{(4,2,4)} $, $ \varpi^{(3,1,3)  } \eta_{1} $, $ \varpi^{(2,2,0)} \eta_{2} $
\item  $ \mathcal{Y}_{\varepsilon_{2}}/U_{2}  '  $ are given by  $ \varpi^{(4,4,2)}$, $ \varpi^{(3,3,1)}\eta_{1} $, $ \varpi^{(2,2,0)}\eta_{2} $. 
\end{itemize}
Note that we make use of $ U_{2} \varpi^{(2,2,0)} \eta_{2} U_{2}' = U_{2} \varpi^{(2,0,2)} \eta_{2} U_{2}' $ which holds by Lemma \ref{etaireflections}.      
\end{proof} 
\subsection{Orbits on $ U'h H_{\tau_{1}}'/H_{\tau_{1}}'$}     \label{UHtau1orbitsec}        The proof of  Proposition \ref{R1scrhard} is based on Lemma  \ref{fibersofpV}. To compute  the decompositions of the projections of $ U'\varpi^{\lambda} H_{\tau_{1}}' $ to $ H_{2}' $, it will be convenient to work the with the conjugate $ H_{\tau_{\circ}}' $ of $ H_{\tau_{1}}' $ introduced in Notation \ref{notationT}. This is done since the projection $   U_{2}^{\circ}  :   =  \pr_{2}'(H_{\tau_{\circ}}')$ is a (standard) maximal  parahoric subgroup of $ \mathrm{GSp}_{4}(F) $. It is possible to perform these computations with $ \pr_{2}'(H_{\tau_{1}}') $ instead, but this requires us to introduce a different Iwahori subgroup of $ \mathrm{GSp}_{4}$.

Recall that $ W_{2}' $ denotes the Weyl group of $ H_{2}' $ and  $ W_{I_{2}'} $ the Iwahori Weyl group. Let $ W^{\circ}_{2} $ denote Coxeter  subgroup of $ W_{I_{2}'} $ generated by $ T_{2} := \left \{ t(f_{2})r_{0} ,  r_{2 }  \right \}  $. Each coset $ W_{2}' w W_{2}^{\circ} \in W_{2}' \backslash W_{I_{2}'} / W_{2}^{\circ}  $ contains a unique element of minimal possible length which we refer to as \emph{$ (W_{2}', W_{2}^{\circ})$-reduced} element. We let $ [W_{2}' \backslash W  _ {I_{2}'} / W_{2}^{\circ}]  $ denote the subset of $ W_{I_{2}'} $ of all $ (W_{2}', W_{2}^{\circ}) $-reduced elements. If $ w \in W_{I_{2}'} $ is such a  reduced element, the intersection $$ W_{2, w}' := W_{2}' \cap w W_{2}^{\circ} w^{-1} $$ is a Coxeter subgroup of $ W_{2}' $ generated by $ T_{2,w}  :=  w T_{2}w^{-1} \cap  W_{2}'   $. Then each coset in $ W_{2}'/ W_{2,w}' $  contains a unique element of minimal possible length. The set of all  representatives elements for $ W_{2}'/W_{2,w}'$ of minimal length  denoted by $ [ W_{2}' / W_{2,w}'  ]$.   
Then the   decomposition  recipe of \cite[Theorem 5.4.2]{CZE} says the following. 
\begin{proposition} For any $ w \in [ W_{2}' \backslash W_{I_{2}}' / W_{2}^{\circ} ] $, $$ U_{2}' w  U^{\circ}_{2}  = \bigsqcup_{\tau }  \bigsqcup_{\vec{\kay} \in [\kay]^{l(\tau w)}} \mathcal{Y}_{\tau w}(\vec{\kay})  U_{2}^{\circ}       $$ 
where $  \tau $ runs over $ [W_{2}'/W_{2,w}']$. 
\end{proposition}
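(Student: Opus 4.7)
The plan is to recognize this proposition as a direct specialization of the general parahoric double coset decomposition recipe \cite[Theorem 5.4.2]{CZE} to the reductive group $\mathbf{H}_2' = \mathrm{GSp}_4$, equipped with the Iwahori subgroup $I_2'$, the maximal parahoric $U_2^\circ$ associated to the generating set $T_2 = \{t(f_2)r_0, r_2\}$, and the Schubert-cell parameterization $\mathcal{Y}_w$ constructed in \S\ref{players} via the root group maps $y_i$ and reflection lifts $v_i$. Once one verifies that these data satisfy the hypotheses of the general theorem, the proposition follows immediately.

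The verification proceeds in three steps. First, one records the Iwahori-level Bruhat decomposition: by the standard fact for unramified reductive groups (or by direct inspection of the explicit $h_{r_i}$ and the braid relations in the Iwahori Hecke algebra), the map $\vec{\kappa} \mapsto \mathcal{Y}_v(\vec{\kappa}) I_2'$ is a bijection $[\kay]^{l(v)} \xrightarrow{\sim} I_2' v I_2'/I_2'$ for every $v \in W_{I_2'}$, independent of the reduced word used to define $\mathcal{Y}_v$. Second, for $v$ of minimal length in $v W_2^\circ$, passing to the quotient by $U_2^\circ$ on the right merges exactly $|W_2^\circ|$ Iwahori cells into a single $U_2^\circ$-coset and preserves disjointness, so $\mathcal{Y}_v(\vec{\kappa}) U_2^\circ$ parameterizes $I_2' v U_2^\circ / U_2^\circ$ faithfully. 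Third, writing $U_2' = \bigsqcup_{\tau \in [W_2'/W_{2,w}']} \tau I_2'$ and chaining reduced words for $\tau$ and $w$, one assembles the claimed disjoint union provided that $\ell(\tau w) = \ell(\tau) + \ell(w)$, which holds precisely because $w$ is $(W_2', W_2^\circ)$-reduced and $\tau$ is of minimal length in its coset modulo $W_{2,w}'$.

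The main subtlety is the double disjointness: as $\tau$ varies over $[W_2'/W_{2,w}']$ and $\vec{\kappa}$ over $[\kay]^{l(\tau w)}$, no two pairs $(\tau, \vec{\kappa})$ should yield the same right $U_2^\circ$-coset. This is where the stabilizer $W_{2,w}' = W_2' \cap w W_2^\circ w^{-1}$ plays its essential role: it accounts for exactly the ambiguity introduced when pushing $I_2'$-cosets through the right $U_2^\circ$-quotient, and the length-additivity forces the Schubert-cell parameter $\vec{\kappa}$ to see the full length $l(\tau w)$ without collapsing. Since this combinatorial bookkeeping is precisely what is carried out in the axiomatic framework of \cite[Theorem 5.4.2]{CZE}, the residual work is merely to note that the data $(I_2', U_2^\circ, T_2, \mathcal{Y}_w)$ of \S\ref{players} fits that framework, which is transparent from the explicit description of $I_2' = \pr_2(I')$ and $U_2^\circ = I_2' W_2^\circ I_2'$.
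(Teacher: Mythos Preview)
Your proposal is correct and matches the paper's approach exactly: the paper states this proposition without proof, introducing it with the sentence ``Then the decomposition recipe of \cite[Theorem 5.4.2]{CZE} says the following,'' so it is simply a citation of the general result specialized to $\mathrm{GSp}_4$ with the parahoric data $(U_2', U_2^\circ)$. Your elaboration of why the hypotheses of that theorem are met is more than the paper provides; the only imprecision is the literal equality $U_2' = \bigsqcup_{\tau} \tau I_2'$ in your third step, which should instead read as the decomposition $U_2' w U_2^\circ = \bigsqcup_{\tau \in [W_2'/W_{2,w}']} I_2' \tau w U_2^\circ$, but this does not affect the argument.
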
 
\begin{remark} Note that $ l(\tau w) = l(\tau) + l(w) $ for $ \tau \in [W_{2}' / W_{2,w}'] $ and $ w \in   [W_{2}'\backslash W_{I_{2}'}/ W_{2}^{\circ}] $.    
\end{remark} 
\begin{lemma}   \label{U2circwords}           For each $ \lambda \in \Lambda_{2}^{+}$, the element $ w = w_{\lambda} \in W_{I_{2}'} $ specified is the unique element in $ W_{I_{2}}' $ of minimal possible length such that $ U_{2}' \varpi^{\lambda}  U_{2}^{\circ}  = U_{2}' w  U_{2}  ^  {    \circ}  $ 
\begin{itemize} 
\item $ \lambda = (1,1,1) $, $ w =  \rho_{2}$
\item $ \lambda = (2,2,2) $, $ w  =   v_{0} v_{1} \rho^{2}_{2} $ 
\item $ \lambda = (3,3,2)  $, $ w  =  v_{0}  v_{1} \rho^{3}_{2}  $  
\item $ \lambda = (3,2,3) $, $ w = v_{0} v_{1} v_{2} v_{1} \rho^{3}_{2} $
\item $ \lambda = (4,4,2) $, $ w = v_{0} v_{1} v_{2} v_{1} \rho^{4}_{2}  $ 
\end{itemize}  
\end{lemma}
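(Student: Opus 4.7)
My plan is to proceed in close parallel with Lemma \ref{lminwords}, adapted to the asymmetric situation where the right-hand parahoric Weyl group $W_2^\circ = \langle r_2,\, t(f_2)r_0 \rangle$ differs from the hyperspecial $W_2'$. For each dominant $\lambda \in \Lambda_2^+$ in the list, I would first verify that the proposed word $w_\lambda$ lies in the double coset $W_2' \cdot t(-\lambda) \cdot W_2^\circ$ inside $W_{I_2'} = \Lambda_2 \rtimes W_2'$. Using the identifications $v_0 \leftrightarrow t(f_2)r_0$, $v_1 \leftrightarrow r_1$, $v_2 \leftrightarrow r_2$ and $\rho_2^2 \leftrightarrow t(-(2,1,1))$ (both central and length zero), one rewrites each $w_\lambda$ in the canonical form $t(\mu_\lambda)\,v_\lambda$ with $\mu_\lambda \in \Lambda_2$ and $v_\lambda \in W_2'$, by systematically commuting translations leftward via $\sigma \cdot t(\nu) = t(\sigma\nu) \cdot \sigma$. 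It then suffices to check that $\mu_\lambda$ lies in the $W_2'$-orbit of $-\lambda + \ZZ f_2$, where the $\ZZ f_2$ summand accounts for translations absorbable through the element $t(f_2)r_0 \in W_2^\circ$.

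Minimality would be established by an adaptation of formula (\ref{lmin}) to one-sided parahoric double cosets. Concretely, for translation component $-\lambda$, the minimum length over $W_2' \cdot t(-\lambda) \cdot W_2^\circ$ is a sum over positive roots $\alpha \in \Phi_2^+$, with each contribution adjusted based on whether the associated affine reflection lies in $W_2^\circ$. Since $W_2^\circ$ contains reflections in the two long roots $\beta_0$ and $\beta_2$ but in neither short root $\beta_1$ nor $\beta_1+\beta_2$, the terms involving $\langle \lambda, \beta_0\rangle$ and $\langle \lambda, \beta_2\rangle$ receive a $-1$ adjustment (when positive), while the short-root contributions are as in the two-sided hyperspecial case. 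Computing this quantity for each $\lambda$ and comparing with $\ell(w_\lambda)$ (which equals the number of $v_i$'s in the word, since $\rho_2$ has length zero) confirms that the proposed word achieves the minimum.

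The principal obstacle will be establishing uniqueness of the minimal length element. For the Coxeter subgroup $W_{\mathrm{aff},2}'$ of type $\tilde{C}_2$, uniqueness of minimal representatives in cosets of the standard parabolic $W_2^\circ$ (and of the finite Weyl group $W_2'$ on the left) is standard; the subtlety is that $W_{I_2'} = W_{\mathrm{aff},2}' \rtimes \Omega_{H_2'}$ and so one must track the $\Omega$-component separately. This is handled by noting that the image of $t(-\lambda)$ under the canonical projection $W_{I_2'} \twoheadrightarrow \Omega_{H_2'}$ is determined by the class $-\lambda + Q_2^\vee$, and this forces the power of $\rho_2$ appearing in $w_\lambda$ to match the one given. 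Combined with affine-Coxeter uniqueness in each $\Omega$-coset, this yields the claim.
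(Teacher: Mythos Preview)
Your three-step outline (coset membership, minimality, uniqueness) mirrors the paper's, and your uniqueness argument via Coxeter theory plus $\Omega$-tracking is fine. But there are problems in the first two steps.

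In paragraph 1, the claim that one can absorb a ``$\ZZ f_2$ summand'' through $W_2^\circ$ is incorrect: $W_2^\circ = \{1,\, r_2,\, t(f_2)r_0,\, t(f_2)r_0 r_2\}$ contains no nontrivial pure translation, only the affine reflection $t(f_2)r_0$. The paper simply says the coset equality is ``easy to verify'' directly.

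The genuine gap is in paragraph 2. Your ``adjusted formula'' --- a $-1$ correction to the $\beta_0$- and $\beta_2$-terms of (\ref{lmin}) --- is neither derived nor correct. For $\lambda = (2,2,2)$, the standard formula gives $\ell_{\min}(t(\lambda)W_2') = 3$; your adjustment yields $1$, but the true double-coset minimum is $2$ (achieved by $v_0 v_1 \rho_2^2$). For $\lambda = (3,2,3)$, your adjustment gives at most $3$, but the minimum is $4$. There is no simple closed-form correction of (\ref{lmin}) valid for asymmetric parahoric double cosets.

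The paper avoids this by exploiting $|W_2^\circ| = 4$. Passing to the inverse double coset $W_2^\circ\, t(\lambda)\, W_2'$, one enumerates the (at most four) cosets $\gamma\, t(\lambda)\, W_2'$ for $\gamma \in W_2^\circ$, applies the analogue of (\ref{lmin}) for $\mathrm{GSp}_4$ to each, and takes the minimum. For example, $W_2^\circ\, t(3,2,3)$ reduces to $\{t(3,2,3),\, t(3,2,0)\}$ modulo $W_2'$, with $\ell_{\min}$-values $4$ and $5$ respectively. This finite enumeration is the simplification you missed.
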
 
\begin{proof} It is easy to verify the equality of cosets for each $ \lambda $ and $ w $. To check that the length is indeed minimal, one can proceed as follows.     Under the isomorphism, $ U_{2}' \backslash H_{2}'/U_{2}^{\circ} \simeq W_{2}'\backslash  W_{I_{2}'}/ W_{2}^{\circ}   $, the coset $ U_{2}' \varpi^{\lambda}  U_{2} ^  { \circ } $ corresponds to $ W_{2}' t(-\lambda) U_{2} ^{\circ} 
   $. The minimal possible length of elements in $ W_{2}' t(-\lambda ) U_{2} ^  { \circ } $ is the same as that for $ U_{2} ^{ \circ } t(\lambda) W_{2}' $ (taking inverse establishes a bijection). One can then use analogue of (\ref{lmin})   for $ \mathrm{GSp}_{4} $ to find the 
minimal possible length in each of $ \gamma t(\lambda) W_{2}' $ for every  $ \gamma \in W^{\circ}_{2}   = \left \{ 1, r_{2} , t(f_{2}) r_{0}, t(f_{2})r_{0}r_{2} \right \} $. For instance, $$ W^{\circ}_{2}  t(3,2,3) = \left \{ t(3,2,3), t(3,2,0) \right \}  $$  and the minimal lengths of elements in  $ t(3,2,3)W_{2}'  $ is $ 4 $ while that of  $t(3,2,0)W_{2}' $  is $ 5 $.       
\end{proof} 
\begin{lemma}  \label{distinctH2U2circ}  $ 1, v_{1}, \eta_{1} $ and $ \eta_{2} $ 
represent distinct classes in $ H_{2}  \backslash H_{2}' /  U_{2}^{\circ} $.    
\end{lemma}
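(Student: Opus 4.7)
My strategy is to exploit the inclusion $U_2^\circ \subset U_2'$ and the resulting surjection $H_2\backslash H_2'/U_2^\circ \twoheadrightarrow H_2 \backslash H_2'/U_2'$, then separately handle the single pair of classes that this surjection fails to distinguish by a direct matrix argument.

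First, observe that $v_1 \in \mathrm{GSp}_4(\Oscr_F) = U_2'$ with $\mathrm{sim}(v_1) = 1$, so under the projection above the classes of $1$ and $v_1$ both map to $H_2 U_2'$. By Lemma \ref{distinctetai}, the three images $H_2 U_2'$, $H_2\eta_1 U_2'$, $H_2\eta_2 U_2'$ in $H_2\backslash H_2'/U_2'$ are pairwise disjoint. Pulling back to $H_2\backslash H_2'/U_2^\circ$, this settles five of the six required inequalities: any pair involving $\eta_1$ or $\eta_2$ with a different element is already separated by the projection.

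The remaining step is to show $v_1 \notin H_2 U_2^\circ$. I will verify this by direct computation. Assuming $v_1 = h u$ with $h \in H_2$ and $u \in U_2^\circ$, I will write $h^{-1}$ in the block form of Notation \ref{notationh2} with entries $a, b, c, d, a_1, b_1, c_1, d_1$ satisfying $ad - bc = a_1 d_1 - b_1 c_1$, and compute $h^{-1} v_1$ explicitly; the product just redistributes the eight parameters across the anti-diagonal blocks:
$$h^{-1} v_1 = \begin{pmatrix} 0 & a & 0 & b \\ a_1 & 0 & b_1 & 0 \\ 0 & c & 0 & d \\ c_1 & 0 & d_1 & 0 \end{pmatrix}.$$
Reading the entrywise valuation conditions that characterize $U_2^\circ$ (from the description of $U^\circ$ in \S\ref{H'strucsec}) against this explicit matrix forces in particular $c, d \in \varpi \Oscr_F$ while $a, b \in \Oscr_F$, whence $\mathrm{sim}(h^{-1}) = ad - bc \in \varpi \Oscr_F$. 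Since $\mathrm{sim}(v_1) = 1$, this contradicts $\mathrm{sim}(u) = \mathrm{sim}(h^{-1} v_1) \in \Oscr_F^\times$.

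No step of this plan presents a conceptual obstacle; the only genuine labor is the valuation bookkeeping in the final matrix inspection, which is a completely routine check once $h^{-1} v_1$ has been written down. The main ingredient is the conceptual observation that $v_1$ lies in $U_2'$ but not in $H_2 U_2^\circ$, which measures precisely the failure of the short reflection $r_1$ (generating the nontrivial simple reflection dropped from $U_2^\circ$) to belong to $H_2$ after passage to the smaller parahoric.
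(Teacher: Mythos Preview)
Your argument rests on the inclusion $U_{2}^{\circ}\subset U_{2}'$, but this inclusion is false. From the explicit description of $U^{\circ}$ in \S\ref{H'strucsec}, the $\mathrm{GSp}_{4}$-component $U_{2}^{\circ}=\pr_{2}'(U^{\circ})$ allows the $(1,3)$-entry to lie in $\varpi^{-1}\Oscr_{F}$; concretely, the element $v_{0}=\left(\begin{smallmatrix} & & 1/\varpi & \\ & 1 & & \\ \varpi & & & \\ & & & -1\end{smallmatrix}\right)$, which represents $t(f_{2})r_{0}\in W_{2}^{\circ}$, lies in $U_{2}^{\circ}$ but not in $U_{2}'=\mathrm{GSp}_{4}(\Oscr_{F})$. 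The group $U_{2}^{\circ}$ is the non-special (paramodular-type) maximal parahoric of $\mathrm{GSp}_{4}(F)$, and it is not comparable to the hyperspecial $U_{2}'$. Consequently there is no natural map $H_{2}\backslash H_{2}'/U_{2}^{\circ}\to H_{2}\backslash H_{2}'/U_{2}'$ of the sort you invoke, and your appeal to Lemma~\ref{distinctetai} does not go through.

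Your direct matrix check that $v_{1}\notin H_{2}U_{2}^{\circ}$ is correct, and in fact this is exactly the style of argument the paper uses. What you need to do is carry out the analogous computation for \emph{each} of the remaining pairs: write out $h\eta_{i}$, $v_{1}h\eta_{i}$, and $\eta_{1}^{-1}h\eta_{2}$ for $h\in H_{2}$, read off the valuation constraints imposed by membership in $U_{2}^{\circ}$, and derive the contradiction $\mathrm{sim}\in\varpi\Oscr_{F}$ in each case. This is routine but cannot be bypassed by the shortcut you proposed.
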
    
\begin{proof} We need to show that for distinct  $ \gamma  , \gamma'  \in \left \{ 1, v_{1}, \eta_{1}, \eta_{2} \right \} $, $ \gamma ^{-1} h \gamma '  \notin H $ for any $ h \in H $.  Writing $ h $ as in Notation \ref{notationh2},   we  have 
$$
hv_{1}   =   \scalebox{0.9}{$\begin{pmatrix} & a &  & b \\ a_{1} & & b_{1} &  \\  & c & & d \\ c_{1} & & d_{1}  & \end{pmatrix}$} ,  \quad 
 h \eta_{i}    =  \scalebox{0.9}{$\begin{pmatrix}  a \varpi^{i}  & & b & a    \\ &   a_{1}  \varpi^{i} & a_{1} &  b_{1}  \\[0.1em]  c \varpi^{i}  & &d & c  \\   &   c _{1} \varpi^{i}  &  c_{1} & d_{1}   
\end{pmatrix}$}, \quad  
v_{1}  h \eta_{i}  = 
\scalebox{0.9}{$\begin{pmatrix}  & a_{1} \varpi ^{i}  & a_{1} &  b_{1} \\ a \varpi ^{i} & & b & a   \\  &  c_{1} \varpi ^{i } & c_{1}  &  d_{1}  \\ c \varpi & & d & c  \end{pmatrix}$}$$ 
where $ i = 1, 2 $ and  
$$ 
 \eta^{-1}_{1} h \eta_{2} =   
\scalebox{0.9}{$\begin{pmatrix}
a \varpi& - c_{1} \varpi & \frac{b-c_{1}}{\varpi}  &  \frac{a - d _{1}}{\varpi}   \\[0.1em]  
-c \varpi &   a_{1}  \varpi & \frac{a_{1} -d_{1}}{\varpi} &  \frac{b_{1}-c}{\varpi}  \\[0.1em]  c \varpi^{2}  & &d & c  \\   &   c _{1} \varpi^{2}  &  c_{1} & d_{1}   
\end{pmatrix}$}. $$  
If any of $  hv_{1} \in  U_{2}^{\circ} $,  then $ a_{1}, c_{1} \in \varpi \Oscr_{F} $. Since all entries of $ hv _{1} $ are integral, this would mean $ \det(h v_{1} ) \in \varpi \Oscr_{F} $, a contradiction. If $ h\eta_{i} \in U_{2}^{\circ} $, then all entries of $ h $  excluding $ b $ are integral and $ b \in \varpi^{-1}  \Oscr_{F} $. Since the  first two columns of $ h \eta_{i} $ are integral multiples of $ \varpi $, this would still make $ \det ( h \eta_{i} ) \in \varpi \Oscr_{F} $, a contradiction.  Similarly for $ v_{1} h \eta_{i} $.  Finally, $ \eta_{1}^{-1} h \eta_{2} \in  U_{2}^{\circ} $ implies  that $ c, d, c_{1}, d_{1} \in \Oscr_{F} $ and the top right  $ 2 \times 2 $ block implies $ a , a_{1} \in \Oscr_{F} $. So again, the first two columns are integral multiples of $ \varpi $ making $ \det( \eta_{1} ^{-1} h \eta_{2} ) \in \varpi  \Oscr_{F} $, a contradiction.        
\end{proof} 
\begin{notation} For this subsection only, we  let  $ \mathscr{R}_{V}(h) $, denote the double coset space $ U_{2} \backslash   U_{2}' h V / V  $   where   $ h \in H_{2}' $ and $ V \subset H_{2}' $ a compact open subgroup. 
\end{notation} 
\begin{proposition}  \label{U2circhard}   We have 
\begin{enumerate}[label = \normalfont (\alph*)] 
\item $ \mathscr{R}_{U_{2}^{\circ}}(\varpi^{(2,1,1)}) = \left\{\varpi^{(2,1,1)}, \,  \varpi^{(2,1,1)}v_{1} , \,   \varpi^{(1,1,0)} \eta_{1}    \right \}  $ 
\item $  \mathscr{R}_{U_{2}^{\circ}}(\varpi^{(2,2,2)}  )   = \left \{\varpi^{(2,2,2)},  \,  
\varpi^{(2,2,2)}v_{1}, \,
\varpi^{(1,0,0)}\eta_{1}, \,
\varpi^{(1,0,1)}\eta_{1}, \, 
\varpi^{(1,1,1)}\eta_{1}, \eta_{2} \right \} $
\item $ \mathscr{R}_{U_{2}^{\circ}}(\varpi^{(3,2,3)}) = \left \{ \varpi^{(3,2,3)}, \,
\varpi^{(3,3,2)}v_{1}, \,
\varpi^{(2,0,1)}\eta_{1}, \,
\varpi^{(2,1,2)}\eta_{1}, \,
\varpi^{(1,0,1)}\eta_{2}     \right \}   $  
\end{enumerate}  
and $ \mathscr{R}_{U_{2}}^{\circ}(\varpi^{(2,2,1)}) =  \mathscr{R}_{U_{2}^{\circ}}(\varpi^{(2,1,1)})$. 
\end{proposition}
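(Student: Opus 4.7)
My plan is to mirror the Schubert cell analysis of \S\ref{U'orbitssec} but now on the smaller group $H_2'$, using the maximal parahoric $U_2^\circ$ in place of $U_2'$. First, for each $\lambda \in \{(2,1,1), (2,2,1), (2,2,2), (3,2,3)\}$, I will invoke Lemma \ref{U2circwords} to replace $U_2'\varpi^\lambda U_2^\circ$ by $U_2' w_\lambda U_2^\circ$ where $w_\lambda$ has minimal length in its $(W_2', W_2^\circ)$-double coset. Then I will apply the decomposition recipe recalled before Lemma \ref{U2circwords} to write $U_2' w_\lambda U_2^\circ = \bigsqcup_{\tau} \bigsqcup_{\vec{\kay}} \mathcal{Y}_{\tau w_\lambda}(\vec{\kay}) U_2^\circ$, where $\tau$ runs over the set $[W_2'/W_{2,w_\lambda}']$ of minimal length representatives. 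For each such $\tau$, I will compute explicit matrix forms of $\mathcal{Y}_{\tau w_\lambda}/U_2^\circ$ using the maps $y_i$, $v_i$ introduced in \S\ref{players} and then study the $U_2$-orbits by elementary row operations from $U_2$ and column operations from $U_2^\circ$. For cases (a) and (d) = (a), the cases $\lambda = (2,1,1)$ and $(2,2,1)$, the Weyl orbit diagram has only a linear structure and each Schubert cell reduces quickly. The equality $\mathscr{R}_{U_2^\circ}(\varpi^{(2,2,1)}) = \mathscr{R}_{U_2^\circ}(\varpi^{(2,1,1)})$ is a consequence of $\varpi^{(1,0,0)} \in U_2^\circ \cdot Z$ together with the central nature of $\varpi^{(1,1,1)}$ (or equivalently, from the identification on the Iwahori Weyl group level $W_2' t(-(2,2,1)) W_2^\circ = W_2' t(-(2,1,1)) W_2^\circ$ which is easily checked by hand).

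For case (b), the cell $\lambda = (2,2,2)$, I will use the reduced word $w_\lambda = v_0 v_1 \rho_2^2$ and the Weyl orbit diagram of $(2,2,2)$ is linear of length $3$. So I will need to analyze four cells of lengths $2, 3, 4, 5$ respectively. Each Schubert cell can be reduced by first eliminating the bottom-row junk using row operations supported by $U_2$, then matching the diagonal to $\varpi^{(2,2,2)}$ via reflections $r_0 r_2$ from $U_2$, and finally performing Euclidean-division type reductions on the off-diagonal root group entries. The appearance of $\eta_1$ and $\eta_2$ comes from those cells in which the off-diagonal entries have particular valuation patterns: if an entry in the (e.g.) $(1,4)$ root group is a unit while the conjugate $(2,3)$ entry is zero, normalisation by an element of $A^\circ$ yields $\eta_1$ or $\eta_2$ depending on the valuation profile of surrounding entries. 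The six distinct representatives listed will emerge from the four cells, with the three $\eta_1$-twists $\varpi^{(1,0,0)}\eta_1, \varpi^{(1,0,1)}\eta_1, \varpi^{(1,1,1)}\eta_1$ arising from different $W_2'$-orbits under the $r_0 r_2$ action (using that $H_{2,\eta_1} \subseteq U_2$, so $W_2$-orbits lift faithfully, as in Lemma \ref{distinctetaireflections}).

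For case (c), the cell $\lambda = (3,2,3)$, the reduced word is $w_\lambda = v_0 v_1 v_2 v_1 \rho_2^3$ of length $4$, and the relevant Weyl orbit diagram for $W_2'/W_{2,w_\lambda}'$ I must first identify. Here $T_{2,w_\lambda} = w_\lambda T_2 w_\lambda^{-1} \cap W_2'$ will be computed directly, and typically will yield a two-element set for this particular $w_\lambda$, giving a coset space of size $4$. I will list the five cells (the length-$4$ cell together with the cells obtained by prepending minimal-length representatives of $W_2'/W_{2,w_\lambda}'$), and reduce each as before. The five claimed representatives should emerge, and I will use Lemma \ref{distinctH2U2circ} together with the $W_2$-orbit bijection from Lemma \ref{distinctetaireflections} (applied separately to the $\eta_1$- and $\eta_2$-pieces) to verify they are pairwise distinct.

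The main obstacle will be the case $\lambda = (3,2,3)$, both in correctly identifying the coset representatives $[W_2'/W_{2,w_\lambda}']$ and in carrying out the elementary reductions on the longest Schubert cell, which involves several variables drawn from $[\kay_2]$ and $[\kay_3]$ where integer-divisibility bookkeeping must be done carefully to separate $\eta_1$-twists from $\eta_2$-twists; moreover, to confirm a non-trivial identification like $U_2 \varpi^{(2,0,1)} \eta_1 U_2^\circ = U_2 \varpi^{(2,1,0)} \eta_1 U_2^\circ$ I will invoke Lemma \ref{etaireflections} (the analogue of which for $U_2^\circ$ in place of $U_2'$ follows since $U_2^\circ \supset U_2'$ up to the Iwahori). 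All these manipulations are elementary but lengthy, and as in the proof of Proposition \ref{GSp6decompose}, I would provide detailed reductions for one representative Schubert cell in each case and summarise the rest.
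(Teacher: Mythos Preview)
Your overall strategy --- identify minimal-length words via Lemma~\ref{U2circwords}, apply the parahoric decomposition recipe, and reduce Schubert cells by elementary operations --- is the same as the paper's. But you conflate two different indexing schemes, and in case (b) this produces a genuine gap.

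The ``Weyl orbit diagram of $\lambda$'' indexes the cells in a decomposition of $U_2' \varpi^\lambda U_2'$ with the hyperspecial group on \emph{both} sides: the index set is $[W_2'/W_2^\lambda]$ where $W_2^\lambda$ is the stabilizer of $\lambda$. Here the right-hand group is the non-hyperspecial parahoric $U_2^\circ$, and the correct index set is $[W_2'/W_{2,w}']$ with $W_{2,w}' = W_2' \cap w W_2^\circ w^{-1}$, as in the proposition preceding Lemma~\ref{U2circwords}. In case (b) these differ: $W_2^{(2,2,2)} = \langle r_1 \rangle$ gives your four-node linear orbit, but the paper computes $w W_2^\circ w^{-1} = \langle t(f_3) r_2, t(f_2) r_0 \rangle$, so $W_{2,w}' = \{1\}$ and $[W_2'/W_{2,w}'] = W_2'$ has eight elements. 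Only those $\tau$ whose reduced word does not begin with $r_2$ matter for $U_2$-orbits (since $v_2 \in U_2$), leaving $\{1, r_1, r_1 r_2, r_1 r_2 r_1\}$; your Weyl-orbit cells would instead be indexed by $\{1, r_2, r_1 r_2, r_2 r_1 r_2\}$, so you would never examine the cells for $r_1 w$ or $r_1 r_2 r_1 w$, and the paper's cell-by-cell list shows that the representative $\varpi^{(1,0,1)}\eta_1$ arises only from those two. The same confusion affects (a): the Weyl orbit of the central element $(2,1,1)$ is a single point, which would give you only one cell, whereas the paper has $W_{2,\rho_2^2}' = \langle r_2 \rangle$ and needs three. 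In case (c) your ``five cells'' double-counts the identity coset; here $W_{2,w}' = \langle r_2 \rangle$ (so $T_{2,w}$ has one element, not two), yielding four cosets and three relevant cells.

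Two smaller corrections. The difference between $\varpi^{(2,2,1)}$ and $\varpi^{(2,1,1)}$ is $\varpi^{(0,1,0)}$, not $\varpi^{(1,0,0)}$, and it has non-unit similitude, so it does not lie in $U_2^\circ \cdot Z$; your alternative Iwahori--Weyl argument is the right one (concretely, $t(-f_2) = r_0 \cdot (t(f_2) r_0) \in W_2' \cdot W_2^\circ$). And $U_2^\circ$, $U_2'$ are distinct maximal parahorics with $U_2^\circ \cap U_2' = I_2'$, so the containment you invoke is false; any $U_2^\circ$-analogue of Lemma~\ref{etaireflections} would need separate verification, though the paper's proof does not need one.
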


\begin{proof} That the representatives are distinct follows by Lemma \ref{distinctH2U2circ} and by checking that $ H_{2} \cap \eta_{1} U_{2}^{\circ} \eta_{1}^{-1} $ is contained in an Iwahori subgroup of $ H_{2} $ (see e.g., the argument in Lemma \ref{distinctWTorbits}).    As usual, we show  that all the orbits are represented by  studying the $U_{2}$-orbits on  Schubert cells. Note that $$ W_{2}' = \left \{ 1, r_{1}, r_{2}, r_{2}r_{1}, r_{1} r_{2}, r_{1}r_{2}r_{1}, r_{2}r_{1}r_{2},  r_{2}r_{1}r_{2}r_{1}  \right \}  $$
and $ (r_{2}r_{1})^{2} = (r_{1} r_{2})^{2}   $.  \\ 

\noindent (a) $ w = \rho_{2}^{2} $.  We have $ W_{2,w} ' = W_{2}' \cap W_{2} ^{\circ}  = \langle r_{2} \rangle $, so $  W_{2}'/W_{2,w} =  
\left \{ W_{2,w}' , r_{1} W_{2,w}', r_{2}r_{1} W_{2,w}' , r_{1}r_{2} r_{1} W_{2,w}' \right \} . $
 So $ [W_{2}'/W_{2,w}]  = \left \{ 1 , r_{1}, r_{2} r_{1} , r_{1}  r_{2}  r_{1}  \right \} $. Thus to study  $ \mathscr{R}_{U_{2}^{\circ}}(w) $,  it suffices to study the $U_{2}$-orbits on cells corresponding to $ \varepsilon_{0} =  \rho_{2}^{2}$,  $   \varepsilon_{1} =  r_{1}\rho_{2}^{2} $ and $ \varepsilon_{2} = r_{1} r_{2} r_{1}\rho_{2}^{2}  $. Now $ \mathcal{Y}_{\varepsilon_{0}}/U_{2}^{\circ} = \varpi^{(2,1,1)}U_{2}^{\circ} $ and 
\begin{align*}  \Yvar{1}/U_{2}^{\circ} =   \Set*{\scalebox{0.9}{$\begin{pmatrix}a \varpi & \varpi \\
\varpi \\  & &  & \varpi \\ 
 & & \varpi & -a \varpi\end{pmatrix} $}} \quad  \quad \quad   \Yvar{2}/U_{2}^{\circ}   =  \scalebox{0.9}{$\Set*{\begin{pmatrix} y &  a_{1} \varpi  &   \varpi  &  - a \varpi  \\
 a \varpi &  \varpi \\
 \varpi \\ 
 -a_{1} \varpi  & & & -\varpi \end{pmatrix}}$}
\end{align*}
where $ a, a_{1}, y \in [\kay] $. For $ \Yvar{1}/U_{2}^{\circ} $, the case $ a = 0 $ clearly leads to $ \varpi^{(2,1,1)}v_{1} $. If $ a \neq 0 $, then we can multiply by $ \mathrm{diag}(a^{-1},1,1,a^{-1}  )  $ on the left and $ \mathrm{diag}(1,a,a,1) $ on the right to assume $ a = 1 $. We then hit with $ v_{\beta_{0}} \in U_{2} $ on left and $ v_{0} \in U_{2}^{\circ}  $ on right to arrive at $\mathrm{diag}(-1,1,1,-1)$ (which we can ignore) times 
\[
\begin{psmallmatrix} 
&  & & \varpi \\
& &  1   \\[0.1em] 
&  \varpi & 1   \\
\varpi^{2}  &  & &  \varpi  
\end{psmallmatrix}. 
\]
Now a simple column operation and a left multiplication by a diagonal matrix in the compact torus transforms this into $ \varpi^{(1,1,0)}\eta_{1} $. As for $  \Yvar{2}/U_{2}^{\circ} $, begin by eliminating $ y $ with a row operation. Then note that conjugation by $ v_{2} $ swaps $ a $ with $ a_{1} $ and reverses all signs. So  after applying operations involving second and  fourth row and columns, we may assume that wlog that $ a_{1} = 0 $. Right multiplication by $ v_{2} $ yields the matrix \[ 
\begin{psmallmatrix} 
\varpi^{2}  & & & a \varpi \\ & \varpi & a \\[0.2em]   & & 1 \\ & &  & \varpi   \end{psmallmatrix} .  
\] 
which results in either $ \varpi^{(2,2,1)}$ (which represents the same class as  $ \varpi^{(2,1,1)} $) or $ \varpi^{(1,1,0)}\eta_{0} $. So all in all, we have three representatives: $ \varpi^{(2,1,1)} $, $ \varpi^{(2,1,1)}v_{1} $, $ \varpi^{(1,1,0)}\eta_{1} $. \\

\noindent  (b) $ w = v_{0} v_{1} \rho^{2}_{2} $. Here $ w W^{\circ}_{2}w^{-1} = \langle t(f_{3})r_{2}, t(f_{2})r_{0} \rangle $, 
so $ W_{2,w}' $ is trivial. So we  need  to analyze cells corresponding to $ \varepsilon_{0} = v_{0}v_{1}\rho_{2}^{2} $, $  \varepsilon_{1} = v_{1} \varepsilon_{0} $, $ \varepsilon_{2} = v_{1} v_{2}  \varepsilon_{0} $ and $  \varepsilon_{3} =  v_{1} v_{2}v_{1} \varepsilon_{0}  $.  
The corresponding cells are   
\begin{alignat*}{4}  \mathcal{Y}_{\varepsilon _{0}}/ U_{2} ^ { \circ  }    &  =  
\scalebox{0.9}{$\Set*{
\left(\begin{array}{cccc}
 & & & 1\\ 
 \varpi & & & \\[0.2em] 
a  \varpi^{2} &  \varpi^{2} &  &  \varpi x  \\[0.2em] 
  & &  - \varpi  &  a \varpi   
\end{array}\right)}$},& 
 \mathcal{Y}_{\varepsilon _{2}}  / U_{2}^{\circ }    &=
 \scalebox{0.9}{$\Set*{
\left(\begin{array}{cccc}
y \varpi & &  - \varpi & a_{1} + a \varpi   \\[0.2em] 
&  & &  1 \\
-\varpi  &  & & \\[0.2em] 
 (a_{1} +   a   \varpi  )    \varpi          &   \varpi^{2}  &  &   x  \varpi 
\end{array}\right)}$},
\\
\mathcal{Y}_{\varepsilon _{1}} / U_{2} ^{\circ }  & = \scalebox{0.9}
{$\Set*{\left(\begin{array}{cccc}
\varpi &    &   &  a_{1} \\
& & &  1 \\[0.2em] 
& -\varpi &  &  a \varpi   \\[0.2em] 
a \varpi^{2 }  &  \varpi^{2}  &  a_{1} \varpi    & x \varpi     
\end{array}\right)},$}  
&
\quad \quad  
\mathcal{Y}_{\varepsilon _{3}} / U_{2}^{\circ}     &=  \scalebox{0.85}
{$\Set*{\left(\begin{array}{cccc}
(a_{2} + a\varpi) \varpi   &   \varpi^{2}  & a_{1} \varpi  & y + \varpi x \\
\varpi &  &  &  a_{1} \\[0.2em] 
 &  &   & 1   \\[0.2em] 
 & &  \varpi  &  -(a_{2}+  a \varpi )   
\end{array}\right)}$}   
\end{alignat*} 
where $ a, a_{1}, a_{2}, x, y \in [\kay] $.  Using similar arguments on these, one deduces that the orbits of  $U_{2} $ on 
\begin{itemize} 
\item $ \Yvar{0}/U_{2}^{\circ} $ are represented by $ \varpi^{(2,2,2)}v_{1} $, $ \varpi^{(1,0,0)}\eta_{0} $,
\item $ \Yvar{1}/U_{2}^{\circ} $ are represented by $ \varpi^{(2,2,2)} $, $ \varpi^{(1,0,1)}\eta_{0} $, $ \varpi^{(1,1,1)}\eta_{1} $, 
\item $   \Yvar{2}/U_{2}^{\circ} $  are  represented by $ \varpi^{(2,2,2)} $, $ \varpi^{(1,1,1)}\eta_{1} $, $ \eta_{2} $ 
\item $ \Yvar{3}/U_{2}^{\circ} $ are represented by $ \varpi^{(1,0,0) }  \eta_{1}  $, $ \varpi^{(1,0,1)}\eta_{1}  $, $ \eta_{2}  $. \\     
\end{itemize}  

\noindent (c)  $  w = v_{0} v_{1} v_{2} v_{1 }\rho_{2}^{3}$. Here $ w W^{\circ}_{2}w^{-1} =  \langle r_{2} , t(3f_{2})r_{0} \rangle  $ which means that  $ W _{2, w}' = \langle  r_{2}  \rangle  $. So as in part (a), we have $  [W_{2}'/W_{2,w}] = \left \{ 1, r_{1} ,  r_{2} r_{1}, r_{1} r_{2} r_{1}  \right \} $.  Again,  we   have three cells to analyze, which correspond to $ \varepsilon_{0} = v_{0}v_{1}v_{2}v_{1} \rho_{2}^{3} $, $ \varepsilon_{1} =  v_{1}  \varepsilon_{0} $ and $ \varepsilon_{2} = v_{1} v_{2} v_{1}  \varepsilon_{0}  $.  The  corresponding  cells  are      
\begin{align*}  \mathcal{Y}_{\varepsilon_{0}} /U_{2}  ^ {  \circ   }  =   \scalebox{0.9}{$\Set*{\left(\begin{array}{cccc}
 & & & 1  \\
 & &   \varpi   & a \varpi \\[0.2em] 
- a\varpi^{3} & \varpi^{3} &  a_{1}\varpi^{2}  & (x + y \varpi) \varpi    \\[0.2em] 
\varpi^{2}    & &  &   a_{1} \varpi
\end{array}\right)}$},  
\quad  \mathcal{Y}_{\varepsilon_{1}} /U_{2}^{\circ}    = \scalebox{0.9}
{$\Set*{\left(\begin{array}{cccc}
&  & \varpi & a_{2} + a\varpi  \\
&  & & 1  \\[0.2em] 
\varpi^{2}  &   & &  a_{1}  \varpi  
\\[0.2em] 
-(a_{2} + a\varpi) \varpi^{2}  &  \varpi^{3}  &  a_{1}  \varpi^{2}      & \varpi ( x + \varpi y ) 
\end{array}\right)}$}   
\end{align*}   
\begin{align*}   \mathcal{Y}_{\varepsilon_{2}}/U_{2}^{\circ}  = 
&\scalebox{0.9}{$\Set*{\left(\begin{array}{cccc}
-(a_{2} + a\varpi) \varpi^{2} &  \varpi^{3}  &  (a_{3} + a_{1} \varpi) \varpi    & z \\[0.2em] 
&   &  \varpi  & a_{2} + a \varpi  \\
&  &  &  1  \\[0.2em] 
- \varpi^{2} &  &  &  - (a_{3} + a_{1} \varpi) 
\end{array}\right)}$}\quad \quad \quad  
\end{align*} 
where $ a,a_{1},a_{2}, a_{3} , x,y \in [\kay] $. From these, we deduce that
\begin{itemize}  
\item  $ \Yvar{0}/U_{2}^{\circ} $  are represented by   $ \varpi^{(3,3,2)} v_{1}  
$, $ \varpi^{(2,0,1)} \eta_{1} $, 
\item $ \Yvar{1}/U_{2}^{\circ} $  are  represented  by $ \varpi^{(3,2,3)}  $,  $ \varpi^{(2,1,2) } \eta_{1}   $, $ \varpi^{(1,0,1)}\eta_{2} $, 
\item  $ \Yvar{2}/U_{2}^{\circ} $  are  represented by $  \varpi^{(3,3,2)} v_{1} $, $  \varpi^{(2,0,1)} \eta_{1} $, $ \varpi^{(1,0,1)} \eta_{2}    $. \qedhere 
\end{itemize} 
\end{proof} 
We can use Proposition \ref{U2circhard} to obtain representatives for the remaining words computed in  Lemma  \ref{U2circwords} without computing Schubert cells.  
\begin{corollary} \label{U2circeasy}  We have 
\begin{enumerate}[label = \normalfont (\alph*)]
\item $ \mathscr{R}_{U^{\circ}_{2}}(\varpi^{(1,1,1)}) = \left \{   \varpi^{(1,1,1)} v_{1},  \,  \varpi^{(1,1,1)} , \,   \eta_{1}   \right \} $ 
\item $ \mathscr{R}_{U^{\circ}_{2}}(\varpi^{(3,3,2)})  = \left \{  \varpi^{(3,2,3)}v_{1} , \,   \varpi^{(3,3,2)}, \, 
\varpi^{(2,2,1)}\eta_{1},\, 
\varpi^{(2,2,0)}\eta_{1},\,
\varpi^{(2,1,0)}\eta_{1},\,  \varpi^{(1,1,0)}\eta_{2}   
\right \}  $  
\item  $ \mathscr{R}_{U^{\circ}_{2}} ( \varpi^{(4,4,2)}  )   = \left \{ \varpi^{(4,2,4)}v_{1}  , \,  \varpi^{(4,4,2)}, 
\varpi^{(3,3,1)}\eta_{1}, \, 
\varpi^{(3,2,0)}\eta_{1}, \,    \varpi^{(2,2,0)}\eta_{2} \right \}  $  
\end{enumerate} 
\end{corollary}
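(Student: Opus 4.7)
My plan is to derive the corollary from Proposition \ref{U2circhard} by exploiting the fact that the element $\rho_2 \in H_2'$ normalizes the parahoric $U_2^\circ$. Indeed, $\rho_2$ normalizes the Iwahori $I_2'$, and conjugation by $\rho_2$ induces the nontrivial automorphism of the $\mathrm{GSp}_4$ extended Coxeter--Dynkin diagram in \eqref{dynkingl2gsp4} that swaps $r_2$ with $t(f_2) r_0$ and fixes $r_1$; this preserves the generating set of $W_2^\circ$, so $\rho_2 U_2^\circ \rho_2^{-1} = U_2^\circ$. Moreover $\rho_2^2 = \varpi^{(2,1,1)}$ is a central scalar matrix in $H_2'$.

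Consequently, right multiplication by $\rho_2$ yields a $U_2$-equivariant bijection
\[
U_2 \backslash U_2' X U_2^\circ /U_2^\circ \;\xrightarrow{\sim}\; U_2 \backslash U_2' X \rho_2 U_2^\circ /U_2^\circ
\]
for any $U_2'$-left, $U_2^\circ$-right invariant subset $X \subseteq H_2'$. Applying this with $X = U_2' w_\lambda U_2^\circ$ for the words $w_\lambda$ from Lemma \ref{U2circwords}, and using that $U_2' \varpi^{(2,1,1)} U_2^\circ = U_2' \rho_2^2 U_2^\circ$, one reads off bijections pairing $\mathscr{R}_{U_2^\circ}(\varpi^{(1,1,1)})$ with $\mathscr{R}_{U_2^\circ}(\varpi^{(2,1,1)})$, $\mathscr{R}_{U_2^\circ}(\varpi^{(3,3,2)})$ with $\mathscr{R}_{U_2^\circ}(\varpi^{(2,2,2)})$, and $\mathscr{R}_{U_2^\circ}(\varpi^{(4,4,2)})$ with $\mathscr{R}_{U_2^\circ}(\varpi^{(3,2,3)})$---matching parts (a), (b), (c) of this corollary with parts (a), (b), (c) of Proposition \ref{U2circhard} respectively. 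Distinctness of the representatives listed in the corollary then follows automatically from distinctness in the proposition, and the cardinalities $3, 6, 5$ already agree across the two results.

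It remains to verify that, for each representative $\gamma$ listed in the corollary, $\gamma \rho_2$ lies in the $U_2$--$U_2^\circ$ double coset of the claimed matching representative in Proposition \ref{U2circhard}. This is a routine matrix calculation: one evaluates $\gamma \rho_2$ and, using centrality of $\rho_2^2$ together with elementary left-$U_2$ and right-$U_2^\circ$ manipulations (and the identification of permutation-type matrices arising from products with $\rho_2$ as lying in $U_2$ or in an $\eta_i$-double coset), brings the product into one of the canonical forms $\varpi^\mu \delta$ with $\delta \in \{1, v_1, \eta_1, \eta_2\}$ appearing on the Proposition side. I do not foresee any serious obstacle here; the step amounts to bookkeeping over the representatives in each of (a), (b), (c).
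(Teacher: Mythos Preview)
Your proposal is correct and follows essentially the same approach as the paper: both use that $\rho_2$ normalizes $U_2^\circ$ to transport representatives via right multiplication by $\rho_2$, pairing parts (a), (b), (c) of the corollary with the corresponding parts of Proposition~\ref{U2circhard}. The paper makes the ``routine matrix calculation'' step explicit by recording the three relations $\rho_2 U_2^\circ = \varpi^{(1,0,1)} v_1 U_2^\circ$, $v_1 \rho_2 U_2^\circ = \varpi^{(1,1,0)} U_2^\circ$, and $\eta_i \rho_2 U_2^\circ = v_{\beta_0} v_2 \varpi^{(1,1,0)} \eta_i U_2^\circ$ for $i = 1, 2$, which reduce every case to a shift by a known $\varpi^\mu$ together with a swap between the $1$- and $v_1$-classes; you may find it worthwhile to include these.
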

\begin{proof} Since the class of $ \rho_{2} $ normalizes $ W_{2}^{\circ} $ (see diagram  (\ref{dynkingl2gsp4})) and $ \rho_{2} $ normalizes the Iwahori subgroup $I_{2}' $, it normalizes $ U_{2}^{\circ} $. Thus  for any integer $ k $, the representatives for $ \mathscr{R}_{U_{2}^{\circ}}(h\rho^{k}_{2}) $ can be obtained from $ \mathscr{R}_{U_{2}^{\circ}}(h) $  by multiplying representatives on the right by $ \rho^{k}_{2} $.  Now we have the following relations:
$$ \rho_{2} U_{2}^{\circ}   =  \varpi^{(1,0,1)} v _{1} U_{2}^{\circ}, \quad \quad   v_{1} \rho_{2} U_{2}^{\circ}  =  \varpi^{(1,1,0)} U_{2}^{\circ},  \quad \quad 
  \eta_{i} \rho_{2} U_{2}^{\circ}  = v_{\alpha_{0}}v_{2} \varpi^{(1,1,0)}  \eta_{i}   U_{2}^{\circ}     $$
for $ i = 1 , 2 $.    
By Lemma \ref{U2circwords}, parts  (a), (b) and  (c) are obtained by the corresponding parts of  Proposition \ref{U2circhard}. For instance, $ \varpi^{(1,1,0)} \eta_{1} \in \mathscr{R}_{U_{2}^{\circ}}(\varpi^{(2,1,1)}) $ corresponds to $ \varpi^{-(1,0,1)}\eta_{1} \rho_{2}  $ and   \begin{align*}  U_{2}  \varpi^{-(1,0,1)}\eta_{1} \rho_{2} U_{2} &  =  U_{2}  \varpi^{-(1,0,1)}  v_{\alpha_{0}} v_{2} \varpi^{(1,1,0)} \eta_{1}  U_{2}^{\circ} \\
& = U_{2}v_{\alpha_{0}} v_{2} \eta_{1} U_{2}^{\circ}  = U_{2} \eta_{1} U_{2}^{\circ} 
\end{align*}
which gives the representative $ \eta_{1} $ in $ \mathscr{R}_{U_{2}^{\circ}}(\varpi^{(1,1,1)}) $.          
\end{proof}
Let us denote  $ U_{2}^{\dagger} : = \pr_{2}'(H_{\tau_{1}}') $.   Since $ H_{\tau_{1} } $ is the conjugate of $ H_{\tau_{\circ}} $ by $ \varpi^{(1,1,1,1)}$,  $ U_{2}^{\dagger} $ is the conjugate of $ U^{\circ}_{2} $ by $ \varpi^{(1,1,1) }  $. Set 
\begin{equation} \label{eta0matrix}     \eta_{0}:  = \eta_{1} \varpi^{-(1,1,1)} = \scalebox{0.9}{$  
     \left ( \begin{matrix} 1  &  & & 1 \\ & 1  & 1 \\[0.1em]   & &  1  \\ &  & & 1 \end{matrix}  \right )$}  .
     \end{equation} 
Then $ \eta_{i} \varpi^{-(1,1,1)} = \eta_{i-1} $ for $ i = 1 , 2 $. Moreover $ v_{1} $  commutes with $  \varpi^{(1,1,1)} $. So  by Proposition \ref{U2circhard} and Corollary \ref{U2circeasy}, we obtain the following.     
\begin{corollary}    \label{U2dagger}   We  have    
\begin{itemize} 
\item   $ \mathscr{R}_{U_{2}^{\dagger}}(\varpi^{(1,0,0)}) = \left\{\varpi^{(1,0,0)}, \,
\varpi^{(1,0,0)} v _{1}, \,
\varpi^{(1,1,0)} \eta_{0}  \right \} $, 
\item  $ \mathscr{R}_{U_{2}^{\dagger}}(\varpi^{(1,1,1)}) = \left \{ \varpi^{(1,1,1)}, \, 
\varpi^{(1,1,1)} v _{1}, \, 
\varpi^{(1,0,0)}\eta_{0} , \,
\varpi^{(1,0,1)}\eta_{0}, \, 
\varpi^{(1,1,1)}\eta_{0} , \, 
\eta_{1} \right \}  $
\item  $  \mathscr{R}_{U_{2}^{\dagger}}(\varpi^{(2,1,2)}) = \left \{ \varpi^{(2,1,2)},  \,
\varpi^{(2,2,1)} v _{1},  \, \varpi^{(2,0,1)}  \eta_{0}, \,  \varpi^{(2,1,2)} \eta_{0},  \varpi^{(1,0,1)}\eta_{1}   \right \} $
\item $ \mathscr{R}_{U_{2}^{\dagger}}(\varpi^{(0,0,0)})  =  \left \{ 1,  v _{1},   \eta_{0}  \right \} $
\item $  \mathscr{R}_{U_{2}^{\dagger}}(\varpi^{(2,2,1)}) =  \left \{  \varpi^{(2,1,2)} v _{1} , \,   \varpi^{(2,2,1)}, \, 
\varpi^{(2,2,1)}\eta_{0},
\varpi^{(2,2,0)}\eta_{0},\,
\varpi^{(2,1,0)}\eta_{0},   \,  \varpi^{(1,1,0)}\eta_{1}   
\right \}   $          
\item $ \mathscr{R}_{U_{2}^{\dagger}}(\varpi^{(3,3,1)}) = \left \{    \varpi^{(3,1,3)}v _{1}  , \,  \varpi^{(3,3,1)}, 
\varpi^{(3,3,1)}\eta_{0}, \, 
\varpi^{(3,2,0)}\eta_{0}, \,    \varpi^{(2,2,0)}\eta_{1}  \right \}  $ 
\end{itemize}  
\end{corollary}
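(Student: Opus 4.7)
The plan is to reduce Corollary \ref{U2dagger} entirely to the computations already carried out in Proposition \ref{U2circhard} and Corollary \ref{U2circeasy}, exploiting the conjugation relation $U_{2}^{\dagger} = \varpi^{(1,1,1)} U_{2}^{\circ} \varpi^{-(1,1,1)}$ noted immediately before the statement. The key observation is that right multiplication by $\varpi^{-(1,1,1)}$ induces a bijection
\[ U_{2}' h \varpi^{(1,1,1)} U_{2}^{\circ}/U_{2}^{\circ} \xrightarrow{\sim} U_{2}' h U_{2}^{\dagger}/U_{2}^{\dagger}, \quad g U_{2}^{\circ} \mapsto g\varpi^{-(1,1,1)} U_{2}^{\dagger} \]
for every $h \in H_{2}'$. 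Since this map is left $U_{2}$-equivariant, taking $U_{2}$-orbits yields a bijection $\mathscr{R}_{U_{2}^{\circ}}(h\varpi^{(1,1,1)}) \xrightarrow{\sim} \mathscr{R}_{U_{2}^{\dagger}}(h)$ sending the class of $g$ to the class of $g\varpi^{-(1,1,1)}$; in particular distinctness of representatives on the right follows automatically from distinctness on the left.

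Next I would apply this bijection in the six cases at hand with
\[ h \in \{ 1,\, \varpi^{(1,0,0)},\, \varpi^{(1,1,1)},\, \varpi^{(2,1,2)},\, \varpi^{(2,2,1)},\, \varpi^{(3,3,1)} \}. \]
Multiplying each by $\varpi^{(1,1,1)}$ yields respectively $\varpi^{(1,1,1)}$, $\varpi^{(2,1,1)}$, $\varpi^{(2,2,2)}$, $\varpi^{(3,2,3)}$, $\varpi^{(3,3,2)}$ and $\varpi^{(4,4,2)}$, for which the $U_{2}$-orbits on $U_{2}' h\varpi^{(1,1,1)} U_{2}^{\circ}/U_{2}^{\circ}$ are listed in Corollary \ref{U2circeasy}(a), Proposition \ref{U2circhard}(a), Proposition \ref{U2circhard}(b), Proposition \ref{U2circhard}(c), Corollary \ref{U2circeasy}(b) and Corollary \ref{U2circeasy}(c), respectively.

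Finally, I would translate each representative $\varpi^{\mu}\gamma$ with $\gamma \in \{1, v_{1}, \eta_{1}, \eta_{2}\}$ appearing in those lists into $\varpi^{\mu - (1,1,1)}\gamma'$ with $\gamma' \in \{1, v_{1}, \eta_{0}, \eta_{1}\}$, using the two bookkeeping facts recalled in the paragraph preceding Corollary \ref{U2dagger}: that $v_{1}$ commutes with $\varpi^{(1,1,1)}$, and that $\eta_{i}\varpi^{-(1,1,1)} = \eta_{i-1}$ for $i = 1, 2$. A mechanical case-by-case check then reproduces the six lists in Corollary \ref{U2dagger} verbatim. Since the entire argument is a bijective relabelling transported along a single conjugation, there is no real obstacle; the only point demanding care is the arithmetic of the cocharacter shifts, and even that is routine.
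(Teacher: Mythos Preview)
Your proposal is correct and follows exactly the approach the paper indicates in the paragraph immediately preceding the corollary: transport along the conjugation $U_{2}^{\dagger} = \varpi^{(1,1,1)} U_{2}^{\circ} \varpi^{-(1,1,1)}$, then relabel representatives using $v_{1}\varpi^{(1,1,1)} = \varpi^{(1,1,1)}v_{1}$ and $\eta_{i}\varpi^{-(1,1,1)} = \eta_{i-1}$ to reduce to Proposition~\ref{U2circhard} and Corollary~\ref{U2circeasy}. Your case-by-case matching of the six $h$'s with the corresponding inputs from those results is also correct.
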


\begin{remark}   \label{transreps211dagger}  Note that Proposition  \ref{cartanhtaui} implies that for 
$  \mathscr{R}_{U^{\dagger}_{2}}(\varpi^{\lambda}) =  \mathscr{R}_{U_{2}^{\dagger}} (\varpi^{r_{0}(\lambda)})  = \mathscr{R}_{U_{2}^{\dagger}}(\varpi^{r_{2}(\lambda) - (0,0,1)} ) $ 
Thus Corollary \ref{U2dagger} records the decompositions of the all the projections in  Proposition  \ref{R1scrhard}. 
\end{remark}

Next,   we study the fibers of the projection $ \pr_{\lambda} :  \mathscr{R}_{1}(\varpi^{\lambda })  \to  \mathscr{R}_{U_{2}^{\dagger}}(\varpi^{\pr_{2}(\lambda)})  $ and use Corollary \ref{transreps211dagger} to calculate coset representatives given in Proposition \ref{R1scrhard}.  Let us denote by $ \Lambda^{\alpha_{0} > 0}  $ the set of all $ \lambda \in \Lambda $ such that $ \alpha_{0}(\lambda)  > 0 $. We first  specialize Corollary 
\ref{fibersofpV} to the case of $ H_{\tau_{1}}' $.    
\begin{corollary}  \label{fibersofHtau1}  For any $ \lambda  = (a, b, c ,  d    ) \in \Lambda^{\alpha_{0} > 0 } $ and $ \eta \in H_{2}' $ such that $ U_{2} \eta U_{2}^{\dagger}  \in  U_{2}  \backslash 
 U_{2} ' \varpi^{\pr_{2}(\lambda)} U_{2}^{\dagger}  / U_{2}^{\dagger}  $ with $ \mathrm{sim}(\eta) = a $, the fiber of $ \pr_{\lambda} $ above $ U_{2} \eta U_{2}^{\dagger}$ is $$ \left \{ U  
 ( \varpi^{(a,b)} \chi ,   \eta  ) H_{\tau_{1}}'  \, | \,  \chi \in S^{\pm}_{1} \text{ and } U ' (\varpi^{(a,b)} \chi, \eta ) H_{\tau_{1}}' =  U' \varpi^{\lambda} H_{\tau_{1}}'  \right \}  $$  
\end{corollary}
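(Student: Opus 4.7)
The plan is to invoke Corollary \ref{fibersofpV} as the starting point and then contract its indexing set from $ S^{\pm}_{2b-a} $ down to $ S^{\pm}_{1} $, exploiting the copy of $ \GL_{2}(\Oscr_{F}) $ embedded inside $ H_{\tau_{1}}' $ via Lemma \ref{Xscrembeds}. Since $ \lambda = (a,b,c,d) \in \Lambda^{\alpha_{0} > 0} $ forces $ 2b - a > 0 $, the first $ \GL_{2} $-block $ h_{1} = \pr_{1}'(\varpi^{\lambda}) = \mathrm{diag}(\varpi^{b}, \varpi^{a-b}) $ satisfies $ u = b > v = a-b $. Thus Corollary \ref{fibersofpV} applied with $ h = \varpi^{\lambda} $ and $ V = H_{\tau_{1}}' $ realizes the fiber as $ \left \{ U(\varpi^{(a,b)} \chi, \eta) H_{\tau_{1}}' \, | \,  \chi \in S^{\pm}_{2b-a} \text{ and } U'(\varpi^{(a,b)} \chi, \eta) H_{\tau_{1}}' = U' \varpi^{\lambda} H_{\tau_{1}}' \right \} $. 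The containment of the asserted set inside this fiber is immediate from $ S^{\pm}_{1} \subseteq S^{\pm}_{2b-a} $.

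For the reverse containment, I would take any $ \chi \in S^{\pm}_{2b-a} $ appearing in the fiber and identify its natural image $ \chi'' \in S^{\pm}_{1} $ under the quotient map $ S^{\pm}_{2b-a} \to S^{\pm}_{1} $, writing $ \chi = k \chi'' $ for some $ k \in \SL_{2}(\Oscr_{F}) $ that is lower triangular modulo $ \varpi $. Setting $ v := \jmath_{\tau}(\chi''^{-1} k^{-1} \chi'') $, Lemma \ref{Xscrembeds} places $ v $ in $ \mathscr{X}_{\tau} \subseteq H_{\tau_{1}}' $, and a direct computation yields $ (h_{1} \chi, \eta) \cdot v = (h_{1} \chi'', \eta v_{2}) $ where $ v_{2} := \pr_{2}'(v) $. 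Consequently $ U(h_{1} \chi, \eta) H_{\tau_{1}}' = U(h_{1} \chi'', \eta v_{2}) H_{\tau_{1}}' $.

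The hard part will be closing the loop by establishing $ U(h_{1} \chi'', \eta v_{2}) H_{\tau_{1}}' = U(h_{1} \chi'', \eta) H_{\tau_{1}}' $, i.e., removing the perturbation $ v_{2} $ of $ \eta $ while holding the first component fixed. The natural tool is the kernel subgroup $ N := \ker(\pr_{1}') \cap H_{\tau_{1}}' $, whose $ \mathrm{GSp}_{4} $-projection lies inside $ U_{2}^{\dagger} $ by Lemma \ref{Httau1}; the specific shape of $ v_{1} = \chi''^{-1} k^{-1} \chi'' $ (which is Iwahori-congruent to the identity) should force $ v_{2} $ to lie in a root subgroup for which the equation $ \eta v_{2} n = u_{2} \eta $ admits solutions with $ n \in N $ and $ u_{2} \in U_{2} \cap \mathrm{Sp}_{4}(\Oscr_{F}) $. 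This has to be confirmed by a matrix calculation paralleling the one in Lemma \ref{Httau1}, and case-splitting on whether $ \chi'' \in S^{-}_{1} $ or $ \chi'' \in S^{+}_{1} $ may be unavoidable. Once the absorption is verified, the side condition $ U'(\varpi^{(a,b)} \chi, \eta) H_{\tau_{1}}' = U' \varpi^{\lambda} H_{\tau_{1}}' $ transfers to $ \chi'' $ automatically, since equality of $ U $-double cosets forces equality of the ambient $ U' $-double cosets.
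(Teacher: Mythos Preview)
Your route through $\mathscr{X}_\tau$ is an unnecessary detour that manufactures the very difficulty you label the ``hard part''. The paper's proof is a single sentence: the elements
\[
\left(\begin{psmallmatrix} 1 & \\ x & 1 \end{psmallmatrix},\, 1_{H_2'}\right)
\quad\text{and}\quad
\left(\begin{psmallmatrix} 1 & x \\ & 1 \end{psmallmatrix},\, 1_{H_2'}\right)
\]
lie in $H_{\tau_1}'$ whenever $x \in \varpi\Oscr_F$ (a direct check against the matrix in Lemma~\ref{Httau1}). Right-multiplying $(\varpi^{(a,b)}\chi,\eta)$ by such an element adjusts $\chi$ while leaving $\eta$ untouched, so any $\chi \in S^{\pm}_{2b-a}$ collapses to its class in $S^{\pm}_1$ with no perturbation of the second component to absorb.

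Your approach instead uses $v = \jmath_\tau(v_1) \in \mathscr{X}_\tau$, whose $\mathrm{GSp}_4$-projection $v_2$ is nontrivial, and then you are stuck trying to remove $v_2$ from $\eta v_2$. The mechanism you sketch for this --- solving $\eta v_2 n = u_2 \eta$ with $n \in \pr_2'(\ker(\pr_1') \cap H_{\tau_1}')$ --- would in general depend on $\eta$, and you do not verify it. In fact the cleanest way to complete your argument is to take $n = v_2^{-1}$ and $u_2 = 1$, which works precisely because $(1, v_2^{-1}) \in H_{\tau_1}'$. But since $v = (v_1, v_2) \in H_{\tau_1}'$ already, that membership is equivalent to $(v_1, 1) \in H_{\tau_1}'$ --- which is the paper's observation. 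So the ingredient needed to close your gap is exactly the one that renders the $\mathscr{X}_\tau$ step superfluous: you should have multiplied by $(v_1,1)$ from the start.
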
 

\begin{proof} This follows since $ \left ( \begin{psmallmatrix} 1 & \\ x & 1 \end{psmallmatrix} , 1 \right ) $,  $ \left (  \begin{psmallmatrix} 1 & x \\ & 1 \end{psmallmatrix} , 1 \right ) $ lie in $      H_{\tau_{1}}'$ if $ x \in \varpi \Oscr_{F} $. 
\end{proof}     
For $ x \in \Oscr_{F} $,  define \begin{equation} \label{varkappamatrices} \varkappa^{+}(x) =    \left (  \scalebox{0.9}{$\left ( \begin{matrix} 1 & x \\  & 1  \end{matrix} \right )$}  , \scalebox{1.1}{$ \left (   \begin{smallmatrix} \\  1 &     &   \\  &   1 \\    x   &  &   1  \\ & & &      1    \end{smallmatrix} \right )$} \right )  ,    \quad \quad   \,  \varkappa^{-}(x) =    \left ( \scalebox{0.9}{$\left ( \begin{matrix} 1 &   \\ x & 1  \end{matrix} \right )$} , \scalebox{1.1}{$ \left (   \begin{smallmatrix} \\  1 &     &   x   \\  &   1 \\    &  &   1  \\ & & &      1    \end{smallmatrix} \right )$} \right )   .
\end{equation} 
Note that $ \varkappa^{\pm}(x) \in H_{\tau_{1}} ' $ as these elements are in the subgroup $ \mathscr{X}_{\tau}  \subset     \Ht_{\tau_{1}}     $ introduced in Notation \ref{Xscrnot}.  We let $ \kappa^{\pm}_{1}(x) = \pr_{1}'( \varkappa_{\pm} (x))  $ and $ \kappa^{\pm}_{2}(x) = \pr_{2}' (\varkappa^{\mp}(x)) $ denote their projections.

\begin{lemma} \label{liftsforsigma0}  If   $ \lambda \in \Lambda_{\alpha_{0}}^{>}  $, then   
$ U \varpi^{\lambda}  \chi   H_{\tau_{1}}' \in \left \{ U \varpi^{\lambda} H_{\tau_{1}}',  \,   U \varpi^{s_{0}(\lambda)} H_{\tau_{1}'} \right \} $ for any $ \chi \in S_{1}^{\pm} \times  \left     \{ 1 \right \}  $. 

\end{lemma}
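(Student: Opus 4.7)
The case $\chi = 1$ is immediate. For the remaining cases, write $\chi = (\chi_{1}, 1)$ with $\chi_{1}$ a non-trivial element of $S_{1}^{\pm}$; since every element of $S_{1}^{\pm}$ has determinant one, we have $\chi_{1} \in \SL_{2}(\Oscr_{F})$ and $\chi = (\chi_{1}, 1, 1) \in H$. The plan is to exhibit explicit witnesses $\tilde{u} \in U$ and $\tilde{k} \in H_{\tau_{1}}'$ with $\varpi^{\lambda}\chi = \tilde{u}\, \varpi^{s_{0}(\lambda)}\, \tilde{k}$, thereby placing the coset $U\varpi^{\lambda}\chi H_{\tau_{1}}'$ in the second set.

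The construction proceeds factor by factor. Writing $\lambda = (a_{0}, a_{1}, a_{2}, a_{3})$, in the $H_{1} = \GL_{2}$-component the matrix $\mathrm{diag}(\varpi^{a_{1}}, \varpi^{a_{0}-a_{1}})\chi_{1}$ has determinant $\varpi^{a_{0}}$, and the hypothesis $\alpha_{0}(\lambda) = 2a_{1} - a_{0} > 0$ (equivalently $a_{0} - a_{1} < a_{1}$) together with the presence of a unit entry in $\chi_{1}$ forces the gcd of entries to be $\varpi^{a_{0}-a_{1}}$. Smith normal form then yields $u_{1}, k_{1} \in U_{1} = \GL_{2}(\Oscr_{F})$ with $\varpi^{\lambda_{H_{1}}}\chi_{1} = u_{1}\varpi^{s_{0}(\lambda)_{H_{1}}}k_{1}$ and $\det(u_{1})\det(k_{1}) = 1$. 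Since $s_{0}$ only affects the $f_{0}, f_{1}$-directions, we have $\pr_{2}(s_{0}(\lambda)) = \pr_{2}(\lambda)$, so the equation in the $H_{2}'$-component reduces to solving $\tilde{u}_{H_{2}'} = \varpi^{\pr_{2}\lambda}\, \tilde{k}_{H_{2}'}^{-1}\, \varpi^{-\pr_{2}\lambda}$ once $\tilde{k}_{H_{2}'}$ is chosen.

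I take $\tilde{k} := \jmath_{\tau}(k_{1}) \in \mathscr{X}_{\tau} \subset H_{\tau_{1}}'$ using the section of Notation \ref{Xscrnot} and Lemma \ref{Xscrembeds}. By the explicit formula, $\pr_{2}'(\jmath_{\tau}(k_{1}))$ has non-trivial entries only in positions associated with the root $\beta_{0} = 2e_{2} - e_{0}$ of $\mathrm{GSp}_{4}$, so conjugation by $\varpi^{\pr_{2}\lambda}$ scales the relevant off-diagonal entries by integer powers of $\varpi$. The resulting $\tilde{u}_{H_{2}'}$ is then verified to land in $\jmath_{2}(U_{1} \times_{\Oscr_{F}^{\times}} U_{1}) = \pr_{2}'(U)$, after which one assembles $\tilde{u} = (u_{1}, \tilde{u}_{H_{2}'})$ and checks similitude compatibility across the three $\GL_{2}$-factors of $H$ (automatic from $\det(u_{i})$ all matching the common similitude $\det(u_{1})$, adjustable by diagonal elements in the compact torus).

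The main obstacle is the integrality check of the previous paragraph: a priori the conjugation by $\varpi^{\pr_{2}\lambda}$ could introduce negative powers of $\varpi$, but the specific form of $\jmath_{\tau}(k_{1})$ together with $\alpha_{0}(\lambda) > 0$ conspire to keep everything in $\Oscr_{F}$. Rather than handling this abstractly, I would verify the two non-trivial sub-cases $\chi_{1} = \left(\begin{smallmatrix} 1 & \\ x & 1 \end{smallmatrix}\right)$ (with $x \in \Oscr_{F}^{\times}$) and $\chi_{1} = \left(\begin{smallmatrix} & 1 \\ -1 & \end{smallmatrix}\right)$ by direct matrix computation in $\mathrm{GSp}_{6}$. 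In each sub-case, the verification reduces to checking that a small number of entries in $\tau_{1}^{-1}\tilde{k}\tau_{1}$ are integral, and each of these conditions is an immediate consequence of $\alpha_{0}(\lambda) > 0$.
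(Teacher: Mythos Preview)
Your plan has a fundamental flaw: you are trying to show that $U\varpi^{\lambda}\chi H_{\tau_{1}}'$ equals $U\varpi^{s_{0}(\lambda)}H_{\tau_{1}}'$ for \emph{every} non-trivial $\chi$, but this is simply false. When $\chi_{1}=\left(\begin{smallmatrix}1&\\x&1\end{smallmatrix}\right)$ with $x\in\Oscr_{F}^{\times}$ and $\beta_{0}(\lambda)=2a_{2}-a_{0}\geq 0$, the coset is actually $U\varpi^{\lambda}H_{\tau_{1}}'$, and by projecting to $U'$-cosets and invoking Proposition~\ref{cartanhtaui} one sees that this genuinely differs from $U\varpi^{s_{0}(\lambda)}H_{\tau_{1}}'$ whenever $\lambda$ and $s_{0}(\lambda)$ lie in distinct $W_{\tau_{1}}'$-orbits.

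The technical manifestation of this in your argument is the integrality step. The entries of $\tilde{u}_{H_{2}'}=\varpi^{\pr_{2}\lambda}\,\pr_{2}'(\jmath_{\tau}(k_{1}))^{-1}\,\varpi^{-\pr_{2}\lambda}$ in the $\beta_{0}$-root positions are scaled by $\varpi^{\pm\beta_{0}(\lambda)}$, so their integrality is governed by $\beta_{0}(\lambda)=2a_{2}-a_{0}$, \emph{not} by $\alpha_{0}(\lambda)=2a_{1}-a_{0}$. These two quantities are independent, so the hypothesis $\alpha_{0}(\lambda)>0$ cannot rescue you; the ``direct matrix computation'' you defer would fail. (You also propose to check integrality of $\tau_{1}^{-1}\tilde{k}\tau_{1}$, but $\tilde{k}\in\mathscr{X}_{\tau}\subset H_{\tau_{1}}'$ is automatic by Lemma~\ref{Xscrembeds}; the genuine obstruction is whether $\tilde{u}\in U$.)

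The paper's approach avoids this by not fixing the target in advance. The key move is to right-multiply by $\varkappa^{-}(-x)\in\mathscr{X}_{\tau}\subset H_{\tau_{1}}'$, which cancels $\chi_{1}$ in the $H_{1}$-component while introducing the $\beta_{0}$-root-group element $\kappa_{2}^{+}(-x)$ in the $H_{2}'$-component. One is then led to a case split on the sign of $\beta_{0}(\lambda)$: if $\beta_{0}(\lambda)\geq 0$ the new element conjugates into $U$ and the coset is $U\varpi^{\lambda}H_{\tau_{1}}'$; if $\beta_{0}(\lambda)<0$ a further manipulation with $\varkappa^{+}$ produces a lift of $r_{0}$ inside $U_{2}$ and lands in the other member of the set.
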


\begin{proof}    Write $  \chi  = (\chi_{1}, 1 ) $. If $ \chi_{1} \in S^{+} = \left \{ \begin{psmallmatrix} & 1 \\- 1  \end{psmallmatrix} \right \} $, the claim is clear. If $ \chi \in S_{1}^{-} $, let $ x \in \Oscr_{F} $ be such that $ \chi_{1} =   \kappa_{1}^{-}(x)  $. The case $ x = 0 $ is also obvious, so we assume that $ x \in \Oscr_{F}^{\times}    $.  Observe  that  \begin{align*}  U \varpi^{\lambda} \chi 
H_{\tau_{1}}' &  = U \varpi^{\lambda} \chi  \varkappa^{-}(-x) H_{\tau_{1}}'   \\ & =  
U \varpi^{\lambda} \left ( 1 ,  \kappa_{2}^{+}(-x) \right )  H_{\tau_{1}}' . 
\intertext{If $ 2c - a \geq 0 $, the conjugate of $ \left ( 1 ,  \kappa^{+}_{2}(-x) \right ) $ by $ \varpi^{\lambda} $ lies in $ U $ and so our double coset equals $ U \varpi^{\lambda} H_{\tau_{1}}' $. 
If  $ 2c - a < 0  $ however, then the conjugate of  $  \left ( \kappa^{+}_{1}(-x^{-1}) ,  \kappa^{-}_{2}(x^{-1})  \right ) $ by $ \varpi^{\lambda} $ lies in $ U $.  
So} U \varpi^{\lambda} \gamma H_{\tau_{1}}' 
 &  = U \varpi^{\lambda} \left  (\kappa_{1}^{+}(-x^{-1}),  \kappa_{2}^{-}(x^{-1})\kappa_{2}^{+}(-x) \right  ) H_{\tau_{1}}' 
\end{align*}
Now note that $$ \left  (\kappa_{1}^{+}(-x^{-1}),  \kappa_{2}^{-}(x^{-1})\kappa_{2}^{+}(-x) \right  ) \cdot  \varkappa ^{+}(x^{-1}) 
= \left (  1 ,  
\begin{psmallmatrix} 
& &  -x  \\ & 1  \\ 1/x
\\ & & & 1 
\end{psmallmatrix}  
\right ) . 
$$ 
From this, it follows that $ U \varpi^{\lambda} \chi    H_{\tau_{1}} ' = U \varpi^{r_{0}(\lambda)}H_{\tau_{1}}' $ which equals $ U \varpi^{s_{0}(\lambda)} H_{\tau_{1}}  '    $.       
\end{proof} 
\begin{corollary}   \label{liftsforsigma1}        For $ \lambda \in \Lambda^{\alpha_{0}  >} $, 
$ U \varpi^{\lambda} \sigma_{1}  \chi  H_{\tau_{1}}' \in \left \{ U \varpi^{\lambda}   \sigma_{1} H_{\tau_{1}}',  \, U \varpi^{s_{0}(\lambda)} \sigma_{1}   H_{\tau_{1}}'   \right \} $ 
for any $  \chi   \in S^{\pm}   _  {  1 }   \times \left \{ 1 \right \}  $. 
\end{corollary}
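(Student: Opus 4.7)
The proposed strategy is to reduce Corollary \ref{liftsforsigma1} to Lemma \ref{liftsforsigma0} via a symmetry argument. The crucial observation is that $\sigma_1 = (1, v_1)$ normalizes $H$ inside $H'$: conjugation by $\sigma_1$ interchanges the second and third $H_1$-factors of $H \simeq H_1 \times_{\GG_m} H_1 \times_{\GG_m} H_1$. In particular, $\sigma_1$ commutes with $\chi = (\chi_1, 1, 1)$, so $U\varpi^{\lambda}\sigma_1\chi H_{\tau_1}' = U\varpi^{\lambda}\chi\sigma_1 H_{\tau_1}'$. The case $\chi_1 \in S_1^+$ is then immediate: here $\chi \in U$ represents $s_0$, and commuting $\chi$ past $\varpi^{\lambda}$ before absorbing into $U$ gives $U\varpi^{s_0(\lambda)}\sigma_1 H_{\tau_1}'$. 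The substantive case is $\chi_1 = \kappa_1^-(x)$ with $x \in \Oscr_F^{\times}$.

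In the substantive case, my plan is to mirror the proof of Lemma \ref{liftsforsigma0} step by step, except that each auxiliary element $\gamma \in \mathscr{X}_\tau \subset H_{\tau_1}'$ appearing there (such as $\varkappa^-(-x)$ or $\varkappa^+(x^{-1})$) is inserted on the right of $\sigma_1$ and commuted back using $\sigma_1 \gamma = (\sigma_1 \gamma \sigma_1^{-1})\sigma_1$. Each $\sigma_1 \gamma \sigma_1^{-1}$ remains in $H$, but with its second and third $\GL_2$-coordinates swapped. Applied to the first step of the lemma's argument (where $\varkappa^-(-x) = (\kappa_1^-(-x), \kappa_1^+(-x), I_2)$ is used to kill the $\chi_1$-component), this gives
\[
U\varpi^\lambda \chi \sigma_1 H_{\tau_1}' = U\varpi^\lambda (1,1,\kappa_1^+(-x))\sigma_1 H_{\tau_1}',
\]
where the surviving unipotent now lies in the root group $U_{\beta_2}$ rather than $U_{\beta_0}$. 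The subsequent dichotomy is driven by the sign of $\beta_2(\lambda) = 2a_3 - a_0$: if $\beta_2(\lambda) \geq 0$ the conjugate of $(1,1,\kappa_1^+(-x))$ by $\varpi^{\lambda}$ is integral and absorbs into $U$, yielding $U\varpi^{\lambda}\sigma_1 H_{\tau_1}'$. If $\beta_2(\lambda) < 0$ the analog of the lemma's Bruhat manoeuvre applies: the $\GL_2$-factorization $\kappa_1^+(-x)\kappa_1^-(x^{-1}) = \kappa_1^-(-x^{-1}) \cdot W_x$ with $W_x = \bigl(\begin{smallmatrix} 0 & -x \\ x^{-1} & 0 \end{smallmatrix}\bigr)$ allows the unipotent factor $(\kappa_1^+(x^{-1}), 1, \kappa_1^-(-x^{-1}))$ to be absorbed into $U$ (legitimate because both $\alpha_0(\lambda) > 0$ and $-\beta_2(\lambda) > 0$), leaving behind the Weyl representative $(1,1,W_x) \in U$ of $r_2 = s_{\beta_2}$; pushing this past $\varpi^{\lambda}$ delivers $U\varpi^{r_2(\lambda)}\sigma_1 H_{\tau_1}'$.

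The main obstacle is the final identification $U\varpi^{r_2(\lambda)}\sigma_1 H_{\tau_1}' = U\varpi^{s_0(\lambda)}\sigma_1 H_{\tau_1}'$. For this I would use the commutation $\varpi^{\mu}\sigma_1 = \sigma_1\varpi^{s_2(\mu)}$ together with $U\sigma_1 = \sigma_1 U$ (the latter since $\sigma_1$ normalizes $U$) to move $\sigma_1$ to the left of both cosets. Using $s_2 r_2 = r_0 s_2$ and the commutativity of $s_0$ with $s_2$, the two cosets become $\sigma_1 U \varpi^{r_0(s_2(\lambda))} H_{\tau_1}'$ and $\sigma_1 U \varpi^{s_0(s_2(\lambda))} H_{\tau_1}'$ respectively, so the equality reduces to the companion identity $U\varpi^{r_0(\mu)} H_{\tau_1}' = U\varpi^{s_0(\mu)} H_{\tau_1}'$ for $\mu = s_2(\lambda)$, which still lies in $\Lambda^{\alpha_0 > 0}$ because $s_2$ fixes $\alpha_0$. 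This last identity is precisely the one supplied at the end of the proof of Lemma \ref{liftsforsigma0}, so the argument closes by direct appeal.
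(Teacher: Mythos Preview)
Your argument is correct, but it takes a longer route than the paper's. The paper observes that since $\sigma_1 = w_2$ normalizes $U$ and commutes with $\chi$, one has
\[
U\varpi^{\lambda}\sigma_1\chi H_{\tau_1}' \;=\; \sigma_1\bigl(U\,\varpi^{r_1(\lambda)}\chi\, H_{\tau_1}'\bigr),
\]
after which Lemma~\ref{liftsforsigma0} applies verbatim with $r_1(\lambda)$ in place of $\lambda$ (note $\alpha_0(r_1(\lambda))=\alpha_0(\lambda)>0$), and the claim follows by pushing $\sigma_1$ back using $r_1 s_0 r_1 = s_0$. In other words: conjugate once, quote the lemma, done.

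Your approach uses the same normalizing property of $\sigma_1$, but applies it piecemeal---once to each auxiliary element $\varkappa^{\pm}$ inserted from $H_{\tau_1}'$---which has the effect of swapping the roles of the second and third $\GL_2$ factors inside the running computation. This is why your dichotomy is governed by $\beta_2(\lambda)$ rather than $\beta_0(\lambda)$, and why you land on $U\varpi^{r_2(\lambda)}\sigma_1 H_{\tau_1}'$ rather than $U\varpi^{r_0(\lambda)}H_{\tau_1}'$. You then have to undo this at the end by conjugating $\sigma_1$ out anyway (your identity $s_2 r_2 = r_0 s_2$), reducing to the same $U\varpi^{r_0(\mu)}H_{\tau_1}' = U\varpi^{s_0(\mu)}H_{\tau_1}'$ used in Lemma~\ref{liftsforsigma0}. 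So the two proofs are equivalent in content; yours just postpones the global $\sigma_1$-conjugation and replays the lemma's internals instead of invoking it as a black box. The paper's version is shorter and makes the reduction to Lemma~\ref{liftsforsigma0} transparent.
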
 
\begin{proof} 
Since  $ v   _{1} $ normalizes $ U_{2} $ and $ \sigma_{1} = (1,  v_{1}  ) $ commutes with $ \chi $ 
we see that $ U \varpi^{\lambda} \sigma_{1}  \chi H_{\tau_{1}}' =  \sigma _{1}  U \varpi^{r_{1}(\lambda)}   \chi 
 H_{\tau_{1}}  '     $. The claim  now 
 follows from part by  noting that $ r_{1}$ commutes with $ s_{0} $.
 \end{proof}
Next we record results on double cosets involving $ \sigma_{2} $. 

\begin{lemma} \label{Hvarsig2inIwahori}  $ H \cap \sigma_{2} H_{\tau_{1}}' \sigma_{2}^{-1}  =  H \cap \varsigma_{2} K \varsigma_{2} ^{-1}  
  $ is contained in the Iwahori subgroup  $   J_{\varsigma_{2}} $ of  triples $ (h_{1}, h_{2}, h_{3}) \in U $ where $ h_{2} $ reduces to an upper triangular matrix modulo $ \varpi $ and $ h_{1}, h_{3} $ reduce to lower triangular matrices.   
\end{lemma}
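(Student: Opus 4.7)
The plan is to prove this by an explicit matrix computation, in the same spirit as the proofs of Lemmas \ref{Httau1} and \ref{structureofHtau2}. First I will compute $\varsigma_{2} = \sigma_{2} \tau_{1}$ as a concrete $6 \times 6$ matrix. Since $\sigma_{2} = \varrho_{1} \varpi^{-(1,1,1,1)}$ equals $I_{6} + E_{2,6} + E_{3,5}$ (upper unitriangular with just two off-diagonal ones), multiplying by $\tau_{1}$ yields an upper triangular matrix whose non-zero entries are $\varpi$'s on positions $(1,1), (2,2), (3,3)$, ones on the rest of the diagonal, and ones in positions $(1,5), (2,4), (2,6), (3,5)$.

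Next I will take an arbitrary $h = \iota(h_{1}, h_{2}, h_{3}) \in H$, write each $h_{i} = \left(\begin{smallmatrix} a_{i} & b_{i} \\ c_{i} & d_{i} \end{smallmatrix}\right)$ (with common similitude), and compute the conjugate $k := \varsigma_{2}^{-1} h \varsigma_{2}$ by solving $\varsigma_{2} k = h \varsigma_{2}$ via back-substitution. The rows $k_{4,*}, k_{5,*}, k_{6,*}$ drop out immediately (giving $(c\varpi, 0, 0, d, c, 0)$, $(0, c_{1}\varpi, 0, c_{1}, d_{1}, c_{1})$, $(0, 0, c_{2}\varpi, 0, c_{2}, d_{2})$ respectively), while rows $k_{1,*}, k_{2,*}, k_{3,*}$ each carry a factor of $\varpi^{-1}$ in their four rightmost entries. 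In particular, after the back-substitution the following entries must be forced to lie in $\Oscr_{F}$: $\tfrac{c_{1}}{\varpi}$, $\tfrac{b - c_{1}}{\varpi}$, $\tfrac{a - d_{1}}{\varpi}$, $\tfrac{b_{2} - c_{1}}{\varpi}$, $\tfrac{a_{2} - d_{1}}{\varpi}$, $\tfrac{a_{1} - d}{\varpi}$, $\tfrac{a_{1} - d_{2}}{\varpi}$, $\tfrac{b_{1} - c - c_{2}}{\varpi}$.

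The key observation will be that the first of these conditions forces $c_{1} \in \varpi \Oscr_{F}$; substituting this back into the next two forces $b \in \varpi \Oscr_{F}$ and $b_{2} \in \varpi \Oscr_{F}$. Combined with the integrality of the remaining entries (which places all $a_{i}, b_{i}, c_{i}, d_{i}$ into $\Oscr_{F}$ and hence $h \in U$), these congruences state precisely that $h_{2}$ reduces to an upper triangular matrix modulo $\varpi$, while $h_{1}$ and $h_{3}$ reduce to lower triangular matrices, which is the defining property of $J_{\varsigma_{2}}$. The remaining congruences ($a \equiv d_{1} \equiv a_{2}$, $d \equiv a_{1} \equiv d_{2}$, etc.) encode the additional matching conditions between diagonals and are automatically included in $J_{\varsigma_{2}} \subset U$.

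There is no real conceptual obstacle here — the argument is carried out by a finite bookkeeping computation whose output I have essentially predicted above. The only care needed is in solving the triangular system for $k = \varsigma_{2}^{-1} h \varsigma_{2}$ and in correctly identifying which entries acquire $\varpi^{-1}$ denominators; once that is done, the inclusion in $J_{\varsigma_{2}}$ is immediate from the resulting integrality constraints.
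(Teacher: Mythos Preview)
Your proposal is correct and is essentially the same computation the paper performs. The paper's own proof simply defers to the later Lemma~\ref{varsig2struclemma}, whose proof carries out exactly the matrix calculation you describe (the displayed formula for $\varsigma_{2}^{-1} h \varsigma_{2}$ there matches your predicted entries verbatim) and deduces $c_{1}, b, b_{2} \in \varpi\Oscr_{F}$, hence $H_{\varsigma_{2}} \subset J_{\varsigma_{2}}$.
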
 
\begin{proof} This follows by a stronger result established in  Lemma  \ref{varsig2struclemma}.  
\end{proof} 

\begin{lemma}  \label{liftsforsigma2U'}  Suppose $ \lambda = (a, b, c, d ) \in \Lambda_{\alpha_{0}}^{>} $ and let $ \chi = (\chi_{1}, 1) \in S^{\pm}_{1}  \times \left \{ 1 \right \}  $.  
\begin{itemize}   
\item If $ \chi _{1} =   \begin{psmallmatrix} 1  \\  x & 1 \end{psmallmatrix}  \in S^{-}_{1} $,  then  $$  \quad  \,     U '  \varpi^{\lambda}\sigma_{2} \chi  H_{\tau_{1}}'  =   \begin{cases} 
U  '\varpi^{\lambda} H_{\tau_{1}}' & \text{ if  }   \,\,   x \in \Oscr_{F}^{\times}, \,  c + d \geq a , \,  2c  \geq a \,  \text{  or if  }  \, \,  x = 0, \,   c+ d \geq a  ,      \\
U' \varpi^{r_{0}(\lambda)} H_{\tau_{1}}' & \text{ if } \,\,  x \in \Oscr_{F}^{\times} , \,  c + d \geq a > 2c  , \\
U   '    \varpi^{r_{1} r_{0}(\lambda) } H_{\tau_{1}}'  &  \text{ if }  \,\,  x \in \Oscr_{F}^{\times }  ,  \,        2d > a > c+ d  , \\
U  '  \varpi^{r_{0}r_{1} r_{0}(\lambda)} H_{\tau_{1}}'  & \text{ if } \,\,  x \in \Oscr_{F}^{\times}, \,   a > c + d, \,    a \geq 2d \,     \text{ or if } \,  x  = 0,    \,    a > c + d .     \end{cases}  $$

\item  If $ \chi_{1} =    \begin{psmallmatrix}  &  1  \\  -  1   &   \end{psmallmatrix}   \in S^{+}_{1} $, then   $$  U '  \varpi^{\lambda}\sigma_{2} \chi  H_{\tau_{1}}'  =   \begin{cases}  U  '\varpi^{r_{0}(\lambda)} H_{\tau_{1}}' & \text{ if  }  
\, c + d \geq a    \\

U  '  \varpi^{r_{1} r_{0}(\lambda)} H_{\tau_{1}}'  & \text{ if } 

a > c + d  \end{cases}  $$      
\end{itemize} 
\end{lemma}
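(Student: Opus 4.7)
The proof proceeds by explicit matrix manipulation: in each subcase, one exhibits $u \in U'$ and $h \in H_{\tau_1}'$ with $u \cdot \varpi^\lambda \sigma_2 \chi \cdot h = \varpi^\mu$ for the claimed target $\mu$. The starting point is the explicit computation $\sigma_2 = (1_{\GL_2}, I_4 + E_{14} + E_{23}) \in H'$, obtained by direct multiplication of $\varrho_1$ and $\varpi^{-(1,1,1,1)}$ in the $\GL_2 \times_{\GG_m} \GSp_4$ picture. In particular, the $\GSp_4$-component of $\sigma_2$ lies entirely in the single positive root subgroup $U^+_{\beta_1+\beta_2}$, where $\beta_1+\beta_2 = e_2 + e_3 - e_0$ is the highest short root of the $\GSp_4$-factor of $H'$.

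For $\chi_1 = \bigl(\begin{smallmatrix} 1 & \\ x & 1 \end{smallmatrix}\bigr) \in S^-_1$, we first absorb the $\GL_2$-factor of $\chi$ on the right by multiplying by $\varkappa^-(x)^{-1}$; by Lemma \ref{Xscrembeds} and~(\ref{varkappamatrices}), this element lies in $H_{\tau_1}'$. The result is an element of $H'$ whose $\GL_2$-component is trivial and whose $\GSp_4$-component is $I_4 + E_{14} + E_{23} - xE_{13}$. Crucially, the new $E_{13}$-term lies in the root subgroup $U^+_{\beta_0}$ with $\beta_0 = 2e_2 - e_0$. The key computation is then the conjugation
\[
\varpi^\lambda \cdot \bigl(1, \, I_4 + E_{14} + E_{23} - xE_{13}\bigr) \cdot \varpi^{-\lambda} = \bigl(1, \, I_4 + \varpi^{c+d-a}(E_{14} + E_{23}) - x\varpi^{2c-a}E_{13}\bigr).
\]
When both exponents $c+d-a$ and $2c-a$ are non-negative, this conjugate lies in $U'$, giving $U'\varpi^\lambda \sigma_2 \chi H_{\tau_1}' = U'\varpi^\lambda H_{\tau_1}'$; this establishes the most permissive case.

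When an inequality fails, one inserts a Weyl-type representative from $N_{H'}(A)$ to flip the offending root. If only the $E_{13}$-term fails integrality (i.e.\ $x \neq 0$, $c+d \geq a$, $2c < a$), a representative of $r_0 = s_{\beta_0}$ is inserted, producing $\mu = r_0(\lambda) = (a, b, a-c, d)$. If the $E_{14} + E_{23}$-term fails (i.e.\ $a > c+d$), a reflection in $\langle r_0, r_1 \rangle$ is applied; the exact choice depends on the relative size of $2d$ and $a$, yielding $\mu = r_1 r_0(\lambda)$ when $2d > a$ and $\mu = r_0 r_1 r_0(\lambda) = s_{\beta_1+\beta_2}(\lambda)$ when $a \geq 2d$. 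Each correction must be verified to lie in $H_{\tau_1}'$, using the explicit entry-wise description of $H_{\tau_\circ}'$ in Lemma \ref{Httau1} (translated by $\varpi^{(1,1,1,1)}$). For $\chi_1 \in S^+_1$, the argument is parallel but simpler: the $\chi_1$-factor already plays the role of a Weyl-type element on the $\GL_2$-side, so the $E_{13}$-ambiguity is absent and only the two cases corresponding to $r_0(\lambda)$ and $r_1 r_0(\lambda)$ survive.

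The main difficulty is verifying, in each reflected subcase, that the correcting element lies in $H_{\tau_1}'$. The subgroup $H_{\tau_1}'$ is not a standard parahoric, and its allowed valuations vary per root subgroup (see Lemma \ref{Httau1}). Tracking how conjugation by $\varpi^\lambda$ interacts with these tight constraints, across all combinations of the inequalities $c+d$ vs $a$, $2c$ vs $a$, $2d$ vs $a$, constitutes essentially the entire substance of the proof; no conceptual novelty is expected beyond this careful bookkeeping.
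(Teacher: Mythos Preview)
Your approach is correct and essentially the same as the paper's, differing mainly in the order of operations. The paper handles $\sigma_2$ first: it introduces auxiliary elements $\nu^\pm = (1,\nu_2^\pm)$ with $\nu_2^+ = \eta_0^{-1}$ and $\nu^- \in H_{\tau_1}'$, and uses $\nu^+$ on the left (when $c+d\geq a$, since $\varpi^\lambda\nu^+\varpi^{-\lambda}\in U'$) to cancel $\sigma_2$ outright, or sandwiches by $\nu^-$ on both sides (when $a>c+d$) to produce the long reflection $r_0r_1r_0$. In either case one is reduced to $U'\varpi^\mu\chi H_{\tau_1}'$, and the remaining $\chi$ is dispatched by the argument of Lemma~\ref{liftsforsigma0}, which supplies the further reflection (using the $W'_{\tau_1}$-equivalence $s_0\sim r_0$). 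Your order---absorb $\chi$ first via $\varkappa^-(-x)$, then case-analyze the combined unipotent $I_4+E_{14}+E_{23}-xE_{13}$---works equally well but does not factor through a prior lemma, so the bookkeeping is slightly heavier.

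One point of imprecision: your phrase ``insert a Weyl-type representative to flip the offending root'' is not a single one-sided move. A left-insertion of an $r_0$-representative would also send the (integral) $E_{14}+E_{23}$ term to a $U_{-\beta_1}$ term, since $r_0(\beta_1+\beta_2)=-\beta_1$, and one must then check this is still absorbable. The actual mechanism in each subcase is the $\SL_2$-identity rewriting the offending unipotent as a Weyl element sandwiched by opposite-root unipotents, with the right-hand factor absorbed into $H_{\tau_1}'$ via $\varkappa^\pm\in\mathscr{X}_\tau$ (Lemma~\ref{Xscrembeds}) and the left-hand factor into $U'$ after conjugation by $\varpi^\lambda$---precisely the manipulation in the proof of Lemma~\ref{liftsforsigma0}. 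Once your case analysis is made explicit in this way, it recapitulates the paper's argument; the paper's ordering is marginally cleaner because it lets that lemma do half the work.
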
 
\begin{proof} For $ x \in \Oscr_{F} $,  define  $$ \nu_{2}^{+}  = \begin{psmallmatrix} 1  & &  & -1 \\ & \,  1 & - 1 \\[0.1em]   &  & \,   \,   1 \\   &  & & 1  \end{psmallmatrix} ,  \quad  \quad   \quad   
     \nu_{2}^{-}  = \begin{psmallmatrix} 1 & \\ &  \,   \,  1 \\[0.2em]  & -1 & \,  1 \\
-1  & &&  \, \,   1  \end{psmallmatrix} $$  and set $ \nu^{\pm}  = (1, \nu_{2}^{\pm}) $. Note that  $ \nu ^ {  -  }  \in H_{\tau_{1}}' $. Now if $ c + d \geq a   $, we have $ \varpi^{\lambda} \nu^{+} \varpi^{-\lambda} \in U' $. Thus \begin{align*}  U' \varpi^{\lambda} \sigma_{2} \chi H_{\tau_{1}}'  = U ' \varpi^{\lambda} \nu^{+}   \sigma_{2} \chi H_{\tau_{1}}' = U ' \varpi^{\lambda} \chi 
 H_{\tau_{1}}'  
\end{align*}  
since $ \nu^{+} \sigma_{1} = ( 1, \nu_{2}^{+}\eta_{0} ) = (1, 1) $. If on the other hand $ a > c + d $, then $ \varpi^{\lambda}  \nu^{-} \varpi^{-\lambda} \in U' $, and so 
\begin{align*} U ' \varpi ^ { \lambda} \sigma_{2} \chi H_{\tau_{1}}'    &  = U \varpi^{\lambda}  \nu^{-}  \sigma_{2} \chi \nu^{-}   H_{\tau_{1}}'    \\
& = U' \varpi^{\lambda}  \left ( \chi_{1} ,  \nu_{2}^{-}   \begin{psmallmatrix}  & & & 1 \\ 
 &&  1 \\[0.1em] &  -1  & 1  \\
 -1 & &  & 1  \end{psmallmatrix}  \right )  H_{\tau_{1}}  
 \\ 
 & =   U' \varpi^{\lambda}  \left ( \chi_{1} ,    \begin{psmallmatrix}  & & & 1 \\ 
 &&  1 \\[0.1em] &  -1  &   \\
 -1 & &  &   \end{psmallmatrix}  \right )  H_{\tau_{1}}  
 \\  
 & = U ' \varpi^{r_{0} r_{1} r_{0} (\lambda)} \chi  H_{\tau_{1}}'  .
\end{align*} 
The rest of the proof now proceeds along exactly the same lines as Lemma  \ref{liftsforsigma0} which determines the classes of $ U \varpi^{\mu} \chi H_{\tau_{1}}' $ for any $ \chi \in S^{\pm}  _ { 1 } \times \left \{ 1 \right \} $, $\mu \in \Lambda $ and  hence those of $ U' \varpi^{\mu} \chi H_{\tau_{1}}' $.  
\end{proof} 

\begin{lemma} \label{liftsforsigma2U}  Suppose  $ \lambda \in \Lambda^{\alpha_{0} >0} $ satisfies  either $ \beta_{0}(\lambda) \geq 0 $ or $ \beta_{2}(\lambda) \leq 0 $. Then  $ U\varpi^{\lambda} \sigma_{2} \chi H_{\tau_{1}} =  U \varpi^{\lambda} \sigma_{2}  H_{\tau_{1}}' $  for any $ \chi \in S^{-}_{1}   \times \left \{ 1 \right \}  $.  
\end{lemma}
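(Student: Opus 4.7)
The plan is to exhibit, for each $\chi = (\chi_1, 1)$ in $S_1^-\times \{1\}$, an element $u \in U$ and $h \in H_{\tau_1}'$ with $u\,\varpi^\lambda \sigma_2\, h = \varpi^\lambda \sigma_2 \chi$, reducing the verification to a matrix computation in $\mathrm{GSp}_6$. The case $x=0$ is trivial (then $\chi$ is the identity), so assume $\chi_1 = \left(\begin{smallmatrix} 1 & \\ x & 1\end{smallmatrix}\right)$ with $x \in \Oscr_F^\times$. Since $\chi$ and $\sigma_2=(1,\eta_0)$ lie in complementary factors of $H' = \GL_2 \times_{\GG_m} \mathrm{GSp}_4$, they commute, and by Lemma~\ref{Xscrembeds} we have $\varkappa^-(x) \in \mathscr{X}_\tau \subset H_{\tau_1}'$. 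Multiplying by $\varkappa^-(x)^{-1}$ on the right and simplifying $\chi\,\varkappa^-(x)^{-1} = (1,\kappa_2^+(-x))$ gives
\[
 U\,\varpi^\lambda \sigma_2 \chi\, H_{\tau_1}'
 \;=\; U\,\varpi^\lambda\,(1,\eta_0\kappa_2^+(-x))\,H_{\tau_1}',
\]
and the matrix on the right differs from $\varpi^\lambda \sigma_2$ only at position $(2,5)$, where it carries the entry $-x\varpi^{a_2}$ (writing $\lambda=(a_0,a_1,a_2,a_3)$).

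For the first case $\beta_0(\lambda) \geq 0$, the absorbing element is immediate: the root vector $E_{2,5}$ belongs to the $\beta_0$-root group, i.e.\ the long simple root group of the second $\GL_2$ factor of $H$, and $\eta_0 \kappa_2^+(-x)\eta_0^{-1} = I - x E_{2,5}$ in $\mathrm{GSp}_6$-indices. Conjugating by $\varpi^\lambda$ rescales this to $I - x\varpi^{\beta_0(\lambda)}E_{2,5}$, which lies in $U$ under the standing hypothesis. Thus $\varpi^\lambda \sigma_2 \chi \in U\,\varpi^\lambda \sigma_2\, H_{\tau_1}'$, as required.

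For the second case $\beta_2(\lambda) \leq 0$ (with $\beta_0(\lambda)<0$), the preceding strategy fails because the required $U_{\beta_0}$-element is no longer integral. The idea is to enlarge $h$ by elements drawn from those root groups that lie unconditionally in $H_{\tau_1}'$: a direct check via $\tau_1^{-1}(\cdot)\tau_1$ shows that $U_{-\beta_2}(y), U_{\beta_1}(z),$ and $U_{-(\beta_1+\beta_2)}(w)$ all lie in $H_{\tau_1}'$ for every $y,z,w\in\Oscr_F$ (the integrality obstructions arising from the $-\varpi^{-1}$ entries of $\tau_1^{-1}$ are absorbed by compensating powers of $\varpi$ coming from $\tau_1$ itself). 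I plan to take $h = \varkappa^-(x)\cdot U_{-\beta_2}(x)\cdot U_{\beta_1}(z)\cdot U_{-(\beta_1+\beta_2)}(w)$, compute the adjoint action of $\eta_0$ on each root vector, and choose $z,w$ so that the resulting $u$ has support only on roots $\{\pm\alpha_0,\pm\beta_0,\pm\beta_2\} = \Phi_H$. The coefficient of $E_{6,3}$ then equals $-x\varpi^{-\beta_2(\lambda)}$, which is integral exactly because $\beta_2(\lambda)\leq 0$.

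The main obstacle is the second case: since $\beta_1 \notin \Phi_H$, any stray $E_{2,3}$ or $E_{6,5}$ contribution introduced during the absorption cannot be killed by left multiplication by an element of $U$ and must instead be eliminated by a precise cancellation among the several terms arising from conjugation of the three auxiliary root vectors by $\eta_0$; the elementary computation of $\eta_0 E_{6,3}\eta_0^{-1} = E_{6,3} + E_{2,3} - E_{6,5} - E_{2,5}$ already suggests that matching the $\beta_1$-contributions is the delicate point. The first case, by contrast, is a one-line calculation once $\varkappa^-(x)$ is identified as the natural absorbing element of $H_{\tau_1}'$.
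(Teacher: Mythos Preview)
Your Case 1 argument is correct and matches the paper's: both multiply on the right by $\varkappa^-(-x)\in H_{\tau_1}'$ to reduce $\chi$ to $(1,\kappa_2^+(-x))$, then absorb the resulting $E_{1,3}$ entry on the left by a $U_{\beta_0}$-element of $U$, which is integral precisely when $\beta_0(\lambda)\ge 0$.

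For Case 2 you have only a plan, not a proof: you have not exhibited values of $z,w$ realizing the promised $\beta_1$-cancellation, and you flag this yourself as the main obstacle. The scheme can in fact be closed --- taking $z=-x$ and $w=0$ (so $U_{-(\beta_1+\beta_2)}$ is not needed at all) gives $g_2=U_{\beta_1}(-x)U_{-\beta_2}(x)$, and a direct check shows $\eta_0\,\kappa_2^+(-x)\,g_2\,\eta_0^{-1}=I+xE_{4,2}$, already in $H_2$ and landing in $U_2$ after $\varpi^{\pr_2(\lambda)}$-conjugation exactly when $\beta_2(\lambda)\le 0$ --- but this is not what you wrote. The paper avoids the cancellation issue altogether by a more direct route. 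It right-multiplies by $\nu^-\varkappa^-(-x)$, where $\nu^-=(1,\nu_2^-)\in H_{\tau_1}'$ is the single $U_{-(\beta_1+\beta_2)}$ element already introduced in the preceding lemma, and writes out the explicit $\mathrm{GSp}_4$-matrix
\[
\varpi^{\pr_2(\lambda)}\eta_0\,\nu_2^-\,\kappa_2^+(-x)
=\begin{psmallmatrix} & & & \varpi^c\\ & & \varpi^d\\ & \varpi^{a-c}&\varpi^{a-c}\\ \varpi^{a-d}& &-x\varpi^{a-d}&\varpi^{a-d}\end{psmallmatrix}.
\]
This differs from $\varpi^{\pr_2(\lambda)}\eta_0\nu_2^-$ (which represents the same double coset as $\varpi^\lambda\sigma_2$, since $\nu^-\in H_{\tau_1}'$) at the single entry $(4,3)$. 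A row operation from the third $\GL_2$ factor of $U$, adding $x\varpi^{a-2d}$ times row $2$ to row $4$, removes it, and the multiplier is integral exactly by $\beta_2(\lambda)\le 0$. No root-vector bookkeeping or parameter-matching is required. (Minor slip: in your Case~2 you wrote $h=\varkappa^-(x)\cdot\ldots$; you mean $\varkappa^-(-x)$, as in your Case~1.)
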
 
\begin{proof} Write $ \chi =  (\chi_{1}, 1) $ and $ \chi_{1} =  \begin{psmallmatrix}  1&  \\  x & 1 \end{psmallmatrix} $. If  $ 2c \geq a $.  replace $ \varpi^{\lambda} \sigma_{2} \chi $ with $ \varpi^{\lambda} \sigma_{2}  \varkappa^{-}(-x)  = \varpi^{\lambda} \sigma_{2} \kappa_{2}^{+}(-x) $. If $ 2d \geq a $, replace $ \varpi^{\lambda} \sigma_{2} \chi $ with $ \varpi^{\lambda} \sigma_{2} \chi \nu^{-}  \varkappa^{-}(-x)  =   \varpi^{\lambda} \sigma_{2} \nu^{-} \kappa_{2}^{+}(-x)   $ where $ \nu^{-} \in H_{\tau_{1}}' $ is as in  proof     Lemma \ref{liftsforsigma2U'}.  Explicitly if $ \lambda = (a,b,c,d) $, then  $$ \varpi^{\lambda} \sigma_{2} \kappa_{2}^{+}(-x) =  \left (  \varpi^{(a,b)}  , \left ( \begin{smallmatrix}  \varpi^{c}  & &  - x  \varpi^{c} & \varpi^{c} \\ & \varpi^{d} & \varpi^{d} \\[0.2em]  
 & & \varpi^{a-c}  \\ &  & &  \varpi^{a-d} \end{smallmatrix} \right )  \right ) , \, \, \, \varpi^{\lambda}  \sigma_{2}      \nu^{-} \kappa_{2}^{+}(-x)   = \left  ( \varpi^{(a,b)} ,   \left ( \begin{smallmatrix}   & & & \varpi^{c} \\ 
 & & \varpi^{d} \\[0.2em]  &  \,  \varpi^{a-c} &  \varpi^{a-c}  \\[0.2em]    \varpi^{a-d }    &  & - x \varpi^{a-d}  & \varpi^{a-d} \end{smallmatrix} \right)  \right )   
$$  
Now an obvious row operation transforms these into $ \varpi^{\lambda} \sigma_{2} $.   
\end{proof}

 \begin{lemma}   \label{liftsforsigma3}      Suppose $ \lambda = (a,b,c,d) \in \Lambda^{\alpha_{0} > 0} $ satisfies  $ 2c  + 1 \geq a $ and $ c + d \geq a $. Then

$$ U \varpi^{\lambda} \sigma_{3}  \chi  H_{\tau_{1}}'  =  \begin{cases}  U \varpi^{\lambda} \sigma_{3} H_{\tau_{1}}' &  \text{ if } \chi \in S_{1}^{-} \times \{ 1 \}  \\
 U \varpi^{s_{0}(\lambda) - f_{1}}  \sigma_{3}      H_{\tau_{1}}'   & \text{ if }   \chi \in S_{1}^{+} \times \{ 1 \}     
\end{cases}  $$ 
Moreover $  U ' \varpi^{\lambda}  \sigma_{3}  H_{\tau_{1}}' =  U' \varpi^{\lambda+  \lambda_{\circ}} H_{\tau_{1}}'  $  and $ U' \varpi^{s_{0}(\lambda)-f_{1} } \sigma_{3} H_{\tau_{1}}' = U' \varpi^{s_{0}(\lambda) - f_{1} + \lambda_{\circ} } H_{\tau_{1}}' = U ' \varpi^{s_{0}(\lambda+ \lambda_{\circ})} H_{\tau_{1}}' $.

\end{lemma}   
\begin{proof}  This is  entirely similar Lemma \ref{liftsforsigma2U} and \ref{liftsforsigma2U'}.  
\end{proof}    
\begin{proof}[Proof of Proposition \ref{R1scrhard}]     The proof in each case 
goes by 
applying either Lemma \ref{pVbijection} or  Corollary \ref{fibersofHtau1} to  the coset representatives computed in  Corollary \ref{U2dagger}. In the latter case, we   will need to  determine the fibers  of the projection   $$ \pr_{\mu } :   \mathscr{R}_{1}( \varpi^{\mu} ) \to \mathscr{R}_{U^{\dagger}_{2}}(\varpi^{\pr_{2}(\mu)}) $$ for a given $ \mu = (u_{0}, u_{1}, u_{2},  u_{3}) \in \Lambda $ above each $ \varpi^{(a,c,d)} \gamma  \in \mathscr{R}_{U_{2}^{\dagger}}(\varpi^{\pr_{2}(\mu)}) $ where $ \gamma \in \left \{ 1, v_{1} , \eta_{0},  \eta_{1}  \right \} $. Let $ \lambda = (a, b, c, d ) $ where $ b  = u_{1}  $  if $ \gamma \in \left \{ 1, v_{1}  , \eta_{0} \right \} $  and $  u_{1}     - 1 $ if $ \gamma =  \eta_{1} $. Let  $ i \in \left \{0,1,2,3\right\}  $ be the unique integer 
such that $ \pr_{2}(\sigma_{i}) = \gamma $. Then the fiber consists of cosets of the form $ U \varpi^{\lambda} \sigma_{i} \chi  H_{\tau_{1}}' $ where $ \chi  \in S^{\pm}_{1} $.   

Let us first address the case where $ \gamma \in \left \{ 1,  v_{1} \right \} $.  
Note that in each of the projections computed in Corollary \ref{U2dagger},  there is a unique element of the form $ \varpi^{(a,c,d)}\gamma  $. So the projection of $ \mathscr{R}_{1}(\mu) $ in each case has a unique element of this form (see Remark \ref{transreps211dagger}). Lemma \ref{liftsforsigma0} and Corollary \ref{liftsforsigma1}  tell us that the fiber  of  $ \pr_{ 2, \mu }  $  above each such element is   contained in    $ \left \{ \varpi^{\lambda} \gamma , \varpi^{s_{0}(\lambda)}    \gamma \right \} $. If $ \lambda \neq s_{0}(\lambda) $, then Corollary \ref{cartanhtaui} implies 
that only one of $ \varpi^{\lambda}\gamma $ or $ \varpi^{s_{0}(\lambda)}\gamma $ can belong to the fiber. 
Thus the fiber is necessarily a  singleton. 
Now $ U \varpi^{\mu} H_{\tau_{1}}' $, $ U  \varpi^{r_{1}(\mu)}   \sigma_{1}  H_{\tau_{1}} ' $ are clearly subsets of $ U' \varpi^{\mu}  H_{\tau_{1}}' $ and their projections  $ \varpi^{(u_{0}, u_{2}, u_{3})  }  $, $\varpi^{(u_{0},u_{3},u_{2})} v_{1} $ respectively  have the desired form\footnote{that is,  $ U\varpi^{(u_{0},u_{2},u_{3})} H_{\tau_{1}}' = U \varpi^{(a,c,d)} H_{\tau_{1}}'$ and $U \varpi^{(u_{0},u_{3},u_{2})}v_{1}H_{\tau_{1}}'  = U \varpi^{(a,c,d)}v_{1}H_{\tau_{1}}'$}. So  we  are free to choose $ \varpi^{\mu}  $ as the representative element in the fiber if $ \gamma = 1 $ and $ \varpi^{s_{2}(\mu)} \sigma_{1} $  if $ \gamma = v_{1} $.

The case where $  \gamma = \eta_{0} $ requires a closer  case-by-case analysis.   Here we need study the possible values $ \chi \in S_{1}^{\pm} $  such that $ U' \varpi^{\lambda} \sigma_{i}  \chi H_{\tau_{1}} ' =  U ' \varpi^{\mu} H_{\tau_{1}}'  $.  
We let $ C(\lambda) = \left \{ \lambda,  r_{0}(\lambda) , r_{1} r_{0} (\lambda), r_{0} r_{1}  r_{0}( \lambda)  \right \}  $. In each case, we compute the intersection $ C(\lambda) \cap W_{\tau_{1}}'\mu $,  using which we read off the possible values of $ \chi $ from Lemma \ref{liftsforsigma2U'}, i.e., we only consider $ \chi $ for which $ U '\varpi^{\lambda} \sigma_{2} \chi  H_{\tau_{1}}' = U' \varpi^{\mu'} H_{\tau_{1}}'$ for $ \mu' \in C(\lambda) \cap W_{\tau_{1}}' $ which is a necessary condition by Proposition  \ref{cartanhtaui}.  We then use Lemma \ref{liftsforsigma2U} to simplify these cosets if possible. In most cases, this results in a single element in the fiber. For $ \gamma = \eta_{1} $, the analysis is similar but much easier and we will  only need Lemma \ref{liftsforsigma3} to decide the elements of the fiber.  
\\

\noindent $ \bullet $ $  \mu =  (1,1,1,0) $. \\[0.3em]    The projection is $ \mathscr{R}_{U_{2}^{\dagger}}(\varpi^{(1,1,0)} )  =  \mathscr{R}_{U_{2}^{\dagger}} ( \varpi^{(1,0,0)} ) = \left \{ \varpi^{(1,0,0) } , \varpi^{(1,0,0)  } v_{1}  ,    \varpi^{(1,1,0) }  \eta_{0}     \right \} $. To determine the lift of  $   \varpi^{(1,1,0)} \eta_{0} $,  let $ \lambda  : = (1,1,1,0)$. Then $ C(\lambda) = \left \{ (1,1,1,0), (1,1,0,0) \right \} $ and $ \varpi^{(1,1,0,0)} \notin U' \varpi^{\mu} H_{\tau_{1}}'  $  by Lemma    \ref{cartanhtaui}.    Lemma  \ref{liftsforsigma2U'} tells us for $ \chi \in S^{\pm}_{1} $, $ U' \varpi^{\lambda} \sigma_{2} \chi H_{\tau_{1}} = U' \varpi^{\mu} H_{\tau_{1}}' $ only when 
$ \chi \in S^{-}   $.    But then $ U \varpi^{\lambda} \sigma_{2} \chi  H_{\tau_{1}} ' = U \varpi^{\lambda} \sigma_{2} H_{\tau_{1}}' $  by  Lemma   \ref{liftsforsigma2U}.   Thus $ \varpi^{(1,1,1,0)}  \sigma_{2} $ is the unique element of the fiber  above  $ \varpi^{(1,1,0)}  \eta_{1}  $.       \\

\noindent   $ \bullet $  $  \mu  =  (1,1,0,1 ) $. \\[0.3em]  
We have  $ \mathscr{R}_{U^{\dagger}_{2}}(\varpi^{(1,0,1)}) =  \mathscr{R}_{U^{\dagger}_{2}}(\varpi^{(1,1,1)})      = \left \{ \varpi^{(1,1,1)} , \,  \varpi^{(1,1,1) }  v_{1} , \,   \varpi^{(1,0,0)}\eta_{0} , \,  \varpi^{(1,0,1)} \eta_{0} , \,  \varpi^{(1,1,1)  } \eta_{0} , \, \eta_{1}  \right \} .  $ 
Let $ \lambda_{1} = (1,1,0,0) $, $ \lambda_{2} = (1,1,0,1) $, $ \lambda_{3} = (1,1,1,1) $. Then  
$  
C(\lambda_{i})  \cap W_{\tau_{1}}\mu
 = 
\left \{ (1,1,0,1)  \right \}    $.    
For $ \lambda_{1} $ and $ \lambda_{3} $, the only choice is $ \chi = \begin{psmallmatrix} & 1 \\ -1  \end{psmallmatrix}  $ and so the unique elements in the fibers above $ \varpi^{(1,0,0)}\eta_{0} $ and  $ \varpi^{(1,1,1)}\eta_{0} $  are respectively $  \varpi^{s_{0}(\lambda_{1})}  \sigma_{2}   = \varpi^{(1,0,0,0)} \sigma_{2}  $ and  $ \varpi^{s_{0}(\lambda_{3})} \sigma_{2}  =  \varpi^{(1,0,1,1)} \sigma_{2}$.  For $ \lambda_{2}$,  
the only  choice is $ \chi = \begin{psmallmatrix} 1 \\ & 1 \end{psmallmatrix} \in S^{-} $, $ \varpi^{(\lambda_{2})}\sigma_{2} $ is the unique element in the fiber above  $ \varpi^{(1,0,1)}\eta_{0} $.

For $ \eta_{1} $, the unique element above is $ \varpi^{-f_{1}}  \sigma_{3}  $ by  Lemma  \ref{liftsforsigma3},  since $ \lambda_{\circ} = (1,1,1,1) $ does not belong to $  W_{\tau_{1}}' \mu $ but $ (1,0,1,1) =  s_{0}( \lambda_{\circ})   $ does.  \\

\noindent  $ \bullet $   $ \mu =  (1,1,0,0) $.  \\[0.3em]   This is similar to the first case  except now we work with $ \varpi^{(1,1,0,0)} \in C(\lambda) $ where $ \lambda = (1,1,1,0) $. In this case, the only possible choice for $ \chi  =    \begin{psmallmatrix} & 1 \\ -1 &  \end{psmallmatrix} $.  The fiber is therefore $ U \varpi^{\lambda} \sigma_{2} \chi H_{\tau_{1}}' = U \varpi^{s_{0}(\lambda) } \sigma_{2} H_{\tau_{1}}  $ and we take the representative $  \varpi^{(1,0,1,0)}\sigma_{2}  $.  \\

\noindent  $ \bullet $  $  \mu  =  ( 2,2,1,1) . $ \\[0.3em]
The projection is $ \mathscr{R}_{U_{2}^{\dagger}}(\varpi^{(2,1,1)}) = \left \{ \varpi^{(2,1,1)}, \varpi^{(2,1,1)} v_{1},  \varpi^{(2,1,1)} \eta_{0} \right \} $. Lemma \ref{liftsforsigma2U'} implies that $ U '  \varpi^{(2,2,1,1)} \sigma_{2} \chi  H_{\tau_{1}}' $ coincides with $  U ' \varpi^{(2,2,1,1)} H{\tau_{1}}'$ for any $ \chi \in S^{\pm}_{1} $. Now if $ \chi \in S^{-} $, then $ U \varpi^{(2,2,1,1)}\sigma_{2} \chi H_{\tau_{1}} ' = U \varpi^{(2,2,1,1)}\sigma_{2} H_{\tau_{1}}' $ by Lemma \ref{liftsforsigma2U}.    If however $ \chi = \begin{psmallmatrix} & 1 \\ -1  \end{psmallmatrix} $, then $ U ' \varpi^{(2,2,1,1)} \sigma_{2} H_{\tau_{1}}' = U' \varpi^{(2,0,1,1)} H_{\tau_{1}}' $. Thus the fiber  above $ \varpi^{(2,1,1)} \eta_{0} $ consists of $$ U \varpi^{(2,2,1,1)}\sigma_{2} H_{\tau_{1}}'  \, \text {  and  }  U \varpi^{(2,0,1,1)}\sigma_{2} H_{\tau_{1}}'. 
$$  These are distinct elements of the fiber, since $ U \varpi^{(2,2,1,1)} H_{\varsigma_{2}}  \subset U \varpi^{(2,2,1,1)}  J_{\varsigma_{2}} $, $ U\varpi^{(2,0,1,1)} H_{\varsigma_{2}} \subset U \varpi^{(2,0,1,1)} J_{\varsigma_{2}}  $ by  Lemma \ref{Hvarsig2inIwahori} and $ U \backslash H / J_{\varsigma_{2}} \simeq  \Lambda $.  \\

\noindent $\bullet $ $ \mu = ( 2,1,2,1) $, $(2,1,1,2)$,  $ (2,1,1,1) $. \\[0.3em]  These are handled by Lemma \ref{pVbijection}. \\

\noindent $  \bullet $ $ \mu = (2,2,0,1) $\\[0.3em]  
The projection is $ \mathscr{R}_{U_{2}^{\dagger}}(\varpi^{(2,2,1)}) = \left \{     \varpi^{(2,1,2)}v_{1} , \,   \varpi^{(2,2,1)}, \, 
\varpi^{(2,2,1)}\eta_{0},
\varpi^{(2,2,0)}\eta_{0},\,
\varpi^{(2,1,0)}\eta_{0},   \,  \varpi^{(1,1,0)}\eta_{1}   \right \} $. Let $ \lambda_{1} = (2,2,2,1) $, $ \lambda_{2} = (2,2,2,0) $, $ \lambda_{3} = (2,2,1,0) $. Then for  $ \chi \in S^{\pm}_{1} $ and any $ i = 1, 2,  3 $, the double coset   $ U ' \varpi^{\lambda_{i}} \sigma_{2} \chi H_{\tau_{1}}' $ coincides with 
$ U ' \varpi^{(2,2,0,1)} H_{\tau_{1}}'  $ only when $ \chi  =  \begin{psmallmatrix} &  1 \\ - 1 \end{psmallmatrix} $. This gives the three desired representatives. 

As for $ \varpi^{(1,1,0)}  \eta_{1} $, the unique element in the fiber  is $ \varpi^{s_{0}(1,1,1,0) -  f_{1}} \sigma_{3}  =   \varpi^{(1,-1,1,0)} \sigma_{3} $  by Lemma \ref{liftsforsigma3}  since $ (1,1,1,0) + \lambda_{\circ} =  (2,2,2,1)   \notin W_{\tau_{1}}' \mu $  but  $   (2,0,2,1) = s_{0}(1,1,1,0) + s_{0} ( \lambda_{\circ}   ) = ( 2,0,2,1) \in W_{\tau_{1}}' $.  \\

\noindent $ \bullet $   
$ \mu = (3,2,2,2)  $   \\[0.3em] 
We have $ \mathscr{R}(\varpi^{(3,2,2)}) = \varpi^{(2,1,1)}\mathscr{R}_{U_{2}^{\dagger}}(\varpi^{(1,1,1)}) $, so
$$ 
\mathscr{R}_{U_{2}^{\dagger} } (\varpi^{(3,2,2)}) =  \left \{ \varpi^{(3,2,2)},  \varpi^{(3,2,2)}v_{1},   \varpi^{(3,1,1)}\eta_{0},  \varpi^{(3,1,2)}\eta_{0},  \varpi^{(3,2,2)}\eta_{0},  \varpi^{(2,1,1)}\eta_{1}   \right  \}. $$ 
Let $ \lambda_{1} = (3,2,1,1) $, $ \lambda_{2} = (3,2,1,2) $, $ \lambda_{3} =(3,2,2,2) $. Then $ C(\lambda_{i}) \cap W_{\tau_{1}}' \mu =  \left \{  (3,2,2,2) \right \} $ for all $ i $. For  $ \lambda_{1} $ and $ \lambda_{3} $, Lemma \ref{liftsforsigma2U'} forces $ \chi $ to be in  $ S^{-}_{1} $, and  Lemma \ref{liftsforsigma2U}   allow us to conclude that  $ U \varpi^{\lambda_{1}}\sigma_{2} H_{\tau_{1}}'  $, $ U\varpi^{\lambda_{2}}\sigma_{2}  H_{\tau_{1}'}  $ are the only elements of the fibers above $ \varpi^{(3,1,1)}\eta_{0} $, $ \varpi^{(3,2,2)}\eta_{0} $ respectively.  For $ \lambda_{2} $, the possible choices are $ \chi = \begin{psmallmatrix} & 1 \\ -1  \end{psmallmatrix} $ or $ \chi = \begin{psmallmatrix}  1 &  \\ x  & 1  \end{psmallmatrix} $ for $ x \in \Oscr_{F}  ^{\times }$. In the latter case, we have $ U \varpi^{\lambda_{2}} \sigma_{2} \chi  H_{\tau_{1}}' = U \varpi^{\lambda_{2}} \sigma_{2} \psi H_{\tau_{1}}' $
since the conjugate of $ \varpi^{\lambda}\sigma_{2}\chi $ by $  \mathrm{diag}(x, 1, x , 1 , x, 1) \in U \cap  H_{\tau_{1}}' $ equals $ \varpi^{\lambda_{2}} \sigma_{2} \psi $. So the fiber above $ \varpi^{(3,1,2)}\eta_{0} $ contains $$  U \varpi^{s_{0}(\lambda_{2})}\sigma_{2}  H_{\tau_{1}}' \,  \text{ and } \,   U  \varpi^{\lambda_{2}} \sigma_{2} \psi H_{\tau_{1}}' . $$
Since  $ U \varpi^{\lambda_{2}} \psi J_{\varsigma_{2}}  = U \varpi^{\lambda_{2}} J_{\varsigma_{2}} $, so the same argument used in the case $ \mu = (2,2,1,1) $ shows that the two displayed elements are distinct.

For $  \varpi^{(2,1,1)} \eta_{1} $, the only element in the fiber is $ \varpi^{(2,1,1,1)}\sigma_{3} $ by Lemma \ref{liftsforsigma3}, since $  (3,2,2,2) = (2,1,1,1) + \lambda_{\circ}  $ belongs to $ W_{\tau_{1}}' $ but $ (3,1,2,2) =  s_{0} (2,1,1,1) + s_{0} ( \lambda_{\circ}  )  $ does not.     \\      

\noindent  $\bullet$ $ \mu =  (3,3,1,1) $ \\[0.3em] 
The projection is $ \varpi^{(2,1,1)}  \cdot 
 \mathscr{R}_{U_{2}^{\dagger}}(\varpi^{(1,0,0)}) = \left \{ \varpi^{(3,1,1)}, \varpi^{(3,1,1)} v_{1}, \varpi^{(3,2,1)}\eta_{0} \right \} $. For  $ \lambda = (3,3,2,1)$, the only possibility is $ \chi = \begin{psmallmatrix} & 1 \\ -1  \end{psmallmatrix} $ which gives the representative $ \varpi^{(3,0,2,1)}\sigma_{2} $ in the fiber above $ \varpi^{(3,2,1)}\eta_{0} $.   \\          

\noindent $ \bullet $  $ \mu   =  ( 3 , 2 , 0 ,   1 )  $. \\[0.3em] 
The projection is $ \mathscr{R}(\varpi^{(3,3,1)}) = \left \{ \varpi^{(3,1,3)}v_{1}, \varpi^{(3,3,1)}, \varpi^{(3,3,1)}\eta_{0},  \varpi^{(3,2,0)}\eta_{0},  \varpi^{(2,2,0)}\eta_{1} \right \}  $. Set  $ \lambda_{1} : = (3,2,3,1) $ and $ \lambda_{2}  : = (3, 2,2,0)  $.  Then $ C(\lambda_{i})  \cap W_{\tau_{1}}'\mu = \left \{ \mu \right \} $ for $ i = 1, 2 $. In both cases, the only possibility is $ \chi  =  \begin{psmallmatrix} & 1 \\ -1 & \end{psmallmatrix} $ which gives the rerpresentatives $ \varpi^{(3,1,3,1)}\sigma_{2} $, $ \varpi^{(3,1,2,0)}\sigma_{2} $ above $ \varpi^{(3,3,1)}\eta_{0} $, $ \varpi^{(3,2,0)}\eta_{0} $ respectively. 

For $ \varpi^{(2,2,0)}\eta_{1} $, the unique element in the fiber is $    \varpi^{(2,0,2,0)}  \sigma_{3}     $ since $ (2,1,2,0) + \lambda_{\circ} \notin W_{\tau_{1}}' $ but $  (3,1,3,1) =  s_{0} (2,1,2,0) + s_{0}(\lambda_{\circ}) \in W_{\tau_{1}}'\mu $.       
\end{proof}  
\subsection{Orbits on $U' h H_{\tau_{2}}'/ H_{\tau_{2}}'   $}  \label{UHtau2'orbitssec}  
Let $   E   =  \pr_{2}(U^{\ddagger}) $ denote the projection of the group $ U^{\ddagger} $. Thus     $  E \subset U_{2}  '  $ is the \emph{endohoric}\footnote{a portmanteu of Iwahori and endoscopic} subgroup of all  elements whose reduction modulo $ \varpi $ lies in $   \mathbf{H}_{2} (\kay)  =  \GL_{2}(\kay) \times _{\kay^{\times}  } \GL_{2}(\kay) $.  
For $ a ,  b \in  \Oscr_{F}    $, let   $$ \gamma(u, v)    =    
    \begin{pmatrix}    1  & u  &  &   v \\ 
                          &  1 &   v       \\
                           & &  1  \\
                            & & -u &  1  
\end{pmatrix}  .   $$

\begin{lemma}
\label{IwYlemma}  $  I_{2} ' 
E  / E   = \bigsqcup_{a,b \in  [\kay] }  
\gamma (a, b ) E  
$.  
\end{lemma}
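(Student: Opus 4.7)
The plan is to reduce the decomposition to a finite one over the residue field by passing to a suitable quotient, and then identify the cosets explicitly using root group considerations.

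First, I would observe that both $I_2'$ and $E$ contain the first principal congruence subgroup $U_2'(1) := \ker(U_2' \twoheadrightarrow \mathrm{GSp}_4(\kay))$. Indeed, $I_2'$ is by definition the preimage of the standard Borel $B \subset \mathrm{GSp}_4(\kay)$ (for the positive system $\{\beta_0,\beta_1,\beta_2\}$), and $E$, being $\pr_2(U^{\ddagger})$, is exactly the preimage of $\mathbf{H}_2(\kay) \subset \mathbf{H}_2'(\kay)$ --- the last point follows since $\mathrm{sim} : \mathbf{H}_2(\kay) \to \kay^\times$ is surjective, so any lift of $h_2 \in \mathbf{H}_2(\kay) $ to $U_2'$ can be paired with a lift of a matching $h_1 \in \GL_2(\mathcal{O}_F)$. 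Hence the natural map yields $I_2' E / E \cong B(\kay) / (B(\kay) \cap \mathbf{H}_2(\kay))$.

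Next I would exploit the root group decomposition of $B$. The positive roots of $\mathbf{H}_2' = \mathrm{GSp}_4$ are $\beta_0, \beta_1, \beta_2, \beta_1 + \beta_2$, of which the long ones $\beta_0, \beta_2$ are roots of $\mathbf{H}_2$ while the short ones $\beta_1$ and $\beta_1 + \beta_2$ are not. Since the maximal torus $T$ lies in $\mathbf{H}_2$, one obtains
\[
B(\kay) \cap \mathbf{H}_2(\kay) = T(\kay) \cdot U_{\beta_0}(\kay) \cdot U_{\beta_2}(\kay),
\]
and by the Bruhat-style product decomposition of $B(\kay)$,
\[
B(\kay) / (B(\kay) \cap \mathbf{H}_2(\kay)) \; \cong \; U_{\beta_1}(\kay) \cdot U_{\beta_1+\beta_2}(\kay),
\]
a set of cardinality $q^2$.

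The final step is to identify $\gamma(a,b)$ with an explicit lift. A direct computation inside $\mathrm{GSp}_4$ shows that the short root groups have the form $U_{\beta_1}(F) = \{I + x(E_{12}-E_{43})\}$ and $U_{\beta_1+\beta_2}(F) = \{I + y(E_{14}+E_{23})\}$, and multiplying these yields
\[
\bigl(I + a(E_{12}-E_{43})\bigr)\bigl(I + b(E_{14}+E_{23})\bigr) \;=\; \gamma(a,b)\cdot (I + ab\,E_{13}),
\]
where $I + ab\,E_{13} \in U_{\beta_0}(\mathcal{O}_F) \subset E$. Therefore $\gamma(a,b)$ represents the class $u_{\beta_1}(\bar a)\,u_{\beta_1+\beta_2}(\bar b)$ in the quotient. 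As $(a,b)$ ranges over $[\kay]^2$, these cosets are pairwise distinct and exhaust the $q^2$ elements, giving the claimed decomposition. There is no substantive obstacle --- the only care required is in matching matrix positions with the correct short roots of $\mathrm{GSp}_4$.
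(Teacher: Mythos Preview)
Your proof is correct and follows essentially the same approach as the paper: both reduce modulo the principal congruence subgroup to identify $I_2'E/E$ with a quotient over the residue field (the paper writes it as $\mathbf{N}_2'(\kay)/\mathbf{N}_2(\kay)$, you as $B(\kay)/(B(\kay)\cap\mathbf{H}_2(\kay))$, which are the same thing once the torus is cancelled), and then verify that the $\gamma(a,b)$ give representatives. Your version is somewhat more explicit in pinning down the short root groups and the identity $u_{\beta_1}(a)u_{\beta_1+\beta_2}(b)=\gamma(a,b)\cdot u_{\beta_0}(ab)$, which the paper subsumes under ``it is easily seen''.
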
 
\begin{proof}  Let $ \mathbf{N}'_{2} $ (resp., $\mathbf{N}_{2}$)  denote the unipotent radical of the Borel subgroup of $ \Hb_{2}' $ (resp., $\Hb_{2}$) determined by $ \left \{ \beta_{0} , \beta_{2} \right \} $.   Let $ Z \subset E $ the subgroup of all elements  that  reduce modulo $ \varpi $ to the Borel subgroup of $ H_{2} $. 
 Then  $ Z  = I_{2}' \cap E $ and so $$ I_{2}' E /  E   \simeq  I_{2}'/  Z    \simeq \mathbf{N}_{2}' (\kay)/ \mathbf{N}_{2}(\kay) . $$ 
 Now $ | \mathbf{N}_{2}'(\kay) | = q^{4} $  and $ | \mathbf{N}_{2}(\kay) | = q^{2} $ and so $ | I_{2} ' / Z |  = q^{2} $ and it is easily seen that the reduction of $ \gamma(u,v) $ for $ u, v \in [\kay] $ form a complete set of representatives for $ \mathbf{N}_{2}'(\kay) / \mathbf{N}_{2}(\kay) $.  
\end{proof}    
Let $ v_{1} $ be as in \S \ref{players}  and $  \eta_{0}, \eta_{1} $ be as in (\ref{eta0matrix}),  (\ref{eta12matrices}). Recall (\ref{etakmatrix}) that for $ k \in [\kay] $, we  denote 
\begin{equation} 
\tilde{\eta}_{k} =  \scalebox{0.9}{$\begin{pmatrix}     k & 1  \\
k+1 & 1 \\ & & -1 & k + 1   \\[0.1em]   &  & 1 & - k   \end{pmatrix}$}  
\end{equation} 
and  $ [\kay]^{\circ}  = [\kay] \setminus \left \{ -1 \right \} $.  
\begin{lemma}    \label{distinctH2E} $ 1 $, $v_{1} $, $\eta_{0}, \eta_{1},  $ and $ \tilde{\eta}_{k} $ for $ k \in [\kay]^{\circ}  $ represent pairwise  distinct classes in $ H_{2} \backslash H_{2}'/ E $.          
\end{lemma}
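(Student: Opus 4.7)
The plan is to split the pairwise distinctness into two regimes according to the $\varpi$-adic valuation of the similitude. First, observe that $\mathrm{sim}(\eta_{1})=\varpi$, whereas $1,\, v_{1},\, \eta_{0}$, and each $\tilde{\eta}_{k}$ all lie in $U_{2}' = \mathrm{GSp}_{4}(\Oscr_{F})$, so have unit similitude. Since $\mathrm{sim}(H_{2})\subset F^{\times}$ and $\mathrm{sim}(E)\subset\Oscr_{F}^{\times}$, the image of $\mathrm{sim}(\gamma)$ in $F^{\times}/\Oscr_{F}^{\times}$ is constant on each double coset $H_{2}\gamma E$. This immediately separates $\eta_{1}$ from all the other representatives.

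For the remaining elements, all of which lie in $U_{2}'$, the reduction-modulo-$\varpi$ map will reduce the problem to a finite-field computation. Concretely, if $\gamma,\gamma'$ are two such representatives with $\gamma = h\gamma' e$ for some $h\in H_{2}$, $e\in E$, then $h = \gamma e^{-1}(\gamma')^{-1}\in U_{2}'\cap H_{2} = U_{2}$; reducing modulo $\varpi$ yields $\bar{\gamma} = \bar{h}\,\bar{\gamma}'\bar{e}$ in $\mathrm{GSp}_{4}(\kay)$ with $\bar{h},\bar{e}\in\Hb_{2}(\kay)$. Thus it suffices to show that $\bar{1},\bar{v}_{1},\bar{\eta}_{0}$ and $\bar{\tilde{\eta}}_{k}$ represent pairwise distinct double cosets in $\Hb_{2}(\kay)\backslash\mathrm{GSp}_{4}(\kay)/\Hb_{2}(\kay)$. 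For each pair I would carry out the direct matrix analysis already exemplified in the proof of Lemma \ref{distinctH2U2circ}: write a generic $\bar{h}\in\Hb_{2}(\kay)$ in its $\GL_{2}\times_{\GG_{m}}\GL_{2}$ block form with entries $a,a',b,b',c,c',d,d'\in\kay$, compute the product $\bar{\gamma}^{-1}\bar{h}\,\bar{\gamma}'$, and impose the condition that the eight ``off-block'' positions $(1,2),(1,4),(2,1),(2,3),(3,2),(3,4),(4,1),(4,3)$ vanish.

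For the pairs involving $1,v_{1}$, or $\eta_{0}$ the resulting linear conditions on $\bar{h}$ collapse immediately---the non-$\tilde{\eta}$ factors make several entries vanish outright, and a factor of $(k+1)$, which is a unit since $k\in[\kay]^{\circ}$, forces the remaining entries to vanish too, contradicting invertibility of $\bar{h}$.

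The main case---and the one that motivates the choice of indexing set $[\kay]^{\circ}$---is $\tilde{\eta}_{k}$ against $\tilde{\eta}_{k'}$ with $k\neq k'$. A direct computation using
\[
\bar{\tilde{\eta}}_{k}^{-1} = \begin{pmatrix} -1 & 1 & 0 & 0\\ k+1 & -k & 0 & 0\\ 0 & 0 & k & k+1\\ 0 & 0 & 1 & 1\end{pmatrix}
\]
shows that each of the four pairs $\{(1,2),(2,1)\}$, $\{(4,3),(3,4)\}$, $\{(1,4),(2,3)\}$, $\{(3,2),(4,1)\}$ of vanishing conditions is a $2\times 2$ linear system in two of the entries of $\bar{h}$ whose coefficient matrix is (up to sign and transpose) $\bigl(\begin{smallmatrix}k'+1 & k'\\ k+1 & k\end{smallmatrix}\bigr)$, of determinant $k-k'$. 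Since $k,k'\in[\kay]^{\circ}$ are distinct representatives of $\kay$, the difference $k-k'$ is a unit in $\kay$, so each system has only the zero solution, forcing $\bar{h}=0$ and giving the required contradiction. I expect this last case to be the main obstacle simply because of the bookkeeping of the linear systems, though the $(k-k')$-determinant observation makes the final step transparent once the matrices are laid out.
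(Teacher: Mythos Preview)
Your second step---reducing the comparison of $1,v_{1},\eta_{0},\tilde{\eta}_{k}$ to a computation in $\Hb_{2}(\kay)\backslash\mathrm{GSp}_{4}(\kay)/\Hb_{2}(\kay)$---is sound and is exactly the paper's approach in disguise: since $E$ is defined by a congruence condition modulo $\varpi$, membership in $E$ is a mod-$\varpi$ condition on entries, so the paper's direct matrix check \emph{is} your finite-field computation. Your observation that $h\in U_{2}'\cap H_{2}=U_{2}$ is the key point that makes the reduction legitimate, and your determinant-$(k-k')$ analysis for $\tilde{\eta}_{k}$ versus $\tilde{\eta}_{k'}$ is correct (the paper stops a little earlier, deducing only $a,b\in\varpi\Oscr_{F}$ and then using $\mathrm{sim}(h)=ad-bc$, but your version works too).

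Your first step, however, contains a genuine error. You write that ``the image of $\mathrm{sim}(\gamma)$ in $F^{\times}/\Oscr_{F}^{\times}$ is constant on each double coset $H_{2}\gamma E$''. This is false: $H_{2}=\GL_{2}(F)\times_{F^{\times}}\GL_{2}(F)$ is the full group of $F$-points, so $\mathrm{sim}(H_{2})=F^{\times}$, not $\Oscr_{F}^{\times}$. Left-multiplying by $h\in H_{2}$ with $\mathrm{sim}(h)=\varpi$ changes the class of $\mathrm{sim}(\gamma)$ in $F^{\times}/\Oscr_{F}^{\times}$, so the similitude valuation is \emph{not} a double-coset invariant here, and $\eta_{1}$ cannot be separated from the others by this device. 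Concretely, nothing you have written rules out $\eta_{1}=h\cdot 1\cdot e$ with $\mathrm{sim}(h)\in\varpi\Oscr_{F}^{\times}$.

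The fix is to treat the cases involving $\eta_{1}$ by the same direct matrix computation as the rest: for $\gamma\in\{1,v_{1},\eta_{0}\}$ one computes $h\eta_{1}$ (resp.\ $v_{1}h\eta_{1}$, $\eta_{0}^{-1}h\eta_{1}$) and observes, as in the proof of Lemma~\ref{distinctH2U2circ}, that the integrality and off-block congruences force enough entries of $h$ into $\varpi\Oscr_{F}$ that the determinant (equivalently, the similitude) lands in $\varpi\Oscr_{F}$; for $\tilde{\eta}_{k}$ one computes $\eta_{1}^{-1}h\tilde{\eta}_{k}$ and argues the same way. Your reduction-mod-$\varpi$ trick does not apply directly here because $\eta_{1}\notin U_{2}'$, so you cannot conclude $h\in U_{2}$ a priori; the integrality of the relevant entries of $h$ has to be extracted from the explicit matrix product.
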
  
\begin{proof} 
This is 
handled as in Lemma \ref{distinctH2U2circ}.  The matrix formulas shown  therein for $ \eta_{i} $, $ i =1 , 2 $  also apply for $ i = 0 $ and it is easy to deduce the pairwise distinction for $ 1, v_{1}, \eta_{0}, \eta_{1} $ from these formulas. Let us distinguish the class of $ \tilde{\eta}_{k} $ for $ k \in [\kay]^{\circ} $ from $ \gamma \in \left \{ 1, v_{1} ,\eta_{0}, \eta_{1} \right \}  $. Write $ h \in H $ as in Notation \ref{notationh2}. Then  
$$ h \tilde{\eta}_{k}  
= \begin{pmatrix}  
ak & a  & -b  & b(k+1) \\ * & * & * & * \\   * & * &  *  & *  \\  * & * & * &*  
\end{pmatrix},  \quad   
\eta_{i}^{-1} h \tilde{\eta}_{k}  
= \begin{pmatrix} * & * & * & * 
\\ * & * & * & * \\ 
ck & c & -d  & d (k+1)   \\ 
*&  *& * & *  \end{pmatrix}$$ 
for $ i = 0, 1 $.  
If $ h \tilde{\eta}_{k} \in E $, we see from the entries shown above that the first row is a multiple of $ \varpi $ which makes $ \det(h\tilde{\eta}_{k}) \in \varpi \Oscr_{F}$, a contradiction. Since $ v_{1} $ just swaps the rows of $ h\tilde{\eta}_{k} $, the same argument applies to $ v_{1} h \tilde{\eta}_{k}  $. Similarly for $ \eta_{i}^{-1} h \tilde{\eta}_{k} $.  Finally for $ k ,k' \in  [\kay]^{\circ} $ and $ k \neq k' $, we see from the matrix    $$ 
 (\tilde{\eta}_{k})^{-1} h \tilde{\eta}_{k'}     =       \begin{pmatrix}  *  &  a_{1} -  a  &  * & - b - b_{1} k' \\ a k'  - a_{1} k &  * & - b - b_{1} k   &  * \\  *  & *  &   *  & *     \\ * & * &  * &  *  \end{pmatrix} $$ 
 that $ (\tilde{\eta}_{k})^{-1} h \tilde{\eta}_{k'} $ lies in $ E $ only if $ a, b \in \varpi \Oscr_{F} $. But since $ \tilde{\eta}_{k} , \tilde{\eta}_{k'} \in U_{2}' $ and $ E \subset U_{2}' $, we also have $ h \in U_{2}' $. But then $ a, b \in \varpi \Oscr_{F}$ implies that $ \mathrm{sim}(h) = ad - bc \in \varpi  \Oscr_{F} $, a contradiction. 
\end{proof}  

\begin{remark} Note that $ U v_{2} \tilde{\eta}_{-1}v_{2} E = U \eta_{0} E $.    
\end{remark} 
\begin{lemma}   \label{cartanE}       For $  \gamma  \in \left \{ \eta_{0},  \tilde{\eta}_{0}    \right \} $, the map $ \Lambda_{2} \to U_{2} \varpi^{\Lambda}   \gamma E $, $ \lambda \mapsto  U_{2} \varpi^{\lambda}  \gamma   E $ is a bijection.   
\end{lemma}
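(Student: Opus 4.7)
Surjectivity of $\lambda \mapsto U_2\varpi^{\lambda}\gamma E$ onto $U_2\varpi^{\Lambda_2}\gamma E$ is immediate from the definition of the target; the only thing to establish is injectivity. My plan is to reduce the problem to a standard Bruhat-Tits decomposition for the group $H_2 = \GL_2 \times_{\GG_m} \GL_2$ by showing that the twisted intersection $H_{2,\gamma} := H_2 \cap \gamma E \gamma^{-1}$ sits inside an Iwahori subgroup of $H_2$.

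First, I would invoke the analogue of the bijection (\ref{mydclemma}) for the compact open $E \subset H_2'$ to identify
\[
U_2\varpi^{\Lambda_2}\gamma E \;\xrightarrow{\sim}\; U_2\varpi^{\Lambda_2} H_{2,\gamma}, \qquad U_2\varpi^{\lambda}\gamma E \mapsto U_2\varpi^{\lambda}H_{2,\gamma}.
\]
So it suffices to show that the map $\lambda \mapsto U_2\varpi^{\lambda}H_{2,\gamma}$ is injective. The key claim I would then verify is that $H_{2,\gamma}$ is contained in the Iwahori subgroup $J \subset U_2$ consisting of elements whose reduction modulo $\varpi$ lies in the upper triangular Borel of $\GL_2(\kay) \times_{\kay^{\times}}\GL_2(\kay)$. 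Granting this, the bijection $U_2 \backslash H_2/J \xrightarrow{\sim} \Lambda_2$ (a consequence of the Iwasawa/Bruhat-Tits decomposition for $H_2$, since $U_2$ is hyperspecial and $J$ is Iwahori) immediately gives the injectivity of $\lambda \mapsto U_2\varpi^{\lambda}J$, and hence of $\lambda \mapsto U_2\varpi^{\lambda}H_{2,\gamma}$.

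The main work is therefore verifying the inclusion $H_{2,\gamma} \subseteq J$ for $\gamma \in \{\eta_0,\tilde{\eta}_0\}$. For $\gamma = \eta_0$, I would write a general $h = (h_1,h_2) \in H_2$ as in Notation \ref{notationh2} and compute $\eta_0^{-1} h \eta_0$ explicitly as a $4\times 4$ matrix in $H_2'$; requiring every entry to lie in $\Oscr_F$ and every below-diagonal entry of the reduction modulo $\varpi$ to vanish (which is exactly the condition of landing in $E$ on the diagonal blocks and in the upper triangular part modulo $\varpi$) will force the entries $c, c_1$ to lie in $\varpi \Oscr_F$, so $h \in J$. Moreover, using that $H_2 \subset U_2'$ forces $\mathrm{sim}(h) \in \Oscr_F^{\times}$, we get $h \in J$ as required. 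For $\gamma = \tilde{\eta}_0$ the computation is analogous but uses the specific entries of $\tilde{\eta}_0$ displayed in (\ref{etakmatrix}); the crucial point is again that the mixing between rows/columns introduced by $\tilde{\eta}_0$ forces the off-diagonal entries of $h_1$ and $h_2$ to lie in $\varpi \Oscr_F$ after the conjugation to land in $E$.

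The main obstacle is simply organizing this matrix computation cleanly, especially for $\tilde{\eta}_0$, whose entries do not form a block unipotent pattern and thus produce a more intricate system of congruences. Once the inclusion $H_{2,\gamma}\subseteq J$ is in hand, no further work is needed: the bijection follows formally from Bruhat–Tits for $H_2$.
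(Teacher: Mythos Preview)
Your approach is essentially the paper's: show $H_{2,\gamma}=H_2\cap\gamma E\gamma^{-1}$ lies in an Iwahori of $H_2$, then invoke the Bruhat--Tits bijection $U_2\backslash H_2/J\simeq\Lambda_2$ together with the identification $U_2\varpi^{\Lambda_2}\gamma E\simeq U_2\varpi^{\Lambda_2}H_{2,\gamma}$. One small correction: for $\gamma=\tilde{\eta}_0$ the computation does \emph{not} place $H_{2,\gamma}$ inside the upper-triangular Iwahori $J$; working out $\tilde{\eta}_0^{-1}h\tilde{\eta}_0\in E$ forces $b\in\varpi\Oscr_F$ and $c_1\in\varpi\Oscr_F$ (not $c,c_1$), so $h_1$ is \emph{lower}-triangular and $h_2$ upper-triangular modulo $\varpi$. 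This lands you in a different Iwahori, but of course the Bruhat--Tits argument is indifferent to which Iwahori you use, so the proof goes through unchanged once you correct the target subgroup.
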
 
\begin{proof} It is easy to see that  $ H_{2} \cap \gamma E \gamma^{-1} $ are  contained in certain  Iwahori subgroups of $H_{2} $.  So the Bruhat-Tits decomposition along with the identification $ U_{2} \varpi^{\Lambda} \gamma E \xrightarrow{\sim}  U \varpi^{\Lambda} (H_{2} \cap \gamma E \gamma^{-1})  $ implies the result.     
\end{proof} 
\begin{lemma}  \label{Iwahoripassesthrough}    If $ \lambda \in \Lambda_{2} $  is such that  $ \beta_{1}(\lambda), \beta_{2}(\lambda)  \in \left \{ 0 ,1 \right \}  $, then $  U_{2}' \varpi^{\lambda} E  =  U_{2}' \varpi^{\lambda}  I_{2}'  E $.     
\end{lemma}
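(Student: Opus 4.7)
The plan is to use the decomposition of $I_2' E / E$ from Lemma \ref{IwYlemma} and reduce the claim to a direct matrix calculation. The inclusion $U_2'\varpi^{\lambda} E \subseteq U_2'\varpi^{\lambda} I_2' E$ is trivial. For the reverse inclusion, by Lemma \ref{IwYlemma} it suffices to show that $\varpi^{\lambda} \gamma(a,b) \in U_2' \varpi^{\lambda} E$ for every $a, b \in [\kay]$, where $\gamma(a,b)$ is the explicit matrix defined just before Lemma \ref{IwYlemma}.

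The key observation is that it is actually enough to prove the stronger statement $\varpi^{\lambda}\gamma(a,b)\varpi^{-\lambda} \in U_2'$; once we have this, writing $\varpi^{\lambda}\gamma(a,b) = \bigl(\varpi^{\lambda}\gamma(a,b)\varpi^{-\lambda}\bigr) \cdot \varpi^{\lambda}$ exhibits $\varpi^{\lambda}\gamma(a,b)$ as an element of $U_2' \varpi^{\lambda} \subset U_2'\varpi^{\lambda} E$ (with the trivial element of $E$).

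To verify the stronger statement, I would perform the conjugation explicitly. Writing $\lambda = (a_0, a_2, a_3)$ and $\varpi^{\lambda} = \mathrm{diag}(\varpi^{a_2}, \varpi^{a_3}, \varpi^{a_0 - a_2}, \varpi^{a_0 - a_3})$, the nonzero off-diagonal entries of $\varpi^{\lambda}\gamma(a,b)\varpi^{-\lambda}$ are
\begin{equation*}
(1,2): a\varpi^{\beta_1(\lambda)}, \quad (4,3): -a\varpi^{\beta_1(\lambda)}, \quad (1,4): b\varpi^{\beta_1(\lambda)+\beta_2(\lambda)}, \quad (2,3): b\varpi^{\beta_1(\lambda)+\beta_2(\lambda)},
\end{equation*}
since $a_2 - a_3 = \beta_1(\lambda)$ and $a_2 + a_3 - a_0 = \beta_1(\lambda) + \beta_2(\lambda)$. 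Under the hypothesis $\beta_1(\lambda), \beta_2(\lambda) \in \{0,1\}$, both exponents are non-negative, so all entries lie in $\Oscr_F$. The resulting matrix therefore lies in $\mathrm{GSp}_4(\Oscr_F) = U_2'$ (the symplectic condition is preserved by conjugation).

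There is no real obstacle here; the proof is essentially a bookkeeping check. The only subtlety worth noting is that although the hypothesis is stated symmetrically in $\beta_1$ and $\beta_2$, the argument only uses $\beta_1(\lambda) \geq 0$ and $\beta_1(\lambda) + \beta_2(\lambda) \geq 0$, both of which are immediate consequences of the stated bounds. The stronger form of the hypothesis is presumably what matches the cocharacters $\lambda$ that actually arise in the applications of the lemma.
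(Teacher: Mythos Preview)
Your proof is correct and is in fact more careful than the paper's. The paper gives a one-line argument asserting that the hypothesis ensures $\varpi^{\lambda} I_2' \varpi^{-\lambda} \subset U_2'$, but this containment is not literally true for every $\lambda$ in the stated range: for instance, when $\lambda = (2,2,1)$ (one of the three cocharacters to which the lemma is actually applied in Proposition~\ref{Ehard}) we have $\beta_0(\lambda) = 2$, so the $-\beta_0$ root group in $I_2'$, which has entries in $\varpi\Oscr_F$ at position $(3,1)$, gets conjugated to $\varpi^{-1}\Oscr_F \not\subset \Oscr_F$.

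Your route via Lemma~\ref{IwYlemma} repairs this: the representatives $\gamma(a,b)$ only involve the root groups for $\beta_1$ and $\beta_1+\beta_2$, and for these the hypothesis does give non-negative exponents after conjugation, exactly as you compute. An equivalent way to patch the paper's argument is to observe that the part of $I_2'$ not captured by the $\gamma(a,b)$ is precisely $Z = I_2' \cap E$, which can be absorbed into $E$ on the right; this is of course just Lemma~\ref{IwYlemma} again. So the two arguments are the same idea, but yours is the one that actually works as written.
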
  
\begin{proof} The conditions ensure that $ \varpi^{\lambda} I_{2}' \varpi^{-\lambda}  \subset U_{2}' $.      
\end{proof} 
\begin{notation} For this subsection only, we  let  $ \mathscr{R}_{E}(h) $, denote the double coset space $ U_{2} \backslash   U_{2}' h E/ E  $   for   $ h \in H_{2}' $.   
\end{notation} 
\begin{proposition} \label{Ehard}    We have  
\begin{enumerate}[label = \normalfont(\alph*)]    \setlength\itemsep{0.5em}
\item $ \mathscr{R}_{E} (\varpi^{(0,0,0)}) =  
\left \{   1  ,  v_{1} , \eta_{0} ,    \tilde {\eta}_{k} \, | \,  k \in [\kay]^{\circ} \right \} $.   
\item  $   \mathscr{R}_{E}(\varpi^{(1,1,1)}) =$ 
\begin{minipage}[t]{1.0\textwidth} 
$\{\varpi^{(1,1,1)},\,  \varpi^{(1,1,1)}v_{1},\, \varpi^{(1,0,1)}\eta_{0},\, \varpi^{(1,1,0)}\eta_{0},\,  \varpi^{(1,1,1)}\eta_{0},\,  
\eta_{1}\} \cup \\[0.4em] 
\Set*{\varpi^{(1, 1, 1)}\tilde{\eta}_{0},\,  \varpi^{(1,0,1)}\tilde{\eta}_{0},\,  \varpi^{(1,0,0)}\tilde{\eta}_{k}       \given k \in [\kay]^{\circ}}$ 
\end{minipage}
\item     $\mathscr{R}_{E}(\varpi^{(2,2,1)}) 
=  \Set * {  \varpi^{(2, 2, 1)}, \,   \varpi^{(2,1,2) }  v_{1} ,  \,    \varpi^{(2,2,1) }\eta_{0} ,  \,   \varpi^{(2,1,2)} \tilde {\eta}_{0}  ,  \,     \varpi^{(1,1,0) } \eta_{1}   }  $.    
\end{enumerate}      
\end{proposition}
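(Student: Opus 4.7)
My plan is to treat Proposition \ref{Ehard} in three stages: establishing distinctness of representatives, computing the Schubert-cell decompositions of $U_2' \varpi^\lambda I_2'/I_2'$, and refining these to $E$-cosets via Lemma \ref{IwYlemma} while tracking $U_2$-orbits.

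For distinctness, I would leverage Lemma \ref{distinctH2E} to separate cosets lying in different classes $H_2 \cdot \gamma \cdot E$ with $\gamma \in \{1, v_1, \eta_0, \eta_1, \tilde{\eta}_k\}$, and invoke Lemma \ref{cartanE} to distinguish cosets of the form $\varpi^\lambda \gamma$ from $\varpi^\mu \gamma$ when $\gamma \in \{\eta_0, \tilde{\eta}_0\}$ and $\lambda \neq \mu$. The slightly subtle case is separating $\varpi^{(1,0,0)} \tilde{\eta}_k$ from $\varpi^{(1,0,0)} \tilde{\eta}_{k'}$ for $k \neq k'$ in $[\kay]^\circ$; here I would compute $(\tilde{\eta}_k)^{-1} \varpi^{-(1,0,0)} h \varpi^{(1,0,0)} \tilde{\eta}_{k'}$ explicitly and show, as in the final part of Lemma \ref{distinctH2E}, that integrality of the appropriate entries forces $\det \in \varpi \Oscr_F$, a contradiction.

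For the enumeration, I would first write $U_2' \varpi^\lambda U_2' = \bigsqcup \mathcal{X}_{\tau w}(\vec\kappa) U_2'$ using the Schubert-cell recipe of \cite[\S 5]{CZE} applied to $\mathrm{GSp}_4$, which exactly mirrors the strategy of \S \ref{U'orbitssec}. Then for each Iwahori coset $\gamma I_2'$ appearing in the Schubert description, I apply Lemma \ref{IwYlemma} to expand $\gamma I_2' = \bigsqcup_{a,b \in [\kay]} \gamma \cdot \gamma(a,b) \cdot E$, thereby lifting the Iwahori decomposition to an $E$-decomposition. The ensuing double cosets modulo $U_2$ are then reduced by row/column operations in $U_2$ (and on the right by $E$, which provides extra clearing power compared with $I_2'$). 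Part (a) is the shortest: $U_2' E / E$ modulo $U_2$ is controlled by Bruhat–Tits on the finite quotient, since reducing modulo $\varpi$ sends $U_2' \backslash U_2' / E$ to $\GL_2(\kay) \backslash \mathbf{H}_2'(\kay)/\mathbf{H}_2(\kay)$; I would enumerate this quotient directly and check that the lifts $1, v_1, \eta_0, \tilde{\eta}_k$ exhaust it. For parts (b) and (c), one can often shortcut the calculation by applying Lemma \ref{Iwahoripassesthrough} to the dominant translations, which lets the Iwahori pass through $\varpi^\lambda$ harmlessly, after which the analysis reduces to studying how the parameters $(a,b)$ in $\gamma(a,b)$ propagate through $\varpi^\lambda$-conjugation.

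The principal obstacle will be the $q-1$ extra cosets $\varpi^{(1,0,0)}\tilde{\eta}_k$ for $k \in [\kay]^\circ$ appearing in part (b). These reflect the fact that $E$ is strictly larger than $I_2'$ but strictly smaller than $U_2'$, so a full $\kay$-line of Iwahori cosets collapses to $|{\kay}| - 1$ distinct $E$-orbits rather than to one, and the excluded value $k = -1$ corresponds to the degeneration that merges into the $\eta_0$-orbit (exactly the relation $U \eta_0 E = U v_2 \tilde{\eta}_{-1} v_2 E$ noted in the remark after Lemma \ref{cartanE}). I would identify the $k$-parameter intrinsically as the class of the bottom-left $2\times 2$ block of $\tilde{\eta}_k$ modulo the right action of the unipotent radical of the $\{1,2\}\times\{3,4\}$ block stabilizer, and verify by a direct matrix computation that no element of $U_2 \times E$ can change $k$ without forcing a contradiction on similitudes, completing the enumeration.
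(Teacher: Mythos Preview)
Your plan is essentially the paper's: invoke Lemma \ref{Iwahoripassesthrough} to write $U_2'\varpi^\lambda E = U_2'\varpi^\lambda I_2' E$, decompose $U_2'\varpi^\lambda I_2'/I_2'$ into Schubert cells, refine each Iwahori coset into $E$-cosets via Lemma \ref{IwYlemma}, and then reduce modulo $U_2$ by row/column operations. (Note the slip in your write-up where you once write $U_2'\varpi^\lambda U_2'$; the cells must be taken with $I_2'$ on the right, as your plan statement correctly indicates.)

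Two small remarks. First, your concern about separating $\varpi^{(1,0,0)}\tilde\eta_k$ from $\varpi^{(1,0,0)}\tilde\eta_{k'}$ is unnecessary: since $\varpi^{(1,0,0)} \in H_2$, these lie in the distinct classes $H_2\tilde\eta_k E$ and $H_2\tilde\eta_{k'}E$ already distinguished by Lemma \ref{distinctH2E}, so no further matrix computation is needed. Second, your proposed shortcut for part (a)---reducing modulo $\varpi$ to identify $U_2\backslash U_2'/E$ with $\mathbf{H}_2(\kay)\backslash\mathbf{H}_2'(\kay)/\mathbf{H}_2(\kay)$ (not $\GL_2(\kay)$ on the left, as you wrote)---is valid and cleaner than the paper's route: since $E$ is exactly the preimage of $\mathbf{H}_2(\kay)$ and $U_2$ surjects onto $\mathbf{H}_2(\kay)$, the reduction map is a bijection of double coset spaces. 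The paper instead runs the same four-cell analysis for $w=1$ as for (b) and (c), and only records the finite-group description as a remark afterwards.
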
 
\begin{proof} By Lemma \ref{distinctH2E} and \ref{cartanE}, the elements listed in part (a), (b), (c) represent distinct classes.  We show that these also form a full set of representatives. 
Say for $ \lambda \in \Lambda_{2}$   is such that $ 0 \leq \beta_{1} ( \lambda) , \beta_{2}(\lambda)  \leq  1 $ and say $ U_{2}' \varpi^{\lambda} I_{2}'  =   \bigsqcup_{\gamma \in \Gamma} \gamma \tilde{I}_{2} $  for some finite set $ \Gamma $.   
Then by Lemma \ref{Iwahoripassesthrough} and \ref{IwYlemma},  

\begin{equation}  \label{Edecomrecipe}    U _{2}   '  h  E    =    U_{2}' h  I_{2}' E        =       \bigcup_{\gamma \in \Gamma }  U_{2} \gamma   I_{2}'  E    =      \bigcup_{\substack{\gamma  \in \Gamma \\  u ,v  \in [\kay] } }  U_{2} \gamma  \gamma_{u, v }   E 
\end{equation} 
Since $ (0,0,0) $, $ (1,1,1) $, $ (2,2,1) $ satisfy the condition of Lemma \ref{Iwahoripassesthrough}, the decomposition (\ref{Edecomrecipe}) applies. Now  we can compute the set $ \Gamma $ for each $ \lambda $ by replacing $ \varpi^{\lambda} $ with $ w \in W_{I_{2}'}  $ of  minimal  possible  length   such that $ U_{2}' \varpi^{\lambda} E = U_{2}'w E $ and invoking the analogue of Proposition \ref{SchubertGSp6}    for $ \mathrm{GSp}_{4} $. 
Since we are only interested in computing the double cosets $ U_{2} \gamma \gamma_{u, v}  E $ appearing in $ U_{2}'   w  E   $,  we only need to study the  cells corresponding to $$   \varepsilon_{0} : =  w , \quad    \varepsilon_{1} :  = r_{1}w, \quad   \varepsilon_{2} : = r_{1} r_{2}w , \quad  \varepsilon_{3} :  =  r_{1}r_{2}r_{1}w  .     $$ 
   Thus we need to study the classes in  $ U_{2} \backslash H_{2}' / E $ of  $ \left \{  \mathcal{Y}_{\varepsilon_{i}}(\vec{\kappa}) \gamma_{u,v} E \, |  \,  \vec{\kappa} \in [\kay]^{l(\varepsilon_{i})} , \,   u, v \in [\kay]     \right \} $ for each $  i = 0    ,    1, 2 , 3 $. We will refer to these sets Schubert cells  as  well and as usual,  abuse notation to  denote them by  $  \Yvar{i}E/E $. \\    

\noindent (a)  Here $ w = 1 $ and the four cells are  
\begin{alignat*}{4}  \mathcal{Y}_{\varepsilon _{0}}E/ E &  =  
\scalebox{0.9}{$\Set*{
\left(\begin{array}{cccc}
 1 &  u   &  & v \\ 
  & 1  & v  & \\[0.2em] 
 &  &  1 &  \\[0.2em] 
  & &  - u  &  
\end{array}\right)}$},& 
 \mathcal{Y}_{\varepsilon _{2} } E /  E   &=
 \scalebox{0.9}{$\Set*{
\left(\begin{array}{cccc}
a &    y + au &  vy - u & av + 1 \\[0.2em] 
1 &  u  & &   v \\
  & 1  & v  & \\[0.2em] 
  &  - a  & - (av+1)  &   
\end{array}\right)}$},\\
\mathcal{Y}_{\varepsilon _{1}} E / E  & = \scalebox{0.9}
{$\Set*{\left(\begin{array}{cccc}
a&  au + 1    &  v &  av \\
1& u & &   v \\[0.2em] 
&   &  - u  &   1  \\[0.2em] 
  &  &  au + 1   &  -a     
\end{array}\right)},$}  
&
\quad 
\mathcal{Y}_{\varepsilon _{3}} E  / E   &=  \scalebox{0.9}
{$\Set*{\left(\begin{array}{cccc}
z   &  u z +  a_{1}  & a u + a_{1} v + 1  &  vz - a \\
a & au + 1 &  v  &  av  \\[0.2em] 
 1&  u  &   & v    \\[0.2em] 
 -a_{1} & -a_{1} u  &  u  & -a_{1}v - 1  
\end{array}\right)}$}   
\end{alignat*} 
where $ a, a_{1}, u, v , y \in   [\kay] $ and $ z  :  = y + aa_{1} $.  Note that the  $ \varepsilon_{1}$-cell is obtained from $ \varepsilon_{0}$-cell by multiplying on the left by $ y_{1}(a)v_{1} $. If $ a = 0 $, the orbits of $ U_{2} $ are $ v_{1} $ times  those of $ \varepsilon_{0} $-cell since $ v_{1} $ normalizes $ U_{2} $. Similarly we can assume that $ a \neq 0 $ in $ \varepsilon_{2}$-cell and $ a_{1} \neq 0 $ in $  \varepsilon_{3} $-cell.   

Consider the $ \varepsilon_{0}$-cell. Conjugation by $ v_{2} $ swaps the entries $ u $, $ v $ and row column operations arising from $ U_{2} $, $ E $   allow us to make at least one of  $u$, $v$ zero. So say $ u = 0 $. Then we obtain either identity or $ \eta_{0} $ as representative from this cell.  Next consider the $ \varepsilon_{1} $-cell. As observed above,  the case  $ a = 0 $ leads to orbits of $ v_{1} $ and $ v_{1} \eta_{0} $ and we have $ U_{2} v_{1} \eta_{0} E = U_{2}   \tilde{\eta}_{0} E $. If $ a \neq 0 $,  we apply the following sequence of row-column operations:  $$ \scalebox{0.9}{$\left(\begin{array}{cccc}
a&  au + 1    &  v &  av \\
1& u & &   v \\[0.2em] 
&    &  - u  &   1  \\[0.2em] 
  &  &  au + 1   &  -a     
\end{array}\right)$} \longrightarrow   \scalebox{0.9}{$\left(\begin{array}{cccc}
a&  au + 1    &  &  av \\
1& u & -v/a  &   v \\[0.2em] 
&     &  - u  &   1  \\[0.2em] 
  &  &  au + 1   &  -a     
\end{array}\right)$}     \longrightarrow     \scalebox{0.9}{$\left(\begin{array}{cccc}
a&  au + 1    &  &  av \\
1& u & uv  &    \\[0.2em] 
&     &  - u  &   1  \\[0.2em] 
  &  &  au + 1   &  -a     
\end{array}\right)$} $$ $$   \longrightarrow   
  \scalebox{0.9}{$\left(\begin{array}{cccc}
a&  au + 1    & a uv   &  av \\
1& u &  &    \\[0.2em] 
&   &  - u  &   1  \\[0.2em] 
  &  &  au + 1   &  -a     
\end{array}\right)$} \longrightarrow    \scalebox{0.9}{$\left(\begin{array}{cccc}
a&  au + 1    &    &  \\
1& u &   &    \\[0.2em] 
&   &  - u  &   1  \\[0.2em] 
  &  &  au + 1   &  -a     
\end{array}\right)$}   \longrightarrow    \scalebox{0.9}{$\left(\begin{array}{cccc}
1&  au + 1    &  &  \\
1& au &   &    \\[0.2em] 
&   &  - au  &   1  \\[0.2em] 
  &  &  au + 1   &  -1     
\end{array}\right)$.}
$$ 
Let us denote $ k = au $. The structure of $ Y $ allows us to restrict $ k \in [\kay] $. Conjugating this matrix by $ v_{\beta_{0}}v_{\beta_{2}} $ and scaling by $ -1 $ gives us the matrix $ \tilde{ \eta}_{k} $ if $ k \in [\kay]^{\circ} $, i.e., $ au \neq - 1$. If $ au = -1 $ however,  then conjugating by $ v_{2} $ further  gives us $ \eta_{0} $. So the $   \varepsilon_{1} $-cells decomposes into $U_{2}$-orbits of $ v_{1},  \eta_{0} $ and $ \tilde{\eta}_{k} $ for $ k \in [\kay]^{\circ} $. 

For the case of $ \varepsilon_{2} $-cell and $ a \neq 0 $, use   
$$ 
 \scalebox{0.9}{$
\left(\begin{array}{cccc}
a &    y + au &  vy - u & av + 1 \\[0.2em] 
1 &  u  & &   v \\
  & 1  & v  & \\[0.2em] 
  &  - a  & - (av+1)  &   
\end{array}\right)$} \longrightarrow 
 \scalebox{0.9}{$
\left(\begin{array}{cccc}
a &   &  -auv-u & av + 1 \\[0.2em] 
1 &  u  & &   v \\
  & 1  & v  & \\[0.2em] 
  &  - a  & - (av+1)  &   
\end{array}\right)$} \longrightarrow    
 \scalebox{0.9}{$
\left(\begin{array}{cccc}
a &    &  & av + 1 \\[0.2em] 
1 &  u  & uv + u/a &   v \\
  & 1  & v  & \\[0.2em] 
  &  - a  & - (av+1)  &   
\end{array}\right)$} $$  $$  \longrightarrow    \scalebox{0.9}{$
\left(\begin{array}{cccc}
a &    &  & av + 1 \\[0.2em] 
1 &   & &   v \\
  & 1  & v  & \\[0.2em] 
  &  - a  & - (av+1)  &   
\end{array}\right)$}  \longrightarrow    \scalebox{0.9}{$
\left(\begin{array}{cccc}
 &   1 &   v  & \\[0.2em] 
& a  &  (av + 1)  &    \\
-a  &   &   & - (av +  1)   \\[0.2em] 
1  &  &  &  v   
\end{array}\right)$}    \longrightarrow    \scalebox{0.9}{$
\left(\begin{array}{cccc}
v &   1 &      & \\[0.2em] 
(av+1)& a  &  &    \\
 &   & - a  & av + 1   \\[0.2em] 
  &  &  1 &  - v   
\end{array}\right)$}      $$
and multiply on the left by $ \mathrm{diag}(a, 1,  1, a ) $ and $ \mathrm{diag}(1, a^{-1}  ,  a^{-1}  ,1) $ on the right to arrive at the same situation as the $ \varepsilon_{1}$-cell.  Finally the case for $ \varepsilon_{3} $-cell with $ a_{1} \neq 0 $, use 
$$  \scalebox{0.9}
{$\left(\begin{array}{cccc}
z   &  u z +  a_{1}  & a u + a_{1} v + 1  &  vz - a \\
a & au + 1 &  v  &  av  \\[0.2em] 
 1&  u  &   & v    \\[0.2em] 
 -a_{1} & -a_{1} u  &  u  & -a_{1}v - 1  
\end{array}\right)$}  \longrightarrow    \scalebox{0.9}
{$\left(\begin{array}{cccc}
  &   a_{1}  & a u + a_{1} v + 1  &   - a \\
a & au + 1 &  v  &  av  \\[0.2em] 
 1&  u  &   & v    \\[0.2em] 
 -a_{1} & -a_{1} u  &  u  & -a_{1}v - 1  
\end{array}\right)$}     $$ 
$$\scalebox{0.9}
{$\left(\begin{array}{cccc}
  &   a_{1}  & a u + a_{1} v + 1  &   - a \\
 &  1 &  (au + a_{1}v)/a_{1}   &  -a/a_{1}    \\[0.2em] 
 & u & &   v   \\[0.2em] 
 -a_{1} & -a_{1} u  &  u  & -a_{1}v - 1  
\end{array}\right)$}    \longrightarrow  \scalebox{0.9}
{$\left(\begin{array}{cccc}
  &   a_{1}  & a u + a_{1} v + 1  &  
  \\
 &  1 & (au+ a_{1}v)/a_{1}  &   \\[0.2em] 
 1&   u  &  
 &    (au + a_{1} v ) / a_{1}    \\[0.2em] 
 -a_{1} & -a_{1} u  &  u  & - (au + a_{1}v + 1 )  
\end{array}\right)$}    . $$
Next  substitute $   k_{1}  =  au + a_{1} v $ and use 
$$ \scalebox{0.8}
{$\left(\begin{array}{cccc}
  &   a_{1}  & k_{1}  + 1  &  
  \\
&  1 &   k_{1}/a_{1}    &   \\[0.2em] 
 1&   u  &  
 &    k  _  { 1  }   / a_{1}    \\[0.2em] 
 -a_{1} & -a_{1} u  &  u  & - k_{1}  - 1   
\end{array}\right)$}     \longrightarrow 
   \scalebox{0.8}
{$\left(\begin{array}{cccc}
  &   a_{1}  & k_{1} + 1  &  
  \\
 &  1 &  k_{1}  /a_{1}  &   \\[0.2em] 
 1&     &   - u ( k_{1} + 1 )  /  a_{1}  
 &    k_{1} / a_{1}    \\[0.2em] 
 -a_{1} & -a_{1} u  &  u  & - k_{1}  - 1   
\end{array}\right)$}     \longrightarrow   
   \scalebox{0.8}
{$\left(\begin{array}{cccc}
  &   a_{1}  & k_{1} + 1  &  
  \\
 &  1 &   k_{1} / a_{1 }  &   \\[0.2em] 
 1&     &    
  &    k_{1} / a_{1}    \\[0.2em] 
 -a_{1} & -a_{1} u  & -uk_{1}       & - k_{1} - 1   
\end{array}\right)$}  $$ $$ \longrightarrow         \scalebox{0.8}
{$\left(\begin{array}{cccc}
  &   a_{1}  & k_{1} + 1  &  
  \\
 &  1 &  k_{1} / a_{1}  &   \\[0.2em] 
 1&     &   
 &    k_{1} / a_{1}    \\[0.2em] 
 -a_{1} & &    & - k_{1} - 1   
\end{array}\right)$} \longrightarrow    \scalebox{0.85}
{$\left(\begin{array}{cccc}
  &   a_{1}  & k_{1}  + 1  &  
  \\
 &  1 &  k_{1}   / a_{1}  &   \\[0.2em] 
 1&     &    
 &    k_{1 }  / a_{1}    \\[0.2em] 
 -a_{1} & &   & - k _{1}  - 1   
\end{array}\right)$}  
\longrightarrow  \scalebox{0.8}
{$\left(\begin{array}{cccc}
  k_{1} + 1 &   a_{1}  &   &  
  \\
  k_{1}/a_{1} &  1 &  &   \\[0.2em] 
 &     &    
 1 &    - k_{1 }  / a_{1}    \\[0.2em] 
 & & - a_{1}  &  k _{1}  + 1   
\end{array}\right)$}  . $$
Now multiply by $ \mathrm{diag}(-1,-a_{1},a_{1},1) $ on the left, $ \mathrm{diag}(1,-a_{1}^{-1},-a_{1}^{-1},1) $ on the right and use the substitution $ k = -k_{1} - 1 $. If $ a_{1} = 0 $ in the $ \varepsilon_{3}$-cell, then one gets $ v_{1}, 1 , v_{1}\eta_{0} $, $ v_{1}\tilde{\eta}_{k}$ and the latter two can be replaced with $ \tilde{\eta}_{0} $, $ \tilde{\eta}_{k'} $ where $ k' = -(k+1) $.   \\

\noindent (b) We have $ w = \rho _{2}  $ and the four cells are  
\begin{alignat*}{4}  \mathcal{Y}_{\varepsilon _{0}}E/ E &  =  
\scalebox{0.85}{$\Set*{
\left(\begin{array}{cccc}
  &     &   - u   & 1 \\ 
  &   & 1  & \\[0.2em] 
 &  \varpi  &  v \varpi   &  \\[0.2em] 
\varpi     &   u \varpi &   &    v \varpi   
\end{array}\right)}$},& 
 \mathcal{Y}_{\varepsilon _{2} } E /  E   &=
 \scalebox{0.85}{$\Set*{
\left(\begin{array}{cccc}
\varpi &   u \varpi & y - au  & a + v \varpi  \\[0.2em] 
 &     & - u &   1  \\
  &    &  1   & \\[0.2em] 
  &  - \varpi    &  - a - v \varpi  &   
\end{array}\right)}$},
\\
\mathcal{Y}_{\varepsilon _{1}} / U_{2} ^{\circ }  & = \scalebox{0.85}
{$\Set*{\left(\begin{array}{cccc}
 &   &    1 - au  &  a  \\
 & & - u &   1 \\[0.2em] 
\varpi &    u  \varpi  &   &   v \varpi \\[0.2em] 
 -a \varpi   & (1-au) \varpi &  v  \varpi     &  -a   v \varpi    
\end{array}\right)},$}  
&
\quad 
\mathcal{Y}_{\varepsilon _{3}} E  / E   &=  \scalebox{0.85}
{$\Set*{\left(\begin{array}{cccc}
- a  \varpi   &  ( 1 - au) \varpi    & a_{1}  + v  \varpi  -  u z  &  z - av \varpi  \\
 & &   1 - au  &  a   \\[0.2em] 
 &    &    -u  &  1    \\[0.2em] 
- \varpi  &  -  u  \varpi  &   a_{1} u   & -a_{1} - v \varpi  
\end{array}\right)}$}   
\end{alignat*} 
where $ a, a_{1} , y , u , v  \in [\kay] $ and $ z  = y + a a_{1}  $ in the $ \varepsilon_{3}$-cell.    Using  analogous arguments on these cells, one deduces that the $U_{2}$-orbits on 
\begin{itemize}
    \item $ \Yvar{0}E/ E $  are  represented by $  \varpi^{(1,1,1)}v_{1}   $, $  \varpi^{(1,0,1)} \tilde{\eta}_{0} $,  $ \varpi^{(1,1,1)} \tilde{\eta}_{0} $, 
    \item $ \Yvar{1}E/E $ are  represented by   $ \varpi^{(1,1,1)} $, $  \varpi^{(1,1,1)}\eta_{0} $, $ \varpi^{(1,1,0)}\eta_{0}  $ when $ a $ equals zero\footnote{these are obtained by applying $ v_{1} $ to the representatives of the $ \varepsilon_{1} $-cell} and $\varpi^{(1,0,0)}\tilde{\eta}_{k} $ for $ k \in [\kay] $ when $ a $ is non-zero,  
    \item   $ \Yvar{2}E/E $ are  represented by   $ \varpi^{(1,1,1)} $, $  \varpi^{(1,1,1)}\eta_{0} $, $ \varpi^{(1,1,0)}\eta_{0}  $ when $ a $ equals zero and $ \eta_{1} $ when $ a \neq 0 $ 
    \item      $ \Yvar{3}E/E $ are  represented by   $ \varpi^{(1,1,1)}  v_{1}   $, $  \varpi^{(1,0,1)}\tilde{\eta}_{0} $, $ \varpi^{(1,1,1)}\tilde{\eta}_{0}  $, $ \tilde{\eta}_{k} $ for $ k \in [\kay] $ when $ a_{1} $ equals zero and  $ \eta_{1} $ when $ a_{1} \neq 0 $. \\   
\end{itemize}   

\noindent (c) In this case, $ w = v_{0}  \rho^{2}_{2}   $ and the four  cells are    
\begin{alignat*}{4}  \mathcal{Y}_{\varepsilon _{0}}E/ E &  =  
\scalebox{0.85}{$\Set*{
\left(\begin{array}{cccc}
  &     &   1   &  \\ 
  &   \varpi  &  v  \varpi     & \\[0.2em] 
\varpi ^{ 2}  &   u \varpi^{2}    &   x \varpi   &  v \varpi^{2}  \\[0.2em] 
   & &   u  \varpi   &    - \varpi   
\end{array}\right)}$},& 
 \mathcal{Y}_{\varepsilon _{2} } E /  E   &=
 \scalebox{0.85}{$\Set*{
\left(\begin{array}{cccc}
 &   y  \varpi  & a + (u + vy)\varpi &  - \varpi   \\[0.2em] 
 &     &  1 &     \\
  &   \varpi  &   v \varpi    & \\[0.2em] 
 - \varpi^{2}  &  - ( a + u \varpi )  & - (x + av )  \varpi    &  - v \varpi ^{2}   
\end{array}\right)}$},
\\
\mathcal{Y}_{\varepsilon _{1}} / U_{2} ^{\circ }  & = \scalebox{0.85}
{$\Set*{\left(\begin{array}{cccc}
 &   \varpi    &   a + v \varpi  &   \\
 & & 1  &   \\[0.2em] 
 &     &   u \varpi  &   - \varpi \\[0.2em] 
\varpi^{2}   & u  \varpi ^{2 } &  ( x -  a u )   \varpi     &  ( a +  v   \varpi ) \varpi    
\end{array}\right)},$}  
&
\quad 
\mathcal{Y}_{\varepsilon _{3}} E  / E   &=  \scalebox{0.85}
{$\Set*{\left(\begin{array}{cccc}
 \varpi^{2}   &    (  a _{1} + u \varpi   )  \varpi    & z  &  ( a + v \varpi ) \varpi  \\
 & \varpi  & a  +  v  \varpi  &   \\[0.2em] 
 &    &   1   &    \\[0.2em] 
  &  &  -  a_{1} - u \varpi  &  \varpi  
\end{array}\right)}$}   
\end{alignat*}  
where $ a, a_{1}, x , y, u, v \in [\kay] $ and $ z $ denotes $  y + aa_{1} + ( x - au + a_{1} v ) \varpi  $  in the $ \varepsilon_{3} $-cell.   From  these,  one   deduces that the orbits of $U_{2} $ on 
\begin{itemize} 
\item $ \Yvar{0}E/E $ are represented by $ \varpi^{(2,2,1)   } $, $   \varpi^{(2,2,1)} \eta_{0} $,  
\item    $ \Yvar{1}E/E $ are represented by $ \varpi^{(2,1,2)   } v_{1}  $, $   \varpi^{(2,1,2)} \tilde{\eta}_{0} $ when $ a  = 0 $ and   $ \varpi^{(1,1,0)} $ when $ a \neq 0 $,
\item      $ \Yvar{2}E/E $ are represented by $ \varpi^{(2,1,2)   } v_{1}  $, $   \varpi^{(2,1,2)} \tilde{\eta}_{0} $ when $ a  = 0 $ and   $ \varpi^{(1,1,0)} $ when $ a \neq 0 $, 
\item    $ \Yvar{3}E/E $ are represented by $ \varpi^{(2,2,1)   } $, $   \varpi^{(2,2,1)} \eta_{0} $ when both $a,  a_{1} $ are $ 0 $ and  $ \eta_{1} $ when at least  one of $ a, a_{1} $ is non-zero.  \qedhere     
\end{itemize}  
\end{proof}

\begin{remark} The result above implies that (the reductions of) $ 1, v_{1} $ and $ \tilde{\eta}_{k}$ for $ k \in [\kay] $ form a complete system 
 of representatives for  $ \mathbf{H}_{2}(\kay) \backslash \mathbf{H}_{2}'(\kay) / \mathbf{H}_{2}(\kay) $.     
\end{remark} 

\begin{corollary}   \label{Eeasy} $ \mathscr{R}_{E}(\varpi^{(2,1,2)}) = \left \{   \varpi^{(2,2,1)} v_{1}  ,  \, \varpi^{(2,1,2)},  \,     \varpi^{(2,0,1)} \tilde{\eta}_{0}  , \,  \varpi^{(2,1,2)} \eta_{0} , \,   \varpi^{(1,0,1)} \eta_{1}         \right \} $ 
\end{corollary}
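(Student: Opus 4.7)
The plan is to deduce $\mathscr{R}_E(\varpi^{(2,1,2)})$ from $\mathscr{R}_E(\varpi^{(2,2,1)})$ (established in Proposition~\ref{Ehard}(c)) by conjugating by $v_1 \in H_2'$, the representative of the Weyl reflection $r_1 \colon (a_0, a_2, a_3) \mapsto (a_0, a_3, a_2)$.

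First I would verify that $v_1$ normalizes both $U_2$ and $E$: the former because $v_1$ implements the swap of the two $\GL_2$-factors of $\mathbf{H}_2 = \GL_2 \times_{\GG_m} \GL_2$, under which $U_2 = \GL_2(\Oscr_F) \times_{\Oscr_F^\times} \GL_2(\Oscr_F)$ is preserved; the latter because $v_1 \bmod \varpi$ lies in $\mathbf{H}_2(\kay)$, whose preimage from $U_2'$ defines $E$. Since $v_1 \varpi^{(2,2,1)} v_1 = \varpi^{(2,1,2)}$, the self-conjugation $X \mapsto v_1 X v_1$ then yields a bijection $\mathscr{R}_E(\varpi^{(2,2,1)}) \xrightarrow{\sim} \mathscr{R}_E(\varpi^{(2,1,2)})$ on double-coset spaces.

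I would then transport each of the five representatives. Direct $4 \times 4$ matrix computations give $v_1 \eta_0 v_1 = \eta_0$ and $v_1 \eta_1 v_1 = \eta_1$, from which four representatives map verbatim to four of the five claimed entries: $\varpi^{(2,2,1)} \mapsto \varpi^{(2,1,2)}$, $\varpi^{(2,1,2)} v_1 \mapsto \varpi^{(2,2,1)} v_1$, $\varpi^{(2,2,1)} \eta_0 \mapsto \varpi^{(2,1,2)} \eta_0$, and $\varpi^{(1,1,0)} \eta_1 \mapsto \varpi^{(1,0,1)} \eta_1$.

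The remaining case --- the image of $\varpi^{(2,1,2)} \tilde{\eta}_0$, namely $\varpi^{(2,2,1)}(v_1 \tilde{\eta}_0 v_1)$ --- is the main obstacle. By Lemma~\ref{cartanE}, this double coset admits a unique presentation as $U_2 \varpi^\mu \tilde{\eta}_0 E$ for some $\mu \in \Lambda_2$, and the task is to pin down $\mu = (2,0,1)$. My plan is to first compute the factorization $v_1 \tilde{\eta}_0 v_1 = \jmath_2(u_1, u_2) \cdot \tilde{\eta}_0$, where $u_1 = \bigl(\begin{smallmatrix} 0 & 1 \\ -1 & 1 \end{smallmatrix}\bigr)$ and $u_2 = \bigl(\begin{smallmatrix} 1 & 1 \\ -1 & 0 \end{smallmatrix}\bigr)$ both lie in $\SL_2(\Oscr_F)$, so that $\jmath_2(u_1, u_2) \in U_2$. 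I would then exploit the conjugation identity $\jmath_2(w_0, I) \cdot \varpi^{(2,2,1)} \cdot \jmath_2(w_0, I)^{-1} = \varpi^{(2,0,1)}$ for $w_0 = \bigl(\begin{smallmatrix} 0 & 1 \\ -1 & 0 \end{smallmatrix}\bigr)$, together with a Bruhat-style decomposition of $(u_1, u_2)$ isolating the Weyl component in the first factor, to rewrite $\varpi^{(2,2,1)} \jmath_2(u_1, u_2) \tilde{\eta}_0$ in the form $u \cdot \varpi^{(2,0,1)} \tilde{\eta}_0 \cdot e$ with $u \in U_2$ and $e \in E$. The only non-routine step is verifying that the residual unipotent part after this Bruhat extraction lies in $\tilde{\eta}_0 E \tilde{\eta}_0^{-1}$, which amounts to a short check on reductions modulo $\varpi$. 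Once $\mu = (2,0,1)$ is confirmed, pairwise distinctness of the five resulting representatives is immediate from Lemma~\ref{distinctH2E} (separating different right-factors among $\{1, v_1, \eta_0, \tilde{\eta}_0, \eta_1\}$) combined with Lemma~\ref{cartanE} (distinguishing $\mu$-values for a fixed right-factor).
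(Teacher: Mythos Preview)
Your overall strategy --- transport the computation of $\mathscr{R}_E(\varpi^{(2,2,1)})$ from Proposition~\ref{Ehard}(c) via $v_1$ --- matches the paper's proof, which right-multiplies by $v_1$ rather than conjugates; both rest on the fact that $v_1$ normalizes $E$. Your version is arguably cleaner since $v_1 \eta_0 v_1 = \eta_0$ and $v_1 \eta_1 v_1 = \eta_1$, so four of the five representatives transport verbatim. However, there are two errors.

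First, a minor one: $v_1 \bmod \varpi$ does \emph{not} lie in $\jmath_2(\Hb_2(\kay))$ --- its $(1,2)$-entry is $1$, whereas every matrix in the image of $\jmath_2$ has that entry zero. So $v_1 \notin E$. The correct justification that $v_1$ normalizes $E$ is the one you already used for $U_2$: conjugation by $v_1$ swaps the two $\GL_2$-factors of $\jmath_2(\Hb_2)$ (as you verify by computing $v_1 \jmath_2(g_1,g_2) v_1 = \jmath_2(g_2,g_1)$), and therefore preserves the reduction condition defining $E$.

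Second, and more seriously: your factorization $v_1 \tilde{\eta}_0 v_1 = \jmath_2(u_1,u_2)\,\tilde{\eta}_0$ is false. The quotient $(v_1 \tilde{\eta}_0 v_1)\tilde{\eta}_0^{-1}$ is the \emph{block-diagonal} matrix $\left(\begin{smallmatrix} u_1 & \\ & u_2 \end{smallmatrix}\right)$ with your $u_1,u_2$, and this lies in the Siegel Levi of $\mathrm{GSp}_4$, not in $\jmath_2(\Hb_2)$ (which has the ``checkerboard'' shape with zeros at $(1,2),(2,1),(3,4),(4,3)$). In particular it is not in $U_2$, so your planned Bruhat extraction in $U_2$ cannot proceed. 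What is true is that $v_1 \tilde{\eta}_0 v_1 \in U_2 \tilde{\eta}_0 E$ (this follows from the relation $U_2 v_1 \eta_0 E = U_2 \tilde{\eta}_0 E$ used in the proof of Proposition~\ref{Ehard}(a), together with $v_1 \eta_0 v_1 = \eta_0$ and normality), so a factorization $v_1 \tilde{\eta}_0 v_1 = u\,\tilde{\eta}_0\,e$ with $u \in U_2$, $e \in E$ does exist --- but you must find the correct $u$ before you can carry out the reduction to $\varpi^{(2,0,1)}\tilde{\eta}_0$.
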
 
\begin{proof} 
First note that  $ U ' \varpi^{(2,1,2)} = U' v_{0} v_{1}\rho_{2}^{2}   $.  Since $  v_{1} $ normalizes $ E  $ and $ \rho_{2}^{2} \in H' $ is central, $  U_{2} '  v_{0} v_{1} \rho_{2}^{2} E = U_{2} ' v_{0} \rho_{2}^{2} E v_{1}    $.  So the result follows by  Proposition \ref{R2scrhard} (c).  
\end{proof} 
Now we address the lifts of these cosets to $ H' $. Let $ S_{1}^{\pm} $ be as in    \S  \ref{projectionprelim} 
\begin{lemma} \label{Htau2easylifts}  Suppose $ \lambda $ is in $ \Lambda^{+}  $.  Then for any $ \chi \in S_{1}^{\pm} $, 
$ U \varpi^{\lambda} \chi H_{\tau_{2}}' \in \left \{ U \varpi^{\lambda} H_{\tau_{2}}' , U \varpi^{s_{0}(\lambda)}H_{\tau_{2}}'  \right \} $ and $ U \varpi^{r_{1}(\lambda)} \theta_{1}\chi H_{\tau_{2}}' $ $\in \left \{ U\varpi^{r_{1}(\lambda)}\theta_{1} H_{\tau_{2}}' , U \varpi^{s_{0}r_{1}(\lambda)} \theta_{1} H_{\tau_{2}}' \right \} $. 
\end{lemma}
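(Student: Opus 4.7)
The plan is to mimic the proof of Lemma \ref{liftsforsigma0}, using that Lemma \ref{Xscrembeds} gives $\mathscr{X}_{\tau} \subseteq H_{\tau_2}'$, so in particular $\varkappa^{\pm}(x) \in H_{\tau_2}'$ for all $x \in \Oscr_{F}$. Writing $\lambda = (a_{0},a_{1},a_{2},a_{3})$, the dominance hypothesis yields $2a_{2} \geq 2a_{3} \geq a_{0}$, and this inequality will be the only analytic input needed beyond the structural results of \S\ref{H'strucsec}.

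For the first claim, write $\chi = (\chi_{1}, 1)$ and split on whether $\chi_{1} \in S_{1}^{+}$ or $\chi_{1} \in S_{1}^{-}$. If $\chi_{1} \in S_{1}^{+}$, then $\chi_{1}$ is the single element $w_{\alpha} = \left(\begin{smallmatrix} & 1 \\ -1 & \end{smallmatrix}\right)$, so $\chi = (w_{\alpha}, 1) \in U$. Since conjugation by $(w_{\alpha}, 1)$ swaps positions $1$ and $4$ of the $\mathrm{GSp}_{6}$ matrix, it sends $\varpi^{\lambda}$ to $\varpi^{s_{0}(\lambda)}$, and consequently
\[
U \varpi^{\lambda} \chi H_{\tau_{2}}' = U (w_{\alpha}, 1) \varpi^{s_{0}(\lambda)} H_{\tau_{2}}' = U \varpi^{s_{0}(\lambda)} H_{\tau_{2}}'.
\]
If $\chi_{1} = \kappa_{1}^{-}(x) \in S_{1}^{-}$, I would multiply on the right by $\varkappa^{-}(-x) \in H_{\tau_{2}}'$, which by direct computation in (\ref{varkappamatrices}) gives $\chi \cdot \varkappa^{-}(-x) = (1, \kappa_{2}^{+}(-x))$, a root-group element for $\beta_{0} = 2e_{2} - e_{0}$ with parameter $-x$. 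Conjugation by $\varpi^{\lambda}$ rescales its nontrivial entry (in position $(2,5)$ of $\mathrm{GSp}_{6}$) to $-x \varpi^{2a_{2} - a_{0}}$, which is integral by dominance. Hence the conjugate lies in $U$ and $U \varpi^{\lambda} \chi H_{\tau_{2}}' = U \varpi^{\lambda} H_{\tau_{2}}'$.

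For the second claim, I would exploit two properties of $\theta_{1} = (1, v_{1})$: first, conjugation by $\theta_{1}$ sends $\varpi^{\lambda}$ to $\varpi^{r_{1}(\lambda)}$, since $v_{1}$ represents the reflection $r_{1}$ which agrees with $s_{2}$ as an automorphism of $\Lambda$; second, $\theta_{1}$ normalizes $U$, because its conjugation action on $H \cong \GL_{2} \times_{\GG_{m}} \GL_{2} \times_{\GG_{m}} \GL_{2}$ merely interchanges the last two $\GL_{2}$-factors, preserving the integrality condition defining $U$. Using these, together with the obvious fact that $\theta_{1}$ commutes with $\chi = (\chi_{1}, 1)$ (it lives in the $\GL_{2}$-component while $\theta_{1}$ lives in the $\mathrm{GSp}_{4}$-component), one gets
\[
U \varpi^{r_{1}(\lambda)} \theta_{1} \chi H_{\tau_{2}}' = \theta_{1} \cdot \bigl( U \varpi^{\lambda} \chi H_{\tau_{2}}' \bigr).
\]
Applying the first claim to the inner coset and using that $s_{0}$ commutes with $r_{1} = s_{2}$ (they act on disjoint coordinates of $\Lambda$) yields the two stated possibilities. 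There is no substantial obstacle here; the proof reduces to the matrix-level check that $2a_{2} \geq a_{0}$ suffices to absorb the unit entry of $\kappa_{2}^{+}(-x)$ into $U$ after conjugation by $\varpi^{\lambda}$.
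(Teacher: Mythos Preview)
Your proof is correct and follows essentially the same approach as the paper's, which simply says the first part ``is proved in the same manner as Lemma~\ref{liftsforsigma0}'' and then deduces the second part from the fact that $\theta_{1}=\sigma_{1}=w_{2}$ normalizes $U$, commutes with $\chi$ and with $w_{\alpha_{0}}$, and satisfies $w_{2}\varpi^{\lambda}=\varpi^{r_{1}(\lambda)}w_{2}$. Your explicit observation that dominance forces $2a_{2}\ge a_{0}$, so that only the easy case of the Lemma~\ref{liftsforsigma0} argument is needed, is a nice simplification but not a different method.
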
 
\begin{proof}  This first part is  proved in the same manner as Lemma \ref{liftsforsigma0}. Since $ \theta_{1} = \sigma_{1} = w_{2}  $ normalizes $ U $,  commutes with $ \chi $, $ w_{\alpha_{0}} $   and $ w_{2} \varpi^{\lambda} =   \varpi^{r_{1}(\lambda)} w_{2}   $,  the second claim also follows easily.     
\end{proof} 
\begin{lemma}  \label{liftsforU'Htau2'}  Let $ \lambda \in \Lambda^{\alpha_{0} > 0 } $ and $ \chi =  ( \chi_{1}, 1 ) $ where $ \chi_{1} \in S^{\pm}_{1} $.    
\begin{enumerate} [label =  \normalfont (\alph*)]  

\item  Suppose $ (\beta_{1} + \beta_{2})(\lambda) \geq 0 $. Then  $$ 
U '  \varpi^{\lambda}\theta_{2} \chi  H_{\tau_{2}}' =  \begin{cases}  U' \varpi^{\lambda} H_{\tau_{2}}' & \text{ if }  \,  \chi_{1} = 1 \,  \text{ or if } \,  \, \chi_{1} \in S^{-}_{1} \setminus \left \{ 1 \right \}\!, \, \beta_{0}(\lambda) \geq 0  \\ 
U  ' \varpi^{s_{0}(\lambda)   }     
H_{\tau_{2}}'    &  \text{ if }  \chi_{1} \in S^{-}_{1}  \setminus \left \{ 1 \right \}\!, \,   \beta_{0}(\lambda) < 0 \,  \text{ or if }  \, \,  \chi_{1} \in S_{1}^{+} 
\end{cases} $$ 
\item Suppose  $ \beta_{1} ( \lambda) \leq 
0 $. Then $$ U ' \varpi^{\lambda} \tilde{\theta}_{0}  \chi   H_{\tau_{2}}' = \begin{cases} U '  \varpi^{r_{1}(\lambda)}  H_{\tau_{2}}  '   & \text { if }  \, \chi_{1} = 1 \, \text{ or if } \, \, \chi_{1} \in S_{1}^{-}\setminus \left \{1 \right \}\!, \,   \beta_{2}(\lambda) \geq 0, 
\\  U ' \varpi^{s_{0}r_{1}(\lambda) } H_{\tau_{2}}' & \text{ if }  \,   \chi_{1} \in S_{1}^{-}\setminus \left \{ 1 \right \}\!,  \,   \beta_{2}(\lambda) < 0   \text{ or  if }  \chi_{1} \in S^{+}_{1}   \end{cases}  $$ 
\item  Suppose $  \beta_{1}( \lambda) = 0 $.  Then for any $ k \in [\kay] $, $$  U'  \varpi^{\lambda} \tilde{\theta}_{k} \chi H_{\tau_{2}}'  =     \begin{cases}     U' \varpi^{\lambda}  H_{\tau_{2}}'  & \text{ if } \,  \chi_{1} = 1 \text{ or if } \, \, \chi_{1} \in S_{1}^{-} \setminus \left \{ 1 \right \}\!, \, \beta_{0}(\lambda) \geq 0    \\  U'  \varpi^{s_{0}(\lambda)} H_{\tau_{2}}'  & \text{ if }  \chi_{1} \in S^{-}_{1}  \setminus  \! \left \{ 1 \right \}, \,  \beta_{0}(\lambda) < 0 \text{ or if } \, \, \chi_{1} \in S^{+}  \end{cases} .     $$  
 \item    Suppose $ (\beta_{1} + \beta_{2})(\lambda) \geq 1 $. Then  $$ 
U '  \varpi^{\lambda}\theta_{3} \chi  H_{\tau_{2}}' =  \begin{cases}  U' \varpi^{\lambda 
} H_{\tau_{2}}' & \text{ if }  \,  \chi_{1} = 1 \,  \text{ or if } \,  \, \chi_{1} \in S^{-}_{1} \setminus \left \{ 1 \right \}\!, \, \beta_{0}(\lambda) \geq  0 \\ 
  U  ' \varpi^{s_{0}(\lambda 
  )   }     
 H_{\tau_{2}}'    &  \text{ if }  \chi_{1} \in S^{-}_{1}  \setminus \left \{ 1 \right \}\!, \,   \beta_{0}(\lambda) <  0 \,  \text{ or if }  \, \,  \chi_{1} \in S_{1}^{+} 
 \end{cases} $$ 
\end{enumerate}  

\end{lemma}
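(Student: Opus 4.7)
The plan is to prove each of (a)–(d) by producing explicit auxiliary elements in $U'$ and in $H_{\tau_2}'$ that either absorb the twist $\chi$ completely or convert it into the long Weyl reflection $s_0$, in direct analogy with the proof of Lemma \ref{liftsforsigma2U'}. The structural input is Lemma \ref{structureofHtau2}, which says $H_{\tau_2}'\subset U'$ and that $\pr_2'(H_{\tau_2}')$ is the endohoric subgroup $\pr_2'(U^{\ddagger})$; together with Lemma \ref{Xscrembeds}, this gives a supply of elements inside $H_{\tau_2}'$ with which to manipulate cosets.

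For part (a), I would introduce unipotent elements $\nu^{\pm}=(1,\nu_2^{\pm})$ in $H'$, taken so that $\varpi^{\lambda}\nu^{+}\varpi^{-\lambda}\in U'$ exactly when $(\beta_1+\beta_2)(\lambda)\ge 0$, and so that $\nu^{+}\theta_2$ reduces to the identity on the $\mathrm{GSp}_4$-component. This lets me rewrite $U'\varpi^{\lambda}\theta_2\chi H_{\tau_2}'=U'\varpi^{\lambda}\chi H_{\tau_2}'$, after which I can run the same matrix analysis as in Lemma \ref{liftsforsigma0}, using that $\varkappa^{\pm}(x)\in H_{\tau_2}'$ (Notation \ref{Xscrnot} and Lemma \ref{Xscrembeds}) to absorb $\chi_1\in S_1^{-}\setminus\{1\}$ when $\beta_0(\lambda)\ge 0$, and otherwise produce the $s_0$-reflection. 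For $\chi_1\in S_1^{+}$, a direct Bruhat-style column operation returns $s_0(\lambda)$.

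For (b), I would use the identity $\tilde{\theta}_0=w_2w_3\theta_2w_3$ established in the text before Proposition \ref{R2scrhard}; since $w_2w_3$ normalizes $U'$ and $H_{\tau_2}'$ (up to the central correction $t_1$), conjugating reduces (b) to (a) applied to $r_1r_2(\lambda)$, under which $\beta_1(\lambda)\le 0$ becomes $(\beta_1+\beta_2)(r_1r_2(\lambda))\ge 0$ and $\beta_2(\lambda)\ge 0$ becomes the $\beta_0$-condition needed in (a). Part (c) will be handled by observing that when $\beta_1(\lambda)=0$, conjugation by $\varpi^{\lambda}$ preserves the $r_1$-root subgroup, which is precisely where $\tilde{\eta}_k$ fails to lie in $\mathbf{H}_2(\Oscr_F)$; a finite calculation (essentially the row/column reductions already carried out in the proof of Proposition \ref{Ehard}(a) for the $\varepsilon_1$-cell) shows that the parameter $k$ can be moved into $U$ and $H_{\tau_2}'$, leaving a coset indexed only by $\lambda$. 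Part (d) reduces to the analysis of $\sigma_3$ already handled in Lemma \ref{liftsforsigma3}: since $\theta_3=\varpi^{-\lambda_\circ}\sigma_3$, the hypothesis $(\beta_1+\beta_2)(\lambda)\ge 1$ is exactly the translated form of the hypothesis of Lemma \ref{liftsforsigma3} on $\lambda+\lambda_\circ$.

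The main obstacle will be the bookkeeping in (a), where the correct conjugating element depends simultaneously on the signs of $\beta_0(\lambda)$ and $(\beta_1+\beta_2)(\lambda)$, and one must check in each regime that both the absorbing element on the left lies in $U'$ and the absorbing element on the right lies in $H_{\tau_2}'$ (the latter requiring the sharper integrality encoded by Lemma \ref{structureofHtau2} rather than just by $U'$). A secondary subtlety is part (c): one must confirm that, despite the $q$-fold family $\{\tilde{\theta}_k\}_{k\in[\kay]}$, the resulting class $U'\varpi^{\lambda}\tilde{\theta}_k\chi H_{\tau_2}'$ is independent of $k$, which is precisely what the vanishing of $\beta_1(\lambda)$ ensures via commutation with the $r_1$-root group.
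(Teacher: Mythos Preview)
Your overall strategy is sound and on the same track as the paper, but you are working much harder than necessary. The paper's proof is essentially a one-line observation: in each of (a), (c), (d) the stated hypothesis is \emph{precisely} the condition that $\varpi^{\lambda}\gamma\varpi^{-\lambda}\in U'$ (for $\gamma=\theta_2,\tilde{\theta}_k,\theta_3$ respectively), so $U'\varpi^{\lambda}\gamma\chi H_{\tau_2}'=U'\varpi^{\lambda}\chi H_{\tau_2}'$ outright, and in (b) one has $\tilde{\eta}_0 v_1^{-1}$ lying in the $-\beta_1$ root group, so $\beta_1(\lambda)\le 0$ gives $U'\varpi^{\lambda}\tilde{\theta}_0=U'\varpi^{r_1(\lambda)}$. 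After this absorption, the treatment of $\chi$ is identical to that in Lemma~\ref{liftsforsigma0}, using only $\varkappa^{-}(-x)\in H_{\tau_2}'$. Your auxiliary $\nu^{\pm}$ in (a), the conjugation detour in (b), and the cell-reduction argument in (c) are all correct but redundant: there is no ``simultaneous bookkeeping'' obstacle because the $\gamma$-absorption and the $\chi$-analysis are completely decoupled once you notice the hypothesis kills $\gamma$ on the left.

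Part (d), however, contains a genuine gap. Lemma~\ref{liftsforsigma3} concerns $H_{\tau_1}'$, not $H_{\tau_2}'$, so you cannot reduce to it; these are different compact opens with different coset structures. Moreover, your translation claim is numerically off: the hypotheses of Lemma~\ref{liftsforsigma3} applied to $\lambda+\lambda_\circ$ yield $(\beta_1+\beta_2)(\lambda)\ge -1$, not $\ge 1$. The correct argument is the same direct absorption: $\theta_3=(1,\eta_0\cdot\varpi^{-(\beta_1+\beta_2)^{\vee}})$ has its only nontrivial entries in the $(\beta_1+\beta_2)$ root group with an extra factor of $\varpi^{-1}$, so $\varpi^{\lambda}\theta_3\varpi^{-\lambda}\in U'$ exactly when $(\beta_1+\beta_2)(\lambda)\ge 1$.
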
    
\begin{proof} In each of the parts (a), (c) and (d), the assumption made implies  the equality $ U' \varpi^{\lambda} \gamma = U' \varpi^{\lambda} $ where $ \gamma $ denotes $ \sigma_{2}, \sigma^{k} , \sigma_{3} $. In part (b), the assumption implies that $  U' \varpi^{\lambda} \sigma^{0} = U'\varpi^{r_{1}(\lambda)} $. Using this and the fact that the matrix $ \varkappa^{-}(-x) $ in (\ref{varkappamatrices}) lies in $ H_{\tau_{2}}' $ for $ x \in \Oscr_{F} $, one easily deduces each of the claims. 
\end{proof}

\begin{proof}[Proof of  Proposition \ref{R2scrhard}]  For $ \mathscr{R}_{2}(1) $ (resp., $ \mathscr{R}_{2}(\varpi^{(4,2,2,3)}) $), the result is obtained by applying Lemma \ref{pVbijection} to  Proposition \ref{Ehard} (resp., Corollary \ref{Eeasy}). The other two cases are handled by studying the fibers of the projection $$ \pr_{\mu} : \mathscr{R}_{2}(\varpi^{\mu}) \to  \mathscr{R}_{E}(\varpi^{\pr_{2}(\mu)}) $$ 
using Corollary \ref{fibersofpV}. That is, if $ \mu \in \left \{ (3,2,1,2), (4,3,1,2) \right \} $ and  $ \varpi^{(a,c,d)} \gamma $ lies in $ \mathscr{R}_{E}(\varpi^{\pr_{2}(\mu)}) $ for some $  \gamma \in \{ 1, v_{1}, \eta_{0},   \varpi^{-(1,1,1)} \eta_{1},  \tilde{  \eta}_{k} \, | \,    k \in  [\kay]^{\circ}  \} $, the fiber $ \pr_{\mu }$   above $ \varpi^{(a,c,d)} \gamma $ consists of all elements of the form $ \varpi^{\lambda}\hat{\gamma} \chi  $  where  $ \hat{\gamma} \in \{1, \theta_{1}, \theta_{2},  \theta_{3},  \tilde{\theta}_{k} \, | \, k \in [\kay]^{\circ}   \} $ satisfies $ \pr_{2}(\hat{\gamma}) = \gamma $, the cocharacter  $ \lambda = (a,b,c,d) \in \Lambda^{\alpha_{0} > 0} $ is such that $  b  = \pr_{1}'(\varpi^{\mu}) $ and $ \chi \in S_{\alpha_{0}(\mu)}^{\pm}   $ is arbitrary.  Note that $ \alpha_{0}(\mu)   = 1 $ for both $ \mu $.    \\

\noindent $ \bullet $ $ \mu = (3,2,1,2) $ \\[0.3em]   The projection is  $ \mathscr{R}_{E}(\varpi^{(3,1,2)} ) = \mathscr{R}_{E}(\varpi^{(3,2,2)})  = \varpi^{(2,1,1)}  \mathscr{R}_{E}(\varpi^{(1,1,1)}) $ which by Proposition \ref{Ehard}(b),  equals    $$ \left \{ \varpi^{(3,2,2)},\,  \varpi^{(3,2,2)}v_{1},\, \varpi^{(3,1,2)}\eta_{0},\, \varpi^{(3,2,1)}\eta_{0},\,  \varpi^{(3,2,2)}\eta_{0},\,
\varpi^{(2,1,1) } \eta_{1}, \,  \varpi^{(3, 2, 2)}\tilde{\eta}_{0},\,  \varpi^{(3,1,2)}\tilde{\eta}_{0},\,  \varpi^{(3,1,1)}\tilde{\eta}_{k}  \, | \,     k \in [\kay]^{\circ} \right \}  $$ 
By Lemma  \ref{Htau2easylifts} and Proposition \ref{cartanhtaui}, the fibers above $ \varpi^{(3,2,2)} $ and $ \varpi^{(3,2,2)}v_{1} $ are singletons. Since $ \varpi^{(3,2,1,2)}$, $ 
 \varpi^{(3,2,2,1)}\sigma_{1} $ clearly belong to $ \mathscr{R}(\varpi^{\mu})$, we choose these as the representative elements above the corresponding fibers.    For the remaining elements of $ \mathscr{R}_{E}(\varpi^{(3,2,2)})$, one deduces from Lemma \ref{liftsforU'Htau2'} that $ \chi $ must be either identity or in $ S_{1}^{+}  $ in each case (but not both), and the corresponding unique representative in the fiber is  easily obtained.    \\
 
\noindent $ \bullet $ $ \mu = (4,3,1,2) $ \\[0.3em]  
The projection $   \varpi^{(2,1,1)} \cdot   \mathscr{R}_{E}(\varpi^{(2,2,1)}) = \left \{ \varpi^{(4,3,2)},\, 
\varpi^{(4,2,3)}v_{1},\,  
\varpi^{(4,3,2)}\eta_{0},\, \varpi^{(4,2,3)}\tilde{\eta}_{0},\,
\varpi^{(3,2,1)}\eta_{1}
\right \} $. Again, we decide the lifts for $ \varpi^{(4,3,2)} $, $ \varpi^{(4,2,3)}v_{1} $ using Lemma \ref{Htau2easylifts} and use Lemma \ref{liftsforU'Htau2'} to show that $ \chi \in S^{-}_{1} $ is the only possible for choice  for each of the remaining representatives in $ \mathscr{R}_{E}(\varpi^{(4,3,2)} )  $.  
\end{proof} 
 \section{Convolutions}

\label{convolutionsection}

Recall that $ X  $ denotes the topological  vector   space  $ \mathrm{Mat}_{2 \times 1}(F) $ and $ \mathcal{S} =  \mathcal{S}_{\mathcal{O}, X} $ denotes the set of all locally constant compactly supported $ \mathcal{O}$-valued  functions on $ X$.  The space $ X $  admits a continuous  right action of $ H_{1} =  \GL_{2}(F) $ via left matrix multiplication by inverse and we extend this action to $ H $ via  $ \pr_{1} : H \to H_{1}  $. These induce left actions of $ H_{1} $ and $ H $ on $ \mathcal{S} $.  If $ \mathfrak{p}  $ is an ideal of $ \mathcal{O} $ and $ \xi_{1}, \xi_{2} \in \mathcal{S}  $, we write $ \xi_{1} \equiv \xi_{2} \pmod { \mathfrak{p} }  $ if $ \xi_{1}(x) - \xi_{2}(x) \in \mathfrak{p} $ for all $ x \in X $.    If $ V $ is a compact open subgroup of $ H_{1} $ or $ H $, we let $ \mathcal{S}(V) $ denote the space of $ V $-invariants of $ \mathcal{S} $. 
If $ m, n $ are integers, we let 
\begin{align*} X_{m,n } &  =  \left \{  \begin{psmallmatrix} x \\ y \end{psmallmatrix} \, | \, x \in \varpi^{m} \Oscr_{F} ,  \,  y \in  \varpi^{n}  \Oscr_{F} \right \}
\end{align*} 
which are  compact open subset of $ X $. We denote  $$ \phi_{(m,n)} : = \ch(X_{m,n} ) , \quad \quad  \bar{\phi}_{(m,n)} = \phi_{(-m,-n)}. $$   We let $ z_{0} $ denote the inverse of the central element $ \rho_{1}^{2} =  \mathrm{diag}(\varpi, \varpi) \in   H_{1} $. 
For $ n $ a positive integer, we let $ U_{\varpi^{n}} $ denote the subgroup of all elements in $ U $  whose reduction modulo $ \varpi^{n} $ is identity in $ \mathbf{H}(\kay/\varpi^{n} ) $. For $ \lambda \in \Lambda $, we define the \emph{depth} of $ \lambda $ to be $ \mathrm{dep}(\lambda) : = \max \{   \pm \alpha_{0}(\lambda) , \pm \beta_{0}(\lambda) ,  \pm \beta_{2}(\lambda) \}  $. Then for $ \lambda $ of depth at most $ n $,   $  \varpi^{-\lambda}  U_{\varpi^{n}}  \varpi^{\lambda} 
 \subset U $.     

\begin{notation}  \label{notationh}    
We will often write $ h  = (h_{1}, h_{2}, h_{3}) \in  \GL_{2}(F) \times_{F^{\times}  }  \GL_{2}(F) \times_{F^{\times}}  \GL_{2}(F)  \subset \mathrm{GSp}_{6}(F) $ as  $$ h  =   \left (  \begin{smallmatrix} a  & & &  b  \\  & a_{1} & & & b_{1} \\
 & & a_{2} & & & b_{2} \\
  c & &  & d  \\ &  c_{1} & &  &  d_{1} \\
  & &  c_{2}  & && d_{2} \end{smallmatrix}   \right  )   \quad  \text { or }  \quad     h = \left (  \left ( \begin{matrix} a  & b \\ c & d  \end{matrix}  \right )  ,   \left  (   \begin{matrix} a_{1} & b_{1} \\ c_{1} & d_{1}    \end{matrix}   
     \right )  ,     \left  (   \begin{matrix} a_{2} & b_{2} \\ c_{2} & d_{2}    \end{matrix}   
     \right )   \right )  .   $$  
If we wish to refer to another element in $ H $, we will write $ h '$ and all its   entries will be 
 adorned with a prime.  Given $ a, b \in \ZZ $, we write $ \varpi^{(a,b)} $ to denote $ \mathrm{diag}(\varpi^{b} , \varpi^{a-b }  ) \in  \GL_{2}(F)  $.  
\end{notation}

\subsection{Action of $\GL_{2}$}  It will be useful to record a few general results on convolution of Hecke operators of $ \GL_{2}(F) $ with $ \phi $. Let $  \mathcal{T} _{u,v} $ denote the double coset Hecke operator $ [U_{1} \, \mathrm{diag}(\varpi^{u}, \varpi^{v} ) \,   
U_{1}] $.   
It acts on $ \mathcal{S}(U_{1})  $ and in particular, on $ \phi \in  
\mathcal{S}(U_{1}) $. It is clear that $  \mathcal{T}_{u,v}(\phi) = \mathcal{T}_{v,u}(\phi)$ and $ \mathcal{T}_{u,u}(\phi) = \phi_{(u,u)} $.   
\begin{lemma}    \label{Tablemma} $ \mathcal{T}_{u,v}(\phi) =  \phi_{(v,v)} + q^{u-v}\phi_{(u,u)} + \sum_{ i=1}^{u-v-1} (q^{i} - q^{i-1}) \phi_{(i+v, i+v)} $ when $  u > v $.  Here the sum in the expression  is zero if $ u - v  = 1 $. 
\end{lemma}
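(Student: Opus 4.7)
The function $\mathcal{T}_{u,v}(\phi)$ is $U_1$-invariant by construction, so it is completely determined by its values on the $U_1$-orbits in $X$. These orbits are exactly the ``annuli'' $S_n := \{ w = (a,b)^t \in X : \min(\ord(a), \ord(b)) = n \}$ for $n \in \ZZ$. Moreover, since every $\gamma \in U_1\,\mathrm{diag}(\varpi^u, \varpi^v)\,U_1$ carries $X_{0,0}$ into $X_{v,v}$, the support of $\mathcal{T}_{u,v}(\phi)$ is contained in $X_{v,v}$, and hence $\mathcal{T}_{u,v}(\phi)$ vanishes on $S_n$ for $n < v$. My plan is to fix the convenient representative $w_n := (\varpi^n, 0)^t \in S_n$ for each $n \geq v$, compute $c_n := \mathcal{T}_{u,v}(\phi)(w_n)$ directly via an explicit set of coset representatives for the double coset, and then match the resulting step function against the right-hand side of the lemma.

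For the representatives I use upper-triangular normal forms: writing $n_0 := u - v$, take
\[
\gamma_{i',b} \;=\; \varpi^v \begin{pmatrix} \varpi^{i'} & b \\ 0 & \varpi^{n_0 - i'} \end{pmatrix},
\]
where $0 \leq i' \leq n_0$ and $b$ ranges over $\Oscr_F / \varpi^{i'}\Oscr_F$. To cut down to the correct double coset (i.e., to get elementary divisors $(\varpi^v, \varpi^u)$) one must impose $\ord(b) = 0$ exactly in the intermediate range $0 < i' < n_0$; the endpoints $i' = 0$ and $i' = n_0$ impose no constraint on $b$ beyond its range. Counting these yields $1 + (q^{n_0-1} - 1) + q^{n_0} = q^{n_0} + q^{n_0-1}$, which is the classical cardinality of $U_1\,\mathrm{diag}(\varpi^u,\varpi^v)\,U_1/U_1$. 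A direct matrix computation gives $\gamma_{i',b}^{-1} w_n = (\varpi^{n-v-i'},0)^t$, which lies in $X_{0,0}$ precisely when $i' \leq n - v$; crucially, this condition does not involve $b$.

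A short case analysis now computes $c_n$. If $n = v$, only $(i',b) = (0,0)$ qualifies, so $c_v = 1$. If $v < n < u$, then $n - v < n_0$ and every contributing $i'$ lies in the intermediate range, so $c_n = 1 + \sum_{i'=1}^{n-v}(q^{i'} - q^{i'-1}) = q^{n-v}$ by telescoping. If $n \geq u$, every representative contributes, giving $c_n = 1 + \sum_{i'=1}^{n_0-1}(q^{i'} - q^{i'-1}) + q^{n_0} = q^{n_0-1} + q^{n_0}$. Evaluating the proposed right-hand side at $w_n$ using the elementary fact $\phi_{(m,m)}(w_n) = 1$ iff $n \geq m$ reproduces exactly these three values, by the same telescoping identity. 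The only real subtlety is the bookkeeping at the boundary indices $i' = 0$ and $i' = n_0$, where the constraint on $b$ differs from the interior case; once this is tracked correctly, the lemma follows immediately.
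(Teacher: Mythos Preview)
Your proof is correct and follows essentially the same approach as the paper: both compute $\mathcal{T}_{u,v}(\phi)$ pointwise on the $U_1$-orbits in $X$ by summing over an explicit system of right coset representatives. The only cosmetic difference is the choice of representatives: the paper first reduces to $v=0$ by a central translation and then uses the two-piece Iwahori-style decomposition
\[
\bigsqcup_{\kappa \in [\kay_u]} \begin{pmatrix} \varpi^u & \kappa \\ & 1 \end{pmatrix} U_1 \;\sqcup\; \bigsqcup_{\kappa \in [\kay_{u-1}]} \begin{pmatrix} 1 & \\ \varpi\kappa & \varpi^u \end{pmatrix} U_1,
\]
whereas you use the Hermite-normal-form parametrization by $(i',b)$ with the unit constraint on $b$ in the interior range. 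Your count $q^{n_0}+q^{n_0-1}$ and the pairwise distinctness of the $\gamma_{i',b}U_1$ confirm that your system is complete, so either choice leads to the same telescoping computation of the step values $c_n$.
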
 
\begin{proof} Let $ \xi = \mathcal{T}_{u,v}(\phi) =  \sum_{ \gamma } \gamma \cdot \phi $ where $ \gamma $ runs over representatives of of $ U_{1} \mathrm{diag}(\varpi^{u} , \varpi^{v} )  U_{1} / U _{1} $.  Translating everything by $ (z_{0})^{v} $, it suffices to establish our formula  when  $ v = 0 $. Then $ u \geq 1 $ and \[ U_{1}  
\begin{psmallmatrix} \varpi^{u} \\  & 1 \end{psmallmatrix}U_{1} / U_{1} =  \bigsqcup_{\kappa \in [\kay_{u}] } \begin{psmallmatrix} \varpi^{u} & \kappa \\ & 1 \end{psmallmatrix}  U_{1} \sqcup \bigsqcup _{ \kappa \in [\kay_{u-1}] }  \begin{psmallmatrix} 1 \\ \varpi \kappa   & \varpi^{u}   \end{psmallmatrix} U_{1}  . \] 
From the  decomposition above, we see that  $ \xi(\vec{v}) = q^{i} $ whenever  $  v  \in ( X_{i,i} \setminus  X_{i,i+1} )  \cup ( X_{i,i+1} \setminus X_{i+1,i+1}  )  = X_{i,i} \setminus X_{i+1,i+1} $ for all $ i \in \left \{ 0, 1 , \ldots, u  - 1 \right \} $  
and that  $ \xi(\vec{x} ) = q^{u} + q^{u-1} $ when $  \vec{x}  \in X_{u,u} $.  
\end{proof} 

Let $ \mathcal{T}_{u,v,*} : = \mathcal{T}_{-u,-v} =  [U_{1} \mathrm{diag}(\varpi^{u}, \varpi^{v})U_{1}]_{*} $  denote the dual (or transpose)  of $ \mathcal{T}_{u,v} $.   
\begin{corollary} \label{Tab*}      If $ u \neq v  $, then $ \mathcal{T}_{u,v,*} (\phi) \equiv (z_{0}^{u} + z_{v}^{v}) \cdot \phi  \pmod{q - 1} $ and $ \mathcal{T}_{u,u,*}(\phi) = z_{0}^{u}  \cdot   \phi  $.     
\end{corollary}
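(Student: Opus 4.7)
The plan is to derive this directly from Lemma~\ref{Tablemma} by symmetry of the operator $\mathcal{T}_{u,v}$ and by reducing the coefficients modulo $q-1$. Observe first that by definition $\mathcal{T}_{u,v,*} = \mathcal{T}_{-u,-v}$, and since $\mathcal{T}_{a,b}(\phi) = \mathcal{T}_{b,a}(\phi)$, we may assume without loss of generality that $u > v$, in which case $-v > -u$, and Lemma~\ref{Tablemma} applies to the pair $(-v, -u)$.

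For the case $u = v$, the double coset $U_{1}\,\mathrm{diag}(\varpi^{-u},\varpi^{-u})\,U_{1}$ reduces to a single left coset because $\mathrm{diag}(\varpi^{-u},\varpi^{-u})$ is central. Hence $\mathcal{T}_{u,u,*}(\phi)$ is simply the translate of $\phi$ by $\mathrm{diag}(\varpi^{-u},\varpi^{-u}) = z_0^{u}$, which is $\phi_{(-u,-u)}$. Unwinding the definition of the left action of $H$ on $\mathcal{S}$ (so $z_0 \cdot \phi(\vec{v}) = \phi(\mathrm{diag}(\varpi,\varpi)\vec{v})$), one checks that $\phi_{(-u,-u)} = z_0^{u}\cdot\phi$, which gives the first identity.

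For $u > v$, I will substitute $(-v, -u)$ for $(u,v)$ in Lemma~\ref{Tablemma} to obtain
\[
\mathcal{T}_{u,v,*}(\phi)
= \phi_{(-u,-u)} + q^{u-v}\,\phi_{(-v,-v)} + \sum_{i=1}^{u-v-1}(q^{i}-q^{i-1})\,\phi_{(i-u,\,i-u)}.
\]
Reducing modulo $q-1$, every coefficient $q^{i}-q^{i-1}$ vanishes and $q^{u-v} \equiv 1$, so the sum collapses to $\phi_{(-u,-u)} + \phi_{(-v,-v)}$. Identifying $\phi_{(-u,-u)} = z_0^{u}\cdot\phi$ and $\phi_{(-v,-v)} = z_0^{v}\cdot\phi$ as above yields the congruence in the statement.

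There is no real obstacle here; this is a mechanical corollary of Lemma~\ref{Tablemma}. The only point requiring a moment's care is the identification of the central translation $z_0^{u}\cdot\phi$ with $\phi_{(-u,-u)}$, which is just the definition of the $H$-action on $\mathcal{S}$ pulled back through $\pr_{1}$.
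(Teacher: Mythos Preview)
Your proof is correct and takes exactly the approach the paper intends: the paper's own proof is the single line ``This is clear by Lemma~\ref{Tablemma},'' and your argument is a faithful unpacking of that line---apply the symmetry $\mathcal{T}_{u,v}(\phi)=\mathcal{T}_{v,u}(\phi)$, plug $(-v,-u)$ into Lemma~\ref{Tablemma}, and reduce modulo $q-1$, using $z_{0}^{u}\cdot\phi=\phi_{(-u,-u)}$. There is nothing to add.
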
 
\begin{proof}  This is clear by Lemma \ref{Tablemma}. 
\end{proof} 
Let $I_{1}^{+} $ denote the Iwahori subgroup of $ U_{1} = \GL_{2}(\Oscr_{F}) $ of upper triangular matrices and $ I_{1}^{-} $ the Iwahori subgroup of lower triangular matrices. For $ u,  v  $ integers, let $ \mathcal{I}_{u,v}^{\pm} $ denote the double coset Hecke operator $  [I_{1}^{\pm} \mathrm{diag}(\varpi^{u},\varpi^{v}) U_{1} ] $.

\begin{lemma}   \label{Iuvlemma}     Let $ u, v $ be integers. Then 
$$  \mathcal{I}_{u,v}^{+} (\phi)  = \begin{cases}  \displaystyle{q^{u-v} \phi_{(u,u)}  +  \sum_{i=0}^{u-v-1} 
  q^{i}\rho_{1}^{2(i+v)} \cdot ( \phi - \phi_{(0,1)} ) } 
  & \text{ if } \, \,  u \geq  v  \\[0.3em]   

q^{v-u-1} \phi_{(v-1, v)}  +  \displaystyle{ \sum_{i=0}^{v-u-2} 
  q^{i}  \rho_{1}^{2(i+u)} \cdot ( \phi_{(0,1)} - \phi_{(1,1)}) } 
  & \text { if } \, \,    u <  v
  \end{cases}  
  $$
and 
$$  \mathcal{I}_{u,v}^{-} (\phi)  = \begin{cases} \displaystyle{ q^{v-u} \phi_{(v,v)}  +  \sum_{i=0}^{v-u-1} 
  q^{i}  \rho_{1}^{2(i+u)} \cdot ( \phi - \phi_{(1,0)})} 
  & \text{ if }  u \leq  v  \\[0.3em]   

\displaystyle{ q^{u-v-1} \phi_{(u,u-1)}  +  \sum_{i=0}^{u-v-2} q^{i}  \rho_{1}^{2(i+v) }  \cdot  (\phi_{(1,0)} - \phi_{(1,1)})} 
  & \text { if }  u >   v  
  \end{cases}  
  $$
  where $ \rho _{1}^{2}  =   z_{0}^{-1}   =   \begin{psmallmatrix} \varpi &  \\ & \varpi \end{psmallmatrix} $.

\end{lemma}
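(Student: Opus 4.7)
\smallskip

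The plan is to compute $\mathcal{I}^{\pm}_{u,v}(\phi)$ by direct decomposition of the double coset $I_1^{\pm} g U_1$ into right $U_1$-cosets (where $g = \mathrm{diag}(\varpi^u,\varpi^v)$) and evaluating the action of each representative on $\phi$. First I would absorb the central shift: since $\rho_1^2 = \mathrm{diag}(\varpi,\varpi)$ is central in $H_1$ and $\rho_1^{2m} \cdot \phi = \phi_{(m,m)}$, it suffices to establish the formulas in the form written. The coset representatives are computed via the bijection $I_1^{\pm} g U_1 / U_1 \leftrightarrow I_1^{\pm} / (I_1^{\pm} \cap g U_1 g^{-1})$: a direct computation inside $\GL_2(\Oscr_F)$ shows that for $\mathcal{I}^+_{u,v}$ the quotient has size $q^{u-v}$ when $u \geq v$ with representatives $\begin{psmallmatrix}1 & \kappa \\ 0 & 1\end{psmallmatrix}$ ($\kappa \in [\kay_{u-v}]$), giving coset reps $\begin{psmallmatrix}\varpi^u & \varpi^v\kappa \\ 0 & \varpi^v\end{psmallmatrix}$, and size $q^{v-u-1}$ when $u < v$ with representatives $\begin{psmallmatrix}1 & 0 \\ \varpi \kappa & 1\end{psmallmatrix}$ ($\kappa \in [\kay_{v-u-1}]$), giving coset reps $\begin{psmallmatrix}\varpi^u & 0 \\ \varpi^{u+1}\kappa & \varpi^v\end{psmallmatrix}$.

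Next, for each representative $\gamma$, I would expand $\gamma^{-1}\vec{x}$ for $\vec{x} = \begin{psmallmatrix}x \\ y\end{psmallmatrix}$ and note that $(\gamma \cdot \phi)(\vec{x}) = \phi(\gamma^{-1}\vec{x}) = 1$ precisely when an affine congruence of the form $x \equiv \kappa y \pmod{\varpi^u}$ (respectively $y \equiv \varpi \kappa x \pmod{\varpi^v}$) holds together with an integrality constraint. Summing over $\kappa$, the main bookkeeping step is to count, for each valuation shell $\mathrm{val}(y) = v+k$ (resp.\ $\mathrm{val}(x) = u+j$), the number of $\kappa$ in the appropriate $[\kay_{\bullet}]$ that satisfy the congruence; this count is $q^k$ (resp.\ $q^j$) on the shell and jumps to $q^{u-v}$ (resp.\ $q^{v-u-1}$) once the shell depth exceeds the modulus. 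The shell indicator on $\{\mathrm{val}(y) = v+k,\, \mathrm{val}(x) \geq v+k\}$ is exactly $\rho_1^{2(v+k)} \cdot (\phi - \phi_{(0,1)})$, and the analogous identity $\rho_1^{2(u+j)} \cdot (\phi_{(0,1)} - \phi_{(1,1)})$ describes the shells arising in the $u < v$ case. Assembling these contributions yields the stated formula for $\mathcal{I}^+_{u,v}$.

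Finally, the formula for $\mathcal{I}^-_{u,v}$ is obtained either by repeating the whole computation (the asymmetry is mild: $I_1^-$ imposes $b \in \varpi\Oscr_F$ instead of $c \in \varpi\Oscr_F$, which swaps the roles of the two coordinates throughout), or by noting that conjugation by the Weyl element $w = \begin{psmallmatrix}0 & 1 \\ 1 & 0\end{psmallmatrix}$ interchanges $I_1^{\pm}$ and swaps the two diagonal entries of $g$. Either way one obtains the mirrored formula. The main obstacle is purely bookkeeping: carefully tracking, shell by shell, the number of $\kappa$ solving the congruence and repackaging the resulting piecewise-constant function as the advertised telescoping combination of shifted $\phi$'s; once the coset representatives are written down, the remainder is mechanical.
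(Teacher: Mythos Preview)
Your proposal is correct and follows essentially the same approach as the paper. The paper likewise reduces by the central shift, writes down the same coset decompositions (stated for $\mathrm{diag}(\varpi^{u},1)$ and $\mathrm{diag}(1,\varpi^{v})$ with $u\geq 0$, $v\geq 1$), evaluates on $\phi$ by the shell-counting argument already used in the proof of Lemma~\ref{Tablemma}, and deduces the $\mathcal{I}^{-}$ formula from the $\mathcal{I}^{+}$ one via conjugation by $\begin{psmallmatrix}&1\\1&\end{psmallmatrix}$.
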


\begin{proof}  The first equality is established in the same manner as Lemma \ref{Tablemma} using the  decompositions \[ I^{+}_{1}   \begin{psmallmatrix} \varpi^{u} \\  & 1 \end{psmallmatrix}   U_{1} / U_{1}  =    \bigsqcup_{\kappa \in [\kay]_{u}} \begin{psmallmatrix} \varpi^{u}   &  \kappa \\[0.2em]   &   1 \end{psmallmatrix},       \quad \quad  \quad   I_{1}^{+} \begin{psmallmatrix} 1  \\    &  \varpi^{v}  \end{psmallmatrix}   U_{1} / U_{1}  =    \bigsqcup_{\kappa \in [\kay]_{v-1}}  \begin{psmallmatrix} 1 \\[0.2em]  \kappa  \varpi   &  \varpi^{v}  \end{psmallmatrix}  \] 
which hold for integers $ u \geq 0 $, $ v  \geq  1  $. The second is obtained from the first by notation that $ I_{1}^{-} $, $ I_{1}^{+} $ are conjugates of each  other by the reflection matrix $ \begin{psmallmatrix}  &  1  \\ 1  & \  \end{psmallmatrix} $.  
\end{proof} 

\subsection{Convolutions with restrictions of $\mathfrak{h}_{0}$}  This subsection is devoted to computing $ \mathfrak{h}_{\varrho_{i},*}(\phi) $ for $ i = 0 ,1 ,2  $.   Recall that $$ 
\varrho_{0} =  
\scalebox{1.1}{$
\left(\begin{smallmatrix}
1   & & & &      \\   
&     1   & &     \\[0.1em] 
& &  1   &  \\   
& & &   1 \\
& & & &   1      \\ 
& & & & & 1  
\end{smallmatrix}\right)$}
, \quad    \quad  \varrho_{1} =  \scalebox{1.1}{$\left(\begin{smallmatrix}
\varpi  & & & &   \\ 
& \varpi &   & & &  1 \\
&  & \varpi & & 1  \\
& & &  1 \\
& & & &  1  \\ 
& & & &  &  1 
\end{smallmatrix}\right)$} ,    \quad  \quad 
\varrho_{1} =  \scalebox{1.1}{$\left(\begin{smallmatrix}
\varpi  & & & &   \\ 
& \varpi^{2} &   & & &  1 \\
&  & \varpi^{2} & & 1  \\
& & &  \varpi \\
& & & &  1  \\ 
& & & &  &  1 
\end{smallmatrix}\right).$}
$$

\begin{proposition}    Modulo $ q - 1 $,  
\begin{enumerate}  [label = \normalfont(\alph*), itemsep= 0.3em, after = \vspace{0.1em} , before = \vspace{0.1em} ] 

\item $ \mathfrak{a}_{\varrho_{0},*}(\phi) \equiv    (  6  +  16  z_{0}  +  6z_{0}^{2} ) \phi   $ 
\item $ \mathfrak{b}_{\varrho_{0},*}(\phi) \equiv  4 ( 1 + z_{0} ^{3}  +  6 z_{0} + 6 z_{0}^{2} )  \phi  $
\item  $ \mathfrak{c}_{\varrho_{0},*}(\phi)  \equiv   (   ( z_{0} + 1 )^{4} - 2 z_{0}^{2}  ) \phi   $ 
\end{enumerate}  
and $ \mathfrak{h}_{\varrho_{0},*}( \phi ) \equiv   0  $ 
\end{proposition}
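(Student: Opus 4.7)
The plan is to reduce every Hecke-operator computation on $\phi$ to one on $\GL_2(F)$. The starting point is that the $H$-action on $\mathcal{S}$ factors through $\pr_1 : H \to H_1 = \GL_2(F)$, so that for any cocharacter $\lambda = (a_0, a_1, a_2, a_3) \in \Lambda$ with first-factor image $\varpi^{\lambda_1} := \pr_1(\varpi^\lambda) = \mathrm{diag}(\varpi^{a_1}, \varpi^{a_0-a_1})$, the natural projection of coset spaces
$$ U \varpi^\lambda U / U \longrightarrow U_1 \varpi^{\lambda_1} U_1 / U_1 $$
is $U$-equivariant and surjective; by transitivity its fibers have constant cardinality $d(\lambda)/d_1(\lambda_1)$, and each fiber contributes the same element to $[\ch(U\varpi^\lambda U)]_*(\phi)$ since $\phi$ is $U_1$-invariant. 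Hence
$$ \bigl[\ch(U\varpi^\lambda U)\bigr]_*(\phi) \;=\; \frac{d(\lambda)}{d_1(\lambda_1)}\, \mathcal{T}_{a_1, a_0-a_1, *}(\phi). $$
By Schubert-cell counts (Proposition \ref{SchubertGSp6} and its $\GL_2$ analogue), both degrees reduce modulo $q-1$ to the corresponding Weyl-orbit sizes $|W_H/W_H^\lambda|$ and $|W_1/W_1^{\lambda_1}|$, where $W_H \simeq (\ZZ/2\ZZ)^3$ acts on $\Lambda$ by the three independent reflections $a_i \leftrightarrow a_0 - a_i$ for $i=1,2,3$.

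With this reduction in place, I would tabulate each cocharacter appearing in $\mathfrak{a}_{\varrho_0}$, $\mathfrak{b}_{\varrho_0}$, $\mathfrak{c}_{\varrho_0}$, recording its multiplicity, its $W_H$-stabilizer, and the corresponding $\GL_2$ Hecke operator (evaluated via Corollary \ref{Tab*}). For (a), the three cocharacters $(2,2,2,1), (2,2,1,2), (2,2,1,1)$ all project to the non-central $\mathrm{diag}(\varpi^2,1)$ with degree ratios $2,2,1$ and multiplicities $1,1,2$, contributing a total of $6 \cdot \mathcal{T}_{2,0,*}(\phi) \equiv 6(1+z_0^2)\phi$; the four cocharacters $(2,1,2,2), (2,1,2,1), (2,1,1,2), (2,1,1,1)$ project to the central $\mathrm{diag}(\varpi,\varpi)$ with ratios $4,2,2,1$ and multiplicities $1,2,2,4$, contributing $16 z_0 \phi$. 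Summing gives (a). Parts (b) and (c) follow the identical template: in (b) every cocharacter projects to a non-central element, yielding $4(1 + 6z_0 + 6z_0^2 + z_0^3)\phi$; in (c) a mixture of central and non-central projections assembles into $1 + 4z_0 + 4z_0^2 + 4z_0^3 + z_0^4 = (1+z_0)^4 - 2z_0^2$.

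The final vanishing $\mathfrak{h}_{\varrho_0,*}(\phi) \equiv 0$ will then be obtained by substituting (a)-(c) together with the easy values $[\ch(U)]_*(\phi) = \phi$ and $[\ch(U\varpi^{(1,1,1,1)}U)]_*(\phi) \equiv 4(1+z_0)\phi$ into expression (\ref{tfhvarrho0}), noting that the central element $\rho^{2k} = \varpi^{k\cdot (2,1,1,1)}$ acts on $\phi$ as the scalar $z_0^k$. The identity collapses to
$$ (1+z_0^4) - 4(1+z_0)(1+z_0^3) + (1+z_0)^2(6+16z_0+6z_0^2) - 4(1+z_0)(1+6z_0+6z_0^2+z_0^3) + \bigl((1+z_0)^4 - 2z_0^2\bigr) = 0 $$
in $\ZZ[z_0]$, which is verified by direct expansion coefficient-by-coefficient.

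There is no serious conceptual obstacle: the entire argument is bookkeeping once one accepts that the $H$-action factors through $\pr_1$ and that degrees modulo $q-1$ are given by Weyl-orbit sizes. The mildly laborious step is correctly identifying, for each of the dozen-odd cocharacters, the relevant stabilizer subgroup of $W_H$ and the corresponding $\GL_2$-cocharacter; a clerical error there is the main risk. The clean final cancellation reflects the fact that $\varrho_0$ lies in the ``generic'' stratum where the pair $(H,G)$ behaves spherically, so the Hecke polynomial coefficients recorded in Proposition \ref{Gsp6Heckepolynomial} are precisely calibrated to annihilate the contribution at $\varrho_0$.
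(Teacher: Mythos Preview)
Your proposal is correct and follows essentially the same approach as the paper: reduce each $[U\varpi^{\lambda}U]_{*}(\phi)$ to a $\GL_{2}$ Hecke operator times a degree factor, compute the degree factor modulo $q-1$, apply Corollary~\ref{Tab*}, and then check the resulting polynomial identity in $z_{0}$. The only cosmetic difference is that the paper derives the key formula via the explicit product bijection $U\varpi^{\lambda}U/U \xrightarrow{\sim} \prod_{i=1}^{3} U_{1}\varpi^{(a,a_{i})}U_{1}/U_{1}$ (so the degree factor is directly $|U_{1}\varpi^{(a,c)}U_{1}/U_{1}|\cdot|U_{1}\varpi^{(a,d)}U_{1}/U_{1}|$), whereas you phrase it as a fibre count over $\pr_{1}$ and identify the ratio $d(\lambda)/d_{1}(\lambda_{1})$ with a quotient of Weyl-orbit sizes; these are equivalent.
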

\begin{proof} For $ \lambda = (a,b,c,d) \in \Lambda $, the map \begin{align*} U \varpi^{\lambda} U / U  &  \longrightarrow  ( U_{1} \varpi^{(a,b)} U_{1}/ U_{1} )  \times (U_{1} \varpi^{(a,c)}U_{1}/U_{1}) \times (U_{1} \varpi^{(a,d)} U_{1} / U_{1})  \\ 
(h_{1}, h_{2}, h_{3} ) U & \longmapsto (h_{1}U_{1} , h_{2} U_{1} , h_{3} U_{1}) 
\end{align*} 
is a bijection.  Corollary \ref{Tab*} implies that  $$ [U \varpi^{\lambda} U / U  ] ( \phi ) =    | U_{1} \varpi^{(a,c)} U_{1}/U_{1} | \cdot | U_{1} \varpi^{(a,d)}U_{1}/ U_{1} | \cdot  (z_{0}^{b} + z_{0}^{a-b})  \phi  . $$
Now $ |  U_{1} \varpi^{(u,v)} U_{1}/U_{1} | \equiv  1 \text{ or } 2  \pmod{q-1} $ depending on whether $ 2v - u = 0 $ or not. So parts (a)-(c) are all easily obtained.   
Now recall from (\ref{tfhvarrho0}) that  $$ \mathfrak{h}_{\varrho_{0},*}(\phi) = 
(1 + \rho^{8}) (U) - (1+\rho^{6}) ( U 
\varpi^{(1,1,1,1)}  U ) +(1+ 2 \rho^{2} + \rho^{4}) \mathfrak{a}_{\varrho_{0}} -(1+\rho^{2}) \mathfrak{b}_{\varrho_{0}} +  \mathfrak{c}_{\varrho_{0}} $$  
Using our formulas, we find that 
\begin{align*} \mathfrak{h}_{\varrho_{0},*}(\phi) &  \equiv     \Big ( (  1+  z_{0}^{4} )   - 4 ( 1 + z_{0} ^{3}   ) ( 1 + z_{0} ) +  ( 1 + z_{0})^{2} ( 6 + 16 z_{0} + 6z_{0}^{2}) - 4( 1 + z_{0} ) (  1 + z_{0}^{3} + 6z_{0} + 6z_{0}^{2} )  \, \\
  & \quad \, \, \, + (z_{0}+1)^{4} - 2z_{0}^{2} \Big ) \phi  
\end{align*}   
and one  verifies that  the  polynomial  expression in $ z_{0} $  above    is  identically  zero.  
\end{proof}

\begin{notation}  \label{strucvarrho1not}  

 Let $ \mathbf{P} : = \GL_{2} \times_{\GG_{m} } \GL_{2} $  and   define embeddings   
\begin{alignat*}{4} \imath _{\varrho_{1}}   : \mathbf{P}   & \hookrightarrow \mathbf{H} , &  \quad \quad     \jmath_{\varrho_{1}} :  \mathbf{P}  & \hookrightarrow  \mathbf{H}  \\ 
(\gamma _{1}, \gamma _{2}) & \mapsto ( \partial \gamma _{1} \partial^{-1} ,   \gamma _{2} ,   \ess \gamma_{2} \ess 
)   & \quad \quad \quad   \quad   (\gamma_{1}, \gamma_{2} )    
& \mapsto     ( \partial  \gamma _{1} \partial^{-1},    \ess \gamma _{2} \ess ,  \gamma_{2}   ) 
\end{alignat*} 
where  $ \ess =    \begin{psmallmatrix} & 1  \\ 1 \end{psmallmatrix}  $ and $  \partial  :  = \ess \rho_{1} \ess  =  \begin{psmallmatrix} & \varpi \\ 1 \end{psmallmatrix} $. We let $ \mathscr{X}_{\varrho_{1}} $ denote the common image $ \imath_{\varrho_{1}}(P^{\circ} ) $, $ \jmath_{\varrho_{1}}(P^{\circ}  )  $.  We denote by $ M_{\varrho_{1}} $ (resp., $ M_{\varrho_{1}}')$ denote the subgroup of $U_{\varpi} $ in which the first and second (resp., first and third) components are identity.  We also let $$ \mathbf{\pr}_{2,3} : \Hb \to \mathbf{P} \quad \quad  (h_{1}, h_{2}, h_{3}) \mapsto (h_{2}, h_{3} )  .  $$ 
Finally, we let 
 $ \mathscr{Y}_{\varrho_{1}} $, $ L_{\varrho_{1}}, L'_{\varrho_{1}}, P_{\varpi}^{\circ}  $ denote respectively  the projections of  $ \mathscr{X}_{\varrho_{1}} $, $ M_{\varrho_{1}} $, $ M'_{\varrho_{1}}  $,  $ U_{\varpi} $  under $ \pr_{2,3} $.     
\end{notation}

\begin{lemma}  \label{varrho1struclemma}   $ H_{\varrho_{1}} =   \mathscr{X}_{\varrho_{1}}  M_{\varrho_{1}}    =  \mathscr{X}_{\varrho_{1}}  M _ { \varrho_{1}}  '  $.   
\end{lemma}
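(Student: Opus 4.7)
The plan is to first characterize $H_{\varrho_1}$ explicitly via matrix entries, and then exhibit both decompositions by a direct construction. First I would compute the conjugation $\varrho_1^{-1} h \varrho_1$ for an arbitrary $h = (h_1, h_2, h_3) \in H$ viewed as a $6 \times 6$ matrix, using the notation for the entries of $h_1, h_2, h_3$ from Notation \ref{notationh}. Because $\varrho_1$ is block-upper-triangular apart from the unit entries at positions $(2,6)$ and $(3,5)$, the only non-integral interactions in the conjugate occur in rows $1, 2, 3$ and involve the second and third components of $h$. A direct inspection then shows that $\varrho_1^{-1} h \varrho_1 \in K$ is equivalent to the following conditions: $h_1 \in \partial\, \GL_2(\Oscr_F)\, \partial^{-1}$ (equivalently $a, d \in \Oscr_F$, $b \in \varpi \Oscr_F$ and $c \in \varpi^{-1}\Oscr_F$), $h_2, h_3 \in \GL_2(\Oscr_F)$, and the cross-compatibility $h_3 \equiv \ess h_2 \ess \pmod{\varpi}$, i.e.\ $a_2 \equiv d_1$, $b_2 \equiv c_1$, $c_2 \equiv b_1$, $d_2 \equiv a_1 \pmod{\varpi}$.

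Next I would verify that each of $\mathscr{X}_{\varrho_1}, M_{\varrho_1}, M'_{\varrho_1}$ is contained in $H_{\varrho_1}$. For $\mathscr{X}_{\varrho_1}$ this is immediate from its definition, since its elements have first component in $\partial \GL_2(\Oscr_F) \partial^{-1}$ and third component equal to $\ess \cdot (\text{second component}) \cdot \ess$ on the nose, so the mod-$\varpi$ coupling is automatic. For $M_{\varrho_1}$ and $M'_{\varrho_1}$ the characterization is vacuous, as both relevant reductions mod $\varpi$ are the identity.

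For the equality $H_{\varrho_1} = \mathscr{X}_{\varrho_1} M_{\varrho_1}$, given $h = (h_1, h_2, h_3) \in H_{\varrho_1}$, I would set $\gamma_2 = h_2 \in \GL_2(\Oscr_F)$ and $\gamma_1 = \partial^{-1} h_1 \partial \in \GL_2(\Oscr_F)$; the agreement of similitudes forces $(\gamma_1, \gamma_2) \in P^{\circ}$. Putting $x = \imath_{\varrho_1}(\gamma_1, \gamma_2) = (h_1, h_2, \ess h_2 \ess) \in \mathscr{X}_{\varrho_1}$, one has $x^{-1} h = (1, 1, (\ess h_2 \ess)^{-1} h_3)$, whose third component has determinant $\det(h_3)/\det(h_2) = 1$ and reduces to the identity modulo $\varpi$ by the cross-compatibility; hence $x^{-1} h \in M_{\varrho_1}$, giving the factorization $h = x \cdot (x^{-1} h)$. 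The companion identity $H_{\varrho_1} = \mathscr{X}_{\varrho_1} M'_{\varrho_1}$ is handled symmetrically using $\jmath_{\varrho_1}$ with $\gamma_2 := h_3$, so that the remainder now lives in the second component.

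The main obstacle is the explicit entry-level verification in the first step: one must carefully track how the off-diagonal $1$'s at positions $(2,6)$ and $(3,5)$ of $\varrho_1$ couple the second component of $h$ to the third component precisely via the $\ess$-twist (and not via a different permutation), as this is exactly what aligns the embeddings $\imath_{\varrho_1}, \jmath_{\varrho_1}$ with the structure of $H_{\varrho_1}$. Once the characterization of $H_{\varrho_1}$ is in hand, both factorizations reduce to routine bookkeeping.
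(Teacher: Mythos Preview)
Your proposal is correct and follows essentially the same approach as the paper: compute $\varrho_1^{-1} h \varrho_1$ explicitly, read off the characterization $\partial^{-1}h_1\partial,\,h_2,\,h_3\in\GL_2(\Oscr_F)$ with $h_3\equiv \ess h_2\ess\pmod\varpi$, and then factor $h$ by peeling off the element $\imath_{\varrho_1}(\partial^{-1}h_1\partial,\,h_2)\in\mathscr{X}_{\varrho_1}$ (respectively $\jmath_{\varrho_1}(\partial^{-1}h_1\partial,\,h_3)$) to leave a residue in $M_{\varrho_1}$ (respectively $M_{\varrho_1}'$). The paper phrases the last step as multiplying by $\imath_{\varrho_1}(\partial^{-1}h_1^{-1}\partial,\,h_2^{-1})$ on the left, which is the same move written inversely.
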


\begin{proof}  Writing $ h \in H $ as in \ref{notationh}, we see that  $$ \varrho_{1} ^{-1} h  \varrho_{1}  = 
\begin{pmatrix} 
a &  &  & \mfrac{b}{\varpi } &  &  \\[0.5em]  
 & a_1  & -c_2  &  & \mfrac{b_1 -  c_2 }{\varpi } & \mfrac{a_1 -  d_2 }{\varpi }\\[0.5em] 
 & -c_1  & a_2  &  & \mfrac{a_2  -  d_1 }{\varpi } & \mfrac{b_2  - c_1 }{\varpi }\\[0.5em] 
c\,\varpi  &  &  & d &  &  \\[0.5em] 
 & c_1 \,\varpi  &  &  & d_1  & c_1 \\[0.5em] 
  &  & c_2 \,\varpi  &  & c_2  & d_2 
\end{pmatrix}  $$  
From this, one immediately sees that $  h = (h_{1}, h_{2}, h_{3} ) \in H_{\varrho_{1}  } $ if and only if $ \partial^{-1}  h_{1} \partial, h_{2}, h_{3}  \in U_{1} $ and the modulo $ \varpi $ reductions of $ h_{2} $, $ \ess h_{3} \ess $ coincide. So $ H_{\varrho_{1}} \supset \mathscr{X}_{\varrho_{1}} $, $ M_{\varrho_{1}} $,  $ M_{\varrho_{1}}' $. To see that $ H_{\varrho_{1}} $ equals the stated products, we note that for any $ h = (h_{1}, h_{2}, h_{3} ) \in H_{\varrho_{1}} $, $ \iota_{\varrho_{1}} ( \partial^{-1} h_{1}^{-1}  \partial ,  h_{2} ^{-1} ) \cdot h \in  M_{\varrho_{1}}  $ and $  \jmath_{\varrho_{1}}(\partial^{-1} h_{1} ^{-1}  \partial , h_{3} ^{-1}  ) \cdot  h \in  M_{\varrho_{1}}'  $.  
\end{proof} 
\begin{proposition}  \label{tfhvarrho1action}  Modulo $ q - 1 $, 
\begin{enumerate} [label = \normalfont(\alph*), itemsep= 0.3em, after = \vspace{0.1em} , before = \vspace{0.1em} ] 
\item $ \mathfrak{a}_{\varrho_{1},*}(\phi) \equiv 2 ( 1 + 3z_{0} + z_{0}^{2})  
\phi  $,   
\item $  \mathfrak{b}_{\varrho_{1},*}(\phi)  \equiv  ( 1 + 10 z_{0}   + 10 z_{0}^{2} + 
 z_{0}^{3}  ) 
\phi     $   
\item  $ \mathfrak{c}_{\varrho_{1},*}  (\phi) \equiv 2z_{0}  ( 1   +  z_{0}  )  
\phi   $ 
\end{enumerate} 
and $ \mathfrak{h}_{\varrho_{1},*}(\phi)  \equiv 0     $.  
\end{proposition}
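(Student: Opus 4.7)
The plan is to reduce everything to convolutions of $\GL_{2}$-Hecke operators against $\phi$, exploiting the fact that the left $H$-action on $\mathcal{S}$ factors through $\pr_{1}$ and that $\phi = \ch(\mathrm{Mat}_{2\times 1}(\Oscr_{F}))$ only sees the first $\GL_{2}$-component. Concretely, I will use the structural decomposition $H_{\varrho_{1}} = \mathscr{X}_{\varrho_{1}}M_{\varrho_{1}} = \mathscr{X}_{\varrho_{1}}M_{\varrho_{1}}'$ from Lemma \ref{varrho1struclemma}: because $\pr_{1}(M_{\varrho_{1}}) = \pr_{1}(M_{\varrho_{1}}') = \{1\}$ and $\pr_{1}(\mathscr{X}_{\varrho_{1}}) = \partial U_{1} \partial^{-1}$, the image of $U\varpi^{\lambda}H_{\varrho_{1}}$ in $H_{1}$ is the single $(U_{1}, \partial U_{1}\partial^{-1})$-double coset $U_{1}\varpi^{(a,b)}\partial U_{1}\partial^{-1}$, or equivalently $U_{1}\varpi^{(a,b)}\partial \cdot U_{1}$ after right multiplication by $\partial$. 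Consequently, the covariant action of $[U\varpi^{\lambda}H_{\varrho_{1}}]_{*}$ on $\phi$ can be identified, up to a multiplicity given by $|H_{\varrho_{1}}/\pr_{1}(H_{\varrho_{1}})\cap H_{\varrho_{1}}|$, with the action of an Iwahori-level $\GL_{2}$-Hecke operator $\mathcal{I}_{u,v}^{\pm}$ on $\phi$.

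First I would carry out the reduction for $\mathfrak{a}_{\varrho_{1},*}(\phi)$. Each of the three double cosets appearing in $\mathfrak{a}_{\varrho_{1}}$, namely $U\varpi^{(1,1,1,0)}H_{\varrho_{1}}$, $U\varpi^{(1,0,1,1)}H_{\varrho_{1}}$, $U\varpi^{(1,0,1,0)}H_{\varrho_{1}}$, has first $\GL_{2}$-component $\varpi$ or $1$ (scaled); projecting under $\pr_{1}$ converts the action on $\phi$ into an operator of the shape $\mathcal{I}_{u,v}^{\pm}$. Applying Lemma \ref{Iuvlemma} modulo $q-1$, all the summations collapse since $q^{i} \equiv 1$; what survives is precisely a combination of translates $\rho_{1}^{2k}\cdot \phi = z_{0}^{-k}\phi$. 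Summing with the stated multiplicities of $1, 1, 2$ yields the claimed $2(1 + 3z_{0} + z_{0}^{2})\phi$. The analogous computation for $\mathfrak{b}_{\varrho_{1}}$ and $\mathfrak{c}_{\varrho_{1}}$ proceeds identically, with slightly longer bookkeeping because of the five and three double cosets respectively, each of which projects to a different Iwahori Hecke operator whose value mod $q-1$ is read off from Lemma \ref{Iuvlemma} and Corollary \ref{Tab*}. I would also verify that the multiplicity factor $|H_{\varrho_{1}}/\mathscr{X}_{\varrho_{1}}|$-type correction is either $1$ or $\equiv 1 \pmod{q-1}$ in each case; this is the analogue of the $|U_{1}\varpi^{(u,v)}U_{1}/U_{1}| \equiv 1$ or $2$ reduction used in the previous proposition.

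Finally, having established (a), (b), (c), substitute into the expression \eqref{tfhvarrho1} for $\mathfrak{h}_{\varrho_{1}}$. Since $\rho^{2k}$ acts as $z_{0}^{-k}$ on $\mathcal{S}(H_{\varrho_{1}})$ (central translation), the identity to check modulo $q-1$ becomes the polynomial identity
\[
-(1+z_{0}^{3}) + (1+z_{0})^{2}\cdot 2(1+3z_{0}+z_{0}^{2}) - (1+z_{0})(1 + 10z_{0} + 10z_{0}^{2} + z_{0}^{3}) + 2z_{0}(1+z_{0}) \;=\; 0
\]
in $\mathcal{O}[z_{0}]$, which is a direct expansion.

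The main obstacle will be the second step: correctly enumerating, for each of the asymmetric cocharacters (notably $(2,1,2,0)$ and $(3,1,3,1)$, $(3,1,1,3)$ where the right-hand group $H_{\varrho_{1}}$ intertwines the second and third $\GL_{2}$-factors via $\ess$), the set of $U$-cosets in $U\varpi^{\lambda}H_{\varrho_{1}}/H_{\varrho_{1}}$, and checking that the $M_{\varrho_{1}}$-factor does not obstruct projecting to a clean $\GL_{2}$ computation. In the cases where $\mathscr{X}_{\varrho_{1}}$ and $\varpi^{\lambda}$ do not commute in the obvious way, one may need to split off a further $\ess$-twist before projecting, which is precisely why the two decompositions $\mathscr{X}_{\varrho_{1}}M_{\varrho_{1}}$ and $\mathscr{X}_{\varrho_{1}}M_{\varrho_{1}}'$ are both recorded in Lemma \ref{varrho1struclemma}.
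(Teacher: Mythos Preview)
Your high-level strategy (factor the action through $\pr_{1}$ using Lemma~\ref{varrho1struclemma}, separate off a multiplicity coming from the $(2,3)$-components, and reduce to a $\GL_{2}$-convolution) is exactly the paper's approach. But two key identifications are wrong and would derail the computation.

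First, the projected $\GL_{2}$-operator is \emph{spherical}, not Iwahori. You correctly note that $\pr_{1}(H_{\varrho_{1}}) = \partial U_{1}\partial^{-1}$, but this is a conjugate of the full maximal compact $U_{1}$, not an Iwahori. The paper writes
\[
\xi_{\lambda} \;=\; \bigl|P^{\circ}\backslash P^{\circ}\varpi^{(a,c,d)}\pr_{2,3}(H_{\varrho_{1}})\bigr|\cdot [U_{1}\varpi^{(a,b)}\partial U_{1}\partial^{-1}]_{*}(\phi),
\]
and the $\GL_{2}$-factor is $\partial\cdot\mathcal{T}_{b+1,\,a-b,\,*}(\phi)$, computed via Corollary~\ref{Tab*}, not via Lemma~\ref{Iuvlemma}. (The Iwahori lemma is used later for the $\varsigma_{2}$-case, where $H_{\varsigma_{2}}$ genuinely sits inside an Iwahori; you may be conflating the two.)

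Second, the multiplicity is not $\equiv 1$. It is the index above, and the paper shows that when $|\beta_{0}(\lambda)|,|\beta_{2}(\lambda)|\in\{0,1\}$ it reduces to $[\mathscr{Y}_{\varrho_{1}}:\mathscr{Y}_{\varrho_{1}}\cap P^{\circ}_{(a,c,d)}]$, which is $\equiv 1$ only when $\beta_{0}(\lambda)=\beta_{2}(\lambda)=0$ and is $\equiv 2$ otherwise. This $1$-versus-$2$ distinction is what produces the correct coefficients in (a)--(c). Moreover the cocharacter $(2,1,2,0)$ in $\mathfrak{b}_{\varrho_{1}}$ has $|\beta_{0}|=2$, so even this reduction fails there; the paper handles it separately via \cite[Lemma~5.9.3]{CZE} applied to the product $\pr_{2,3}(H_{\varrho_{1}}) = \mathscr{Y}_{\varrho_{1}}P^{\circ}_{\varpi}$.

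These errors propagate to your final check: the displayed polynomial does not vanish (it equals $z_{0}-2z_{0}^{2}-2z_{0}^{3}+z_{0}^{4}$). The missing piece is that $[UH_{\varrho_{1}}]_{*}(\phi)$ is not $\phi$ --- indeed $\phi\notin\mathcal{S}(H_{\varrho_{1}})$ since $\partial U_{1}\partial^{-1}\not\subset U_{1}$ --- and its contribution carries an extra factor $(1+z_{0})$, as the paper's final expansion records.
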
 
\begin{proof}  
For $ \lambda = (a,b,c,d)  \in \Lambda $, let $ \xi_{\lambda} =   [ U \varpi^{\lambda} H_{\varrho_{1}}]_{*}( \phi) $.  Then  Lemma \ref{varrho1struclemma} implies that $$ \xi_{\lambda} =  \big |  
P^{\circ} \backslash P^{\circ}  \varpi^{(a,c,d)}  \pr_{2,3}(H_{\varrho_{1}}) \big  | \cdot   [ U_{1} \varpi^{(a,b)}  \partial U_{1} \partial^{-1}]_{*} (\phi )  
$$ 
Now Corollary  \ref{Tab*} implies that \begin{align*}   [ U_{1} \varpi^{(a,b)}  \partial U_{1} \partial^{-1}]_{*} (\phi )  &    =   \partial \cdot  \mathcal{T}_{b+1, a -  b, *}(\phi) 
\equiv  \begin{cases}   (z_{0}^{b+1} + z_{0}^{a-b} )  \, \phi _{(0,1)}  & \text{ if } a \neq 2b + 1 \\ 
 z_{0}^{b+1}   \cdot  \phi_{(0,1)   }  & \text{ if } a = 2b + 1 
\end{cases} 
\end{align*}
If  moreover  $ |\beta_{0}(\lambda)|,| \beta_{2}(\lambda) | \in \left \{ 0,1 \right \} $,  then $  P^{\circ}  \varpi^{(a,c,d)}   \pr_{2,3}( H_{\varrho_{1}} )   $ simplifies to $    P^{\circ}  \varpi^{(a,c,d)} \mathscr{Y}_{\varrho_{1}} $. So in this case, \begin{equation}   \label{varrho1easy}  |  P^{\circ} \backslash  P^{\circ}    \varpi^{(a,c,d)} \pr_{2,3}(H_{\varrho_{1}}) | = [ \mathscr{Y} _{\varrho_{1}} :  \mathscr{Y} 
 _{\varrho_{1}} \cap P^{\circ}_{(a,c,d)} ]  \end{equation} 
where $ P_{(a,c,d)} ^ { \circ}  :  =   \varpi^{-(a,c,d)}  P^{\circ} \varpi^{(a,c,d)}   $. Since $\mathscr{Y} _{\varrho_{1}} \simeq \GL_{2}(\Oscr_{F} ) = U_{1} $   (via the projection $ \mathbf{P} \to \Hb_{1} $, $ (\gamma_{1}, \gamma_{2} ) \mapsto \gamma_{1} $), the index on the RHS of (\ref{varrho1easy})  can be found by comparing the intersection $  \mathscr{Y}_{\varrho_{1}} \cap P^{\circ}_{(a,c,d)} $ with the Iwahori subgroups $ I_{1}^{\pm} $ in $ U_{1} $. One easily sees that that the RHS of (\ref{varrho1easy})   is congruent to $ 1 $ or $ 2 $ modulo $ q+1 $, and that the former only happens if and only if $ \beta_{0}(\lambda) = \beta_{2}(\lambda) = 0 $. This takes care of  the index calculations for all the  Hecke operators in parts (a)-(c) except for $ (U \varpi^{(2,1,2,0)} H_{\varrho_{1}  } ) $.  Here,  we  invoke \cite[Lemma 5.9.3]{CZE}. More precisely, we use that  $ \pr_{2,3}(H_{\varrho_{1}}) = 
P_{\varpi}^{\circ} \mathscr{Y}_{\varrho_{1}}  $ and the result in \emph{loc.cit.} implies that  $$  | P^{\circ}  \backslash P^{\circ} \varpi^{(2,2,0)  } P_{\varpi}^{\circ} \mathscr{Y} _{\varrho_{1}} ] =   e^{-1} \cdot  |  P ^  {\circ }  \backslash    P^{\circ} \varpi^{(2,2,0)} P^{\circ} _{\varpi} | \cdot |   (  P_{\varpi} ^{\circ}  \cap    \mathscr{Y} _{\varrho_{1}}  ) 
\backslash   \mathscr{Y} 
 _{\varrho_{1}} |    
$$
where $ e =  [  \mathscr{Y}_{\varrho_{1}} P_{\varpi}^{\circ} 
\cap P_{(2,2,0)}^{\circ} :   P_{\varpi}^{\circ} \cap  P_{(2,2,0)}^{\circ} ] $.  Now $ P_{\varpi} ^{\circ} \cap Y_{\varrho_{1}} $ is identified with $ I_{1}^{\pm} $, and $ [U_{1} : I_{1} \cap I_{1}^{-}] = q ( q+  1 ) $ and similarly $ |  P^{\circ} \backslash P ^{\circ}  \varpi^{(2,2,0)} P^{\circ}_{\varpi}   | = q^{2}  $.   Moreover   $ \mathscr{Y}_{\varrho_{1}}  P_{\varpi}^{\circ} 
\cap P_{(2,2,0)}^{\circ}  =   (\mathscr{Y}_{\varrho_{1}} \cap P_{(2,2,0)}^{\circ}  ) \cdot  ( P_{\varpi}^{\circ} \cap P_{(2,2,0)}^{\circ}  ) $, which implies that $$ e =   [ \mathscr{Y} _{\varrho_{1}} \cap P_{(2,2,0)}^{\circ}   :  \mathscr{Y} _{\varrho_{1}} \cap  P_{\varpi}^{\circ} \cap  P_{(2,2,0)}^{\circ}  ] . $$ 
from which it is not too hard to see that $ e  = q $.  
It follows that $ \xi_{(2,1,2,0)} \equiv   2  z_{0}  \cdot \phi $. Now  recall from (\ref{tfhvarrho1}) that 
    $$ \mathfrak{h}_{\varrho_{1}} = 
  - ( 1 + \rho^{6} ) (U H_{\varrho_{1}}) +  ( 1 +  2 \rho^{2} + \rho^{4} ) \mathfrak{a}_{\varrho_{1}} - ( 1 + \rho^{2} ) \mathfrak{b}_{\varrho_{1}}    +  \mathfrak{c}_{\varrho_{1}}  $$ 
  So we see that
\begin{align*}  \mathfrak{h}_{\varrho_{1}, *} ( \phi  )  &   \equiv \Big ( \! - ( 1 + z_{0}^{3})  ( 1 + z_{0} ) + (1 + z_{0})^{2}  
( 2 + 6z_{0} +  2 z_{0} ^{2} )   - ( 1 + z_{0 } ) ( 1 + 10 z_{0} + 10 z_{0} ^{2} + z_{0} ^{3} )  \\
&  \quad \, \, + 2  z_{0} ( 1 + z_{0}  ) ^ { 2 }  \Big )  
\end{align*} 
which is zero since  the  polynomial expression in $ z_{0} $  vanishes.   
\end{proof}
\begin{notation}   \label{strucvarrho2not} 
Let $ \mathbf{P} $, $ \ess $ be as in Notation \ref{strucvarrho1not},   $  \imath _{\varrho_{2}}   : \mathbf{P}    \hookrightarrow \mathbf{H}   $ be the given by $  
(\gamma _{1}, \gamma _{2}) 
 \mapsto ( 
\gamma _{1} 
,   \gamma _{2} ,   \ess \gamma_{2} \ess  ) $ and $ \mathscr{X}_{\varrho_{2}} =  \imath_{\varrho_{2}}(P^{\circ} )  $.   
Let $ \mathrm{pr}_{2,3} : \Hb \to \mathbf{P} $ be the projection as before and let $ \mathscr{Y}_{\varrho_{2}} $,  $  P_{\varpi^{2}}^{\circ}  $ denote respectively  the projections of  $ \mathscr{X}_{\varrho_{0}} $, $ U_{\varpi^{2}} $  under $ \pr_{2,3} $.     
\end{notation}

\begin{lemma} $ H _ {\varrho_{2}} = \mathscr{X}_{\varrho_{2}}   U_{\varpi^{2} } $. 
\end{lemma}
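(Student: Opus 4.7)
The plan is to mimic the proof of Lemma \ref{varrho1struclemma}: write down $\varrho_2^{-1} h \varrho_2$ explicitly for a general $h = (h_1,h_2,h_3) \in H$ presented as in Notation \ref{notationh}, read off the integrality conditions that characterize $H_{\varrho_2}$, and then verify both inclusions.

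Concretely, a direct block computation (using that $\varrho_2$ is block upper-triangular with diagonal blocks $\mathrm{diag}(\varpi,\varpi^2,\varpi^2)$ and $\mathrm{diag}(\varpi,1,1)$, and off-diagonal block $B$ with $B_{2,3}=B_{3,2}=1$) gives
$$\varrho_2^{-1} h \varrho_2 = \begin{pmatrix} a & 0 & 0 & b & 0 & 0 \\ 0 & a_1 & -c_2 & 0 & \tfrac{b_1-c_2}{\varpi^2} & \tfrac{a_1-d_2}{\varpi^2} \\ 0 & -c_1 & a_2 & 0 & \tfrac{a_2-d_1}{\varpi^2} & \tfrac{b_2-c_1}{\varpi^2} \\ c & 0 & 0 & d & 0 & 0 \\ 0 & c_1\varpi^2 & 0 & 0 & d_1 & c_1 \\ 0 & 0 & c_2\varpi^2 & 0 & c_2 & d_2 \end{pmatrix}.$$
Requiring all entries to lie in $\Oscr_F$ (and using that $h \in H$ has integral similitude once its entries are integral) is visibly equivalent to the three conditions: (i) $h_1,h_2,h_3 \in U_1 = \GL_2(\Oscr_F)$; (ii) $a_1 \equiv d_2$, $b_1 \equiv c_2$, $a_2 \equiv d_1$, $b_2 \equiv c_1$ all modulo $\varpi^2$; equivalently, $h_2 \equiv \ess h_3 \ess \pmod{\varpi^2}$.

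For the inclusion $\mathscr{X}_{\varrho_2} U_{\varpi^2} \subseteq H_{\varrho_2}$, take $x = \imath_{\varrho_2}(\gamma_1,\gamma_2) = (\gamma_1,\gamma_2,\ess \gamma_2 \ess)$ and $u = (u_1,u_2,u_3) \in U_{\varpi^2}$. Then the product $xu = (\gamma_1 u_1, \gamma_2 u_2, \ess \gamma_2 \ess \, u_3)$ clearly has each component in $U_1$, and since $u_2, u_3 \equiv 1 \pmod{\varpi^2}$, the congruence $\gamma_2 u_2 \equiv \ess(\ess \gamma_2 \ess \, u_3)\ess \pmod{\varpi^2}$ reduces to $\gamma_2 \equiv \gamma_2$, which holds. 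Hence $xu \in H_{\varrho_2}$.

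Conversely, given $h=(h_1,h_2,h_3) \in H_{\varrho_2}$, set $x := \imath_{\varrho_2}(h_1,h_2) = (h_1,h_2,\ess h_2 \ess) \in \mathscr{X}_{\varrho_2}$ (this makes sense because $\det h_1 = \det h_2$ by the similitude constraint on $H$). Then $x^{-1} h = (1,1,\ess h_2^{-1} \ess \cdot h_3)$. Using that $h_2 \equiv \ess h_3 \ess \pmod{\varpi^2}$, we get $h_2^{-1} \equiv \ess h_3^{-1} \ess$ and hence $\ess h_2^{-1}\ess \cdot h_3 \equiv h_3^{-1} h_3 = 1 \pmod{\varpi^2}$, so $x^{-1} h \in U_{\varpi^2}$. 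This gives $h = x \cdot (x^{-1}h) \in \mathscr{X}_{\varrho_2} U_{\varpi^2}$, completing the argument. There is no real obstacle here — the only thing to be careful about is the book-keeping in the matrix computation of $\varrho_2^{-1} h \varrho_2$ and the correct interpretation of the congruence $h_2 \equiv \ess h_3 \ess \pmod{\varpi^2}$ in terms of the individual matrix entries.
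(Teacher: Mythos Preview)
Your proof is correct and follows exactly the same strategy as the paper: compute $\varrho_2^{-1} h \varrho_2$ explicitly, read off the integrality conditions, and then verify both inclusions by producing an element of $\mathscr{X}_{\varrho_2}$ whose inverse carries $h$ into $U_{\varpi^2}$. In fact the paper only writes down the conjugated matrix and then invokes the analogous argument from Lemma~\ref{varrho1struclemma}, whereas you have spelled out the details; your matrix entries $c_1\varpi^2, c_2\varpi^2$ in positions $(5,2), (6,3)$ are actually correct (the paper's displayed $c_1\varpi, c_2\varpi$ appear to be typos carried over from the $\varrho_1$ computation, though this does not affect the argument).
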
 
\begin{proof} If $ h \in H $ is written as in Notation \ref{notationh}, then 
$$  \varrho_{2}^{-1} h \varrho_{2} =   
\begin{pmatrix} 
a &  &  & b &  &  \\[0.5em]  
 & a_1  & -c_2  &  & \mfrac{b_1 -  c_2 }{\varpi^{2} } & \mfrac{a_1 -  d_2 }{\varpi ^{2} }\\[0.5em] 
 & -c_1  & a_2  &  & \mfrac{a_2  -  d_1 }{\varpi ^{2}} & \mfrac{b_2  - c_1 }{\varpi ^{2}}\\
 c  &  &  & d &  &  \\
 & c_1 \,\varpi  &  &  & d_1  & c_1 \\[0.5em] 
  &  & c_2 \,\varpi  &  & c_2  & d_2 
\end{pmatrix} . 
$$ 
Now an argument similar to Lemma  \ref{strucvarrho2not} yields the desired factorization. 
\end{proof}  
\begin{proposition}  \label{tfhvarrho2action}  Modulo $ q - 1 $, 
$ \mathfrak{h}_{\varrho_{2},*}(\phi)  \equiv 0     $.  
\end{proposition}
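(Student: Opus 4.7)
The plan is to follow the blueprint of Proposition \ref{tfhvarrho1action}, with the level $U_{\varpi^2}$ replacing $U_{\varpi}$. Using the factorization $H_{\varrho_2} = \mathscr{X}_{\varrho_2} U_{\varpi^2}$ just established, I would write each Hecke operator appearing in $\mathfrak{h}_{\varrho_2}$ as
\begin{equation*}
\xi_\lambda := \bigl[U\varpi^\lambda H_{\varrho_2}\bigr]_*(\phi) = \bigl|P^\circ \backslash P^\circ \varpi^{(a,c,d)}\pr_{2,3}(H_{\varrho_2})\bigr| \cdot \bigl[U_1 \varpi^{(a,b)} U_1\bigr]_*(\phi)
\end{equation*}
for $\lambda = (a,b,c,d)$, splitting the triple decomposition of $U\varpi^\lambda U / U$ onto its $\Hb_1$-part and its $\mathbf{P}$-part. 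The $\GL_2$-factor is evaluated modulo $q-1$ directly via Corollary \ref{Tab*}.

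For the $\mathbf{P}$-index, I would use $\pr_{2,3}(H_{\varrho_2}) = \mathscr{Y}_{\varrho_2}P^\circ_{\varpi^2}$ and check that each of the four cocharacters $(0,0,0,0)$, $(1,1,0,1)$, $(2,2,1,1)$, $(2,1,2,0)$ appearing in $\mathfrak{a}_{\varrho_2}$, $\mathfrak{b}_{\varrho_2}$, $\mathfrak{c}_{\varrho_2}$ has $\mathbf{P}$-depth $\max(|\beta_0(\lambda)|,|\beta_2(\lambda)|)$ at most $2$. This guarantees $\varpi^{(a,c,d)}P^\circ_{\varpi^2}\varpi^{-(a,c,d)} \subset P^\circ$ throughout, so the index collapses to $[\mathscr{Y}_{\varrho_2}:\mathscr{Y}_{\varrho_2}\cap \varpi^{-(a,c,d)}P^\circ\varpi^{(a,c,d)}]$, bypassing the more elaborate invocation of \cite[Lemma 5.9.3]{CZE} needed in the $\varrho_1$ case. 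A direct inspection gives indices $1$, $q+1$, $1$, $q(q+1)$ respectively; combined with the $\GL_2$-factors $\phi$, $(1+z_0)\phi$, $(1+z_0^2)\phi$, $z_0\phi$ modulo $q-1$, I would obtain
\begin{equation*}
\mathfrak{a}_{\varrho_2,*}(\phi) \equiv \phi, \qquad \mathfrak{b}_{\varrho_2,*}(\phi) \equiv 2(1+z_0)\phi, \qquad \mathfrak{c}_{\varrho_2,*}(\phi) \equiv (1+z_0)^2\phi \pmod{q-1}.
\end{equation*}
Plugging into (\ref{tfhvarrho2}) collapses the expression to $(1+z_0)^2\phi - 2(1+z_0)^2\phi + (1+z_0)^2\phi = 0$, as required.

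The main obstacle lies in the case $\lambda = (2,1,2,0)$, where $\beta_0(\lambda) = 2$ and $\beta_2(\lambda) = -2$. Here the entangled embedding $\mathscr{Y}_{\varrho_2} \ni \gamma \mapsto (\gamma, \ess\gamma\ess)$ means that the two conjugation constraints coming from the two $\GL_2$-components of $\varpi^{(2,2,0)}$ must be carefully combined; when done correctly, both of them collapse to the single requirement that the lower-left entry of $\gamma$ lies in $\varpi^2\Oscr_F$, producing the ``deep Iwahori'' of index $[\GL_2(\Oscr_F):\{\text{upper triangular mod }\varpi^2\}] = q(q+1)$. Once this index count is secured, $q(q+1) \equiv 2 \pmod{q-1}$ provides precisely the coefficient needed to effect the cancellation against $(1+z_0^2)\phi$ in $\mathfrak{c}_{\varrho_2,*}(\phi)$, and the remaining bookkeeping is routine.
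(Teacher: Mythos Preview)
Your proposal is correct and follows essentially the same approach as the paper. The paper simplifies $U\varpi^{\lambda}H_{\varrho_2} = U\varpi^{\lambda}\mathscr{X}_{\varrho_2}$ first (using depth $\leq 2$) and then writes $[U\varpi^{\lambda}H_{\varrho_2}]_*(\phi) = |P^{\circ}\backslash P^{\circ}\varpi^{(a,c,d)}\mathscr{Y}_{\varrho_2}|\cdot \mathcal{T}_{b,a-b,*}(\phi)$, whereas you project to $\mathbf{P}$ first and then invoke depth; the outcome and the four index values $1,\,q+1,\,1,\,q(q+1)$ are identical. Your explicit identification of the $(2,1,2,0)$ index with the ``deep Iwahori'' of level $\varpi^{2}$ is more detailed than the paper's bare assertion that the index is $\equiv 2\pmod{q-1}$, and your remark that \cite[Lemma~5.9.3]{CZE} is \emph{not} needed here (in contrast to the $\varrho_1$ case) is exactly right: the extra level in $U_{\varpi^2}$ absorbs the depth-$2$ conjugation that $U_{\varpi}$ could not.
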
 
\begin{proof} Recall from (\ref{tfhvarrho2})   that $$ \mathfrak{h}_{\varrho_{2}} =  ( 1 + \rho^{2} + \rho^{4} ) (U H _{\varrho_{2}} )  -  ( 1 +  \rho ^{2}  )  (  U   \varpi^{(1,1,0,1)}  H_{\varrho_{2}} )   + (  U \varpi^{( 2,2,1,1)} H_{\varrho_{2}} ) +  (   U  \varpi^{(2,2,1,0)} H_{\varrho_{2}} ) . $$ 
If $ \lambda  = ( a , b, c, d )   \in \Lambda $ has depth at most $ 2 $, then $ U\varpi^{\lambda} H_{\varrho_{2}} =  U \varpi^{\lambda}  \mathscr{X} _{\varrho_{2}} $ by  Lemma  \ref{strucvarrho2not} and so $$ [U \varpi^{\lambda}  H_{\varrho_{2}}]_{*}( \phi )  =  | P^{\circ} \backslash  P^{\circ}\varpi^{(a,c,d)} 
 \mathscr{Y}_{\varrho_{2}} | \cdot \mathcal{T}_{b,a-b,*}(\phi) . $$
Now $ \mathcal{T}_{b, a-b,*}(\phi) $ is computed (modulo $ q-1$) by Corollary \ref{Tab*}. As $ \mathscr{Y}_{\varrho_{2}} \simeq \GL_{2}(\Oscr_{F} )$,  $ | P^{\circ} \backslash  P^{\circ}\varpi^{(a,c,d)} 
 \mathscr{Y}_{\varrho_{2}} | \equiv 1 \text{ or } 2 \pmod{q  - 1 }  $ depending on whether $ c = d = 2a $ or not. So one finds that 
 \begin{align*} [U H_{\varrho_{2}}]_{*}(\phi) & = \phi, \\ 
 [U \varpi^{(1,1,0,1)} H_{\varrho_{2}}  ]_{*}( \phi)  & \equiv2  ( 1 + z_{0}  ) \phi ,  \\
 [U \varpi^{(2,2,1,1)} H_{\varrho_{2}}]_{*}(\phi)  & \equiv ( 1 + z_{0}^{2}) \phi  , \\
 [ U \varpi^{(2,1,2,0)} H_{\varrho_{2}}]_{*}(\phi) &  \equiv  2   z_{0} \cdot  \phi . 
 \end{align*}
 From these, the claim easily  follows.  
\end{proof} 
\subsection{Convolutions with restrictions of  $\mathfrak{h}_{1}$}    In this subsection, we compute the convolution  $ \mathfrak{h}_{\varsigma_{i},*}(\phi) $ for $ i = 0 , 1 ,2 , 3 $. Recall that $ \varsigma_{i} = \sigma_{i} \tau_{1} $. Explicitly, 
$$ \varsigma_{0} =  
\scalebox{1.1}{$
\left(\begin{smallmatrix}
\varpi \, \,  & & & & 1   \\   
&     \varpi \,  \, & &  1  \\[0.1em] 
& &  \varpi   &  \\   
& & &   1 \\
& & & &   1      \\ 
& & & & & 1  
\end{smallmatrix}\right)$}
, \quad  \!  \varsigma_{1} =  \scalebox{1.1}{$\left(\begin{smallmatrix}
\varpi \, \,  & & & & 1  \\ 
& &  \varpi  \, \,  &  \\
&  \varpi \, \, & & 1  \\[0.2em]  
& & &  1 \\
& & & & & 1  \\ 
& & & &  1 
\end{smallmatrix}\right)$}, \quad  \!    \varsigma_{2} =   \scalebox{1.1}{$ \left ( \begin{smallmatrix}  
\varpi  & &  &   &   1    \\ 
& \varpi   &   &  1  & & 1      \\ & & \varpi  & &  1 \\ & & &   1 \\ & & & &  1 \\ & & & &  & 1 
\end{smallmatrix}\right )$} ,  \quad \!      \varsigma_{3} =   \scalebox{1.1}{$  \left ( \begin{smallmatrix}  
\varpi ^{2}  & &  &   &  \varpi    \\ 
& \varpi^{2}    &   &   \varpi & &   1    \\ & & \varpi^{2}  & & 1 \\ & & &  1 \\ & & & & 1 \\ & & & &  & 1 
\end{smallmatrix} \right ) $} $$ 

\begin{notation}  \label{strucvarsig1not} Let $ \mathbf{P} $, $ \ess $ and $ \partial $ be as in \S \ref{strucvarrho1not} and define  embeddings 
\begin{alignat*}{4} \imath _{\varsigma_{0}}   : \mathbf{P}   & \hookrightarrow \mathbf{H} , &  \quad \quad     \jmath_{\varsigma_{0}} :  \mathbf{P}  & \hookrightarrow  \mathbf{H}  \\ 
(\gamma _{1}, \gamma _{2}) & \mapsto (\gamma _{1}, \ess  \gamma _{1}  \ess,  \partial    \gamma _{2}  \partial ^{-1}   )   & \quad \quad \quad   \quad    (\gamma _{1},  \gamma _{2})    & \mapsto     (\ess  \gamma _{1} \ess,  \gamma _{1}, 
\partial \gamma _{2}   \partial^{-1}   ) 
\end{alignat*} 
We denote $ \mathscr{X}_{\varsigma_{0}}  $, the common images $   \imath_{\varsigma_{0}}(P^{\circ} ) = \jmath_{\varsigma_{0}}( P^{\circ} ) $. 
We let $ M_{\varsigma_{0}} $ (resp., $ M_{\varsigma_{0}} '$)  denote the subgroup of $ U_{\varpi} $ in which the first and third (resp., second and third) components are identity.
We also let $ \mathbf{\pr}_{1,2} : \Hb \to \mathbf{P} $ denote the projection $ ( h_{1}, h_{2}, h_{3}) \mapsto  (h_{1}, h_{2} ) $.  Finally, we let $ \mathscr{Y}_{\varsigma_{0}} $, $ L_{\varsigma_{0}}, L'_{\varsigma_{0}}, P_{\varpi}^{\circ}   \subset P $ the projections of $ \mathscr{X}_{\varsigma_{0}} $, $ M_{\varsigma_{0}} $, $ M'_{\varsigma_{0}}  $,  $ U_{\varpi} $   respectively. 
\end{notation} 

\begin{lemma}  \label{varsig0struclemma}   $ H_{\varsigma_{0}} =   \mathscr{X}_{\varsigma_{0}}  M_{\varsigma_{0}}    =  \mathscr{X}_{\varsigma_{0}}  M _ { \varsigma_{0} } '  $.   
\end{lemma}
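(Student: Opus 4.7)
The plan mirrors the argument given for Lemma \ref{varrho1struclemma}. I would begin by writing a generic $h = (h_{1}, h_{2}, h_{3}) \in H$ in the notation of \ref{notationh} and computing the conjugate $\varsigma_{0}^{-1} h \varsigma_{0}$ explicitly as a $6 \times 6$ matrix. Writing $\varsigma_{0} = \tau_{1} = D + N$ with $D = \mathrm{diag}(\varpi, \varpi, \varpi, 1, 1, 1)$ and $N = E_{1,5} + E_{2,4}$, the relations $N^{2} = 0$, $DN = \varpi N$, $ND^{-1} = N$ give $\tau_{1}^{-1} = D^{-1} - \varpi^{-1} N$, which makes the conjugation tractable. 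The resulting matrix has all its entries either equal to one of $a, c, d, -c_{1}, a_{1}, a_{2}, c_{1}, d_{1}, d_{2}$ or their multiples by $\varpi$ (automatically integral for $h \in H$), except at positions $(1,4)$, $(1,5)$, $(2,4)$, $(2,5)$, $(3,6)$, where they read
\[
\varpi^{-1}(b - c_{1}), \qquad \varpi^{-1}(a - d_{1}), \qquad \varpi^{-1}(a_{1} - d), \qquad \varpi^{-1}(b_{1} - c), \qquad \varpi^{-1} b_{2}
\]
respectively. Imposing $\varsigma_{0}^{-1} h \varsigma_{0} \in K$ then extracts the precise integrality conditions characterizing $H_{\varsigma_{0}}$:
\begin{enumerate}[label = \normalfont(\roman*)]
\item $h_{1}, h_{2} \in \GL_{2}(\Oscr_{F})$ and $h_{3} \in \partial \GL_{2}(\Oscr_{F}) \partial^{-1}$ (equivalently $a_{2}, d_{2} \in \Oscr_{F}$, $b_{2} \in \varpi \Oscr_{F}$, $c_{2} \in \varpi^{-1} \Oscr_{F}$);
\item $h_{2} \equiv \ess h_{1} \ess \pmod{\varpi}$;
\item the common similitude of $(h_{1}, h_{2}, h_{3})$ lies in $\Oscr_{F}^{\times}$.
\end{enumerate}

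Next I would verify the inclusions $\mathscr{X}_{\varsigma_{0}}, M_{\varsigma_{0}}, M'_{\varsigma_{0}} \subset H_{\varsigma_{0}}$. For $\mathscr{X}_{\varsigma_{0}}$, conditions (i)-(iii) hold for $h = \imath_{\varsigma_{0}}(\gamma_{1}, \gamma_{2})$ by construction: $h_{2} = \ess h_{1} \ess$ identically (giving (ii)), $h_{3} = \partial \gamma_{2} \partial^{-1}$ with $\gamma_{2} \in \GL_{2}(\Oscr_{F})$ (giving (i)), and the common similitude equals $\det(\gamma_{1}) \in \Oscr_{F}^{\times}$ (giving (iii)); the same verification works with $\jmath_{\varsigma_{0}}$. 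For $M_{\varsigma_{0}}$ and $M'_{\varsigma_{0}}$, inclusion is immediate since their elements sit in $U_{\varpi} \subset U \subset K$, which manifestly satisfies all three conditions.

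For the reverse inclusion, given $h \in H_{\varsigma_{0}}$, I set $x := \imath_{\varsigma_{0}}(h_{1}, \partial^{-1} h_{3} \partial) \in \mathscr{X}_{\varsigma_{0}}$ (well-defined by (i) and (iii)) and $m := x^{-1} h$. By construction $m$ has first and third components equal to the identity, and its second component equals $(\ess h_{1} \ess)^{-1} h_{2}$, which lies in $1 + \varpi \, \mathrm{Mat}_{2}(\Oscr_{F})$ by (ii); hence $m \in M_{\varsigma_{0}}$ and $h = x m$. The factorization $H_{\varsigma_{0}} = \mathscr{X}_{\varsigma_{0}} M'_{\varsigma_{0}}$ is strictly symmetric: use $\jmath_{\varsigma_{0}}$ in place of $\imath_{\varsigma_{0}}$, setting $x' := \jmath_{\varsigma_{0}}(h_{2}, \partial^{-1} h_{3} \partial)$, so that $(x')^{-1} h$ has trivial second and third components and first component $(\ess h_{2} \ess)^{-1} h_{1} \in 1 + \varpi \, \mathrm{Mat}_{2}(\Oscr_{F})$, again by (ii). The only step requiring real care is the bookkeeping of entries in the initial conjugation; once (i)-(iii) are correctly identified as the defining conditions of $H_{\varsigma_{0}}$, both factorizations are formal.
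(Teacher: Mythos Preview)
Your proof is correct and follows the same route as the paper: compute $\varsigma_{0}^{-1} h \varsigma_{0}$ explicitly, read off the defining integrality conditions, and then factor any $h \in H_{\varsigma_{0}}$ through $\mathscr{X}_{\varsigma_{0}}$ via $\imath_{\varsigma_{0}}$ (resp.\ $\jmath_{\varsigma_{0}}$) to land in $M_{\varsigma_{0}}$ (resp.\ $M'_{\varsigma_{0}}$). One minor slip: the parenthetical ``automatically integral for $h \in H$'' is misleading since $H$ consists of $F$-points with no integrality imposed, but the conditions (i)--(iii) you extract are nonetheless correct and the remainder of the argument goes through.
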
 

\begin{proof}  Writing $ h \in H $ as in Notation \ref{notationh}, we see that $$  \varsigma_{0} ^{-1} h \varsigma_{0} =   
\begin{pmatrix}
a & -c_1  & & \mfrac{b - c_1 }{\varpi } & \mfrac{a - d_1 }{\varpi } &  \\[0.5em] 
- c  & a_1  &  & \mfrac{a_1 - d  }{\varpi } & \mfrac{b_1 -  c }{\varpi } &  \\[0.5em] 
 &  & a_2  &  &  & \mfrac{b_2 }{\varpi }\\[0.5em] 
c \varpi  &  &  & d &  c &  \\[0.5em]    
 & c_1 \,\varpi  &  & c_1  & d_1  &  \\[0.5em] 
 &  & c_2 \,\varpi  &  &  & d_2 
\end{pmatrix}
$$  
Then one easily verifies that $ \mathscr{X}_{\varsigma_{0}} $, $ N_{\varsigma_{0}} $,  $ M_{\varsigma_{0}} $, $ M'_{\varsigma_{0}} $  are contained in $ H_{\varsigma_{0}} $. On the other hand if $ h = (h_{1}, h_{2}, h_{3}) \in H_{\varsigma_{0}} $, the above matrix is in $ K $ which implies that  $  h_{1}$, $ h_{2}$ and $  \partial ^{-1} h_{3}  \partial 
 \in  \GL_{2}(\Oscr_{F} ) $. It follows that  $     \eta : =  \imath_{\varsigma_{0}}    \left  ( h_{1}, \partial h_{3}  \partial  ) \right   )  ,
\gamma :=  \jmath_{\varsigma_{0}} (h_{2}, \partial^{-1}h_{3} \partial)   \in  \mathscr{X}_{\varsigma_{0}}    $ and $  \eta  ^{-1}  h  \in  M_{\varsigma_{0}} $, $ \gamma^{-1} h \in  M'_{\varsigma_{0}} $.    
\end{proof}

\begin{proposition}  \label{tfhvarsig0action}  Modulo $ q - 1 $, 
\begin{enumerate} [label = \normalfont(\alph*), itemsep= 0.3em, after = \vspace{0.1em} , before = \vspace{0.1em} ] 
\item $ \mathfrak{a}_{\varsigma_{0},*}(\phi) \equiv  5  (1+z_{0})  \phi  $,   
\item $  \mathfrak{b}_{\varsigma_{0},*}(\phi)  \equiv   ( 4  +  14 z_{0} +  4 z_{0} ^{2} )  \phi     $   
\item  $ \mathfrak{c}_{\varsigma_{0},*}  (\phi) \equiv  ( 1 +  z_{0})^{3} \cdot   
\phi   $ 
\end{enumerate} 
and $ \mathfrak{h}_{\varsigma_{0},*}(\phi)   =   
\mathfrak{h}_{\varsigma_{1},*}(\phi)    \equiv 0 $. 
\end{proposition}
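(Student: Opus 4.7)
The equality $\mathfrak{h}_{\varsigma_1,*}(\phi) = \mathfrak{h}_{\varsigma_0,*}(\phi)$ is immediate from (\ref{tfhvarsig1}): $w_2$ lies in $K \cap N_G(H) \cap N_G(U)$ and acts on $H$ by conjugation via the involution $(h_1, h_2, h_3) \mapsto (h_1, h_3, h_2)$, so in particular $\pr_1 \circ \mathrm{Ad}(w_2) = \pr_1$ on $H$. The bijection $U \varpi^\lambda H_{\varsigma_0}/H_{\varsigma_0} \xrightarrow{\sim} U \varpi^{s_2(\lambda)} H_{\varsigma_1}/H_{\varsigma_1}$, $\gamma \mapsto w_2 \gamma w_2$, therefore identifies the two pushforward sums term-by-term, and the second identity of the proposition reduces to the first. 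It thus suffices to establish (a), (b), (c) and verify that the resulting combination (\ref{tfhvarsig0}) vanishes modulo $q-1$.

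For each $\lambda = (a,b,c,d)$ occurring in the formulas for $\mathfrak{a}_{\varsigma_0}$, $\mathfrak{b}_{\varsigma_0}$, $\mathfrak{c}_{\varsigma_0}$, as well as $\lambda = 0$ (accounting for $(UH_{\varsigma_0})$), I will compute $\xi_\lambda := [U\varpi^\lambda H_{\varsigma_0}]_*(\phi)$ modulo $q - 1$ via Lemma \ref{varsig0struclemma}, which gives $H_{\varsigma_0} = \mathscr{X}_{\varsigma_0} M_{\varsigma_0}$. Projecting via $\pr_{1,2} : \Hb \to \mathbf{P}$ exactly as in Proposition \ref{tfhvarrho1action}, one obtains the factorization
\[
\xi_\lambda \;=\; d(\lambda) \cdot \mathcal{T}_{b, a-b, *}(\phi),
\]
where $d(\lambda) := |P^\circ \backslash P^\circ \varpi^{(a,c,d)} \pr_{2,3}(H_{\varsigma_0})|$ absorbs the contribution of the non-acting slots, and the $\GL_2$-Hecke factor on the $\pr_1$-component is evaluated modulo $q - 1$ by Corollary \ref{Tab*}.

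For cocharacters of depth at most one in the $\beta_0$ and $\beta_2$ directions, the conjugate $\varpi^{-\lambda} U_\varpi \varpi^\lambda$ is contained in $U$ and $d(\lambda)$ reduces to an index of $\mathscr{Y}_{\varsigma_0} \cong \GL_2(\Oscr_F)$ against an Iwahori-type subgroup $I_1^\pm$ translated by $\varpi^{(a,c,d)}$, taking the value $1$, $2$, or $4$ modulo $q - 1$ according to the dominance pattern — in direct analogy with (\ref{varrho1easy}). The degenerate cocharacters where a pairing inequality becomes an equality (most notably $\lambda = 0$, yielding $[UH_{\varsigma_0}]_*(\phi) \equiv 2\phi$, together with $(1,1,0,0)$ in $\mathfrak{a}_{\varsigma_0}$, $(2,1,1,1)$ in $\mathfrak{b}_{\varsigma_0}$, and all three cocharacters of $\mathfrak{c}_{\varsigma_0}$) are handled by the parahoric decomposition argument of \cite[Lemma 5.9.3]{CZE}, mirroring the treatment of $\lambda = (2,1,2,0)$ at the end of the proof of Proposition \ref{tfhvarrho1action}. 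Collecting the resulting contributions yields the polynomials $5(1+z_0)$, $4 + 14z_0 + 4z_0^2$, and $(1+z_0)^3$ claimed in (a), (b), (c).

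Substituting into (\ref{tfhvarsig0}), the vanishing of $\mathfrak{h}_{\varsigma_0, *}(\phi)$ modulo $q - 1$ reduces to the polynomial identity
\[
-2 (1+z_0^3) + 5(1+z_0)^3 - (1+z_0)(4 + 14z_0 + 4z_0^2) + (1+z_0)^3 \;=\; 0,
\]
which follows by factoring $(1+z_0)$ out of the last three terms to obtain $(1+z_0)(2 - 2z_0 + 2z_0^2) = 2(1+z_0^3)$, canceling with the first. The principal obstacle will be the index bookkeeping for the degenerate cocharacters — particularly in $\mathfrak{b}_{\varsigma_0}$, where five distinct double cosets must combine to produce the coefficient $14$ of $z_0$ — and verifying that the twisted conjugates $\varpi^{-\lambda} U_\varpi \varpi^\lambda$ match correctly with the Iwahori strata of $\pr_{2,3}(H_{\varsigma_0})$ in these boundary situations.
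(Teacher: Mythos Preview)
Your factorization $\xi_\lambda = d(\lambda)\cdot\mathcal{T}_{b,a-b,*}(\phi)$ with $d(\lambda)=|P^\circ\backslash P^\circ\varpi^{(a,c,d)}\pr_{2,3}(H_{\varsigma_0})|$ does not follow from the analogy with Proposition~\ref{tfhvarrho1action}, and this is where the argument breaks. In the $\varrho_1$ case the embedding $\imath_{\varrho_1}(\gamma_1,\gamma_2)=(\partial\gamma_1\partial^{-1},\gamma_2,s\gamma_2 s)$ makes the \emph{first} component of $\mathscr{X}_{\varrho_1}$ independent of the other two, so projecting via $\pr_{2,3}$ genuinely separates a counting factor from the Hecke action on $\phi$. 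In the $\varsigma_0$ case the embedding is $\imath_{\varsigma_0}(\gamma_1,\gamma_2)=(\gamma_1,s\gamma_1 s,\partial\gamma_2\partial^{-1})$, so the first and second components are \emph{coupled} through $\gamma_1$; only the third component is independent. Consequently $\pr_{2,3}(H_{\varsigma_0})$ does not carry the information needed to isolate a pure $\GL_2$-Hecke factor on the first slot, and your formula for $d(\lambda)$ is not the correct fiber count. (Your sentence even mixes the two projections: you write ``projecting via $\pr_{1,2}$'' but then define $d(\lambda)$ through $\pr_{2,3}$.)

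The paper proceeds differently: it splits off the genuinely independent \emph{third} component, contributing the scalar $|Q^\diamond\varpi^{(a,d)}Q^\circ/Q^\circ|$, and then handles the coupled first-two components through $\mathscr{Y}_{\varsigma_0}=\pr_{1,2}(\mathscr{X}_{\varsigma_0})\cong U_1$. The remaining sum is over $\mathscr{Y}_{\varsigma_0}/(\mathscr{Y}_{\varsigma_0}\cap P^\circ_{(a,b,c)})$, and the structure of this intersection depends on \emph{both} $\alpha_0(\lambda)=2b-a$ and $\beta_0(\lambda)=2c-a$ via the quantities $u_\lambda,v_\lambda$, not on $(c,d)$ alone. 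Your list of ``degenerate'' cocharacters reflects this confusion: the one case actually requiring \cite[Lemma 5.9.3]{CZE} is $\mu=(2,2,0,1)$ (where $|\alpha_0|=|\beta_0|=2$), not $(1,1,0,0)$, $(2,1,1,1)$, or the cocharacters in $\mathfrak{c}_{\varsigma_0}$, all of which fall under the easy formula~(\ref{varsig0easy}). Your reduction for $\mathfrak{h}_{\varsigma_1}$ via $w_2$-conjugation and the final polynomial check are both fine.
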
 
\begin{proof}   Let $ \lambda = (a,b,c,d ) \in \Lambda $ and $ \xi_{\lambda} $ denote $ [H_{\varsigma_{0}} \varpi^{\lambda} U ](\phi) $. Let   $ Q^{\circ} : = \GL_{2}(\Oscr_{F})  $ and $ Q^{\diamond}  \subset \GL_{2}(F) $ the  conjugate of $ Q^{\circ} $  by $ \partial =    \begin{psmallmatrix} & \varpi \\ 1 \end{psmallmatrix}  $. 
Lemma  \ref{varsig0struclemma} implies that 
\begin{align*} H_{\varsigma_{0}} \varpi^{\lambda} U / U  &  \to  
\pr_{1,2}  \left ( H_{\varsigma_{0}} \varpi^{\lambda} U / U  \right )  \times 
Q^{\diamond}  
\varpi^{(a,d)} Q^{\circ} / Q^{\circ} \quad  \\ 
(\gamma_{1}, \gamma_{2}, \gamma_{3} ) U  &  \mapsto  \left ( (\gamma_{1}, \gamma_{2}) \pr_{1,2}(U) ,  \gamma_{3} Q^{\circ}  \right )  
\end{align*} 
is a bijection. Now $ \left  |Q^{\diamond} \varpi^{(a,d)}  Q^{\circ} / Q^{\circ}   \right |  = \left | Q^{\circ}  \varpi^{(a-1,d) } Q^{\circ} / Q^{\circ}  \right | $ which equals $ q^{|a-1-2d|}(q+1) $ if $ a -1 \neq 2d $ and $ 1 $ otherwise. It remains to describe $ \mathrm{pr}_{1,2}\left ( H_{\varsigma_{0}} \varpi^{\lambda} U / U   \right )   \subset P / P^{\circ}   $.  By  Lemma  \ref{varsig0struclemma},   $ \pr_{1,2}(  H_{\varsigma_{0}} )  = \mathscr{Y}_{\varsigma_{0}} L_{\varsigma_{0}} =  \mathscr{Y}_{\varsigma_{0}} L'_{\varsigma_{0}} $. If $ | \beta_{0}(\lambda) | \leq 1  $ (resp., $ |\alpha_{0}(\lambda)| \leq 1  $),  then  the conjugate of $ L_{\varsigma_{0}} $ (resp., $ L_{\varsigma_{0}}'  $) by $ \varpi^{\lambda} $ is contained in $ U $. So if $ \mathrm{min} \left \{ |  \alpha_{0}(\lambda) |  , |  \beta_{0}(\lambda) |  \right \} \in \left \{ 0 ,1 \right \} $, we have \begin{equation}    \label{varsig0easy}   \pr_{1, 2} \left (  H_{\varsigma_{0}} \varpi^{\lambda}  U / U    \right )    = \mathscr{Y} _{\varsigma_{0}}  \varpi^{(a,b,c)} P^{\circ} /  P^{\circ}   \end{equation}  
where we write $ \varpi^{(a,b,c)} $ for $ \pr_{1,2}(\varpi^{\lambda} ) $.  
To describe a system of representatives for  $ \mathscr{Y}_{\varsigma_{0}}  \varpi^{(a,b,c)} P^{\circ} / P^{\circ} $, it suffices to describe one for $  \mathscr{Y} _{\varsigma_{0}} /  ( \mathscr{Y} _{\varsigma_{0}} \cap P^{\circ}_{(a,b,c)} )  $ where $$ P^{\circ}_{(a,b,c)}  : = \varpi^{(a,b,c)} P^{\circ }  \varpi^{-(a,b,c) } $$ denotes the conjugate of $ P^{\circ} $ by $ \varpi^{(a,b,c)} $.      Since $ \mathscr{Y} _{\varsigma_{0}} $ is isomorphic to $  \GL_{2}(\Oscr_{F}) $ (via the projection $ \mathbf{P} \to \mathbf{H}_{1} $, $ (\gamma_{1}, \gamma_{2}  ) \mapsto \gamma_{1} $),    this can be done by viewing  intersection $ \mathscr{Y}_{\varsigma_{0}} \cap P^{\circ}_{(a,b,c)} $ as a subgroup of $  U_{1} =  \GL_{2}(\Oscr_{F} )$ and comparing it with the  Iwahori subgroups $ I_{1}^{\pm} $. For this purpose, it will be convenient to introduce the quantities
$$ u_{\lambda}  = \mathrm{max}  \left \{  0, \alpha_{0}(\lambda) ,    -  \beta_{0}(\lambda) \right \} ,  \quad   \quad   v_{\lambda}  =  \mathrm{max} \left  \{  0 ,  - \alpha_{0}(\lambda  ) , \beta_{0}(\lambda)  \right \} . $$ 
These describe the valuations of the upper right and lower left entries of a matrix in  $ \mathscr{Y}_{\varsigma_{0}} \cap P^{\circ}_{(a,b,c)}   $. 

The case where $ \mathrm{min} \left \{ |a_{0}(\lambda) |  , |  \beta_{0}(\lambda)  | \right \} \geq 2 $ requires a little more work (though it will only occur once in this proof).  Here  
we invoke \cite[Lemma 5.9.3]{CZE} for the product $ \pr_{1,2}(H_{\varsigma_{0}}  )  =  \mathscr{Y}_{\varsigma_{0} }  P_{\varpi}^{\circ} $. Thus  
\begin{equation}   \label{varsig0hard}    \ch \left (  \pr_{1,2}   ( H_{\varsigma_{0}} \varpi^{\lambda} U  ) \right ) =  e^{-1} \sum  _{\gamma } \ch (  \gamma  P_{\varpi}^{\circ}  \varpi^{(a,b,c)} P^{\circ} )  
\end{equation} and  where $ \gamma $ runs over  (the finite set) $ \mathscr{Y}_{\varsigma_{0}} / \mathscr{Y}_{\varsigma_{0}} \cap P_{\varpi}^{\circ} $ and  
$ e =  e_{(a,b,c)}   : =  \big  [ \pr_{1,2} (   H _ { \varsigma_{0}} 
 )   \cap  P_{(a,b,c)} ^{\circ} \, :  \,  P^{\circ}_{\varpi} \cap P^{\circ}_{(a,b,c)}   \big   ]   $.  So the function $ \xi_{\lambda} $ can be computed by first computing $ \ch(P^{\circ}_{\varpi} \varpi^{(a,b,c)} P^{\circ} ) \cdot \phi $, then summing the translates of the result by representatives of $ \mathscr{Y}_{\varsigma_{0} } /  ( Y_{\varsigma_{0}} \cap P_{\varpi}^{\circ} ) $ and dividing the coefficients by $ e $. \\

\noindent (a) Recall that $ \mathfrak{a}_{\varsigma_{0}} =   (U \varpi^{(1,1,1,0)} H_{\varsigma_{0}}) + 
(U\varpi^{(1,1,0,1)}H_{\varsigma_{0}})+ 2 (U\varpi^{(1,1,0,0)}H_{\varsigma_{0}}) $.  Let $$  \lambda_{1} :  = (1,0,0,1) , \quad  \lambda_{2} :  = 
(1,0,1,0) , \quad \lambda_{3} : =  (1,0,1,1) .  $$    Then  $ \mathfrak{a}_{\varsigma_{0}, *} (\phi )  =  z_{0} \cdot ( \xi_{\lambda_{1} } + \xi_{\lambda_{2}} +  2 \xi_{\lambda_{3}} )  $. For each $ \lambda_{i} $, the formula  (\ref{varsig0easy})  applies. For $ \lambda  =  (a,b,c,d) \in  \left \{ \lambda_{2}, \lambda_{3}  \right \}  $, $ u_{\lambda} = 0 $ and $ v_{\lambda } = 1 $, so $ \mathscr{Y}_{\varsigma_{0}} \cap \varpi^{(a,b,c)} P^{\circ} \varpi^{-(a,b,c)} $ is identified with $ I_{1}^{+} $ and one easily sees that \begin{align*} 
 \xi_{\lambda_{2} }  & =  (q+1)  \, \mathcal{T}_{0,1}(\phi) \equiv 2 (\phi + \phi_{(1,1)})   \\     \xi_{\lambda_{3}} &  = \mathcal{T}_{0,1} (\phi ) \equiv \phi + \phi_{(1,1)}   
\end{align*} 
modulo $ q - 1 $.  
For $ \lambda = \lambda_{1} $, $ u_{\lambda} =  v_{\lambda} = 1 $ and we see that $ \mathscr{Y}_{\varsigma_{0}} \cap \varpi^{(1,0,0)} P^{\circ} \varpi^{-(1,0,0)} $ is identified with $ I_{1}^{+} \cap  I_{1}^{-} $. Thus a system of representatives for $  \mathscr{Y} _{\varsigma_{0}} /  ( \mathscr{Y} _{\varsigma_{0}} \cap \varpi^{(1,0,0)} P^{\circ} \varpi^{-(1,0,0) }  ) $ is obtained by multiplying a system of representatives for $  U_{1} / I_{1}^{+} $ with that for $ I_{1}^{+} / I_{1}^{+} \cap I_{1}^{-} $. So  $$ \xi_{\lambda_{1}} = 
\sum_{ \gamma \in U_{1}    / I_{1}^{+} }  \gamma  \sum_{\eta \in I_{1}^{+}/(I_{1}^{+}\cap I_{1}^{-}) } \eta  \varpi^{\lambda_{1}}  \cdot \phi   .  $$
Now $ \eta \varpi^{\lambda_{1}   }  \cdot \phi = \varpi^{\lambda_{1} } \cdot \phi $ for any $ \eta \in I_{1}^{+} $. So  the inner sum  equals $  q \phi $. The outer sum then evaluates to $ q ( \phi + q \phi_{(1,1)}) $. 
Thus  $ \xi_{\lambda_{1}} \equiv   (   \phi + \phi_{(1,1)}  ) $.
Putting everything together 
gives part (a).    \\

\noindent (b) Recall that  $$ \mathfrak{b}_{\varsigma_{0}}  =  (U   \varpi^{(2,2,1,1)}   H _{\varsigma_{0}} ) +  (U \varpi^{(2,1,2,1)}   H_{\varsigma_{0}}) + (U\varpi^{(2,2,0,1)} H _{\varsigma_{0}}) + (U \varpi^{(2,1,1,2)} H_{\varsigma_{0}}) + 4 (U\varpi^{(2,1,1,1)}H_{\varsigma_{0}}) . $$  For $ \mu   \in \left \{  (2,1,1,2), (2,1,1,1) \right \} $,  it is easy to see that   $$ [  U \varpi^{\mu} H_{\varsigma_{0}}]_{*}(\phi) \equiv 2  z_{0} \cdot \phi  $$ 
For $ \mu_{1} =  (2,2,1,1) $ and $ \mu_{2} = (2,1,2,1) $,  arguments similar to part (a)  reveal that $$ [U \varpi^{\mu_{1}} 
H_{\varsigma_{0}} ]_{*}  ( \phi ) \equiv  2 ( 1 + z_{0}^{2})  \phi   ,  \quad \quad 
 [U \varpi^{\mu_{2}} H_{\varsigma_{0}} ]_{*} ( \phi ) \equiv 4 z_{0} \cdot \phi . $$ 
This leaves $ \mu = (2,2,0,1) $. Denote $ \lambda = (4,2,2,2) - \mu = ( 2,0,2,1) $ and let $ e $ denote $ e_{(2,0,2)} $.  It is easy to see that $ \pr_{1,2}(H_{\varsigma_{0}}) \cap P^{\circ}_{(2,0,2)} $ is equal to the product of $ \mathscr{Y}_{\varsigma_{0}} \cap P_{(2,0,2)}^{\circ} $ with $  P_{\varpi} ^{\circ} \cap P_{(2,0,2)}^{\circ} $ and therefore $ e  =  [  \mathscr{Y}_{\varsigma_{0}} \cap P_{(2,0,2)}^{\circ} \, : \, 
 \mathscr{Y}_{\varsigma_{0}} \cap P_{\varpi} \cap  P_{(2,0,2)}^{\circ}  ] $.  
From this, one finds that $ e =  q $.  
Next we  compute that $$ \ch( P _ { \varpi  }  ^{\circ} \varpi^{(2,0,2)}  P^{\circ} ) \cdot  \phi   =    q \big (  \phi_{(0,1)}  -  \phi_{(1,1)} + q \phi_{(1,2)}  \big   )  . $$
Since $  \mathscr{Y}_{\varsigma_{0}} \cap P_{\varpi}^{\circ} \subset \mathscr{Y}_{\varsigma_{0}}  $ is identified with $ I_{1}^{+} \cap I_{1}^{-}  \subset \GL_{2}(\Oscr_{F} )  $, 
the  expression (\ref{varsig0hard})  reads   \begin{align*}  \xi_{\lambda}  &  =   \sum _ { h \in U_{1 }   / I _{1} ^ {  + } \cap  I_{1} ^{- }  } 
   h ( \phi_{(0,1)} - \phi_{(1,1)} + q \phi_{(1,2)} )  \\
& = 
\sum_ { \gamma \in   U_{1} / I_{1}^{+} }    
\gamma   \sum _ { \eta \in I_{1}^{+} / ( I_{1}^{+} \cap I_{1}^{-}) } \eta  ( \phi_{(0,1)} - \phi_{(1,1)} + q \phi_{(1,2)} ) 
\end{align*}
Then the inner sum is just multiplication by $ q $. The outer sum then evaluates to $$
q ( \phi +  q \phi_{(1,1)}  
)  -  q  ( q + 1 ) \phi_{(1,1) }  + q  ^ { 2 }   ( \phi_{(1,1)} +  q \phi_{(2,2)} )   =  q  \phi  + q  ( q- 1 )  \phi_{(1,1)} +  q^{3}   \phi_{(2,2) } $$ So  
we see that $\xi_{\lambda}  
= q \phi + q ( q - 1) \phi _{(1,1) }  + q^{3} \phi_{(2,2)} $  and  therefore  $$ [ U \varpi^{(2,2,0,1)} H_{\varsigma_{0}}]_{*}(\phi) =  (q+1)  z_{0}^{2} \cdot 
 \xi_{\lambda} \equiv 2   ( 1   + 
 z_{0}^{2} ) \phi. $$   Putting everything together, we find that  \begin{align*} 
\mathfrak{b}_{\varsigma_{0},*}( \phi)  &  \equiv 2(1+z_{0}^{2})\phi +  4 z_{0} \cdot \phi + 2 ( 1 + z_{0}^{2}) \phi +  2 z_{0}  \cdot  \phi +  4 ( 2 z_{0} \cdot \phi ) \\
& = ( 4 +  4z_{0}^{2} + 14 z_{0} )   \phi . 
\end{align*}

\noindent (c) 
We have $ \mathfrak{c}_{\varsigma_{0}} = 
(U\varpi^{(3,2,2,2)}H_{\varsigma_{0}})+
(U\varpi^{(3,3,1,1)}H_{\varsigma_{0}})+  (U\varpi^{(3,2,0,1)}H_{\varsigma_{0}}) $. For each of the three Hecke operators, the formula (\ref{varsig0easy}) applies and we find that \begin{align*}  [ U \varpi^{(3,2,2,2)}H_{\varsigma_{0} } ]_{*}  ( \phi ) & \equiv 2( z_{0} + z_{0}^{2} ) \phi , \\ 
[U \varpi^{(3,3,1,1)} H_{\varsigma_{0}}]_{*} (\phi) & \equiv  ( 1 + z_{0}^{3}) \phi ,  \\ 
[ U \varpi^{(3,2,0,1)}  H_{\varsigma_{2}} ] _{* } ( \phi )   &   \equiv (z_{0}   + z_{0}^{2} ) \phi 
\end{align*} 
from which (c) follows. \\

\noindent Now recall that  $ \mathfrak{h}_{\varsigma_{0}} = -(1+ \rho^{6}) (U H_{\varsigma_{0}})  +  ( 1+  2 \rho^{2} + \rho^{4} ) \mathfrak{a}_{\varsigma_{0}}  - ( 1 + \rho^{2} )   \mathfrak{b}_{\varsigma_{0}} +   \mathfrak{c}_{\varsigma_{0}} $. It is easy to see that $ [U H_{\varsigma_{0}}]_{*}(\phi) = ( q + 1  ) \phi $. So  by parts (a)-(c), we see  that 
\begin{align*}  \mathfrak{h}_{\varsigma_{0}, * }   ( \phi  )  &  \equiv    - 2  ( 1 +   z_{0}  ^{3} )   \phi      + ( 1 + z_{0})^{2}  \Big (  5 ( 1 + z_{0})  \phi \Big  )    -    ( 1 + z_{0}) \Big ( 4 + 14 z_{0} +  4 z_{0}  ^{2}   \Big ) \phi  +  (1 + z_{0})^{3}  \phi    \\
& =  ( 1 +z_{0} ) \Big ( \! - 2 + 2z_{0} - 2z_{0}^{2}  + 5 ( 1+ z_{0})^{2} - 4 - 14 z_{0} - 4z_{0}^{2} + ( 1 + z_{0}  )^{2}  \Big  )  \phi  
\\ &   = 0 
\end{align*} 
Finally since  $ \mathfrak{h}_{\varsigma_{1}} = w_{2}  \mathfrak{h}_{\varsigma_{0}} w_{2} $ (\ref{tfhvarsig1}) and $ w_{2} $ only swaps the second and third components of $ H $ and $ w_{2} $ normalizes $ U $, we see that $ \mathfrak{h}_{\varsigma_{1},*}(\phi) = \mathfrak{h}_{\varsigma_{0},*}(\phi) $. 
\end{proof}

\begin{notation}  \label{strucvarsig2not} We let $ A_{\varsigma_{2}} $ denote the intersection $ A \cap \varsigma_{2} K \varsigma_{2}^{-1} $ and   $ J_{\varsigma_{2}} \subset U  $ denote the Iwahori subgroups of triples $ (h_{1}, h_{2}, h_{3} ) \in U $ such that $ h_{1} , h_{3} $ reduce modulo $ \varpi $ to lower triangular matrices and $ h_{2} $ reduces to an upper triangular matrix.  We denote by $ M_{\varsigma_{2}}$ the three parameter additive  subgroup of all triples $ h = (h_{1},h_{2},h_{3}) \in U $ such that  $$   h_{1} = \begin{psmallmatrix} 1 \\  x & 1  \end{psmallmatrix}, \quad h_{2} = \begin{psmallmatrix} 1 & y  \\  &   1 \end{psmallmatrix} , \quad  h_{3}  =   \begin{psmallmatrix}  1 \\[0.2em]   y - x + \varpi z & 1  \end{psmallmatrix} $$ 
where $ x , y , z \in \Oscr_{F} $  are  arbitrary and by $ N_{\varsigma_{2}} $ the three  parameter  subgroup of all triples $  (h_{1}, h_{2},  h_{3}) $ of the form 
$$
h_{1} = 
\begin{psmallmatrix}  
1  & x \varpi  \\[0.1em]
& 1    
\end{psmallmatrix} , 
\quad    h_{2} = 
\begin{psmallmatrix} 
1  \\  
y \varpi   & 1  
\end{psmallmatrix},  \quad    h_{3} =   \begin{psmallmatrix}   1  &   z \varpi  \\[0.1em]    &  1    \end{psmallmatrix}  $$  
where $ x , y , z \in \Oscr_{F} $ are arbitrary. 
Finally, we let $ L_{\varsigma_{2}} $ the one-parameter subgroup of $U$ all triples of the form $ (1,1, \begin{psmallmatrix} 1 \\  z & 1 \end{psmallmatrix}  ) $ where $ z \in \Oscr_{F} $.   
\end{notation}

\begin{lemma}    \label{varsig2struclemma}   $ H_{\varsigma_{2}} $ is the product of $ A_{\varsigma_{2}} $, $ M_{\varsigma_{2}} $, $ N_{\varsigma_{2}} $ and $    J_{\varsigma_{2}} $ is the product of $ A^{\circ} $, $ H_{\varsigma_{2}} $, $ L_{\varsigma_{2}} $ where these products can be taken in any order.  
\end{lemma}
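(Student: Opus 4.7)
The plan is to compute $\varsigma_2^{-1} h \varsigma_2$ entry by entry for a general $h = (h_1, h_2, h_3) \in H$ written as in Notation \ref{notationh}, read off the integrality conditions that force this matrix to lie in $K$, and then verify both factorizations by an explicit reduction procedure.

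First I would expand $\varsigma_2 = \sigma_2 \tau_2$ using the definitions $\sigma_2 = \varrho_1 \varpi^{-\lambda_\circ}$ and the formula for $\tau_2$, and carry out the conjugation $\varsigma_2^{-1} h \varsigma_2$ in explicit block form. The resulting matrix entries will be $\Oscr_F$-linear combinations (with denominators being powers of $\varpi$) of the entries $a, b, c, d, a_i, b_i, c_i, d_i$ of $h_1, h_2, h_3$. Demanding that every entry lies in $\Oscr_F$ and that the similitude is a unit will produce a finite list of congruence conditions of the form ``$a - d_1 \in \varpi^? \Oscr_F$'', ``$b_2 - c_1 \in \varpi^? \Oscr_F$'', ``certain off-diagonal entries reduce modulo appropriate powers of $\varpi$'', etc. This is the main technical step and also the most error prone; most of the subsequent analysis is formal once these conditions are in hand.

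Next I would verify the first factorization. Using the explicit conditions, it is immediate that $A_{\varsigma_2}$, $M_{\varsigma_2}$ and $N_{\varsigma_2}$ are all contained in $H_{\varsigma_2}$ (each parametrizes matrices manifestly satisfying the conditions). To get the reverse containment, I will show that any $h \in H_{\varsigma_2}$ can be reduced to the identity by successively left-multiplying by elements of $N_{\varsigma_2}$, then $M_{\varsigma_2}$, then $A_{\varsigma_2}$. Concretely: the congruences on the ``upper-right'' entries of $h_1, h_2, h_3$ identify the three free parameters needed to kill these entries using $N_{\varsigma_2}$; the remaining congruences on the ``lower-left'' entries then exactly allow us to kill those using $M_{\varsigma_2}$; what remains is diagonal and lies in $A_{\varsigma_2}$. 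Uniqueness of the decomposition and the fact that the factors can be reordered (by verifying the three subgroups normalize one another up to the other two) finishes this part.

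For the second factorization, I would again argue by reduction. Given $h \in J_{\varsigma_2}$, the Iwahori condition means its mod-$\varpi$ reduction is upper/lower triangular in the prescribed pattern. I would first use an element of $L_{\varsigma_2}$ to adjust the off-diagonal entry of $h_3$ that distinguishes the $H_{\varsigma_2}$ condition (namely the one forced to satisfy the congruence $y - x + \varpi z$ linking the three components of $M_{\varsigma_2}$) from the bare Iwahori condition; then use an element of $A^\circ$ to adjust the diagonal so that it lies in $A_{\varsigma_2}$; the remainder then lies in $H_{\varsigma_2}$ by the first part. Containment of each factor in $J_{\varsigma_2}$ is immediate from the defining condition of $J_{\varsigma_2}$ (each factor reduces to a matrix of the required triangular shape modulo $\varpi$). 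The claim that the product can be taken in any order reduces, as before, to checking that each pair of factors normalizes one another modulo the third, which is a short check using the explicit parametrizations. The hardest part throughout is keeping track of the $(h_1, h_2, h_3)$-coupling imposed by the fibered product structure of $\Hb$, but this is bookkeeping rather than a genuine obstacle once the conjugation matrix is in hand.
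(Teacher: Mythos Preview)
Your plan is essentially the paper's approach: compute the conjugate $\varsigma_2^{-1} h \varsigma_2$ explicitly, read off the integrality conditions, check $M_{\varsigma_2}$, $N_{\varsigma_2}$, $A_{\varsigma_2}$ lie in $H_{\varsigma_2}$, and then reduce an arbitrary $h$ to the diagonal by successive multiplications. The paper does the reduction in the order $M$ then $N$ (choosing $m$ with $x=-c/a$, $y=-b_1/d_1$, $z=-(c_2/a_2+y-x)/\varpi$, then $n$ to kill the remaining off-diagonal entries), but your order $N$ then $M$ works just as well.

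One point to sharpen: your justification for ``any order'' via ``the three subgroups normalize one another up to the other two'' is not quite how it goes, and $M_{\varsigma_2}$ and $N_{\varsigma_2}$ do not normalize each other. The paper instead establishes both $H_{\varsigma_2}=M_{\varsigma_2}N_{\varsigma_2}A_{\varsigma_2}$ and $H_{\varsigma_2}=N_{\varsigma_2}M_{\varsigma_2}A_{\varsigma_2}$ directly by running the reduction in both orders, and then uses only that $A_{\varsigma_2}$ normalizes each of $M_{\varsigma_2}$ and $N_{\varsigma_2}$ to slide $A_{\varsigma_2}$ to any position, which yields all six orderings. The second factorization is handled the same way; your reduction via $L_{\varsigma_2}$ and $A^\circ$ is exactly what is needed.
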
 

\begin{proof}  It is easily verified that $ \varsigma_{2}^{-1} M_{\varsigma_{2}} \varsigma_{2} $, $ \varsigma_{2}^{-1} N \varsigma_{2} $ are contained in $ K $, so that $ M_{\varsigma_{2}} , N_{\varsigma_{2}} $ are subgroups of $ H_{\varsigma_{2}} $. 
Let $ h\in H_{\varsigma_{2}} $ and write $ h $ as in Notation \ref{notationh}. Then $$ \varsigma_{2} ^{-1} h  \varsigma_{2}   =    
\begin{pmatrix}
a & -c_1  &  & \mfrac{b-c_1 }{\varpi } & \mfrac{a -  d_1 }{\varpi } & -\mfrac{c_1 }{\varpi }\\[0.5em] 
-c & a_1  & -c_{2}  & \mfrac{a_1 -  d}{\varpi 
 } & \mfrac{b_1 - c - c_2 }{\varpi  } & \mfrac{a_1 -  d_2 }{\varpi }\\[0.5em] 
& -c_1  & a_2  & -\mfrac{c_1 }{\varpi } & \mfrac{a_2 - d _1 }{\varpi } &  \mfrac{ b_2 -c_1  } { \varpi  }  \\[0.5em] 
c\,\varpi  & & & d & c & \\[0.5em]  
& c_1 \,\varpi  & & c_1  & d_1  & c_1  \\[0.5em]  
 & & c_2  \varpi  & & c_{2}  & d_2
\end{pmatrix} . $$
It follows that $ h \in U $ and   $   c_{1}, b_{2} , b  \in  \varpi  \Oscr_{F}  $. In particular, $ H_{\varsigma_{2}} \subset J_{\varsigma_{2}} $ and  $ a, a_{1}, a_{2}, d , d_{1}, d_{2}  \in  \Oscr_{F}   ^ {  \times  } $.     Let $ m \in M_{\varsigma_{2}} $ be defined with parameters $ x =  - c/a  $, $ y = -b_{1}/d_{1} $   and   $ z =  -(c_{2}/a_{2} +y-x)/\varpi $ (see Notation \ref{strucvarsig2not}).  Write $ h' = m h $ as in Notation \ref{notationh} and  let $ n \in N_{\varsigma_{2}} $ be defined  with  paramaters $ x = -b'/d'\varpi $, $ -c_{1}'/a_{1}'\varpi $, $ z  =  - c_{2}'/ a_{2}' \varpi $ (see Notation \ref{strucvarsig2not}). Then  $  nmh  $ lies in $ A $, and hence in $ A _{\varsigma_{2}} $.   Thus $ H_{\varsigma_{2}} =  M_{\varsigma_{2}} N_{\varsigma_{2}} A_{\varsigma_{2}} $. Similarly we can show $ H _{\varsigma_{2}} =  N _{\varsigma_{2}} M_{\varsigma_{2}}   A_{\varsigma_{2}} $. Since $ A_{\varsigma_{2}} $ normalizes both $ M_{\varsigma_{2}} $, $ N_{\varsigma_{2}} $, the product  holds in  all possible  orders. This establishes the first claim. The second is established in completely analogous way.         
\end{proof}  
\begin{corollary}  \label{basicallyIwahorivarsig2}  If $ \lambda \in \Lambda $ satisfies $ \beta_{2}  (\lambda)   \leq  0  $, then 
$U \varpi^{\lambda} H_{\varsigma_{2}} = U \varpi^{\lambda} J_{\varsigma_{2}}$.  
\end{corollary}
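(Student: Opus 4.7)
The reverse inclusion $U\varpi^{\lambda}H_{\varsigma_{2}}\subseteq U\varpi^{\lambda}J_{\varsigma_{2}}$ is immediate since $H_{\varsigma_{2}}\subseteq J_{\varsigma_{2}}$ by Lemma \ref{varsig2struclemma}. So the content is in the forward inclusion. The plan is to exploit the factorization $J_{\varsigma_{2}}=A^{\circ}\cdot H_{\varsigma_{2}}\cdot L_{\varsigma_{2}}$ from Lemma \ref{varsig2struclemma} (with factors permutable), and rewrite $J_{\varsigma_{2}}=L_{\varsigma_{2}}\cdot A^{\circ}\cdot H_{\varsigma_{2}}$; the goal is then to absorb the leftmost factor $\varpi^{\lambda}L_{\varsigma_{2}}A^{\circ}$ into $U\varpi^{\lambda}$, using only the hypothesis $\beta_{2}(\lambda)\le 0$.

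The key step is the root-group computation for $L_{\varsigma_{2}}$. Under the embedding $\iota:\Hb\hookrightarrow\Gb$, an element $(1,1,\begin{psmallmatrix}1\\z&1\end{psmallmatrix})\in L_{\varsigma_{2}}$ corresponds to the identity matrix with $z$ placed at position $(6,3)$; in terms of the torus $\mathbf{A}$, position $(6,3)$ carries the character $(e_{0}-e_{3})-e_{3}=-\beta_{2}$. Thus $L_{\varsigma_{2}}$ is (contained in) the $\Oscr_{F}$-points of the root group $U_{-\beta_{2}}$, and for $\varpi^{\lambda}=\mathrm{dis}(\lambda)(\varpi)$ the conjugation rule gives
\[
\varpi^{\lambda}\,\ell(z)\,\varpi^{-\lambda}=\ell\bigl(\varpi^{-\beta_{2}(\lambda)}z\bigr).
\]
Under the hypothesis $\beta_{2}(\lambda)\le 0$, we have $-\beta_{2}(\lambda)\ge 0$, so $\varpi^{-\beta_{2}(\lambda)}z\in\Oscr_{F}$ whenever $z\in\Oscr_{F}$. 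Hence $\varpi^{\lambda}L_{\varsigma_{2}}\varpi^{-\lambda}\subseteq U$.

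To finish, note that $A^{\circ}\subseteq U$ and that $A^{\circ}$ commutes with $\varpi^{\lambda}\in A$, so $U\varpi^{\lambda}A^{\circ}=UA^{\circ}\varpi^{\lambda}=U\varpi^{\lambda}$. Combining this with the previous paragraph,
\[
U\varpi^{\lambda}J_{\varsigma_{2}}
=U\varpi^{\lambda}L_{\varsigma_{2}}A^{\circ}H_{\varsigma_{2}}
=U\bigl(\varpi^{\lambda}L_{\varsigma_{2}}\varpi^{-\lambda}\bigr)\varpi^{\lambda}A^{\circ}H_{\varsigma_{2}}
\subseteq U\cdot\varpi^{\lambda}A^{\circ}H_{\varsigma_{2}}
=U\varpi^{\lambda}H_{\varsigma_{2}},
\]
which proves the claim. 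The only real content is the root-group identification of $L_{\varsigma_{2}}$ with $U_{-\beta_{2}}$; everything else is bookkeeping enabled by Lemma \ref{varsig2struclemma}. I do not anticipate any obstacle here, since the hypothesis $\beta_{2}(\lambda)\le 0$ is precisely what is needed to make the relevant root-group conjugate integral.
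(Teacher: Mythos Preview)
Your argument is correct and matches the paper's approach exactly: the paper's proof is the one-line observation that $\varpi^{\lambda}L_{\varsigma_{2}}\varpi^{-\lambda}\subset U$ when $\beta_{2}(\lambda)\le 0$, invoking Lemma~\ref{varsig2struclemma}. You have simply unpacked this in full detail, including the root-group identification of $L_{\varsigma_{2}}$ with $U_{-\beta_{2}}$ and the absorption of $A^{\circ}$.
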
 
\begin{proof} This follows by Lemma \ref{varsig2struclemma} since $ \varpi^{\lambda} L_{\varsigma_{2}} \varpi^{-\lambda}  \subset U $ if $ \beta_{2}(\lambda)  \leq 0   $.  
\end{proof}  

Corollary \ref{basicallyIwahorivarsig2} reduces the computation of $ [U\varpi^{\lambda} H_{\varsigma_{2}}]_{*}(\phi) $ to $ [U\varpi^{\lambda} J_{\varsigma_{2}}]_{*}(\phi) $ for almost all Hecke operators appearing  in $ \mathfrak{h}_{\varsigma_{2},*}  $, which we  can be calculated   efficiently  using  Lemma  \ref{Iuvlemma}.   The few exceptions are handled below. 
\begin{lemma}  \label{exceptionsvarsigma2}   Modulo $ q - 1 $, we have 
\begin{enumerate} [label = \normalfont(\alph*), itemsep= 0.3em, after = \vspace{0.1em} , before = \vspace{0.1em} ]
\item $ [U \varpi^{(1,1,0,1)} H_{\varsigma_{2}} ]_{*}(\phi) \equiv   ( 1 + z_{0} ) \phi -  z_{0}  \cdot  \phi_{(1,0)}   ,  $ 
\item $ [U \varpi^{(1,0,1,1)} H_{\varsigma_{2}}]_{*}(\phi) \equiv z_{0} \cdot  \phi_{(1,0)}  , $ 
\item $  [U\varpi^{(2,1,1,2)} H_{\varsigma_{2}}]_{*} (\phi) \equiv  z_{0} \cdot \phi  $ 
\item $ [U \varpi^{(3,2,1,2)}\psi H_{\varsigma_{2}}]_{*}( \phi) \equiv 0 
$ 
\end{enumerate}
\end{lemma}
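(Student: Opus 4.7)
The plan is, for each part, to write an explicit system of representatives for $U \backslash U\varpi^\lambda g H_{\varsigma_2}$ (with $g = 1$ for (a)--(c) and $g = \psi$ for (d)) and sum the action of these representatives on $\phi$. Since the action of $H$ on $\mathcal{S}_X$ factors through $\pr_1 : H \to H_1$, only the first $\GL_2$-component of each representative matters, and the answers will be combinations of the basic functions $\phi_{(m,n)}$.

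For parts (a)--(c), the condition $\beta_2(\lambda) > 0$ prevents direct application of Corollary \ref{basicallyIwahorivarsig2}: the one-parameter group $L_{\varsigma_2}$ from Notation \ref{strucvarsig2not} is not absorbed by $\varpi^{-\lambda} U \varpi^\lambda$, so $U\varpi^\lambda H_{\varsigma_2}$ is a proper subset of the Iwahori double coset $U\varpi^\lambda J_{\varsigma_2}$. The route I would take is first to compute the simpler operator $[U\varpi^\lambda J_{\varsigma_2}]_*(\phi)$, using the Iwahori Schubert decomposition (Proposition \ref{SchubertGSp6}) in the $\mathrm{GSp}_6$-direction together with the $\GL_2$-formulas of Lemma \ref{Iuvlemma} acting on the first and third factors, and then to bootstrap to $[U\varpi^\lambda H_{\varsigma_2}]_*(\phi)$ using the factorization $J_{\varsigma_2} = H_{\varsigma_2} L_{\varsigma_2}$ from Lemma \ref{varsig2struclemma} and the index $[L_{\varsigma_2} : L_{\varsigma_2}\cap H_{\varsigma_2}] = q$. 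Mackey functoriality produces an intertwining relation between the two operators, and inverting it modulo $q-1$ yields the claimed formulas, with the $L_{\varsigma_2}$-correction being responsible for the extra term $\phi_{(1,0)}$ in (a) and (b) and its absence in (c).

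For part (d), the element $\psi$ lies in $J_{\varsigma_2} \setminus H_{\varsigma_2}$ (the formula in the proof of Lemma \ref{varsig2struclemma} gives a non-integral entry $-1/\varpi$ in $\varsigma_2^{-1}\psi\varsigma_2$), so $\psi H_{\varsigma_2}$ is a nontrivial coset inside $J_{\varsigma_2}$. The first $\GL_2$-component of $\varpi^{(3,2,1,2)}\psi$ is the non-diagonal matrix $\bigl(\begin{smallmatrix}\varpi^2 & \\ \varpi & \varpi\end{smallmatrix}\bigr)$, which sends $\Oscr_F^{2}$ to the lattice $X_{2,1}$, so the action of $\varpi^{(3,2,1,2)}\psi$ on $\phi$ is (up to sign convention) the function $\phi_{(2,1)}$. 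The main obstacle is showing that after summing over a system of $H_{\varsigma_2}$-coset representatives the resulting sum vanishes modulo $q-1$. This is not forced by any generic degree argument; rather, I expect it to follow from an explicit pairing among the coset representatives whose first $\GL_2$-components differ by an upper-triangular unipotent element of $U_1$, which fixes $\phi$, so that the total contribution is a multiple of $q-1$. This cancellation mirrors the observation (noted in the remark following Proposition \ref{R1scrhard}) that $U\varpi^{(3,2,1,2)}\psi H_{\varsigma_2}$ is the unique double coset arising from $\mathfrak{H}$ whose degree vanishes modulo $q - 1$, and I expect the proof to amount to promoting that divisibility from the constant term to the full $\phi$-valued output.
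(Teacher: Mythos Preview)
Your overall plan differs substantially from the paper's, and the key steps you propose are not justified.

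\textbf{Parts (a)--(c).} The paper does not pass through $J_{\varsigma_2}$ at all. Instead it uses the product decomposition $H_{\varsigma_2} = A_{\varsigma_2} M_{\varsigma_2} N_{\varsigma_2}$ of Lemma~\ref{varsig2struclemma} directly. For each $\lambda$ (after a central shift), one checks that $N_{\varsigma_2}$ and $A_{\varsigma_2}$ are absorbed by $\varpi^{-\lambda} U \varpi^{\lambda}$, so that $H_{\varsigma_2}\varpi^{\lambda}U/U = M_{\varsigma_2}\varpi^{\lambda}U/U$; this coset space is then described explicitly by letting the parameters of $M_{\varsigma_2}$ run over $[\kay]$, and the sum is computed by hand. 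Your proposed route---compute $[U\varpi^\lambda J_{\varsigma_2}]_*(\phi)$ and then ``invert'' a Mackey relation---has a genuine gap: the Mackey decomposition of $[U\varpi^\lambda J_{\varsigma_2}]_*$ with respect to $H_{\varsigma_2}\subset J_{\varsigma_2}$ in general produces a \emph{sum} of operators $[U\varpi^\lambda g_i H_{\varsigma_2}]_*$ over the $(U,H_{\varsigma_2})$-double cosets $g_i$ inside $U\varpi^\lambda J_{\varsigma_2}$, and isolating the $g_i=1$ term requires either computing the others or showing there is only one---both of which amount to the direct coset analysis you were trying to avoid. Your claim that an ``$L_{\varsigma_2}$-correction'' produces the $\phi_{(1,0)}$ term is also off: $L_{\varsigma_2}$ lives entirely in the third $\GL_2$-component and so cannot alter the shape of the function on $X$; at best it changes a scalar multiplicity. (In fact the Iwahori answer $\mathcal{I}^-_{\bullet,\bullet}(\phi)$ already matches the $H_{\varsigma_2}$ answer modulo $q-1$ in these cases, so no correction is needed---but establishing \emph{why} requires exactly the structural input you skipped.)

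\textbf{Part (d).} Your speculated mechanism---a pairing of representatives whose first components differ by an upper unipotent in $U_1$---is not what happens. The paper again works directly: after checking $\eta^{-1}N_{\varsigma_2}\eta\subset U$ for $\eta=\psi^{-1}\varpi^{(1,0,1,0)}$, one is reduced to $A_{\varsigma_2}M_{\varsigma_2}\eta U/U$. The representatives are
\[
C(s,t,u)=\Big(\begin{psmallmatrix}1\\s&\varpi\end{psmallmatrix},\begin{psmallmatrix}\varpi&t\\&1\end{psmallmatrix},\begin{psmallmatrix}1\\u+t-s&\varpi\end{psmallmatrix}\Big),\qquad s,t\in[\kay],\ u\in[\kay]^\times,
\]
where the parameter $u$ arises from the \emph{torus} $A_{\varsigma_2}$ (whose congruence constraints force a unit parameter modulo $\varpi$ once $\psi$ is present). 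Since the first component depends only on $s$, for each fixed $s$ one obtains $q(q-1)$ cosets giving the same contribution to $\phi$, so the sum is visibly divisible by $q-1$. There is no pairing by unipotents; the divisibility comes from the $q-1$ choices of the torus parameter $u$.
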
  
\begin{proof}   
For $ \lambda \in \Lambda $, we will denote $  \xi_{\lambda} := [H_{\varsigma_{2} }  \varpi^{\lambda} U ](\phi) $.  \\

\noindent (a) This equals $ z_{0} \cdot \xi_{\lambda} $ where $ \lambda = (1,0,1,0) $. Since $ \lambda $ has depth one, we have   $ H_{\varsigma_{2}}  \varpi^{\lambda} U = M_{\varsigma_{2}} \varpi^{\lambda} U $. Now  
\[ M_{\varsigma_{2}} \varpi^{\lambda} U / U = \left \{  \left (  \begin{psmallmatrix} 1 \\[0.1em]  x & \varpi  \end{psmallmatrix} 
, 
 \begin{psmallmatrix} \varpi & y \\[0.1em] &  1  \end{psmallmatrix} ,  \begin{psmallmatrix} 1   \\[0.1em]  y - x & \varpi  \end{psmallmatrix}    \right ) U \, | \, x, y \in \Oscr_{F} \right \}.
 \] 
and it is easy to see that a system of representatives for  $ M_{\varsigma_{2}} \varpi^{\lambda} U/U $ is obtained by allowing the parameters $ x, y $ in the set above to run over $ [\kay] $. Using this system, one calculates that $ \xi_{\lambda}  =   \phi - \phi_{(1,0)} - \phi_{(1,1)} $. \\

\noindent (b)  This equals $ z_{0} \cdot \xi_{\lambda} $ where $ \lambda = (1,1,0,0) $. As in part (a), we have $  H_{\varsigma_{2}} \varpi^{\lambda} U  = M_{\varsigma_{2}} \varpi^{\lambda} U $ and it is easy to see that \[ M _{\varsigma_{2}} \varpi^{\lambda} U / U = \left \{ \left (1 , 1 ,  \begin{psmallmatrix} 1 \\[0.1em]   t  &   1 \end{psmallmatrix}  \right )  \varpi^{\lambda} U \, | \, t \in \Oscr_{F} \right \} . \]
A set of representatives is obtained by allowing the parameter $ t $ to run over elements of $ [\kay] $. Thus $ \xi_{\lambda} = q \phi_{(1,0)} $.  \\  

\noindent (c) This  expression equals $ z_{0}^{2} \cdot  \xi_{\lambda} $ where $ \lambda  =   (2,1,1,0) $. As the first two components of $ \varpi^{\lambda} $ are central and $ \beta_{2}(\lambda) \geq 0 $, we have $ \varpi^{-\lambda} N_{\varsigma_{2}}\varpi^{\lambda} \subset U $ and so $ H_{\varsigma_{2}} \varpi^{\lambda} U = M _{\varsigma_{2}} \varpi^{\lambda} U / U $.  Using the centrality of the first two componetns again, we see that $$   M_{\varsigma_{2}} \varpi^{\lambda} U / U  = \left \{  \left ( 1 , 1 ,  \begin{psmallmatrix}  1 & \\[0.1em] u & 1 \end{psmallmatrix}   \right )  \varpi^{\lambda} U  \, | \, u \in \Oscr_{F} \right \} . $$
From this, we see that a system of representatives is given by letting the parameter $ u $ run over $ [\kay_{2}] $. Thus $ \xi_{\lambda} = q^{2} \phi_{(1,1)} $. \\

\noindent  (d)  It suffices to  show that $  [ H_{\varsigma_{2}} \psi^{-1} \varpi^{(1,0,1,0)} U ](\phi) \equiv 0  $. Let us denote $ \psi^{-1} \varpi^{(1,0,1,0) } $ by $ \eta $. It is straightforward to verify that $ \eta ^{-1} N_{\varsigma_{2}} \eta \subset U $, so that  $ H_{\varsigma_{2}} \eta U / U = A_{\varsigma_{2}}  M_{\varsigma_{2}} \eta U / U $.  Elementary manipulations show that \vspace{0.4em} $$  A_{\varsigma_{2}} M_{\varsigma_{2} } \eta U / U = \left \{  \left  (  \begin{psmallmatrix}  1 \\[0.1em]  s   & \varpi   \end{psmallmatrix} , \begin{psmallmatrix} \varpi   & t  \\[0.1em]   &  1 \end{psmallmatrix}  ,  \begin{psmallmatrix} 1 & \\[0.1em]  u + s - t  & \varpi  \end{psmallmatrix} \right ) U  \, \,  | \,  \,  s, t \in \Oscr_{F} ,  u \in \Oscr_{F}^{\times} \right \} $$ 
where we used that $ (a, a_{1}, a_{1}, d, d_{1} , d_{2} ) \in A_{\varsigma_{2}} $ if and only if $ a , d \in \Oscr_{F}^{\times }$ with $ a \equiv d_{1} \equiv  a_{2}  \pmod {\varpi} $ and $ d \equiv a_{1} \equiv d_{2} \pmod{\varpi} $. Let $$ C(s,t,u) : =   \left  (  \begin{psmallmatrix}  1 \\[0.1em]  s   & \varpi   \end{psmallmatrix} , \begin{psmallmatrix} \varpi   & t  \\[0.1em]   &  1 \end{psmallmatrix}  ,  \begin{psmallmatrix} 1 & \\[0.1em]  u + t - s  & \varpi  \end{psmallmatrix} \right )  $$ 
where $s, t  \in \Oscr_{F} $, $ u \in \Oscr_{F}^{\times }$.   Then $ C(s,t,u) U = C(s',t',u') U $ if and only if $ s \equiv s' $, $ t \equiv t' $, $ u \equiv u' $ modulo $ \varpi $. Thus  a system of representatives for $ H_{\varsigma_{2}} \eta U / U  $    is given by $ C(s,t,u) $ where $ s, t $ run over $ [\kay] $ and $ u $ runs over $ [\kay]^{\times} $. Thus for each fixed $ s , t $, there are $ q - 1 $ choices of $ u $ from which it easily follows that the function $ [H_{\varsigma_{2}} \eta U ](\phi) $ vanishes modulo $ q - 1 $.  
\end{proof}

\begin{proposition}  \label{tfhvarsig2action}  Modulo $ q - 1 $,  we have 
\begin{enumerate} [label = \normalfont(\alph*), itemsep= 0.3em, after = \vspace{0.1em} , before = \vspace{0.1em} ] 
\item $ \mathfrak{a}_{\varsigma_{2},*}(\phi) \equiv  2(1+z_{0})  \phi + 2z_{0} \cdot  \phi_{(1,0)}   $,   
\item $  \mathfrak{b}_{\varsigma_{2},*}(\phi)  \equiv   ( 1 + 6z_{0} + z_{0}^{2})  \phi + 3z_{0}(1+z_{0})\phi_{(1,0)}   $ 
\item  $ \mathfrak{c}_{\varsigma_{2},*}  (\phi) \equiv   z_{0}(1+z_{0})  \phi + z_{0}(1+z_{0})^{2} \phi _{(1,0)}   $ 
\end{enumerate} 
and $ \mathfrak{h}_{\varsigma_{2},*}(\phi)  \equiv 0 $. 
\end{proposition}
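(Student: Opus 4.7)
The plan is to model the proof on the previous convolution calculations (Propositions \ref{tfhvarrho1action}, \ref{tfhvarrho2action}, \ref{tfhvarsig0action}): evaluate each Hecke operator $[U\varpi^{\lambda} H_{\varsigma_{2}}]_{*}(\phi)$ appearing in the displayed expressions for $\mathfrak{a}_{\varsigma_{2}}, \mathfrak{b}_{\varsigma_{2}}, \mathfrak{c}_{\varsigma_{2}}$ modulo $q-1$, assemble parts (a)--(c) by summation, and substitute into (\ref{tfhvarsig2}) to verify that the polynomial in $z_{0}$ multiplying $\phi$ and the one multiplying $\phi_{(1,0)}$ both vanish.

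For every $\lambda = (a,b,c,d)$ with $\beta_{2}(\lambda) = 2d - a \leq 0$, Corollary \ref{basicallyIwahorivarsig2} allows us to replace $H_{\varsigma_{2}}$ by the Iwahori subgroup $J_{\varsigma_{2}}$, which splits across the three $\GL_{2}$-components as $I_{1}^{-} \times_{\Oscr_{F}^{\times}} I_{1}^{+} \times_{\Oscr_{F}^{\times}} I_{1}^{-}$. Since $\phi$ is acted on only through $\pr_{1}$, the decomposition of $U \varpi^{\lambda} J_{\varsigma_{2}}/U$ into a product of Iwahori double cosets reduces each such convolution modulo $q-1$ to the Iwahori operator on $\phi$ of Lemma \ref{Iuvlemma} multiplied by indices from the 2nd and 3rd components that collapse to $1$ or $2$ depending on whether the corresponding projection lies on the wall of the Weyl chamber. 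This handles the bulk of the operators in the three expressions.

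The operators with $\beta_{2}(\lambda) > 0$ require separate treatment. Four of them, namely $(1,1,0,1)$ and $(1,0,1,1)$ appearing in $\mathfrak{a}_{\varsigma_{2}}$, $(2,1,1,2)$ in $\mathfrak{b}_{\varsigma_{2}}$, and $(3,2,1,2)\psi$ in $\mathfrak{c}_{\varsigma_{2}}$, are exactly the content of Lemma \ref{exceptionsvarsigma2}, and these are the sources of all the $\phi_{(1,0)}$-terms in parts (a)--(c). The remaining two operators with $\beta_{2}(\lambda) > 0$ are $\varpi^{(3,2,2,2)}$ and $\varpi^{(3,1,1,2)}$ in $\mathfrak{c}_{\varsigma_{2}}$; I plan to handle these by the same direct-enumeration technique as in Lemma \ref{exceptionsvarsigma2}, exploiting the decomposition $H_{\varsigma_{2}} = A_{\varsigma_{2}} M_{\varsigma_{2}} N_{\varsigma_{2}}$ of Lemma \ref{varsig2struclemma} together with the observation that $\varpi^{\lambda} N_{\varsigma_{2}} \varpi^{-\lambda} \subset U$ for both $\lambda$'s, so only the three-parameter $M_{\varsigma_{2}}$ contributes non-trivially. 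The bookkeeping across the intertwined $(x,y,z)$-parameters of $M_{\varsigma_{2}}$ is what I expect to be the main technical obstacle.

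Once (a)--(c) are established, the vanishing of $\mathfrak{h}_{\varsigma_{2},*}(\phi)$ reduces to two polynomial identities in $z_{0}$. For the $\phi_{(1,0)}$-coefficient one checks the immediate identity
\[
(1 + z_{0})^{2}\cdot 2 z_{0} \,-\, (1+z_{0})\cdot 3 z_{0}(1+z_{0}) \,+\, z_{0}(1+z_{0})^{2} \;=\; (2-3+1)\, z_{0}(1+z_{0})^{2} \;=\; 0,
\]
while for the $\phi$-coefficient the corresponding polynomial should simplify, after collecting the contribution from $-(1+\rho^{6})[UH_{\varsigma_{2}}]_{*}(\phi)$ computed as $[U:H_{\varsigma_{2}}]\,\phi$ reduced mod $q-1$, to an identity analogous to those verified at the end of Propositions \ref{tfhvarrho0action} and \ref{tfhvarsig0action}.
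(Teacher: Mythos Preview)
Your plan matches the paper's proof essentially line for line: reduce the generic operators via Corollary~\ref{basicallyIwahorivarsig2} to the Iwahori formula $[U\varpi^{\lambda}J_{\varsigma_{2}}]_{*}(\phi)\equiv \mathcal{I}^{-}_{-b,a-b}(\phi)$, invoke Lemma~\ref{exceptionsvarsigma2} for the $\beta_{2}(\lambda)>0$ exceptions, and then verify the two polynomial identities in $z_{0}$. You are in fact more careful than the paper's terse ``Similarly for parts (b) and (c)'': the operators at $(3,2,2,2)$ and $(3,1,1,2)$ in $\mathfrak{c}_{\varsigma_{2}}$ do have $\beta_{2}=1>0$ and are \emph{not} among the cases of Lemma~\ref{exceptionsvarsigma2}, so your proposal to treat them by direct enumeration through the factorization $H_{\varsigma_{2}}=A_{\varsigma_{2}}M_{\varsigma_{2}}N_{\varsigma_{2}}$ is exactly the needed supplement.

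One small correction: the covariant operator $[UH_{\varsigma_{2}}]_{*}$ is restriction along $H_{\varsigma_{2}}\subset U$, so $[UH_{\varsigma_{2}}]_{*}(\phi)=\phi$, not $[U:H_{\varsigma_{2}}]\,\phi$. With this value the $\phi$-coefficient identity reads
\[
-(1+z_{0}^{3}) + 2(1+z_{0})^{3} - (1+z_{0})(1+6z_{0}+z_{0}^{2}) + z_{0}(1+z_{0}) = 0,
\]
which is precisely the check carried out in the paper.
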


\begin{proof}  For $ \lambda = (a,b,c,d) \in \Lambda $, let $ \xi _{\lambda } $ denote $ [U \varpi^{\lambda} H_{\varsigma_{2}}]_{*}(\phi) $.  If $ \beta_{2}(\lambda) = 2d - a \leq 0 $, then $ \xi_{\lambda} = [U \varpi^{\lambda} J_{\varsigma_{2}}]_{*}(\phi) $. It is easily seen from Lemma \ref{Iuvlemma} and the decompositions given therein  that $$ [U \varpi^{\lambda} J_{\varsigma_{2}}]_{*}(\phi) \equiv  \mathcal{I}_{-b,a-b}^{-}(\phi)    \pmod{q-1}    $$
This  formula in conjunction with Lemma \ref{exceptionsvarsigma2} can be used to calculate all Hecke operators. 
For instance, 
we have $ \mathfrak{a}_{\varsigma_{2}} =     ( U \varpi^{(1,1,1,0)} H_{\varsigma_{2}})   +  ( U \varpi^{(1,0,0,0)} H_{\varsigma_{2}}) +  (U \varpi^{(1,1,0,1)} H_{\varsigma_{2}}) + ( U \varpi^{(1,0,1,1)} H_{\varsigma_{2}}) + 2( U \varpi^{(1,0,1,0)} H_{\varsigma_{2}})  $ and we compute    
\begin{multicols}{2} 
\begin{itemize} 
\item $ [ U \varpi^{(1,1,1,0)} H_{\varsigma_{2}}]_{*}(\phi)  \equiv  ( 1 + z_{0} ) \phi  -  z_{0} \cdot \phi_{(1,0)}  $, 
\item $ [U \varpi^{(1,0,0,0)}H_{\varsigma_{2}}]_{*}(\phi) \equiv z_{0} \cdot \phi_{(1,0)} $, 
\item $ [U \varpi^{(1,1,0,1)} H_{\varsigma_{2}}]_{*}(\phi )   \equiv ( 1 + z_{0} ) \phi -  z_{0} \cdot \phi_{(1,0)} $, 
\item $ [U \varpi^{(1,0,1,1)} H_{\varsigma_{2}}]_{*}(\phi)  \equiv  z_{0} \cdot \phi_{(1,0)} $, 
\item $  2 [U \varpi^{(1,0,1,0)} H_{\varsigma_{2}} ] (\phi) \equiv  2  z_{0}  \cdot  \phi_{(1,0)} $.  
\end{itemize} 
\end{multicols}  
\noindent Now adding all these retrieves the  expression in part (a). Similarly for parts (b) and (c).  \\ 

\noindent Now   $ \mathfrak{h}_{\varsigma_{2}} = - ( 1 + \rho^{6}) (U H_{\varsigma_{2}}) + ( 1 +  2 \rho^{2} + \rho^{4}) \mathfrak{a}_{\varsigma_{2}}  - ( 1 +  \rho^{2})   \mathfrak{b}_{\varsigma_{2}} +    \mathfrak{c}_{\varsigma_{2}} $ from (\ref{tfhvarsig2}). 
Therefore    
\begin{align*}  \mathfrak{h}_{\varsigma_{2},*}( \phi   )  &  \equiv (- 1 - z_{0}^{3}) \phi +  (1+z_{0})^{2}  \Big  ( 2(1 + z_{0} )\phi  + 2z_{0} \cdot \phi_{(1,0)}    \Big  ) \,  - \\[0.1em] 
& \quad  \,  \,   (1+z_{0})  \Big  ( (1+6z_{0} + z_{0}^{2})\phi + 3z_{0}(1+z_{0})\phi_{(1,0)}   \Big  )  + z_{0}(1+z_{0}) \phi + z_{0}(1+z_{0})^{2} \phi_{(1,0)}  \\[0.2em] 
& =  \left (  -1-z_{0}^{3} + 2(1+z_{0})^{3} - (1+z_{0})(1+6z_{0} + z_{0}^{2})  + z_{0}(1+z_{0}) \right )  \phi \,  +  \\[0.2em]  
& \quad \,   \left (  2 z_{0} ( 1+  z_{0} )  ^{2}  - 3z_{0}  ( 1 + z_{0} ) ^{2}  + z_{0} ( 1+ z_{0})^{2}   \right ) \phi_{(1,0)}  = 0   
\qedhere  
\end{align*} 
\end{proof}

\begin{notation}  \label{strucvarsig3not} As usual, we let $ A_{\varsigma_{3}} $ denote the intersection $ A \cap \varsigma_{3} K \varsigma_{3}^{-1} $. 
We denote by $ M_{\varsigma_{3}}$ the three parameter additive  subgroup of all triples $ h = (h_{1},h_{2},h_{3}) \in U $ such that 
\[h_{1} = \begin{psmallmatrix} 1 \\[0.1em]  x/\varpi  & 1  \end{psmallmatrix}, \quad h_{2} = \begin{psmallmatrix} 1 & \, \, \,  y  \\[0.2em]   &  \, \, \,   1 \end{psmallmatrix} , \quad  h_{3}  =   \begin{psmallmatrix}  1 \\[0.2em]   y - x \varpi  +  z\varpi^{2}  & \, \, \,  1  \end{psmallmatrix} \] 
where $ x , y , z \in \Oscr_{F} $  are  arbitrary and by $ N_{\varsigma_{2}} $ the three  parameter  subgroup of all triples $  (h_{1}, h_{2},  h_{3}) $ of the form 
\[
h_{1} = 
\begin{psmallmatrix}  
1 \,\, \,  & x \varpi^{2}  \\[0.2em]
& 1    
\end{psmallmatrix} , 
\quad    h_{2} = 
\begin{psmallmatrix} 
1  &  \\[0.2em]   
y \varpi   & \, \,\,  1  
\end{psmallmatrix},  \quad    h_{3} =   \begin{psmallmatrix}   1  & y \varpi +   z \varpi^{2}  \\[0.2em]    &   1    \end{psmallmatrix}  \]  
where $ x , y , z \in \Oscr_{F} $ are arbitrary. 
\end{notation} 

\begin{lemma}    \label{varsig3struclemma}   $ H_{\varsigma_{3}} = M_{\varsigma_{3}} N_{\varsigma_{3} } A_{\varsigma_{3}} = N_{\varsigma_{3}}  M_{\varsigma_{3}}  A_{\varsigma_{3}} $.  
\end{lemma}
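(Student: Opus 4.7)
The plan is to mirror the argument used for Lemma \ref{varsig2struclemma}. First I would verify, by a direct matrix calculation, that each of $A_{\varsigma_3}$, $M_{\varsigma_3}$ and $N_{\varsigma_3}$ is a subgroup of $H_{\varsigma_3}$: for any $m \in M_{\varsigma_3}$ and $n \in N_{\varsigma_3}$, one computes $\varsigma_3^{-1} m \varsigma_3$ and $\varsigma_3^{-1} n \varsigma_3$ and checks that the $\varpi^{-1}$ and $\varpi^{-2}$ denominators arising from the off-diagonal entries of $\varsigma_3^{-1}$ are cancelled by the chosen parameters ($x/\varpi$ in the first component of $m$, and the combination $y\varpi + z\varpi^2$ in the third component of $n$ are engineered precisely for this purpose). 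For $A_{\varsigma_3}$, it suffices to note that an element $(a,a_1,a_2,d,d_1,d_2) \in A$ lies in $\varsigma_3 K \varsigma_3^{-1}$ exactly when all entries are units and $a \equiv d_1 \equiv a_2$, $d \equiv a_1 \equiv d_2 \pmod{\varpi^2}$, which is preserved under inverses and products.

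Second, given an arbitrary $h \in H_{\varsigma_3}$, I would compute $\varsigma_3^{-1} h \varsigma_3$ explicitly by writing $h$ as in Notation \ref{notationh}. The integrality of the resulting matrix forces $h \in U$ and additionally $b, c_1, b_2 \in \varpi^2 \mathcal{O}_F$, $c \in \varpi \mathcal{O}_F$, and congruences of the form $a - d_1, a_1 - d, a_1 - d_2, a_2 - d_1, b_1 - c_2, b_1 - c \in \varpi^2 \mathcal{O}_F$. In particular the diagonal entries of each component are units. Then, as in the proof of Lemma \ref{varsig2struclemma}, I would choose $m \in M_{\varsigma_3}$ with parameters $x = -c/a$, $y = -b_1/d_1$, $z = (y - x\varpi - c_2/a_2)/\varpi^2$ so that $h' := mh$ has trivial lower-left off-diagonal entries in each component. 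Writing $h' = (h_1',h_2',h_3')$, I would then pick $n \in N_{\varsigma_3}$ with parameters chosen from the upper-right entries of $h'$ (each of which lies in $\varpi^2 \mathcal{O}_F$ for the first and third components and in $\varpi \mathcal{O}_F$ for the second, matching the form of $N_{\varsigma_3}$) so that $nmh$ becomes diagonal, hence lies in $A_{\varsigma_3}$. This gives the factorization $H_{\varsigma_3} = N_{\varsigma_3} M_{\varsigma_3} A_{\varsigma_3}$. Running the analogous argument with the roles of upper-triangular and lower-triangular unipotents swapped yields $H_{\varsigma_3} = M_{\varsigma_3} N_{\varsigma_3} A_{\varsigma_3}$. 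Since $A_{\varsigma_3}$ normalizes both $M_{\varsigma_3}$ and $N_{\varsigma_3}$ (clear from the defining conditions), the factorizations hold in all orders.

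The main obstacle I anticipate is purely computational: the matrix $\varsigma_3^{-1} h \varsigma_3$ has denominators up to $\varpi^2$, and care is needed to track which congruences the integrality conditions impose, particularly the $\varpi^2$ congruences between the diagonal entries across the three components. Once these are in hand, the solvability of the unipotent parameter equations is straightforward since the diagonal entries of $h$ are units. I expect no conceptual difficulty beyond bookkeeping, and the result will follow in the same schematic manner as the parallel statements for $\varrho_i$ and $\varsigma_0, \varsigma_2$.
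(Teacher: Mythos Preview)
Your overall strategy matches the paper's exactly, but there is a genuine error in the integrality analysis that would derail the computation. You assert that integrality of $\varsigma_3^{-1} h \varsigma_3$ forces $h \in U$ and in particular $c \in \varpi\mathcal{O}_F$. This is false: the $(2,1)$ entry of $\varsigma_3^{-1} h \varsigma_3$ is $-c\varpi$ and the $(4,1)$ entry is $c\varpi^2$, so integrality only gives $c \in \varpi^{-1}\mathcal{O}_F$. Thus $H_{\varsigma_3}$ is \emph{not} contained in $U$. Indeed, this is visible already from Notation~\ref{strucvarsig3not}: the first component of an element of $M_{\varsigma_3}$ is $\begin{psmallmatrix} 1 & \\ x/\varpi & 1\end{psmallmatrix}$, which has a non-integral lower-left entry for generic $x \in \mathcal{O}_F$. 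Consequently your parameter choice $x = -c/a$ is also off by a factor of $\varpi$: to kill the lower-left entry of $h_1$ you need $x/\varpi = -c/a$, i.e.\ $x = -c\varpi/a$, which lies in $\mathcal{O}_F$ precisely because $c \in \varpi^{-1}\mathcal{O}_F$.

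Several of your other stated conditions are also mis-stated (for instance $c_1, b_2$ lie only in $\varpi\mathcal{O}_F$, not $\varpi^2\mathcal{O}_F$, and the congruences among diagonal entries are a mix of $\varpi$ and $\varpi^2$ conditions rather than uniformly $\varpi^2$), but these are downstream of the same issue and would be corrected automatically once you actually write out the matrix $\varsigma_3^{-1} h \varsigma_3$. After that correction the argument goes through exactly as you describe and as in Lemma~\ref{varsig2struclemma}.
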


\begin{proof}  Writing $ h \in H $ as in Notation \ref{notationh}, we find that $$ \varsigma_{3} ^{-1} h \varsigma_{3} =  
\begin{pmatrix} 
a & -c_1 \,\varpi & & \mfrac{b}{\varpi^2 }-c_1  & \mfrac{a - d_1 }{\varpi } & -\frac{c_1 }{\varpi }\\[0.5em] 
-c\,\varpi  & a_1  & -c_2  & \mfrac{a_1 - d}{\varpi } & \mfrac{b_1 - c_2 - c \varpi^{2} }{\varpi^2 }  & \mfrac{a_1  - d_2 }{\varpi^2 }\\[0.5em] 
& -c_1  & a_2  & -\mfrac{c_1 }{\varpi } & \mfrac{a_2- d_1 }{\varpi^2 } & \mfrac{b_2 
 - c_1 }{\varpi^2 }\\[0.5em] 
c\,\varpi^2  &  &  & d & c\,\varpi  &  \\[0.5em] 
 & c_1 \,\varpi^2  &  & c_1 \,\varpi  & d_1  & c_1 \\[0.5em] 
 &  & c_2 \,\varpi^2  &  & c_2  & d_2 
\end{pmatrix} .  
$$
From this matrix, we easily see that $ H_{\varsigma_{3}}  $ contains $ M_{\varsigma_{3}} $ and $ N_{\varsigma_{3}} $. We also see that if $ h \in H_{\varsigma_{3}} $, then  all entries of $ h $ except for $ c $ are integral. Moreover,  $ c_{1}, b_{2} \in \varpi \Oscr_{F} $,  $ b \in \varpi^{2} \Oscr_{F} $ and $ c \in \varpi^{-1} \Oscr_{F} $.   Thus $ a, a_{1} ,a_{2}, d, d_{1}, d_{2} \in \Oscr_{F}^{\times} $.  An argument analogous to Lemma  \ref{varsig2struclemma} applies to yield the desired decompositions.   
\end{proof}

\begin{proposition}  \label{tfhvarsig3action}   Modulo $ q - 1 $, we have 
\begin{enumerate} [label = \normalfont(\alph*), itemsep= 0.3em, after = \vspace{0.1em} , before = \vspace{0.1em} ]
\item $\mathfrak{a}_{\varsigma_{3},*}(\phi)  =  z_{0} \cdot  \phi_{(2,0)}   
$ 
\item $ \mathfrak{b}_{\varsigma_{3},*}(\phi) \equiv  ( z_{0}^{2} + z_{0} )\cdot  \phi_{(2,0)}  + z_{0}  \cdot \phi_{(1,0)}  $ 
\item $ \mathfrak{c}_{\varsigma_{3},*} (\phi) \equiv   (z_{0}^{2} + z_{0}) \cdot  \phi_{(1,0)} $

\end{enumerate}
and $ \mathfrak{h}_{\varsigma_{3},*}  (\phi) \equiv     0  $ 
\end{proposition}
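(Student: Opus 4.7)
The plan mirrors the proofs of Propositions \ref{tfhvarsig0action} and \ref{tfhvarsig2action}, exploiting the decomposition $H_{\varsigma_{3}} = M_{\varsigma_{3}}N_{\varsigma_{3}}A_{\varsigma_{3}} = N_{\varsigma_{3}}M_{\varsigma_{3}}A_{\varsigma_{3}}$ supplied by Lemma \ref{varsig3struclemma}. For each Hecke operator $(U\varpi^{\lambda}H_{\varsigma_{3}})$ appearing in $\mathfrak{a}_{\varsigma_{3}}$, $\mathfrak{b}_{\varsigma_{3}}$, $\mathfrak{c}_{\varsigma_{3}}$, I will write $\xi_{\lambda}=[U\varpi^{\lambda}H_{\varsigma_{3}}]_{*}(\phi)$ and reduce to a computation of $H_{\varsigma_{3}}\varpi^{\lambda}U/U$ by checking which of the one-parameter subgroups  defining $M_{\varsigma_{3}}$ and $N_{\varsigma_{3}}$ lie in $\varpi^{\lambda}U\varpi^{-\lambda}$. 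Since $A_{\varsigma_{3}}$ contributes only a factor congruent to $1\bmod q-1$ and normalizes the unipotent parts, it can be absorbed into $U$ once the other factors are parametrized; the remaining parameters will run over $[\kay]$ or $[\kay_{n}]$ for small $n$, producing at most $q^{3}$ cosets whose images under the $H$-action on $\phi$ can be enumerated directly.

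Concretely, for $\lambda=-f_{1}=(0,-1,0,0)$ (the single term in $\mathfrak{a}_{\varsigma_{3}}$) the left multiplication by $\varpi^{\lambda}$ shrinks the GL$_{2}$ entry in the second component enough that $N_{\varsigma_{3}}$ is absorbed into $\varpi^{\lambda}U\varpi^{-\lambda}$, while the $M_{\varsigma_{3}}$ factor still contributes $q$-many genuinely new cosets of the form $(1,1,\begin{psmallmatrix}1\\[0.1em] u\varpi^{-1} & 1\end{psmallmatrix})\varpi^{\lambda}U$. Evaluating $\phi$ at these cosets and using the translation relation $\bar{\phi}_{(0,-1)}=z_{0}^{-1}\phi_{(1,0)}\cdot z_{0}=\phi_{(2,0)}\cdot z_{0}$ (up to central twists) yields the stated $\mathfrak{a}_{\varsigma_{3},*}(\phi)\equiv z_{0}\cdot\phi_{(2,0)}$.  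Analogous but slightly longer bookkeeping handles the three terms of $\mathfrak{b}_{\varsigma_{3}}$ and the two terms of $\mathfrak{c}_{\varsigma_{3}}$; in several cases, entire $[\kay]$-parameters contribute a factor of $q\equiv 1\bmod q-1$ and hence merely give a harmless central translate of the basic characteristic function.

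Once the three ingredients are in hand, I will substitute into
\[
\mathfrak{h}_{\varsigma_{3}} \;=\; (1+2\rho^{2}+\rho^{4})\mathfrak{a}_{\varsigma_{3}} \;-\;(1+\rho^{2})\mathfrak{b}_{\varsigma_{3}} \;+\;\mathfrak{c}_{\varsigma_{3}},
\]
from (\ref{tfhvarsig1}) (with $i=3$), using that $\rho^{2}$ acts on $\mathcal{S}$ as $z_{0}$. The $\phi_{(2,0)}$-coefficients read $z_{0}(1+z_{0})^{2} - (1+z_{0})(z_{0}^{2}+z_{0})$, which vanishes; the $\phi_{(1,0)}$-coefficients read $-z_{0}(1+z_{0})+(z_{0}^{2}+z_{0})$, which also vanishes. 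Thus $\mathfrak{h}_{\varsigma_{3},*}(\phi)\equiv 0\pmod{q-1}$.

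The only genuine obstacle is a careful choice of representatives in the cases where $\varpi^{\lambda}$ fails to conjugate both $M_{\varsigma_{3}}$ and $N_{\varsigma_{3}}$ into $U$; here the depth of $\lambda$ forces one to track Iwahori-type intersections, precisely as in Lemma \ref{exceptionsvarsigma2}. Fortunately the cocharacters $(1,0,1,0)$, $(1,-1,1,0)$, $(1,0,0,1)$, $(2,1,1,1)$, $(2,0,2,0)$ all have depth $\leq 2$, so each case produces at most a three-parameter family of representatives indexed by $[\kay]\times[\kay]\times[\kay]$, and the calculation stays entirely elementary.
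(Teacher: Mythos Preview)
Your overall plan is the same as the paper's: use the factorization $H_{\varsigma_{3}}=M_{\varsigma_{3}}N_{\varsigma_{3}}A_{\varsigma_{3}}$ of Lemma~\ref{varsig3struclemma} to parametrize $H_{\varsigma_{3}}\varpi^{\lambda}U/U$ explicitly, evaluate each $\xi_{\lambda}$, and then verify the polynomial cancellation in $z_{0}$. The final algebra you wrote for $\mathfrak{h}_{\varsigma_{3},*}(\phi)$ is correct.

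However, your treatment of part (a) contains a concrete error. You claim that for $\lambda=-f_{1}$ the group $M_{\varsigma_{3}}$ contributes $q$ cosets of the form $\bigl(1,1,\begin{psmallmatrix}1\\u\varpi^{-1}&1\end{psmallmatrix}\bigr)\varpi^{\lambda}U$. But look again at Notation~\ref{strucvarsig3not}: the non-integral entry $x/\varpi$ in $M_{\varsigma_{3}}$ sits in the \emph{first} component $h_{1}$, not the third; the third component $h_{3}=\begin{psmallmatrix}1\\y-x\varpi+z\varpi^{2}&1\end{psmallmatrix}$ is always integral. In the paper's (dual) computation one works with $\xi_{f_{1}}=[H_{\varsigma_{3}}\varpi^{f_{1}}U](\phi)$, and since $\varpi^{f_{1}}$ has first component $\mathrm{diag}(\varpi,\varpi^{-1})$, conjugation sends the $x/\varpi$ entry of $M_{\varsigma_{3}}$ to $x\varpi$ and the $x\varpi^{2}$ entry of $N_{\varsigma_{3}}$ to $x$, both integral; the second and third components are untouched. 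Hence $H_{\varsigma_{3}}\varpi^{f_{1}}U/U$ is a \emph{singleton}, giving $\xi_{f_{1}}=\varpi^{f_{1}}\cdot\phi=\phi_{(1,-1)}=z_{0}\cdot\phi_{(2,0)}$ exactly. Note that (a) is stated as an equality, not a congruence, so a spurious factor of $q$ would be fatal here. Your ``translation relation'' $\bar{\phi}_{(0,-1)}=z_{0}^{-1}\phi_{(1,0)}\cdot z_{0}=\phi_{(2,0)}\cdot z_{0}$ does not parse; the correct identity is simply $\phi_{(1,-1)}=z_{0}\cdot\phi_{(2,0)}$.

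Two smaller points: the torus $A_{\varsigma_{3}}$ is contained in $A^{\circ}\subset U$, so it is absorbed outright rather than contributing a factor; and the expression for $\mathfrak{h}_{\varsigma_{3}}$ is the displayed formula just before~(\ref{tfhvarsig1}), not ``(\ref{tfhvarsig1}) with $i=3$''. Once part (a) is fixed as above, the same coset-by-coset bookkeeping (now with the non-integral entry correctly located in $h_{1}$) handles (b) and (c) exactly as the paper does.
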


\begin{proof}  For $ \lambda \in \Lambda $, let $ \xi_{\lambda} $ denote $ [H_{\varsigma_{2}} \varpi^{\lambda} U ] ( \phi ) $.   \\

\noindent (a) This equals $ \xi_{f_{1}} $.   From Lemma \ref{varsig3struclemma}, we find that  $   H_{\varsigma_{3}} \varpi^{f_{1}}  U / U = \left \{  \varpi^{f_{1}} U \right \}  $, so that  $ \xi_{f_{1}} = \varpi^{f_{1}} \cdot \phi  =  \phi_{(1,-1) }   $.   \\   

\noindent (b)  Recall that $   \mathfrak{b}_{\varsigma_{3}} =     (U  \varpi^{(1,0,1,0)} H_{\varsigma_{3}}) +  (U \varpi^{(1,-1,1,0)}H_{\varsigma_{3}} )  +  (U \varpi^{(1,0,0,1)}H_{\varsigma_{3}}) $. Let $ \lambda_{1} =  (1,1,0,1) $, $   \lambda_{2} = (3,3,1,2)  $ and $ \lambda_{3} =  (1,1,1,0) $. This $ \mathfrak{b}_{\varsigma_{3},*}(\phi) = z_{0} \cdot \xi_{\lambda_{1}} + z_{0}^{2}  \cdot \xi_{\lambda_{2}} + z_{0}  \cdot  \xi_{\lambda_{3}} $. From Lemma \ref{varsig3struclemma}, we find that \begin{align*} H_{\varsigma_{3}} \varpi^{\lambda_{1}} U / U  & = \left \{ \varpi^{\lambda_{1}} U \right \}   \\
H_{\varsigma_{3}} \varpi^{\lambda_{2}} U / U  & =   \left \{   \left  (  \begin{psmallmatrix}  \varpi^{2}   &  x \varpi  \\[0.1em]   & 1   \end{psmallmatrix} , \begin{psmallmatrix}  1    &   \\[0.1em]   &    \varpi   \end{psmallmatrix}  ,  \begin{psmallmatrix} \varpi & \\[0.1em] & 1 \end{psmallmatrix} \right ) U    \, \,  | \, \, x \in \Oscr_{F}  \right \}  \\ 
H_{\varsigma_{3}}  \varpi^{\lambda_{3}} U / U  & =  \left \{     \left  (  \begin{psmallmatrix}  \varpi   &    \\[0.1em]   & 1   \end{psmallmatrix} , 
\begin{psmallmatrix} 
 \varpi     &  y   \\[0.1em] 
&    1   \end{psmallmatrix}  ,
\begin{psmallmatrix} 
 1  & \\[0.1em]
y &  \varpi 
\end{psmallmatrix} \right ) U  \, \,  | \, \,  y \in \Oscr_{F}  \right \} .   
\end{align*}  
So $ H_{\varsigma_{3}}\varpi^{\lambda_{i}} U / U$ is a singleton for $ i = 1 $ and a complete system of representatives for $ i = 2 $ (resp., $ i = 3 $) is given by  letting the parameter $ x $ (resp., $y $) run over $ [\kay] $. One then  easily finds that $ \xi_{\lambda_{1}} =  \phi_{(1,0) } $, $ \xi_{\lambda_{2}} =  \phi_{(2,0)} - \phi_{(2,1)} + \phi_{(3,1)} $ and $ \xi_{\lambda_{3}} =  q \phi_{(1,0)} $. \\

\noindent (c) Recall that $ \mathfrak{c}_{\varsigma_{3}} =  
(U  \varpi^{(2,1,1,1)} H_{\varsigma_{3}} ) + (U 
 \varpi^{(2,0,2,0)} H_{\varsigma_{3}}) $. Let $ \lambda_{1} = (0,0,0,0) $ and $ \lambda_{2} =  (2,2,0,2) $. Then $ \mathfrak{c}_{\varsigma_{3},*}(\phi) =    z_{0}\cdot \xi_{\lambda_{1}} + z_{0} ^{2}  \cdot \xi_{\lambda_{2}} $. Using Lemma \ref{varsig3struclemma},    we find that \begin{align*} H_{\varsigma_{3}} \varpi^{\lambda_{1}} U / U  & = \left \{ \left ( \begin{psmallmatrix} 1 \\[0.01em]  x / \varpi  & \, \, 1   \end{psmallmatrix} , 1 , 1 \right )  U \right \}   \\
H_{\varsigma_{3}} \varpi^{\lambda_{2}} U / U  & =   \left \{   \left  (  \begin{psmallmatrix}  \varpi^{2}   &  \\[0.1em]   & 1   \end{psmallmatrix} , \begin{psmallmatrix}  1    &   \\[0.1em]  y \varpi     &    \varpi  ^{2}   \end{psmallmatrix}  ,  \begin{psmallmatrix} \varpi ^{2} &  y \varpi  \\[0.1em] & 1 \end{psmallmatrix} \right ) U    \, \,  | \, \, x \in \Oscr_{F}  \right \}  
\end{align*} 
So a system for representative cosets for $ H_{\varsigma_{3}} \varpi^{\lambda_{1}} U/ U $ (resp., $ H_{\varsigma_{3}} \varpi^{\lambda_{2}}U/U $ is obtained by letting $ x $ (resp., $y $) run over $ [\kay]  $.  Using this, we compute that $ \xi_{\lambda_{1}}  = \phi_{(0,-1)} - \phi_{(1,-1)} + \phi_{(1,0)} $  and $ \xi_{\lambda_{2}} =   \phi_{(2,0)} $. \\

\noindent Finally, we have $ \mathfrak{h}_{\varsigma_{3}} =    ( 1 + 2 \rho^{2}  +  \rho^{4} )  \mathfrak{a}_{\varsigma_{3}}  - ( 1 + \rho^{2} ) \mathfrak{b}_{\varsigma_{3}}  + \mathfrak{c}_{\varsigma_{3}}   $, so 
\begin{align*}  \mathfrak{h}_{\varsigma_{3},  *}(\phi)   &   \equiv  ( 1 + z_{0})^{2} \left ( z_{0} \cdot \phi_{(2,0)}) \right )  - ( 1 +  z_{0} ) \left (  z_{0}^{2} + z_{0} ) \phi_{(2,0)}  + z_{0}  \cdot  \phi_{(1,0)}  \right )  +  (z_{0}^{2}+ z_{0})  \phi_{(1,0)}  \\
& =  \left ( z_{0} ( 1+ z_{0})^{2}  - (1+z_{0}) (z_{0}^{2} + z_{0} ) \right ) \phi_{(2,0)}  + \left  ( z_{0}^{2} + z_{0} - (1+z_{0}) z_{0}  \right )  \phi_{(1,0) }   \\
& = 0    \qedhere  
\end{align*} 
\end{proof}

\subsection{Convolutions with restrictions of $ \mathfrak{h}_{2}$}
In this subsection, we compute the convolution  $ \mathfrak{h}_{\vartheta,*}(\phi) $ for $ \vartheta \in \left \{ \vartheta_{0}, \vartheta_{1}, 
\vartheta_{2}, \vartheta_{3} \right \} $ $ \cup  \{  \tilde{\vartheta}_{k} \, | \, k \in  [\kay]^{\circ}  \} $. These matrices are as follows: 
$$ \vartheta_{0} =  
\scalebox{0.85}{$
\left(\begin{matrix}
\varpi \, \,  & & & & \mfrac{1}{\varpi}   \\   
&     \varpi \,  \, & & \mfrac{1}{\varpi}  \\[0.1em] 
& &  1 \, \,  &  \\   
& & &   \mfrac{1}{\varpi} \\
& & & &  \mfrac{1}{\varpi}      \\ 
& & & & & 1  
\end{matrix}\right)$}
, \quad  \vartheta_{1} =  \scalebox{0.85}{$\left(\begin{matrix}
\varpi \, \,  & & & & \mfrac{1}{\varpi}   \\ 
& &  1 \, \,  &  \\
&  \varpi \, \, & & \mfrac{1}{\varpi} \\[0.5em]  
& & &   \mfrac{1}{\varpi} \\
& & & & & 1  \\ 
& & & &  \mfrac{1}{\varpi}      \\ 
\end{matrix}\right)$}, \quad    \vartheta_{2} =   \scalebox{0.85}{$ \begin{pmatrix}  
\varpi  & &  &   &   \mfrac{1}{\varpi}   \\ 
& \varpi   &   &   \mfrac{1}{\varpi}   &  & 1  \\ & & 1  & & \mfrac{1}{\varpi} \\ & & &  \mfrac{1}{\varpi} \\ & & & &  \mfrac{1}{\varpi} \\ & & & &  & 1 
\end{pmatrix}$} 
$$
$$ \vartheta_{3} =    \scalebox{0.85}{$   \begin{pmatrix}  
\varpi  & &  &   &   \mfrac{1}{\varpi}   \\ 
& \varpi   &   &   \mfrac{1}{\varpi}   &   &  \mfrac{1}{\varpi}   \\[0.4em]  & & 1  & & \mfrac{1}{\varpi^{2}} \\[0.4em]  & & &  \mfrac{1}{\varpi} \\ & & & &  \mfrac{1}{\varpi} \\ & & & &  & 1 
\end{pmatrix}$} ,  \quad      \tilde{\vartheta}_{k} =    \scalebox{0.85}{$   \begin{pmatrix}  
\varpi  & &  &   &   \mfrac{1}{\varpi}   \\[0.5em]  
& k\varpi  & 1 &   \mfrac{k}{\varpi}   &   &   \\[0.5em]   
& (k+1)\varpi& 1&\mfrac{k+1}{\varpi} \\[0.5em]  & & &   \mfrac{1}{\varpi} &  \\ & & & & -   \mfrac{1}{\varpi}  & k + 1 \\[0.5em]   & & & & \, \, \, \,  \mfrac{1}{\varpi}     & -k
\end{pmatrix}$} $$
where $ k \in [\kay]^{\circ}  = [\kay] \setminus \left \{ -1 \right \}  $. Recall that $ H _{\vartheta} $ denotes the intersection $ H \cap \vartheta K \vartheta ^{-1} $.    
\begin{lemma}  \label{HvtisinU}   $ H_{\vartheta}$ is a subgroup of $ U $  for $ \vartheta \in \{ \vartheta_{0}, \vartheta_{1}, \vartheta_{2}, \tilde{\vartheta}_{k} \, | \, k \in [\kay]^{\circ} \} $. 
\end{lemma}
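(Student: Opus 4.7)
The plan is to argue one $\vartheta$ at a time by the same conjugation strategy used in Lemmas~\ref{varsig2struclemma} and~\ref{varsig3struclemma}: given $h \in H_\vartheta$, write $h$ as in Notation~\ref{notationh} and compute $\vartheta^{-1} h \vartheta$ as an explicit $6 \times 6$ matrix whose entries are $\Oscr_F$-linear combinations of the entries of $h$, possibly divided by a small power of $\varpi$. Demanding that this conjugate lie in $K$ then forces every entry of $h$ into $\Oscr_F$, after which $h \in U$ follows because $\mathrm{sim}(h) = \mathrm{sim}(\vartheta^{-1} h \vartheta) \in \Oscr_F^\times$.

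For $\vartheta_0$, $\vartheta_1$, $\vartheta_2$, this is a direct mechanical check of the same flavour as the ones carried out in \S\ref{H'strucsec} and in Lemma~\ref{varsig2struclemma}: the only positions in $\vartheta^{-1} h \vartheta$ that can a priori fail to be integral are of the form $(\text{entry of }h)/\varpi$ appearing once per entry. Imposing integrality in those positions forces each individual entry of $h$ to be in $\Oscr_F$, and the claim follows. Note that $\vartheta_1 = w_2 \vartheta_0 w_2$ with $w_2 \in U$, so in fact one only needs to carry out the computation for $\vartheta_0$ and $\vartheta_2$; the $\vartheta_1$ case is obtained by conjugation.

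The substantive case is $\tilde{\vartheta}_k$. Here the conjugation mixes the middle four rows and columns of $h$ through the block $\bigl(\begin{smallmatrix} k & 1 \\ k+1 & 1 \end{smallmatrix}\bigr)$, whose determinant is $-1$ and which therefore lies in $\GL_2(\Oscr_F)$. The plan is to first read off from the blocks of $\tilde{\vartheta}_k^{-1} h \tilde{\vartheta}_k$ that have no $\varpi$-denominator that each of the pairs $(a_1,a_2)$, $(b_1,b_2)$, $(c_1,c_2)$, $(d_1,d_2)$ satisfies a pair of $\Oscr_F$-linear relations of the form $X + kY \in \Oscr_F$, $X + (k+1)Y \in \Oscr_F$. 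Inverting the $2 \times 2$ matrix $\bigl(\begin{smallmatrix} 1 & k \\ 1 & k+1 \end{smallmatrix}\bigr)$ over $\Oscr_F$, one gets $X, Y \in \Oscr_F$ individually. The remaining entries of $h$ are then pinned down from the $1/\varpi$-denominator positions by the same elementary argument as for $\vartheta_0, \vartheta_1, \vartheta_2$.

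I expect the only real obstacle to be the bookkeeping for $\tilde{\vartheta}_k$; the other three are routine. I also want to flag (for the reader) that the deliberate exclusion of $\vartheta_3$ from the statement is genuine: the $\varpi^{-2}$ entry of $\vartheta_3$ permits elements of $H_{\vartheta_3}$ with a non-integral $(1,4)$-block position, so $H_{\vartheta_3} \not\subseteq U$, and its structure has to be analyzed separately in Lemma~\ref{vart3struclemma} as invoked in the proof of Theorem~\ref{mainzeta}.
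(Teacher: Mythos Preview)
Your direct-computation plan is correct and would yield a valid proof, but the paper takes a much shorter route that avoids all of the case-by-case work. Each $\vartheta$ in the list factors as $\vartheta = \theta \tau_2$ with $\theta \in \{\theta_0, \theta_1, \theta_2, \tilde{\theta}_k\} \subset U'$; the paper simply writes $H_\vartheta = H \cap \theta H'_{\tau_2} \theta^{-1}$, quotes Lemma~\ref{structureofHtau2} (which gives $H'_{\tau_2} \subset U'$), and concludes $H_\vartheta \subset H \cap \theta U' \theta^{-1} = H \cap U' = U$. (The printed proof has $U$ in two places where $U'$ is evidently meant, but the argument is transparent.) Thus all the matrix work was already done once, in Lemma~\ref{structureofHtau2}; your approach would essentially re-derive that lemma four times over, twisted by the various $\theta$'s. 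Your route has the merit of being self-contained and of yielding finer information (as in Lemmas~\ref{vart0structure} and~\ref{vart2structure}), but for the bare containment statement it is overkill.

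Two corrections to your side remarks. First, the identity $\vartheta_1 = w_2 \vartheta_0 w_2$ is not correct (rather $\vartheta_1 = w_2 \vartheta_0$), and $w_2 \notin U$ since $w_2 \notin H$; what makes the reduction to $\vartheta_0$ work is that $w_2$ \emph{normalizes} $H$ and $U$, giving $H_{\vartheta_1} = w_2 H_{\vartheta_0} w_2^{-1} \subset w_2 U w_2^{-1} = U$. Second, and more substantively, your assertion that $H_{\vartheta_3} \not\subseteq U$ is false: Lemma~\ref{vart3struclemma} in fact shows $H_{\vartheta_3} \subset I_{\vartheta_3} \subset U$. The reason $\vartheta_3$ is omitted from the present lemma is not that the containment fails, but only that the one-line argument above does not apply --- here $\vartheta_3 \tau_2^{-1} = \sigma_3 = \varrho_1$ has similitude $\varpi$ and therefore lies outside $U'$, so the containment for $\vartheta_3$ is established separately by the direct computation in Lemma~\ref{vart3struclemma}.
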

\begin{proof}  Since $ \theta =  \vartheta\tau_{2}^{-1} \in U $ and $ H_{\tau_{2}}' \subset U $ by Lemma \ref{structureofHtau2}, we see that $ H_{\vartheta} = H \cap  \theta H_{\tau_{2}}' \theta^{-1} \subset U $. 
\end{proof}

\begin{notation} Let $  \mathscr{X}_{\vartheta_{0}} \subset U $ denote the subgroup of all triples $ (h_{1}, h_{2}, h_{3} )$  where $ h_{2}=   \begin{psmallmatrix}  & 1 \\ 1 &  \end{psmallmatrix} h_{1} \begin{psmallmatrix}  & 1 \\ 1 &  \end{psmallmatrix} $.    

\end{notation}

\begin{lemma}  \label{vart0structure}    $ H_{\vartheta_{0}} $ equals the product $ \mathscr{X}_{\vartheta_{0}}  U_{\varpi^{2}} $. 
\end{lemma}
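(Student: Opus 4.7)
The plan is to follow the template of the analogous structure lemmas already proven in this section (for instance Lemmas~\ref{varrho1struclemma}, \ref{varsig0struclemma}, \ref{varsig2struclemma}, and \ref{varsig3struclemma}). Since $\sigma_{0} = 1_{H'}$ we have $\vartheta_{0} = \tau_{2}$, so I would begin by computing $\tau_{2}^{-1} h \tau_{2}$ explicitly for a general triple $h = (h_{1}, h_{2}, h_{3}) \in H$ written as in Notation~\ref{notationh}. Specializing the matrix identity appearing in the proof of Lemma~\ref{structureofHtau2} to the case in which the $\mathrm{GSp}_{4}$ part comes from $\jmath_{2}(h_{2}, h_{3})$ (so that all cross-block entries vanish) yields a $6 \times 6$ matrix whose only potentially non-integral entries are $(b - c_{1})/\varpi^{2}$, $(a - d_{1})/\varpi^{2}$, $(a_{1} - d)/\varpi^{2}$ and $(b_{1} - c)/\varpi^{2}$, while the third component $h_{3}$ sits untouched in rows and columns $3, 6$. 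Consequently membership of $h$ in $H_{\vartheta_{0}}$ is equivalent to $h \in U$ (which we already know from Lemma~\ref{HvtisinU}) together with the single congruence
\[ h_{2} \,\equiv\, s\, h_{1}\, s \pmod{\varpi^{2}}, \qquad s := \left(\begin{smallmatrix} 0 & 1 \\ 1 & 0 \end{smallmatrix}\right), \]
and no further constraint on $h_{3}$ beyond $h_{3} \in \GL_{2}(\Oscr_{F})$ with matching similitude.

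With this explicit description in hand, both inclusions are short. For $\supseteq$, any triple $(h_{1}, s h_{1} s, h_{3}) \in \mathscr{X}_{\vartheta_{0}}$ satisfies the congruence exactly on the nose, while any $h \in U_{\varpi^{2}}$ has $h_{1}, h_{2} \equiv 1 \pmod{\varpi^{2}}$ and therefore also satisfies it. For $\subseteq$, given $h = (h_{1}, h_{2}, h_{3}) \in H_{\vartheta_{0}}$ I would set
\[ x := (h_{1},\, s h_{1} s,\, h_{3}), \qquad u := x^{-1} h = (1,\, (s h_{1} s)^{-1} h_{2},\, 1). \]
The element $x$ lies in $\mathscr{X}_{\vartheta_{0}}$ because $\det(s h_{1} s) = \det(h_{1}) \in \Oscr_{F}^{\times}$, so $s h_{1} s \in \GL_{2}(\Oscr_{F})$, and the three similitudes still agree. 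Writing $h_{2} = s h_{1} s + \varpi^{2} M$ for some $M \in \mathrm{Mat}_{2}(\Oscr_{F})$ afforded by the congruence, and multiplying through by the integral inverse $(s h_{1} s)^{-1}$, one obtains $(s h_{1} s)^{-1} h_{2} \in 1 + \varpi^{2}\,\mathrm{Mat}_{2}(\Oscr_{F})$, so $u \in U_{\varpi^{2}}$ and $h = x u$.

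I do not foresee any real obstacle: the conjugation matrix displays the defining congruences of $H_{\vartheta_{0}}$ essentially by inspection, and the factorization $h = x \cdot (x^{-1} h)$ is the one naturally suggested by the definition of $\mathscr{X}_{\vartheta_{0}}$. The only point requiring minor care is the bookkeeping of which entries of $h_{1}$ match which entries of $h_{2}$ under the twisting by $s$, and this is dictated precisely by the positions of the $\varpi^{-1}$ off-diagonal entries of $\tau_{2}$ joining the first and second $\GL_{2}$-factors.
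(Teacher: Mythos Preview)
Your proposal is correct and follows essentially the same approach as the paper: compute $\vartheta_{0}^{-1} h \vartheta_{0}$ to extract the membership condition $h_{2}\equiv s h_{1} s \pmod{\varpi^{2}}$, then factor $h = x\cdot(x^{-1}h)$ with $x=(h_{1},\,s h_{1} s,\,h_{3})\in\mathscr{X}_{\vartheta_{0}}$ and $x^{-1}h\in U_{\varpi^{2}}$. The paper carries out the conjugation directly rather than specializing Lemma~\ref{structureofHtau2}, and writes the factorization as $h=\gamma^{-1}\cdot(\gamma h)$ instead, but the content is identical.
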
   
\begin{proof} Let $ h \in U $ and write $ h $ as in Notation \ref{notationh}. Then $ h\in H_{\vartheta_{4}} $ if and only if $$ \vartheta_{0}^{-1} h \vartheta_{0} =  \begin{pmatrix} 
a & -c_1  &  & \mfrac{b - c_1 }{\varpi^2 } & \mfrac{a - d_1 }{\varpi^2 } &  \\[0.4em] 
-c & a_1  &   & \mfrac{a_1 - d}{\varpi^2 } & \mfrac{b_1  - c}{\varpi^2 } &  \\
  &   & a_2  &   &   & b_2 \\
c\,\varpi^2  &   &   & d & c &  \\
 & c_1 \,\varpi^2  &  & c_1  & d_1  &  \\
 & & c_2  &  &   & d_2  \end{pmatrix}  \in K. $$ 
It follows that $ \mathscr{X}_{\vartheta_{0}} $, $ U_{\varpi^{2} } $ are both contained in $ H_{\vartheta_{0}} $ and hence so is their product. If  $ h = (h_{1}, h_{2}, h_{3} )  \in H_{\vartheta_{0}} $ is arbitrary, let $ \gamma = ( h_{1}^{-1}, h_{2}' , h_{3}) $ where $ h_{2}' =     \begin{psmallmatrix}  & 1 \\ 1 &  \end{psmallmatrix} h_{1}^{-1} \begin{psmallmatrix}  & 1 \\ 1 &  \end{psmallmatrix} $. Then $ \gamma \in \mathscr{X}_{\vartheta_{0}} $ and $ \gamma h = ( 1, h_{2}'h_{2} , 1 )  \in H_{\vartheta_{0}} $ and it is easily seen from the matrix formula above (applied to $ \gamma h $ in place of $ h $) that $ \gamma h \in U_{\varpi^{2}} $.  Thus $ h = \gamma^{-1} \cdot \gamma h \in  \mathscr{X}_{\vartheta_{0}}  U_{ \varpi^{2}} $ which  establishes the reverse   inclusion.  
\end{proof}

\begin{proposition} \label{tfhvart0action} 
Modulo $ q - 1 $, we have

\begin{enumerate}[label = \normalfont(\alph*), itemsep= 0.3em, after = \vspace{0.1em} , before = \vspace{0.1em} ] 
\item $ [U H_{\vartheta_{0}}]_{*}(\phi) =  \phi $,

\item   $[U \varpi^{(3,2,1,2)}  H_{\vartheta_{0}}]_{*} (\phi) 
\equiv 2 (z_{0}^{2} + z_{0})   \phi   $ , 
\item  $[U \varpi^{(4,2,2,3)}  H_{\vartheta_{0}}]_{*} (\phi) \equiv 2 z_{0}^{2} \cdot  \phi  $,  
\item  $[U \varpi^{(4,3,1,2)}     H_{\vartheta_{0}}]_{*} (\phi) \equiv  (z_{0} ^{3} + z_{0} )  \cdot   \phi.  $  
\end{enumerate}

\noindent and $ \mathfrak{h}_{\vartheta_{0}, *}(\phi) = \mathfrak{h}_{\vartheta_{1},*}(\phi)  \equiv 0   $.  
\end{proposition}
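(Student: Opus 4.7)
The plan is to handle part (a) directly, reduce parts (b)--(d) to a computation with the compact subgroup $\mathscr{X}_{\vartheta_0}$ isolated in Lemma \ref{vart0structure}, and then deduce the $\vartheta_1$ assertion from the identity $\mathfrak{h}_{\vartheta_1} = w_2 \mathfrak{h}_{\vartheta_0} w_2$ already recorded in the excerpt. For (a), Lemma \ref{HvtisinU} gives $H_{\vartheta_0} \subset U$, hence $UH_{\vartheta_0} = U$ and $[UH_{\vartheta_0}]_*(\phi) = \phi$ on the nose.

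For (b)--(d), the cocharacters $\lambda \in \{(3,2,1,2),\,(4,2,2,3),\,(4,3,1,2)\}$ all have depth at most $2$, so $\varpi^{-\lambda} U_{\varpi^2}\varpi^\lambda \subset U$ and Lemma \ref{vart0structure} yields $U\varpi^\lambda H_{\vartheta_0} = U\varpi^\lambda \mathscr{X}_{\vartheta_0}$. I will parameterise $\mathscr{X}_{\vartheta_0}$ by pairs $(h_1,h_3) \in P^\circ := \GL_2(\Oscr_F) \times_{\Oscr_F^\times} \GL_2(\Oscr_F)$, with the middle component $h_2$ forced to equal $w h_1 w$ where $w = \left(\begin{smallmatrix}&1\\1&\end{smallmatrix}\right)$. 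Since the $H$-action on $\phi$ factors through $\pr_1$, and the $h_3$-freedom in $\mathscr{X}_{\vartheta_0}$ is independent of $h_1$ up to translation by $\Oscr_F^\times$, the projection
\[
\pr_1 : U\varpi^\lambda \mathscr{X}_{\vartheta_0}/U \longrightarrow U_1 \pr_1(\varpi^\lambda) U_1/U_1
\]
has equidistributed fibres, giving for $\lambda = (a,b,c,d)$ the formula
\[
[U\varpi^\lambda H_{\vartheta_0}]_*(\phi) \;=\; N_\lambda \cdot \mathcal{T}_{b,\,a-b,*}(\phi), \qquad N_\lambda \;=\; \frac{[\mathscr{X}_{\vartheta_0} : \mathscr{X}_{\vartheta_0} \cap \varpi^{-\lambda} U \varpi^\lambda]}{|U_1 \pr_1(\varpi^\lambda) U_1/U_1|}.
\]
The numerator index is computed by combining the three valuation conditions that $\pr_i(\varpi^{-\lambda}(\,\cdot\,)\varpi^\lambda) \in U_1$ places on the off-diagonal entries of $h_1$ and $h_3$; crucially, for each of the three $\lambda$'s at hand, the $\pr_2$-condition collapses onto the $\pr_1$-condition since $(a,b)$ and $(a,c)$ lie in the same Weyl orbit (e.g.\ for $(3,2,1,2)$ both give the constraint $\gamma \in \varpi\Oscr_F$ on the lower-left entry of $h_1$). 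Feeding the resulting indices through Corollary \ref{Tab*} yields the three stated values modulo $q-1$.

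With parts (a)--(d) established, the expansion
\[
\mathfrak{h}_{\vartheta_0} = (\rho^2 + 2\rho^4 + \rho^6)(UH_{\vartheta_0}) - (1+\rho^2)(U\varpi^{(3,2,1,2)}H_{\vartheta_0}) + (U\varpi^{(4,2,2,3)}H_{\vartheta_0}) + (U\varpi^{(4,3,1,2)}H_{\vartheta_0})
\]
reduces the vanishing of $\mathfrak{h}_{\vartheta_0,*}(\phi) \bmod (q-1)$ to checking that
\[
(z_0 + 2z_0^2 + z_0^3) \;-\; 2(1+z_0)(z_0+z_0^2) \;+\; 2z_0^2 \;+\; (z_0+z_0^3)
\]
vanishes identically in $z_0$, which is a routine expansion. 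Finally, since $w_2$ normalises $U$ and satisfies $\pr_1(w_2) = 1$ (as $w_2$ only swaps the second and third $\GL_2$-components of $H$), the verbatim argument used at the end of Proposition \ref{tfhvarsig0action} for $\varsigma_0 \mapsto \varsigma_1$ gives $\mathfrak{h}_{\vartheta_1,*}(\phi) = \mathfrak{h}_{\vartheta_0,*}(\phi)$. The main obstacle lies in the careful bookkeeping of the three simultaneous valuation constraints on $h_1$ and $h_3$: the coupling between $\pr_1$ and $\pr_2$ through the relation $h_2 = wh_1 w$ means one must verify case by case that no additional independent constraint is being imposed by $\pr_2$, which is what ensures the tidy values $q+1$, $q(q+1)$ and $1$ for $N_\lambda$ in cases (b), (c) and (d) respectively.
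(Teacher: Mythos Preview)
Your approach is correct and coincides with the paper's: both use $H_{\vartheta_0}\subset U$ for (a), reduce to $\mathscr{X}_{\vartheta_0}$ via Lemma~\ref{vart0structure} and the depth bound for (b)--(d), and use the $w_2$-conjugation for $\vartheta_1$. The paper packages your ``$\pr_2$-constraint collapses onto $\pr_1$'' observation more cleanly: since each of the three $\lambda=(a,b,c,d)$ satisfies $b=a-c$, the element $\varpi^\lambda$ actually lies in the image of the embedding $\imath:\mathbf P\hookrightarrow\mathbf H$, $(h_1,h_2)\mapsto(h_1,\ess h_1\ess,h_2)$, yielding a direct bijection $P^\circ\varpi^{(a,b,d)}P^\circ/P^\circ\to\mathscr X_{\vartheta_0}\varpi^\lambda U/U$ and the formula $|U_1\varpi^{(a,d)}U_1/U_1|\cdot\mathcal T_{b,a-b,*}(\phi)$ without any index ratio. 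Your $N_\lambda$ values $q+1$, $q(q+1)$, $1$ are exactly $|U_1\varpi^{(a,d)}U_1/U_1|$, so the computations agree. (Minor slip: in your $(3,2,1,2)$ example the constraint is on the upper-right entry $\beta$, not the lower-left $\gamma$.)
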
    
\begin{proof} Part (a) is clear since $ H_{\vartheta_{0}} \subset U $. Let $ \lambda \in \Lambda $ be such that $ \mathrm{dep}(\lambda) \leq 2 $. Then  Lemma \ref{vart0structure} implies that  $  H_{\vartheta_{0}} \varpi^{\lambda} U = \mathscr{X}_{\vartheta_{0}} \varpi^{\lambda} U $. Let us denote  $  \mathbf{P}  : = \GL_{2}(F) \times _{F^{\times}} \GL_{2}(F) $ and let  $ P $, $ P^{\circ} $ denote  the  groups of $ F $, $ \Oscr_{F} $-points of $ \mathbf{P} $ respectively.   Consider the embedding $$ \imath : \mathbf{P} \hookrightarrow \mathbf{H} , \quad \quad \quad  (h_{1}, h_{2}) \mapsto (h_{1}, \ess h_{1}  \ess, h_{2} ) $$ where $ \ess = \begin{psmallmatrix} & 1 \\ 1 & \end{psmallmatrix} $. Then $ \imath $ identifies  $ P^{\circ} $ with $ \mathscr{X}_{\vartheta_{0}} $. If $ \lambda = (a,b,c,d) $ satisfies $ b = a - c $, then we also have $ \varpi^{\lambda} \in \imath(P) $ and we write $ \varpi^{(a,b,d)} \in P $ for the pre-image.  Then  $$  P^{\circ} \varpi^{(a,b,d)} P^{\circ} / P^{\circ} \to  \mathscr{X}_{\vartheta_{0}} \varpi^{\lambda}   U / U , \quad  \quad \quad  \gamma P \mapsto \imath(\gamma) 
 U  $$ 
is a bijection.  It follows  $ \lambda = (a,b,c,d) $ satisfying $ b = a - c $ and with $ \mathrm{dep}(\lambda) \leq 2 $, we have   $$ [  H_{\vartheta_{0}} \varpi^{\lambda} U ]  (\phi)  =    \left |  U_{1} \backslash  U_{1} \varpi^{(a,d)} U_{1}  \right |  \cdot    T_{b,a-b,*}(\phi) .  $$ 
Parts (b), (c), (d)  are then easily obtained using  Corollary \ref{Tab*} and the formula above.   
Now recall that 
\begin{align*} \mathfrak{h}_{\vartheta_{0}}  &  =   \rho^{2}(1 + 2\rho^{2} +  \rho^{4})  (U H_{\vartheta_{0}} ) -  ( 1 + \rho^{2})     (U  \varpi^{(3,2,1,2)}  H_{\vartheta_{0}} )  +   ( U \varpi^{(4,2,2,3)} H_{\vartheta_{0}})  + ( U \varpi^{(4,3,1,2)}H_{\vartheta_{0}}) .
\end{align*} 
So putting everything together, we have \begin{align*} \mathfrak{h}_{\vartheta_{0},*}(\phi) & \equiv \left (   z_{0}(1+2z_{0}+z_{0}^{2}) - (1 + z_{0})(2z_{0}^{2}+ 2z_{0})   + 2z_{0}^{2} + ( z_{0}^{3} + z_{0})  \right ) \phi = 0   
\end{align*} 
Since $ \mathfrak{h}_{\vartheta_{1}} =  w_{2} \mathfrak{h}_{\vartheta_{0}} w_{2} $ and conjugation by $ w_{2} $ only swaps the second and third components of $ H $, we obtain the equality $ \mathfrak{h}_{\vartheta_{0}, * }(\phi) = \mathfrak{h}_{\vartheta_{1},*}(\phi) $.  
\end{proof}  

\begin{notation}  \label{strucvtheta2not}  Let $ A_{\vartheta_{2}} = A \cap \vartheta_{2} K \vartheta_{2}^{-1} $ and $ U_{\varpi^{2}} $ the subgroup of all elements in $ U $ that reduce to identity   modulo $ \varpi^{2}  $. We let $ M_{\vartheta_{2}}$ be the subgroup of all triples $ h = (h_{1},h_{2},h_{3}) \in U $ such that  $$   h_{1} = \begin{psmallmatrix} 1 \\  x & 1  \end{psmallmatrix}, \quad h_{2} = \begin{psmallmatrix} 1 & y  \\  &   1 \end{psmallmatrix} , \quad  h_{3}  =   \begin{psmallmatrix}  1 \\ y - x & 1  \end{psmallmatrix} $$ 
where $ x , y \in \Oscr_{F} $ satisfy $ x - y \in \varpi  \Oscr_{F} $. We define $ N_{\vartheta_{2}} $ to be the two parameter  subgroup of triples $  (h_{1}, h_{2},  h_{3}) $ given by 
  $$  h_{1} = \begin{psmallmatrix}  1  & x \varpi  \\[0.1em]  & 1    \end{psmallmatrix} , \quad    h_{2} = 
\begin{psmallmatrix}   1  \\  x  \varpi   &  1    \end{psmallmatrix},  \quad    h_{3} =   \begin{psmallmatrix}   1  &   y  \\   &  1    \end{psmallmatrix}  $$  
where $ x , y \in \Oscr_{F} $ are arbitrary. 
\end{notation}

\begin{lemma}  \label{vart2structure}      $ H_{\vartheta_{2}}  =M_{\vartheta_{2}} N_{\vartheta_{2}}  A _{\vartheta_{2}}  U_{\varpi^{2}} =  N_{\vartheta_{2}} M_{\vartheta_{2}} A _{\vartheta_{2}} U_{\varpi^{2}} $.  
\end{lemma}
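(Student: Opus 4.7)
The plan is to follow the template of Lemma \ref{varsig2struclemma}, Lemma \ref{varsig3struclemma} and Lemma \ref{vart0structure}. First I would write an arbitrary $h \in H$ as in Notation \ref{notationh} and explicitly compute the conjugate $\vartheta_{2}^{-1} h \vartheta_{2}$. The condition $h \in H_{\vartheta_2}$ is then equivalent to that $6 \times 6$ matrix lying in $K$. Reading off divisibilities entry by entry, one extracts: all entries of $h$ are integral, $b, c_1 \in \varpi^2 \Oscr_F$ and $b_2 \in \varpi\Oscr_F$ (so in particular $h \in U$ by Lemma \ref{HvtisinU} and hence $a, a_1, a_2, d, d_1, d_2 \in \Oscr_F^\times$), plus the key congruences
\[
a - d_1,\ a_1 - d,\ a_2 - d_1,\ b_1 - c - c_2 \in \varpi^2 \Oscr_F, \qquad c_1 \equiv 0, \ b_2 - c_1 \equiv 0 \pmod{\varpi}.
\]
From these formulas it is direct to check that $M_{\vartheta_2}$, $N_{\vartheta_2}$, $A_{\vartheta_2}$ and $U_{\varpi^2}$ are each contained in $H_{\vartheta_2}$; note in particular that the definitional constraint $x - y \in \varpi\Oscr_F$ hard-wired into $M_{\vartheta_2}$ is exactly what is needed so that the induced entry of the conjugate lies in $\varpi^2\Oscr_F$.

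Second, for the reverse inclusion I would run the standard reduction. Given $h = (h_1, h_2, h_3) \in H_{\vartheta_2}$, pick $m \in M_{\vartheta_2}$ with parameters $x = -c/a$, $y = -b_1/d_1$; the congruence $a \equiv d_1 \pmod{\varpi}$ and $b_1 - c \equiv c_2 \pmod{\varpi^2}$ extracted above force $x - y \in \varpi\Oscr_F$, so $m$ is a legal element of $M_{\vartheta_2}$. Writing $h' = mh$ and denoting its entries with primes, the lower-left entries of $h'_1$ and $h'_2$ and the corresponding entry of $h'_3$ now vanish. Next choose $n \in N_{\vartheta_2}$ with parameters $x = -b'/(d'\varpi)$ and $y = -b'_2/d'_2$ (which are integral by the divisibility constraints on $b, b_2$ and the corresponding ones on $h'$); then $nmh$ is block-diagonal in each $\GL_2$ factor and its diagonal lies in $A \cap H_{\vartheta_2} \cdot U_{\varpi^2} = A_{\vartheta_2} \cdot (A \cap U_{\varpi^2})$, so $nmh \in A_{\vartheta_2} U_{\varpi^2}$. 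This yields the factorization $h \in N_{\vartheta_2} M_{\vartheta_2} A_{\vartheta_2} U_{\varpi^2}$; the other ordering is obtained by the symmetric argument, first killing the super-diagonal part by $N_{\vartheta_2}$ and then the sub-diagonal part by $M_{\vartheta_2}$.

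The main obstacle, as in Lemma \ref{varsig2struclemma}, is purely bookkeeping: $M_{\vartheta_2}$ and $N_{\vartheta_2}$ do not commute on the nose, and one must verify that after reduction by $m$, the required parameters of $n$ still lie in $\Oscr_F$ (i.e.\ that $h'$ satisfies the same congruence conditions as $h$). This is immediate because $M_{\vartheta_2}, N_{\vartheta_2} \subset H_{\vartheta_2}$, so the set of congruence conditions cutting out $H_{\vartheta_2}$ is preserved under left-multiplication by these subgroups. Freedom to reorder the factors then follows from the fact that $A_{\vartheta_2}$ normalizes each of $M_{\vartheta_2}$, $N_{\vartheta_2}$, $U_{\varpi^2}$, and that the commutator $[M_{\vartheta_2}, N_{\vartheta_2}]$ lies in $U_{\varpi^2}$, which is exactly the statement read off from the explicit matrix product.
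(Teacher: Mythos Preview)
Your approach is exactly the paper's: compute $\vartheta_{2}^{-1} h \vartheta_{2}$ explicitly, read off the integrality constraints, then left-reduce by suitable $m \in M_{\vartheta_{2}}$ and $n \in N_{\vartheta_{2}}$ to land in $A_{\vartheta_{2}} U_{\varpi^{2}}$.

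There are, however, two inaccuracies in your execution. First, several of the congruences you list are too strong. The explicit matrix only yields $b, c_{1}, c_{2} \in \varpi \Oscr_{F}$, $a_{2} - d_{1} \in \varpi \Oscr_{F}$, and merely $b_{2} \in \Oscr_{F}$; the genuine $\varpi^{2}$-relations are $b - c_{1},\ a - d_{1},\ a_{1} - d,\ b_{1} - c - c_{2} \in \varpi^{2}\Oscr_{F}$, and one does \emph{not} get $b, c_{1} \in \varpi^{2}\Oscr_{F}$ individually.

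Second, and this is a real gap, the assertion that $nmh$ is ``block-diagonal in each $\GL_{2}$ factor'' is false. Your choices of $m$ and $n$ kill $c, b_{1}, b, b_{2}$, but in the second and third $\GL_{2}$-components the entries $c_{1}''$ and $c_{2}''$ remain. The missing step --- which is exactly what the paper supplies --- is to observe that $h'' = nmh$ is still in $H_{\vartheta_{2}}$, so the congruence conditions apply to it: from $b_{1}'' - c'' - c_{2}'' \in \varpi^{2}\Oscr_{F}$ with $b_{1}'' = c'' = 0$ one obtains $c_{2}'' \in \varpi^{2}\Oscr_{F}$, and from $b'' - c_{1}'' \in \varpi^{2}\Oscr_{F}$ with $b'' = 0$ one obtains $c_{1}'' \in \varpi^{2}\Oscr_{F}$. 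Only then does $h''$ decompose as a diagonal element (which lies in $A_{\vartheta_{2}}$) times something in $U_{\varpi^{2}}$. You flag precisely the right preservation principle (``$M_{\vartheta_{2}}, N_{\vartheta_{2}} \subset H_{\vartheta_{2}}$, so congruences are preserved'') but stop short of invoking it where it is actually needed.
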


\begin{proof} That $ M_{\vartheta_{2}} $, $ N_{\vartheta_{2}} , U_{\varpi^{2}} $ are subgroups of $ H_{\vartheta_{2}} $ is easily verified by checking that their conjugates by $ \vartheta_{2}^{-1} $ are in $ K $ , so $ H_{\vartheta_{2}} $ contains the product. If  $ h  \in H_{\vartheta_{2}} \subset U  $ is arbitrary, then  $$ \vartheta^{-1}_{2} h \vartheta_{2} =   \begin{pmatrix} 
a &  - c_{1}  & & \mfrac{b-c_1}{\varpi^2} &   \mfrac{a-d_1 }{\varpi^2} 
& -\mfrac{c_1 }{\varpi }\\[0.5em]
-c & a_1  & -\mfrac{c_2 }{\varpi } &   \mfrac{a_{1}-d }{\varpi^2}
& \mfrac{b_1 - c - c_2 }{\varpi^2 } & \mfrac{a_1 - d_2 }{\varpi }
\\[0.5em]
 & -   c_{1} \varpi   & a_2  &  - \mfrac{c_{1}} { \varpi } &  \mfrac{a_2   - d_1 }{\varpi } 
 &  b_2 -c_1
 \\[0.5em] 
c\,\varpi^2  
&  &  & d & c &  \\
 &  c_1 \varpi^2  
 &  & c_1  & d_1  &  c_{1} \varpi  \\[0.5em]  
 &   &  c_{2}   &   &  \mfrac{c_{2}}{\varpi} & d_2 
 \end{pmatrix}   \in   K.  $$
From the matrix, we see that $ b $, $ c_{1}$, $ c_{2}$, $ b_{1} - c$, $ a - d_{1}    \in \varpi \Oscr_{F}  $. In particular, $ a, a_{1}, a_{2},  d , d_{1},  d_{2}   \in \Oscr_{F}^{\times} $. Let $ m \in M_{\vartheta_{2}} $  be defined with $ x = -c/a $, $ y = -b_{1}/d_{1} $ (see Notation \ref{strucvtheta2not}). Then $ h' = mh $ satisfies $  b_{1}' =  c' = 0 $. Then $ c_{2}' \in \varpi^{2} \Oscr_{F} $. If we define $ n \in N_{\vartheta_{2}} $ 
with $ x = -b'/d'\varpi $, $ y = -b'_{2}/d' $, we find that $ h'' $ satisfies  $ b_{1}'' =  c'' = 0 $ (inherited from $ h' $) and  $ b'' = b_{2}'' = 0 $. The latter condition   forces  $ c_1'' \in \varpi^{2} \Oscr_{F} $. Now $ h'' $  clearly lies in the product $ A_{\vartheta_{2} } U_{\varpi^{2} } $ which proves the first equality. The second follows similarly by first using  $  N_{\vartheta_{2}} $ to make the entries $ b $, $ b_{2} $ in $ h $ zero.      
\end{proof}

\begin{proposition}  Modulo $ q - 1 $,  we have

\begin{multicols}{2} \begin{enumerate}[label = \normalfont(\alph*), itemsep = 0.3em] 
\item $ [ U H   _ { \vartheta_{2} } ] _{*}( \phi ) = \phi $, \vspace{0.2em} 
\item   $[U \varpi^{(3,2,1,2)}  H_{\vartheta_{2}}]_{*} (\phi) \equiv      (z_{0}^{2} + z_{0} )   \phi    -  \bar{\phi}_{(1,2)}   $, 
\item  $[U \varpi^{(3,1,2,1)}  H_{\vartheta_{2}}]_{*} (\phi) \equiv \bar{\phi}_{(1, 2 ) } $,  
\item  $[U \varpi^{(3,1,2,2)}     H_{\vartheta_{2}}]_{*} (\phi)  =  \bar{\phi} _ { ( 1, 2 ) }  $,  
\item  $[U \varpi^{(4,2,2,3)}     H_{\vartheta_{2}}]_{*} (\phi) \equiv    z_{2}^{2} \cdot \phi  $,
\item  $[U \varpi^{(4,1,3,2)}     H_{\vartheta_{2}}]_{*} (\phi) \equiv    ( z_{0} + 1 )\cdot \phi_{(1, 2) } -  z_{0}^{2}   \cdot   \phi    $ 
\end{enumerate}  
\end{multicols}

\noindent and $ \mathfrak{h}_{\vartheta_{2}, *} (\phi)  = \mathfrak{h}_{\tilde{\vartheta}_{0}, *} ( \phi )   \equiv  0 $. 
\end{proposition}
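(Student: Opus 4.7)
The plan is to compute each Hecke operator $[U\varpi^\lambda H_{\vartheta_2}]_{*}(\phi)$ for the cocharacters appearing in the formula for $\mathfrak{h}_{\vartheta_2}$ by using the structure decomposition provided by Lemma \ref{vart2structure}, in the same spirit as the preceding proofs for $\varsigma_2$ and $\vartheta_0$. Part (a) is immediate from Lemma \ref{HvtisinU}. For parts (b)--(f), one first notes that each cocharacter $\lambda \in \{(3,2,1,2),(3,1,2,1),(3,1,2,2),(4,2,2,3),(4,1,3,2)\}$ has depth at most $2$ (e.g.\ $(3,2,1,2)$ has $\alpha_0 = 1$, $\beta_0 = -1$, $\beta_2 = 1$), so $\varpi^{-\lambda}U_{\varpi^2}\varpi^\lambda \subset U$ and the $U_{\varpi^2}$ factor in $H_{\vartheta_2} = M_{\vartheta_2}N_{\vartheta_2}A_{\vartheta_2}U_{\varpi^2}$ can be absorbed into $U$. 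One then checks, case by case, which of $\varpi^{-\lambda}M_{\vartheta_2}\varpi^\lambda$, $\varpi^{-\lambda}N_{\vartheta_2}\varpi^\lambda$ lies in $U$; whichever does can be dropped, leaving an enumeration of $H_{\vartheta_2}\varpi^\lambda U / U$ indexed by parameters from the complementary subgroup together with $A_{\vartheta_2}/A^\circ$.

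Applying each representative to $\phi$ and summing produces the claimed expressions. The slight subtlety — and the source of the $\bar{\phi}_{(1,2)} = \mathrm{ch}(X_{-1,-2})$ terms in (b), (c), (d), (f) — is that for those $\lambda$ whose upper block $\mathrm{pr}_1(\varpi^\lambda) = \mathrm{diag}(\varpi^b,\varpi^{a-b})$ is non-central, the parametrization by $M_{\vartheta_2}$ (or $N_{\vartheta_2}$) does not collapse via a substitution $u \mapsto ux$ with $x \in \Oscr_F^\times$, so one genuinely picks up support in $X_{-1,-2}$ rather than in $X_{0,0}$. This is entirely analogous to the book-keeping in Lemma \ref{exceptionsvarsigma2}, except the enumeration now runs over two parameters in $[\kay]$ rather than one.

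Once (a)--(f) are in hand, the vanishing $\mathfrak{h}_{\vartheta_2,*}(\phi)\equiv 0 \pmod{q-1}$ is a direct verification: substituting into
\[
\mathfrak{h}_{\vartheta_2} = (\rho^2 + 2\rho^4 + \rho^6)(UH_{\vartheta_2}) - (1+\rho^2)\bigl[(U\varpi^{(3,2,1,2)}H_{\vartheta_2}) + (U\varpi^{(3,1,2,1)}H_{\vartheta_2}) + (U\varpi^{(3,1,2,2)}H_{\vartheta_2})\bigr] + (U\varpi^{(4,2,2,3)}H_{\vartheta_2}) + (U\varpi^{(4,1,3,2)}H_{\vartheta_2})
\]
one checks that the coefficients of $\phi$ assemble into a polynomial identity in $z_0$ that vanishes, while the $\bar{\phi}_{(1,2)}$ contributions from (b) cancel against those from (c), (d) (via $(1+\rho^2)$) and the $\phi_{(1,2)} = \rho^{-2}\cdot\bar{\phi}_{(1,2)}$ term in (f). The equality $\mathfrak{h}_{\tilde{\vartheta}_0,*}(\phi) = \mathfrak{h}_{\vartheta_2,*}(\phi)$ then follows from the relation $\mathfrak{h}_{\tilde{\vartheta}_0} = w_2 w_3 \mathfrak{h}_{\vartheta_2} w_3 w_2$ established in \S\ref{secondrestrictions}, since $w_2, w_3$ normalize $U$ and act on $H$ as an inner automorphism fixing $\phi$.

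The main obstacle is the bookkeeping in parts (b) and (f), where both $M_{\vartheta_2}$ and $A_{\vartheta_2}$ contribute non-trivially to the enumeration and the resulting sums mix $\phi$ with $\bar{\phi}_{(1,2)}$; distinct cosets must be carefully separated (as in the argument after Lemma \ref{varsig2struclemma} using the Iwahori $J_{\varsigma_2}$). Once one commits to a decomposition order for $H_{\vartheta_2}\varpi^\lambda$ that absorbs the ``trivial'' unipotent piece first, the remaining sum is a one- or two-parameter integral that is handled by the same elementary arguments as in the proof of Proposition \ref{tfhvarsig2action}.
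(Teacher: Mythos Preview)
Your approach is the paper's approach: use Lemma~\ref{vart2structure} to write $H_{\vartheta_2}\varpi^\lambda U/U$ in terms of $M_{\vartheta_2}$ and $N_{\vartheta_2}$, drop whichever of the two is absorbed by $U$ for the given $\lambda$, enumerate the remaining cosets, and sum.  A few of your details are off, though.

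First, $A_{\vartheta_2}$ never contributes: since $H_{\vartheta_2}\subset U$ by Lemma~\ref{HvtisinU}, one has $A_{\vartheta_2}\subset A^\circ$, and $A^\circ$ commutes with every $\varpi^\lambda$, so $A_{\vartheta_2}$ is always absorbed.  Your remarks about $A_{\vartheta_2}/A^\circ$ and about ``$M_{\vartheta_2}$ and $A_{\vartheta_2}$ both contributing non-trivially'' should be dropped.  Second, the enumerations are over a \emph{single} parameter in $[\kay]$ in each case, not two: e.g.\ for (b) one reduces to $\lambda=(1,0,1,0)$ after extracting $z_0^2$, and the constraint $x-y\in\varpi\Oscr_F$ built into $M_{\vartheta_2}$ forces $x=y$ modulo $\varpi$, leaving $q$ cosets.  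No Iwahori comparison of the $J_{\varsigma_2}$ type is needed anywhere here.

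Third, your identity $\phi_{(1,2)}=\rho^{-2}\cdot\bar{\phi}_{(1,2)}$ is false (one has $z_0\cdot\bar{\phi}_{(1,2)}=\bar{\phi}_{(2,3)}$, not $\phi_{(1,2)}$).  In fact the ``$\phi_{(1,2)}$'' in part~(f) of the statement is a typo for $\bar{\phi}_{(1,2)}$: the computation in (f) gives $z_0^3\cdot\xi_{(2,2,0,1)}=\bar{\phi}_{(2,3)}-z_0^2\phi+q\,\bar{\phi}_{(1,2)}\equiv(z_0+1)\bar{\phi}_{(1,2)}-z_0^2\phi$, and with that correction the $\bar{\phi}_{(1,2)}$ bookkeeping in the final assembly is simply $-(1+z_0)\bar{\phi}_{(1,2)}+(1+z_0)\bar{\phi}_{(1,2)}=0$.
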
    
 
\begin{proof} Part (a) is immediate since $ H_{\vartheta_{2}}  \subset U $.  For $ \lambda \in \Lambda $, let $ \xi_{\lambda} = [H_{\vartheta_{2}} \varpi^{\lambda} U](\phi) $. If $ \lambda $ depth at most $ 2 $, then  $ H _{\vartheta_{2}} \varpi^{\lambda} U / U = M_{\vartheta_{2}} N_{\vartheta_{2}} \varpi^{\lambda} U/  U $  by Lemma  \ref{vart2structure}. If moreover $ \lambda $  has depth one and  $ \beta_{2}(\lambda) \leq 0 $, then we also have  $ M_{\vartheta_{2}} N_{\vartheta_{2}} \varpi^{\lambda} U = M _{\vartheta_{2}} \varpi^{\lambda} U $. Similarly if $ \alpha_{0}(\lambda), \beta_{2}(\lambda) \geq 0 $ and $ \beta_{0}(\lambda) \leq 0 $,  then   $ H_{\vartheta_{2}} \varpi^{\lambda} U / U = N_{\vartheta_{2}} \varpi^{\lambda} U / U $. \\ 

\noindent (b) We need to compute  $ z_{0}^{2} \cdot \xi_{\lambda} $ where $ \lambda = (1,0,1,0) $. Then $ \mathrm{dep}(\lambda) = 1 $ and $ \beta_{2}(\lambda) = -1 $, so   $ H_{\vartheta_{2}}  \varpi^{\lambda} U/U = M_{\vartheta_{2}} \varpi^{\lambda} U /  U  $.  It is then easily seen that the quotient $ M_{\vartheta_{2}} / M_{\vartheta_{2}} \cap \varpi^{\lambda} U \varpi^{-\lambda} $ has cardinality $ q $ with representatives given by elements with parameters $ x = y $ running over $ [\kay] $ (see Notation \ref{strucvtheta2not}). From this,  one finds that  $ \xi_{\lambda} = \phi - \phi_{(1,0)} +  q  \phi_{(1,1)} $.    \\

\noindent  (c) We need to compute $ z_{0}^{2} \cdot \xi_{\lambda} $ where  $ \lambda = (1,1,0,1) $.  Here $ \alpha_{0}(\lambda) = \beta_{2}(\lambda) = 1 $ and $ \beta_{0}(\lambda) = - 1 $, so $ H_{\vartheta_{2}} \varpi^{\lambda} U / U = N_{\vartheta_{2}} \varpi^{ \lambda} U /  U   $. This  coset space has  cardinality $ q $ and a set of representatives is $ \gamma \varpi^{\lambda} $ where $ \gamma \in  N_{\vartheta_{2} } $ runs over elements defined with $ x = 0 $ and $ y \in [\kay] $ (see Notation \ref{strucvtheta2not}). So $ \xi_{\lambda} = q \varpi^{\lambda}  \cdot  \phi =     q  \phi_{(1,0)} $.   \\

\noindent (d) If $ \lambda = -(3,1,2,2) $, then $ H_{\vartheta_{2}}  \varpi^{\lambda} U / U = M_{\vartheta_{2}} \varpi^{\lambda} U / U $ as in part (b) and its easy to see that this equals $ \varpi^{\lambda} U/U $. So $ \xi_{\lambda} = \varpi^{\lambda} \cdot \phi = \bar{\phi}_{(1,2)}$.   \\

\noindent (e)  We need to compute $ z_{0}^{3} \cdot \xi_{\lambda} $ where $ \lambda =  (2,1,1,0) $.  As the first and second  components of $ \varpi^{\lambda} $ are central and $ \beta_{2}(\lambda) = - 2 < 0 $, we see that $ H_{\vartheta_{2}}  \varpi^{\lambda} U / U =  M_{\vartheta_{2}} \varpi^{\lambda} U / U $. From the structure of $ M $, we see that a set of 
 representatives is given by $ \gamma \varpi^{\lambda} $ where $ \gamma =  \left (1,1, \begin{psmallmatrix} 1 \\ \varpi z & 1 \end{psmallmatrix} \right )  $ and $ z $ running over $ [\kay] $. So $ \xi_{\lambda} = q \varpi^{\lambda}  \cdot \phi = z_{0}^{-1} \cdot \phi $.  \\

\noindent (f) This equals $ z_{0}^{3} \cdot  \xi_{\lambda} $ where $ \lambda = ( 2,2,0,1) $.   Then  $ H _{\vartheta_{2}} \varpi^{\lambda} U / U = N_{\vartheta_{2}} \varpi^{\lambda} U / U $.  A set of representatives for this quotient is $ \gamma \varpi^{\lambda} $ where $ \gamma $ runs over elements of $ N_{\vartheta_{2}} $  defined with $ y = 0 $ and $ x \in [\kay]  $. From this, one calculates that $ \xi_{\lambda} $ vanishes on $ ( X \setminus X_{1,0} ) \cup ( X_{1,1}  \setminus  X_{2,1}) $, takes value one on $ X_{1,0} \setminus X_{1,1} $ and $ q $ on $ X_{2,1} $. So $ \xi_{\lambda} = \phi_{(1,0)} - \phi_{(1,1)}   +  q \phi_{(2,1)} $ and $ z_{0}^{3}  \cdot  \xi_{\lambda} = \bar{\phi}_{(2,3)} - z_{0}^{2}\phi + q \bar{\phi}_{(1,2)}   $.  \\ 

\noindent  Now recall that \begin{align*}  \mathfrak{h}_{\vartheta_{2}}  & = \rho^{2}( 1  + 2 \rho^{2} +  \rho^{4} )  (  U H_{\vartheta_{2}  }  )   - ( 1 + \rho^{2} ) \Big(     (  U    \varpi^{(3,2,1,2)}   H _{\vartheta_{2}  }  )  +   ( U   \varpi^{(3,1,2,1)}   H _{\vartheta_{2}  }  )  +    ( U    \varpi^{(3,1,2,2)}   H _{\vartheta_{2}  }  ) \Big )  \\
 &  +  ( U \varpi^{(4,2,2,3)}  H_{\vartheta_{2} } )   + ( U \varpi^{(4,1,3,2)}  H _ { \vartheta_{2} }  )  
 \end{align*}
By  parts (a)-(f), we see that 
\begin{align*} \mathfrak{h}_{\vartheta_{2},*}(\phi) &  \equiv   z_{0}(1+z_{0})^{2} \cdot \phi - (1+z_{0}) \Big (  (z_{0}^{2} + z_{0}) \cdot \phi - \bar{\phi}_{(1,2)}  + \bar{\phi}_{(1,2)}  +  \bar{\phi}_{(1,2)}  \Big  ) +     \\
&  \quad  \,  \,  z_{0}^{2} \cdot \phi + ( z_{0}+1) \cdot \phi_{(1,2)} - z_{0}^{2} \cdot \phi  \\
&  =  \Big (  z_{0}(1 + z_{0})^{2} - ( 1 + z_{0}) (z_{0}^{2} + z_{0} )  \Big ) \cdot \phi   - ( 1 + z_{0}) \bar{\phi}_{(1,2)} + (1 + z_{0}) \cdot \bar{\phi}_{(1,2)}   \\
&  = 0 
\end{align*}
modulo $ q -  1    $.  Since $ \mathfrak{h}_{\tilde{\vartheta}_{0}} $ is the conjugate of $ \mathfrak{h}_{\vartheta_{2}} $ by $ w_{2} w_{3} $ and this only affects the second and third components of $ H $, we see that $ \mathfrak{h}_{\tilde{\vartheta}_{0}} (\phi) = \mathfrak{h}_{\vartheta_{2}} (\phi) $. This completes the proof.  
\end{proof}

\begin{lemma}      \label{vart3struclemma}   Let $ I_{\vartheta_{3}} \subset U $ denote the subgroup of triples $ (h_{1}, h_{2}, h_{3} ) $ such that modulo $ \varpi^{2}$, $ h_{1} $ reduces  to   a  lower triangular matrix and $ h_{2} $, $ h_{3} $ reduce to upper triangular matrices. Then $ H_{\vartheta_{3}}  \subset  I_{\vartheta_{3}} $.  
\end{lemma}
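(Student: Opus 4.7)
The plan is to adapt the strategy used for Lemmas \ref{varsig2struclemma} and \ref{varsig3struclemma}. Take an arbitrary $h = (h_1, h_2, h_3) \in H_{\vartheta_3}$ and write it in the format of Notation \ref{notationh}. I will form the conjugate $\vartheta_3^{-1} h \vartheta_3$ as an explicit $6 \times 6$ matrix; the requirement that this matrix lie in $K = \mathrm{GSp}_6(\Oscr_F)$ will force enough divisibility on the entries of $h$ to place it inside $I_{\vartheta_3}$.

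Since $\vartheta_3 = \theta_3 \tau_2$, where $\theta_3 = \varpi^{-(1,1,1,1)} \sigma_3$ has non-identity contribution only in positions $(2,6)$ and $(3,5)$ (where it carries the entries $\varpi^{-1}$), I would compute $\vartheta_3^{-1} h \vartheta_3$ in two steps: first form $\tau_2^{-1} h \tau_2$, whose explicit $6\times 6$ expression is already recorded in the proof of Lemma \ref{structureofHtau2}, and then perform the additional row/column operations corresponding to conjugation by $\theta_3$. The new entries that appear will be fractional expressions of the shape $(b - c_1)/\varpi^2$, $(a - d_1)/\varpi^2$, $(b_1 - c - c_2)/\varpi^2$, and analogous ones in the other ``deepened'' positions. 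Requiring each of them to be integral yields precisely the congruences $b, c_1, c_2 \in \varpi^2 \Oscr_F$, which is exactly the defining condition of $I_{\vartheta_3}$. Along the way one also deduces that every entry of $h$ is integral, so $h \in U$ --- a point that must be checked separately here because $\vartheta_3$ is precisely the case excluded from Lemma \ref{HvtisinU}.

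The argument is essentially a bookkeeping exercise in symbolic matrix manipulation, and I do not anticipate any conceptual obstacle. Unlike its predecessors, the present lemma only asserts an inclusion and does not attempt to produce a product decomposition of $H_{\vartheta_3}$; this is all that is required for its one use in the proof of Theorem \ref{mainzeta}, where $I_{\vartheta_3}$ is invoked to control the stabilizers $\mathrm{Stab}_{H_{\vartheta_3}}(\vec{v})$ for $\vec{v} \in \supp(\chi)$. The only real risk is a sign or denominator error in propagating the many fractional entries through the two-step conjugation, which is precisely the type of computation well suited to the symbolic matrix software already used elsewhere in the article.
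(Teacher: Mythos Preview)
Your proposal is correct and follows essentially the same approach as the paper: compute $\vartheta_3^{-1} h \vartheta_3$ explicitly and read off from the integrality of its entries that $h \in U$ and $b, c_1, c_2 \in \varpi^2 \Oscr_F$. The paper simply displays the conjugate in one step rather than two, but the content is identical.

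One small execution point: since $\vartheta_3 = \theta_3 \tau_2$, one has $\vartheta_3^{-1} h \vartheta_3 = \tau_2^{-1}(\theta_3^{-1} h \theta_3)\tau_2$, so the natural two-step route is to conjugate by $\theta_3$ first (a trivial unipotent operation on $h$) and then feed the result into the $\tau_2$-formula of Lemma~\ref{structureofHtau2}, rather than the order you describe. This does not affect the validity of the argument.
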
 
\begin{proof} Write $ h \in H_{\vartheta_{3}} $ as in Notation \ref{notationh}. Then $$  \vartheta_{3} ^{-1} h\vartheta_{3}  =      
\begin{pmatrix} 
a&*& &-\mfrac{b - c_{1}}{\varpi^{2}} 
&*&-\mfrac{c_{1}}{\varpi^{2}} \\[0.5em] 
*&a_{1}&-\mfrac{c_{2}}{\varpi^{2}}&*&\mfrac{b_{1}- c}{\varpi^{2}} - \mfrac{c_{2}}{\varpi^{4}}  &  *  
\\[0.5em]   &  * & a_{2}  & *  & * &    b_{2} -  \mfrac{c_{1}}{\varpi^{2}}  \\[0.5em]  
* & & & d  &  c &   \\[0.5em]
&  * &  & *  & d_{1} &  *   \\[0.5em]  
& & *  
& &  * 
&d_{2} 
\end{pmatrix} \in K  $$
Since all entries of this matrix must be integral, it is easily seen that  $ h \in U $ and  that  $  b , c_{1}, c_{2} \in \varpi^{2} \Oscr_{F} $. 
\end{proof}

\begin{proposition}  \label{vart3action}        We have 
\begin{enumerate}[label = \normalfont(\alph*),  itemsep = 0.3em, after = \vspace{0.3em}, before = \vspace{0.3em}  ] 
\item $[U \varpi^{(3,1,2,2)} H_{\vartheta_{3}}]_{*} (\phi)  = \bar{\phi}_{(1,2)}$ 
\item   $[U \varpi^{(4,2,2,3)}  H_{\vartheta_{3}}]_{*} (\phi)  = \bar{\phi}_{(2, 2) } $  
\item   $[U \varpi^{(4,1,3,2)}  H_{\vartheta_{3}}]_{*} (\phi)  =  \bar{\phi}_{(1, 3)} $  
\end{enumerate}    
and $ \mathfrak{h}_{\vartheta_{3}, *} ( \phi )  = -  \ch \begin{psmallmatrix} \varpi ^{-1}  \Oscr_{F}^{\times} \\  \varpi^{-2} \Oscr_{F}^{\times}  \end{psmallmatrix} $. 
\end{proposition}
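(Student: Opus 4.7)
The three equalities (a), (b), (c) share a common mechanism, which I would pursue uniformly. For each cocharacter $\lambda \in \{(3,1,2,2),\, (4,2,2,3),\, (4,1,3,2)\}$, the plan is to establish the inclusion $\varpi^{\lambda} H_{\vartheta_{3}} \varpi^{-\lambda} \subset U$. Granted this, the product $U\varpi^{\lambda} H_{\vartheta_{3}}$ collapses to the single left $U$-coset $U\varpi^{\lambda}$, and hence $[U\varpi^{\lambda} H_{\vartheta_{3}}]_{*}(\phi)$ reduces to the single translate $\pr_{1}(\varpi^{\lambda})^{-1}\cdot \phi$. Writing $\lambda = (a,b,c,d)$, so that $\pr_{1}(\varpi^{\lambda}) = \mathrm{diag}(\varpi^{b}, \varpi^{a-b})$, this translate is exactly $\bar{\phi}_{(b,\,a-b)}$; for the three values under consideration this gives $\bar{\phi}_{(1,2)}$, $\bar{\phi}_{(2,2)}$, and $\bar{\phi}_{(1,3)}$, respectively.

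The required inclusion is a direct matrix verification using Lemma \ref{vart3struclemma}. If $h = (h_{1},h_{2},h_{3}) \in H_{\vartheta_{3}}$ is written as in Notation \ref{notationh}, then the lemma ensures $h \in U$ together with the extra integrality conditions $b,\, c_{1},\, c_{2} \in \varpi^{2}\Oscr_{F}$. Conjugation by $\varpi^{\lambda}$ acts blockwise on $h_{i}$ by scaling the upper-right entry by $\varpi^{2b_{i} - a}$ and the lower-left entry by $\varpi^{a - 2b_{i}}$, where $b_{i}$ is the appropriate coordinate of $\lambda$. For each of the three $\lambda$ the only entries that receive a scaling by a negative power of $\varpi$ equal to $\varpi^{-2}$ are precisely $b$, $c_{1}$, $c_{2}$; and these are exactly the entries assured to lie in $\varpi^{2}\Oscr_{F}$, so the conjugated matrix stays in $U$. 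The remaining entries either sit in central blocks (no scaling) or receive a non-negative power of $\varpi$, automatically integral.

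For the concluding formula, I will substitute (a), (b), (c) into the expression
\[
\mathfrak{h}_{\vartheta_{3}} = (U\varpi^{(4,2,2,3)} H_{\vartheta_{3}}) + (U\varpi^{(4,1,3,2)} H_{\vartheta_{3}}) - (1 + \rho^{2})(U\varpi^{(3,1,2,2)} H_{\vartheta_{3}})
\]
recorded at the end of \S \ref{secondrestrictions}. Since $\rho^{2} = \varpi^{(2,1,1,1)}$ is central, it passes inside the Hecke operator, giving $\rho^{2}(U\varpi^{(3,1,2,2)} H_{\vartheta_{3}}) = (U\varpi^{(5,2,3,3)} H_{\vartheta_{3}})$. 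The single-coset verification of the previous paragraph applies verbatim to $\lambda = (5,2,3,3)$, because a central shift of $\lambda$ leaves the conjugation action on $H_{\vartheta_{3}}$ unchanged, and produces the translate $\bar{\phi}_{(2,3)}$. Combining everything gives $\mathfrak{h}_{\vartheta_{3},*}(\phi) = \bar{\phi}_{(2,2)} + \bar{\phi}_{(1,3)} - \bar{\phi}_{(1,2)} - \bar{\phi}_{(2,3)}$, which is a signed sum of characteristic functions of the four nested subsets $X_{-1,-2} \subset X_{-2,-2},\, X_{-1,-3} \subset X_{-2,-3}$. A routine pointwise inclusion-exclusion collapses all contributions except on a single valuation stratum, yielding the claimed characteristic function (up to the central shift that identifies it with the support $\varpi^{-1}\Oscr_{F}^{\times} \times \varpi^{-2}\Oscr_{F}^{\times}$).

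There is no real obstacle in this proof: the whole argument is mechanical once Lemma \ref{vart3struclemma} is in hand. The one point that deserves care is the single-coset collapse for the covariant action $[U g V]_{*}$ when $V \subset U$ and $g V g^{-1} \subset U$, but this is exactly the pattern already exploited repeatedly throughout \S \ref{convolutionsection}, and it is the deeper-level structure of $H_{\vartheta_{3}}$ (the $\varpi^{2}$-conditions from Lemma \ref{vart3struclemma}) that makes the equalities in (a)--(c) exact rather than merely congruences modulo $q-1$.
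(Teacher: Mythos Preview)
Your proposal is correct and follows essentially the same approach as the paper: both invoke Lemma \ref{vart3struclemma} to show that conjugation of $H_{\vartheta_{3}}$ by the relevant $\varpi^{\lambda}$ lands inside $U$, collapsing each double coset to a single left $U$-coset and reducing the convolution to a single translate of $\phi$; the paper phrases this contravariantly via $\varpi^{-\lambda} I_{\vartheta_{3}} \varpi^{\lambda} \subset U$ for the negatives of your cocharacters, which is the same computation from the other side. One small imprecision: for $\lambda = (3,1,2,2)$ the off-diagonal scalings are by $\varpi^{-1}$ rather than $\varpi^{-2}$, but since $b, c_{1}, c_{2} \in \varpi^{2}\Oscr_{F}$ this is harmless.
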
  
\begin{proof} For $ \lambda \in \Lambda $, let $ \xi_{\lambda} $ denote $ [ H  _   { \vartheta_{3}} \varpi^{\lambda} U ]  ( \phi ) $.  If  each of $ \alpha_{0}(\lambda) , -\beta_{0}(\lambda) , -\beta_{2}(\lambda)$ lies in $  \left  \{ 0 ,1 ,2 \right \} $, then $ \varpi^{-\lambda} I_{\vartheta_{3}}  \varpi^{\lambda}  \subset U $.  So for such $ \lambda  $,  $  H _ {\vartheta_{3}    }  \varpi^{\lambda} U  = \varpi^{\lambda} U $ and so $ \xi_{\lambda} = \varpi^{\lambda} \cdot \phi $.  Parts (a), (b), (c) then follow immediately.  Now recall that 
$$ \mathfrak{h}_{\vartheta_{3}}  =  - ( 1 + \rho^{2} ) (U \varpi^{(3,1,2,2)} H_{\vartheta_{3}} )  +    ( U \varpi^{(4,2,2,3)}   H _{\vartheta_{3}} )   +  (U \varpi^{(4,1,3,2)}   H _{\vartheta_{3}} )  . $$ 
Using parts (a)-(c), we find that   
Therefore   \begin{align*} 
\mathfrak{h}_{\vartheta_{3}, *} ( \phi )  
 & = - ( 1 + z_{0}  ) \bar{\phi} _{(1,2)}  
 + \bar{\phi}_{(2,2)}  + \bar{\phi}_{(1,3)}   \\
 & = \bar{\phi}_{(2,2)} - \bar{\phi}_{(1,2)}  + \bar{\phi}_{(1,3)} - \bar{\phi}_{(2,3)}    \\ 
 & =   -  \ch \begin{psmallmatrix} \varpi ^{-1}  \Oscr_{F}^{\times} \\  \varpi^{-2} \Oscr_{F} \end{psmallmatrix}  +  \ch \begin{psmallmatrix} \varpi ^{-1}  \Oscr_{F}^{\times} \\  \varpi^{-3} \Oscr_{F}  \end{psmallmatrix} \\ &   =  -  \ch \begin{psmallmatrix} \varpi ^{-1}  \Oscr_{F}^{\times} \\  \varpi^{-2} \Oscr_{F}^{\times}   \end{psmallmatrix}  \qedhere 
 \end{align*}  
\end{proof}    
\begin{notation} For $ k \in [\kay]  \setminus \left \{ 0 , -  1 \right \}  $, let $  \tilde{\mathscr{X}}_{k} \subset U $ denote the subgroup of all triples $ (h_{1}, h_{2}, h_{3} )$ where $$ h_{1} = \begin{psmallmatrix} a &  b \\ c & d \end{psmallmatrix} , \quad h_{2} = \begin{psmallmatrix} d &    - c k  \\  -  b / k  & a \end{psmallmatrix} , \quad  h_{3} =  \begin{psmallmatrix} d &  c ( k + 1 ) \\ b / ( k +  1 )  & a  \end{psmallmatrix} .  $$
That is, $ h_{1} \in \GL_{2}(\Oscr_{F}) $ is arbitrary and $ h_{2}, h_{3} $ are certain conjugates of $ h_{1} $ by anti-diagonal matrices.    Recall that $ U_{\varpi } $ denotes the subgroup of $ U $ which reduces to the trivial group  modulo $ \varpi $.  
\end{notation}    
\begin{lemma} For $ k \in [\kay] \setminus \left \{ 0 , -1 \right  \} $, $ H_{\tilde{\vartheta}_{k}} $ is   equal to  the product of $ \tilde{\mathscr{X}}_{k} $ with $ U _{\varpi }  \cap H_{\tilde{\vartheta}_{k}} $.   
\end{lemma}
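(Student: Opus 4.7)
The plan mirrors the structural lemmas already established for $H_{\varsigma_i}$ and $H_{\vartheta_j}$: verify each claimed factor lies in $H_{\tilde{\vartheta}_k}$, then use an explicit matrix computation to pin down the mod-$\varpi$ constraints a general element of $H_{\tilde{\vartheta}_k}$ must satisfy. The inclusion $U_\varpi \cap H_{\tilde{\vartheta}_k} \subset H_{\tilde{\vartheta}_k}$ is tautological. For $\tilde{\mathscr{X}}_k \subset H_{\tilde{\vartheta}_k}$, I would factor $\tilde{\vartheta}_k = \tilde{\theta}_k \tau_2$, exploit the block decomposition $\tilde{\eta}_k = \mathrm{diag}(A,D)$ with $A^t D = I_2$ (which just says $\tilde{\eta}_k \in \mathrm{Sp}_4$), and check directly that the precise anti-diagonal shape imposed on $h_2, h_3$ in the definition of $\tilde{\mathscr{X}}_k$ is exactly what makes the $\varpi^{-1}$ and $\varpi^{-2}$ entries arising from the $\tau_2^{-1}$-conjugation cancel out.

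For the reverse inclusion, let $h = (h_1, h_2, h_3) \in H_{\tilde{\vartheta}_k}$ with $h_1 = \begin{psmallmatrix} a_1 & b_1 \\ c_1 & d_1 \end{psmallmatrix}$ and write the conjugate as $\tilde{\vartheta}_k^{-1} h \tilde{\vartheta}_k = \tau_2^{-1}(\tilde{\theta}_k^{-1} h \tilde{\theta}_k)\tau_2$. The inner conjugation fixes $h_1$ and replaces $\jmath_2(h_2, h_3)$ with $\tilde{\eta}_k^{-1} \jmath_2(h_2, h_3)\tilde{\eta}_k$; thanks to the block structure, its $2 \times 2$ blocks have entries that are linear combinations of $a_2 - d_1$, $a_1 - d_2$, $b_1, b_2, c_1, c_2$ weighted by $k$ and $k+1$. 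Imposing integrality after the outer $\tau_2$-conjugation --- using the same denominator bookkeeping as in the proof of Lemma \ref{vart2structure} --- forces $h \in U$ together with the congruences
\[
h_2 \equiv \begin{psmallmatrix} d_1 & -c_1 k \\ -b_1/k & a_1 \end{psmallmatrix}, \qquad h_3 \equiv \begin{psmallmatrix} d_1 & c_1(k+1) \\ b_1/(k+1) & a_1 \end{psmallmatrix} \pmod{\varpi}.
\]
The hypothesis $k \notin \{0,-1\}$ enters exactly to guarantee $k, k+1 \in \Oscr_F^\times$, so that these relations are well-defined in $\mathbf{H}(\kay)$. Given such $h$, choose $\gamma \in \tilde{\mathscr{X}}_k$ with first component equal to $h_1$; the congruences then imply $\gamma^{-1} h \in U_\varpi$ and, since $H_{\tilde{\vartheta}_k}$ is a group, also in $U_\varpi \cap H_{\tilde{\vartheta}_k}$. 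This yields the desired factorization $h = \gamma \cdot (\gamma^{-1} h)$.

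The main obstacle is performing the matrix bookkeeping cleanly enough to isolate precisely these congruences: the inner conjugate $\tilde{\eta}_k^{-1} \jmath_2(h_2, h_3)\tilde{\eta}_k$ is a dense $4 \times 4$ matrix, and one must carefully track which of its entries acquire denominators $\varpi^{-1}$ or $\varpi^{-2}$ under the outer $\tau_2$-conjugation in order to determine which linear combinations are forced to vanish modulo $\varpi$. I expect, however, that the symplectic identity $A^t D = I_2$ combined with $A A^{-1} = I_2$ causes enough cancellation to leave exactly the stated relations and no spurious extras, in close analogy to the way the $\vartheta_2$-case (Lemma \ref{vart2structure}) was handled.
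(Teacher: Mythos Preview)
Your proposal is correct and follows essentially the same approach as the paper: verify $\tilde{\mathscr{X}}_k \subset H_{\tilde{\vartheta}_k}$ by direct conjugation, then compute $\tilde{\vartheta}_k^{-1} h \tilde{\vartheta}_k$ to extract the mod-$\varpi$ congruences forcing the reduction of $h$ to lie in the reduction of $\tilde{\mathscr{X}}_k$, and conclude by peeling off a $\gamma \in \tilde{\mathscr{X}}_k$. The paper writes out the conjugate matrix in one step rather than factoring through $\tilde{\theta}_k\tau_2$, but the logic is identical.
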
 

\begin{proof} 
It is  straightforward to verify that $ \tilde{\mathscr{X}}_{k} \subset H_{\tilde{\vartheta}_{k}} $ by checking that the matrix $ \tilde{\vartheta}_{k}^{-1} \tilde{\mathscr{X}}_{k} \tilde{\vartheta}_{k} $ has all its entries integral. This implies that the reduction of $  H_{\tilde{\vartheta}_{k}} $ modulo $ \varpi $ contains the reduction of  $ \tilde{\mathscr{X}}_{k} $ modulo $ \varpi $. Thus  $ H_{\tilde{\vartheta}_{k}} $ contains the product $ \tilde{\mathcal{X}}_{k}  \cdot   ( U _{\varpi} \cap H_{\tilde{\vartheta}_{k}}) $.   For the reverse inclusion,  write $ h \in H_{\tilde{\vartheta}_{k}} $ as in Notation \ref{notationh}. 
Then $$ \tilde{\vartheta}_{k}^{-1}  h \tilde{\vartheta}_{k}  =    
\begin{pmatrix}  a & * & *  &  \mfrac{b - c_{1} k^{2} - c_{2}(k+1)^{2}}{\varpi^{2}} &  \mfrac{ a + d_{1} k - d_{2}   ( k+1)  } { \varpi^{2} }  &  * \\[0.5em] 
-c & * & \mfrac{a_{2}-a_{1}}{\varpi}
&     \mfrac{ a_{2} ( k + 1 ) - a_{1} k  -d } { \varpi^{2}}     &   
\mfrac{ b_{1} + b_{2} - c }  { \varpi }   &   -  \mfrac{ b_{2} k  + b_{1} ( k+1)   } { \varpi  }     \\[0.5em]  
&   *  &  * &  *  & *  &   *      \\ 
* &  &  & d & c & &    \\  
& * & * & * & * & *    \\   
 & *   & *  &  \mfrac{c_{1}k + c_{2} (k+1)}{
\varpi}  &  \mfrac{d_{2} - d_{1}}{\varpi} & *    \end{pmatrix}  \in K   $$  
As the displayed entries must be integral (and the entries of $ h $ are also integral by Lemma \ref{HvtisinU}), one easily deduces all the congruence conditions on entries of $ h $ for its reduction to lie in the reduction of $ \tilde{\mathscr{X}}_{k} $.  For instance, we have $ b_{2} k \equiv -b_{1} (k+1) $ and $ b_{1} + b_{2} \equiv c $ modulo $ \varpi $, which implies  that $ b_{1} \equiv - ck   $. 
\end{proof}    

\begin{proposition} Modulo $ q - 1$,  
\begin{enumerate} 
[label = \normalfont(\alph*),  itemsep = 0.3em, after = \vspace{0.3em}, before = \vspace{0.3em}  ]  
\item  $  [U H_{ \tilde{ \vartheta} _{k}  } ]_{*}(\phi ) =  \phi $,  
\item  $ [ U \varpi^{(3,2,1,1)} H_{\tilde{\vartheta}_{k}}]_{*}(\phi)  \equiv (z_{0}^{2} + z_{0} ) \cdot \phi     $ 
\end{enumerate} and 
$ \mathfrak{h}_{\tilde{\vartheta}_{k},*}(\phi)  \equiv 0  $ for all $ k \in [\kay] \setminus  \left \{ 0 , -1  \right \} $.  
\end{proposition}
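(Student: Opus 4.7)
My plan is to mirror the pattern of the preceding propositions computing $\mathfrak{h}_{\vartheta,*}(\phi)$: establish (a) and (b) separately, then deduce (c) by inserting them into the defining expression of $\mathfrak{h}_{\tilde{\vartheta}_k}$. Part (a) is immediate: by Lemma \ref{HvtisinU} the subgroup $H_{\tilde{\vartheta}_k}$ is contained in $U$, so $[UH_{\tilde{\vartheta}_k}]_*(\phi)$ reduces to $[U]_*(\phi) = \phi$.

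For part (b), I would proceed as in the proofs of Propositions \ref{tfhvart0action} and \ref{vart3action}: set $\lambda = -(3,2,1,1)$, define $\xi_\lambda = [H_{\tilde{\vartheta}_k}\varpi^\lambda U](\phi)$, and recover $[U\varpi^{(3,2,1,1)} H_{\tilde{\vartheta}_k}]_*(\phi)$ from $\xi_\lambda$ by translation through the appropriate power of the center, as in the earlier cases. The key step is to enumerate $H_{\tilde{\vartheta}_k}\varpi^\lambda U/U$; this is where the structural lemma immediately preceding the proposition enters. Writing $H_{\tilde{\vartheta}_k} = \tilde{\mathscr{X}}_k \cdot (U_\varpi \cap H_{\tilde{\vartheta}_k})$ and noting that $\lambda$ has depth one---so that $\varpi^{-\lambda} U_\varpi \varpi^\lambda \subset U$---the $U_\varpi$-factor is absorbed into the right $U$, and one is reduced to computing $\tilde{\mathscr{X}}_k / (\tilde{\mathscr{X}}_k \cap \varpi^\lambda U \varpi^{-\lambda})$. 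Under the isomorphism $\tilde{\mathscr{X}}_k \simeq \GL_2(\Oscr_F)$ via projection to the first $\GL_2$-factor, this intersection corresponds to a standard Iwahori subgroup, so the quotient has $q+1 \equiv 2 \pmod{q-1}$ representatives. Summing $\gamma\cdot\phi$ over these representatives yields $(z_0^2+z_0)\phi$ modulo $q-1$.

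Combining (a) and (b) with the formula
\[
\mathfrak{h}_{\tilde{\vartheta}_k} = \rho^2(1+\rho^2)^2\, (UH_{\tilde{\vartheta}_k}) - (1+\rho^2)(U\varpi^{(3,2,1,1)} H_{\tilde{\vartheta}_k})
\]
and the usual rule that each central $\rho^{2m}$ contributes a factor $z_0^m$ modulo $q-1$, I obtain
\[
\mathfrak{h}_{\tilde{\vartheta}_k,*}(\phi) \equiv z_0(1+z_0)^2 \phi - (1+z_0)\cdot z_0(1+z_0)\phi = 0 \pmod{q-1},
\]
which is the claim. The main obstacle is the bookkeeping in (b)---concretely, verifying that the $k$-twisted off-diagonal entries of $\tilde{\mathscr{X}}_k$ interact cleanly with conjugation by $\varpi^{(3,2,1,1)}$ so that the intersection remains a standard Iwahori. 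The hypothesis $k \in [\kay] \setminus \{0,-1\}$ is essential here: it keeps both $k$ and $k+1$ units modulo $\varpi$, so $\tilde{\mathscr{X}}_k$ is a genuine (twisted) copy of $\GL_2(\Oscr_F)$ and does not degenerate. The excluded values $k \in \{0,-1\}$ correspond to the branch $\tilde{\vartheta}_0$, which was handled in the preceding proposition via the identification $\mathfrak{h}_{\tilde{\vartheta}_0,*}(\phi) = \mathfrak{h}_{\vartheta_2,*}(\phi)$ obtained from conjugation by $w_2 w_3$.
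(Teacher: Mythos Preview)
Your proposal is correct and follows essentially the same approach as the paper. The paper likewise reduces to $\tilde{\mathscr{X}}_k \varpi^{\lambda} U / U$ with $\lambda = -(3,2,1,1)$ via the depth-one structure lemma, then identifies this coset space (via the embedding $\GL_2 \hookrightarrow \mathbf{H}$ defining $\tilde{\mathscr{X}}_k$) with $U_1 \varpi^{-(3,2)} U_1 / U_1$, so that the action on $\phi$ is exactly $\mathcal{T}_{2,1,*}(\phi) \equiv (z_0^2+z_0)\phi$ by Corollary~\ref{Tab*}; your Iwahori-index computation is the same bijection seen from the other side.

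Two small remarks. First, the phrase ``translation through the appropriate power of the center'' is superfluous here: $[U\varpi^{(3,2,1,1)} H_{\tilde{\vartheta}_k}]_*(\phi)$ is literally $\xi_{-(3,2,1,1)}$, so no central shift is needed. Second, your closing comment that both excluded values $k \in \{0,-1\}$ correspond to $\tilde{\vartheta}_0$ is not quite right: $k=0$ gives $\tilde{\vartheta}_0$, while $k=-1$ is excluded because $\tilde{\eta}_{-1}$ degenerates to the class of $\eta_0$ (cf.\ the remark after Lemma~\ref{distinctH2E}), not to $\tilde{\eta}_0$. Neither point affects the argument.
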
   

\begin{proof} Part (a) is trivial since $ H_{\tilde{\vartheta_{k}}} \subset U $. For part (b), let $ \lambda = -(3,2,1,1) $. Then $ \mathrm{dep}(\lambda) = 1 $ and so $ H_{\vartheta_{2}} \varpi^{\lambda} U = \tilde{\mathscr{X}}_{k} \varpi^{\lambda} U $. An   argument analogous to Proposition  \ref{tfhvart0action} shows that there is a bijection  $$ U_{1} \varpi^{-(3,2) }  U_{1}  /  U_{1}  \to   \tilde{\mathscr{X}}_{k} \varpi^{\lambda} U / U  $$  
(where $  U_{1} = \GL_{2}(\Oscr_{F}) $)  using which one obtains the equality $  [H_{\tilde{\vartheta}_{k} }\varpi^{\lambda} U](\phi)  =  \mathcal{T}_{2,1,*}(\phi) $. Corollary  \ref{Tab*} then implies the claim. Now recall that 
$$ 
\mathfrak{h}_{\tilde{\vartheta}_{k}}       =  \rho^{2} (1 + 2\rho^{2} + \rho^{4} ) ( U H_{\tilde{\vartheta}_{k}}  ) - ( 1 + \rho^{2} )   (   U \varpi^{(3,2,1,1)} H_{\tilde{\vartheta}_{k}})  . $$
So   
$ \mathfrak{h}_{\tilde{\vartheta}_{k},*}(\phi) \equiv \big (   z_{0}(1+z_{0})^{2} \phi - (1+z_{0}) ( z_{0}^{2}+ z_{0}) \big )  \phi   = 0  $. 
\end{proof}    
 
\bibliographystyle{amsalpha}    
\bibliography{refs}{}  
\Addresses
\end{document}